\newtheorem{theorem}{Theorem}
\theoremstyle{plain}
\newtheorem{axiom}{Axiom}
\newtheorem{corollary}{Corollary}
\newtheorem{definition}{Definition}
\newtheorem{lemma}{Lemma}
\newtheorem{notation}{Notation}
\newtheorem{proposition}{Proposition}
\newtheorem{remark}{Remark}
\numberwithin{equation}{section}
\begin{document}
\title[Flag Hardy spaces on $\mathbb{H}^{n}$]{Flag Hardy spaces and
Marcinkiewicz multipliers on the Heisenberg group: an expanded version}
\author{Yongsheng Han}
\address{Department of Mathematics\\
Auburn University\\
Auburn, AL 36849, U.S.A.}
\email{hanyong\TEXTsymbol{\backslash}@mail.auburn.edu}
\author{Guozhen Lu}
\address{Department of Mathematics\\
Wayne State University\\
Detroit, MI 48202, U.S.A.}
\thanks{Research supported in part by U.S. NSF Grant DMS0901761.}
\email{gzlu\TEXTsymbol{\backslash}@math.wayne.edu}
\author{Eric Sawyer}
\address{Department of Mathematics and Statistics\\
McMaster University\\
Hamilton, Ontario, Canada}
\thanks{Research supported in part by a grant from NSERC}
\email{sawyer\TEXTsymbol{\backslash}@mcmaster.ca}
\date{August 2, 2012}
\keywords{Marcinkiewicz multipliers, Flag singular integrals, Flag Hardy
spaces, Discrete Calderon reproducing formulas, Discrete Littlewood-Paley
analysis}
\maketitle

\begin{abstract}
Marcinkiewicz multipliers are $L^{p}$ bounded for $1<p<\infty $ on the
Heisenberg group $\mathbb{H}^{n}\simeq \mathbb{C}^{n}\times \mathbb{R}$ (D.
Muller, F. Ricci and E. M. Stein \cite{MRS}, \cite{MRS2}). This is
surprising in that this class of multipliers is invariant under a two
parameter group of dilations on $\mathbb{C}^{n}\times \mathbb{R}$, while
there is \emph{no} two parameter group of \emph{automorphic} dilations on $%
\mathbb{H}^{n}$. This lack of automorphic dilations underlies the inability
of classical one or two parameter Hardy space theory to handle Marcinkiewicz
multipliers on $\mathbb{H}^{n}$ when $0<p\leq 1$.

We address this deficiency by developing a theory of \emph{flag} Hardy
spaces $H_{flag}^{p}$ on the Heisenberg group, $0<p\leq 1$, that is in a
sense `intermediate' between the classical Hardy spaces $H^{p}$ and the
product Hardy spaces $H_{product}^{p}$ on $\mathbb{C}^{n}\times \mathbb{R}$
(A. Chang and R. Fefferman (\cite{CF1}, \cite{CF2}, \cite{F1}, \cite{F2}, 
\cite{F3}). We show that flag singular integral operators, which include the
aforementioned Marcinkiewicz multipliers, are bounded on $H_{flag}^{p}$, as
well as from $H_{flag}^{p}$ to $L^{p}$, for $0<p\leq 1$. We also
characterize the dual spaces of $H_{flag}^{1}$ and $H_{flag}^{p}$, and
establish a Calder\'{o}n-Zygmund decomposition that yields standard
interpolation theorems for the flag Hardy spaces $H_{flag}^{p}$. In
particular, this recovers the $L^{p}$ results in \cite{MRS} (but not the
sharp results in \cite{MRS2}) by interpolating between those for $%
H_{flag}^{p}$ and $L^{2}$.
\end{abstract}

\tableofcontents

\section{Introduction}

Classical Calder\'{o}n-Zygmund theory centers around singular integrals
associated with the Hardy-Littlewood maximal operator $M$ that commutes with
the usual one-parameter family of dilations on $\mathbb{R}^{n}$, $\delta
\cdot x=(\delta x_{1},...,\delta x_{n})$ for $\delta >0$. On the other hand, 
\emph{product} Calder\'{o}n-Zygmund theory centers around singular integrals
associated with the \emph{strong} maximal function $M_{S}$ that commutes
with the multi-parameter dilations on $\mathbb{R}^{n}$, $\delta \cdot
x=(\delta _{1}x_{1},...,\delta _{n}x_{n})$ for $\delta =(\delta
_{1},...,\delta _{n})\in \mathbb{R}_{+}^{n}$. All of these dilations are
group automorphisms on $\mathbb{R}^{n}$. The strong maximal function (\cite%
{JMZ}) is given by 
\begin{equation}
M_{S}(f)(x)=\sup\limits_{x\in R}{\frac{{1}}{{|R|}}}\int\limits_{R}|f(y)|dy,
\label{def strong max}
\end{equation}%
where the supremum is taken over the family of all rectangles $R$ with sides
parallel to the axes.

For Calder\'{o}n-Zygmund theory in the product setting, one considers
operators of the form $Tf=K\ast f,$ where $K$ is homogeneous, that is, ${%
\delta _{1}}...{\delta _{n}}K(\delta \cdot x)=K(x),$ or, more generally, $%
K(x)$ satisfies the certain differential inequalities and cancellation
conditions such that the kernels ${\delta _{1}}...{\delta _{n}}K(\delta
\cdot x)$ also satisfy the same bounds. Such operators have been studied for
example in Gundy-Stein (\cite{GS}), R. Fefferman and Stein \cite{FS}, R.
Fefferman (\cite{F1}), Chang and R. Fefferman (\cite{CF1}, \cite{CF2}, \cite%
{CF3}), Journ\'{e} (\cite{J1}, \cite{J2}), Pipher \cite{P}, and others. More
recently, Hankel theorems and commutators have been treated in the product
setting by Ferguson and Lacey \cite{FeLa} and Lacey and Terwilliger \cite%
{LaTe}.

On the other hand, multi-parameter analysis has only recently been developed
for $L^{p}$ theory with $1<p<\infty $ when the underlying multi-parameter
structure is not explicit, but \emph{implicit}, as in the flag
multi-parameter structure studied in \cite{NRS} and its counterpart on the
Heisenberg group $\mathbb{H}^{n}$ studied in \cite{MRS} and \cite{MRS2}. In
these latter two papers the authors obtained the surprising result that
certain Marcinkiewicz multipliers, invariant under a two-parameter group of
dilations on $\mathbb{C}^{n}\times \mathbb{R}$, are bounded on $L^{p}\left( 
\mathbb{H}^{n}\right) $, \emph{despite} the absence of a two-parameter
automorphic group of dilations on $\mathbb{H}^{n}$. This striking result
exploited an implicit product, or \emph{semiproduct,} structure underlying
the group multiplication in $\mathbb{H}^{n}\simeq \mathbb{C}^{n}\times 
\mathbb{R}$. In contrast to this, it is not hard to see that the class of
flag singular integrals considered there is \emph{not} in general bounded on
the standard one-parameter Hardy space $H^{1}\left( \mathbb{H}^{n}\right) $
(see e.g. Theorem \ref{endpoint fails} in Section 11 below). The lesson
learned here is that Hardy space theories for $0<p\leq 1$ must be tailored
to the invariance properties of the class of singular integral operators
under consideration.

The goal of this paper is to develop for the Heisenberg group a theory of 
\emph{flag} Hardy spaces $H_{flag}^{p}$, with $0<p\leq 1$, that serve to
provide a suitable endpoint theory for Marcinkiewicz multipliers on the
Heisenberg group. This answers in part a question posed to one of us by E.M.
Stein in 1998. We remark that the first two authors have treated a \emph{%
Euclidean} flag structure in \textit{\cite{HL1},} a precursor of this paper
that is not intended for publication.

The flag theory for the Heisenberg group is most conveniently explained when 
$p=1$ in the more general context of spaces $\left( X,\rho ,d\mu \right) $
of homogeneous type \cite{CW}, which already include Euclidean spaces $%
\mathbb{R}^{N}$ and stratified graded nilpotent Lie groups such as the
Heisenberg groups $\mathbb{H}^{n}=\mathbb{C}^{n}\times \mathbb{R}$. We may
assume here that $\rho $ and $d\mu $ are connected by the equivalence%
\begin{equation}
\mu \left( B_{\rho }\left( x,r\right) \right) \approx r\text{ where }B_{\rho
}\left( x,r\right) =\left\{ y\in X:\rho \left( x,y\right) <r\right\} .
\label{special rho}
\end{equation}%
In particular, the usual structure on Euclidean space $\mathbb{R}^{n}$ is
given by $\rho \left( x,y\right) =\left\vert x-y\right\vert ^{n}$ and $d\mu
\left( x\right) =dx$.

Recall that one of several equivalent definitions of the Hardy space $%
H^{1}\left( X\right) $ is given as the set of $f\in \left( C^{\eta }\left(
X\right) \right) ^{\ast }$ with%
\begin{equation*}
\left\Vert f\right\Vert _{H^{1}\left( X\right) }\equiv \left\Vert g\left(
f\right) \right\Vert _{L^{1}\left( d\mu \right) }<\infty ,
\end{equation*}%
where the Littlewood-Paley $g$-function $g\left( f\right) $ is given by%
\begin{equation*}
g\left( f\right) =\left\{ \sum\limits_{j=-\infty }^{\infty }\left\vert
E_{j}f\right\vert ^{2}\right\} ^{{\frac{{1}}{{2}}}},
\end{equation*}%
and where $\left\{ E_{j}\right\} _{j=-\infty }^{\infty }$ is an appropriate
Littlewood-Paley decomposition of the identity on $L^{2}\left( d\mu \right) $%
.

The \emph{product} Hardy space $H_{product}^{1}\left( X\times X^{\prime
}\right) $ corresponding to a product of homogeneous spaces $\left( X,\rho
,d\mu \right) $ and $\left( X^{\prime },\rho ^{\prime },d\mu ^{\prime
}\right) $ is given as the set of $f\in \left( C^{\eta }\left( X\times
X^{\prime }\right) \right) ^{\ast }$ with%
\begin{equation*}
\left\Vert f\right\Vert _{H_{product}^{1}\left( X\times X^{\prime }\right)
}\equiv \left\Vert g_{product}\left( f\right) \right\Vert _{L^{1}\left( d\mu
\times d\mu ^{\prime }\right) }<\infty ,
\end{equation*}%
where the \emph{product} Littlewood-Paley $g$-function $g_{product}\left(
f\right) $ is given by%
\begin{equation*}
g_{product}\left( f\right) =\left\{ \sum\limits_{j,j^{\prime }=-\infty
}^{\infty }\left\vert D_{j}D_{j^{\prime }}^{\prime }f\right\vert
^{2}\right\} ^{{\frac{{1}}{{2}}}},
\end{equation*}%
and where $\left\{ D_{j}\right\} _{j=-\infty }^{\infty }$ and $\left\{
D_{j^{\prime }}^{\prime }\right\} _{j^{\prime }=-\infty }^{\infty }$ are
Littlewood-Paley decompositions of the identities on $L^{2}\left( d\mu
\right) $\ and $L^{2}\left( d\mu ^{\prime }\right) $ respectively (and act
separately on the respective distinct variables). Note that if $j=j^{\prime
} $ then $D_{j}D_{j^{\prime }}^{\prime }=D_{j}D_{j}^{\prime }$ satisfies
estimates similar to those for $E_{j}$ in the standard \emph{one-parameter}
Hardy space $H^{1}\left( X\times X^{\prime }\right) $. Thus we see that%
\begin{eqnarray*}
g_{product}\left( f\right) &=&\left\{ \sum\limits_{j,j^{\prime }=-\infty
}^{\infty }\left\vert D_{j}D_{j^{\prime }}^{\prime }f\right\vert
^{2}\right\} ^{{\frac{{1}}{{2}}}}\geq \left\{ \sum\limits_{j}^{\infty
}\left\vert D_{j}D_{j}^{\prime }f\right\vert ^{2}\right\} ^{{\frac{{1}}{{2}}}%
} \\
&\approx &\left\{ \sum\limits_{j}^{\infty }\left\vert E_{j}f\right\vert
^{2}\right\} ^{{\frac{{1}}{{2}}}}=g\left( f\right) ,
\end{eqnarray*}%
and so we have the inclusion%
\begin{equation*}
H_{product}^{1}\left( X\times X^{\prime }\right) \subset H^{1}\left( X\times
X^{\prime }\right) .
\end{equation*}

Now we specialize the space of homogeneous type $X$ to be the Heisenberg
group $\mathbb{H}^{n}=\mathbb{C}^{n}\times \mathbb{R}$. The \emph{flag}
structure on the Heisenberg group $\mathbb{H}^{n}$ arises in an intermediate
manner, namely as a homogeneous space structure derived from the Heisenberg
multiplication law that is adapted to the product of the homogeneous spaces $%
\mathbb{C}^{m}$ and $\mathbb{R}$. The appropriate definition of the flag
Hardy space $H_{flag}^{1}\left( \mathbb{H}^{n}\right) $ is already suggested
in \cite{MRS2}, where a Littlewood-Paley $g$-function $g_{flag}$ is
introduced that is adapted to the flag structure on the Heisenberg group $%
\mathbb{H}^{n}=\mathbb{C}^{n}\times \mathbb{R}$:%
\begin{equation*}
g_{flag}\left( f\right) =\left\{ \sum\limits_{j,k=-\infty }^{\infty
}\left\vert E_{k}D_{j}f\right\vert ^{2}\right\} ^{{\frac{{1}}{{2}}}},
\end{equation*}%
where $\left\{ D_{j}\right\} _{j=-\infty }^{\infty }$ is the standard
Littlewood-Paley decomposition of the identity on $L^{2}\left( \mathbb{H}%
^{n}\right) $, and $\left\{ E_{k}\right\} _{k=-\infty }^{\infty }$ is the
standard Littlewood-Paley decomposition of the identity on $L^{2}\left( 
\mathbb{R}\right) $. One can then define $H_{flag}^{1}(\mathbb{H}^{n})$ to
consist of appropriate `distributions' $f$ on $\mathbb{H}^{n}$ with%
\begin{equation*}
\left\Vert f\right\Vert _{H_{flag}^{1}\left( \mathbb{H}^{n}\right) }\equiv
\left\Vert g_{flag}\left( f\right) \right\Vert _{L^{1}\left( \mathbb{H}%
^{n}\right) }<\infty .
\end{equation*}%
Now for $k\leq 2j$, it turns out that $E_{k}D_{j}$ is essentially the \emph{%
one-parameter} Littlewood-Paley function $D_{j}$; while for $k>2j$ it turns
out that $E_{k}D_{j}$ is essentially the \emph{product} Littlewood-Paley
function $E_{k}F_{j}$ where $\left\{ F_{j}\right\} _{j=-\infty }^{\infty }$
is the standard Littlewood-Paley decomposition of the identity on $%
L^{2}\left( \mathbb{C}^{n}\right) $. Thus we see that $g_{flag}\left(
f\right) $ is a \emph{semiproduct} Littlewood-Paley function satisfying%
\begin{eqnarray*}
g_{product}\left( f\right) &\lessapprox &g_{flag}\left( f\right) \lessapprox
g\left( f\right) , \\
H_{product}^{1}\left( X\times X^{\prime }\right) &\subset
&H_{flag}^{1}\left( X\times X^{\prime }\right) \subset H^{1}\left( X\times
X^{\prime }\right) .
\end{eqnarray*}%
We describe this structure as `semiproduct' since only \emph{vertical}
Heisenberg rectangles (which are essentially unions of contiguous Heisenberg
balls of fixed radius stacked one on top of the other) arise as essentially
the supports of the components $E_{k}D_{j}$ when $k>2j$. When $k\leq 2j$,
the support of $E_{k}D_{j}$ is essentially a Heisenberg cube. Thus no \emph{%
horizontal} rectangles arise and the structure is `semiproduct'.

Of course, we must also address the nature of the `distributions' referred
to above, and for this we will use a lifting technique introduced in \cite%
{MRS} to define \emph{projected} flag molecular spaces $\mathcal{M}%
_{flag}\left( \mathbb{H}^{n}\right) $, and then the aforementioned
distributions will be elements of the dual space $\mathcal{M}_{flag}\left( 
\mathcal{H}^{n}\right) ^{\prime }$. We also show that these distributions
are `essentially' the same as those obtained from the dual of a more
familiar \emph{moment} flag molecular space $\mathcal{M}_{F}\left( \mathcal{H%
}^{n}\right) $. Finally, we mention that a theory of flag Hardy spaces can
also be developed with the techniques used here, but without recourse to any
notion of `distributions', by simply defining $H_{abstract}^{p}\left( 
\mathbb{H}^{n}\right) $ to be the abstract completion of the metric space 
\begin{equation*}
X^{p}\left( \mathbb{H}^{n}\right) \equiv \left\{ f\in L^{2}\left( \mathbb{H}%
^{n}\right) :g_{flag}\left( f\right) \in L^{p}\left( \mathbb{H}^{n}\right)
\right\}
\end{equation*}%
with metric%
\begin{equation*}
d\left( f_{1},f_{2}\right) \equiv \left\Vert g_{flag}\left(
f_{1}-f_{2}\right) \right\Vert _{L^{p}\left( \mathbb{H}^{n}\right) }^{p},\ \
\ \ \ f_{j}\in X^{p}\left( \mathbb{H}^{n}\right) .
\end{equation*}%
We show that the abstract space $H_{abstract}^{p}\left( \mathbb{H}%
^{n}\right) $, whose elements are realized only as equivalence classes of
Cauchy sequences, is in fact isomorphic to the space $H_{flag}^{p}\left( 
\mathbb{H}^{n}\right) $, whose elements have the advantage of being realized
as a subspace of distributions, namely those $f$ in $\mathcal{M}%
_{flag}\left( \mathbb{H}^{n}\right) ^{\prime }$ whose flag Littlewood-Paley
function $g_{flag}\left( f\right) $ belongs to $L^{p}\left( \mathbb{H}%
^{n}\right) $.

In Part 1 of the paper we define flag Hardy spaces and state our results. In
Part 2 we give the proofs, and in Part 3 we begin a modest extension of the
theory to spaces of homogeneous type. In particular, we construct there a
dyadic grid adapted to the flag structure, which is essential for our
treatment of the Heisenberg group.

\part{Flag Hardy spaces: definitions and results}

Our point of departure is to develop a \emph{wavelet} Calder\'{o}n
reproducing formula associated with the given two-parameter structure as in 
\cite{MRS2}, and then to prove a Plancherel-P\^{o}lya type inequality in
this setting. This will provide the flexibility needed to define flag Hardy
spaces and prove boundedness of flag singular integrals, duality and
interpolation theorems for these spaces. To explain the novelty in this
approach more carefully, we point out the following three types of
reproducing formula derived from the original idea of Calder\'{o}n:%
\begin{eqnarray*}
f\left( x\right) &=&\int_{0}^{\infty }\psi _{t}\ast \psi _{t}\ast f\left(
x\right) \frac{dt}{t}, \\
f\left( x\right) &=&\sum_{j\in \mathbb{Z}}\widetilde{D_{j}}D_{j}f\left(
x\right) , \\
f(x) &=&\sum\limits_{j}\sum\limits_{I}\left\{ \left\vert I\right\vert \left(
\psi _{j}\ast f\right) \left( x_{I}\right) \right\} {\widetilde{\psi }}%
_{j}(x,x_{I}).
\end{eqnarray*}

We refer to the formula in the first line as a\emph{\ continuous} Calder\'{o}%
n reproducing formula, its advantage being the use of \emph{compactly}
supported components $\psi _{t}$ that are \emph{repeated}. We refer to the
second formula as a \emph{discrete} Calder\'{o}n reproducing formula, in
which $D_{j}$ is generally a compactly supported $\emph{nonconvolution}$
operator in a space of homogeneous type, and $\widetilde{D_{j}}$ is no
longer compactly supported but satisfies \emph{molecular} estimates. In
certain cases, such as in Euclidean space, it is possible to use the Fourier
transform to obtain a discrete decomposition with repeated convolution
operators $D_{j}=\psi _{j}$.

Finally, we refer to the third formula as a \emph{wavelet} Calder\'{o}n
reproducing formula, which can also be developed in a space of homogeneous
type. For example, such formulas were first developed by Frazier and Jawerth
in certain situations in \cite{FJ}. The advantage of the third formula is
that it expresses $f$ as a sum of molecular, or wavelet-like, functions ${%
\widetilde{\psi }}_{j}\left( x,x_{I}\right) $ with coefficients $\left\vert
I\right\vert \left( \psi _{j}\ast f\right) \left( x_{I}\right) $ that are
obtained by evaluating $\psi _{j}\ast f$ at \emph{any} convenient point in
the set $I$ from a dyadic decomposition at scale $2^{j}$ of the space. As a
consequence, we can replace the coefficient $\left\vert I\right\vert \left(
\psi _{j}\ast f\right) \left( x_{I}\right) $ with either the supremum or
infimum of such choices and retain appropriate estimates (see Theorem \ref%
{P-P} below). We note in passing that the collection of functions $\left\{ {%
\widetilde{\psi }}_{j}\left( x,x_{I}\right) \right\} _{j,I}$ form a \emph{%
Riesz} basis for $L^{2}$. In certain cases when such functions form an \emph{%
orthogonal} basis, the decomposition is referred to as a \emph{wavelet}
decomposition, and it is from this that we borrow our terminology.

This `wavelet' scheme is particularly useful in dealing with the Hardy
spaces $H^{p}$ for $0<p\leq 1$, and using this we will show that flag
singular integral operators are bounded on $H_{flag}^{p}$ for all $0<p\leq 1$%
, and furthermore show that these operators are bounded from $H_{flag}^{p}$
to $L^{p}$ for all $0<p\leq 1$. These ideas can also be applied in the pure
product setting to provide a different approach to proving $H_{product}^{p}$
to $L^{p}$ boundedness than that used by R. Fefferman, albeit requiring more
smoothness, and thus bypass both the action of singular integral operators
on rectangle atoms, and the use of Journ\'{e}'s covering lemma.

\bigskip

We now recall the example of implicit multiparameter structure that provides
the main motivation for this paper. In \cite{MRS}, D. Muller, F. Ricci and
E. M. Stein uncovered a new class of flag singular integrals on
Heisenberg(-type) groups that arose in the investigation of Marcinkiewicz
multipliers. To be more precise, let $m(\mathcal{L},iT)$ be the
Marcinkiewicz multiplier operator, where $\mathcal{L}$ is the sub-Laplacian, 
$T$ is the central element of the Lie algebra on the Heisenberg group $%
\mathbb{H}^{n}=\mathbb{C}^{n}\times \mathbb{R},$ and $m$ satisfies
Marcinkiewicz conditions as in \cite{MRS}. It was proved in \cite{MRS} that
the kernel of $m(\mathcal{L},iT)$ satisfies standard one-parameter Calder%
\'{o}n-Zygmund type estimates associated with automorphic dilations in the
region where $\left\vert t\right\vert <\left\vert z\right\vert ^{2},$ and
two-parameter Calder\'{o}n-Zygmund type estimates in the region where $%
\left\vert t\right\vert \geq \left\vert z\right\vert ^{2}$.

The proof of $L^{p}$ boundedness of $m(\mathcal{L},iT)$ given in \cite{MRS}
requires lifting the operator to a larger group, $\mathbb{H}^{n}\times 
\mathbb{R}$. This lifts $K$, the kernel of $m(\mathcal{L},iT)$ on $\mathbb{H}%
^{n},$ to a \emph{product} kernel $\widetilde{K}$ on $\mathbb{H}^{n}\times 
\mathbb{R}.$ The lifted kernel $\widetilde{K}$ is constructed so that it
projects to $K$ by 
\begin{equation*}
K(z,t)=\int_{-\infty }^{\infty }{\widetilde{K}}(z,t-u,u)du
\end{equation*}%
taken in the sense of distributions. The operator $\widetilde{{T}}$
corresponding to the product kernel $\widetilde{{K}}$ can be dealt with in
terms of tensor products of operators, and one can obtain their $L^{p}$
boundedness from the known pure product theory. Finally, the $L^{p}$
boundedness of the operator with kernel $K$ follows from the transference
method of Coifman and Weiss (\cite{CW}), using the projection $\pi :\mathbb{H%
}^{n}\times \mathbb{R}\rightarrow \mathbb{H}^{n}$ by $\pi ((z,t),u)=(z,t+u)$%
. One of our main results, Corollary \pageref{Marcin} below, is an extension
of the boundedness of $m(\mathcal{L},iT)$ to flag Hardy spaces $H_{flag}^{p}$
for all $0<p\leq 1$, and follows from the boundedness of flag singular
integrals on $H_{flag}^{p}$.

In a subsequent paper \cite{MRS2}, D. Muller, F. Ricci and E.M. Stein
obtained the same boundedness results but with optimal regularity on the
multipliers. This required working directly on the group without lifting to
a product, and led to the introduction of a \emph{continuous} flag
Littlewood-Paley $g$-function and a corresponding \emph{continuous} Calder%
\'{o}n reproducing formula. We remark that one of the main features of our
extension of these results to $H^{p}$ for $0<p\leq 1$ is the construction of
a \emph{wavelet} Calder\'{o}n reproducing formula.

We note that the regularity satisfied by flag singular kernels is better
than that of the product singular kernels. More precisely, the singularity
of the standard pure product kernel on $\mathbb{C}^{n}\times \mathbb{R}$ is
contained in the union $\{(z,0)\}\cup \{(0,u)\}$ of two subspaces, while the
singularity of $K(z,u)$, the flag singular kernel on $\mathbb{H}^{n}=\mathbb{%
C}^{n}\times \mathbb{R}$ defined by (\ref{defflag}) below, is contained in a
single subspace $\{(0,y)\}$, but is more singular on yet a smaller subspace $%
\{(0,0)\}$, a situation described neatly in terms of the flag (or
filtration) of subspaces, $\left\{ \left( 0,0\right) \right\} \subsetneqq
\left\{ \left( 0,u\right) \right\} \subsetneqq \mathbb{H}^{n}$. Some natural
questions that arise now are these.

\bigskip

\textbf{Question 1:} What is the correct definition of a flag Hardy space $%
H_{flag}^{p}$ on the Heisenberg group for $0<p\leq 1$ in order that both (%
\textbf{1}) Maricinkiewicz multipliers are bounded, and (\textbf{2}) the $%
L^{p}$ results are recovered by interpolation?

\textbf{Question 2:} What is the correct definition of spaces $BMO_{flag}$
of bounded mean oscillation, and more generally, what is the duality theory
for $H_{flag}^{p}$?

\textbf{Question 3:} What is the relationship between classical Hardy spaces 
$H^{p}$ and the flag Hardy spaces $H_{flag}^{p}$?

\bigskip

We address these questions as follows. As in the $L^{p}$ theory for $p>1$
considered in \cite{MRS}, one is naturally tempted to establish Hardy space
theory under the implicit two-parameter structure associated with flag
singular integrals by invoking the method of lifting to the pure product
setting together with the transference method in [CW]. However, this direct
lifting method is not readily adaptable to the case of $p<1$ because the
transference method is not known to be valid . A different approach
centering on the use of a continuous flag Littlewood-Paley $g$-function was
carried out in \cite{MRS2}. This suggests that the flag Hardy space $%
H_{flag}^{p}$ for $0<p\leq 1$ should be defined in terms of this or a
similar $g$-function. Crucial for this is the use of a space of test
functions arising from the lifting technique in \cite{MRS}, and a \emph{%
wavelet} Calder\'{o}n reproducing formula adapted to these test functions.
Here is the order in which we implement these ideas.

\bigskip

(1) We first use the $L^{p}$ theory of Littlewood-Paley square functions $%
g_{flag}$ as in \cite{MRS2} to develop a Plancherel-Polya type inequality.

(2) We next define the flag Hardy spaces $H_{flag}^{p}$ using the flag $g$%
-function $g_{flag}$ together with a space of test functions that is
motivated by the lifting technique in \cite{MRS}. We then develop the theory
of Hardy spaces $H_{flag}^{p}$ associated to the two-parameter flag
structures and the boundedness of flag singular integrals on these spaces.
We also establish the boundedness of flag singular integrals from $%
H_{flag}^{p}$ to $L^{p}$.

(3) We then turn to duality theory for the flag Hardy space $H_{flag}^{p}$
and introduce the dual space $CMO_{_{flag}}^{p}$. In particular we establish
the duality between $H_{_{flag}}^{1}$ and the space $BMO_{_{flag}}$. We then
establish the boundedness of flag singular integrals on $BMO_{_{flag}}$. It
is worthwhile to point out that in the classical one-parameter or pure
product case, $BMO$ is related to the concept of Carleson measure. The space 
$CMO_{_{flag}}^{p}$ for all $0<p\leq 1,$ as the dual space of $H_{flag}^{p}$
introduced in this paper, is then defined by a generalized Carleson measure
condition.

(4) We then establish a Calder\'{o}n-Zygmund decomposition lemma for any $%
H_{flag}^{p}$ function ($0<p<\infty $) in terms of functions in $%
H_{_{flag}}^{p_{1}}$ and $H_{_{flag}}^{p_{2}}$ with $0<p_{1}<p<p_{2}<\infty $%
. This gives rise to an interpolation theorem between $H_{_{flag}}^{p_{1}}$
and $H_{_{flag}}^{p_{2}}$ for any $0<p_{2}<p_{1}<\infty $ ($%
H_{_{flag}}^{p}=L^{p}$ for $1<p<\infty $).

(5) Finally, we construct an example of a Marcinkiewicz multiplier that
fails to be bounded on the classical one-parameter Hardy space $H^{1}\left( 
\mathbb{H}^{n}\right) $.

\bigskip

We devote the remainder of Part 1 to a detailed description of our approach
and results. For the most part we deal exclusively with the Heisenberg group 
$\mathbb{H}^{n}=\mathbb{C}^{n}\times \mathbb{R}$, but in constructing a
dyadic grid on $\mathbb{H}^{n}$, it is convenient to also consider the more
general case of special products of spaces of homogeneous type $\left(
X,\rho ,\mu \right) $. Proofs will be given in Parts 2 and 3 of the paper.

\section{The square function on the Heisenberg group}

We begin with an \emph{implicit} two-parameter continuous variant of the
Littlewood-Paley square function that is introduced in \cite{MRS2}. For this
we need the standard Calder\'{o}n reproducing formula on the Heisenberg
group. Note that spectral theory was used in place of the Calder\'{o}n
reproducing formula in \cite{MRS2}.

\begin{theorem}
\label{GM}(Corollary 1 of \cite{GM}) There is $\psi \in C^{\infty }\left( 
\mathbb{H}^{n}\right) $ satisfying 
\begin{eqnarray*}
\text{\textbf{either} }\psi &\in &\mathcal{S}\left( \mathbb{H}^{n}\right) 
\text{ and all moments of }\psi \text{ vanish,} \\
\text{\textbf{or} }\psi &\in &C_{c}^{\infty }\left( \mathbb{H}^{n}\right) 
\text{ and all arbitrarily large moments of }\psi \text{ vanish,}
\end{eqnarray*}%
such that the following Calder\'{o}n reproducing formula holds:%
\begin{equation*}
f=\int_{0}^{\infty }\psi _{s}^{\vee }\ast \psi _{s}\ast f\frac{ds}{s},\ \ \
\ \ f\in L^{2}\left( \mathbb{H}^{n}\right) ,
\end{equation*}%
where $\ast $ is Heisenberg convolution, $\psi ^{\vee }\left( \zeta \right) =%
\overline{\psi \left( \zeta ^{-1}\right) }$ and $\psi _{s}\left( z,t\right)
=s^{-2n-2}\psi \left( \frac{z}{s},\frac{u}{s^{2}}\right) $ for $s>0$.
\end{theorem}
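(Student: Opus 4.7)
The plan is to exploit the spectral calculus of the positive self-adjoint sub-Laplacian $\mathcal{L}$ on $\mathbb{H}^{n}$, whose natural scaling $\lambda \mapsto s^{2}\lambda$ is intertwined with the parabolic dilation $\delta_{s}(z,t)=(sz,s^{2}t)$. For any Borel function $m\colon [0,\infty)\to\mathbb{C}$, the operator $m(\mathcal{L})$ is left-invariant, hence a convolution operator with some kernel $K_{m}$, and $m(s^{2}\mathcal{L})$ has kernel $(K_{m})_{s}$ in the normalization used in the theorem. The key input from harmonic analysis on stratified Lie groups is Hulanicki's theorem: if $m\in\mathcal{S}(\mathbb{R})$, then $K_{m}\in\mathcal{S}(\mathbb{H}^{n})$.

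For the Schwartz alternative, I would pick $\eta\in C_{c}^{\infty}((0,\infty))$ normalized so that $\int_{0}^{\infty}|\eta(s^{2}\lambda)|^{2}\,\frac{ds}{s}=1$ for every $\lambda>0$, and set $\psi:=K_{\eta}$, the convolution kernel of $\eta(\mathcal{L})$. Then $\psi\in\mathcal{S}(\mathbb{H}^{n})$ by Hulanicki, and the spectral theorem yields
\begin{equation*}
\int_{0}^{\infty}\eta(s^{2}\mathcal{L})^{*}\,\eta(s^{2}\mathcal{L})\,\frac{ds}{s}=I.
\end{equation*}
Translating this identity back to kernels via the identification $\eta(s^{2}\mathcal{L})^{*}\leftrightarrow\psi_{s}^{\vee}$ delivers the reproducing formula. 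The vanishing of all moments is then the assertion that $\int P(z,t)\,\psi(z,t)\,dz\,dt=0$ for every polynomial $P$; via the Plancherel formula for $\mathbb{H}^{n}$ each such integral becomes a linear combination of derivatives of $\eta$ at the spectral origin, all of which vanish because $\eta$ is supported away from $\lambda=0$.

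The compactly supported alternative is the main obstacle, since Hulanicki's theorem never produces compact support. I would follow the spirit of \cite{GM}: smoothly truncate the Schwartz $\psi$ constructed above to obtain $\widetilde{\psi}\in C_{c}^{\infty}(\mathbb{H}^{n})$, and then restore any prescribed finite number $N$ of vanishing moments by subtracting a finite linear combination of compactly supported correctors. The reproducing formula then holds only up to an error operator $R$ whose kernel retains rapid decay; a Neumann series or iterative-correction argument absorbs $R$ to yield an exact reproducing formula, at the cost of retaining only \emph{arbitrarily large} vanishing moments rather than all of them. Organizing the corrections so that compact support survives while the reproducing formula becomes exact is the delicate point, and is precisely what is carried out in \cite{GM}.
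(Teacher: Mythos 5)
The paper does not prove this theorem; it quotes it verbatim as Corollary~1 of \cite{GM}, so there is no in-paper argument to compare against. Your outline reconstructs the spectral-calculus strategy of that reference — functional calculus of the sub-Laplacian $\mathcal{L}$, Hulanicki's theorem to obtain a Schwartz kernel, a spectral resolution of the identity for the reproducing formula, and truncation-plus-correction for the compactly supported alternative — and it is consistent with the approach in the cited source; the vanishing-moments argument and the compact-support construction are, as you acknowledge, left schematic, but both key inputs and the genuinely delicate obstacle are correctly identified.
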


\begin{remark}
We will usually assume that $\psi $ above has compact support. However, it
will sometimes be convenient for us if the component functions $\psi _{s}$
have \emph{infinitely} many vanishing moments. In particular we can then use
the \emph{same} component functions to define the flag Hardy spaces for 
\emph{all} $0<p<\infty $ (the smaller $p$ is the more vanishing moments are
required to obtain necessary decay of singular integrals). Thus we will
sometimes sacrifice the property of having compactly supported component
functions.
\end{remark}

We now wish to extend this formula to encompass the flag structure on the
Heisenberg group $\mathbb{H}^{n}$.

\subsection{The component functions\label{component}}

Following \cite{MRS2}, we construct a Littlewood-Paley \emph{component}
function $\psi $ defined on $\mathbb{H}^{n}\simeq \mathbb{C}^{n}\times 
\mathbb{R}$, given by the partial convolution $\ast _{2}$ in the second
variable only: 
\begin{equation*}
\psi (z,u)=\psi ^{(1)}\ast _{2}\psi ^{(2)}(z,u)=\int_{\mathbb{R}}\psi
^{(1)}(z,u-v)\psi ^{(2)}(v)dv,\ \ \ \ \ \left( z,u\right) \in \mathbb{C}%
^{n}\times \mathbb{R},
\end{equation*}%
where $\psi ^{(1)}\in \mathcal{S}(\mathbb{H}^{n}\mathbb{)}$ is as in Theorem %
\ref{GM}, and $\psi ^{\left( 2\right) }\in \mathcal{S}\left( \mathbb{R}%
\right) $ satisfies 
\begin{equation*}
\int_{0}^{\infty }|\widehat{{\psi }^{(2)}}(t\eta )|^{2}\frac{dt}{t}=1
\end{equation*}%
for all $\eta \in \mathbb{R}\backslash \{0\}$, along with the moment
conditions 
\begin{eqnarray*}
\int\limits_{\mathbb{H}^{n}}z^{\alpha }u^{\beta }\psi ^{(1)}(z,u)dzdu &=&0,\
\ \ \ \ \left\vert \alpha \right\vert +2\beta \leq M, \\
\int\limits_{\mathbb{R}}v^{\gamma }\psi ^{(2)}(v)dv &=&0,\ \ \ \ \ \gamma
\geq 0.
\end{eqnarray*}%
Here the positive integer $M$ may be taken arbitrarily large when the
support of $\psi ^{(1)}$ is compact, and may be infinite otherwise.

Thus we have%
\begin{equation}
f(z,u)=\int_{0}^{\infty }\int_{0}^{\infty }\check{\psi}_{s,t}\ast \psi
_{s,t}\ast f(z,u)\frac{ds}{s}\frac{dt}{t},\ \ \ \ \ f\in L^{2}\left( \mathbb{%
H}^{n}\right) ,  \label{CRF st}
\end{equation}%
where the functions $\psi _{s,t}$ are given by%
\begin{equation}
\psi _{s,t}\left( z,u\right) =\psi _{s}^{(1)}\ast _{2}\psi _{t}^{(2)}\left(
z,u\right) ,  \label{defpsits}
\end{equation}%
with%
\begin{equation*}
\psi _{s}^{(1)}(z,u)=s^{-2n-2}\psi ^{(1)}(\frac{z}{s},\frac{u}{s^{2}})\,\,%
\text{and}\,\,\psi _{t}^{(2)}(v)=t^{-1}\psi ^{(2)}(\frac{v}{t}),
\end{equation*}%
and where the integrals in (\ref{CRF st}) converge in $L^{2}\left( \mathbb{H}%
^{n}\right) $. Indeed, 
\begin{eqnarray*}
\check{\psi}_{s,t}\ast _{\mathbb{H}^{n}}\psi _{s,t}\ast _{\mathbb{H}%
^{n}}f\left( z,u\right) &=&\left( \check{\psi}_{s}^{(1)}\ast _{2}\psi
_{t}^{(2)}\right) \ast _{\mathbb{H}^{n}}\left( \check{\psi}_{s}^{(1)}\ast
_{2}\psi _{t}^{(2)}\right) \ast _{\mathbb{H}^{n}}f\left( z,u\right) \\
&=&\left( \check{\psi}_{s}^{(1)}\ast _{\mathbb{H}^{n}}\check{\psi}%
_{s}^{(1)}\right) \ast _{\mathbb{H}^{n}}\left( \psi _{t}^{(2)}\ast _{\mathbb{%
R}}\psi _{t}^{(2)}\right) \ast _{2}f\left( z,u\right)
\end{eqnarray*}%
yields (\ref{CRF st}) upon invoking the standard Calder\'{o}n reproducing
formula on $\mathbb{R}$ and then Theorem \ref{GM} on $\mathbb{H}^{n}$:%
\begin{eqnarray*}
&&\int_{0}^{\infty }\int_{0}^{\infty }\check{\psi}_{s,t}\ast _{\mathbb{H}%
^{n}}\psi _{s,t}\ast _{\mathbb{H}^{n}}f(z,u)\frac{ds}{s}\frac{dt}{t} \\
&=&\int_{0}^{\infty }\check{\psi}_{s}^{(1)}\ast _{\mathbb{H}^{n}}\check{\psi}%
_{s}^{(1)}\ast _{\mathbb{H}^{n}}\left\{ \int_{0}^{\infty }\psi
_{t}^{(2)}\ast _{\mathbb{R}}\psi _{t}^{(2)}\ast _{2}f\left( z,u\right) \frac{%
dt}{t}\right\} \frac{ds}{s} \\
&=&\int_{0}^{\infty }\check{\psi}_{s}^{(1)}\ast _{\mathbb{H}^{n}}\check{\psi}%
_{s}^{(1)}\ast _{\mathbb{H}^{n}}f\left( z,u\right) \frac{ds}{s}=f\left(
z,u\right) .
\end{eqnarray*}

For $f\in L^{p}$, $1<p<\infty $, the \emph{continuous} Littlewood-Paley
square function $g_{flag}(f)$ of $f$ is defined by 
\begin{equation*}
g_{flag}(f)(z,u)=\left\{ \int_{0}^{\infty }\int_{0}^{\infty }|\psi
_{s,t}\ast f(z,u)|^{2}\frac{ds}{s}\frac{dt}{t}\right\} ^{{\frac{{1}}{{2}}}},
\end{equation*}%
Note that we have the \emph{flag} moment conditions, so-called because they
include only half of the product moment conditions associated with the
product $\mathbb{C}^{n}\times \mathbb{R}$:%
\begin{equation}
\int\limits_{\mathbb{R}}u^{\alpha }\psi (z,u)du=0,\ \ \ \ \ \text{for all }%
\alpha \in \mathbb{Z}_{+}\text{ and }z\in \mathbb{C}^{n}.  \label{momcond}
\end{equation}%
Indeed, with the change of variable $u^{\prime }=u-v$ and the binomial
theorem 
\begin{equation*}
\left( u^{\prime }+v\right) ^{\beta }=\sum_{\beta =\gamma +\delta }c_{\gamma
,\delta }\left( u^{\prime }\right) ^{\gamma }v^{\delta },
\end{equation*}%
we have%
\begin{eqnarray*}
\int\limits_{\mathbb{R}}u^{\alpha }\psi (z,u)du &=&\int_{\mathbb{R}%
}u^{\alpha }\left\{ \int_{\mathbb{R}}\psi ^{(2)}(u-v)\psi
^{(1)}(z,v)dv\right\} du \\
&=&\int_{\mathbb{R}}\left\{ \int_{\mathbb{R}}\left( u^{\prime }+v\right)
^{\alpha }\psi ^{(2)}(u^{\prime })du\right\} \psi ^{(1)}(z,v)dv \\
&=&\sum_{\alpha =\gamma +\delta }c_{\gamma ,\delta }\int_{\mathbb{R}}\left\{
\int_{\mathbb{R}}\left( u^{\prime }\right) ^{\gamma }\psi ^{(2)}(u^{\prime
})du^{\prime }\right\} v^{\delta }\psi ^{(1)}(z,v)dv \\
&=&\sum_{\alpha =\gamma +\delta }c_{\gamma ,\delta }\int_{\mathbb{R}}\left\{
0\right\} v^{\delta }\psi ^{(1)}(z,v)dv=0,
\end{eqnarray*}%
for all $\alpha \in \mathbb{Z}_{+}$ and each $z\in \mathbb{C}^{n}$. Note
that as a consequence the \emph{full} moments $\int\limits_{\mathbb{H}%
^{n}}z^{\alpha }u^{\beta }\psi (z,u)du$ all vanish, but that in general the
partial moments $\int\limits_{\mathbb{C}^{n}}z^{\alpha }\psi (z,u)dz$ do 
\emph{not} vanish.

\begin{remark}
As observed in \cite{NRSW}, there is a weak cancellation substitute for this
failure to vanish, namely an \emph{estimate} for $\int\limits_{\mathbb{C}%
^{n}}z^{\alpha }\psi (z,u)dz$ that is derived from the vanishing moments of $%
\psi ^{(1)}(z,v)$ and the smoothness of $\psi ^{(2)}(u)$ via the identity%
\begin{eqnarray*}
\int\limits_{\mathbb{C}^{n}}z^{\alpha }\psi (z,u)dz &=&\int\limits_{\mathbb{C%
}^{n}}\int\limits_{\mathbb{R}}z^{\alpha }\psi ^{(1)}(z,v)\psi ^{(2)}(u-v)dzdv
\\
&=&\int\limits_{\mathbb{C}^{n}}\int\limits_{\mathbb{R}}z^{\alpha }\psi
^{(1)}(z,v)\left[ \psi ^{(2)}(u-v)-\psi ^{(2)}(u)\right] dzdv.
\end{eqnarray*}%
We will not pursue this further here, as it seems to have no role in our
development.
\end{remark}

We will also consider the associated \emph{sequence} of component functions $%
\left\{ \psi _{j,k}\right\} _{j,k\in \mathbb{Z}}$ where the functions $\psi
_{j,k}$ are given by 
\begin{equation}
\psi _{j,k}(z,u)=\psi _{j}^{(1)}\ast _{2}\psi _{k}^{(2)}(z,u),
\label{defpsijk}
\end{equation}%
with%
\begin{equation*}
\psi _{j}^{(1)}(z,u)=2^{\alpha j(2n+2)}\psi ^{(1)}(2^{\alpha j}z,2^{2\alpha
j}u)\,\,\text{and}\,\,\psi _{k}^{(2)}(v)=2^{2\alpha k}\psi ^{(2)}(2^{2\alpha
k}v),
\end{equation*}%
and $\psi ^{(1)}$ and $\psi ^{(2)}$ as above. Here $\alpha $ is a small
positive constant that will be fixed in Theorem \ref{discretethm} below,
where we establish a \emph{wavelet} Calder\'{o}n reproducing formula using
this sequence of component functions for small $\alpha $. We then have a
corresponding \emph{discrete} (convolution) Littlewood-Paley square function 
$g_{flag}(f)$ defined by 
\begin{equation*}
g_{flag}(f)(z,u)=\left\{ \sum\limits_{j}\sum\limits_{k}|\psi _{j,k}\ast
f(z,u)|^{2}\right\} ^{{\frac{{1}}{{2}}}}.
\end{equation*}%
This should be compared with the analogous square function in \cite{MRS2}.

\begin{remark}
The terminology "implicit two-parameter structure" is inspired by the fact
that the functions $\psi _{s,t}(z,u)$ and $\psi _{j,k}(z,u)$ are not dilated
directly from $\psi (z,u)$, but rather from a lifting of $\psi $ to a
product function. It is the subtle convolution $\ast _{2}$ that facilitates
a passage from one-parameter "cubes" to two-parameter "rectangles" as
dictated by the geometry of the kernels considered.
\end{remark}

\subsection{Square function inequalities}

Altogether we have from above that%
\begin{equation}
f(z,u)=\int_{0}^{\infty }\int_{0}^{\infty }\psi _{s,t}\ast \psi _{s,t}\ast
f(z,u)\frac{ds}{s}\frac{dt}{t},\ \ \ \ \ f\in L^{2}\left( \mathbb{H}%
^{n}\right) .  \label{CRF}
\end{equation}%
Note that if one considers the integral on the right hand side as defining
an operator acting on $f$, then by the construction of\ the function $\psi $%
, it is a flag singular integral operator. Using iteration and the
vector-valued Littlewood-Paley estimate, together with the Calder\'{o}n
reproducing formula on $L^{2}$, allows us to obtain $L^{p}$ estimates for $%
g_{flag}$, $1<p<\infty $, in Theorem \ref{p>1} below. This should be
compared to the variant in Proposition 4.1 of \cite{MRS2} for $g$-functions
constructed from spectral theory for $\mathcal{L}$ and $T$.

\begin{theorem}
\label{p>1}Let $1<p<\infty $. There exist constants $C_{1}$ and $C_{2}$
depending on $n$ and $p$ such that 
\begin{equation*}
C_{1}\Vert f\Vert _{p}\leq \Vert g_{flag}(f)\Vert _{p}\leq C_{2}\Vert f\Vert
_{p},\ \ \ \ \ f\in L^{p}\left( \mathbb{H}^{n}\right) .
\end{equation*}
\end{theorem}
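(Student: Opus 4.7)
The plan is to reduce this two-parameter square function estimate to an iteration of two one-parameter Littlewood-Paley inequalities by exploiting the tensorial decomposition $\psi_{s,t} = \psi_s^{(1)} \ast_2 \psi_t^{(2)}$. The upper bound $\|g_{flag}(f)\|_p \leq C_2 \|f\|_p$ will follow from a Hilbert-valued extension of the scalar Heisenberg Littlewood-Paley inequality composed with the classical scalar inequality in the $u$-variable, and the lower bound $C_1 \|f\|_p \leq \|g_{flag}(f)\|_p$ will be obtained from the upper bound by a standard duality argument based on the reproducing formula \eqref{CRF}.

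For the upper bound, I would first use the commutativity of $\ast$ with $\ast_2$ exhibited in the computation immediately preceding \eqref{CRF} to write
\begin{equation*}
\psi_{s,t} \ast f(z,u) = \psi_s^{(1)} \ast F_t(z,u), \qquad F_t := \psi_t^{(2)} \ast_2 f.
\end{equation*}
Regarding $t \mapsto F_t(z,u)$ as a function on $\mathbb{H}^n$ with values in the Hilbert space $\mathcal{H} = L^2(\mathbb{R}_+, dt/t)$, I would apply the $\mathcal{H}$-valued extension of the scalar Heisenberg Littlewood-Paley inequality for the operator $h \mapsto (\psi_s^{(1)} \ast h)_{s>0}$. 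This yields
\begin{equation*}
\|g_{flag}(f)\|_{L^p(\mathbb{H}^n)} \lesssim \|F\|_{L^p(\mathbb{H}^n;\, \mathcal{H})}.
\end{equation*}
By Fubini, the right-hand side is the $L^p$-norm in $(z,u)$ of the classical one-parameter Littlewood-Paley square function of the slice $f(z,\cdot)$ in the variable $u$; applying the $L^p(\mathbb{R})$ Littlewood-Paley theorem pointwise in $z$ and integrating in $z$ bounds it by $\|f\|_p$.

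For the lower bound, I pair $f$ against an arbitrary $g \in L^{p'}(\mathbb{H}^n)$ with $\|g\|_{p'} \leq 1$ and insert \eqref{CRF}, moving one copy of $\psi_{s,t}$ onto $g$ via the adjoint relation associated with $\psi^\vee(\zeta) = \overline{\psi(\zeta^{-1})}$, to obtain
\begin{equation*}
\langle f, g \rangle = \int_0^\infty \int_0^\infty \langle \psi_{s,t} \ast f,\; \psi_{s,t}^\vee \ast g \rangle \frac{ds}{s} \frac{dt}{t}.
\end{equation*}
Cauchy-Schwarz in the $(s,t)$-integral followed by H\"older in $(z,u)$ gives $|\langle f, g \rangle| \leq \|g_{flag}(f)\|_p \cdot \|\widetilde{g}_{flag}(g)\|_{p'}$, where $\widetilde{g}_{flag}$ denotes the analogous square function built from $\psi_{s,t}^\vee$. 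By the symmetry of the construction, the upper bound just established applies to $\widetilde{g}_{flag}$, so $\|\widetilde{g}_{flag}(g)\|_{p'} \lesssim 1$, and duality yields $\|f\|_p \lesssim \|g_{flag}(f)\|_p$.

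The main technical obstacle is the $\mathcal{H}$-valued Littlewood-Paley inequality invoked in the upper bound. I would establish this by verifying that the kernel $s \mapsto \psi_s^{(1)}$, taking values in $\mathcal{H}$, satisfies the size and smoothness estimates of a Calder\'on-Zygmund kernel in the $\mathcal{H}$-norm relative to the homogeneous metric on $\mathbb{H}^n$; these follow from Schwartz decay and the vanishing moments of $\psi^{(1)}$. The $L^2(\mathbb{H}^n;\, \mathcal{H})$-boundedness is precisely the Plancherel-type identity underlying Theorem \ref{GM}, and the vector-valued Calder\'on-Zygmund theorem then supplies the full range $1 < p < \infty$.
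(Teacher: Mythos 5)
Your proof is correct and takes essentially the same approach as the paper: factor $\psi_{s,t}*f$ through the tensorial structure, iterate two vector-valued Littlewood--Paley inequalities for the upper bound, and obtain the lower bound by duality via the reproducing formula. The only difference is the order of iteration --- you apply the (Hilbert-valued) Heisenberg Littlewood--Paley theorem first and then the scalar Euclidean one in $u$, whereas the paper applies the $\ell^2$-valued Euclidean Littlewood--Paley inequality in $u$ for fixed $z$ first and then the scalar Heisenberg one, which is marginally more elementary since the required vector-valued inequality is then the textbook Fefferman--Stein version on $\mathbb{R}$.
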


In order to state our results for flag singular integrals on $\mathbb{H}^{n}$%
, we need to recall some definitions given in \cite{NRS}. Following \cite%
{NRS}, we begin with the definition of a class of distributions on Euclidean
space $\mathbb{R}^{N}$. A $k-normalized$ bump function on a space $\mathbb{R}%
^{N}$ is a $C^{k}-$function supported on the unit ball with $C^{k}$ norm
bounded by $1$. As pointed out in \cite{NRS}, the definitions given below
are independent of the choices of $k\geq 1$, and thus we will simply refer
to a "normalized bump function" without specifying the index $k$.

We define a flag convolution kernel on the Heisenberg group in analogy with
Definition 2.1.1 in \cite{NRS}.

\begin{definition}
\label{defflag}A flag convolution kernel on $\mathbb{H}^{n}=\mathbb{C}%
^{n}\times \mathbb{R}$ is a distribution $K$ on $\mathbb{R}^{2n+1}$ which
coincides with a $C^{\infty }$ function away from the coordinate subspace $%
\{(0,u)\}\subset \mathbb{H}^{n}$, where $0\in \mathbb{C}^{n}$ and $u\in 
\mathbb{R}$, and satisfies

\begin{enumerate}
\item (Differential Inequalities) For any multi-indices $\alpha =(\alpha
_{1},\cdots ,\alpha _{n})$, $\beta =(\beta _{1},\cdots ,\beta _{m})$ 
\begin{equation*}
|\partial _{z}^{\alpha }\partial _{u}^{\beta }K(z,u)|\leq C_{\alpha ,\beta
}\left\vert z\right\vert ^{-2n-|\alpha |}\cdot \left( \left\vert
z\right\vert ^{2}+\left\vert u\right\vert \right) ^{-1-|\beta |}
\end{equation*}%
for all $(z,u)\in \mathbb{H}^{n}$ with $z\neq 0$.

\item (Cancellation Condition) 
\begin{equation*}
|\int_{\mathbb{R}}\partial _{z}^{\alpha }K(z,u)\phi _{1}(\delta u)du|\leq
C_{\alpha }|z|^{-2n-|\alpha |}
\end{equation*}%
for every multi-index $\alpha $ and every normalized bump function $\phi
_{1} $ on $\mathbb{R}$ and every $\delta >0$; 
\begin{equation*}
|\int_{\mathbb{C}^{n}}\partial _{u}^{\beta }K(z,u)\phi _{2}(\delta z)dz|\leq
C_{\gamma }|u|^{-1-|\beta |}
\end{equation*}%
for every multi-index $\beta $ and every normalized bump function $\phi _{2}$
on $\mathbb{C}^{n}$ and every $\delta >0$; and 
\begin{equation*}
|\int_{\mathbb{H}^{n}}K(z,u)\phi _{3}(\delta _{1}z,\delta _{2}u)dzdu|\leq C
\end{equation*}%
for every normalized bump function $\phi _{3}$ on $\mathbb{H}^{n}$ and every 
$\delta _{1}>0$ and $\delta _{2}>0$.
\end{enumerate}
\end{definition}

As in \cite{MRS}, we may always assume that a flag kernel $K(z,u)$ is
integrable on $\mathbb{H}^{n}$ by using a smooth truncation argument.

\bigskip

Informally, we can now define the flag Hardy space $H_{flag}^{p}\left( 
\mathbb{H}^{n}\right) $ on the Heisenberg group for $0<p\leq 1$ by%
\begin{equation*}
H_{flag}^{p}\left( \mathbb{H}^{n}\right) =\left\{ f\text{ a \emph{%
distribution} on }\mathbb{H}^{n}:g_{flag}(f)\in L^{p}(\mathbb{H}%
^{n})\right\} ,
\end{equation*}%
and for $f\in H_{flag}^{p}\left( \mathbb{H}^{n}\right) $ define 
\begin{equation*}
\Vert f\Vert _{H_{flag}^{p}}=\Vert g_{flag}(f)\Vert _{p}.
\end{equation*}

Of course we need to give a precise definition of \emph{distribution} in
this context. A natural question also arises as to whether or not the
resulting definition is independent of the choice of component functions $%
\psi _{j,k}$ in the definition of the square function $g_{flag}$. Moreover,
to study $H_{flag}^{p}$-boundedness of flag singular integrals and establish
a duality theory for $H_{flag}^{p}$, this definition is difficult to use
when $0<p\leq 1$. Instead, we need to approximately discretize the
quasi-norm of $H_{flag}^{p}$. In order to obtain this discrete $H_{flag}^{p}$
quasi-norm we will prove certain Plancherel-P\^{o}lya-type inequalities, and
the main tool used in proving such inequalities is the wavelet Calder\'{o}n
reproducing formula that we define below. To be more specific, we will prove
that the formula (\ref{CRF}) converges in certain spaces of test functions $%
\mathcal{M}_{flag}^{M}(\mathbb{H}^{n})$ adapted to the flag structure, and
thus also in the dual spaces $\mathcal{M}_{flag}^{M}(\mathbb{H}^{n})^{\prime
}$ (see Theorem \ref{discretethm} below). Furthermore, using an
approximation procedure and an almost orthogonality argument, we prove in
Theorem \ref{discretethm} below a \emph{wavelet} Calder\'{o}n reproducing
formula which expresses $f$ as a Fourier-like \emph{series} of molecules or
`wavelets' $\left( z,u\right) \rightarrow {\widetilde{\psi }}%
_{j,k}(z,u,z_{I},u_{J})$ with coefficients $\psi _{j,k}\ast f(z_{I},u_{J})$.

We will describe this formula explicitly in Section 3 below, and in order to
do so, we will use the \emph{flag} dyadic decomposition%
\begin{equation*}
\mathbb{H}^{n}=\overset{\cdot }{\cup }_{\left( \alpha ,\tau \right) \in
K_{j}}\mathcal{S}_{j,\alpha ,\tau }
\end{equation*}%
of the Heisenberg group given in Theorem \ref{Heisenberg grid} below (this
is a `hands on' variant of the tiling construction in Stricharz \cite{Str}),
as well as the notion of Heisenberg rectangles%
\begin{equation*}
\mathcal{R}_{\mathcal{S}_{j,\alpha ,\tau }}^{\mathcal{S}_{k,\beta ,\upsilon
}}\left( ver\right) \text{ and }\mathcal{R}_{\mathcal{S}_{j,\alpha ,\tau }}^{%
\mathcal{S}_{k,\beta ,\upsilon }}\left( hor\right)
\end{equation*}%
given in Definition \ref{Hrect} in Section \ref{s Heisenberg grid} below
when $j\leq k$ and $\mathcal{S}_{j,\alpha ,\tau }$ and $\mathcal{S}_{k,\beta
,\upsilon }$ are dyadic cubes in $\mathbb{H}^{n}$ with $\mathcal{S}%
_{j,\alpha ,\tau }\subset \mathcal{S}_{k,\beta ,\upsilon }$. Recall that%
\begin{equation*}
\left\{ I\right\} _{I\text{ dyadic}}=\left\{ I_{\alpha }^{j}\right\} _{j\in 
\mathbb{Z}\text{ and }\alpha \in 2^{j}\mathbb{Z}^{2n}}
\end{equation*}%
is the usual dyadic grid in $\mathbb{C}^{n}$ and that%
\begin{equation*}
\left\{ J\right\} _{J\text{ dyadic}}=\left\{ J_{\tau }^{k}\right\} _{k\in 
\mathbb{Z}\text{ and }\tau \in 2^{k}\mathbb{Z}}
\end{equation*}%
is the usual dyadic grid in $\mathbb{R}$. The projection of the dyadic cube $%
\mathcal{S}_{j,\alpha ,\tau }$ onto $\mathbb{C}^{n}$ is the dyadic cube $%
I_{\alpha }^{j}$, and $\mathcal{R}_{\mathcal{S}_{j,\alpha ,\tau }}^{\mathcal{%
S}_{k,\beta ,\upsilon }}\left( ver\right) $ (respectively $\mathcal{R}_{%
\mathcal{S}_{j,\alpha ,\tau }}^{\mathcal{S}_{k,\beta ,\upsilon }}\left(
hor\right) $) plays the role of the dyadic rectangle $I_{\alpha }^{j}\times
J_{\upsilon }^{2k}$ (repsectively $I_{\beta }^{k}\times J_{\tau }^{2j}$). In
the Heisenberg group, these rectangles necessarily `rotate' with the group
structure.

\begin{notation}
\label{notaterect}It will be convenient to use the suggestive, if somewhat
imprecise, notation%
\begin{equation*}
R=\mathcal{R}=I\times J=I_{\alpha }^{j}\times J_{\upsilon }^{2k}
\end{equation*}%
for the dyadic rectangle $\mathcal{R}_{\mathcal{S}_{j,\alpha ,\tau }}^{%
\mathcal{S}_{k,\beta ,\upsilon }}\left( ver\right) $, etc. It should be
emphasized that $R=\mathcal{R}=I\times J$ is \emph{not} a product set, but
rather a dyadic Heisenberg rectangle $\mathcal{R}_{\mathcal{S}_{j,\alpha
,\tau }}^{\mathcal{S}_{k,\beta ,\upsilon }}\left( ver\right) $ that serves
as a Heisenberg substitute for the actual product set $I_{\alpha }^{j}$
times $J_{\upsilon }^{2k}$. Thus we will say that the dyadic rectangle $R=%
\mathcal{R}=I\times J$ has \emph{side lengths} $\ell \left( I\right) =2^{j}$
and $\ell \left( J\right) =2^{2k}$. For $j\leq k$, the collection of all
dyadic Heisenberg rectangles $R=\mathcal{R}=I\times J$ with side lengths $%
2^{j}$ and $2^{2k}$ will be denoted by%
\begin{equation*}
\mathcal{R}\left( 2^{j}\times 2^{2k}\right) \equiv \left\{ R=\mathcal{R}%
=I\times J=I_{\alpha }^{j}\times J_{\upsilon }^{2k}=\mathcal{R}_{\mathcal{S}%
_{j,\alpha ,\tau }}^{\mathcal{S}_{k,\beta ,\upsilon }}\left( ver\right) :%
\mathcal{S}_{j,\alpha ,\tau }\subset \mathcal{S}_{k,\beta ,\upsilon
}\right\} .
\end{equation*}
\end{notation}

\begin{description}
\item[Caution] For $k\leq j$, the support of the component function $\psi
_{j,k}$ defined in (\ref{defpsijk}) is essentially a vertical Heisenberg
rectangle $I\times J$ having side lengths $\ell \left( I\right) =2^{-j}$ and 
$\ell \left( J\right) =2^{-2k}$. Note the passage from $j,k$ to $-j,-k$.
\end{description}

\subsection{Standard test functions}

We now discuss the issues arising in giving a precise definition of the flag
Hardy space $H_{flag}^{p}\left( \mathbb{H}^{n}\right) $ as elements in the
dual of familiar test spaces. We begin by introducing the test spaces $%
\mathcal{M}_{flag}^{M}(\mathbb{H}^{n})$ associated with the flag structure
on $\mathbb{H}^{n}$ that are obtained by \emph{projecting} the corresponding
product test spaces $\mathcal{M}_{product}^{M}\left( \mathbb{H}^{n}\times 
\mathbb{R}\right) $ onto $\mathbb{H}^{n}$. Our definitions here will
encompass the entire range $0<p\leq 1$. For this we use the projection of
functions $F$ defined on $\mathbb{H}^{n}\times \mathbb{R}$ \ to functions $%
f=\pi F$ defined on $\mathbb{H}^{n}$\ as introduced in \cite{MRS}:%
\begin{equation}
f\left( z,u\right) =\left( \pi F\right) \left( z,u\right) \equiv \int_{%
\mathbb{R}}F\left( \left( z,u-v\right) ,v\right) dv.  \label{projection}
\end{equation}%
We will also use the notation $\pi F=F_{\flat }$ as in \cite{MRS}. Recall
that $2n+1$ is the Euclidean dimension of the Heisenberg group $\mathbb{H}%
^{n}=\mathbb{C}^{n}\times \mathbb{R}$ and that $Q=2n+2$ is the homogeneous
dimension of $\mathbb{H}^{n}$.

Note that in this notation, the component function $\psi \left( z,u\right) $
in Subsection 2.1 above is given by $\pi \Psi \left( z,u\right) $ where%
\begin{equation}
\Psi \left( z,u,v\right) \equiv \psi ^{\left( 1\right) }\left( z,u\right)
\psi ^{\left( 2\right) }\left( v\right) .  \label{Psi}
\end{equation}

We now define an appropriate \emph{product} molecular space $\mathcal{M}%
_{product}^{M_{1},M_{2},M}$ on $\mathbb{H}^{n}\times \mathbb{R}$ with three
parameters $M_{1},M_{2},M$.

\begin{remark}
Note that in the definition below, we require \emph{equally} many moments
and derivatives in each of the $u$ and $v$ variables, and exactly \emph{twice%
} as many moments and derivatives in the $z$ variable. The integer $M$
controls the decay of the function, the integer $M_{1}$ controls the total
number of moments, and the integer $M_{2}$ controls the total weighted
number of derivatives permitted.
\end{remark}

\begin{definition}
\label{product molecular}Let $M,M_{1},M_{2}\in \mathbb{N}$ be positive
integers and let $0<\delta \leq 1$. The \emph{product} molecular space $%
\mathcal{M}_{product}^{M+\delta ,M_{1},M_{2}}\left( \mathbb{H}^{n}\times 
\mathbb{R}\right) $ consists of all functions $F\left( \left( z,u\right)
,v\right) $ on $\mathbb{H}^{n}\times \mathbb{R}$ satisfying the product
moment conditions%
\begin{eqnarray}
\int_{\mathbb{H}^{n}}z^{\alpha }u^{\beta }F\left( \left( z,u\right)
,v\right) dzdu &=&0\text{ and }\int_{\mathbb{R}}v^{\gamma }F\left( \left(
z,u\right) ,v\right) dv=0  \label{Mo1} \\
\text{for all }\left\vert \alpha \right\vert +2\beta &\leq &M_{1}\text{ and }%
2\gamma \leq M_{1},  \notag
\end{eqnarray}%
and such that there is a nonnegative constant $A$ satisfying the following
four differential inequalities:%
\begin{eqnarray}
&&\left\vert \partial _{z}^{\alpha }\partial _{u}^{\beta }\partial
_{v}^{\gamma }F\left( \left( z,u\right) ,v\right) \right\vert \leq A\frac{1}{%
\left( 1+\left\vert z\right\vert ^{2}+\left\vert u\right\vert \right) ^{%
\frac{Q+M+\left\vert \alpha \right\vert +2\beta +\delta }{2}}}\frac{1}{%
\left( 1+\left\vert v\right\vert \right) ^{1+M+\gamma +\delta }},
\label{DI1} \\
&&\ \ \ \ \ \ \ \ \ \ \text{for all }\left\vert \alpha \right\vert +2\beta
\leq M_{2}\text{ and }2\gamma \leq M_{2},  \notag
\end{eqnarray}%
\begin{eqnarray}
&&\left\vert \partial _{z}^{\alpha }\partial _{u}^{\beta }\partial
_{v}^{\gamma }F\left( \left( z,u\right) ,v\right) -\partial _{z}^{\alpha
}\partial _{u}^{\beta }\partial _{v}^{\gamma }F\left( \left( z^{\prime
},u^{\prime }\right) ,v\right) \right\vert  \label{DI2} \\
&&\ \ \ \ \ \ \ \ \ \ \ \ \ \ \ \leq A\frac{\left\vert \left( z,u\right)
\circ \left( z^{\prime },u^{\prime }\right) ^{-1}\right\vert ^{\delta }}{%
\left( 1+\left\vert z\right\vert ^{2}+\left\vert u\right\vert \right) ^{%
\frac{Q+M+M_{2}+2\delta }{2}}}\frac{1}{\left( 1+\left\vert v\right\vert
\right) ^{1+M+\gamma +\delta }}  \notag \\
&&\text{for all }\left\vert \alpha \right\vert +2\beta =M_{2}\text{ and }%
\left\vert \left( z,u\right) \circ \left( z^{\prime },u^{\prime }\right)
^{-1}\right\vert \leq \frac{1}{2}\left( 1+\left\vert z\right\vert
^{2}+\left\vert u\right\vert \right) ^{\frac{1}{2}}.  \notag
\end{eqnarray}%
\begin{eqnarray}
&&\left\vert \partial _{z}^{\alpha }\partial _{u}^{\beta }\partial
_{v}^{\gamma }F\left( \left( z,u\right) ,v\right) -\partial _{z}^{\alpha
}\partial _{u}^{\beta }\partial _{v}^{\gamma }F\left( \left( z,u\right)
,v^{\prime }\right) \right\vert  \label{DI3} \\
&&\ \ \ \ \ \ \ \ \ \ \leq A\frac{1}{\left( 1+\left\vert z\right\vert
^{2}+\left\vert u\right\vert \right) ^{\frac{Q+M+\left\vert \alpha
\right\vert +2\beta +\delta }{2}}}\frac{\left\vert v-v^{\prime }\right\vert
^{\delta }}{\left( 1+\left\vert v\right\vert \right) ^{1+M+\frac{M_{2}}{2}%
+2\delta }},  \notag \\
&&\ \ \ \ \ \ \ \ \ \ \text{for all }\left\vert \alpha \right\vert +2\beta
\leq M_{2}\text{ and }2\gamma =M_{2}  \notag \\
&&\ \ \ \ \ \ \ \ \ \ \ \ \ \ \ \text{and }\left\vert v-v^{\prime
}\right\vert \leq \frac{1}{2}\left( 1+\left\vert v\right\vert \right) , 
\notag
\end{eqnarray}%
\begin{eqnarray}
&&\left\vert \left[ \partial _{z}^{\alpha }\partial _{u}^{\beta }\partial
_{v}^{\gamma }F\left( \left( z,u\right) ,v\right) -\partial _{z}^{\alpha
}\partial _{u}^{\beta }\partial _{v}^{\gamma }F\left( \left( z^{\prime
},u^{\prime }\right) ,v\right) \right] \right.  \label{DI4} \\
&&\ \ \ \ \ \left. -\left[ \partial _{z}^{\alpha }\partial _{u}^{\beta
}\partial _{v}^{\gamma }F\left( \left( z,u\right) ,v^{\prime }\right)
-\partial _{z}^{\alpha }\partial _{u}^{\beta }\partial _{v}^{\gamma }F\left(
\left( z^{\prime },u^{\prime }\right) ,v^{\prime }\right) \right] \right\vert
\notag \\
&&\ \ \ \ \ \ \ \ \ \ \leq A\frac{\left\vert \left( z,u\right) \circ \left(
z^{\prime },u^{\prime }\right) ^{-1}\right\vert ^{\delta }}{\left(
1+\left\vert z\right\vert ^{2}+\left\vert u\right\vert \right) ^{\frac{%
Q+M+M_{2}+2\delta }{2}}}\frac{\left\vert v-v^{\prime }\right\vert ^{\delta }%
}{\left( 1+\left\vert v\right\vert \right) ^{1+M+\frac{M_{2}}{2}+2\delta }} 
\notag \\
&&\ \ \ \ \ \ \ \ \ \ \text{for all }\left\vert \alpha \right\vert +2\beta
=M_{2}\text{ and }2\gamma =M_{2}  \notag \\
&&\ \ \ \ \ \ \ \ \ \ \ \ \ \ \ \text{and }\left\vert \left( z,u\right)
\circ \left( z^{\prime },u^{\prime }\right) ^{-1}\right\vert \leq \frac{1}{2}%
\left( 1+\left\vert z\right\vert ^{2}+\left\vert u\right\vert \right) ^{%
\frac{1}{2}}  \notag \\
&&\ \ \ \ \ \ \ \ \ \ \ \ \ \ \ \text{and }\left\vert v-v^{\prime
}\right\vert \leq \frac{1}{2}\left( 1+\left\vert v\right\vert \right) , 
\notag
\end{eqnarray}
\end{definition}

The space $\mathcal{M}_{product}^{M+\delta ,M_{1},M_{2}}\left( \mathbb{H}%
^{n}\times \mathbb{R}\right) $ becomes a Banach space under the norm defined
by the least nonnegative number $A$ for which the above four inequalities
hold.

\bigskip

Now we define the\ \emph{flag} molecular space $\mathcal{M}_{flag}^{M+\delta
,M_{1},M_{2}}\left( \mathbb{H}^{n}\right) $ as the projection of $\mathcal{M}%
_{product}^{M+\delta ,M_{1},M_{2}}\left( \mathbb{H}^{n}\times \mathbb{R}%
\right) $ under the map $\pi $ given in (\ref{projection}). We refer to $%
\mathcal{M}_{flag}^{M+\delta ,M_{1},M_{2}}\left( \mathbb{H}^{n}\right) $ as
a \emph{projected} flag molecular space when contrasting it with the more
familiar \emph{moment} flag molecular space defined below.

\begin{definition}
\label{flag molecular}Let $M,M_{1},M_{2}\in \mathbb{N}$ be positive integers
and $0<\delta \leq 1$. The \emph{flag} molecular space $\mathcal{M}%
_{flag}^{M+\delta ,M_{1},M_{2}}\left( \mathbb{H}^{n}\right) $ consists of
all functions $f$ on $\mathbb{H}^{n}$ such that there is $F\in \mathcal{M}%
_{product}^{M+\delta ,M_{1},M_{2}}\left( \mathbb{H}^{n}\times \mathbb{R}%
\right) $ with $f=\pi F=F_{\flat }$. Define a norm on $\mathcal{M}%
_{flag}^{M+\delta ,M_{1},M_{2}}\left( \mathbb{H}^{n}\right) $ by%
\begin{equation*}
\left\Vert f\right\Vert _{\mathcal{M}_{flag}^{M+\delta ,M_{1},M_{2}}\left( 
\mathbb{H}^{n}\right) }\equiv \inf_{F:f=\pi F}\left\Vert F\right\Vert _{%
\mathcal{M}_{product}^{M+\delta ,M_{1},M_{2}}\left( \mathbb{H}^{n}\times 
\mathbb{R}\right) }.
\end{equation*}
\end{definition}

Thus the norm on $\mathcal{M}_{flag}^{M+\delta ,M_{1},M_{2}}\left( \mathbb{H}%
^{n}\right) $ is the quotient norm%
\begin{equation*}
\left\Vert f\right\Vert _{\mathcal{M}_{flag}^{M+\delta ,M_{1},M_{2}}\left( 
\mathbb{H}^{n}\right) }=\mathcal{M}_{product}^{M+\delta ,M_{1},M_{2}}\left( 
\mathbb{H}^{n}\times \mathbb{R}\right) /\pi ^{-1}\left( \left\{ 0\right\}
\right) ,
\end{equation*}%
and $\mathcal{M}_{flag}^{M+\delta ,M_{1},M_{2}}\left( \mathbb{H}^{n}\right) $
is a Banach space.

We record here an intertwining formula for $\pi $ and a convolution operator 
$T$ on $\mathbb{H}^{n}$. Let 
\begin{equation*}
Tf\left( z,u\right) =K\ast _{\mathbb{H}^{n}}f\left( z,u\right) =\int_{%
\mathbb{H}^{n}}K\left( \left( z,u\right) \circ \left( z^{\prime },u^{\prime
}\right) ^{-1}\right) f\left( z^{\prime },u^{\prime }\right) dz^{\prime
}du^{\prime }.
\end{equation*}%
Extend $T$ to an operator $\widetilde{T}=T\otimes \delta _{0}$ on the group $%
\mathbb{H}^{n}\times \mathbb{R}$ by acting $T$ in the $\mathbb{H}^{n}$
factor only:%
\begin{equation*}
\widetilde{T}F\left( \left( z,u\right) ,v\right) =\int_{\mathbb{H}%
^{n}}K\left( \left( z,u\right) \circ \left( z^{\prime },u^{\prime }\right)
^{-1}\right) F\left( z^{\prime },u^{\prime },v\right) dz^{\prime }du^{\prime
}.
\end{equation*}

\begin{lemma}
\label{intertwine}Let $T$ be a convolution operator on $\mathbb{H}^{n}$ and
let $\widetilde{T}=T\otimes \delta _{0}$ be its extension to $\mathbb{H}%
^{n}\times \mathbb{R}$ defined above. Then%
\begin{equation*}
T\left( \pi F\right) \left( z,u\right) =\pi \left( \widetilde{T}F\right)
\left( z,u\right) .
\end{equation*}
\end{lemma}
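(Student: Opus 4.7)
The plan is to prove the intertwining identity by direct computation, using two ingredients: Fubini's theorem, and the fact that the second (``time'') coordinate of $\mathbb{H}^n = \mathbb{C}^n \times \mathbb{R}$ is central in the Heisenberg multiplication, so translations by $(0,v)$ commute with every group element.

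First I would expand both sides. The left-hand side is
\[
T(\pi F)(z,u) = \int_{\mathbb{H}^n} K\!\left((z,u)\circ (z',u')^{-1}\right)\!\left\{\int_{\mathbb{R}} F\bigl((z',u'-v),v\bigr)\,dv\right\}dz'\,du',
\]
while the right-hand side is
\[
\pi(\widetilde{T}F)(z,u) = \int_{\mathbb{R}}\int_{\mathbb{H}^n} K\!\left((z,u-v')\circ(z',u'')^{-1}\right) F(z',u'',v')\,dz'\,du''\,dv'.
\]
After invoking Fubini (which is justified since $K$ may be assumed integrable, as noted in the remark after Definition \ref{defflag}, and $F$ is a product molecule and hence integrable), I would make the change of variable $u'' = u' - v$ in the left-hand side (holding $v = v'$ fixed), which produces
\[
T(\pi F)(z,u) = \int_{\mathbb{R}}\int_{\mathbb{H}^n} K\!\left((z,u)\circ(z',u'' + v')^{-1}\right) F(z',u'',v')\,dz'\,du''\,dv'.
\]

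The key point is then to verify the kernel identity
\[
K\!\left((z,u)\circ(z',u'+v')^{-1}\right) = K\!\left((z,u-v')\circ(z',u')^{-1}\right),
\]
which reduces to the group-theoretic identity $(z,u)\circ(z',u'+v')^{-1} = (z,u-v')\circ(z',u')^{-1}$. This in turn is immediate from the centrality of $(0,v')$: since $(0,v')$ commutes with every element of $\mathbb{H}^n$, one has $(z',u'+v') = (z',u')\circ(0,v')$, hence its inverse is $(0,-v')\circ(z',u')^{-1}$, and then
\[
(z,u)\circ(z',u'+v')^{-1} = (z,u)\circ(0,-v')\circ(z',u')^{-1} = (z,u-v')\circ(z',u')^{-1}.
\]
Substituting this identity into the rewritten LHS produces the RHS, completing the proof.

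The main obstacle, if any, is conceptual rather than technical: one must notice that the projection $\pi$ built into the definition of the flag structure is precisely the projection that intertwines a convolution operator on $\mathbb{H}^n$ with its trivial lift $T\otimes\delta_0$, and that the mechanism making this work is the commutativity of the center with everything in $\mathbb{H}^n$. Once that is observed, the verification is a short change of variables plus Fubini, and no delicate estimates are required.
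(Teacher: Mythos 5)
Your proof is correct and follows essentially the same route as the paper: expand $T(\pi F)$, apply Fubini, make the change of variable $w'=u'-v$ (your $u''=u'-v$), and then recognize the resulting kernel argument as $(z,u-v)\circ(z',w')^{-1}$. The only difference is cosmetic: the paper verifies the key kernel identity by writing out the Heisenberg coordinates explicitly (using $(z,u)\circ(z',u')^{-1} = (z-z',u-u'+2\operatorname{Im}\overline{z'}z)$), whereas you derive it abstractly from the fact that $(0,v')$ is central, which is cleaner but amounts to the same computation.
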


\begin{proof}
Formally we have%
\begin{eqnarray*}
T\left( \pi F\right) \left( z,u\right) &=&\int_{\mathbb{H}^{n}}K\left(
\left( z,u\right) \circ \left( z^{\prime },u^{\prime }\right) ^{-1}\right)
\left( \pi F\right) \left( z^{\prime },u^{\prime }\right) dz^{\prime
}du^{\prime } \\
&=&\int_{\mathbb{H}^{n}}K\left( \left( z,u\right) \circ \left( z^{\prime
},u^{\prime }\right) ^{-1}\right) \left\{ \int_{\mathbb{R}}F\left( z^{\prime
},u^{\prime }-v,v\right) dv\right\} dz^{\prime }du^{\prime } \\
&=&\int_{\mathbb{H}^{n}}\int_{\mathbb{R}}K\left( z-z^{\prime },u-u^{\prime
}+2\func{Im}\overline{z^{\prime }}z\right) F\left( z^{\prime },u^{\prime
}-v,v\right) dvdz^{\prime }du^{\prime }.
\end{eqnarray*}%
Now make the change of variable $w^{\prime }=u^{\prime }-v$ to get%
\begin{eqnarray*}
T\left( \pi F\right) \left( z,u\right) &=&\int_{\mathbb{H}^{n}}\int_{\mathbb{%
R}}K\left( z-z^{\prime },u-w^{\prime }-v+2\func{Im}\overline{z^{\prime }}%
z\right) F\left( z^{\prime },w^{\prime },v\right) dvdz^{\prime }dw^{\prime }
\\
&=&\int_{\mathbb{R}}\left\{ \int_{\mathbb{H}^{n}}K\left( \left( z,u-v\right)
\circ \left( z^{\prime },w^{\prime }\right) ^{-1}\right) F\left( z^{\prime
},w^{\prime },v\right) dz^{\prime }dw^{\prime }\right\} dv \\
&=&\int_{\mathbb{R}}\left\{ \widetilde{T}F\left( z,u-v,v\right) \right\}
dv=\pi \left( \widetilde{T}F\right) \left( z,u\right) .
\end{eqnarray*}
\end{proof}

Later in the paper we will fix $M_{1}=M_{2}=M$ and denote $\mathcal{M}%
_{flag}^{M+\delta ,M_{1},M_{2}}\left( \mathbb{H}^{n}\right) $ simply by $%
\mathcal{M}_{flag}^{M+\delta }\left( \mathbb{H}^{n}\right) $, but for now we
will allow $M_{1}$ and $M_{2}$ to remain independent of $M$ in order to
further analyze the space $\mathcal{M}_{flag}^{M+\delta ,M_{1},M_{2}}\left( 
\mathbb{H}^{n}\right) $.

\subsubsection{An analysis of the projected flag molecular space}

Lemma \ref{containments} below shows that functions $f\left( z,u\right) $ in
the projected flag molecular space $\mathcal{M}_{flag}^{M+\delta
,M_{1},M_{2}}\left( \mathbb{H}^{n}\right) $ have moments in the $u$ variable
alone, as well as \emph{more} moments in the $\left( z,u\right) $ variable
than we might expect. We refer loosely to this situation as having \emph{%
half-product} moments. There is a more familiar space of test functions $%
\mathsf{M}_{F}^{M+\delta ,M_{1},M_{2}}\left( \mathbb{H}^{n}\right) $ defined
below with half-product moments, that \emph{avoids} the operation of
projection, and that is closely related to the projected test space $%
\mathcal{M}_{flag}^{M+\delta ,M_{1},M_{2}}\left( \mathbb{H}^{n}\right) $. We
refer to $\mathsf{M}_{F}^{M+\delta ,M_{1},M_{2}}\left( \mathbb{H}^{n}\right) 
$ as the \emph{moment} flag molecular space. While we do not know if the
spaces $\mathcal{M}_{flag}^{M+\delta ,M_{1},M_{2}}\left( \mathbb{H}%
^{n}\right) $ and $\mathsf{M}_{F}^{M+\delta ,M_{1},M_{2}}\left( \mathbb{H}%
^{n}\right) $ coincide, the embeddings in Lemma \ref{containments} below
show they are essentially the same.

\begin{definition}
Let $M,M_{1},M_{2}\in \mathbb{N}$ be positive integers and $0<\delta \leq 1$%
. Define the \emph{moment\ }flag molecular space $\mathsf{M}_{F}^{M+\delta
,M_{1},M_{2}}\left( \mathbb{H}^{n}\right) $ to consist of all functions $f$
on $\mathbb{H}^{n}$ satisfying the moment conditions%
\begin{eqnarray*}
\int_{\mathbb{H}^{n}}z^{\alpha }u^{\beta }f\left( z,u\right) dzdu &=&0\text{
for all }\left\vert \alpha \right\vert \leq M_{1}\text{, }\left\vert \alpha
\right\vert +2\beta \leq 2M_{1}+2, \\
\int_{\mathbb{R}}u^{\gamma }f\left( z,u\right) du &=&0\text{ for all }\gamma
\leq M_{1},
\end{eqnarray*}%
and such that there is a nonnegative constant $A$ satisfying the following
two differential inequalities:%
\begin{equation*}
\left\vert \partial _{z}^{\alpha }\partial _{u}^{\beta }f\left( z,u\right)
\right\vert \leq A\frac{1}{\left( 1+\left\vert z\right\vert ^{2}+\left\vert
u\right\vert \right) ^{\frac{Q+M+\left\vert \alpha \right\vert +2\beta }{2}}}%
\text{ for all }\left\vert \alpha \right\vert +2\beta \leq M_{2},
\end{equation*}%
\begin{eqnarray*}
&&\left\vert \partial _{z}^{\alpha }\partial _{u}^{\beta }f\left( z,u\right)
-\partial _{z}^{\alpha }\partial _{u}^{\beta }f\left( z^{\prime },u^{\prime
}\right) \right\vert \leq A\frac{\left\vert \left( z,u\right) \circ \left(
z^{\prime },u^{\prime }\right) ^{-1}\right\vert ^{\delta }}{\left(
1+\left\vert z\right\vert ^{2}+\left\vert u\right\vert \right) ^{\frac{%
Q+M+\delta +M_{2}}{2}}} \\
&&\text{for all }\left\vert \alpha \right\vert +2\beta =M_{2}\text{ and }%
\left\vert \left( z,u\right) \circ \left( z^{\prime },u^{\prime }\right)
^{-1}\right\vert \leq \frac{1}{2}\left( 1+\left\vert z\right\vert
^{2}+\left\vert u\right\vert \right) ^{\frac{1}{2}}.
\end{eqnarray*}
\end{definition}

Note that the moment conditions in the definition of $\mathsf{M}%
_{F}^{M+\delta ,M_{1},M_{2}}\left( \mathbb{H}^{n}\right) $ permit larger $%
\beta $'$s$ depending on $\left\vert \alpha \right\vert $ than in the
definition of $\mathcal{M}_{flag}^{M+\delta ,M_{1},M_{2}}\left( \mathbb{H}%
^{n}\right) $. The space $\mathsf{M}_{F}^{M+\delta ,M_{1},M_{2}}\left( 
\mathbb{H}^{n}\right) $ becomes a Banach space under the norm defined by the
least nonnegative number $A$ for which the above two inequalities hold.

\begin{lemma}
\label{containments}The spaces $\mathcal{M}_{flag}^{M+\delta
,M_{1},M_{2}}\left( \mathbb{H}^{n}\right) $ and $\mathsf{M}_{F}^{M+\delta
,M_{1},M_{2}}\left( \mathbb{H}^{n}\right) $ satisfy the following
containments%
\begin{equation*}
\mathsf{M}_{F}^{3M+\delta +M_{2},M_{1},2M_{2}+4}\left( \mathbb{H}^{n}\right)
\subset \mathcal{M}_{flag}^{M+\delta ,M_{1},M_{2}}\left( \mathbb{H}%
^{n}\right) \subset \mathsf{M}_{F}^{M+\delta ,M_{1},M_{2}}\left( \mathbb{H}%
^{n}\right) ,
\end{equation*}%
which are continuous: 
\begin{equation*}
\left\Vert f\right\Vert _{\mathsf{M}_{F}^{M+\delta ,M_{1},M_{2}}\left( 
\mathbb{H}^{n}\right) }\lesssim \left\Vert f\right\Vert _{\mathcal{M}%
_{flag}^{M+\delta ,M_{1},M_{2}}\left( \mathbb{H}^{n}\right) }\lesssim
\left\Vert f\right\Vert _{\mathsf{M}_{F}^{3M+\delta
+M_{2},M_{1},2M_{2}+4}\left( \mathbb{H}^{n}\right) }.
\end{equation*}
\end{lemma}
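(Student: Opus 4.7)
My plan is to prove the two inclusions separately. The second (easier) inclusion reduces to binomial expansions applied to the projection formula $f=\pi F$; the first (harder) inclusion requires an explicit lift of $f$ back to an element of $\mathcal{M}_{product}$, which is why the parameters on the $\mathsf{M}_F$ side must be inflated.

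For the easier direction $\mathcal{M}_{flag}^{M+\delta,M_{1},M_{2}}\subset \mathsf{M}_F^{M+\delta,M_{1},M_{2}}$, fix any $F\in \mathcal{M}_{product}^{M+\delta,M_{1},M_{2}}$ with $\pi F=f$. The substitution $u'=u-v$ together with the binomial expansion of $(u'+v)^{\beta}$ gives
\[
\int_{\mathbb{H}^{n}} z^{\alpha}u^{\beta}f(z,u)\,dz\,du=\sum_{\delta=0}^{\beta}\binom{\beta}{\delta}\int_{\mathbb{H}^{n}\times\mathbb{R}} z^{\alpha}(u')^{\delta}v^{\beta-\delta}F(z,u',v)\,dz\,du'\,dv,
\]
and each summand vanishes by either the $\mathbb{H}^{n}$-moment condition on $F$ (available when $|\alpha|+2\delta\le M_{1}$) or the $\mathbb{R}$-moment condition on $F$ (available when $2(\beta-\delta)\le M_{1}$). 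A short integer check shows that, for $|\alpha|\le M_{1}$ and $|\alpha|+2\beta\le 2M_{1}+2$, every integer $\delta\in\{0,\dots,\beta\}$ falls into at least one of these two regimes. The flag moment $\int u^{\gamma}f\,du=0$ for $\gamma\le M_{1}$ is obtained by the same expansion, this time using only the $\mathbb{R}$-moment of $F$ applied pointwise in $(z,u')$. The differential inequalities for $f$ follow by differentiating under the integral: the $v$-decay of $F$ makes the integrals absolutely convergent, and the $(z,u)$-decay and Hölder estimates (\ref{DI1})--(\ref{DI4}) on $F$ transfer directly, yielding the norm bound $\Vert f\Vert_{\mathsf{M}_F^{M+\delta,M_{1},M_{2}}}\lesssim \Vert f\Vert_{\mathcal{M}_{flag}^{M+\delta,M_{1},M_{2}}}$ after infimizing over the representatives.

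For the harder direction $\mathsf{M}_F^{3M+\delta+M_{2},M_{1},2M_{2}+4}\subset \mathcal{M}_{flag}^{M+\delta,M_{1},M_{2}}$, I construct an explicit lift. Fix $\Phi\in\mathcal{S}(\mathbb{R})$ with $\int\Phi=1$ and vanishing moments $\int v^{k}\Phi(v)\,dv=0$ for $1\le k\le M_{1}$, and start from the ansatz $F_{0}(z,u,v):=\Phi(v)f(z,u+v)$, so that $\pi F_{0}=f$ at once. The $(z,u)$-moment condition on $F_{0}$ is verified by another binomial expansion and the enhanced moment conditions on $f$ inherited from $\mathsf{M}_F$ (valid for $|\alpha|\le M_{1}$ and $|\alpha|+2\beta\le 2M_{1}+2$). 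The product differential inequalities for $F_{0}$ follow by distributing derivatives over $\Phi(v)f(z,u+v)$: the Schwartz decay of $\Phi$ supplies the needed decay in $v$, while the inflated decay parameter $3M+\delta+M_{2}$ on $f$, together with the splitting $(1+|z|^{2}+|u+v|)\lesssim (1+|z|^{2}+|u|)(1+|v|)$, produces a factor $(1+|z|^{2}+|u|)^{-(Q+M+|\alpha|+2\beta+\delta)/2}$ and a polynomial factor in $|v|$ absorbed by $\Phi$. The inflated regularity index $2M_{2}+4$ is what delivers the cross-Hölder bound (\ref{DI4}) needed for the full product structure.

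The main obstacle is verifying the $v$-moment condition $\int v^{\gamma}F\,dv=0$ for $2\gamma\le M_{1}$, which is \emph{not} automatic for $F_{0}$: a Taylor expansion of $f(z,u+v)$ in $v$ kills the lower-order contributions via the vanishing moments of $\Phi$, but leaves a nonzero remainder. I therefore correct the lift to
\[
F(z,u,v):=F_{0}(z,u,v)-\sum_{2k\le M_{1}} v^{k}\Phi(v)\,c_{k}(z,u),
\]
choosing the coefficients $c_{k}(z,u)$ inductively in $k$ so that $\int v^{\gamma}F\,dv=0$ for every admissible $\gamma$. Because $\pi(v^{k}\Phi(v)\,c_{k}(z,u))=c_{k}(z,\cdot)\ast_{u}(v^{k}\Phi)(u)$ and $v^{k}\Phi$ has zero integral for $k\ge 1$, the identity $\pi F=f$ is preserved by the correction. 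Showing that the corrected $F$ still satisfies the product differential inequalities (\ref{DI1})--(\ref{DI4}) at the target parameters $(M+\delta,M_{1},M_{2})$ is the most technical step, and it is precisely at this point that the inflated decay $3M+\delta+M_{2}$ and the inflated regularity $2M_{2}+4$ on $f$ are consumed: the extra decay absorbs the $v$-weights introduced by the corrections, and the extra smoothness absorbs the cross-derivatives needed to pass from a single Hölder estimate on $f$ to the mixed Hölder estimates on $F$.
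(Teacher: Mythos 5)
Your proof of the easier containment $\mathcal{M}_{flag}^{M+\delta,M_1,M_2}\subset\mathsf{M}_F^{M+\delta,M_1,M_2}$ is essentially the same as the paper's and is fine.

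The harder containment, however, has a genuine gap in the correction step. You define
$F=F_0-\sum_{1\le k,\,2k\le M_1}v^k\Phi(v)\,c_k(z,u)$ with $F_0(z,u,v)=\Phi(v)f(z,u+v)$, and you justify $\pi F=f$ by saying that $\pi\bigl(v^k\Phi(v)\,c_k(z,u)\bigr)=c_k(z,\cdot)\ast_u(v^k\Phi)(u)$ vanishes because $\int v^k\Phi\,dv=0$ for $k\ge1$. That inference is false: a convolution with a mean-zero kernel is not the zero function unless the other factor is constant in $u$, and here $c_k$ necessarily depends on $u$ (it is built from $f$). So the corrected $F$ no longer projects to $f$.

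Independently, the correction cannot cancel the bad $v$-moments even in principle. For $2\gamma\le M_1$ one has
\begin{equation*}
\int_{\mathbb{R}}v^\gamma F(z,u,v)\,dv=\int_{\mathbb{R}}v^\gamma\Phi(v)f(z,u+v)\,dv-\sum_{1\le k,\,2k\le M_1}c_k(z,u)\int_{\mathbb{R}}v^{\gamma+k}\Phi(v)\,dv,
\end{equation*}
and because $1\le\gamma+k\le M_1$ throughout that sum, every integral $\int v^{\gamma+k}\Phi\,dv$ vanishes by your own choice of $\Phi$. The correction therefore contributes nothing to the $v$-moment integrals, and the troublesome term $\int v^\gamma\Phi(v)f(z,u+v)\,dv$ (which already fails to vanish at $\gamma=0$, where it is a smoothed copy of $f$) survives untouched. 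There is a structural tension here: $\pi F=f$ forces a nontrivial integral of $F$ along the skewed lines $v\mapsto(z,u-v,v)$, while the $\gamma=0$ moment condition forces $\int F(z,u,v)\,dv=0$ on the vertical lines, and a single ansatz of the form $\Phi(v)f(z,u+v)$ plus a finite polynomial-in-$v$ correction cannot reconcile the two. The paper avoids this by not attempting a single closed-form lift: it decomposes $f=\sum_m f_m$ over Heisenberg annuli $1+|z|^2+|u|\approx 2^m$, then decomposes each $f_m$ in the $u$-variable via a one-dimensional reproducing formula $\delta_0=\chi_{-m}\ast\chi_{-m}+\sum_{k>-m}\psi_k\ast\psi_k$; the high-frequency pieces $\psi_k$ carry built-in $v$-moments, and the low-frequency piece $\chi_{-m}$ (which has no vanishing moments) is handled by an integration by parts in $v$, replacing $(f_m\ast_2\chi_{-m})(z,u)\chi_{-m}(v)$ by $I^\ell(f_m\ast_2\chi_{-m})(z,u)\,D^\ell\chi_{-m}(v)$ with $\ell=\tfrac{M_1}{2}+1$, so that $D^\ell\chi_{-m}$ has the required $v$-moments while $\pi$ is unchanged. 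If you wish to salvage an explicit-lift approach, you would need a mechanism like this that produces the $v$-moments without disturbing the projection.
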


\begin{remark}
The importance of the `projected' flag molecular space $\mathcal{M}%
_{flag}^{M+\delta ,M_{1},M_{2}}\left( \mathbb{H}^{n}\right) $ lies in the
existence of a \emph{wavelet} Calder\'{o}n reproducing formula for this
space of test functions - see Theorem \ref{discretethm} below. We do not
know if such a reproducing formula holds for the `moment' flag space $%
\mathsf{M}_{F}^{M+\delta ,M_{1},M_{2}}\left( \mathbb{H}^{n}\right) $, but
the embeddings in Lemma \ref{containments} will identify the distributions
in the dual space $\mathcal{M}_{flag}^{M+\delta ,M_{1},M_{2}}\left( \mathbb{H%
}^{n}\right) ^{\prime }$ as being `roughly' those in a dual space $\mathsf{M}%
_{F}^{M^{\prime }+\delta ,M_{1}^{\prime },M_{2}^{\prime }}\left( \mathbb{H}%
^{n}\right) ^{\prime }$.
\end{remark}

\begin{remark}
The integer $M_{1}$ that controls the number of moments in $\mathsf{M}%
_{F}^{M+\delta ,M_{1},M_{2}}\left( \mathbb{H}^{n}\right) $ remains the same
in both the smaller space $\mathsf{M}_{F}^{3M+\delta
+M_{2},M_{1},2M_{2}+4}\left( \mathbb{H}^{n}\right) $ and the larger space $%
\mathsf{M}_{F}^{M+\delta ,M_{1},M_{2}}\left( \mathbb{H}^{n}\right) $.
However, we lose both derivatives and decay in passing from the smaller to
the larger space.
\end{remark}

While we cannot say that $H_{flag}^{p}\left( \mathbb{H}^{n}\right) $ is a 
\emph{subspace} of the more familiar one-parameter Hardy space $H^{p}\left( 
\mathbb{H}^{n}\right) $, we can show that the quotient space 
\begin{equation*}
Q_{flag}^{p}\left( \mathbb{H}^{n}\right) \equiv H_{flag}^{p}\left( \mathbb{H}%
^{n}\right) \ /\ \mathsf{M}_{F}^{M^{\prime }+\delta ,M_{1}^{\prime
},M_{2}^{\prime }}\left( \mathbb{H}^{n}\right) ^{\bot }
\end{equation*}%
of $H_{flag}^{p}\left( \mathbb{H}^{n}\right) $ can be identified with a
closed subspace of the corresponding quotient space%
\begin{equation*}
Q^{p}\left( \mathbb{H}^{n}\right) \equiv H^{p}\left( \mathbb{H}^{n}\right) \
/\ \mathsf{M}_{F}^{M^{\prime }+\delta ,M_{1}^{\prime },M_{2}^{\prime
}}\left( \mathbb{H}^{n}\right) ^{\bot }
\end{equation*}%
of $H^{p}\left( \mathbb{H}^{n}\right) $, thus giving a sense in which the
distributions we use to define $H_{flag}^{p}\left( \mathbb{H}^{n}\right) $
are `roughly' the same as those used to define $H^{p}\left( \mathbb{H}%
^{n}\right) $. See Section 10 below for details.

\section{The wavelet Calder\'{o}n reproducing formula}

We can now state our \emph{wavelet} Calder\'{o}n reproducing formula for the
flag structure in terms of the projected product test spaces 
\begin{equation*}
\mathcal{M}_{flag}^{M+\delta }\left( \mathbb{H}^{n}\right) \equiv \mathcal{M}%
_{flag}^{M+\delta ,M.M}\left( \mathbb{H}^{n}\right) ,
\end{equation*}%
defined by projecting the product test spaces%
\begin{equation*}
\mathcal{M}_{product}^{M+\delta }\left( \mathbb{H}^{n}\times \mathbb{R}%
\right) \equiv \mathcal{M}_{product}^{M+\delta ,M,M}\left( \mathbb{H}%
^{n}\times \mathbb{R}\right) .
\end{equation*}%
We remind the reader that Euclidean versions of such reproducing formulas
were obtained by Frazier and Jawerth \cite{FJ} using the Fourier transform
together with the very special property that $\mathbb{R}^{n}$ is \emph{tiled}
by the compact abelian torus $\mathbb{T}^{n}$ and its discrete dual group,
the lattice $\mathbb{Z}^{n}$.

It is convenient to introduce some new notation for the dyadic rectangles
defined in Notation \ref{notaterect}. Given $0<\alpha <1$ and a positive
integer $N$, we write%
\begin{eqnarray*}
\mathsf{R}\left( j,k\right) &\equiv &\mathcal{R}\left( 2^{-\alpha \left(
j+N\right) }\times 2^{-2\alpha \left( k+N\right) }\right) , \\
\mathsf{Q}\left( j\right) &\equiv &\mathcal{R}\left( 2^{-\alpha \left(
j+N\right) }\times 2^{-2\alpha \left( j+N\right) }\right) .
\end{eqnarray*}%
Now for $\mathcal{Q}\in \mathsf{Q}\left( j\right) $ let $\left( z_{\mathcal{Q%
}},u_{\mathcal{Q}}\right) $ be any \emph{fixed} point in the cube $\mathcal{Q%
}$; and for $\mathcal{R}\in \mathsf{R}\left( j,k\right) $ with $k<j$ let $%
\left( z_{\mathcal{R}},u_{\mathcal{R}}\right) $ be any \emph{fixed} point in
the rectangle $\mathcal{R}$. Let us write the collection of \emph{all}
dyadic cubes as 
\begin{equation*}
\mathsf{Q}\equiv \bigcup_{j\in \mathbb{Z}}\mathsf{Q}\left( j\right) ,
\end{equation*}%
and the collection of \emph{all strictly vertical} dyadic rectangles as%
\begin{equation*}
\mathsf{R}_{vert}\equiv \bigcup_{j>k}\mathsf{R}\left( j,k\right) .
\end{equation*}%
We now set%
\begin{eqnarray*}
\psi _{\mathcal{Q}}^{\prime } &=&\psi _{j}^{\left( 1\right) }\text{ if }%
\mathcal{Q}\in \mathsf{Q}\left( j\right) , \\
\psi _{\mathcal{R}}^{\prime } &=&\psi _{j,k}=\psi _{j}^{\left( 1\right)
}\ast _{2}\psi _{k}^{\left( 2\right) }\text{ if }\mathcal{R}\in \mathsf{R}%
\left( j,k\right) ,
\end{eqnarray*}%
where $\psi _{j,k}$ are as in (\ref{defpsijk}). Given an appropriate
distribution $f$ on $\mathbb{H}^{n}$, we define its \emph{wavelet
coefficients} $f_{\mathcal{Q}}$ and $f_{\mathcal{R}}$ by%
\begin{eqnarray*}
f_{\mathcal{Q}} &=&\psi _{\mathcal{Q}}^{\prime }\ast f\left( z_{\mathcal{Q}%
},u_{\mathcal{Q}}\right) \text{ if }\mathcal{Q}\in \mathsf{Q}, \\
f_{\mathcal{R}} &=&\psi _{\mathcal{R}}^{\prime }\ast f\left( z_{\mathcal{R}%
},u_{\mathcal{R}}\right) \text{ if }\mathcal{R}\in \mathsf{R}_{vert},\text{
i.e. when }j>k.
\end{eqnarray*}%
Here is the \emph{wavelet} Calder\'{o}n reproducing formula.

\begin{theorem}
\label{discretethm}Suppose notation is as above. Then there are associated
functions $\widetilde{\psi }_{\mathcal{Q}},\widetilde{\psi }_{\mathcal{R}%
}\in \mathcal{M}_{flag}^{M+\delta }\left( \mathbb{H}^{n}\right) $ for $%
\mathcal{Q}\in \mathsf{Q}$ and $\mathcal{R}\in \mathsf{R}_{vert}$ satisfying 
\begin{eqnarray*}
\left\Vert \widetilde{\psi }_{\mathcal{Q}}\right\Vert _{\mathcal{M}%
_{flag}^{M+\delta }\left( \mathbb{H}^{n}\right) } &\lesssim &\left\Vert \psi
_{\mathcal{Q}}^{\prime }\right\Vert _{\mathcal{M}_{flag}^{M+\delta }\left( 
\mathbb{H}^{n}\right) },\ \ \ \ \ \mathcal{Q}\in \mathsf{Q}, \\
\left\Vert \widetilde{\psi }_{\mathcal{R}}\right\Vert _{\mathcal{M}%
_{flag}^{M+\delta }\left( \mathbb{H}^{n}\right) } &\lesssim &\left\Vert \psi
_{\mathcal{R}}^{\prime }\right\Vert _{\mathcal{M}_{flag}^{M+\delta }\left( 
\mathbb{H}^{n}\right) },\ \ \ \ \ \mathcal{R}\in \mathsf{R}_{vert},
\end{eqnarray*}%
and 
\begin{equation}
f\left( z,u\right) =\sum_{\mathcal{Q}\in \mathsf{Q}}f_{\mathcal{Q}}\ 
\widetilde{\psi }_{\mathcal{Q}}\left( z,u\right) +\sum_{\mathcal{R}\in 
\mathsf{R}_{vert}}f_{\mathcal{R}}\ \widetilde{\psi }_{\mathcal{R}}\left(
z,u\right) ,\ \ \ \ \ \left( z,u\right) \in \mathbb{H}^{n},
\label{defdiscreteCRF}
\end{equation}%
where the series in (\ref{defdiscreteCRF}) converges in three spaces:

\begin{enumerate}
\item in $L^{p}\left( \mathbb{H}^{n}\right) $ for $1<p<\infty $,

\item in the Banach space $\mathcal{M}_{flag}^{M^{\prime }+\delta }(\mathbb{H%
}^{n})$ for $M^{\prime }$ large enough,

\item and in the corresponding dual space $\mathcal{M}_{flag}^{M^{\prime
}+\delta }(\mathbb{H}^{n})^{\prime }$ for $M^{\prime }$ large enough.
\end{enumerate}
\end{theorem}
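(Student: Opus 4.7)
The plan is to start from the continuous Calderón reproducing formula (\ref{CRF}) and discretize in two stages. First, partition the scale parameters by setting $s\sim 2^{-\alpha(j+N)}$ and $t\sim 2^{-2\alpha(k+N)}$ with $\alpha$ small and $N$ large to be fixed later, so that
\begin{equation*}
f(z,u) = c_{\alpha,N}\sum_{j,k\in\mathbb{Z}} \psi_{j,k}\ast\psi_{j,k}\ast f(z,u) + \text{(scale error)},
\end{equation*}
where the scale error is controlled using the continuity of $(s,t)\mapsto \psi_{s,t}$ in the molecular norm. Second, for each fixed $(j,k)$, partition $\mathbb{H}^{n}$ via the flag dyadic grid of Theorem \ref{Heisenberg grid}, and replace the inner convolution $\psi_{j,k}\ast f$ by its value at the fixed sample point $(z_{\mathcal{Q}},u_{\mathcal{Q}})$ or $(z_{\mathcal{R}},u_{\mathcal{R}})$. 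Here the caution following Notation \ref{notaterect} dictates the dichotomy: when $k\geq j$ the kernel $\psi_{j,k}$ reduces effectively to the Heisenberg-cube function $\psi_{j}^{(1)}$ (via the vanishing moments of $\psi^{(2)}$ and the smoothness of $\psi^{(1)}$), so the natural cells are the cubes $\mathcal{Q}\in\mathsf{Q}(j)$; when $j>k$, the kernel genuinely spreads over a vertical rectangle and the natural cells are $\mathcal{R}\in\mathsf{R}(j,k)\subset\mathsf{R}_{vert}$. This produces precisely the two sums in (\ref{defdiscreteCRF}).

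The resulting discretized operator $T_{N}f$ is related to $f$ by $f=T_{N}f+R_{N}f$, and the main analytic step is to show that $R_{N}\equiv I-T_{N}$ is a bounded operator with norm at most $C\,2^{-c\alpha N}$ on each of $L^{2}(\mathbb{H}^{n})$ and the test space $\mathcal{M}_{flag}^{M+\delta}(\mathbb{H}^{n})$. Taking $N$ large makes this norm less than one, so the Neumann series $(I-R_{N})^{-1}$ converges as a bounded operator and
\begin{equation*}
f=(I-R_{N})^{-1}T_{N}f=\sum_{\mathcal{Q}\in\mathsf{Q}}f_{\mathcal{Q}}\,(I-R_{N})^{-1}\psi'_{\mathcal{Q}}+\sum_{\mathcal{R}\in\mathsf{R}_{vert}}f_{\mathcal{R}}\,(I-R_{N})^{-1}\psi'_{\mathcal{R}}.
\end{equation*}
Defining $\widetilde{\psi}_{\mathcal{Q}}\equiv(I-R_{N})^{-1}\psi'_{\mathcal{Q}}$ and $\widetilde{\psi}_{\mathcal{R}}\equiv(I-R_{N})^{-1}\psi'_{\mathcal{R}}$ (centered appropriately at the sample points) yields the wavelet formula (\ref{defdiscreteCRF}), and the norm bounds on $\widetilde{\psi}_{\mathcal{Q}},\widetilde{\psi}_{\mathcal{R}}$ are immediate from the uniform boundedness of $(I-R_{N})^{-1}$ on $\mathcal{M}_{flag}^{M+\delta}(\mathbb{H}^{n})$.

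The hardest step is the almost-orthogonality estimate that establishes smallness of $\|R_{N}\|$ on the projected flag molecular norm. My approach is to exploit the intertwining Lemma \ref{intertwine}: since $\psi_{j,k}=\pi(\Psi_{j,k})$ with $\Psi_{j,k}=\psi_{j}^{(1)}\otimes\psi_{k}^{(2)}$ a genuine product function, $R_{N}$ lifts to a product operator $\widetilde{R}_{N}$ on $\mathbb{H}^{n}\times\mathbb{R}$, where the familiar two-parameter Cotlar-type almost-orthogonality for product Calderón-Zygmund operators gives independent geometric decay in the $\mathbb{H}^{n}$ and $\mathbb{R}$ factors, the decay in each factor coming from the interplay of vanishing moments of $\psi^{(1)}$ (resp.\ $\psi^{(2)}$) with the smoothness of a product test function in $\mathcal{M}_{product}^{M+\delta}$. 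Projecting back by $\pi$ and taking the infimum over preimages transfers the bound to the flag molecular norm. Once the operator estimate on $R_{N}$ is in hand, convergence of the wavelet series in $L^{p}$ for $1<p<\infty$ follows from the Plancherel-Pólya inequality of Theorem \ref{P-P} combined with Theorem \ref{p>1}; convergence in $\mathcal{M}_{flag}^{M'+\delta}(\mathbb{H}^{n})$ for $M'$ sufficiently large follows from the geometric-series control on partial sums (the slack between $M'$ and $M$ absorbs the asymmetric loss coming from the half-product moment structure (\ref{momcond}) of $\psi$, where cancellation in $u$ is available but cancellation in $z$ is not); and convergence in the dual $\mathcal{M}_{flag}^{M'+\delta}(\mathbb{H}^{n})^{\prime}$ then follows by duality. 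The subtlety throughout is matching the asymmetric cancellation of $\psi$ with the asymmetric geometry of $\mathsf{Q}$ versus $\mathsf{R}_{vert}$, which is exactly why the two-sum structure of (\ref{defdiscreteCRF}) is unavoidable.
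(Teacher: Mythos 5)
Your proposal is in essence the paper's own proof: discretize the continuous reproducing formula first in scale and then in space, split the sum according to $j\leq k$ (dyadic cubes $\mathcal{Q}$) versus $j>k$ (vertical dyadic rectangles $\mathcal{R}$), lift the resulting error operators to $\mathbb{H}^{n}\times\mathbb{R}$ via the projection $\pi$ and the intertwining of Lemma \ref{intertwine}, bound them by the product $T1$-type molecular estimate of Theorem \ref{product molecular proof}, and invert by Neumann series to obtain $\widetilde{\psi}_{\mathcal{Q}},\widetilde{\psi}_{\mathcal{R}}$. One bookkeeping slip: the scale error from freezing $(s,t)$ at dyadic values is controlled only by taking $\alpha$ small and does not decay in $N$, so the combined error operator has norm $\lesssim\alpha+2^{-N}$ rather than $\lesssim 2^{-c\alpha N}$; this is why the paper keeps $R_{\alpha}$, $R_{\alpha,N}^{(1)}$, $R_{\alpha,N}^{(2)}$ separate and chooses $\alpha$ small first, then $N$ large.
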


\begin{remark}
Note that only \emph{half} of the collection of dyadic rectangles, namely
the vertical ones $\mathsf{R}_{vert}$, are used in the wavelet Calder\'{o}n
reproducing formula. This is a reflection of the implicit product structure
inherent in the Heisenberg group $\mathbb{H}^{n}$.
\end{remark}

\subsection{Plancherel-P\^{o}lya inequalities and flag Hardy spaces}

The wavelet Calder\'{o}n reproducing formula (\ref{defdiscreteCRF}) yields
the following Plancherel-P\^{o}lya type inequalities (cf \cite{PlPo}, \cite%
{PlPo2}). We use the notation $A\approx B$ to indicate that two quantities $%
A $ and $B$ are comparable.

\begin{theorem}
\label{P-P}Suppose $\psi ^{(1)},\phi ^{(1)}\in \mathcal{S}(\mathbb{C}^{n})$
and $\psi ^{(2)},\phi ^{(2)}\in \mathcal{S}\mathbb{(R)}$ and let 
\begin{equation*}
\psi (z,u)=\int\limits_{\mathbb{R}}\psi ^{(1)}(z,u-v)\psi ^{(2)}(v)dv,
\end{equation*}%
\begin{equation*}
\phi (z,u)=\int\limits_{\mathbb{R}}\phi ^{(1)}(z,u-v)\psi ^{(2)}(v)dv,
\end{equation*}%
be two component functions that each satisfy the conditions in Subsubsection %
\ref{component}. Then with $\mathsf{Q}$ , $\mathsf{R}_{vert}$, $\psi _{%
\mathcal{Q}}^{\prime }$ ands $\psi _{\mathcal{R}}^{\prime }$ as above, and
similarly for $\phi $, and for $f\in \mathcal{M}_{flag}^{M+\delta }\left( 
\mathbb{H}^{n}\right) ^{\prime }$, $0<p<\infty $, and $M$ chosen large
enough depending on $n$ and $p$, 
\begin{equation*}
\left\Vert \left\{ \sum_{\mathcal{Q}\in \mathsf{Q}}\sup_{\left( z^{\prime
},u^{\prime }\right) \in \mathcal{Q}}\left\vert \psi _{\mathcal{Q}}^{\prime
}\ast f\left( z^{\prime },u^{\prime }\right) \right\vert ^{2}\chi _{\mathcal{%
Q}}\left( z,u\right) +\sum_{\mathcal{R}\in \mathsf{R}_{vert}}\sup_{\left(
z^{\prime },u^{\prime }\right) \in \mathcal{R}}\left\vert \psi _{\mathcal{R}%
}^{\prime }\ast f\left( z^{\prime },u^{\prime }\right) \right\vert ^{2}\chi
_{\mathcal{R}}\left( z,u\right) \right\} ^{\frac{1}{2}}\right\Vert
_{L^{p}\left( \mathbb{H}^{n}\right) }
\end{equation*}%
\begin{equation*}
\approx \left\Vert \left\{ \sum_{\mathcal{Q}\in \mathsf{Q}}\inf_{\left(
z^{\prime },u^{\prime }\right) \in \mathcal{Q}}\left\vert \phi _{\mathcal{Q}%
}^{\prime }\ast f\left( z^{\prime },u^{\prime }\right) \right\vert ^{2}\chi
_{\mathcal{Q}}\left( z,u\right) +\sum_{\mathcal{R}\in \mathsf{R}%
_{vert}}\inf_{\left( z^{\prime },u^{\prime }\right) \in \mathcal{R}%
}\left\vert \phi _{\mathcal{R}}^{\prime }\ast f\left( z^{\prime },u^{\prime
}\right) \right\vert ^{2}\chi _{\mathcal{R}}\left( z,u\right) \right\} ^{%
\frac{1}{2}}\right\Vert _{L^{p}\left( \mathbb{H}^{n}\right) }.
\end{equation*}
\end{theorem}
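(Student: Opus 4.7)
\textbf{Proof plan for Theorem \ref{P-P}.} My plan is to use the wavelet Calder\'{o}n reproducing formula of Theorem \ref{discretethm} applied to the $\phi$-system in order to convert the comparison between the two square functions into an almost orthogonality estimate, after which a vector-valued Fefferman-Stein maximal inequality finishes the argument. First, choose in the definition of $f_{\mathcal{Q}}^{\phi}$ and $f_{\mathcal{R}}^{\phi}$ the base points $(z_{\mathcal{Q}},u_{\mathcal{Q}})$ and $(z_{\mathcal{R}},u_{\mathcal{R}})$ to be (arbitrarily close to) minimizers of $|\phi_{\mathcal{Q}}^{\prime }\ast f|$ on $\mathcal{Q}$ and $|\phi_{\mathcal{R}}^{\prime }\ast f|$ on $\mathcal{R}$, respectively; this is legitimate because the theorem as stated asserts independence of the chosen base points. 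By Theorem \ref{discretethm}, $f=\sum_{\mathcal{Q}}f_{\mathcal{Q}}^{\phi}\widetilde{\phi}_{\mathcal{Q}}+\sum_{\mathcal{R}}f_{\mathcal{R}}^{\phi}\widetilde{\phi}_{\mathcal{R}}$ with convergence in $\mathcal{M}_{flag}^{M^{\prime}+\delta}(\mathbb{H}^{n})^{\prime}$, so for any fixed $\mathcal{Q}^{\prime}\in\mathsf{Q}$, $\mathcal{R}^{\prime}\in\mathsf{R}_{vert}$ and $(z^{\prime},u^{\prime})\in\mathbb{H}^{n}$,
\begin{equation*}
\psi_{\mathcal{Q}^{\prime}}^{\prime}\ast f(z^{\prime},u^{\prime})=\sum_{\mathcal{Q}}f_{\mathcal{Q}}^{\phi}\,(\psi_{\mathcal{Q}^{\prime}}^{\prime}\ast\widetilde{\phi}_{\mathcal{Q}})(z^{\prime},u^{\prime})+\sum_{\mathcal{R}}f_{\mathcal{R}}^{\phi}\,(\psi_{\mathcal{Q}^{\prime}}^{\prime}\ast\widetilde{\phi}_{\mathcal{R}})(z^{\prime},u^{\prime}),
\end{equation*}
and similarly for $\psi_{\mathcal{R}^{\prime}}^{\prime}\ast f$.

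Next I must establish four almost orthogonality estimates, one for each of the four possible pairings cube/cube, cube/vertical-rectangle, vertical-rectangle/cube, vertical-rectangle/vertical-rectangle. Writing scales $\mathcal{Q}\in\mathsf{Q}(j)$, $\mathcal{Q}^{\prime}\in\mathsf{Q}(j^{\prime})$, and analogously for rectangles, the required decay has the schematic form
\begin{equation*}
|\psi_{\mathcal{Q}^{\prime}}^{\prime}\ast\widetilde{\phi}_{\mathcal{Q}}(z^{\prime},u^{\prime})|\lesssim 2^{-|j-j^{\prime}|\varepsilon}\,K_{j\wedge j^{\prime}}\big((z^{\prime},u^{\prime})\circ(z_{\mathcal{Q}},u_{\mathcal{Q}})^{-1}\big),
\end{equation*}
with $K$ a Heisenberg kernel with $L^{1}$-bounded rescalings, and a two-parameter flag analogue for the other three interactions. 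These estimates rest on transferring cancellation from the member of each pair with the more moments onto the other member, combined with the smoothness in $\mathcal{M}_{flag}^{M+\delta}(\mathbb{H}^{n})$ provided by Theorem \ref{discretethm} for the associated molecules $\widetilde{\phi}_{\mathcal{Q}},\widetilde{\phi}_{\mathcal{R}}$. With these bounds in hand, a standard Young-type summation argument (first in $\mathcal{Q}$ at fixed $\mathcal{Q}^{\prime}$, then in the scale differences) yields
\begin{equation*}
\sup_{(z^{\prime},u^{\prime})\in\mathcal{Q}^{\prime}}|\psi_{\mathcal{Q}^{\prime}}^{\prime}\ast f(z^{\prime},u^{\prime})|\lesssim\mathcal{A}\Big(\{\inf_{\mathcal{Q}}|\phi_{\mathcal{Q}}^{\prime}\ast f|\chi_{\mathcal{Q}}\},\{\inf_{\mathcal{R}}|\phi_{\mathcal{R}}^{\prime}\ast f|\chi_{\mathcal{R}}\}\Big)(z,u),\quad (z,u)\in\mathcal{Q}^{\prime},
\end{equation*}
where $\mathcal{A}$ is (majorized by) a composition of the Hardy-Littlewood maximal operator on $\mathbb{H}^{n}$ and the strong maximal operator adapted to the Heisenberg rectangles constructed in Section \ref{s Heisenberg grid}. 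Squaring, summing in $\mathcal{Q}^{\prime}$ and $\mathcal{R}^{\prime}$, and invoking the Fefferman-Stein vector-valued maximal inequality (replacing the scalar maximal by its $L^{r}$-variant with $r<p$ in the $0<p\le1$ range) controls the left-hand side of the claimed equivalence by the right-hand side. The reverse inequality follows by swapping the roles of $\psi$ and $\phi$.

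The main obstacle is the almost orthogonality in the presence of only \emph{flag} moments: $\widetilde{\phi}_{\mathcal{R}}$ with $\mathcal{R}\in\mathsf{R}_{vert}$ satisfies only (\ref{momcond}) and its product-type partial moments in the $(z,u)$-variable need not vanish. To handle this I expect to lift the molecules to $\mathbb{H}^{n}\times\mathbb{R}$ via the projected flag molecular space, use Lemma \ref{intertwine} to commute the Heisenberg convolution through $\pi$, and exploit the genuine product moment conditions (\ref{Mo1}) at the lifted level to obtain the two-parameter decay; projecting back then yields the required flag almost orthogonality. A secondary technical point is that for $0<p\le1$ the Fefferman-Stein inequality must be applied to maximal operators of order $r<p$, which forces $M$ (and hence $M^{\prime}$ in the moment counts of Theorem \ref{discretethm}) to be chosen sufficiently large depending on $n$ and $p$, exactly as stated in the hypothesis.
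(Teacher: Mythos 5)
Your proposal is correct and follows essentially the same route as the paper's proof: expand $f$ via the wavelet Calder\'on reproducing formula in the $\phi$-system, use almost orthogonality estimates (obtained, as you anticipate, by lifting to $\mathbb{H}^n\times\mathbb{R}$ and projecting back, which is precisely the content of Lemmas \ref{almostorthoproduct}, \ref{project} and \ref{almostortho}), then dominate by maximal functions as in Lemma \ref{7}, apply the Fefferman--Stein vector-valued inequality with $r<p$, and conclude by symmetry. The only cosmetic difference is that the paper organizes the orthogonality bounds by the dichotomy $j\wedge j'\gtrless k\wedge k'$ rather than by the four cube/rectangle pairings, and it dominates the Hardy--Littlewood term directly by $M_S$ instead of presenting a literal composition, but these are equivalent presentations of the same argument.
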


The Plancherel-P\^{o}lya type inequalities in Theorem \ref{P-P} will prove
useful in establishing properties of the \emph{wavelet} Littlewood-Paley $g$%
-function 
\begin{equation*}
g_{flag}(f)(z,u)=\left\{ \sum_{\mathcal{Q}\in \mathsf{Q}}\left\vert \psi _{%
\mathcal{Q}}^{\prime }\ast f\left( z_{\mathcal{Q}},u_{\mathcal{Q}}\right)
\right\vert ^{2}\chi _{\mathcal{Q}}\left( z,u\right) +\sum_{\mathcal{R}\in 
\mathsf{R}_{vert}}\left\vert \psi _{\mathcal{R}}^{\prime }\ast f\left( z_{%
\mathcal{R}},u_{\mathcal{R}}\right) \right\vert ^{2}\chi _{\mathcal{R}%
}\left( z,u\right) \right\} ^{{\frac{{1}}{{2}}}},
\end{equation*}%
where we are using the notation of Theorem \ref{discretethm} and Theorem \ref%
{P-P}.

We can now give a precise definition of the \emph{flag} Hardy spaces.

\begin{definition}
\label{defflagHardy}Let $0<p<\infty $. Then for $M$ sufficiently large
depending on $n$ and $p$ we define the \emph{flag} Hardy space $%
H_{flag}^{p}\left( \mathbb{H}^{n}\right) $ on the Heisenberg group by%
\begin{equation*}
H_{flag}^{p}\left( \mathbb{H}^{n}\right) =\left\{ f\in \mathcal{M}%
_{flag}^{M+\delta }\left( \mathbb{H}^{n}\right) ^{\prime }:g_{flag}(f)\in
L^{p}\left( \mathbb{H}^{n}\right) \right\} ,
\end{equation*}%
and for $f\in H_{flag}^{p}\left( \mathbb{H}^{n}\right) $ we set 
\begin{equation}
\Vert f\Vert _{H_{flag}^{p}}=\Vert g_{flag}(f)\Vert _{p}.  \label{H^p norm}
\end{equation}
\end{definition}

\begin{remark}
We can take $M$ in Definition \ref{defflagHardy} to satisfy 
\begin{equation*}
M\geq M_{n,p}\equiv \left( 2n+2\right) \left[ \frac{2}{p}-1\right] +1.
\end{equation*}%
We have not computed the optimal value of $M_{n,p}$.
\end{remark}

It is easy to see using Theorem \ref{P-P} that the Hardy space $H_{flag}^{p}$
in Definition \ref{defflagHardy} is well defined and that the $H_{flag}^{p}$
norm of $f$ is equivalent to the $L^{p}$ norm of $g_{flag}\left( f\right) $.
By use of the Plancherel-P\^{o}lya-type inequalities, we will prove the
boundedness of flag singular integrals on $H_{flag}^{p}$ below.

\subsection{Boundedness of singular integrals and Marcinkiewicz multipliers}

Our main theorem is the $H_{flag}^{p}\rightarrow H_{flag}^{p}$ boundedness
of flag singular integrals.

\begin{theorem}
\label{flagintbounded}Suppose that $T$ is a flag singular integral with the
kernel $K(z,u)$ as in Definition \ref{defflag}. Then $T$ is bounded on $%
H_{flag}^{p}$ for $0<p\leq 1$. Namely, for all $0<p\leq 1$ there exists a
constant $C_{p,n}$ such that 
\begin{equation*}
\Vert Tf\Vert _{H_{flag}^{p}}\leq C_{p,n}\Vert f\Vert _{H_{flag}^{p}}.
\end{equation*}
\end{theorem}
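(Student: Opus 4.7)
The plan is to combine the \emph{wavelet} Calder\'{o}n reproducing formula of Theorem \ref{discretethm} with almost orthogonality estimates for flag singular integrals and the Plancherel-P\^{o}lya inequality of Theorem \ref{P-P}, reducing the $H_{flag}^{p}\to H_{flag}^{p}$ bound to a Fefferman--Stein type vector-valued maximal inequality applied at an auxiliary exponent $p/r>1$.

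First I would fix $f\in H_{flag}^{p}\cap L^{2}$ (a dense subspace) and apply (\ref{defdiscreteCRF}) to write
\begin{equation*}
f=\sum_{\mathcal{Q}\in \mathsf{Q}}f_{\mathcal{Q}}\,\widetilde{\psi }_{\mathcal{Q}}+\sum_{\mathcal{R}\in \mathsf{R}_{vert}}f_{\mathcal{R}}\,\widetilde{\psi }_{\mathcal{R}},
\end{equation*}
so that, since $T$ is bounded on $L^{2}$ (by its lifted product counterpart and Lemma \ref{intertwine}), convergence is preserved and
\begin{equation*}
Tf=\sum_{\mathcal{Q}}f_{\mathcal{Q}}\,T\widetilde{\psi }_{\mathcal{Q}}+\sum_{\mathcal{R}}f_{\mathcal{R}}\,T\widetilde{\psi }_{\mathcal{R}}.
\end{equation*}
Testing against a Littlewood-Paley component $\psi _{j^{\prime },k^{\prime }}$ gives a double sum whose coefficients are the ``matrix entries'' $\psi _{j^{\prime },k^{\prime }}\ast T\widetilde{\psi }_{\mathcal{R}}$ (and similarly for cubes $\mathcal{Q}$). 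The core estimate I would prove is an almost orthogonality lemma: for each rectangle $\mathcal{R}\in \mathsf{R}\left( j,k\right) $ and each scale pair $\left( j^{\prime },k^{\prime }\right) $,
\begin{equation*}
\left\vert \psi _{j^{\prime },k^{\prime }}\ast T\widetilde{\psi }_{\mathcal{R}}\left( z,u\right) \right\vert \lesssim 2^{-\delta \left( \left\vert j-j^{\prime }\right\vert +\left\vert k-k^{\prime }\right\vert \right) }\,\Phi _{\mathcal{R},\mathcal{R}^{\prime }}\left( z,u\right) ,
\end{equation*}
where $\Phi _{\mathcal{R},\mathcal{R}^{\prime }}$ is a spatially decaying bump adapted to the larger of the two flag rectangles, with a similar statement for cubes. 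This is what the vanishing moments of $\psi ^{(1)},\psi ^{(2)}$ and the two-sided differential/cancellation inequalities in Definition \ref{defflag} are designed to yield, exactly as in the proof of $L^{2}$ boundedness of flag kernels but carried out in the wavelet frame.

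Next I would combine this decay with the Cauchy--Schwarz inequality in the scale variables $\left( j,k\right) $, $\left( j^{\prime },k^{\prime }\right) $ so that the $2^{-\delta\left\vert j-j^{\prime }\right\vert }2^{-\delta\left\vert k-k^{\prime }\right\vert }$ factors sum to a constant, leaving the pointwise bound
\begin{equation*}
g_{flag}\left( Tf\right) \left( z,u\right) ^{2}\lesssim \sum_{\mathcal{R}\in \mathsf{R}_{vert}}\left\vert f_{\mathcal{R}}\right\vert ^{2}\Phi _{\mathcal{R}}^{\ast }\left( z,u\right) +\sum_{\mathcal{Q}\in \mathsf{Q}}\left\vert f_{\mathcal{Q}}\right\vert ^{2}\Phi _{\mathcal{Q}}^{\ast }\left( z,u\right),
\end{equation*}
where each $\Phi _{\mathcal{R}}^{\ast }$ is controlled, via the strong-type tail estimate, by the flag-adapted strong maximal operator $M_{S}^{flag}$. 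For $p\leq 1$ the standard device is to fix $r$ with $\max \left( \frac{2n+2}{2n+2+M+\delta },0\right) <r<p$, replace $\chi _{\mathcal{R}}$ by $\left( M_{S}^{flag}\chi _{\mathcal{R}}\right) ^{1/r}$ absorbing the spatial tail, and conclude the pointwise inequality
\begin{equation*}
g_{flag}\left( Tf\right) \left( z,u\right) \lesssim \left\{ M_{S}^{flag}\left( g_{flag}\left( f\right) ^{r}\right) \left( z,u\right) \right\} ^{1/r}.
\end{equation*}
Here I use Theorem \ref{P-P} to replace $f_{\mathcal{R}}=\psi _{\mathcal{R}}^{\prime }\ast f\left( z_{\mathcal{R}},u_{\mathcal{R}}\right) $ by $\inf $ over $\mathcal{R}$, so that $\left\vert f_{\mathcal{R}}\right\vert \chi _{\mathcal{R}}$ can be recovered from $g_{flag}\left( f\right) $.

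Taking $L^{p}$ norms, since $p/r>1$ and the flag strong maximal function is bounded on $L^{p/r}$ (this is where the flag dyadic grid constructed in Part 3 enters), we get
\begin{equation*}
\left\Vert Tf\right\Vert _{H_{flag}^{p}}=\left\Vert g_{flag}\left( Tf\right) \right\Vert _{p}\lesssim \left\Vert M_{S}^{flag}\left( g_{flag}\left( f\right) ^{r}\right) \right\Vert _{p/r}^{1/r}\lesssim \left\Vert g_{flag}\left( f\right) \right\Vert _{p}=\left\Vert f\right\Vert _{H_{flag}^{p}}.
\end{equation*}
The main obstacle I expect is the almost orthogonality estimate: one must track both the ``vertical'' and ``one-parameter'' regimes of a flag kernel simultaneously against wavelets $\widetilde{\psi }_{\mathcal{R}}$ whose geometry itself switches from cube-like ($k\leq j$) to genuinely vertical rectangular ($k>j$). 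The flag cancellation conditions of Definition \ref{defflag} give the decay against $\psi ^{(1)}$ moments in one direction and against $\psi ^{(2)}$ moments in the other, and fitting these together requires care, particularly at the transition $k=j$. The integer $M$ in Definition \ref{defflagHardy} must be chosen large enough to provide enough spatial decay so that $r$ may be taken strictly less than $p$; the threshold $M\geq M_{n,p}=\left( 2n+2\right) \left[ \tfrac{2}{p}-1\right] +1$ noted after Definition \ref{defflagHardy} is exactly what this almost orthogonality/maximal function argument forces.
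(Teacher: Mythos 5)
Your overall strategy---discretize via a Calder\'on reproducing formula, reduce to matrix coefficients $\psi_{j',k'}\ast T\widetilde{\psi}_{\mathcal{R}}$ with almost orthogonal decay, then control $g_{flag}(Tf)$ by strong maximal functions and close with a Fefferman--Stein vector-valued inequality at exponent $p/r>1$---is exactly the paper's strategy. (The paper discretizes with Theorem \ref{19}, the $L^{2}$ version of the reproducing formula with compactly supported $\phi_{jk}$ and the correction operator $T_{N}^{-1}$, rather than with Theorem \ref{discretethm}; for $f\in L^{2}\cap H_{flag}^{p}$ the compact support makes the almost orthogonality cleaner and the convergence of the series automatic, but your choice would also go through with more bookkeeping.)

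There is however a genuine gap at the maximal function step. The pointwise inequality
\begin{equation*}
g_{flag}\left( Tf\right) \left( z,u\right) \lesssim \left\{ M_{S}^{flag}\left( g_{flag}\left( f\right) ^{r}\right) \left( z,u\right) \right\} ^{1/r}
\end{equation*}
you write down is not true in general. What the almost orthogonality estimate together with Lemma \ref{7} actually gives is, for each fixed scale pair $(j,k)$, a bound of $|\psi_{j,k}\ast Tf(z,u)|$ by $\sum_{j',k'}2^{-|j-j'|L_{1}}2^{-|k-k'|L_{2}}\bigl\{M_{S}\bigl[\bigl(\sum_{I',J'}|\text{coeff}|\,\chi_{I'}\chi_{J'}\bigr)^{r}\bigr](z,u)\bigr\}^{1/r}$, and after Cauchy--Schwarz in the scales and summing over $(j,k)$ one obtains a bound on $g_{flag}(Tf)(z,u)^{2}$ by $\sum_{j',k'}\{M_{S}[\cdots]\}^{2/r}(z,u)$. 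This scale-indexed family of maximal quantities cannot be collapsed pointwise to a single maximal function applied to $g_{flag}(f)^{r}$, because the maximal operator and the $\ell^{2}$-sum over scales do not commute pointwise. The correct final step keeps the $(j',k')$ sum outside, takes $L^{p}$ norms, and invokes the Fefferman--Stein \emph{vector-valued} maximal inequality
\begin{equation*}
\left\Vert \Bigl(\sum_{j',k'}\bigl(M_{S}g_{j',k'}\bigr)^{2/r}\Bigr)^{r/2}\right\Vert _{p/r}\lesssim \left\Vert \Bigl(\sum_{j',k'}|g_{j',k'}|^{2/r}\Bigr)^{r/2}\right\Vert _{p/r},
\end{equation*}
with $p/r>1$ and $2/r>1$, not merely the scalar $L^{p/r}$ boundedness of $M_{S}$. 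You invoke the vector-valued inequality in your opening sentence but then fail to use it; if one tries to execute the argument as written, the scalar pointwise bound fails. Once this step is corrected (and the resulting $L^{p/r}$ norm identified with $\Vert g_{flag}(f)\Vert_{p}$ via Corollary \ref{L2 dense}), the argument coincides with the paper's; the final density of $L^{2}\cap H_{flag}^{p}$ in $H_{flag}^{p}$ from Proposition \ref{molecules dense} then extends $T$ to all of $H_{flag}^{p}$.
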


To obtain the $H_{flag}^{p}\rightarrow L^{p}$ boundedness of flag singular
integrals, we prove the following general result:

\begin{theorem}
\label{flagintHtoL}Let $0<p\leq 1$. If $T$ is a linear operator which is
bounded simultaneously on $L^{2}(\mathbb{R}^{2n+1})$ and $H_{flag}^{p}(%
\mathbb{H}^{n}),$ then $T$ can be extended to a bounded operator from $%
H_{flag}^{p}(\mathbb{H}^{n})$ to $L^{p}(\mathbb{R}^{2n+1})$.
\end{theorem}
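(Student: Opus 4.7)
The plan is to deduce the theorem from an abstract embedding
\begin{equation*}
L^2(\mathbb{R}^{2n+1}) \cap H^p_{flag}(\mathbb{H}^n) \hookrightarrow L^p(\mathbb{R}^{2n+1}), \qquad \|g\|_p \lesssim \|g\|_{H^p_{flag}}.
\end{equation*}
Granting this embedding, I would take $f$ in the dense subspace $L^2 \cap H^p_{flag}$. The two boundedness hypotheses on $T$ place $Tf$ in $L^2 \cap H^p_{flag}$ with $\|Tf\|_{H^p_{flag}} \lesssim \|f\|_{H^p_{flag}}$, and the embedding then yields $\|Tf\|_p \lesssim \|f\|_{H^p_{flag}}$. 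A standard density argument extends $T$ to all of $H^p_{flag}$.

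To prove the embedding for $g \in L^2 \cap H^p_{flag}$, I would perform a Calder\'{o}n--Zygmund decomposition organized by the level sets of $g_{flag}(g)$. Writing $\Omega_k = \{g_{flag}(g) > 2^k\}$ and letting $\widetilde{\Omega}_k$ denote a fattening via a flag strong maximal function, I would partition the indexing set $\mathsf{Q} \cup \mathsf{R}_{vert}$ of Theorem~\ref{discretethm} into classes
\begin{equation*}
\mathcal{B}_k = \bigl\{R : |R \cap \Omega_k| > \tfrac{1}{2}|R| \text{ and } |R \cap \Omega_{k+1}| \leq \tfrac{1}{2}|R|\bigr\}
\end{equation*}
and use the wavelet Calder\'{o}n reproducing formula to set
\begin{equation*}
b_k = \sum_{\mathcal{Q} \in \mathcal{B}_k \cap \mathsf{Q}} g_{\mathcal{Q}}\, \widetilde{\psi}_{\mathcal{Q}} + \sum_{\mathcal{R} \in \mathcal{B}_k \cap \mathsf{R}_{vert}} g_{\mathcal{R}}\, \widetilde{\psi}_{\mathcal{R}},
\end{equation*}
so that $g = \sum_k b_k$ converges in $L^2$. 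By Theorem~\ref{P-P} combined with the definition of $\mathcal{B}_k$,
\begin{equation*}
\|b_k\|_2^2 \lesssim \sum_{R \in \mathcal{B}_k} |R||g_R|^2 \lesssim \int_{\widetilde{\Omega}_k \setminus \Omega_{k+1}} g_{flag}(g)^2 \lesssim 2^{2k}|\Omega_k|.
\end{equation*}

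Since $p \leq 1$ I would invoke $p$-subadditivity, $\|g\|_p^p \leq \sum_k \|b_k\|_p^p$, and split each $\|b_k\|_p^p$ into an integral over a further fattening $\Omega_k^*$ of $\widetilde{\Omega}_k$ (with $|\Omega_k^*| \lesssim |\Omega_k|$) and over its complement. On the bulk set $\Omega_k^*$, H\"{o}lder's inequality together with the $L^2$ bound above gives
\begin{equation*}
\int_{\Omega_k^*} |b_k|^p \lesssim |\Omega_k^*|^{1 - p/2} \|b_k\|_2^p \lesssim 2^{kp}|\Omega_k|,
\end{equation*}
and summation over $k$ produces $\sum_k 2^{kp}|\Omega_k| \approx \|g_{flag}(g)\|_p^p = \|g\|_{H^p_{flag}}^p$, as desired.

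The main obstacle I expect is the tail term $\int_{(\Omega_k^*)^c} |b_k|^p$. Since the molecules $\widetilde{\psi}_R$ are not compactly supported, controlling this tail relies entirely on their polynomial decay encoded in the $\mathcal{M}_{flag}^{M+\delta}$ norm, combined with the quantitative containment $\bigcup_{R \in \mathcal{B}_k}R \subset \widetilde{\Omega}_k$. Provided $M$ is taken large enough relative to $p$ (cf.\ the threshold $M \geq (2n+2)[2/p-1]+1$ appearing in the remark following Definition~\ref{defflagHardy}), I expect the tails to sum by a geometric-series argument played against the flag strong maximal operator. This is the only point in the argument where the detailed flag-molecular geometry needs to be exploited beyond the square-function and Plancherel-P\^{o}lya level.
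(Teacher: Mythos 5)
Your top-level strategy is exactly the paper's: reduce Theorem~\ref{flagintHtoL} to the embedding $\|g\|_{L^p}\lesssim\|g\|_{H^p_{flag}}$ for $g\in L^2\cap H^p_{flag}$ (this is Theorem~\ref{L<H} in the paper), apply it to $Tf$, and finish by density of $L^2\cap H^p_{flag}$ in $H^p_{flag}$ (Proposition~\ref{molecules dense}). The paper's proof is literally this one paragraph, deferring the real work to Theorem~\ref{L<H}. So far so good.

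Where you diverge is inside the proof of the embedding. You perform the Chang--Fefferman stopping-time decomposition using the wavelet Calder\'on reproducing formula of Theorem~\ref{discretethm}, whose reconstructing molecules $\widetilde{\psi}_{\mathcal{Q}},\widetilde{\psi}_{\mathcal{R}}$ are \emph{not} compactly supported, and then you must split into a bulk integral over a fattening $\Omega_k^*$ and a tail integral over $(\Omega_k^*)^c$. You correctly flag the tail as ``the only point in the argument where the detailed flag-molecular geometry needs to be exploited,'' but you do not carry it out, and the needed estimate is not a one-liner: after $p$-subadditivity one arrives at a sum $\sum_{R\in\mathcal{B}_k}|g_R|^p\int_{(\Omega_k^*)^c}|\widetilde{\psi}_R|^p$ over rectangles of all scales, and reconciling this with the $\ell^2$-type information $\sum_R|R||g_R|^2\lesssim 2^{2k}|\Omega_k|$ requires genuinely more than a geometric series against the strong maximal function. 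That step is a real gap in the proposal as written. The paper avoids it entirely by introducing a second, purpose-built discrete reproducing formula (Theorem~\ref{19}): it chooses $\phi^{(1)},\phi^{(2)}$ with compact support (sacrificing infinitely many vanishing moments for finitely many, and validity on $L^2$ rather than on test spaces) so that the averaged building blocks $\widetilde{\phi}_{j,k}$ are compactly supported. Then, because each $\widetilde{\phi}_{j,k}(\cdot\circ(x_I,y_J)^{-1})$ with $(j,k,I,J)\in\mathcal{B}_i$ is literally supported in the fattened set $\widetilde{\Omega_i}$, the single H\"older step $\int|b_i|^p\le|\widetilde{\Omega_i}|^{1-p/2}\|b_i\|_2^p$ suffices with no tail term at all. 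A second, smaller imprecision: your bound $\|b_k\|_2^2\lesssim\sum_{R\in\mathcal{B}_k}|R||g_R|^2$ is not an application of Theorem~\ref{P-P} (which is an equivalence of global $L^p$ quasinorms, not a bound on partial sums); the paper obtains it by a duality pairing against $g\in L^2$ with $\|g\|_2\le 1$ and the almost-orthogonality/square-function estimates. To close the argument along your lines you should either supply the molecular tail estimate in full, or switch to the compactly supported reproducing formula of Theorem~\ref{19} as the paper does.
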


\begin{remark}
From the proof given in the next part of the paper, we see that this result
holds in a larger setting, which includes the classical one-parameter and
product Hardy spaces and the Hardy spaces on spaces of homogeneous type.
Thus this provides an alternative approach, albeit requiring more
smoothness, to that using R. Fefferman's rectangle atom criterion on
boundedness \cite{F1}, andJourn\'{e}'s geometric lemma (see \cite{J1}, \cite%
{J2} and \cite{P}).
\end{remark}

In particular, for flag singular integrals we can deduce the following:

\begin{corollary}
\label{Hp to Lp}Let $T$ be a flag singular integral as in Theorem 1.4. Then $%
T$ is bounded from $H_{flag}^{p}(\mathbb{H}^{n})$ to $L^{p}(\mathbb{R}%
^{2n+1})$ for $0<p\leq 1$.
\end{corollary}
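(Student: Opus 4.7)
The plan is to deduce Corollary \ref{Hp to Lp} as an almost immediate combination of the two previously stated results, Theorem \ref{flagintbounded} (giving $H_{flag}^{p}\to H_{flag}^{p}$ boundedness) and Theorem \ref{flagintHtoL} (upgrading simultaneous $L^{2}$ and $H_{flag}^{p}$ boundedness to $H_{flag}^{p}\to L^{p}$ boundedness). So the real content we must supply is one ingredient that is \emph{not} the main Hardy space theorem, namely that a flag singular integral $T$ with kernel $K$ as in Definition \ref{defflag} is bounded on $L^{2}(\mathbb{R}^{2n+1})$.

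First I would establish $L^{2}$ boundedness of $T$. The cleanest route is to invoke the lifting construction reviewed in the introduction: using the product kernel $\widetilde{K}$ on $\mathbb{H}^{n}\times\mathbb{R}$ that projects to $K$ via $K(z,t)=\int \widetilde{K}(z,t-u,u)\,du$, the operator $\widetilde{T}$ with kernel $\widetilde{K}$ is a pure product singular integral on $\mathbb{H}^{n}\times\mathbb{R}$ and hence is $L^{2}$-bounded by the standard product Calder\'on--Zygmund theory (or by the tensor-product / iterated $T(1)$ argument). The Coifman--Weiss transference method used in \cite{MRS} (which \emph{is} valid at $p=2$) then transfers this to $L^{2}$ boundedness of $T$ on $\mathbb{H}^{n}$. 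Alternatively, one can note that the differential and cancellation conditions in Definition \ref{defflag} directly provide, via the flag-adapted Calder\'on reproducing formula \eqref{CRF} and the square function equivalence of Theorem \ref{p>1}, enough almost-orthogonality between $T$ and the component functions $\psi_{s,t}$ to yield $L^{2}$ boundedness by a standard Cotlar--Stein type estimate on the bi-parameter level $(s,t)$.

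Second, having $L^{2}$ boundedness in hand, Theorem \ref{flagintbounded} gives the boundedness of $T$ on $H_{flag}^{p}$ for every $0<p\leq 1$. Thus $T$ satisfies the hypothesis of Theorem \ref{flagintHtoL}: it is bounded simultaneously on $L^{2}(\mathbb{R}^{2n+1})$ and on $H_{flag}^{p}(\mathbb{H}^{n})$. Applying Theorem \ref{flagintHtoL} directly yields the conclusion
\begin{equation*}
\|Tf\|_{L^{p}(\mathbb{R}^{2n+1})}\leq C_{p,n}\|f\|_{H_{flag}^{p}(\mathbb{H}^{n})},\qquad 0<p\leq 1,
\end{equation*}
for all $f$ in a dense subclass of $H_{flag}^{p}$, and hence by density for every $f\in H_{flag}^{p}(\mathbb{H}^{n})$.

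The only genuine obstacle is the first step: although the $L^{2}$ boundedness of Marcinkiewicz multipliers $m(\mathcal{L},iT)$ is classical \cite{MRS}, the class of flag singular integrals in Definition \ref{defflag} is strictly broader, so we must give an argument that uses only the differential inequalities and cancellation conditions (1) and (2) of that definition. I expect the cleanest form of this argument to go through the lifting-and-transference route described above, because at $p=2$ transference is unproblematic; the cancellation condition is tailored precisely so that $\widetilde{K}$ is a product Calder\'on--Zygmund kernel on $\mathbb{H}^{n}\times\mathbb{R}$, and no new Hardy space machinery is required to close the loop back to Corollary \ref{Hp to Lp}.
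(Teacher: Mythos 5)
Your proposal has exactly the same logical skeleton as the paper's (essentially one-line) deduction: establish that $T$ is bounded on $L^{2}$, invoke Theorem \ref{flagintbounded} for $H_{flag}^{p}\to H_{flag}^{p}$ boundedness, and then apply Theorem \ref{flagintHtoL} to conclude $H_{flag}^{p}\to L^{p}$ boundedness, with the final extension to all of $H_{flag}^{p}$ by density of $L^{2}\cap H_{flag}^{p}$. The only place where you diverge from the paper is in how you secure the $L^{2}$ ingredient. The paper already has Theorem \ref{flagp>1}, proved in Section \ref{s Lp estimates}, which gives $L^{p}$ boundedness of flag singular integrals for all $1<p<\infty$ (hence $p=2$) via the square-function estimate $|\psi_{s,t}\ast K\ast f|\leq C\,M_{S}(f)$ and the Fefferman--Stein vector-valued maximal inequality; so the $L^{2}$ bound is already available in-house and the corollary follows immediately. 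You instead re-derive $L^{2}$ boundedness through the M\"uller--Ricci--Stein lifting to a product kernel on $\mathbb{H}^{n}\times\mathbb{R}$ followed by Coifman--Weiss transference, which is precisely the route \cite{MRS} originally took (and which the paper explicitly notes it is giving an alternative to). This is a legitimate and self-contained argument, but it reproves something the paper already supplies; citing Theorem \ref{flagp>1} would have been the shorter path. Your secondary suggestion of a Cotlar--Stein almost-orthogonality argument at the bi-parameter level is also plausible given the estimates in Lemma \ref{almostortho}, but it is not developed in the paper, and you would have to verify that the orthogonality estimates interact correctly with the flag cancellation conditions; the transference route you describe first is safer.
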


\begin{remark}
The conclusions of both Theorem \ref{flagintbounded} and Corollary \ref{Hp
to Lp}, persist if we\ only require the moment and smoothness conditions on
the flag kernel in Definition \ref{defflag} to hold for $\left\vert \alpha
\right\vert ,\beta \leq N_{n,p}$ where $N_{n,p}<\infty $ is taken
sufficiently large.
\end{remark}

As a consequence, we can extend the Marcinkiewicz multiplier theorem in \cite%
{MRS} (see Lemma 2.1 there) to flag Hardy spaces for $0<p\leq 1$. To
describe this extension, recall the standard sub-Laplacian $\mathcal{L}$ on
the Heisenberg group 
\begin{equation*}
\mathbb{H}^{n}=\mathbb{C}^{n}\times \mathbb{R}=\left\{ \left( z,t\right)
:z=\left( z_{j}\right) _{j=1}^{n},\ z_{j}=x_{j}+iy_{j}\in \mathbb{C},\ t\in 
\mathbb{R}\right\} ,
\end{equation*}%
defined by%
\begin{equation*}
\mathcal{L}\equiv -\sum_{j=1}^{n}\left( X_{j}^{2}+Y_{j}^{2}\right) ,\ \ \ \
\ X_{j}=\frac{\partial }{\partial x_{j}}+2y_{j}\frac{\partial }{\partial t}%
,\ Y_{j}=\frac{\partial }{\partial y_{j}}-2x_{j}\frac{\partial }{\partial t}.
\end{equation*}%
The operators $\mathcal{L}$ and $T=\frac{\partial }{\partial t}$ commute,
and so do their spectral measures $dE_{1}\left( \xi \right) $ and $%
dE_{2}\left( \eta \right) $. Given a bounded function $m\left( \xi ,\eta
\right) $ on $\mathbb{R}_{+}\times \mathbb{R}$, define the multiplier
operator $m\left( \mathcal{L},iT\right) $ on $L^{2}\left( \mathbb{H}%
^{n}\right) $ by%
\begin{equation*}
m\left( \mathcal{L},iT\right) =\int \int_{\mathbb{R}_{+}\times \mathbb{R}%
}m\left( \xi ,\eta \right) dE_{1}\left( \xi \right) dE_{2}\left( \eta
\right) .
\end{equation*}%
Then $m\left( \mathcal{L},iT\right) $ is automatically bounded on $%
L^{2}\left( \mathbb{H}^{n}\right) $, and if we impose Marcinkiewicz
conditions on the multiplier, we obtain boundedness on flag Hardy spaces,
this despite the fact that $m$ is invariant under a two-parameter family of
dilations $\delta _{\left( s,t\right) }$ which are group automorphisms only
when $s=t^{2}$.

\begin{corollary}
\label{Marcin}Let $0<p\leq 1$, and suppose that $m\left( \xi ,\eta \right) $
is a bounded function defined on $\mathbb{R}_{+}\times \mathbb{R}$
satisfying the Marcinkiewicz conditions%
\begin{equation*}
\left\vert \left( \xi \partial _{\xi }\right) ^{\alpha }\left( \eta \partial
_{\eta }\right) ^{\beta }m\left( \xi ,\eta \right) \right\vert \leq
C_{\alpha ,\beta },
\end{equation*}%
for all $\left\vert \alpha \right\vert ,\beta \leq N_{n,p}$, where $%
N_{n,p}<\infty $ is taken sufficiently large. Then $m\left( \mathcal{L}%
,iT\right) $ is a bounded operator on $H_{flag}^{p}\left( \mathbb{H}%
^{n}\right) $ for $0<p\leq 1$.
\end{corollary}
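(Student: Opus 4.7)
The plan is to reduce Corollary \ref{Marcin} directly to Theorem \ref{flagintbounded} by showing that the operator $m(\mathcal{L},iT)$ is itself a flag singular integral in the sense of Definition \ref{defflag}, at least up to the finite order of smoothness and cancellation required by the quantitative version noted in the remark immediately following Corollary \ref{Hp to Lp}. The boundedness on $H_{flag}^{p}\left( \mathbb{H}^{n}\right) $ then follows at once.

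The first step is to invoke the structural result of Muller--Ricci--Stein (Lemma 2.1 of \cite{MRS}), which asserts that under the Marcinkiewicz hypothesis on $m(\xi,\eta)$ with sufficiently many derivatives, the convolution kernel $K(z,u)$ of $m(\mathcal{L},iT)$ is smooth away from $\{(0,u):u\in\mathbb{R}\}$ and satisfies the differential inequalities and cancellation conditions of Definition \ref{defflag}. Concretely, the joint spectral calculus for the commuting pair $(\mathcal{L},iT)$, combined with the two--parameter Marcinkiewicz dilation invariance $m(\delta_{s,t}\cdot(\xi,\eta)) $ and standard Littlewood--Paley decompositions in each of the variables $\xi$ and $\eta$, produces a decomposition of $K$ into pieces of scale $s,t$ that behave as products of one--parameter Calder\'{o}n--Zygmund kernels on $\mathbb{C}^{n}$ and on $\mathbb{R}$. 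The integer number of $(\xi\partial_{\xi})$ and $(\eta\partial_{\eta})$ derivatives of $m$ controls, in a transparent manner, the number of derivatives $\partial_{z}^{\alpha}\partial_{u}^{\beta}$ and the quality of cancellation against normalized bumps that one obtains for $K$.

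With this identification in hand, the second step is to count indices. Theorem \ref{flagintbounded} gives $H_{flag}^{p}$--boundedness for flag singular integrals with finitely many moment and smoothness conditions on their kernels, the required number $N_{n,p}$ depending only on $n$ and $p$ (as recorded in the remark following Corollary \ref{Hp to Lp}). Choosing this same $N_{n,p}$ in the hypothesis on $m$, and enlarging it if necessary by the fixed integer bound coming from the derivation of flag kernel estimates from Marcinkiewicz conditions, ensures that $K$ meets every hypothesis of Theorem \ref{flagintbounded}. Applying that theorem yields the desired inequality $\Vert m(\mathcal{L},iT)f\Vert _{H_{flag}^{p}} \lesssim \Vert f\Vert _{H_{flag}^{p}}$.

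The main obstacle is purely bookkeeping: keeping precise track of how many $(\xi\partial_{\xi})^{\alpha}(\eta\partial_{\eta})^{\beta}$ derivatives of $m$ are consumed to produce each order of derivative and each cancellation condition on $K$ required in Definition \ref{defflag}. This is essentially the content of \cite{MRS} (Lemma 2.1), and the only genuine work here is to verify that the finite--smoothness version of that lemma, combined with the finite--smoothness version of Theorem \ref{flagintbounded}, produces a single effective constant $N_{n,p}$. No $L^{2}$ or transference arguments are needed, since $m(\mathcal{L},iT)$ is automatically bounded on $L^{2}\left( \mathbb{H}^{n}\right) $ by the spectral theorem and the remaining work is entirely carried by the flag Hardy space theory already developed in this paper.
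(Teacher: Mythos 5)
Your proposal is correct and follows essentially the same route as the paper: identify the kernel of $m(\mathcal{L},iT)$ as a flag kernel in the sense of Definition \ref{defflag} via the structural result of M\"uller--Ricci--Stein, then invoke Theorem \ref{flagintbounded} (in its finite-smoothness form as noted after Corollary \ref{Hp to Lp}). The one small discrepancy is a citation: the paper attributes the flag-kernel characterization of $K(z,u)$ to Theorem 3.1 of \cite{MRS}, not Lemma 2.1 (the latter is the $L^{p}$ multiplier theorem being extended, not the kernel estimate used here); the substance of the argument is unaffected.
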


The Corollary follows from the results above together with Theorem 3.1 in 
\cite{MRS}, which shows that the kernel $K\left( z,u\right) $ of a
Marcinkiewicz multiplier $m\left( \mathcal{L},iT\right) $ satisfies the
conditions defining a flag convolution kernel in Definition \ref{defflag}.

\subsection{Carleson measures and duality}

To study the dual space of $H_{flag}^{p},$ we introduce the Carleson measure
space $CMO_{flag}^{p}$, so-called because of the example due to L. Carleson
in \cite{Car}.

\begin{notation}
It will often be convenient from now on to bundle the set $\mathsf{Q}$ of
all dyadic cubes and the set $\mathsf{R}_{vert}$ of all vertical dyadic
rectangles into a single set 
\begin{equation*}
\mathsf{R}_{+}=\mathsf{Q}\cup \mathsf{R}_{vert}
\end{equation*}%
consisting of all dyadic cubes and all vertical dyadic rectangles. We also
write 
\begin{equation*}
\psi _{\mathcal{R}}=\left\{ 
\begin{array}{ccc}
\psi _{\mathcal{Q}}^{\prime } & \text{ if } & \mathcal{R}=\mathcal{Q}\in 
\mathsf{Q} \\ 
\psi _{\mathcal{R}}^{\prime } & \text{ if } & \mathcal{R}\in \mathsf{R}%
_{vert}%
\end{array}%
\right. .
\end{equation*}
\end{notation}

\begin{definition}
Let $\psi _{j,k}$ be as in (\ref{defpsijk}) with notation as above. We say
that $f\in CMO_{flag}^{p}$ if $f\in \mathcal{M}_{flag}^{M+\delta }\left( 
\mathbb{H}^{n}\right) ^{\prime }$ and the norm $\Vert f\Vert
_{CMO_{flag}^{p}}$ is finite where%
\begin{equation*}
\Vert f\Vert _{CMO_{flag}^{p}}\equiv \sup_{\Omega }\left\{ \frac{1}{%
\left\vert \Omega \right\vert ^{\frac{2}{p}-1}}\sum_{\mathcal{R}\in \mathsf{R%
}_{+}}\int_{\Omega }\sum_{\mathcal{R}\subset \Omega }\left\vert \psi _{%
\mathcal{R}}\ast f\left( z,u\right) \right\vert ^{2}\chi _{\mathcal{R}%
}\left( z,u\right) dzdu\right\} ^{\frac{1}{2}}
\end{equation*}%
for all open sets $\Omega $ in $\mathbb{H}^{n}$ with finite measure.
\end{definition}

Note that the Carleson measure condition is used with the implicit
two-parameter structure in $CMO_{flag}^{p}$. When $p=1$, we denote the space 
$CMO_{flag}^{1}$ as usual by $BMO_{flag}$. To see that the space $%
CMO_{flag}^{p}$ is well defined, one needs to show that the definition of $%
CMO_{flag}^{p}$ is independent of the choice of the component functions $%
\psi _{j,k}.$ This can be proved just as for the Hardy space $H_{flag}^{p}$,
using the following Plancherel-P\^{o}lya-type inequality.

\begin{theorem}
\label{P-PCar}Suppose $\psi ,\phi $ satisfy the conditions as in Theorem \ref%
{P-P}. Then for $f\in \mathcal{M}_{flag}^{M+\delta }\left( \mathbb{H}%
^{n}\right) ^{\prime },$ 
\begin{equation*}
\sup_{\Omega }\left\{ {\frac{{1}}{{|\Omega |^{{\frac{{2}}{{p}}}-1}}}}\sum_{%
\mathcal{R}\in \mathsf{R}_{+}}\int_{\Omega }\sum_{\mathcal{R}\subset \Omega
}\sup_{\left( z,u\right) \in \mathcal{R}}\left\vert \psi _{\mathcal{R}}\ast
f\left( z,u\right) \right\vert ^{2}\left\vert \mathcal{R}\right\vert
\right\} ^{\frac{{1}}{{2}}}\approx
\end{equation*}%
\begin{equation*}
\sup_{\Omega }\left\{ {\frac{{1}}{{|\Omega |^{{\frac{{2}}{{p}}}-1}}}}\sum_{%
\mathcal{R}\in \mathsf{R}_{+}}\int_{\Omega }\sum_{\mathcal{R}\subset \Omega
}\inf_{\left( z,u\right) \in \mathcal{R}}\left\vert \phi _{\mathcal{R}}\ast
f\left( z,u\right) \right\vert ^{2}\left\vert \mathcal{R}\right\vert
\right\} ^{\frac{{1}}{{2}}},
\end{equation*}%
where $\Omega $ ranges over all open sets in $\mathbb{H}^{n}$ with finite
measure.
\end{theorem}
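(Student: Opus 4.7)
The plan is to mirror the strategy used to establish Theorem~\ref{P-P}, but replacing the $L^{p}$ control by a Carleson measure estimate localized to the open set $\Omega$. First, I would apply the wavelet Calder\'{o}n reproducing formula of Theorem~\ref{discretethm} (in the dual space $\mathcal{M}_{flag}^{M+\delta}(\mathbb{H}^{n})^{\prime}$, using the $\phi$-wavelets) to expand
\begin{equation*}
\psi_{\mathcal{R}}\ast f(z,u)=\sum_{\mathcal{R}^{\prime}\in\mathsf{R}_{+}}\bigl(\psi_{\mathcal{R}}\ast\widetilde{\phi}_{\mathcal{R}^{\prime}}\bigr)(z,u)\cdot\bigl(\phi_{\mathcal{R}^{\prime}}\ast f\bigr)(z_{\mathcal{R}^{\prime}},u_{\mathcal{R}^{\prime}}),
\end{equation*}
where the points $(z_{\mathcal{R}^{\prime}},u_{\mathcal{R}^{\prime}})$ are chosen to essentially attain the infimum on the right-hand side of the desired inequality, and analogously for the reverse direction. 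The same expansion evaluated at a point $(z^{\prime},u^{\prime})\in\mathcal{R}$ attaining the supremum will yield one direction; the reverse direction follows by symmetry after swapping $\psi$ and $\phi$.

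Second, I would derive \emph{almost orthogonality} estimates for $\psi_{\mathcal{R}}\ast\widetilde{\phi}_{\mathcal{R}^{\prime}}(z,u)$ in the flag setting. These exploit the vanishing moments of $\psi$ in the $u$-variable coming from (\ref{momcond}) and the smoothness/decay of $\widetilde{\phi}_{\mathcal{R}^{\prime}}$ inherited from membership in $\mathcal{M}_{flag}^{M+\delta}(\mathbb{H}^{n})$. The result should take the schematic form
\begin{equation*}
\bigl|\psi_{\mathcal{R}}\ast\widetilde{\phi}_{\mathcal{R}^{\prime}}(z,u)\bigr|\lesssim\left(\frac{\ell\wedge\ell^{\prime}}{\ell\vee\ell^{\prime}}\right)^{\!\!M+\delta}\!\!\bigl(\text{flag-distance factor between }\mathcal{R}\text{ and }\mathcal{R}^{\prime}\bigr),
\end{equation*}
separately in each of the two scale parameters governing the vertical rectangles in $\mathsf{R}_{vert}$ and the cubes in $\mathsf{Q}$. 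These are the natural flag analogues of the almost orthogonality already implicit in the proof of Theorem~\ref{P-P} and of the standard Heisenberg estimates of \cite{MRS2}, and require $M$ chosen large (depending on $p$).

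Third, I would apply Cauchy--Schwarz (in the $\mathcal{R}^{\prime}$ sum with weights drawn from the almost orthogonality kernel) followed by a Schur-type argument, in order to pass from pointwise bounds to the squared sum appearing in the $CMO_{flag}^{p}$ quasi-norm. At this stage one has, after summing over $\mathcal{R}\subset\Omega$ and integrating over $\Omega$, an expression of the form
\begin{equation*}
\sum_{\mathcal{R}^{\prime}\in\mathsf{R}_{+}}\inf_{\mathcal{R}^{\prime}}\bigl|\phi_{\mathcal{R}^{\prime}}\ast f\bigr|^{2}\,|\mathcal{R}^{\prime}|\cdot\omega(\mathcal{R}^{\prime},\Omega),
\end{equation*}
where the weight $\omega(\mathcal{R}^{\prime},\Omega)$ captures the total mass of the almost orthogonality kernel against $\Omega$. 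The main obstacle — and this is where the Heisenberg flag geometry genuinely enters — is that for rectangles $\mathcal{R}^{\prime}$ not contained in $\Omega$, one must show $\omega(\mathcal{R}^{\prime},\Omega)$ is controlled by $|\mathcal{R}^{\prime}\cap\widetilde{\Omega}|/|\mathcal{R}^{\prime}|$ for an appropriate flag enlargement $\widetilde{\Omega}$ of $\Omega$ with $|\widetilde{\Omega}|\lesssim|\Omega|$. Here the two-parameter, non-product nature of $\mathsf{R}_{vert}$ forces an enlargement procedure adapted to the dyadic grid of Theorem~\ref{Heisenberg grid}, rather than the simple strong-maximal enlargement used in the pure product case.

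Finally, having reduced matters to an estimate localized to $\widetilde{\Omega}$, I would take the supremum over $\Omega$ and absorb the factor $|\widetilde{\Omega}|/|\Omega|\lesssim 1$, obtaining the $\phi$-$\inf$ version of the Carleson quasi-norm as an upper bound for the $\psi$-$\sup$ version. Reversing the roles of $\psi$ and $\phi$, together with the trivial bound $\inf\leq\sup$ applied with the \emph{same} component function on both sides, closes the equivalence asserted in Theorem~\ref{P-PCar}.
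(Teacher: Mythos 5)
Your overall architecture — wavelet Calder\'on reproducing formula, almost orthogonality estimates, and a geometric enlargement argument — does match the paper's proof, and you correctly identify where the flag geometry enters. The gap is in the third and fourth steps. You propose to bound the weight $\omega(\mathcal{R}',\Omega)$ arising after Cauchy--Schwarz by $|\mathcal{R}'\cap\widetilde{\Omega}|/|\mathcal{R}'|$ for a \emph{single} enlargement $\widetilde{\Omega}$ with $|\widetilde{\Omega}|\lesssim|\Omega|$, and then absorb the factor $|\widetilde{\Omega}|/|\Omega|$. This does not close. The $\mathcal{R}'$ in the sum
\begin{equation*}
\sum_{\mathcal{R}'\in\mathsf{R}_{+}}\Bigl(\inf_{\mathcal{R}'}|\phi_{\mathcal{R}'}\ast f|^{2}\Bigr)\,|\mathcal{R}'|\,\omega(\mathcal{R}',\Omega)
\end{equation*}
range over \emph{all} rectangles, whereas the Carleson quasi-norm on the right-hand side of the theorem only controls the $\inf$ quantity summed over $\mathcal{R}'\subset\bar{\Omega}$ for open test sets $\bar{\Omega}$. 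A bound on $\omega(\mathcal{R}',\Omega)$ in terms of the fractional overlap with one fixed enlargement does not produce such a confinement; rectangles $\mathcal{R}'$ that intersect $\widetilde{\Omega}$ but are not contained in it, or that lie far away, are left uncontrolled.

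The paper's proof avoids this by a genuinely multi-scale stopping-time decomposition. One first introduces a two-parameter family of enlargements $\Omega^{i,\ell}=\bigcup_{I\times J\subset\Omega}3(2^{i}I\times2^{\ell}J)$ and classifies the $\mathcal{R}'$ into sets $B_{i,\ell}$ according to the smallest $(i,\ell)$ with $3(2^{i}I'\times2^{\ell}J')\cap\Omega^{i,\ell}\neq\emptyset$. Within each $B_{i,\ell}$ one runs a stopping time in $h$ according to the overlap fraction $|3(2^{i}I'\times2^{\ell}J')\cap\Omega^{i,\ell}|\geq2^{-h}|3(2^{i}I'\times2^{\ell}J')|$, producing classes $\mathcal{D}_{h}^{i,\ell}$ whose unions $\Omega_{h}^{i,\ell}$ \emph{are} open test sets containing the relevant $\mathcal{R}'$, with $|\Omega_{h}^{i,\ell}|\lesssim h2^{h}2^{i}2^{\ell}|\Omega|$. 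Convergence of the sum over $(i,\ell,h)$ then rests on two decays: a Chang--Fefferman-type counting estimate $\sum_{R\subset\Omega,\,R\in A_{i',\ell'}(R')}r(R,R')\lesssim2^{-hL}$ for the scale ratio factor, and the polynomial decay $2^{-i(2+K)}2^{-\ell(1+K)}2^{-i'(2+K)}2^{-\ell'(1+K)}$ from the separation factor $P(R,R')$ (with $K,L$ taken large enough relative to $2/p-1$). Your proposal as written omits this stopping-time partition of the $\mathcal{R}'$ into families lying inside nested open sets, which is the essential device that converts the Schur-type bound into the Carleson measure condition. Without it, the argument cannot be completed.
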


To show that $CMO_{flag}^{p}$ is the dual of $H_{flag}^{p}$, we introduce
appropriate sequence spaces.

\begin{definition}
Let $s^{p}$ be the collection of all sequences $s=\{s_{\mathcal{R}}\}_{%
\mathcal{R}\in \mathsf{R}_{+}}$ such that 
\begin{equation*}
\Vert s\Vert _{s^{p}}=\left\Vert \left\{ \sum_{\mathcal{R}\in \mathsf{R}%
_{+}}|s_{\mathcal{R}}|^{2}|\left\vert \mathcal{R}\right\vert ^{-1}\chi _{%
\mathcal{R}}\right\} ^{\frac{{1}}{{2}}}\right\Vert _{L^{p}\left( \mathbb{H}%
^{n}\right) }<\infty .
\end{equation*}%
Let $c^{p}$ be the collection of all sequences $s=\{s_{\mathcal{R}}\}$ such
that 
\begin{equation*}
\Vert s\Vert _{c^{p}}=\sup_{\Omega }\left\{ {\frac{{1}}{{|\Omega |^{{\frac{{2%
}}{{p}}}-1}}}}\sum_{\mathcal{R}\in \mathsf{R}_{+}}\sum_{\mathcal{R}\subset
\Omega }|s_{\mathcal{R}}|^{2}\right\} ^{\frac{{1}}{{2}}}<\infty ,
\end{equation*}%
where $\Omega $ ranges over all open sets in $\mathbb{H}^{n}$ with finite
measure.
\end{definition}

We point out that just certain of the dyadic rectangles are used in $s^{p}$
and $c^{p}$ and these choices reflect the implicit two-parameter structure.
Next, we obtain the following duality theorem for sequence spaces.

\begin{theorem}
\label{sequenceduality}Let $0<p\leq 1$. Then we have $(s^{p})^{\ast }=c^{p}$%
. More precisely, the map which sends $s=\{s_{\mathcal{R}}\}$ to $%
\left\langle s,t\right\rangle \equiv \sum\limits_{\mathcal{R}}s_{\mathcal{R}}%
\overline{t}_{\mathcal{R}}$ defines a continuous linear functional on $s^{p}$
with operator norm $\Vert t\Vert _{(s^{p})^{\ast }}\approx \Vert t\Vert
_{c^{p}}$, and moreover, every $\ell \in (s^{p})^{\ast }$ is of this form
for some $t\in c^{p}.$
\end{theorem}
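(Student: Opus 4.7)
The plan is to follow the classical Frazier--Jawerth / Chang--Fefferman duality schema for tent-type sequence spaces, adapted to the mixed index set $\mathsf{R}_+=\mathsf{Q}\cup\mathsf{R}_{vert}$.

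For the easy inclusion $c^p\hookrightarrow(s^p)^*$, I would fix $t\in c^p$ and a finitely supported $s\in s^p$ and perform a stopping-time decomposition of the square function
\[
G(z,u) := \Bigl(\sum_{\mathcal{R}\in\mathsf{R}_+}|s_\mathcal{R}|^2|\mathcal{R}|^{-1}\chi_\mathcal{R}(z,u)\Bigr)^{1/2}
\]
along its level sets $\Omega_k=\{G>2^k\}$ and the enlargements $\widetilde\Omega_k=\{M_{\mathsf{R}_+}\chi_{\Omega_k}>1/2\}$, where $M_{\mathsf{R}_+}$ is a maximal operator adapted to the flag shapes of $\mathsf{R}_+$. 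Grouping the rectangles into the stopping families
\[
B_k := \bigl\{\mathcal{R}\in\mathsf{R}_+ : |\mathcal{R}\cap\Omega_k|>\tfrac12|\mathcal{R}|,\ |\mathcal{R}\cap\Omega_{k+1}|\le\tfrac12|\mathcal{R}|\bigr\},
\]
every $\mathcal{R}\in B_k$ lies in $\widetilde\Omega_k$, and Cauchy--Schwarz inside each $B_k$ splits the pairing into an $s$-factor controlled by $2^k|\Omega_k|^{1/2}$ (since the definition of $B_k$ gives $\sum_{\mathcal{R}\in B_k}|s_\mathcal{R}|^2\lesssim\int_{\Omega_k\setminus\Omega_{k+1}}G^2\lesssim 2^{2k}|\Omega_k|$) and a $t$-factor controlled by $|\Omega_k|^{1/p-1/2}\|t\|_{c^p}$ (using the $c^p$-condition on $\widetilde\Omega_k$ together with $|\widetilde\Omega_k|\lesssim|\Omega_k|$). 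Summing in $k$ and invoking the subadditivity inequality $\sum_k a_k^{1/p}\le(\sum_k a_k)^{1/p}$ valid for $0<p\le 1$ (i.e.\ $1/p\ge 1$), together with the layer-cake identity $\sum_k 2^{kp}|\Omega_k|\approx\|G\|_{L^p}^p = \|s\|_{s^p}^p$, yields $|\langle s,t\rangle|\lesssim\|s\|_{s^p}\|t\|_{c^p}$, which extends by density.

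For the converse inclusion $(s^p)^*\hookrightarrow c^p$, I would define $t_\mathcal{R} := \overline{\ell(e_\mathcal{R})}$ for $e_\mathcal{R}$ the standard unit sequence, so that $\ell$ coincides with $\langle\cdot,t\rangle$ on the dense subspace of finitely supported sequences. Fixing an open set $\Omega$ of finite measure and any finite subfamily $\mathcal{F}\subset\{\mathcal{R}\in\mathsf{R}_+:\mathcal{R}\subset\Omega\}$, test $\ell$ against the truncated sequence $s_\mathcal{R}:=t_\mathcal{R}$ for $\mathcal{R}\in\mathcal{F}$ and zero otherwise, giving $\ell(s)=\sum_{\mathcal{F}}|t_\mathcal{R}|^2$. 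H\"older's inequality on $\Omega$ with conjugate exponents $2/p$ and $2/(2-p)$ applied to $(\sum_\mathcal{F}|t_\mathcal{R}|^2|\mathcal{R}|^{-1}\chi_\mathcal{R})^{p/2}$ gives $\|s\|_{s^p}\le\bigl(\sum_\mathcal{F}|t_\mathcal{R}|^2\bigr)^{1/2}|\Omega|^{1/p-1/2}$. Comparing the two estimates produces $\bigl(\sum_\mathcal{F}|t_\mathcal{R}|^2\bigr)^{1/2}\le\|\ell\|\cdot|\Omega|^{1/p-1/2}$ uniformly in $\mathcal{F}$, and exhausting over $\mathcal{F}$ and then taking the supremum over $\Omega$ yields $\|t\|_{c^p}\le\|\ell\|$.

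The principal technical obstacle is the enlargement step in Step 1: one needs an $L^2$ (or at least weak-type $(2,2)$) bound for the flag maximal operator $M_{\mathsf{R}_+}$ acting on the mixed family of Heisenberg cubes and strictly vertical Heisenberg rectangles that `rotate' with the group law, so as to guarantee $|\widetilde\Omega_k|\lesssim|\Omega_k|$. This is a Jessen--Marcinkiewicz--Zygmund-type statement in the flag geometry, and its proof depends crucially on the dyadic flag grid constructed in Part 3 of the paper. Once that maximal-function bound is in place, the remainder of the argument is the standard stopping-time/Cauchy--Schwarz chain for one direction and a one-line H\"older computation for the other.
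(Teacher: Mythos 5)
Your proposal follows the paper's proof essentially step for step: the same stopping-time decomposition of the square function $G$ into families $\mathcal{B}_k$ along level sets $\Omega_k$, the same Cauchy--Schwarz within each family and $c^p$-estimate on the enlarged sets, the same use of $p$-subadditivity; and for the converse the same one-line H\"older computation (the paper phrases it via $\ell^2$-duality on a weighted counting measure, but the inequality chain is identical to yours).

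One small slip worth fixing: the middle step $\sum_{\mathcal{R}\in B_k}|s_\mathcal{R}|^2\lesssim\int_{\Omega_k\setminus\Omega_{k+1}}G^2$ does not follow from the stopping condition alone, since $|\mathcal{R}\cap\Omega_k|>\tfrac12|\mathcal{R}|$ and $|\mathcal{R}\cap\Omega_{k+1}|\le\tfrac12|\mathcal{R}|$ give no lower bound on $|\mathcal{R}\cap(\Omega_k\setminus\Omega_{k+1})|$. You should integrate $G^2$ over $\widetilde\Omega_k\setminus\Omega_{k+1}$ instead, noting that $\mathcal{R}\subset\widetilde\Omega_k$ for $\mathcal{R}\in B_k$, hence $|\mathcal{R}\cap(\widetilde\Omega_k\setminus\Omega_{k+1})|\ge\tfrac12|\mathcal{R}|$, and then conclude $\int_{\widetilde\Omega_k\setminus\Omega_{k+1}}G^2\lesssim 2^{2k}|\widetilde\Omega_k|\lesssim 2^{2k}|\Omega_k|$. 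As for the maximal-function estimate you flag as the main obstacle, the paper does not introduce a separate flag maximal operator; it simply uses the strong dyadic maximal function $M_S$ over \emph{all} dyadic Heisenberg rectangles, which dominates $M_{\mathsf{R}_+}$ pointwise and whose $L^p$-boundedness ($1<p<\infty$) is Theorem \ref{maximal theorem} in Part 3. So your diagnosis is right that the dyadic flag grid is the essential ingredient, and the needed inequality $|\widetilde\Omega_k|\lesssim|\Omega_k|$ is already available from $M_S$.
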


When $p=1,$ this theorem in the one-parameter setting on $\mathbb{R}^{n}$
was proved in M. Frazier and B. Jawerth \cite{FJ}. The proof given in \cite%
{FJ} depends on estimates of certain distribution functions, which seem to
be difficult to apply to the two-parameter case. For all $0<p\leq 1$ we give
a simple and more constructive proof of Theorem \ref{sequenceduality}, which
uses a stopping time argument for sequence spaces. Theorem \ref%
{sequenceduality}, together with the discrete Calder\'{o}n reproducing
formula and the Plancherel-P\^{o}lya-type inequalities, yields the duality
theorem for $H_{flag}^{p}$.

\begin{theorem}
\label{flag duality}Let $0<p\leq 1$. Then%
\begin{equation*}
(H_{flag}^{p})^{\ast }=CMO_{F}^{p}.
\end{equation*}%
More precisely, if $g\in CMO_{flag}^{p}$, the map $\ell _{g}$ given by $\ell
_{g}(f)=\left\langle f,g\right\rangle $, defined initially for $f\in 
\mathcal{M}_{flag}^{M+\delta }\left( \mathbb{H}^{n}\right) $, extends to a
continuous linear functional on $H_{flag}^{p}$ with $\Vert \ell _{g}\Vert
\approx \Vert g\Vert _{CMO_{flag}^{p}}.$ Conversely, for every $\ell \in
(H_{flag}^{p})^{\ast }$ there exists some $g\in CMO_{flag}^{p}$ so that $%
\ell =\ell _{g}.$ In particular, $(H_{flag}^{1})^{\ast }=BMO_{flag}$.
\end{theorem}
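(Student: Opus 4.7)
The plan is to pass through sequence spaces, using the wavelet Calder\'on reproducing formula (Theorem \ref{discretethm}) to encode elements of $H_{flag}^p$ and $CMO_{flag}^p$ as sequences indexed by $\mathsf{R}_+$, and then invoke the sequence duality $(s^p)^\ast = c^p$ from Theorem \ref{sequenceduality}. Define the \emph{analysis} map
\begin{equation*}
S_\psi f \equiv \left\{ f_{\mathcal{R}} \right\}_{\mathcal{R}\in \mathsf{R}_+},\qquad f_{\mathcal{R}}=\psi _{\mathcal{R}}\ast f(z_{\mathcal{R}},u_{\mathcal{R}}),
\end{equation*}
and the \emph{synthesis} map $T_{\widetilde\psi}\{s_{\mathcal{R}}\}\equiv \sum_{\mathcal{R}} s_{\mathcal{R}}\widetilde\psi_{\mathcal{R}}$. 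By the Plancherel-P\^olya inequality (Theorem \ref{P-P}) together with the definition (\ref{H^p norm}), $S_\psi:H_{flag}^p\to s^p$ is bounded with $\|S_\psi f\|_{s^p}\approx \|f\|_{H_{flag}^p}$, and by Theorem \ref{P-PCar} $S_\psi:CMO_{flag}^p\to c^p$ is bounded with $\|S_\psi g\|_{c^p}\approx \|g\|_{CMO_{flag}^p}$. Moreover the wavelet reproducing formula (\ref{defdiscreteCRF}) shows $T_{\widetilde\psi}\circ S_\psi=I$ on the relevant spaces.

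For the easy direction $CMO_{flag}^p\hookrightarrow (H_{flag}^p)^\ast$, I would fix $g\in CMO_{flag}^p$ and define $\ell_g(f)\equiv \langle f,g\rangle$ initially for $f\in \mathcal{M}_{flag}^{M+\delta}(\mathbb{H}^n)$. Using the reproducing formula I expand
\begin{equation*}
\langle f,g\rangle =\Bigl\langle \sum_{\mathcal{R}\in \mathsf{R}_+} f_{\mathcal{R}}\widetilde\psi_{\mathcal{R}},\, g\Bigr\rangle =\sum_{\mathcal{R}\in \mathsf{R}_+} f_{\mathcal{R}}\,\overline{\widetilde g_{\mathcal{R}}},\qquad \widetilde g_{\mathcal{R}}\equiv \overline{\langle \widetilde\psi_{\mathcal{R}},g\rangle},
\end{equation*}
and invoke Theorem \ref{sequenceduality} to dominate the pairing by $\|\{f_{\mathcal{R}}\}\|_{s^p}\|\{\widetilde g_{\mathcal{R}}\}\|_{c^p}$. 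The Plancherel-P\^olya equivalence (applied with component functions $\widetilde\psi_{\mathcal{R}}$ versus $\psi_{\mathcal{R}}$) then gives $\|\{\widetilde g_{\mathcal{R}}\}\|_{c^p}\lesssim \|g\|_{CMO_{flag}^p}$, so $\ell_g$ is bounded on a dense subspace and extends to $(H_{flag}^p)^\ast$ with the correct norm.

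For the harder direction, given $\ell \in (H_{flag}^p)^\ast$, the map $S_\psi$ embeds $H_{flag}^p$ as a closed subspace of $s^p$, so $\ell\circ T_{\widetilde\psi}$ is a continuous functional on the image of $S_\psi$ in $s^p$. By Hahn-Banach extend to all of $s^p$, and by Theorem \ref{sequenceduality} this extension is represented by some $t=\{t_{\mathcal{R}}\}\in c^p$ with $\|t\|_{c^p}\lesssim \|\ell\|$. Set
\begin{equation*}
g\equiv \sum_{\mathcal{R}\in \mathsf{R}_+} t_{\mathcal{R}}\,\widetilde\psi_{\mathcal{R}},
\end{equation*}
convergent in $\mathcal{M}_{flag}^{M+\delta}(\mathbb{H}^n)^\prime$ in view of the bounds on $\widetilde\psi_{\mathcal{R}}$ from Theorem \ref{discretethm}. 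The main obstacle is showing $g\in CMO_{flag}^p$ with $\|g\|_{CMO_{flag}^p}\lesssim \|t\|_{c^p}$: one must control $\psi_{\mathcal{R}'}\ast g(z,u)$ for each $\mathcal{R}'$ by an almost-orthogonality estimate between $\psi_{\mathcal{R}'}$ and $\widetilde\psi_{\mathcal{R}}$ adapted to the implicit two-parameter flag structure. This almost-orthogonality, together with a Carleson-style localisation argument applied to arbitrary open sets $\Omega\subset \mathbb{H}^n$ (splitting the sum according to whether $\mathcal{R}\subset \widetilde\Omega$ for a suitable enlargement), is the technical heart of the proof and is where the use of only \emph{vertical} rectangles in $\mathsf{R}_+$ becomes essential. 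Once this bound is established, the reproducing formula yields $\ell=\ell_g$ on the dense subspace $\mathcal{M}_{flag}^{M+\delta}(\mathbb{H}^n)$, completing the duality, and specialising $p=1$ gives $(H_{flag}^1)^\ast=BMO_{flag}$.
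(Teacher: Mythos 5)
Your proposal is essentially the paper's argument: reduce to the sequence duality $(s^p)^\ast=c^p$ via the analysis map $S$ and synthesis map $T$, using the reproducing formula, Plancherel--P\^{o}lya and the Carleson analogue thereof. Both directions agree with the paper in substance. Two small remarks. First, the Hahn--Banach step is an unnecessary detour: since the synthesis map $T$ is bounded from $s^p$ into $H_{flag}^p$ (this is the theorem immediately preceding the duality proof in the paper), $\ell\circ T$ is already a continuous functional on \emph{all} of $s^p$, and $\ell=(\ell\circ T)\circ S$ follows from $T\circ S=I$; there is no need to restrict to the image of $S$ and then extend. (As written, ``$\ell\circ T_{\widetilde\psi}$ is a functional on the image of $S_\psi$'' is also a misstatement, since $T$ takes $s^p$ into $H_{flag}^p$, not into the image of $S$.) Second, you correctly identify the boundedness $\|g\|_{CMO_{flag}^p}\lesssim\|t\|_{c^p}$ as the technical heart, but this is not an extra obstacle to be overcome within the duality proof itself: it is precisely the assertion that $T:c^p\to CMO_{flag}^p$ is bounded, which the paper establishes separately (by the same almost-orthogonality and $\Omega$-localisation machinery you describe) in the preparatory theorem, and then simply invokes. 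The paper also builds $g$ from $\psi_{\mathcal R}$ rather than $\widetilde\psi_{\mathcal R}$ so that $\langle S(f),t\rangle=\langle f,g\rangle$ holds exactly (assuming $\psi$ radial); this is a normalization choice and does not affect the argument.
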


As a consequence of the duality of $H_{flag}^{1}$ and $BMO_{flag}$, together
with the $H_{flag}^{1}$-boundedness of flag singular integrals, we obtain
the $BMO_{flag}$-boundedness of flag singular integrals. Furthermore, we
will see that $L^{\infty }\subseteq BMO_{flag}$ and hence the $L^{\infty
}\rightarrow BMO_{flag}$ boundedness of flag singular integrals is also
obtained. These provide the endpoint results of those in \cite{MRS} and \cite%
{NRS}, and can be summarized as follows:

\begin{theorem}
\label{BMO bound}Suppose that $T$ is a flag singular integral with kernel as
in Definition \ref{defflag}. Then $T$ is bounded on $BMO_{flag}.$ Moreover,
there exists a constant $C$ such that 
\begin{equation*}
\Vert T(f)\Vert _{BMO_{flag}}\leq C\Vert f\Vert _{BMO_{flag}}.
\end{equation*}
\end{theorem}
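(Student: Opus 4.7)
The plan is to prove Theorem \ref{BMO bound} by duality, leveraging the duality result $(H_{flag}^{1})^{\ast}=BMO_{flag}$ of Theorem \ref{flag duality} together with the $H_{flag}^{1}$ boundedness of flag singular integrals from Theorem \ref{flagintbounded}. The strategy is to interpret $T$ on $BMO_{flag}$ as the Banach-space adjoint of $T^{\ast}$ acting on $H_{flag}^{1}$. More precisely, for $f\in BMO_{flag}\subset \mathcal{M}_{flag}^{M+\delta}(\mathbb{H}^{n})^{\prime}$ and $g\in\mathcal{M}_{flag}^{M+\delta}(\mathbb{H}^{n})$, we define $\langle Tf,g\rangle\equiv\langle f,T^{\ast}g\rangle$, where $T^{\ast}$ has Heisenberg-convolution kernel $K^{\ast}(z,u)=\overline{K((z,u)^{-1})}$. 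If we can show that $T^{\ast}$ maps $\mathcal{M}_{flag}^{M+\delta}(\mathbb{H}^{n})$ into itself continuously and is bounded on $H_{flag}^{1}$, then the estimate
\[
|\langle Tf,g\rangle|=|\langle f,T^{\ast}g\rangle|\leq \|f\|_{BMO_{flag}}\|T^{\ast}g\|_{H_{flag}^{1}}\leq C\|f\|_{BMO_{flag}}\|g\|_{H_{flag}^{1}}
\]
together with Theorem \ref{flag duality} yields $\|Tf\|_{BMO_{flag}}\leq C\|f\|_{BMO_{flag}}$.

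The first step is to verify that $T^{\ast}$ is itself a flag singular integral in the sense of Definition \ref{defflag}. This amounts to checking that $K^{\ast}(z,u)=\overline{K((z,u)^{-1})}$ still satisfies the differential inequalities in part (1) and the three cancellation conditions in part (2). The Heisenberg inverse $(z,u)^{-1}=(-z,-u)$ preserves the homogeneous norm $(|z|^{2}+|u|)^{1/2}$ and maps the coordinate subspace $\{(0,u)\}$ to itself, so the differential inequalities transfer directly after relabeling derivatives; the three cancellation conditions are symmetric under this inversion since they test against arbitrary normalized bump functions at arbitrary scales in each coordinate slot. Consequently Theorem \ref{flagintbounded} applies to $T^{\ast}$, giving $T^{\ast}:H_{flag}^{1}\to H_{flag}^{1}$ boundedly.

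Next we must justify that $T^{\ast}g\in H_{flag}^{1}$ when $g\in\mathcal{M}_{flag}^{M+\delta}(\mathbb{H}^{n})$, and that our pairing is independent of the particular representative. The natural route is to show the continuous inclusion $\mathcal{M}_{flag}^{M+\delta}(\mathbb{H}^{n})\hookrightarrow H_{flag}^{1}(\mathbb{H}^{n})$ (so that the $H_{flag}^{1}$ bound of $T^{\ast}$ applies), and to check that if $g=\pi G$ with $G\in\mathcal{M}_{product}^{M+\delta}$, then $T^{\ast}g=\pi(\widetilde{T^{\ast}}G)$ via Lemma \ref{intertwine}, so that $T^{\ast}g$ is again in a projected product space of the same type (with possibly larger regularity parameters, which are harmless provided $M$ is initially chosen large enough in the Definition \ref{defflagHardy} of $H_{flag}^{1}$). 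The pairing $\langle f,T^{\ast}g\rangle$ is then well-defined for $f\in\mathcal{M}_{flag}^{M+\delta}(\mathbb{H}^{n})^{\prime}$.

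The main obstacle, and where care is needed, is the book-keeping of regularity and moment parameters: we must ensure that the $M'$ required so that $T^{\ast}$ preserves $\mathcal{M}_{flag}^{M'+\delta}(\mathbb{H}^{n})$ is compatible with the $M$ used to define $BMO_{flag}=CMO_{flag}^{1}$, and that Theorem \ref{flagintbounded} can be invoked with parameters matching those for which Theorem \ref{flag duality} identifies $BMO_{flag}$ as $(H_{flag}^{1})^{\ast}$. Once that matching is in place, the duality argument above closes. Finally, to deduce the $L^{\infty}\to BMO_{flag}$ bound mentioned just before the theorem, one verifies $L^{\infty}\hookrightarrow BMO_{flag}$ by testing the defining Carleson-type sum: for $f\in L^{\infty}$ and a fixed open $\Omega\subset\mathbb{H}^{n}$ of finite measure, $L^{2}$-orthogonality of the wavelet system together with $\|\psi_{\mathcal{R}}\ast f\|_{L^{2}}\lesssim\|f\|_{L^{\infty}}|\mathcal{R}|^{1/2}$ gives the desired bound $\|f\|_{BMO_{flag}}\lesssim\|f\|_{L^{\infty}}$, after which the theorem supplies the $L^{\infty}\to BMO_{flag}$ boundedness of $T$.
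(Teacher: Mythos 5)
Your proposal is correct and follows essentially the same duality argument as the paper: $T$ bounded on $H_{flag}^{1}$ (Theorem \ref{flagintbounded}) together with $(H_{flag}^{1})^{\ast}=BMO_{flag}$ (Theorem \ref{flag duality}) yields boundedness of the transpose on $BMO_{flag}$. The paper's proof is only a few sentences and leaves implicit the points you carefully supply — that the adjoint kernel $K^{\ast}(z,u)=\overline{K((z,u)^{-1})}$ is again a flag kernel, that the pairing is well-defined via Lemma \ref{intertwine}, and the parameter book-keeping — so your write-up is a more complete version of the same argument; the only small divergence is that the paper deduces $L^{\infty}\subset BMO_{flag}$ from $H_{flag}^{1}\subset L^{1}$ by duality rather than by your direct Carleson-sum estimate, but both are routine.
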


\subsection{C-Z decompositions and interpolation}

Now we give the Calder\'{o}n-Zygmund decomposition and interpolation
theorems for flag Hardy spaces. We note that $H_{flag}^{p}\left( \mathbb{H}%
^{n}\right) =L^{p}(\mathbb{R}^{2n+1})$ for $1<p<\infty $ by Theorem \ref{p>1}%
.

\begin{theorem}
\label{CZ decomposition}(Calder\'{o}n-Zygmund decomposition for flag Hardy
spaces) Let $0<p_{2}\leq 1$, $p_{2}<p<p_{1}<\infty $ and let $\alpha >0$ be
given and suppose $f\in H_{flag}^{p}\left( \mathbb{H}^{n}\right) $. Then we
can write%
\begin{equation*}
f=g+b,
\end{equation*}%
where $g\in H_{flag}^{p_{1}}(\mathbb{H}^{n})$ with $p<p_{1}<\infty $ and $%
b\in H_{flag}^{p_{2}}(\mathbb{H}^{n})$ with $0<p_{2}<p$ such that 
\begin{equation*}
||g||_{H_{flag}^{p_{1}}}^{p_{1}}\leq C\alpha
^{p_{1}-p}||f||_{H_{flag}^{p}}^{p}\text{ and }%
||b||_{H_{flag}^{p_{2}}}^{p_{2}}\leq C\alpha
^{p_{2}-p}||f||_{H_{flag}^{p}}^{p},
\end{equation*}%
where $C$ is an absolute constant.
\end{theorem}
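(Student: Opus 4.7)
The plan is to apply the wavelet Calder\'{o}n reproducing formula (Theorem \ref{discretethm}) to expand
\[
f = \sum_{\mathcal{R} \in \mathsf{R}_+} f_{\mathcal{R}}\,\widetilde{\psi}_{\mathcal{R}},
\]
thereby reducing the decomposition of $f$ to a splitting of the wavelet coefficient sequence $\{f_{\mathcal{R}}\}$ into two subcollections and a reconstruction. By the Plancherel--P\'{o}lya inequalities (Theorem \ref{P-P}), the $H_{flag}^{q}$ quasi-norm of any such sub-sum is comparable to an integral of a discrete square function built from its coefficients, so it suffices to partition $\mathsf{R}_+$ into a \emph{good} family $\mathsf{G}$ and a \emph{bad} family $\mathsf{B}$ and to estimate the resulting two square functions in $L^{p_1}$ and $L^{p_2}$, respectively.

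To effect the split, I perform a Calder\'{o}n--Zygmund stopping time on level sets of $g_{flag}(f)$. Set
\[
\Omega_k := \{g_{flag}(f) > 2^k \alpha\}, \qquad \widetilde{\Omega}_k := \{M_{flag}\chi_{\Omega_k} > \tfrac{1}{2}\}, \qquad k \ge 0,
\]
where $M_{flag}$ is the maximal operator associated with the family $\mathsf{R}_+$ of flag rectangles. By $L^2$-boundedness of $M_{flag}$ together with Chebyshev's inequality, $|\widetilde{\Omega}_k| \lesssim |\Omega_k| \le (2^k\alpha)^{-p}\|f\|_{H_{flag}^{p}}^{p}$. Declare $\mathcal{R}$ good if $\mathcal{R} \not\subset \widetilde{\Omega}_0$, and otherwise put it in $\mathsf{B}_k$, where $k \ge 1$ is the unique index with $\mathcal{R} \subset \widetilde{\Omega}_{k-1} \setminus \widetilde{\Omega}_k$ (such a $k$ exists since $\bigcap_k \widetilde{\Omega}_k$ has measure zero whenever $f \in L^2$). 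Put
\[
g := \sum_{\mathcal{R} \in \mathsf{G}} f_{\mathcal{R}}\,\widetilde{\psi}_{\mathcal{R}}, \qquad b_k := \sum_{\mathcal{R} \in \mathsf{B}_k} f_{\mathcal{R}}\,\widetilde{\psi}_{\mathcal{R}}, \qquad b := \sum_{k \ge 1} b_k.
\]

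For the bad part, quasi-subadditivity of $\|\cdot\|_{H_{flag}^{p_2}}^{p_2}$ and Plancherel--P\'{o}lya give
\[
\|b\|_{H_{flag}^{p_2}}^{p_2} \le \sum_{k\ge 1} \|b_k\|_{H_{flag}^{p_2}}^{p_2} \lesssim \sum_{k\ge 1} \int_{\widetilde{\Omega}_{k-1}} g_{flag}(f)^{p_2},
\]
because the discrete square function of $b_k$ is supported in $\widetilde{\Omega}_{k-1}$ and is pointwise dominated by $g_{flag}(f)$. Stratifying each $\widetilde{\Omega}_{k-1}$ into layers $\widetilde{\Omega}_j \setminus \widetilde{\Omega}_{j+1}$ on which $g_{flag}(f) \le 2^{j+1}\alpha$, and using the measure bound $|\widetilde{\Omega}_j| \lesssim (2^j\alpha)^{-p}\|f\|_{H_{flag}^{p}}^{p}$, produces a double geometric series that sums (because $p_2 < p$) to $C\alpha^{p_2-p}\|f\|_{H_{flag}^{p}}^{p}$. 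For the good part, the parallel Plancherel--P\'{o}lya reduction expresses $\|g\|_{H_{flag}^{p_1}}^{p_1}$ as an integral of a restricted square function; on $\Omega_0^c$ this quantity is bounded pointwise by $g_{flag}(f) \le \alpha$, and absorbing the surplus power $p_1-p > 0$ yields the desired control of $\int_{\Omega_0^c} g_{flag}(g)^{p_1}$ by $\alpha^{p_1-p}\|f\|_{H_{flag}^{p}}^{p}$.

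The hard part will be the good-part contribution on $\Omega_0$ itself: the naive pointwise bound $g_{flag}(g) \lesssim \alpha$ need not hold because many good rectangles of different scales can stack at a single point of $\Omega_0$. In the pure-product Hardy space theory this obstruction is overcome by Journ\'{e}'s covering lemma, but, as signalled in the introduction of the paper, the authors' strategy is to bypass Journ\'{e} by exploiting the almost-orthogonality built into Theorem \ref{discretethm} together with a layer-by-layer iteration through the enlargements $\widetilde{\Omega}_k$. Because $\mathsf{R}_+$ consists only of cubes and \emph{vertical} rectangles (rather than a full two-parameter family), the flag geometry is strictly simpler than the pure-product case: every good rectangle reaching deep into $\Omega_k$ is forced by $\mathcal{R} \not\subset \widetilde{\Omega}_0$ to be large enough to also touch $\widetilde{\Omega}_0^c$, which severely constrains the count at any fixed point. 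A careful iteration of this constraint through the layers, combined with Plancherel--P\'{o}lya applied layer by layer, should supply the remaining $\alpha^{p_1-p}\|f\|_{H_{flag}^{p}}^{p}$ bound on $\Omega_0$ and close the argument.
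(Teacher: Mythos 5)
Your set-up is the right one and agrees in spirit with the paper's: assume $f\in L^{2}\cap H_{flag}^{p}$, form level sets $\Omega_{\ell}=\{S(f)>2^{\ell}\alpha\}$ for a discrete square function $S(f)$ coming from the wavelet reproducing formula, classify each rectangle $\mathcal{R}$ as \emph{good} or \emph{bad} by a density/containment stopping condition, and reconstruct $g$ and $b$ from the good and bad coefficient families. The bad-part estimate is essentially sound (the stratification of $\widetilde{\Omega}_{k-1}$ into annuli $\widetilde{\Omega}_{j}\setminus\widetilde{\Omega}_{j+1}$ where $S(f)\leq 2^{j+1}\alpha$, the measure bounds from the maximal theorem, and the geometric-series summation exploiting $p_{2}<p$ are exactly the ingredients that make Claim~2 of the paper work), modulo the fact that relating $\|b_{k}\|_{H_{flag}^{p_{2}}}$ to the restricted coefficient square function requires re-running the Plancherel--P\^{o}lya argument on a sub-sum — a point the paper also handles by citing ``a similar argument to Theorem~\ref{P-P}.''

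The genuine gap is in the good-part estimate, precisely where you flag ``the hard part.'' You correctly observe that a pointwise bound $g_{flag}(g)\lesssim\alpha$ fails on $\Omega_{0}$, but the paper does \emph{not} resolve this by almost-orthogonality or by any iteration through the layers $\widetilde{\Omega}_{k}$, and I see no reason to expect the argument you sketch to close. The paper's actual mechanism (Claim~1) runs the inequality in the opposite direction: one starts from the known small quantity $\int_{\Omega_{0}^{c}}S(f)^{p_{1}}$, rewrites it as $\int_{\mathbb{H}^{n}}\bigl\{\sum_{R\in\mathcal{R}_{0}}|\phi_{jk}\ast T_{N}^{-1}f(x_{I},y_{J})|^{2}\chi_{R\cap\Omega_{0}^{c}}\bigr\}^{p_{1}/2}$, and then exploits the density condition $|R\cap\Omega_{0}^{c}|\geq\frac{1}{2}|R|$ (valid exactly for rectangles in the good family) to write
\[
\chi_{R}(z,u)\leq 2^{1/q}\,M_{S}\bigl(\chi_{R\cap\Omega_{0}^{c}}\bigr)^{1/q}(z,u),\qquad 0<q<p_{1},
\]
and then the vector-valued Fefferman--Stein inequality for the strong maximal function $M_{S}$ (with exponents $r=2/q>1$ and $p=p_{1}/q>1$) upgrades the integral over $\Omega_{0}^{c}$ to an integral over all of $\mathbb{H}^{n}$. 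The upshot is that the full integral of the good coefficient square function, including the contribution on $\Omega_{0}$, is dominated in one stroke by $\int_{\Omega_{0}^{c}}S(f)^{p_{1}}\leq C\alpha^{p_{1}-p}\|f\|_{H_{flag}^{p}}^{p}$; there is no need to control $\Omega_{0}$ separately, and Journ\'{e}'s covering lemma is not invoked. Without this step your proof is incomplete, because the hand-waved ``careful iteration of this constraint through the layers'' is not the paper's argument and would still require bounding the number of nested good rectangles stacking over a point of $\Omega_{0}$ — precisely the combinatorial issue the Fefferman--Stein trick sidesteps.
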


\begin{theorem}
\label{interpolation}(Interpolation theorem on flag Hardy spaces) Let $%
0<p_{2}<p_{1}<\infty $ and let $T$ be a linear operator which is bounded
from $H_{flag}^{p_{2}}$ to $L^{p_{2}}$ and bounded from $H_{flag}^{p_{1}}$
to $L^{p_{1}}$. Then $T$ is bounded from $H_{flag}^{p}$ to $L^{p}$ for all $%
p_{2}<p<p_{1}$. Similarly, if $T$ is bounded on $H_{flag}^{p_{2}}$ and $%
H_{flag}^{p_{1}}$, then $T$ is bounded on $H_{flag}^{p}$ for all $%
p_{2}<p<p_{1}$.
\end{theorem}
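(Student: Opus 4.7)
The approach is the classical Marcinkiewicz interpolation scheme, with the Calderón--Zygmund decomposition of Theorem~\ref{CZ decomposition} playing the role of the $L^{1}/L^{\infty }$ truncation used in the standard proof. I focus first on the $H_{flag}^{p}\to L^{p}$ assertion; the second (boundedness on $H_{flag}^{p}$) will follow by running identical steps with $|Tf|$ replaced everywhere by $g_{flag}(Tf)$, exploiting the subadditivity $g_{flag}(Tf)\leq g_{flag}(Tg_{\alpha })+g_{flag}(Tb_{\alpha })$ arising from the linear splitting $Tf=Tg_{\alpha }+Tb_{\alpha }$, together with the identity $\|\cdot \|_{H_{flag}^{q}}=\|g_{flag}(\cdot )\|_{L^{q}}$ from (\ref{H^p norm}).

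Start from the layer-cake formula
\begin{equation*}
\|Tf\|_{L^{p}}^{p}=p\int_{0}^{\infty }\alpha ^{p-1}\,\left\vert \{x:|Tf(x)|>\alpha \}\right\vert \,d\alpha .
\end{equation*}
At each height $\alpha >0$ I invoke Theorem~\ref{CZ decomposition} to split $f=g_{\alpha }+b_{\alpha }$ with $g_{\alpha }\in H_{flag}^{p_{1}}$ and $b_{\alpha }\in H_{flag}^{p_{2}}$. The elementary inclusion $\{|Tf|>\alpha \}\subset \{|Tg_{\alpha }|>\alpha /2\}\cup \{|Tb_{\alpha }|>\alpha /2\}$, together with Chebyshev's inequality and the hypothesized $H_{flag}^{p_{i}}\to L^{p_{i}}$ boundedness of $T$, yields
\begin{equation*}
\left\vert \{|Tg_{\alpha }|>\alpha /2\}\right\vert \leq C\alpha ^{-p_{1}}\|g_{\alpha }\|_{H_{flag}^{p_{1}}}^{p_{1}},\qquad \left\vert \{|Tb_{\alpha }|>\alpha /2\}\right\vert \leq C\alpha ^{-p_{2}}\|b_{\alpha }\|_{H_{flag}^{p_{2}}}^{p_{2}}.
\end{equation*}

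The main obstacle is that inserting only the uniform bounds of Theorem~\ref{CZ decomposition} produces the divergent integrand $\alpha ^{-1}\|f\|_{H_{flag}^{p}}^{p}$, yielding only the weak-type estimate. The resolution, which I would extract from the construction underlying Theorem~\ref{CZ decomposition} rather than from its statement, is the finer \emph{integrated} bound
\begin{equation*}
\int_{0}^{\infty }\alpha ^{p-1-p_{i}}\|h_{\alpha }\|_{H_{flag}^{p_{i}}}^{p_{i}}\,d\alpha \leq C\|f\|_{H_{flag}^{p}}^{p},\qquad h_{\alpha }\in \{g_{\alpha },b_{\alpha }\},\ i\in \{1,2\},
\end{equation*}
which the actual proof of the decomposition delivers: $f=g_{\alpha }+b_{\alpha }$ is produced by a stopping-time selection applied to the wavelet coefficients $\{f_{\mathcal{R}}\}$ of Theorem~\ref{discretethm} against the $\alpha $-level sets of a flag maximal function, so that grouping $\alpha \in \lbrack 2^{k},2^{k+1})$ dyadically and using the Plancherel--Pólya identification $\|f\|_{H_{flag}^{p}}^{p}\sim \|\{f_{\mathcal{R}}\}\|_{s^{p}}^{p}$ collapses the double sum over scales $k$ and rectangles $\mathcal{R}$ to the required bound. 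Substituting these integrated estimates into the layer-cake integral then gives $\|Tf\|_{L^{p}}^{p}\leq C\|f\|_{H_{flag}^{p}}^{p}$, and the parallel argument with $g_{flag}(Tf)$ in place of $|Tf|$ completes the second assertion.
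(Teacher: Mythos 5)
Your proposal is correct and follows the same route as the paper: the paper likewise applies the Calder\'{o}n--Zygmund decomposition at each level $\alpha $, uses Chebyshev and the $H_{flag}^{p_{i}}\rightarrow L^{p_{i}}$ hypotheses inside the layer-cake formula, and---exactly as you anticipate---invokes the pointwise-in-$\alpha $ refinements established \emph{inside} the proof of Theorem~\ref{CZ decomposition} (namely $\Vert g_{\alpha }\Vert _{H_{flag}^{p_{1}}}^{p_{1}}\lesssim \int_{S(f)\leq \alpha }S(f)^{p_{1}}$ and $\Vert b_{\alpha }\Vert _{H_{flag}^{p_{2}}}^{p_{2}}\lesssim \int_{S(f)>\alpha }S(f)^{p_{2}}$) rather than the weaker uniform bounds of its statement, since these are precisely what make the $\alpha $-integral converge. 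Your integrated-in-$\alpha $ formulation of this refinement is equivalent to the paper's pointwise version after an exchange of integration order, and your reduction of the second assertion to the first via $g_{flag}(Tf)$ matches the paper's treatment as well.
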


\begin{remark}
Combining Theorem \ref{interpolation} with Corollary \ref{Marcin} recovers
the $L^{p}$ boundedness of Marcinkiewicz multipliers in \cite{MRS} (but not
the sharp versions in \cite{MRS2}).
\end{remark}

We point out that the Calder\'{o}n-Zygmund decomposition in pure product
domains for all $L^{p}$ functions ($1<p<2$) into $H^{1}$ and $L^{2}$
functions, as well as the corresponding interpolation theorem, was
established by A. Chang and R. Fefferman (\cite{CF1}, \cite{CF2}).

\part{Proofs of results}

The second part of this paper contains the proofs of the results stated in
the first part, and is organized as follows.

\begin{enumerate}
\item In Section \ref{s Lp estimates}, we establish $L^{p}$ estimates for
the multi-parameter Littlewood-Paley $g$-function when $1<p<\infty $, and
prove Theorems \ref{p>1} and \ref{flagp>1}.

\item In Section \ref{s developing} we show that the Calder\'{o}n
reproducing formula holds on the flag molecular test function space $%
\mathcal{M}_{flag}^{M+\delta }$ and its dual space $\left( \mathcal{M}%
_{flag}^{M+\delta }\right) ^{\prime }$, and then prove the almost
orthogonality estimates and establish the wavelet Calder\'{o}n reproducing
formula on $\mathcal{M}_{flag}^{M+\delta }$ and $\left( \mathcal{M}%
_{flag}^{M+\delta }\right) ^{\prime }$ in Theorem \ref{discretethm}. Some
estimates are established for the strong maximal function, and together with
the wavelet Calder\'{o}n reproducing formula, we then derive the Plancherel-P%
\^{o}lya-type inequalities in Theorem \ref{P-P}.

\item Section \ref{s boundedness} gives a general result for bounding the $%
L^{p}$ norm of the function by its $H_{flag}^{p}$ norm in Theorem \ref{L<H},
and then proves the $H_{flag}^{p}$ boundedness of flag singular integrals
for all $0<p\leq 1$ in Theorem \ref{flagintbounded}. The boundedness from $%
H_{flag}^{p}$ to $L^{p}$ for all $0<p\leq 1$ for the flag singular integral
operators, Theorem \ref{flagintHtoL}, is thus a consequence of Theorem \ref%
{flagintbounded} and Theorem \ref{L<H}.

\item Duality theory for the Hardy space $H_{flag}^{p}$ is then established
in Section \ref{s duality} along with the boundedness of flag singular
integral operators on $BMO_{flag}$. The proofs of Theorems \ref{P-PCar}, \ref%
{sequenceduality}, \ref{flag duality} and \ref{BMO bound} will all be given
in Section \ref{s duality}.

\item In Section \ref{s CZ decomposition}, we prove the Calder\'{o}n-Zygmund
decomposition in the flag two-parameter setting, Theorem \ref{CZ
decomposition}, and then derive an interpolation result, Theorem \ref%
{interpolation}.

\item In Section \ref{s embeddings} we prove the embeddings in Lemma \ref%
{containments} relating the flag molecular spaces $\mathcal{M}%
_{flag}^{M+\delta ,M_{1},M_{2}}$ and $\mathsf{M}_{F}^{M+\delta ,M_{1},M_{2}}$%
, and prove the embedding of quotient spaces.

\item In Section \ref{s counterexample} we show that flag singular integrals
are not in general bounded from the classical one-parameter Hardy space $%
H^{1}\left( \mathbb{H}^{n}\right) $ on the Heisenberg group to $L^{1}\left( 
\mathbb{H}^{n}\right) $.
\end{enumerate}

\section{L$^{p}$ estimates for the Littlewood-Paley square function\label{s
Lp estimates}}

The purpose of this section is to show that the $L^{p}$ norm of $f$ is
equivalent to the $L^{p}$ norm of $g_{flag}(f)$ when $1<p<\infty $. This was
shown in Proposition 4.1 of \cite{MRS2} for a function $g_{flag}(f)$ only
slightly different than that used here. Our proof is similar in spirit to
that in \cite{MRS2}.

\bigskip

\begin{proof}
\textbf{(}of Theorem \ref{p>1})\textbf{:} The proof is similar to that in
the pure product case given in \cite{FS} and follows from iteration and
standard vector-valued Littlewood-Paley inequalities. To see this, define $%
L^{p}\left( \mathbb{H}^{n}\right) \ni f\rightarrow F\in H=\ell ^{2}$ by $%
F(z,u)=\{\psi _{j}^{(1)}\ast f(z,u)\}$ so that 
\begin{equation*}
\Vert F\Vert _{H}=\{\sum\limits_{j}|\psi _{j}^{(1)}\ast f(z,u)|^{2}\}^{{%
\frac{{1}}{{2}}}}.
\end{equation*}%
For $z$ fixed, set 
\begin{equation*}
{\widetilde{g}}(F)(z,u)=\{\sum\limits_{k}\Vert \psi _{k}^{(2)}\ast
_{2}F(z,\cdot )(u)\Vert _{H}^{2}\}^{{\frac{{1}}{{2}}}}.
\end{equation*}%
It is then easy to see that ${\widetilde{g}}(F)(z,u)=g_{flag}(f)(z,u).$ For $%
z$ fixed, by the vector-valued Littlewood-Paley inequality, 
\begin{equation*}
\int\limits_{\mathbb{R}}{\widetilde{g}}(F)^{p}(z,u)du\leq C\int\limits_{%
\mathbb{R}}\Vert F\Vert _{H}^{p}du.
\end{equation*}%
However, $\Vert F\Vert _{H}^{p}=\{\sum\limits_{j}|\psi _{j}^{(1)}\ast
f(z,u)|^{2}\}^{{\frac{{p}}{{2}}}},$ so integrating with respect to $z$
together with the standard Littlewood-Paley inequality yields 
\begin{equation*}
\int\limits_{\mathbb{C}^{n}}\int\limits_{\mathbb{R}}g_{flag}(f)^{p}(z,u)dzdu%
\leq C\int\limits_{\mathbb{C}^{n}}\int\limits_{\mathbb{R}}\{\sum\limits_{j}|%
\psi _{j}^{(1)}\ast f(z,u)|^{2}\}^{{\frac{{p}}{{2}}}}dzdu\leq C\Vert f\Vert
_{L^{p}\left( \mathbb{H}^{n}\right) }^{p},
\end{equation*}%
which shows that $||g_{flag}(f)||_{p}\leq C||f||_{p}$.

The proof of the estimate $||f||_{p}\leq C||g_{flag}(f)||_{p}$ is a routine
duality argument using the Calder\'{o}n reproducing formula on $L^{2}(%
\mathbb{H}^{n})$, for all $f\in L^{2}\cap L^{p}$, $g\in L^{2}\cap
L^{p^{\prime }}$ and $\frac{1}{p}+\frac{1}{p^{\prime }}=1$, and the
inequality $||g_{flag}(f)||_{p}\leq C||f||_{p}$, which was just proved. This
completes the proof of Theorem \ref{p>1}.
\end{proof}

\bigskip

Let $\psi ^{(1)}\in \mathcal{S}(\mathbb{H}^{n})$ as in Theorem 1 of \cite{GM}
be supported in the unit ball in $\mathbb{H}^{n}$ and $\psi ^{(2)}\in 
\mathcal{S}(\mathbb{R})$ be supported in the unit ball of $\mathbb{R}$ and
satisfy 
\begin{equation*}
\int_{0}^{\infty }|\widehat{\psi ^{(2)}}(t\eta )|^{4}\frac{dt}{t}=1
\end{equation*}%
for all $\eta \in \mathbb{R}\backslash \{0\}.$ We define $\psi ^{\natural
}(z,u,v)=\psi ^{(1)}(z,u)\psi ^{(2)}(v).$ Set $\psi
_{s}^{(1)}(z,u)=s^{-n-2}\psi ^{(1)}(\frac{z}{s},\frac{u}{s^{2}})$ and $\psi
_{t}^{(2)}(v)=t^{-1}\psi (\frac{z}{t})$ and 
\begin{equation*}
\psi _{s,t}(z,u)=\int_{\mathbb{R}}\psi _{s}^{(1)}(z,u-v)\psi _{t}^{(2)}(v)dv.
\end{equation*}%
Repeating the proof of Theorem \ref{p>1}, we can get for $1<p<\infty $ 
\begin{equation*}
\Vert \{\int_{0}^{\infty }\int_{0}^{\infty }|\psi _{s,t}\ast f(z,u)|^{2}{%
\frac{{dt}}{{t}}}{\frac{{ds}}{{s}}}\}^{{\frac{{1}}{{2}}}}\Vert _{p}\leq
C\Vert f\Vert _{p},
\end{equation*}%
and 
\begin{equation}
\Vert f\Vert _{p}\approx \Vert \{\int_{0}^{\infty }\int_{0}^{\infty }|\psi
_{s,t}\ast \psi _{s,t}\ast f(z,y)|^{2}{\frac{{dt}}{{t}}}{\frac{{ds}}{{s}}}%
\}^{{\frac{{1}}{{2}}}}\Vert _{p}.  \label{square}
\end{equation}

The $L^{p}$ boundedness of flag singular integrals for $1<p<\infty $ is then
an easy consequence of Theorem \ref{p>1}. This theorem was originally
obtained in \cite{MRS} using a different proof that involved the method of
transference.

\begin{theorem}
\label{flagp>1}Suppose that $T$ is a flag singular integral defined on $%
\mathbb{H}^{n}$ with the flag kernel $K(z,u)$ as in Definition \ref{defflag}
above. Then $T$ is bounded on $L^{p}$ for $1<p<\infty $. Moreover, there
exists a constant $C$ depending on $p$ such that for $f\in L^{p}$, 
\begin{equation*}
\Vert Tf\Vert _{p}\leq C\Vert f\Vert _{p},\ \ \ \ \ 1<p<\infty .
\end{equation*}
\end{theorem}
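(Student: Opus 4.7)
The plan is to deduce the $L^p$ boundedness directly from the square function characterization already available via Theorem \ref{p>1}. By (\ref{square}) applied to $Tf$, we have
\[
\|Tf\|_p \approx \left\|\left\{\int_0^\infty\int_0^\infty |\psi_{s,t}\ast\psi_{s,t}\ast Tf(z,u)|^2\,\frac{ds}{s}\frac{dt}{t}\right\}^{1/2}\right\|_p ,
\]
so it suffices to bound this $L^p$ norm by $\|f\|_p$. Since $T$ is the convolution operator with the flag kernel $K$, $\psi_{s,t}\ast Tf = (\psi_{s,t}\ast K)\ast f$, and the whole problem is reduced to understanding the smeared kernel $\Phi_{s,t}:=\psi_{s,t}\ast K$.

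The first technical step is to show that $\Phi_{s,t}$ inherits, uniformly in $(s,t)$, flag-type size and smoothness bounds from the differential inequalities on $K$ together with the Schwartz decay of $\psi_{s,t}$, and flag cancellation from the vanishing moments (\ref{momcond}) of $\psi$ together with the cancellation conditions of Definition \ref{defflag}. Equivalently, one proves an almost-orthogonality estimate of the form
\[
|\psi_{s,t}\ast K \ast \psi_{s',t'}(z,u)| \lesssim \left(\tfrac{s\wedge s'}{s\vee s'}\right)^{\!M}\!\left(\tfrac{t\wedge t'}{t\vee t'}\right)^{\!M}\!\Psi_{s\vee s',\,t\vee t'}(z,u),
\]
where $\Psi_{\sigma,\tau}$ is a flag bump at the coarser scale. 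Both the one-parameter behaviour in the regime $|u|<|z|^2$ and the two-parameter behaviour in $|u|\geq |z|^2$ must be used here; one extracts decay in the $s/s'$ ratio from the vanishing moments of $\psi^{(1)}$ against the Heisenberg-Calderón--Zygmund estimates of $K$, and decay in $t/t'$ from the moment conditions on $\psi^{(2)}$ together with the partial cancellation of $K$ tested against bumps in $u$.

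With this estimate in hand, the second step is to insert a second Calderón reproducing formula on $f$,
\[
f=\int_0^\infty\!\!\int_0^\infty \psi_{s',t'}\ast \psi_{s',t'}\ast f\,\frac{ds'}{s'}\frac{dt'}{t'},
\]
so that $\psi_{s,t}\ast Tf = \int\!\!\int (\psi_{s,t}\ast K\ast \psi_{s',t'})\ast(\psi_{s',t'}\ast f)\,\frac{ds'\,dt'}{s't'}$. Combining the almost-orthogonality estimate with a Schur test in the scale parameters and dominating the remaining spatial convolution by a flag strong maximal operator $\mathcal{M}_{flag}$ (iterated one-parameter maximal functions in the $z$ and $u$ variables), one obtains the pointwise bound
\[
\left\{\int_0^\infty\!\!\int_0^\infty |\psi_{s,t}\ast Tf(z,u)|^2\tfrac{ds\,dt}{st}\right\}^{1/2} \lesssim \mathcal{M}_{flag}\!\left(g_{flag}(f)\right)(z,u).
\]
Since $\mathcal{M}_{flag}$ is bounded on $L^p$ for $1<p<\infty$ and $\|g_{flag}(f)\|_p\lesssim\|f\|_p$ by Theorem \ref{p>1}, combining with (\ref{square}) gives $\|Tf\|_p\lesssim\|f\|_p$.

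The main obstacle is the almost-orthogonality estimate, since it requires a careful case split according to which of the two scales $(s,t)$ or $(s',t')$ is coarser \emph{and} whether one is in the one-parameter regime $|u|<|z|^2$ or the two-parameter regime $|u|\geq |z|^2$ dictated by the flag geometry of $K$. In each of these four sub-cases one must match the right cancellation of the $\psi$'s against the correct piece of $K$ to extract simultaneous decay in both ratios $s/s'$ and $t/t'$; the argument mirrors, in continuous parameters, the discrete almost-orthogonality developed later in Section \ref{s developing} for the wavelet reproducing formula.
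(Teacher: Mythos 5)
Your proof is correct in outline, but it takes a noticeably heavier route than the paper does, and the difference is worth noting. The paper also starts from the square function identity (\ref{square}) applied to $Tf$, so that the task becomes bounding $\|\{\iint|\psi_{s,t}\ast\psi_{s,t}\ast K\ast f|^2\,\tfrac{ds\,dt}{s\,t}\}^{1/2}\|_p$. But at this point, rather than inserting a second Calder\'{o}n reproducing formula for $f$ and proving a scale-decaying almost-orthogonality estimate on $\psi_{s,t}\ast K\ast\psi_{s',t'}$ followed by a Schur test, the paper proves a single \emph{uniform} pointwise domination: the smeared kernel $\psi_{s,t}\ast K$ is, uniformly in $(s,t)$, dominated by a product Poisson kernel, so that
\[
|\psi_{s,t}\ast K\ast g(z,u)|\le C\,M_{S}(g)(z,u)
\]
with $C$ independent of $s,t$ (and of the $L^1$ norm of the truncated $K$). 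Applying this with $g=\psi_{s,t}\ast f$ and invoking the Fefferman--Stein vector-valued inequality for the strong maximal function $M_S$, and then Theorem \ref{p>1}, gives the conclusion directly. No decay in the scale ratios $s/s'$, $t/t'$ is needed and no second reproducing formula is inserted. Your proposed route is internally consistent and mirrors the machinery the paper later develops for $H^p_{flag}$ (discrete Calder\'{o}n reproducing formula plus almost orthogonality), so it has the virtue of uniformity with the $p\le1$ theory; but for $L^p$, $1<p<\infty$, it is heavier than necessary. The paper's proof buys simplicity (one Poisson-kernel domination, one maximal inequality), at the modest cost of an argument that does not reuse any discrete or orthogonality apparatus. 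One more minor point: you should check, as the paper takes for granted, that the $L^1$-truncation of $K$ gives bounds independent of $\|K\|_1$, which is needed to remove the qualitative integrability assumption at the end (this is the reason the constant in the claim is explicitly noted to be independent of the $L^1$ norm of $K$).
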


\begin{proof}
\textbf{(}of Theorem \ref{flagp>1})\textbf{:} We may first assume that $K$
is integrable function and shall prove the $L^{p}$ boundedness of $T$ is
independent of the $L^{1}$ norm of $K$. The conclusion for general $K$ then
follows by an argument used in \cite{MRS}. For all $f\in L^{p}$, by (\ref%
{square}), 
\begin{equation}
\Vert T(f)\Vert _{p}\leq C\left\Vert \{\int_{0}^{\infty }\int_{0}^{\infty
}|\psi _{s,t}\ast \psi _{s,t}\ast K\ast f|^{2}{\frac{{dt}}{{t}}}{\frac{{ds}}{%
{s}}}\}^{{\frac{{1}}{{2}}}}\right\Vert _{p}.  \label{Tsquare}
\end{equation}

Now we claim the following estimate: for $f\in L^{p}$, 
\begin{equation}
|\psi _{s,t}\ast K\ast f(z,u)|\leq CM_{S}(f)(z,u),  \label{claimK}
\end{equation}%
where $C$ is a constant which is independent of the $L^{1}$ norm of $K$ and $%
M_{S}(f)$ is the strong maximal function of $f$ defined in (\ref{def strong
max}).

Assuming (\ref{claimK}) for the moment, we obtain from (\ref{Tsquare}) that 
\begin{equation*}
\Vert Tf\Vert _{p}\leq C\left\Vert \{\int_{0}^{\infty }\int_{0}^{\infty
}(M_{S}(\psi _{s,t}\ast f))^{2}{\frac{{dt}}{{t}}}{\frac{{ds}}{{s}}}\}^{{%
\frac{{1}}{{2}}}}\right\Vert _{p}\leq C\Vert f\Vert _{p},
\end{equation*}%
where the last inequality follows from the Fefferman-Stein vector-valued
maximal inequality.

We now turn to the claim (\ref{claimK}). This follows from dominating $%
\left\vert \psi _{s,t}\ast K\ast f\right\vert $ by a product Poisson
integral $\mathbb{P}_{prod}f$, and then dominating the product Poisson
integral $\mathbb{P}_{prod}f$ by the strong maximal function $M_{S}f$. The
arguments are familiar and we leave them to the reader.
\end{proof}

\section{Developing the wavelet Calder\'{o}n reproducing formula\label{s
developing}}

In this section, we develop the wavelet Calder\'{o}n reproducing formula and
prove the Plancherel-P\^{o}lya-type inequalities on test function spaces.
These are the main tools used in establishing the theory of Hardy spaces
associated with the flag dilation structure. In order to establish the
wavelet Calder\'{o}n reproducing formula and the Plancherel-P\^{o}lya-type
inequalities, we use the continuous version of the Calder\'{o}n reproducing
formula on test function spaces and certain almost orthogonality estimates.

\bigskip

We now start the relatively long proof of Theorem \ref{discretethm},
beginning with the Calder\'{o}n reproducing formula in (\ref{CRF st}) that
holds for $f\in L^{2}\left( \mathbb{H}^{n}\right) $ and converges in $%
L^{2}\left( \mathbb{H}^{n}\right) $. For any given $\alpha >0$ we discretize
it as follows:%
\begin{eqnarray*}
f\left( z,u\right) &=&\int_{0}^{\infty }\int_{0}^{\infty }\check{\psi}%
_{s,t}\ast _{\mathbb{H}^{n}}\psi _{s,t}\ast _{\mathbb{H}^{n}}f\left(
z,u\right) \frac{ds}{s}\frac{dt}{t} \\
&=&\sum_{j,k\in \mathbb{Z}}\int_{2^{-\alpha \left( j+1\right) }}^{2^{-\alpha
j}}\int_{2^{-2\alpha \left( k+1\right) }}^{2^{-2\alpha k}}\check{\psi}%
_{s,t}\ast \psi _{s,t}\ast f\left( z,u\right) \frac{dt}{t}\frac{ds}{s} \\
&=&c_{\alpha }\sum_{j\leq k}\check{\psi}_{j,k}\ast \psi _{j,k}\ast f\left(
z,u\right) +c_{\alpha }\sum_{j>k}\check{\psi}_{j,k}\ast \psi _{j,k}\ast
f\left( z,u\right) \\
&&+\sum_{j,k\in \mathbb{Z}}\int_{2^{-\alpha \left( j+1\right) }}^{2^{-\alpha
j}}\int_{2^{-2\alpha \left( k+1\right) }}^{2^{-2\alpha k}}\left\{ \check{\psi%
}_{s,t}\ast \psi _{s,t}-\check{\psi}_{j,k}\ast \psi _{j,k}\right\} \ast
f\left( z,u\right) \frac{dt}{t}\frac{ds}{s} \\
&=&T_{\alpha }^{\left( 1\right) }f\left( z,u\right) +T_{\alpha }^{\left(
2\right) }f\left( z,u\right) +R_{\alpha }f\left( z,u\right) ,
\end{eqnarray*}%
where%
\begin{eqnarray*}
\psi _{j,k} &=&\psi _{2^{-\alpha j},2^{-2\alpha k}}, \\
c_{\alpha } &=&\int_{2^{-\alpha \left( j+1\right) }}^{2^{-\alpha
j}}\int_{2^{-2\alpha \left( k+1\right) }}^{2^{-2\alpha k}}\frac{dt}{t}\frac{%
ds}{s}=\ln \frac{2^{-\alpha j}}{2^{-\alpha \left( j+1\right) }}\ln \frac{%
2^{-2\alpha k}}{2^{-2\alpha \left( k+1\right) }}=2\left( \alpha \ln 2\right)
^{2}.
\end{eqnarray*}

\begin{notation}
We have relabeled $\psi _{2^{-\alpha j},2^{-2\alpha k}}$ as simply $\psi
_{j,k}$ when we replace integrals $\int_{0}^{\infty }\int_{0}^{\infty }\frac{%
ds}{s}\frac{dt}{t}$ by sums $\sum_{j,k\in \mathbb{Z}}$. This abuse of
notation should not cause confusion as we will always use $j,k,j^{\prime
},k^{\prime }$ as subscripts for the discrete components $\psi _{j,k}$,
while we always use $s,t,s^{\prime },t^{\prime }$ as subscripts for the
continuous components $\psi _{s,t}$. Note however that directions are \emph{%
reversed} in passing from $s,t\in \left( 0,\infty \right) $ to $j,k\in 
\mathbb{Z}$, in the sense that $s=2^{-\alpha j}$ and $t=2^{-2\alpha k}$
decrease as $j$ and $k$ increase.
\end{notation}

To continue, we choose a large positive integer $N$ to be fixed later. We
decompose the first term $T_{\alpha }^{\left( 1\right) }f\left( z,u\right) $
by writing the Heisenberg group $\mathbb{H}^{n}$ as a pairwise disjoint
union of dyadic cubes $\mathcal{Q}$ of side length $2^{-\alpha \left(
j+N\right) }$, i.e. 
\begin{equation*}
\mathcal{Q}\in \mathcal{R}\left( 2^{-\alpha \left( j+N\right) }\times
2^{-2\alpha \left( j+N\right) }\right) .
\end{equation*}%
We decompose the second term $T_{\alpha }^{\left( 2\right) }f\left(
z,u\right) $ by writing the Heisenberg group $\mathbb{H}^{n}$ as a pairwise
disjoint union of dyadic rectangles $\mathcal{R}$ of dimension $2^{-\alpha
\left( j+N\right) }\times 2^{-2\alpha \left( k+N\right) }$, i.e. $\mathcal{R}%
\in \mathcal{R}\left( 2^{-\alpha \left( j+N\right) }\times 2^{-2\alpha
\left( k+N\right) }\right) $. Recall that%
\begin{eqnarray*}
\mathsf{R}\left( j,k\right) &\equiv &\mathcal{R}\left( 2^{-\alpha \left(
j+N\right) }\times 2^{-2\alpha \left( k+N\right) }\right) , \\
\mathsf{Q}\left( j\right) &\equiv &\mathcal{R}\left( 2^{-\alpha \left(
j+N\right) }\times 2^{-2\alpha \left( j+N\right) }\right) ,
\end{eqnarray*}%
and that $\left( z_{\mathcal{Q}},u_{\mathcal{Q}}\right) $ is any \emph{fixed}
point in the cube $\mathcal{Q}\in \mathsf{Q}\left( j\right) $; and that $%
\left( z_{\mathcal{R}},u_{\mathcal{R}}\right) $ is any \emph{fixed} point in
the rectangle $\mathcal{R}\in \mathsf{R}\left( j,k\right) $.

We further discretize the terms $T_{\alpha }^{\left( 1\right) }f\left(
z,u\right) $ and $T_{\alpha }^{\left( 2\right) }f\left( z,u\right) $ in
different ways, exploiting the one-parameter structure of the Heisenberg
group for $T_{\alpha }^{\left( 1\right) }$, and exploiting the implicit
product structure for $T_{\alpha }^{\left( 2\right) }$. We rewrite $%
T_{\alpha }^{\left( 1\right) }f\left( z,u\right) $ as%
\begin{eqnarray*}
T_{\alpha }^{\left( 1\right) }f\left( z,u\right) &=&c_{\alpha }\sum_{j\leq k}%
\check{\psi}_{j,k}\ast \psi _{j,k}\ast f\left( z,u\right) \\
&=&c_{\alpha }\sum_{j\leq k}\left( \check{\psi}_{j}^{(1)}\ast _{2}\check{\psi%
}_{k}^{(2)}\right) \ast \left( \psi _{j}^{(1)}\ast _{2}\psi
_{k}^{(2)}\right) \ast f\left( z,u\right) \\
&=&c_{\alpha }\sum_{j\leq k}\left( \check{\psi}_{j}^{(1)}\ast _{2}\check{\psi%
}_{k}^{(2)}\ast _{2}\psi _{k}^{(2)}\right) \ast \psi _{j}^{(1)}\ast f\left(
z,u\right) \\
&=&c_{\alpha }\sum_{j\in \mathbb{Z}}\left( \check{\psi}_{j}^{(1)}\ast
_{2}\left( \sum_{k\geq j}\check{\psi}_{k}^{(2)}\ast _{2}\psi
_{k}^{(2)}\right) \right) \ast \psi _{j}^{(1)}\ast f\left( z,u\right) \\
&=&c_{\alpha }\sum_{j\in \mathbb{Z}}\check{\psi}_{j}\ast \psi _{j}\ast
f\left( z,u\right) ,
\end{eqnarray*}%
where%
\begin{equation}
\psi _{j}\equiv \psi _{j}^{(1)}\text{ and }\check{\psi}_{j}\equiv \check{\psi%
}_{j}^{(1)}\ast _{2}\left( \sum_{k\geq j}\check{\psi}_{k}^{(2)}\ast _{2}\psi
_{k}^{(2)}\right) .  \label{new psi}
\end{equation}

\begin{remark}
\label{same est}It is a standard exercise to prove that $\check{\psi}_{j}$
satisfies the same type of estimates as does $\psi _{j}^{(1)}$ on the
Heisenberg group $\mathbb{H}^{n}$.
\end{remark}

Now we write 
\begin{eqnarray*}
T_{\alpha }^{\left( 1\right) }f\left( z,u\right) &=&\sum_{j\leq k}\sum_{%
\mathcal{Q}\in \mathsf{Q}\left( j\right) }f_{\mathcal{Q}}\psi _{\mathcal{Q}%
}\left( z,u\right) +R_{\alpha ,N}^{\left( 1\right) }f\left( z,u\right) , \\
T_{\alpha }^{\left( 2\right) }f\left( z,u\right) &=&\sum_{j>k}\sum_{\mathcal{%
R}\in \mathsf{R}\left( j,k\right) }f_{\mathcal{R}}\psi _{\mathcal{R}}\left(
z,u\right) +R_{\alpha ,N}^{\left( 2\right) }f\left( z,u\right) ,
\end{eqnarray*}%
where%
\begin{eqnarray*}
f_{\mathcal{Q}} &\equiv &c_{\alpha }\left\vert \mathcal{Q}\right\vert \ \psi
_{j,k}\ast f\left( z_{\mathcal{Q}},u_{\mathcal{Q}}\right) ,\ \ \ \ \ \text{%
for }\mathcal{Q}\in \mathsf{Q}\left( j\right) \text{ and }k\geq j, \\
f_{\mathcal{R}} &\equiv &c_{\alpha }\left\vert \mathcal{R}\right\vert \ \psi
_{j,k}\ast f\left( z_{\mathcal{R}},u_{\mathcal{R}}\right) ,\ \ \ \ \ \text{%
for }\mathcal{R}\in \mathsf{R}\left( j,k\right) \text{ and }k<j, \\
\psi _{\mathcal{Q}}\left( z,u\right) &=&\frac{1}{\left\vert \mathcal{Q}%
\right\vert }\int_{\mathcal{Q}}\check{\psi}_{j,k}\left( \left( z,u\right)
\circ \left( z^{\prime },u^{\prime }\right) ^{-1}\right) dz^{\prime
}du^{\prime },\ \ \ \ \ \text{for }\mathcal{Q}\in \mathsf{Q}\left( j\right) 
\text{ and }k\geq j, \\
\psi _{\mathcal{R}}\left( z,u\right) &=&\frac{1}{\left\vert \mathcal{R}%
\right\vert }\int_{\mathcal{R}}\check{\psi}_{j,k}\left( \left( z,u\right)
\circ \left( z^{\prime },u^{\prime }\right) ^{-1}\right) dz^{\prime
}du^{\prime },\ \ \ \ \ \text{for }\mathcal{R}\in \mathsf{R}\left(
j,k\right) \text{ and }k<j.
\end{eqnarray*}%
and%
\begin{eqnarray*}
R_{\alpha ,N}^{\left( 1\right) }f\left( z,u\right) &=&c_{\alpha }\sum_{j\leq
k}\sum_{\mathcal{Q}\in \mathsf{Q}\left( j\right) }\int_{\mathcal{Q}}\check{%
\psi}_{j,k}\left( \left( z,u\right) \circ \left( z^{\prime },u^{\prime
}\right) ^{-1}\right) \\
&&\times \left[ \psi _{j,k}\ast f\left( z^{\prime },u^{\prime }\right) -\psi
_{j,k}\ast f\left( z_{\mathcal{Q}},u_{\mathcal{Q}}\right) \right] dz^{\prime
}du^{\prime }, \\
R_{\alpha ,N}^{\left( 2\right) }f\left( z,u\right) &=&c_{\alpha
}\sum_{j>k}\sum_{\mathcal{R}\in \mathsf{R}\left( j,k\right) }\int_{\mathcal{R%
}}\check{\psi}_{j,k}\left( \left( z,u\right) \circ \left( z^{\prime
},u^{\prime }\right) ^{-1}\right) \\
&&\times \left[ \psi _{j,k}\ast f\left( z^{\prime },u^{\prime }\right) -\psi
_{j,k}\ast f\left( z_{\mathcal{R}},u_{\mathcal{R}}\right) \right] dz^{\prime
}du^{\prime }.
\end{eqnarray*}

Altogether we have%
\begin{eqnarray}
f\left( z,u\right) &=&\sum_{j\in \mathbb{Z}}\sum_{\mathcal{Q}\in \mathsf{Q}%
\left( j\right) }f_{\mathcal{Q}}\psi _{\mathcal{Q}}\left( z,u\right)
+\sum_{j>k}\sum_{\mathcal{R}\in \mathsf{R}\left( j,k\right) }f_{\mathcal{R}%
}\psi _{\mathcal{R}}\left( z,u\right)  \label{altogether} \\
&&+\left\{ R_{\alpha }f\left( z,u\right) +R_{\alpha ,N}^{\left( 1\right)
}f\left( z,u\right) +R_{\alpha ,N}^{\left( 2\right) }f\left( z,u\right)
\right\} .  \notag
\end{eqnarray}%
Recall that we denote by $\mathsf{Q}\equiv \bigcup_{j\in \mathbb{Z}}\mathsf{Q%
}\left( j\right) $ the collection of \emph{all} dyadic cubes, and by $%
\mathsf{R}_{vert}\equiv \bigcup_{j>k}\mathsf{R}\left( j,k\right) $ the
collection of \emph{all strictly vertical} dyadic rectangles. Then we can
rewrite (\ref{altogether}) as%
\begin{equation}
f\left( z,u\right) =\sum_{\mathcal{Q}\in \mathsf{Q}}f_{\mathcal{Q}}\psi _{%
\mathcal{Q}}\left( z,u\right) +\sum_{\mathcal{R}\in \mathsf{R}_{vert}}f_{%
\mathcal{R}}\psi _{\mathcal{R}}\left( z,u\right) +\left\{ R_{\alpha
}+R_{\alpha ,N}^{\left( 1\right) }+R_{\alpha ,N}^{\left( 2\right) }\right\}
f\left( z,u\right) ,  \label{altogether'}
\end{equation}%
which is a precursor to the \emph{wavelet} form of the Calder\'{o}n
reproducing formula given in the statement of Theorem \ref{discretethm}.

The following theorem is the analogue of Theorem 1.19 in \cite{H2} for the
operators $R_{\alpha }$, $R_{\alpha ,N}^{\left( 1\right) }$ and $R_{\alpha
,N}^{\left( 2\right) }$.

\begin{theorem}
\label{key boundedness}For fixed $M$ and $0<\delta <1$, we can choose $%
M^{\prime }$ and $0<\alpha <\varepsilon $ sufficiently small, and then
choose $N$ sufficiently large, so that the operators $R_{\alpha }$, $%
R_{\alpha }^{\left( 1\right) }$ and $R_{\alpha }^{\left( 2\right) }$ satisfy%
\begin{eqnarray}
&&\left\Vert R_{\alpha }f\right\Vert _{L^{p}\left( \mathbb{H}^{n}\right)
}+\left\Vert R_{\alpha ,N}^{\left( 1\right) }f\right\Vert _{L^{p}\left( 
\mathbb{H}^{n}\right) }+\left\Vert R_{\alpha ,N}^{\left( 2\right)
}f\right\Vert _{L^{p}\left( \mathbb{H}^{n}\right) }  \label{error op bounds}
\\
&&\ \ \ \ \ \ \ \ \ \ \leq \frac{1}{2}\left\Vert f\right\Vert _{L^{p}\left( 
\mathbb{H}^{n}\right) },\ \ \ \ \ f\in L^{p}\left( \mathbb{H}^{n}\right)
,1<p<\infty ,  \notag \\
&&\left\Vert R_{\alpha }f\right\Vert _{\mathcal{M}_{flag}^{M^{\prime
}+\delta }\left( \mathbb{H}^{n}\right) }+\left\Vert R_{\alpha ,N}^{\left(
1\right) }f\right\Vert _{\mathcal{M}_{flag}^{M^{\prime }+\delta }\left( 
\mathbb{H}^{n}\right) }+\left\Vert R_{\alpha ,N}^{\left( 2\right)
}f\right\Vert _{\mathcal{M}_{flag}^{M^{\prime }+\delta }\left( \mathbb{H}%
^{n}\right) }  \notag \\
&&\ \ \ \ \ \ \ \ \ \ \leq \frac{1}{2}\left\Vert f\right\Vert _{\mathcal{M}%
_{flag}^{M^{\prime }+\delta }\left( \mathbb{H}^{n}\right) },\ \ \ \ \ f\in 
\mathcal{M}_{flag}^{M^{\prime }+\delta }\left( \mathbb{H}^{n}\right) . 
\notag
\end{eqnarray}
\end{theorem}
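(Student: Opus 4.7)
The plan is to treat each of the three error operators as a perturbation of the identity and show that it has small operator norm, which can be forced below $\tfrac12$ by choosing $\alpha$ small (for $R_\alpha$) and $N$ large (for $R_{\alpha,N}^{(1)}$ and $R_{\alpha,N}^{(2)}$). Since all three are built from the component functions $\psi_{j,k}$ via the Calder\'on reproducing formula (\ref{CRF}), the natural tool is an almost orthogonality estimate
\begin{equation*}
\left\vert \check{\psi}_{j,k}\ast \psi _{j^{\prime },k^{\prime }}(z,u)\right\vert \lesssim 2^{-\epsilon(|j-j'|+|k-k'|)}\,\Phi_{\min(j,j'),\min(k,k')}(z,u),
\end{equation*}
where $\Phi_{j,k}$ is a kernel with the same decay and smoothness as $|\psi_{j,k}|$, combined with the continuous flag $g$-function bound from Theorem \ref{p>1} and its vector-valued analogue. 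For the molecular estimate one works instead on the lifted product space $\mathbb{H}^n\times\mathbb{R}$ and then projects via $\pi$, since by Definition \ref{flag molecular} the space $\mathcal{M}_{flag}^{M'+\delta}$ is defined as the projection of $\mathcal{M}_{product}^{M'+\delta}$, and Lemma \ref{intertwine} commutes the lifting with convolution operators.

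For the quadrature error $R_\alpha$, I would apply the mean value theorem in the scaling parameters: for $(s,t)$ in the small box $[2^{-\alpha(j+1)},2^{-\alpha j}]\times[2^{-2\alpha(k+1)},2^{-2\alpha k}]$, the difference $\check{\psi}_{s,t}\ast\psi_{s,t}-\check{\psi}_{j,k}\ast\psi_{j,k}$ is an integral over a short path in parameter space whose logarithmic derivatives $s\partial_s$ and $t\partial_t$ preserve the decay/cancellation class of $\psi_{s,t}$. Hence the kernel of $R_\alpha$ gains a factor of $\alpha$ without losing regularity, and composing with the $g$-function estimate yields $\|R_\alpha f\|_{L^p}\lesssim \alpha\|f\|_{L^p}$ for $1<p<\infty$.

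For the spatial quadrature errors $R_{\alpha,N}^{(1)}$ and $R_{\alpha,N}^{(2)}$, the analogous gain comes from the H\"older regularity of $(z',u')\mapsto \psi_{j,k}\ast f(z',u')$ at its natural scale $2^{-\alpha j}\times 2^{-2\alpha\min(j,k)}$, evaluated against the much smaller cube or rectangle of scale $2^{-\alpha(j+N)}\times 2^{-2\alpha(k+N)}$. This yields a block-wise gain of $2^{-N\alpha\delta}$ uniformly in $(j,k)$; inserting the standard flag $g$-function estimate again controls the resulting operator in $L^p$ and delivers the $\tfrac12$ bound once $N$ is chosen sufficiently large (depending on the earlier choice of $\alpha$).

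The main obstacle is the second inequality in the theorem, the boundedness on the molecular space $\mathcal{M}_{flag}^{M'+\delta}(\mathbb{H}^n)$. The difficulty is twofold: the product molecular norm of Definition \ref{product molecular} encodes four differential inequalities (\ref{DI1})--(\ref{DI4}) together with product moment conditions in both $(z,u)$ and $v$; and the flag molecular space is accessed only through lifting and projection. My plan is to lift $R_\alpha$, $R_{\alpha,N}^{(1)}$ and $R_{\alpha,N}^{(2)}$ to the product group using the tensorization $\psi=\pi\Psi$ from (\ref{Psi}) and the intertwining identity $T(\pi F)=\pi(\widetilde T F)$ of Lemma \ref{intertwine}, and then verify the four product inequalities and the moment conditions for the lifted kernels directly from almost orthogonality, using the vanishing moments of $\psi^{(1)}$ and $\psi^{(2)}$ to absorb factors $2^{-\epsilon(|j-j'|+|k-k'|)}$ into the required decay and to propagate the small constants $\alpha$ or $2^{-N\alpha\delta}$ through the resulting geometric sums. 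Taking $M'$ much larger than $M$ compensates for the loss of derivatives and decay inherent in passing through almost orthogonality, and yields the stated bound in $\mathcal{M}_{flag}^{M'+\delta}(\mathbb{H}^n)$ after taking the infimum over lifts.
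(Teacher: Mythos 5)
Your high-level strategy — gain a small factor $\alpha$ for $R_\alpha$ (mean value theorem in the scaling parameter) and $2^{-N}$ for $R_{\alpha,N}^{(1)},R_{\alpha,N}^{(2)}$ (H\"older regularity of $\psi_{j,k}\ast f$ at its natural scale versus the much smaller $N$-shifted grid), then lift to $\mathbb{H}^n\times\mathbb{R}$ via Lemma \ref{intertwine} to access the flag molecular norm — matches the structure of the paper's argument. Your $L^p$ route, however, is different: you propose to estimate directly with the continuous $g$-function and almost orthogonality, whereas the paper lifts both the $L^p$ and the molecular estimates, writing $f=\pi F$ with $\Vert F\Vert_{L^p(\mathbb{H}^n\times\mathbb{R})}\lesssim\Vert f\Vert_{L^p(\mathbb{H}^n)}$ (a nontrivial surjectivity-with-norm-control result borrowed from \cite{MRS}) and then applying one unified theorem on the product group. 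Your $g$-function route is plausible in principle, but you should be aware you are substituting a Fefferman--Stein vector-valued argument that you have not spelled out.

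The genuine gap is in the molecular half. You write that you will ``verify the four product inequalities and the moment conditions for the lifted kernels directly from almost orthogonality.'' This conflates two different things: (\ref{DI1})--(\ref{DI4}) are conditions on a \emph{function} $F$ that qualify it for membership in $\mathcal{M}_{product}^{M+\delta,M_1,M_2}$, not conditions on a \emph{kernel}. What the argument actually requires is a statement that an operator on $\mathbb{H}^n\times\mathbb{R}$ whose kernel satisfies Calder\'on--Zygmund cancellation and smoothness estimates of high order (with small constants) maps $\mathcal{M}_{product}^{M'+\delta}$ into itself with small operator norm. This is exactly the content of Theorem \ref{product molecular proof} in the paper, a product $T1$-type result which is itself proved by extending the one-parameter $T1$-type Theorem \ref{standard molecular proof} to Banach-space-valued molecules. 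That is a substantial piece of machinery (the paper spends several pages on Cases 1--4 checking size and H\"older estimates using Meyer/Torres decompositions), and your proposal does not name it or supply a substitute; you would effectively be re-deriving it in an ad hoc way. A second, smaller error: you say $M'$ should be taken \emph{much larger than} $M$ to compensate for loss of decay through almost orthogonality, but the theorem statement and the paper's proof both take $M'$ \emph{sufficiently small} compared to $M$. The direction is backwards — the component functions carry $M$ moments and $M$ orders of decay, and the error operator only preserves a \emph{weaker} molecular class $\mathcal{M}_{flag}^{M'+\delta}$ with $M'<M$ after the loss.
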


With Theorem \ref{key boundedness} in hand we obtain that the operator 
\begin{equation*}
S_{\alpha ,N}\equiv I-R_{\alpha }-R_{\alpha ,N}^{\left( 1\right)
}f-R_{\alpha ,N}^{\left( 2\right) }
\end{equation*}%
is bounded and invertible on $\mathcal{M}_{flag}^{M^{\prime }+\delta }\left( 
\mathbb{H}^{n}\right) $. It follows that with $\widetilde{\psi }_{\mathcal{Q}%
}\equiv S_{\alpha ,N}^{-1}\psi _{\mathcal{Q}}$ and $\widetilde{\psi }_{%
\mathcal{R}}\equiv S_{\alpha ,N}^{-1}\psi _{\mathcal{R}}$, 
\begin{equation}
f\left( z,u\right) =\sum_{\mathcal{Q}\in \mathsf{Q}}f_{\mathcal{Q}}\ 
\widetilde{\psi }_{\mathcal{Q}}\left( z,u\right) +\sum_{\mathcal{R}\in 
\mathsf{R}_{vert}}f_{\mathcal{R}}\ \widetilde{\psi }_{\mathcal{R}}\left(
z,u\right) ,\ \ \ \ \ f\in \mathcal{M}_{flag}^{M^{\prime }+\delta }\left( 
\mathbb{H}^{n}\right) ,  \label{discrete formula}
\end{equation}%
where $\widetilde{\psi }_{\mathcal{Q}}$ and $\widetilde{\psi }_{\mathcal{R}}$
are in $\mathcal{M}_{flag}^{M^{\prime }+\delta }\left( \mathbb{H}^{n}\right) 
$, and the convergence in (\ref{discrete formula}) is in both $L^{p}\left( 
\mathbb{H}^{n}\right) $ and in the Banach space $\mathcal{M}%
_{flag}^{M^{\prime }+\delta }\left( \mathbb{H}^{n}\right) $. This finally is
the \emph{wavelet} form of the Calder\'{o}n reproducing formula given in the
statement of Theorem \ref{discretethm}. The same argument shows that (\ref%
{discrete formula}) holds for $f\in L^{p}\left( \mathbb{H}^{n}\right) $ with
convergence in $L^{p}\left( \mathbb{H}^{n}\right) $ provided $1<p<\infty $.
In fact we obtain that (\ref{discrete formula}) holds for $f$ in any Banach
space$\ \mathcal{X}\left( \mathbb{H}^{n}\right) $ with convergence in $%
\mathcal{X}\left( \mathbb{H}^{n}\right) $ provided we have operator bounds%
\begin{equation*}
\left\Vert R_{\alpha }f\right\Vert _{\mathcal{X}\left( \mathbb{H}^{n}\right)
}+\left\Vert R_{\alpha ,N}^{\left( 1\right) }f\right\Vert _{\mathcal{X}%
\left( \mathbb{H}^{n}\right) }+\left\Vert R_{\alpha ,N}^{\left( 2\right)
}f\right\Vert _{\mathcal{X}\left( \mathbb{H}^{n}\right) }\leq \frac{1}{2}%
\left\Vert f\right\Vert _{\mathcal{X}\left( \mathbb{H}^{n}\right) },\ \ \ \
\ f\in \mathcal{X}\left( \mathbb{H}^{n}\right) .
\end{equation*}

\bigskip

We turn first to proving the flag molecular estimates in (\ref{error op
bounds}), but only for $\left\Vert R_{\alpha ,N}^{\left( 1\right)
}f\right\Vert _{\mathcal{M}_{flag}^{M^{\prime }+\delta }\left( \mathbb{H}%
^{n}\right) }$ and $\left\Vert R_{\alpha ,N}^{\left( 2\right) }f\right\Vert
_{\mathcal{M}_{flag}^{M^{\prime }+\delta }\left( \mathbb{H}^{n}\right) }$,
as the estimate for $\left\Vert R_{\alpha }f\right\Vert _{\mathcal{M}%
_{flag}^{M^{\prime }+\delta }\left( \mathbb{H}^{n}\right) }$ is similar, but
easier. We will use the following special $T1$ type theorem on the
Heisenberg group $\mathbb{H}^{n}$ (see \cite{H1}, \cite{H2} for the
Euclidean case), to prove a corresponding product version below, which is
then used to obtain the aforementioned flag molecular estimates. Recall the
one-parameter molecular space $\mathcal{M}^{M^{\prime }+\delta }\left( 
\mathbb{H}^{n}\right) $ in Definition \ref{one para mol}.

\begin{theorem}
\label{standard molecular proof}Suppose that $T:L^{2}\left( \mathbb{H}%
^{n}\right) \rightarrow L^{2}\left( \mathbb{H}^{n}\right) $ is a bounded
linear operator with kernel $K\left( \left( z,u\right) ,\left( z^{\prime
},u^{\prime }\right) \right) $, i.e.%
\begin{equation*}
Tf\left( z,u\right) =\int_{\mathbb{H}^{n}}K\left( \left( z,u\right) ,\left(
z^{\prime },u^{\prime }\right) \right) f\left( z^{\prime },u^{\prime
}\right) dz^{\prime }du^{\prime }.
\end{equation*}%
Suppose furthermore that $K$ satisfies%
\begin{eqnarray*}
\int_{\mathbb{H}^{n}}z^{\alpha }u^{\beta }\ K\left( \left( z,u\right)
,\left( z^{\prime },u^{\prime }\right) \right) dzdu &=&0, \\
\int_{\mathbb{H}^{n}}\left( z^{\prime }\right) ^{\alpha }\left( u^{\prime
}\right) ^{\beta }\ K\left( \left( z,u\right) ,\left( z^{\prime },u^{\prime
}\right) \right) dz^{\prime }du^{\prime } &=&0,
\end{eqnarray*}%
for all $0\leq \left\vert \alpha \right\vert ,\beta $; and%
\begin{eqnarray*}
&&\left\vert \partial _{z}^{\alpha }\partial _{u}^{\beta }\partial
_{z^{\prime }}^{\alpha ^{\prime }}\partial _{u^{\prime }}^{\beta ^{\prime
}}K\left( \left( z,u\right) ,\left( z^{\prime },u^{\prime }\right) \right)
\right\vert \\
&\leq &A_{\alpha ,\beta ,\alpha ^{\prime },\beta ^{\prime }}\frac{1}{%
\left\vert \left( z,u\right) \circ \left( z^{\prime },u^{\prime }\right)
^{-1}\right\vert ^{Q+\left\vert \alpha \right\vert +2\beta +\left\vert
\alpha ^{\prime }\right\vert +2\beta ^{\prime }}},
\end{eqnarray*}%
for all $0\leq \left\vert \alpha \right\vert ,\beta ,\left\vert \alpha
^{\prime }\right\vert ,\beta ^{\prime }$. Then 
\begin{eqnarray*}
T &:&L^{p}\left( \mathbb{H}^{n}\right) \rightarrow L^{p}\left( \mathbb{H}%
^{n}\right) ,\ \ \ \ \ 1<p<\infty , \\
T &:&\mathcal{M}^{M^{\prime }+\delta }\left( \mathbb{H}^{n}\right)
\rightarrow \mathcal{M}^{M^{\prime }+\delta }\left( \mathbb{H}^{n}\right) ,\
\ \ \ \ \text{all }M^{\prime }\text{ and }0<\delta <1,
\end{eqnarray*}%
and moreover,%
\begin{equation*}
\left\Vert Tf\right\Vert _{L^{p}\left( \mathbb{H}^{n}\right) }\leq
C_{p}\left\Vert f\right\Vert _{L^{p}\left( \mathbb{H}^{n}\right) }\text{ and 
}\left\Vert Tf\right\Vert _{\mathcal{M}^{M^{\prime }+\delta }\left( \mathbb{H%
}^{n}\right) }\leq C_{M^{\prime },\delta }\left\Vert f\right\Vert _{\mathcal{%
M}^{M^{\prime }+\delta }\left( \mathbb{H}^{n}\right) },
\end{equation*}%
where the constants $C_{p}$ and $C_{M^{\prime },\delta }$ go to zero as $%
A_{\alpha ,\beta ,\alpha ^{\prime },\beta ^{\prime }}\rightarrow 0$ for
sufficiently many of the indices $\alpha ,\beta ,\alpha ^{\prime },\beta
^{\prime }$.
\end{theorem}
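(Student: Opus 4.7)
The proof splits naturally into two parts: the $L^{p}$ boundedness, and the boundedness on the one-parameter molecular space $\mathcal{M}^{M'+\delta}(\mathbb{H}^n)$.

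For the $L^{p}$ boundedness with $1<p<\infty$, my plan is to invoke classical Calder\'{o}n--Zygmund theory on the space of homogeneous type $\mathbb{H}^n$ equipped with the Heisenberg quasi-metric $|(z,u)\circ(z',u')^{-1}|$. The hypothesis provides $L^{2}$ boundedness, and specializing the differential inequalities to $|\alpha|+2\beta+|\alpha'|+2\beta'\le 1$ shows that $K$ satisfies the standard size and Lipschitz/H\"ormander kernel conditions on $\mathbb{H}^n$. A Calder\'{o}n--Zygmund decomposition adapted to $\mathbb{H}^n$ together with Marcinkiewicz interpolation and duality then yields $T:L^{p}\to L^{p}$ for $1<p<\infty$, and the resulting constant $C_p$ depends only on the $L^{2}$ norm and on $A_{\alpha,\beta,\alpha',\beta'}$ for $|\alpha|+2\beta+|\alpha'|+2\beta'\le 1$.

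For the molecular boundedness, I would verify each defining condition of $\mathcal{M}^{M'+\delta}(\mathbb{H}^n)$ directly for $Tf$, given $f\in\mathcal{M}^{M'+\delta}(\mathbb{H}^n)$ of unit norm. The moment conditions on $Tf$ follow from a single application of Fubini together with the first cancellation hypothesis $\int z^{\alpha}u^{\beta}K((z,u),(z',u'))\,dzdu=0$. For the size/derivative estimate on $\partial^{\gamma}Tf(z_0,u_0)$, I would distinguish the case $|z_0|^{2}+|u_0|\le 2$ (handled by pointwise bounds derived from the CZ kernel estimates near the diagonal, the $L^{2}$ bound, and the decay of $f$) from the case $|z_0|^{2}+|u_0|\gg 1$. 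In the latter case, split the defining integral
\[
\partial^{\gamma}Tf(z_0,u_0)=\int\partial^{\gamma}_{(z,u)}K\bigl((z_0,u_0),(z',u')\bigr)f(z',u')\,dz'du'
\]
into a near region, where $|(z_0,u_0)\circ(z',u')^{-1}|\le\tfrac{1}{2}(|z_0|^{2}+|u_0|)^{1/2}$, and a far region. In the near region, use the CZ size bound on $K$ together with the molecular decay of $f$ (which is small there, since $(z',u')$ must be large). In the far region, use the second cancellation hypothesis on $K$ (i.e.\ moments in $(z',u')$) to subtract from $f(z',u')$ its Taylor polynomial of order $M'$ at $(z_0,u_0)$, then apply the smoothness of $f$. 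The H\"older-type condition on $\partial^{\gamma}Tf$ at two nearby points is obtained by the same dichotomy applied to the difference of kernels, exploiting the Calder\'{o}n--Zygmund smoothness of $K$ in the first variable.

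The main obstacle will be the bookkeeping for the Taylor expansion argument on the Heisenberg group, where polynomials and the mean value theorem are taken with respect to left-invariant vector fields, and one must match the scales of near/far decomposition with the homogeneity profile $(1+|z|^{2}+|u|)^{-(Q+M'+|\gamma|+\delta)/2}$. Once this is carried out, it is clear from the explicit bounds that only finitely many of the constants $A_{\alpha,\beta,\alpha',\beta'}$, with $|\alpha|+2\beta+|\alpha'|+2\beta'$ bounded in terms of $M'$ and $\delta$, appear; hence both $C_p$ and $C_{M',\delta}$ tend to $0$ as the relevant $A$'s tend to $0$, which is the final assertion.
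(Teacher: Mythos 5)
Your $L^{p}$ argument and the derivation of the moment conditions on $Tf$ via Fubini are fine, but there is a genuine gap in the near/far dichotomy you propose for the size estimate when $\left|z_{0}\right|^{2}+\left|u_{0}\right|\gg1$, and the paper in fact uses a finer, three-way Meyer decomposition precisely to avoid this.

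In your ``near region'' $\left|\left(z_{0},u_{0}\right)\circ\left(z^{\prime},u^{\prime}\right)^{-1}\right|\leq\tfrac12\left(\left|z_{0}\right|^{2}+\left|u_{0}\right|\right)^{1/2}$ you propose to use only the CZ size bound on $K$ together with the decay of $f$. This fails: $\left|\partial^{\gamma}K\right|\lesssim\left|\left(z_{0},u_{0}\right)\circ\left(z^{\prime},u^{\prime}\right)^{-1}\right|^{-Q-\left|\gamma\right|}$ is not integrable over this region, so even though $\left|f\left(z^{\prime},u^{\prime}\right)\right|$ is uniformly of order $\left(1+\left|z_{0}\right|^{2}+\left|u_{0}\right|\right)^{-\left(Q+M^{\prime}\right)/2}$ there, the integral diverges. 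You must subtract the Taylor polynomial of $f$ at $\left(z_{0},u_{0}\right)$ to gain the extra factor $\left|\left(z_{0},u_{0}\right)\circ\left(z^{\prime},u^{\prime}\right)^{-1}\right|^{M^{\prime}+\delta}$, and then separately control the integral of $K$ against the Taylor polynomial using boundedness of the operators $T_{\left(\alpha^{\prime},\beta^{\prime}\right);\left(\alpha,\beta\right)}$ applied to the cutoff (the paper's terms $III$, $B$, and $IV$). Conversely, your ``far region'' lumps together two regimes that require different treatments. Where $\left(z^{\prime},u^{\prime}\right)$ is near the origin, $\left|f\left(z^{\prime},u^{\prime}\right)\right|=O\left(1\right)$ and the naive bound gives only $\left|\left(z_{0},u_{0}\right)\right|^{-Q}$ decay, not the required $\left|\left(z_{0},u_{0}\right)\right|^{-Q-2M^{\prime}-\delta}$; there you need to expand $K$ in its \emph{second} variable around the origin and exploit the vanishing moments of $f$ (or rather of $f_{J}$, with error terms controlled by the moments of $f_{I}$ and $f_{L}$). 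Subtracting the Taylor polynomial of $f$ \emph{at} $\left(z_{0},u_{0}\right)$ in this far-from-$\left(z_{0},u_{0}\right)$ region does not help, since that polynomial grows rather than approximates $f$ there, and the second cancellation hypothesis annihilates polynomials in $\left(z^{\prime},u^{\prime}\right)$, not in $\left(z^{\prime},u^{\prime}\right)\circ\left(z_{0},u_{0}\right)^{-1}$, with the discrepancy producing uncontrolled powers of $\left|\left(z_{0},u_{0}\right)\right|$. The paper resolves all of this with the three-way cutoff $1=I+J+L$ following Meyer, using Taylor expansion of $f$ on $\operatorname{supp}I$, Taylor expansion of $K$ on $\operatorname{supp}J$, and pure size bounds on $\operatorname{supp}L$. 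Finally, for the H\"{o}lder estimate on $\partial^{\gamma}Tf$ at two nearby points your phrase ``the same dichotomy applied to the difference of kernels'' hides the real work: one needs the Meyer-type identity of Torres (the paper's Lemma~\ref{3}) that reorganizes the difference of Taylor expansions at the two points into four tractable integrals; without that identity the bookkeeping of the double expansion does not close.
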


In order to obtain flag molecular estimates, we will use the technique of
lifting to the product space $\mathcal{M}_{product}^{M^{\prime }+\delta
}\left( \mathbb{H}^{n}\times \mathbb{R}\right) $ together with the following
special product $T1$ type theorem on the product group $\mathbb{H}^{n}\times 
\mathbb{R}$.

\begin{theorem}
\label{product molecular proof}Suppose that $T:L^{2}\left( \mathbb{H}%
^{n}\times \mathbb{R}\right) \rightarrow L^{2}\left( \mathbb{H}^{n}\times 
\mathbb{R}\right) $ is a bounded linear operator with kernel $K\left( \left[
\left( z,u\right) ,v\right] ,\left[ \left( z^{\prime },u^{\prime }\right)
,v^{\prime }\right] \right) $, i.e.%
\begin{equation*}
Tf\left( \left( z,u\right) ,v\right) =\int_{\mathbb{H}^{n}\times \mathbb{R}%
}K\left( \left[ \left( z,u\right) ,v\right] ,\left[ \left( z^{\prime
},u^{\prime }\right) ,v^{\prime }\right] \right) f\left( \left( z^{\prime
},u^{\prime }\right) ,v^{\prime }\right) dz^{\prime }du^{\prime }dv^{\prime
}.
\end{equation*}%
Suppose furthermore that $K$ satisfies%
\begin{eqnarray*}
\int_{\mathbb{H}^{n}}z^{\alpha }u^{\beta }\ K\left( \left[ \left( z,u\right)
,v\right] ,\left[ \left( z^{\prime },u^{\prime }\right) ,v^{\prime }\right]
\right) dzdu &=&0, \\
\int_{\mathbb{H}^{n}}\left( z^{\prime }\right) ^{\alpha }\left( u^{\prime
}\right) ^{\beta }\ K\left( \left[ \left( z,u\right) ,v\right] ,\left[
\left( z^{\prime },u^{\prime }\right) ,v^{\prime }\right] \right) dz^{\prime
}du^{\prime } &=&0, \\
\int_{\mathbb{R}}v^{\gamma }\ K\left( \left[ \left( z,u\right) ,v\right] ,%
\left[ \left( z^{\prime },u^{\prime }\right) ,v^{\prime }\right] \right) dv
&=&0, \\
\int_{\mathbb{R}}\left( v^{\prime }\right) ^{\gamma }\ K\left( \left[ \left(
z,u\right) ,v\right] ,\left[ \left( z^{\prime },u^{\prime }\right)
,v^{\prime }\right] \right) dv^{\prime } &=&0,
\end{eqnarray*}%
for all $0\leq \left\vert \alpha \right\vert ,\beta ,\gamma $; and%
\begin{eqnarray*}
&&\left\vert \partial _{z}^{\alpha }\partial _{u}^{\beta }\partial
_{v}^{\gamma }\partial _{z^{\prime }}^{\alpha ^{\prime }}\partial
_{u^{\prime }}^{\beta ^{\prime }}\partial _{v^{\prime }}^{\gamma ^{\prime
}}K\left( \left[ \left( z,u\right) ,v\right] ,\left[ \left( z^{\prime
},u^{\prime }\right) ,v^{\prime }\right] \right) \right\vert \\
&\leq &A_{\alpha ,\beta ,\gamma ,\alpha ^{\prime },\beta ^{\prime },\gamma
^{\prime }}\frac{1}{\left\vert \left( z,u\right) \circ \left( z^{\prime
},u^{\prime }\right) ^{-1}\right\vert ^{Q+\left\vert \alpha \right\vert
+2\beta +\left\vert \alpha ^{\prime }\right\vert +2\beta ^{\prime }}}\frac{1%
}{\left\vert v-v^{\prime }\right\vert ^{1+\gamma _{1}+\gamma _{2}}},
\end{eqnarray*}%
for all $0\leq \left\vert \alpha \right\vert ,\beta ,\gamma ,\left\vert
\alpha ^{\prime }\right\vert ,\beta ^{\prime },\gamma ^{\prime }$. Then 
\begin{eqnarray*}
T &:&L^{p}\left( \mathbb{H}^{n}\times \mathbb{R}\right) \rightarrow
L^{p}\left( \mathbb{H}^{n}\times \mathbb{R}\right) ,\ \ \ \ \ 1<p<\infty , \\
T &:&\mathcal{M}_{product}^{M^{\prime }+\delta }\left( \mathbb{H}^{n}\times 
\mathbb{R}\right) \rightarrow \mathcal{M}_{product}^{M^{\prime }+\delta
}\left( \mathbb{H}^{n}\times \mathbb{R}\right) ,\ \ \ \ \ \text{all }%
M^{\prime }\text{ and }0<\delta <1,
\end{eqnarray*}%
and moreover,%
\begin{equation*}
\left\Vert Tf\right\Vert _{L^{p}\left( \mathbb{H}^{n}\right) }\leq
C_{p}\left\Vert f\right\Vert _{L^{p}\left( \mathbb{H}^{n}\right) }\text{ and 
}\left\Vert Tf\right\Vert _{\mathcal{M}^{M^{\prime }+\delta }\left( \mathbb{H%
}^{n}\right) }\leq C_{M^{\prime },\delta }\left\Vert f\right\Vert _{\mathcal{%
M}^{M^{\prime }+\delta }\left( \mathbb{H}^{n}\right) },
\end{equation*}%
where the constants $C_{p}$ and $C_{M^{\prime },\delta }$ go to zero as $%
A_{\alpha ,\beta ,\gamma ,\alpha ^{\prime },\beta ^{\prime },\gamma ^{\prime
}}\rightarrow 0$ for sufficiently many of the indices $\alpha ,\beta ,\gamma
,\alpha ^{\prime },\beta ^{\prime },\gamma ^{\prime }$.
\end{theorem}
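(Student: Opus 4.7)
The plan is to bootstrap the one-parameter $T1$ result (Theorem \ref{standard molecular proof}) into the product setting by exploiting the cleanly factored hypotheses on $K$. The cancellation conditions and differential bounds on $K$ separate into a Heisenberg part (in the $(z,u),(z^{\prime},u^{\prime})$ variables) and a Euclidean part (in the $v,v^{\prime}$ variables): for each fixed pair $v,v^{\prime}$ the slice kernel $K_{v,v^{\prime}}$ satisfies all the hypotheses of Theorem \ref{standard molecular proof} on $\mathbb{H}^{n}$, and symmetrically for each fixed $(z,u),(z^{\prime},u^{\prime})$ the slice kernel is a standard one-dimensional Calder\'{o}n-Zygmund kernel on $\mathbb{R}$ with vanishing moments of all orders. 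Thus the product $T1$ theorem should follow by applying the one-parameter theorem in each factor in turn.

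For the $L^{p}$ bound I would first apply Theorem \ref{standard molecular proof} on $\mathbb{H}^{n}$ with $v,v^{\prime}$ as parameters, then iterate in the second variable using a vector-valued Calder\'{o}n-Zygmund/Littlewood-Paley estimate on $\mathbb{R}$, in the same spirit as the proof of Theorem \ref{p>1}. The operator norm depends on only finitely many of the constants $A_{\alpha,\beta,\gamma,\alpha^{\prime},\beta^{\prime},\gamma^{\prime}}$ and shrinks to zero with them, inheriting the quantitative dependence from the one-parameter case.

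For the molecular space estimate I would directly verify the moment conditions (\ref{Mo1}) and the four differential inequalities (\ref{DI1})--(\ref{DI4}) of Definition \ref{product molecular} for $Tf$. The moment conditions on $Tf$ follow immediately by Fubini from the separate cancellation conditions imposed on $K$ in the $z,u$ and $v$ variables. The size bound (\ref{DI1}) and the pure-variable H\"{o}lder estimates (\ref{DI2}) and (\ref{DI3}) are then reduced to the one-parameter situation: differentiating in $(z,u)$ and $v$ and applying Theorem \ref{standard molecular proof} (and its one-dimensional counterpart) to the appropriate derivative-of-$K$ slice kernel produces the required factor in each variable, and the product structure of $\mathcal{M}_{product}^{M^{\prime}+\delta}$ lets the two estimates be combined.

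The main obstacle will be the mixed double-difference estimate (\ref{DI4}), which demands simultaneous H\"{o}lder regularity of order $\delta$ in both the Heisenberg variable $(z,u)$ and the Euclidean variable $v$. To handle it I plan to split the double difference as an iterated difference: first the $v$-difference, then the $(z,u)$-difference. The inner difference is controlled by combining the $v$-smoothness of $f$ (coming from the $\mathcal{M}_{product}^{M^{\prime}+\delta}$ norm) with the $v$-smoothness of $K$, producing a kernel that still satisfies the Heisenberg hypotheses of Theorem \ref{standard molecular proof} with an extra factor $|v-v^{\prime}|^{\delta}(1+|v|)^{-1-M-\frac{M_{2}}{2}-2\delta}$; the outer Heisenberg difference is then controlled by invoking Theorem \ref{standard molecular proof} on this auxiliary kernel. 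The required constants appear linearly in the final bound and each can be made small by hypothesis, yielding the claimed smallness of $C_{M^{\prime},\delta}$.
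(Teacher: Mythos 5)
Your overall strategy is sound and, at bottom, the same as the paper's: reduce the product $T1$ theorem to the one-parameter Theorem \ref{standard molecular proof} by iterating once in the Heisenberg factor and once in the Euclidean factor. The paper packages this iteration differently: it first proves a Banach-space-valued extension of Theorem \ref{standard molecular proof} (Theorem \ref{standard molecular proof X}), then takes $X=\mathcal{M}^{M'+\delta}(\mathbb{R})$ and uses the isometric identification $\mathcal{M}_{product}^{M'+\delta}(\mathbb{H}^n\times\mathbb{R})=\mathcal{M}^{M'+\delta}(\mathbb{H}^n;X)$, so that all four product differential inequalities (\ref{DI1})--(\ref{DI4}) are encoded at once in the $X$-valued norm and inherited automatically. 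What the paper's route buys you is that you never re-derive the four inequalities separately; the $v$-direction $T1$ estimates live entirely inside the target Banach space. Your proposal instead unwinds the iteration into a direct verification of (\ref{Mo1}) and (\ref{DI1})--(\ref{DI4}), which is a legitimate alternative organization, at the cost of more bookkeeping.

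The one place your sketch is too loose is the mixed double-difference (\ref{DI4}). Taking the $v$-difference of $Tf$ only differences $K$ in its first $v$-slot; the result is still a joint integral over $(z'',u'',v'')$, and the $v''$-integration is \emph{not} absolutely convergent. It must be tamed by the vanishing $v''$-moments of $K$ together with the decay and $v''$-smoothness of $f$, via a Meyer-type splitting in the $v''$-variable, before (or interleaved with) the Heisenberg Meyer-type splitting. Saying the $v$-difference ``produces a kernel satisfying the Heisenberg hypotheses'' conflates the kernel with the output function: the $v''$-integrated object is not a Heisenberg kernel applied to a fixed Heisenberg function because the weight produced by the $v''$-integration still depends on both $(z,u)$ and $(z'',u'')$. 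Concretely, each of the four terms $I$--$IV$ of the Heisenberg decomposition in Lemma \ref{3} requires its own further $v$-direction decomposition (mirroring the $W_1,\dots,W_4$ split in Lemma \ref{almostorthoproduct}), and the constants from the $1$-dimensional argument appear multiplicatively in the constants $A_{\alpha,\beta,\gamma,\alpha',\beta',\gamma'}$ rather than ``linearly.'' If you develop your direct-verification route, you should make this two-level Meyer decomposition explicit; the paper's vectorization sidesteps exactly this bookkeeping because the $X=\mathcal{M}^{M'+\delta}(\mathbb{R})$ norm already internalizes the $1$-dimensional $T1$ estimates.
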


We postpone the proofs of these $T1$ type theorems, and turn now to using
them to complete the proof of Theorem \ref{key boundedness}, which in turn
completes the proof of Theorem \ref{discretethm}.

\subsection{Boundedness on the flag molecular space}

We prove the estimates for the operators $R_{\alpha ,N}^{\left( 1\right) }$
and $R_{\alpha ,N}^{\left( 2\right) }$ in Theorem \ref{key boundedness}
separately, beginning with $R_{\alpha ,N}^{\left( 2\right) }$.

\subsubsection{The operator $R_{\protect\alpha ,N}^{\left( 2\right) }$}

Here we prove the boundedness of the error operator%
\begin{eqnarray*}
R_{\alpha ,N}^{\left( 2\right) }f\left( z,u\right) &=&c_{\alpha
}\sum_{j>k}\sum_{\mathcal{R}\in \mathsf{R}\left( j,k\right) }\int_{\mathcal{R%
}}\check{\psi}_{j,k}\left( \left( z,u\right) \circ \left( z^{\prime
},u^{\prime }\right) ^{-1}\right) \\
&&\times \left[ \psi _{j,k}\ast f\left( z^{\prime },u^{\prime }\right) -\psi
_{j,k}\ast f\left( z_{\mathcal{R}},u_{\mathcal{R}}\right) \right] dz^{\prime
}du^{\prime }
\end{eqnarray*}%
on the flag molecular space $\mathcal{M}_{flag}^{M^{\prime }+\delta }\left( 
\mathbb{H}^{n}\right) $ where $M^{\prime }$ is taken sufficiently small
compared to $M$ as in the component functions. We begin by lifting the
desired inequality to the product group $\mathbb{H}^{n}\times \mathbb{R}$
and reducing matters to Theorem \ref{product molecular proof}. So we begin
by writing%
\begin{eqnarray*}
&&R_{\alpha ,N}^{\left( 2\right) }f\left( z,u\right) =c_{\alpha
}\sum_{j>k}\sum_{\mathcal{R}\in \mathsf{R}\left( j,k\right) }\int_{\mathcal{R%
}}\check{\psi}_{j,k}\left( \left( z,u\right) \circ \left( z^{\prime
},u^{\prime }\right) ^{-1}\right) \\
&&\times \int \left[ \psi _{j,k}\left( \left( z^{\prime },u^{\prime }\right)
\circ \left( z^{\prime \prime },u^{\prime \prime }\right) ^{-1}\right) -\psi
_{j,k}\ast f\left( \left( z_{\mathcal{R}},u_{\mathcal{R}}\right) \circ
\left( z^{\prime \prime },u^{\prime \prime }\right) ^{-1}\right) \right]
f\left( z^{\prime \prime },u^{\prime \prime }\right) dz^{\prime \prime
}du^{\prime \prime }dz^{\prime }du^{\prime } \\
&=&c_{\alpha }\sum_{j>k}\sum_{\mathcal{R}\in \mathsf{R}\left( j,k\right)
}\int_{\mathcal{R}}\left\{ \int \check{\psi}_{j}^{\left( 1\right) }\left(
z-z^{\prime },u-u^{\prime }+\func{Im}z\overline{z^{\prime }}-w\right) \check{%
\psi}_{k}^{\left( 2\right) }\left( w\right) dw\right\} \\
&&\times \int \left\{ \int \psi _{j}^{\left( 1\right) }\left( z^{\prime
}-z^{\prime \prime },u^{\prime }-u^{\prime \prime }+\func{Im}z^{\prime }%
\overline{z^{\prime \prime }}-w^{\prime }\right) \check{\psi}_{k}^{\left(
2\right) }\left( w^{\prime }\right) \right. \\
&&\ \ \ \ \ \left. -\int \psi _{j}^{\left( 1\right) }\left( z_{\mathcal{R}%
}-z^{\prime \prime },u_{\mathcal{R}}-u^{\prime \prime }+\func{Im}z_{\mathcal{%
R}}\overline{z^{\prime \prime }}-w^{\prime }\right) \psi _{k}^{\left(
2\right) }\left( w^{\prime }\right) \right\} dw^{\prime }\int F\left(
z^{\prime \prime },u^{\prime \prime }-w^{\prime \prime },w^{\prime \prime
}\right) dw^{\prime \prime },
\end{eqnarray*}%
where%
\begin{equation*}
f\left( z,u\right) =\pi F\left( z,u\right) =\int F\left( \left( z,u-w\right)
,w\right) dw
\end{equation*}%
and $F\left( \left( z,u\right) ,w\right) \in \mathcal{M}_{product}^{M^{%
\prime }+\delta }\left( \mathbb{H}^{n}\times \mathbb{R}\right) $. We
continue with%
\begin{eqnarray*}
R_{\alpha ,N}^{\left( 2\right) }f\left( z,u\right) &=&c_{\alpha
}\sum_{j>k}\sum_{\mathcal{R}\in \mathsf{R}\left( j,k\right) }\int_{\mathcal{R%
}}\iiiint \check{\psi}_{j}^{\left( 1\right) }\left( z-z^{\prime
},u-u^{\prime }+\func{Im}z\overline{z^{\prime }}-w\right) \check{\psi}%
_{k}^{\left( 2\right) }\left( w\right) \\
&&\times \left\{ \psi _{j}^{\left( 1\right) }\left( z^{\prime }-z^{\prime
\prime },u^{\prime }-u^{\prime \prime }+\func{Im}z^{\prime }\overline{%
z^{\prime \prime }}-w^{\prime }\right) \right. \\
&&\ \ \ \ \ \ \ \ \ \ \left. -\psi _{j}^{\left( 1\right) }\left( z_{\mathcal{%
R}}-z^{\prime \prime },u_{\mathcal{R}}-u^{\prime \prime }+\func{Im}z_{%
\mathcal{R}}\overline{z^{\prime \prime }}-w^{\prime }\right) \right\} \check{%
\psi}_{k}^{\left( 2\right) }\left( w^{\prime }\right) \\
&&\times F\left( z^{\prime \prime },u^{\prime \prime }-w^{\prime \prime
},w^{\prime \prime }\right) dz^{\prime \prime }du^{\prime \prime }dw^{\prime
\prime }dw^{\prime }dwdz^{\prime }du^{\prime }.
\end{eqnarray*}%
Now for fixed $w^{\prime \prime }$ make the change of variable $u^{\prime
\prime }\rightarrow u^{\prime \prime }+w^{\prime \prime }$ (in the sense
that $u^{\prime \prime }\rightarrow \widetilde{u}^{\prime \prime }+w^{\prime
\prime }$ and we then rewrite $\widetilde{u}^{\prime \prime }$ as $u^{\prime
\prime }$) to obtain%
\begin{eqnarray*}
R_{\alpha ,N}^{\left( 2\right) }f\left( z,u\right) &=&c_{\alpha
}\sum_{j>k}\sum_{\mathcal{R}\in \mathsf{R}\left( j,k\right) }\int_{\mathcal{R%
}}\iiiint \check{\psi}_{j}^{\left( 1\right) }\left( z-z^{\prime
},u-u^{\prime }+\func{Im}z\overline{z^{\prime }}-w\right) \check{\psi}%
_{k}^{\left( 2\right) }\left( w\right) \\
&&\times \left\{ \psi _{j}^{\left( 1\right) }\left( z^{\prime }-z^{\prime
\prime },u^{\prime }-u^{\prime \prime }-w^{\prime \prime }+\func{Im}%
z^{\prime }\overline{z^{\prime \prime }}-w^{\prime }\right) \right. \\
&&\ \ \ \ \ \ \ \ \ \ \left. -\psi _{j}^{\left( 1\right) }\left( z_{\mathcal{%
R}}-z^{\prime \prime },u_{\mathcal{R}}-u^{\prime \prime }+\func{Im}z_{%
\mathcal{R}}\overline{z^{\prime \prime }}-w^{\prime }-w^{\prime \prime
}\right) \right\} \check{\psi}_{k}^{\left( 2\right) }\left( w^{\prime
}\right) \\
&&\times F\left( z^{\prime \prime },u^{\prime \prime },w^{\prime \prime
}\right) dz^{\prime \prime }du^{\prime \prime }dw^{\prime \prime }dw^{\prime
}dwdz^{\prime }du^{\prime }.
\end{eqnarray*}%
Then making a change of variable $w^{\prime }\rightarrow w^{\prime
}-w^{\prime \prime }$ (in the sense of the previous change of variable) we
get%
\begin{eqnarray*}
R_{\alpha ,N}^{\left( 2\right) }f\left( z,u\right) &=&c_{\alpha
}\sum_{j>k}\sum_{\mathcal{R}\in \mathsf{R}\left( j,k\right) }\int_{\mathcal{R%
}}\iiiint \check{\psi}_{j}^{\left( 1\right) }\left( z-z^{\prime
},u-u^{\prime }+\func{Im}z\overline{z^{\prime }}-w\right) \check{\psi}%
_{k}^{\left( 2\right) }\left( w\right) \\
&&\times \left\{ \psi _{j}^{\left( 1\right) }\left( z^{\prime }-z^{\prime
\prime },u^{\prime }-u^{\prime \prime }+\func{Im}z^{\prime }\overline{%
z^{\prime \prime }}-w^{\prime }\right) \right. \\
&&\ \ \ \ \ \ \ \ \ \ \left. -\psi _{j}^{\left( 1\right) }\left( z_{\mathcal{%
R}}-z^{\prime \prime },u_{\mathcal{R}}-u^{\prime \prime }+\func{Im}z_{%
\mathcal{R}}\overline{z^{\prime \prime }}-w^{\prime }\right) \right\} \check{%
\psi}_{k}^{\left( 2\right) }\left( w^{\prime }-w^{\prime \prime }\right) \\
&&\times F\left( z^{\prime \prime },u^{\prime \prime },w^{\prime \prime
}\right) dz^{\prime \prime }du^{\prime \prime }dw^{\prime \prime }dw^{\prime
}dwdz^{\prime }du^{\prime }.
\end{eqnarray*}%
Finally, making the change of variable $w\rightarrow w-w^{\prime }$ we get%
\begin{eqnarray*}
R_{\alpha ,N}^{\left( 2\right) }f\left( z,u\right) &=&c_{\alpha
}\sum_{j>k}\sum_{\mathcal{R}\in \mathsf{R}\left( j,k\right) }\int_{\mathcal{R%
}}\iiiint \check{\psi}_{j}^{\left( 1\right) }\left( z-z^{\prime
},u-u^{\prime }+\func{Im}z\overline{z^{\prime }}-w+w^{\prime }\right) \check{%
\psi}_{k}^{\left( 2\right) }\left( w-w^{\prime }\right) \\
&&\times \left\{ \psi _{j}^{\left( 1\right) }\left( z^{\prime }-z^{\prime
\prime },u^{\prime }-u^{\prime \prime }+\func{Im}z^{\prime }\overline{%
z^{\prime \prime }}-w^{\prime }\right) \right. \\
&&\ \ \ \ \ \ \ \ \ \ \left. -\psi _{j}^{\left( 1\right) }\left( z_{\mathcal{%
R}}-z^{\prime \prime },u_{\mathcal{R}}-u^{\prime \prime }+\func{Im}z_{%
\mathcal{R}}\overline{z^{\prime \prime }}-w^{\prime }\right) \right\} \check{%
\psi}_{k}^{\left( 2\right) }\left( w^{\prime }-w^{\prime \prime }\right) \\
&&\times F\left( z^{\prime \prime },u^{\prime \prime },w^{\prime \prime
}\right) dz^{\prime \prime }du^{\prime \prime }dw^{\prime \prime }dw^{\prime
}dwdz^{\prime }du^{\prime } \\
&=&\int \widetilde{R}_{\alpha ,N}^{\left( 2\right) }F\left( \left(
z,u-w\right) ,w\right) dw,
\end{eqnarray*}%
where the kernel of $\widetilde{R}_{\alpha ,N}^{\left( 2\right) }$ is given
by%
\begin{eqnarray*}
&&\widetilde{R}_{\alpha ,N}^{\left( 2\right) }\left[ \left( \left(
z,u\right) ,w\right) ,\left( \left( z^{\prime \prime },u^{\prime \prime
}\right) ,w^{\prime \prime }\right) \right] \\
&=&c_{\alpha }\sum_{j>k}\sum_{\mathcal{R}\in \mathsf{R}\left( j,k\right)
}\int_{\mathcal{R}}\int \check{\psi}_{j}^{\left( 1\right) }\left(
z-z^{\prime },u-u^{\prime }+\func{Im}z\overline{z^{\prime }}+w^{\prime
}\right) \check{\psi}_{k}^{\left( 2\right) }\left( w-w^{\prime }\right) \\
&&\times \left\{ \psi _{j}^{\left( 1\right) }\left( z^{\prime }-z^{\prime
\prime },u^{\prime }-u^{\prime \prime }+\func{Im}z^{\prime }\overline{%
z^{\prime \prime }}-w^{\prime }\right) \right. \\
&&\ \ \ \ \ \ \ \ \ \ \left. -\psi _{j}^{\left( 1\right) }\left( z_{\mathcal{%
R}}-z^{\prime \prime },u_{\mathcal{R}}-u^{\prime \prime }+\func{Im}z_{%
\mathcal{R}}\overline{z^{\prime \prime }}-w^{\prime }\right) \right\} \check{%
\psi}_{k}^{\left( 2\right) }\left( w^{\prime }-w^{\prime \prime }\right)
dz^{\prime }du^{\prime }dw^{\prime }.
\end{eqnarray*}

Now it suffices to show that%
\begin{equation*}
\widetilde{R}_{\alpha ,N}^{\left( 2\right) }F\in \mathcal{M}%
_{product}^{M^{\prime }+\delta }\left( \mathbb{H}^{n}\times \mathbb{R}\right)
\end{equation*}%
with small norm, since we then conclude that%
\begin{equation*}
R_{\alpha ,N}^{\left( 2\right) }f\in \mathcal{M}_{flag}^{M^{\prime }+\delta
}\left( \mathbb{H}^{n}\right)
\end{equation*}%
with small norm. To do this we need only check that the kernel of $%
\widetilde{R}_{\alpha ,N}^{\left( 2\right) }$ satisfies the conditions of
Theorem \ref{product molecular proof} with small bounds.

For this we rewrite the kernel in terms of Heisenberg group multiplication as%
\begin{eqnarray*}
&&\widetilde{R}_{\alpha ,N}^{\left( 2\right) }\left[ \left( \left(
z,u\right) ,w\right) ,\left( \left( z^{\prime \prime },u^{\prime \prime
}\right) ,w^{\prime \prime }\right) \right] \\
&=&c_{\alpha }\sum_{j>k}\sum_{\mathcal{R}\in \mathsf{R}\left( j,k\right)
}\int_{\mathcal{R}}\int \check{\psi}_{j}^{\left( 1\right) }\left( \left(
z,u\right) \circ \left( z^{\prime },u^{\prime }-w^{\prime }\right)
^{-1}\right) \check{\psi}_{k}^{\left( 2\right) }\left( w-w^{\prime }\right)
\\
&&\times \left\{ \psi _{j}^{\left( 1\right) }\left( \left( z^{\prime
},u^{\prime }-w^{\prime }\right) \circ \left( z^{\prime \prime },u^{\prime
\prime }\right) ^{-1}\right) \right. \\
&&\ \ \ \ \ \ \ \ \ \ \left. -\psi _{j}^{\left( 1\right) }\left( \left( z_{%
\mathcal{R}},u_{\mathcal{R}}-w^{\prime }\right) \circ \left( z^{\prime
\prime },u^{\prime \prime }\right) ^{-1}\right) \right\} \psi _{k}^{\left(
2\right) }\left( w^{\prime }-w^{\prime \prime }\right) dz^{\prime
}du^{\prime }dw^{\prime }.
\end{eqnarray*}%
By construction we have%
\begin{eqnarray*}
&&\psi _{j}^{\left( 1\right) }\left( \left( z^{\prime },u^{\prime
}-w^{\prime }\right) \circ \left( z^{\prime \prime },u^{\prime \prime
}\right) ^{-1}\right) -\psi _{j}^{\left( 1\right) }\left( \left( z_{\mathcal{%
R}},u_{\mathcal{R}}-w^{\prime }\right) \circ \left( z^{\prime \prime
},u^{\prime \prime }\right) ^{-1}\right) \\
&&\ \ \ \ \ \ \ \ \ \ \ \ \ \ \ \ \ \ \ \ \sim 2^{-N}\psi _{j}^{\left(
1\right) }\left( \left( z^{\prime },u^{\prime }-w^{\prime }\right) \circ
\left( z^{\prime \prime },u^{\prime \prime }\right) ^{-1}\right) ,
\end{eqnarray*}%
in the sense that the left side satisfies the same moment, size and
smoothness conditions as does the right side. Thus we have%
\begin{eqnarray}
&&\ \ \ \ \ \ \ \ \ \ \ \ \ \ \ \ \ \ \ \ \sum_{\mathcal{R}\in \mathsf{R}%
\left( j,k\right) }\int_{\mathcal{R}}\int \check{\psi}_{j}^{\left( 1\right)
}\left( \left( z,u\right) \circ \left( z^{\prime },u^{\prime }-w^{\prime
}\right) ^{-1}\right)  \label{sim} \\
&&\ \ \ \times \left\{ \psi _{j}^{\left( 1\right) }\left( \left( z^{\prime
},u^{\prime }-w^{\prime }\right) \circ \left( z^{\prime \prime },u^{\prime
\prime }\right) ^{-1}\right) -\psi _{j}^{\left( 1\right) }\left( \left( z_{%
\mathcal{R}},u_{\mathcal{R}}-w^{\prime }\right) \circ \left( z^{\prime
\prime },u^{\prime \prime }\right) ^{-1}\right) \right\} dz^{\prime
}du^{\prime }  \notag \\
&\sim &\sum_{\mathcal{R}\in \mathsf{R}\left( j,k\right) }\int_{\mathcal{R}%
}\int \check{\psi}_{j}^{\left( 1\right) }\left( \left( z,u\right) \circ
\left( z^{\prime },u^{\prime }-w^{\prime }\right) ^{-1}\right) 2^{-N}\psi
_{j}^{\left( 1\right) }\left( \left( z^{\prime },u^{\prime }-w^{\prime
}\right) \circ \left( z^{\prime \prime },u^{\prime \prime }\right)
^{-1}\right) dz^{\prime }du^{\prime }  \notag \\
&\sim &2^{-N}\psi _{j}^{\left( 1\right) }\left( \left( z,u\right) \circ
\left( z^{\prime \prime },u^{\prime \prime }\right) ^{-1}\right) .  \notag
\end{eqnarray}%
We also have%
\begin{equation*}
\int \check{\psi}_{k}^{\left( 2\right) }\left( w-w^{\prime }\right) \psi
_{k}^{\left( 2\right) }\left( w^{\prime }-w^{\prime \prime }\right)
dw^{\prime }\sim \psi _{k}^{\left( 2\right) }\left( w-w^{\prime \prime
}\right) .
\end{equation*}%
So altogether we obtain%
\begin{eqnarray*}
&&\widetilde{R}_{\alpha ,N}^{\left( 2\right) }\left[ \left( \left(
z,u\right) ,w\right) ,\left( \left( z^{\prime \prime },u^{\prime \prime
}\right) ,w^{\prime \prime }\right) \right] \\
&\sim &2^{-N}\sum_{j>k}\psi _{j}^{\left( 1\right) }\left( \left( z,u\right)
\circ \left( z^{\prime \prime },u^{\prime \prime }\right) ^{-1}\right) \psi
_{k}^{\left( 2\right) }\left( w-w^{\prime \prime }\right) ,
\end{eqnarray*}%
which satisfies the hypotheses of Theorem \ref{product molecular proof} with
bounds roughly $2^{-N}$ since $\psi ^{\left( 1\right) }\in \mathcal{S}\left( 
\mathbb{H}^{n}\right) $ and $\psi ^{\left( 2\right) }\in \mathcal{S}\left( 
\mathbb{R}\right) $. Here we are using the well known fact that the partial
sums $\sum_{j<M}\psi _{j}$ of an approximate identity satisfy Calder\'{o}%
n-Zygmund kernel conditions of infinite order \emph{uniformly} in $M$.

\subsubsection{The operator $R_{\protect\alpha ,N}^{\left( 1\right) }$}

Now we turn to boundedness of the error operator 
\begin{eqnarray*}
R_{\alpha ,N}^{\left( 1\right) }f\left( z,u\right) &=&c_{\alpha }\sum_{j\leq
k}\sum_{\mathcal{Q}\in \mathsf{Q}\left( j\right) }\int_{\mathcal{Q}}\check{%
\psi}_{j,k}\left( \left( z,u\right) \circ \left( z^{\prime },u^{\prime
}\right) ^{-1}\right) \\
&&\times \left[ \psi _{j,k}\ast f\left( z^{\prime },u^{\prime }\right) -\psi
_{j,k}\ast f\left( z_{\mathcal{Q}},u_{\mathcal{Q}}\right) \right] dz^{\prime
}du^{\prime },
\end{eqnarray*}%
on the flag molecular space $\mathcal{M}_{flag}^{M^{\prime }+\delta }\left( 
\mathbb{H}^{n}\right) $ where $M^{\prime }$ is taken sufficiently small
compared to $M$ as in the component functions. Applying the calculation used
for term $R_{\alpha ,N}^{\left( 2\right) }$ above, we can obtain%
\begin{equation*}
R_{\alpha ,N}^{\left( 1\right) }f\left( z,u\right) =\int \widetilde{R}%
_{\alpha ,N}^{\left( 1\right) }F\left( \left( z,u-w\right) ,w\right) dw,
\end{equation*}%
where the kernel of $\widetilde{R}_{\alpha ,N}^{\left( 1\right) }$ is given
by%
\begin{eqnarray*}
&&\widetilde{R}_{\alpha ,N}^{\left( 1\right) }\left[ \left( \left(
z,u\right) ,w\right) ,\left( \left( z^{\prime \prime },u^{\prime \prime
}\right) ,w^{\prime \prime }\right) \right] \\
&=&c_{\alpha }\sum_{j\leq k}\sum_{\mathcal{Q}\in \mathsf{Q}\left( j\right)
}\int_{\mathcal{Q}}\int \check{\psi}_{j}^{\left( 1\right) }\left(
z-z^{\prime },u-u^{\prime }+\func{Im}z\overline{z^{\prime }}+w^{\prime
}\right) \check{\psi}_{k}^{\left( 2\right) }\left( w-w^{\prime }\right) \\
&&\times \left\{ \psi _{j}^{\left( 1\right) }\left( z^{\prime }-z^{\prime
\prime },u^{\prime }-u^{\prime \prime }+\func{Im}z^{\prime }\overline{%
z^{\prime \prime }}-w^{\prime }\right) \right. \\
&&\ \ \ \ \ \ \ \ \ \ \left. -\psi _{j}^{\left( 1\right) }\left( z_{\mathcal{%
R}}-z^{\prime \prime },u_{\mathcal{R}}-u^{\prime \prime }+\func{Im}z_{%
\mathcal{R}}\overline{z^{\prime \prime }}-w^{\prime }\right) \right\} \check{%
\psi}_{k}^{\left( 2\right) }\left( w^{\prime }-w^{\prime \prime }\right)
dz^{\prime }du^{\prime }dw^{\prime }.
\end{eqnarray*}%
By construction we have%
\begin{eqnarray*}
&&\psi _{j}^{\left( 1\right) }\left( \left( z^{\prime },u^{\prime
}-w^{\prime }\right) \circ \left( z^{\prime \prime },u^{\prime \prime
}\right) ^{-1}\right) -\psi _{j}^{\left( 1\right) }\left( \left( z_{\mathcal{%
R}},u_{\mathcal{R}}-w^{\prime }\right) \circ \left( z^{\prime \prime
},u^{\prime \prime }\right) ^{-1}\right) \\
&&\ \ \ \ \ \ \ \ \ \ \ \ \ \ \ \ \ \ \ \ \sim 2^{-N}\psi _{j}^{\left(
1\right) }\left( \left( z^{\prime },u^{\prime }-w^{\prime }\right) \circ
\left( z^{\prime \prime },u^{\prime \prime }\right) ^{-1}\right) ,
\end{eqnarray*}%
in the sense that the left side satisfies the same moment, size and
smoothness conditions as does the right side. Thus we have%
\begin{eqnarray*}
&&\ \ \ \ \ \ \ \ \ \ \ \ \ \ \ \ \ \ \ \ \sum_{\mathcal{Q}\in \mathsf{Q}%
\left( j\right) }\int_{\mathcal{Q}}\int \check{\psi}_{j}^{\left( 1\right)
}\left( \left( z,u\right) \circ \left( z^{\prime },u^{\prime }-w^{\prime
}\right) ^{-1}\right) \\
&&\ \ \ \times \left\{ \psi _{j}^{\left( 1\right) }\left( \left( z^{\prime
},u^{\prime }-w^{\prime }\right) \circ \left( z^{\prime \prime },u^{\prime
\prime }\right) ^{-1}\right) -\psi _{j}^{\left( 1\right) }\left( \left( z_{%
\mathcal{R}},u_{\mathcal{R}}-w^{\prime }\right) \circ \left( z^{\prime
\prime },u^{\prime \prime }\right) ^{-1}\right) \right\} dz^{\prime
}du^{\prime } \\
&\sim &\sum_{\mathcal{Q}\in \mathsf{Q}\left( j\right) }\int_{\mathcal{R}%
}\int \check{\psi}_{j}^{\left( 1\right) }\left( \left( z,u\right) \circ
\left( z^{\prime },u^{\prime }-w^{\prime }\right) ^{-1}\right) 2^{-N}\psi
_{j}^{\left( 1\right) }\left( \left( z^{\prime },u^{\prime }-w^{\prime
}\right) \circ \left( z^{\prime \prime },u^{\prime \prime }\right)
^{-1}\right) dz^{\prime }du^{\prime } \\
&\sim &2^{-N}\psi _{j}^{\left( 1\right) }\left( \left( z,u\right) \circ
\left( z^{\prime \prime },u^{\prime \prime }\right) ^{-1}\right) .
\end{eqnarray*}%
We also have%
\begin{equation*}
\int \check{\psi}_{k}^{\left( 2\right) }\left( w-w^{\prime }\right) \psi
_{k}^{\left( 2\right) }\left( w^{\prime }-w^{\prime \prime }\right)
dw^{\prime }\sim \psi _{k}^{\left( 2\right) }\left( w-w^{\prime \prime
}\right) .
\end{equation*}%
So altogether we obtain%
\begin{eqnarray*}
&&\widetilde{R}_{\alpha ,N}^{\left( 1\right) }\left[ \left( \left(
z,u\right) ,w\right) ,\left( \left( z^{\prime \prime },u^{\prime \prime
}\right) ,w^{\prime \prime }\right) \right] \\
&\sim &2^{-N}\sum_{j\leq k}\psi _{j}^{\left( 1\right) }\left( \left(
z,u\right) \circ \left( z^{\prime \prime },u^{\prime \prime }\right)
^{-1}\right) \psi _{k}^{\left( 2\right) }\left( w-w^{\prime \prime }\right) ,
\end{eqnarray*}%
which satisfies the hypotheses of Theorem \ref{product molecular proof} with
bounds roughly $2^{-N}$ since $\psi ^{\left( 1\right) }\in \mathcal{S}\left( 
\mathbb{H}^{n}\right) $ and $\psi ^{\left( 2\right) }\in \mathcal{S}\left( 
\mathbb{R}\right) $.

It now follows that the kernels of both $\widetilde{R}_{\alpha ,N}^{\left(
1\right) }$ and $\widetilde{R}_{\alpha ,N}^{\left( 2\right) }$ satisfy the
hypotheses of Theorem \ref{product molecular proof} with bounds roughly $%
2^{-N}$, and we conclude that%
\begin{equation*}
\left\Vert \widetilde{R}_{\alpha ,N}^{\left( i\right) }F\right\Vert _{%
\mathcal{M}_{product}^{M^{\prime }+\delta }\left( \mathbb{H}^{n}\times 
\mathbb{R}\right) }\lesssim 2^{-N}\left\Vert F\right\Vert _{\mathcal{M}%
_{product}^{M^{\prime }+\delta }\left( \mathbb{H}^{n}\times \mathbb{R}%
\right) },\ \ \ \ \ i=1,2.
\end{equation*}%
Thus we obtain for each $i=1,2$:%
\begin{eqnarray*}
\left\Vert R_{\alpha ,N}^{\left( i\right) }f\right\Vert _{\mathcal{M}%
_{flag}^{M^{\prime }+\delta }\left( \mathbb{H}^{n}\right) } &\leq
&\inf_{f=\pi F}\left\Vert \widetilde{R}_{\alpha ,N}^{\left( i\right)
}F\right\Vert _{\mathcal{M}_{product}^{M^{\prime }+\delta }\left( \mathbb{H}%
^{n}\times \mathbb{R}\right) } \\
&\lesssim &2^{-N}\inf_{f=\pi F}\left\Vert F\right\Vert _{\mathcal{M}%
_{product}^{M^{\prime }+\delta }\left( \mathbb{H}^{n}\times \mathbb{R}%
\right) }=2^{-N}\left\Vert f\right\Vert _{\mathcal{M}_{flag}^{M^{\prime
}+\delta }\left( \mathbb{H}^{n}\right) },
\end{eqnarray*}%
and taking $N$ sufficiently large completes the proof of the molecular
estimates in (\ref{error op bounds}).

\subsubsection{The $L^{p}$ estimates}

Finally, we turn to proving the $L^{p}$ estimates in (\ref{error op bounds})
for $1<p<\infty $,%
\begin{equation*}
\left\Vert R_{\alpha }f\right\Vert _{L^{p}\left( \mathbb{H}^{n}\right)
}+\left\Vert R_{\alpha ,N}^{\left( 1\right) }f\right\Vert _{L^{p}\left( 
\mathbb{H}^{n}\right) }+\left\Vert R_{\alpha ,N}^{\left( 2\right)
}f\right\Vert _{L^{p}\left( \mathbb{H}^{n}\right) }\leq \frac{1}{2}%
\left\Vert f\right\Vert _{L^{p}\left( \mathbb{H}^{n}\right) }.
\end{equation*}%
The estimates for $R_{\alpha ,N}^{\left( 1\right) }$ and $R_{\alpha
,N}^{\left( 2\right) }$ follow from the estimates established above for the
kernels of the lifted operators $\widetilde{R}_{\alpha ,N}^{\left( 1\right)
} $ and $\widetilde{R}_{\alpha ,N}^{\left( 2\right) }$. Indeed, for $f\in
L^{p}\left( \mathbb{H}^{n}\right) $, we can use a result in \cite{MRS} to
find $F\in L^{p}\left( \mathbb{H}^{n}\times \mathbb{R}\right) $ with $f=\pi
F $ and $\left\Vert F\right\Vert _{L^{p}\left( \mathbb{H}^{n}\times \mathbb{R%
}\right) }\leq C\left\Vert f\right\Vert _{L^{p}\left( \mathbb{H}^{n}\right)
} $. Then we have%
\begin{equation*}
\left\Vert R_{\alpha ,N}^{\left( i\right) }f\right\Vert _{L^{p}\left( 
\mathbb{H}^{n}\right) }\leq \left\Vert \widetilde{R}_{\alpha ,N}^{\left(
i\right) }F\right\Vert _{L^{p}\left( \mathbb{H}^{n}\times \mathbb{R}\right)
}\lesssim 2^{-N}\left\Vert F\right\Vert _{L^{p}\left( \mathbb{H}^{n}\times 
\mathbb{R}\right) }\leq C2^{-N}\left\Vert f\right\Vert _{L^{p}\left( \mathbb{%
H}^{n}\right) }.
\end{equation*}%
In similar fashion, the kernel of the lifted operator $\widetilde{R}_{\alpha
}$ can be shown to satisfy product kernel estimates with constant $A$ that
is a multiple of $\alpha $ for small $\alpha >0$, and so we obtain from
Theorem \ref{product molecular proof} that 
\begin{equation*}
\left\Vert \widetilde{R}_{\alpha }F\right\Vert _{L^{p}\left( \mathbb{H}%
^{n}\times \mathbb{R}\right) }\lesssim \alpha \left\Vert F\right\Vert
_{L^{p}\left( \mathbb{H}^{n}\times \mathbb{R}\right) }.
\end{equation*}%
and hence with $f=\pi F$ as above,%
\begin{equation*}
\left\Vert R_{\alpha }f\right\Vert _{L^{p}\left( \mathbb{H}^{n}\right) }\leq
\left\Vert \widetilde{R}_{\alpha }F\right\Vert _{L^{p}\left( \mathbb{H}%
^{n}\times \mathbb{R}\right) }\lesssim \alpha \left\Vert F\right\Vert
_{L^{p}\left( \mathbb{H}^{n}\times \mathbb{R}\right) }\leq C\alpha
\left\Vert f\right\Vert _{L^{p}\left( \mathbb{H}^{n}\right) }.
\end{equation*}%
If we now take $0<\alpha <1$ sufficiently small, and then $N$ sufficiently
large, we obtain the $L^{p}$ estimates in (\ref{error op bounds}).

This concludes our proof of Theorem \ref{key boundedness}.

\subsection{The $T1$ type theorems}

We begin with the proof of Theorem \ref{standard molecular proof} in the
one-parameter case. We follow the argument in \cite{H}, where the same
result is proved in the Euclidean setting. For this we will also need an
extension to the Heisenberg group of the generalization of Meyer's lemma by
Torres \cite{Tor}. We use notation adapted to our arguments below.

\begin{lemma}
\label{3}Suppose that $T:L^{2}\left( \mathbb{H}^{n}\right) \rightarrow
L^{2}\left( \mathbb{H}^{n}\right) $ is a bounded linear operator with kernel 
$K\left( \left( z,u\right) ,\left( z^{\prime },u^{\prime }\right) \right) $
satisfying the kernel conditions in the hypotheses of Theorem \ref{standard
molecular proof}. Suppose that $M\geq 0$ and that $T\left( \left( z,u\right)
^{\left( \alpha ^{\prime \prime },\beta ^{\prime \prime }\right) }\right) =0$
for all multi-indices $\left( \alpha ^{\prime \prime },\beta ^{\prime \prime
}\right) $ with $\left\vert \alpha ^{\prime \prime }\right\vert +2\beta
^{\prime \prime }\leq M$. Then for any two points $\left( z,u\right) ,\left(
z^{\prime \prime },u^{\prime \prime }\right) \in \mathbb{H}^{n}$ and any
smooth $\varphi $ on $\mathbb{H}^{n}$ with compact support, and for any
multi-index $\left( \alpha ^{\prime },\beta ^{\prime }\right) $ with $%
\left\vert \alpha ^{\prime }\right\vert +2\beta ^{\prime }=M^{\prime }\leq M$%
, we have the following identity:%
\begin{equation*}
\partial _{z}^{\alpha ^{\prime }}\partial _{u}^{\beta ^{\prime }}T\varphi
\left( z,u\right) -\partial _{z}^{\alpha ^{\prime }}\partial _{u}^{\beta
^{\prime }}T\varphi \left( z^{\prime \prime },u^{\prime \prime }\right)
\end{equation*}%
equals%
\begin{eqnarray*}
&&\int \partial _{z}^{\alpha ^{\prime }}\partial _{u}^{\beta ^{\prime
}}K\left( \left( z,u\right) ,\left( z^{\prime },u^{\prime }\right) \right) \\
&&\times \left\{ \varphi \left( z^{\prime },u^{\prime }\right)
-\sum_{\left\vert \alpha ^{\prime \prime }\right\vert +2\beta ^{\prime
\prime }\leq M^{\prime }}c_{\alpha ^{\prime \prime },\beta ^{\prime \prime
}}\partial _{z}^{\alpha ^{\prime \prime }}\partial _{u}^{\beta ^{\prime
\prime }}\varphi \left( z,u\right) \left[ \left( z^{\prime },u^{\prime
}\right) \circ \left( z,u\right) ^{-1}\right] ^{\left( \alpha ^{\prime
\prime },\beta ^{\prime \prime }\right) }\right\} \widetilde{\theta }\left(
z^{\prime },u^{\prime }\right) dz^{\prime }du^{\prime }
\end{eqnarray*}%
\begin{eqnarray*}
&&-\int \partial _{z}^{\alpha ^{\prime }}\partial _{u}^{\beta ^{\prime
}}K\left( \left( z^{\prime \prime },u^{\prime \prime }\right) ,\left(
z^{\prime },u^{\prime }\right) \right) \\
&&\times \left\{ \varphi \left( z^{\prime },u^{\prime }\right)
-\sum_{\left\vert \alpha ^{\prime \prime }\right\vert +2\beta ^{\prime
\prime }\leq M^{\prime }}c_{\alpha ^{\prime \prime },\beta ^{\prime \prime
}}\partial _{z}^{\alpha ^{\prime \prime }}\partial _{u}^{\beta ^{\prime
\prime }}\varphi \left( z^{\prime \prime },u^{\prime \prime }\right) \left[
\left( z^{\prime },u^{\prime }\right) \circ \left( z^{\prime \prime
},u^{\prime \prime }\right) ^{-1}\right] ^{\left( \alpha ^{\prime \prime
},\beta ^{\prime \prime }\right) }\right\} \widetilde{\theta }\left(
z^{\prime },u^{\prime }\right) dz^{\prime }du^{\prime }
\end{eqnarray*}%
\begin{eqnarray*}
&&+\int \left\{ \partial _{z}^{\alpha ^{\prime }}\partial _{u}^{\beta
^{\prime }}K\left( \left( z,u\right) ,\left( z^{\prime },u^{\prime }\right)
\right) -\partial _{z}^{\alpha ^{\prime }}\partial _{u}^{\beta ^{\prime
}}K\left( \left( z^{\prime \prime },u^{\prime \prime }\right) ,\left(
z^{\prime },u^{\prime }\right) \right) \right\} \\
&&\times \left\{ \varphi \left( z^{\prime },u^{\prime }\right)
-\sum_{\left\vert \alpha ^{\prime \prime }\right\vert +2\beta ^{\prime
\prime }\leq M^{\prime }}c_{\alpha ^{\prime \prime },\beta ^{\prime \prime
}}\partial _{z}^{\alpha ^{\prime \prime }}\partial _{u}^{\beta ^{\prime
\prime }}\varphi \left( z^{\prime \prime },u^{\prime \prime }\right) \left[
\left( z^{\prime },u^{\prime }\right) \circ \left( z^{\prime \prime
},u^{\prime \prime }\right) ^{-1}\right] ^{\left( \alpha ^{\prime \prime
},\beta ^{\prime \prime }\right) }\right\} \left( 1-\widetilde{\theta }%
\left( z^{\prime },u^{\prime }\right) \right) dz^{\prime }du^{\prime }
\end{eqnarray*}%
\begin{eqnarray*}
&&+\sum_{\left\vert \alpha ^{\prime \prime }\right\vert +2\beta ^{\prime
\prime }\leq M^{\prime }} \\
&&\left\{ c_{\alpha ^{\prime \prime },\beta ^{\prime \prime }}\partial
_{z}^{\alpha ^{\prime \prime }}\partial _{u}^{\beta ^{\prime \prime
}}\varphi \left( z,u\right) -\sum_{\substack{ \left\vert \alpha ^{\prime
\prime \prime }\right\vert +2\beta ^{\prime \prime \prime }  \\ \leq
M^{\prime }-\left\vert \alpha ^{\prime \prime }\right\vert -2\beta ^{\prime
\prime }}}c_{\alpha ^{\prime \prime \prime },\beta ^{\prime \prime \prime
}}\partial _{z}^{\alpha ^{\prime \prime \prime }+\alpha ^{\prime \prime
}}\partial _{u}^{2\beta ^{\prime \prime \prime }+2\beta ^{\prime \prime
}}\varphi \left( z^{\prime \prime },u^{\prime \prime }\right) \left[ \left(
z,u\right) \circ \left( z^{\prime \prime },u^{\prime \prime }\right) ^{-1}%
\right] ^{\left( \alpha ^{\prime \prime \prime },\beta ^{\prime \prime
\prime }\right) }\right\} \\
&&\ \ \ \ \ \ \ \ \ \ \ \ \ \ \ \ \ \ \ \ \ \ \ \ \ \ \ \ \ \ \times
T_{\left( \alpha ^{\prime \prime },\beta ^{\prime \prime }\right) ,\left(
\alpha ^{\prime },\beta ^{\prime }\right) }\widetilde{\theta }\left(
z,u\right) .
\end{eqnarray*}
\end{lemma}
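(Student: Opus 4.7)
The plan is to prove this identity by direct algebraic manipulation of the kernel representation, using the hypothesis $T((z,u)^{(\alpha'',\beta'')})=0$ for $|\alpha''|+2\beta''\leq M$ as the sole source of cancellation. The whole identity is essentially the bookkeeping of a single Heisenberg-group Taylor subtraction performed twice (at $(z,u)$ and at $(z'',u'')$) and then reconciled via a cutoff $\widetilde{\theta}$. There is nothing analytic to do; convergence of all integrals is automatic since $\varphi\in C_c^{\infty}$.

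First I would write
$$\partial_z^{\alpha'}\partial_u^{\beta'} T\varphi(w,v)=\int \partial_z^{\alpha'}\partial_u^{\beta'}K((w,v),(z',u'))\,\varphi(z',u')\,dz'du'$$
for $(w,v)=(z,u)$ and $(w,v)=(z'',u'')$, and for each center $(w,v)$ introduce the group-Taylor polynomial
$$P_{(w,v)}(z',u')=\sum_{|\alpha''|+2\beta''\leq M'}c_{\alpha'',\beta''}\,\partial_z^{\alpha''}\partial_u^{\beta''}\varphi(w,v)\,[(z',u')\circ(w,v)^{-1}]^{(\alpha'',\beta'')}.$$
Since each factor $[(z',u')\circ(w,v)^{-1}]^{(\alpha'',\beta'')}$ is a polynomial in $(z',u')$ of Heisenberg degree at most $M'\leq M$, the vanishing hypothesis gives $T(P_{(w,v)})\equiv 0$, so $T\varphi=T(\varphi-P_{(w,v)})$ identically. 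Consequently $\partial_z^{\alpha'}\partial_u^{\beta'}T\varphi$ at the evaluation point equals the kernel integral against $\varphi-P_{(w,v)}$. Next I would split the integrand using $1=\widetilde{\theta}+(1-\widetilde{\theta})$: the $\widetilde\theta$-pieces at $(z,u)$ and at $(z'',u'')$ (with signs) produce the first two terms of the claimed identity verbatim. Combining the two $(1-\widetilde{\theta})$-pieces, and insisting on the \emph{same} polynomial $P_{(z'',u'')}$ in both, produces the third term involving the kernel difference $\partial_z^{\alpha'}\partial_u^{\beta'}K((z,u),\cdot)-\partial_z^{\alpha'}\partial_u^{\beta'}K((z'',u''),\cdot)$, at the cost of an extra term
$$\int \partial_z^{\alpha'}\partial_u^{\beta'}K((z,u),(z',u'))\,\bigl(P_{(z'',u'')}-P_{(z,u)}\bigr)(z',u')\,(1-\widetilde\theta(z',u'))\,dz'du'.$$

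Finally I would convert this extra term into the last sum of the statement. The derivatives $\partial_z^{\alpha'}\partial_u^{\beta'}$ fall on the first argument of $K$, so the vanishing hypothesis still yields
$$\int \partial_z^{\alpha'}\partial_u^{\beta'}K((z,u),(z',u'))\,[(z',u')\circ(z,u)^{-1}]^{(\alpha'',\beta'')}\,dz'du'=0$$
for $|\alpha''|+2\beta''\leq M$, and this lets me replace $(1-\widetilde\theta)$ by $-\widetilde\theta$ in every resulting integral, producing the quantities $T_{(\alpha'',\beta''),(\alpha',\beta')}\widetilde\theta(z,u)$. The coefficient of $T_{(\alpha'',\beta''),(\alpha',\beta')}\widetilde\theta(z,u)$ is then obtained from $P_{(z,u)}$ directly (giving the $c_{\alpha'',\beta''}\partial_z^{\alpha''}\partial_u^{\beta''}\varphi(z,u)$ term) and from $P_{(z'',u'')}$ after expanding each monomial $[(z',u')\circ(z'',u'')^{-1}]^{(\alpha'',\beta'')}$ as a Heisenberg-group binomial in $(z',u')\circ(z,u)^{-1}$ and $(z,u)\circ(z'',u'')^{-1}$ (the second of which depends only on the base points and pulls out of the integral). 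The main obstacle will be verifying that this Heisenberg-group binomial identity reproduces exactly the inner sum over $(\alpha''',\beta''')$ with the stated derivative indices $\alpha'''+\alpha''$, $2\beta'''+2\beta''$, taking care because the underlying multiplication is noncommutative; once this combinatorial identity is checked, the four terms of the claimed formula appear by collecting the pieces constructed above.
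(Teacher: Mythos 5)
Your overall strategy is the right one and matches the standard Meyer--Torres approach that the paper cites (the paper gives no proof of its own, simply asserting that the proof ``follows verbatim'' from Lemma 3.1.22 of Torres's memoir): Taylor-subtract at each base point, split with the cutoff $\widetilde{\theta}$, combine the $(1-\widetilde{\theta})$ pieces using the same Taylor polynomial and absorb the error via the cancellation $T(\mathrm{poly})=0$, and finally expand a group monomial $[a\circ b]^{(\alpha'',\beta'')}$ via a Heisenberg binomial to extract the fourth term. You also correctly flag the one genuinely new point compared with Torres's Euclidean argument, namely that the binomial expansion of $[(z',u')\circ(z'',u'')^{-1}]^{(\alpha'',\beta'')}$ into $(z',u')\circ(z,u)^{-1}$ and $(z,u)\circ(z'',u'')^{-1}$ acquires twist terms from the noncommutative multiplication and needs a careful check that the coefficients reproduce the stated inner sum.

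The gap is the dismissal ``there is nothing analytic to do; convergence of all integrals is automatic since $\varphi\in C_c^\infty$.'' That is the one thing that is \emph{not} automatic, and it is precisely the content of the lemma. Your very first display
$\partial_z^{\alpha'}\partial_u^{\beta'}T\varphi(w,v)=\int\partial_z^{\alpha'}\partial_u^{\beta'}K((w,v),(z',u'))\varphi(z',u')\,dz'du'$
is not a valid identity: when $(w,v)$ lies in the support of $\varphi$ the right-hand side is a divergent integral, since $\partial_z^{\alpha'}\partial_u^{\beta'}K$ has a nonintegrable singularity of homogeneous order $Q+M'$ on the diagonal while $\varphi$ does not vanish there. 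You cannot therefore literally ``start from'' that representation and move $P_{(w,v)}$ around inside it. Likewise, $T$ applied to a polynomial (and hence $T(\varphi-P_{(w,v)})$) is not defined by the $L^2$-boundedness of $T$ --- $\varphi-P_{(w,v)}$ has polynomial growth, so these expressions must be interpreted distributionally or via truncations, and the quantities $T_{(\alpha'',\beta''),(\alpha',\beta')}\widetilde{\theta}$ that appear in the fourth term are themselves principal-value/truncated pairings $\langle K_{(\alpha'',\beta'');(\alpha',\beta')},\widetilde{\theta}\rangle$ rather than absolutely convergent integrals. In Torres's proof (which the paper delegates to) this is handled by working with smooth truncations $K_\varepsilon$ of the kernel, proving the identity for each $\varepsilon>0$ by exactly the algebra you describe, and then verifying that each of the four final expressions has a finite $\varepsilon\to 0$ limit --- the Taylor subtraction is what makes the first term converge, the support separation makes the second and third converge, and the cancellation hypothesis makes $T_{\cdot,\cdot}\widetilde{\theta}$ well defined. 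Your proof should incorporate this truncation-and-limit step (or an equivalent distributional bookkeeping); without it the manipulations, while correctly arranged, are operations on divergent integrals.
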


The proof of this lemma follows verbatim that of Lemma 3.1.22 on page 62 of 
\cite{Tor}.

\textbf{Proof of Theorem \ref{standard molecular proof}}: Let $f\in \mathcal{%
M}^{M^{\prime }+\delta }\left( \mathbb{H}^{n}\right) $. In order to show
that $Tf\in \mathcal{M}^{M^{\prime }+\delta }\left( \mathbb{H}^{n}\right) $,
we split the proof into four cases, the first two dealing with the size
estimates, and the last two with the difference estimates. For multi-indices 
$\left( \alpha ,\beta \right) $, $\left( \alpha ^{\prime },\beta ^{\prime
}\right) $, $\left( \alpha ^{\prime \prime },\beta ^{\prime \prime }\right) $
and $\left( \alpha ^{\prime \prime \prime },\beta ^{\prime \prime \prime
}\right) $ that arise in the arguments below, we will consistently use the
notation%
\begin{equation}
\left\vert \alpha \right\vert +\beta =M,\ \ \ \left\vert \alpha ^{\prime
}\right\vert +2\beta ^{\prime }=M^{\prime },\ \ \ \left\vert \alpha ^{\prime
\prime }\right\vert +2\beta ^{\prime \prime }=M^{\prime \prime },\ \ \
\left\vert \alpha ^{\prime \prime \prime }\right\vert +2\beta ^{\prime
\prime \prime }=M^{\prime \prime \prime }.  \label{equals M'}
\end{equation}

\bigskip

\textbf{Case 1}: Suppose that $\left\vert \left( z,u\right) \right\vert \leq
1$ and $\left\vert \alpha \right\vert +\beta =M$. Here we are using $%
\left\vert \left( z,u\right) \right\vert $ to denote the usual norm on the
Heisenberg group $\mathbb{H}^{n}$. Let $\theta $ be a smooth cutoff function
supported in $B\left( 0,4\right) $ and identically one on $B\left(
0,2\right) $. Then we have%
\begin{eqnarray*}
&&\ \ \ \ \ \ \ \ \ \ \partial _{z}^{\alpha }\partial _{u}^{\beta }Tf\left(
z,u\right) \\
&=&\int \partial _{z}^{\alpha }\partial _{u}^{\beta }K\left( \left(
z,u\right) ,\left( z^{\prime },u^{\prime }\right) \right) f\left( z^{\prime
},u^{\prime }\right) dz^{\prime }du^{\prime } \\
&=&\int \partial _{z}^{\alpha }\partial _{u}^{\beta }K\left( \left(
z,u\right) ,\left( z^{\prime },u^{\prime }\right) \right) \theta \left(
z^{\prime },u^{\prime }\right) \\
&&\ \ \ \ \ \times \left\{ f\left( z^{\prime },u^{\prime }\right)
-\sum_{\left\vert \alpha ^{\prime }\right\vert +2\beta ^{\prime }\leq
M}c_{\alpha ^{\prime },\beta ^{\prime }}\left( \partial _{z}^{\alpha
^{\prime }}\partial _{u}^{\beta ^{\prime }}f\right) \left( z,u\right) \left[
\left( z^{\prime },u^{\prime }\right) \circ \left( z,u\right) ^{-1}\right]
^{\left( \alpha ^{\prime },\beta ^{\prime }\right) }\right\} dz^{\prime
}du^{\prime } \\
&&+\int \partial _{z}^{\alpha }\partial _{u}^{\beta }K\left( \left(
z,u\right) ,\left( z^{\prime },u^{\prime }\right) \right) f\left( z^{\prime
},u^{\prime }\right) \left( 1-\theta \left( z^{\prime },u^{\prime }\right)
\right) dz^{\prime }du^{\prime } \\
&&+\sum_{\left\vert \alpha ^{\prime }\right\vert +2\beta ^{\prime }\leq
M}c_{\alpha ^{\prime },\beta ^{\prime }}\int \partial _{z}^{\alpha }\partial
_{u}^{\beta }K\left( \left( z,u\right) ,\left( z^{\prime },u^{\prime
}\right) \right) \theta \left( z^{\prime },u^{\prime }\right) \\
&&\ \ \ \ \ \times \left( \partial _{z}^{\alpha ^{\prime }}\partial
_{u}^{\beta ^{\prime }}f\right) \left( z,u\right) \left[ \left( z^{\prime
},u^{\prime }\right) \circ \left( z,u\right) ^{-1}\right] ^{\left( \alpha
^{\prime },\beta ^{\prime }\right) }dz^{\prime }du^{\prime } \\
&&I+II+III.
\end{eqnarray*}%
From Taylor's theorem and the size condition on $K$ we obtain%
\begin{eqnarray*}
\left\vert I\right\vert &\lesssim &\left\Vert f\right\Vert _{\mathcal{M}%
^{M^{\prime }+\delta }\left( \mathbb{H}^{n}\right) }\int_{\left\{ \left\vert
\left( z^{\prime },u^{\prime }\right) \right\vert \leq 4\right\} }\left\vert
\left( z^{\prime },u^{\prime }\right) \circ \left( z,u\right) \right\vert
^{-Q-M}\left\vert \left( z^{\prime },u^{\prime }\right) \circ \left(
z,u\right) ^{-1}\right\vert ^{M+\delta }dz^{\prime }du^{\prime } \\
&\lesssim &C_{\delta }\left\Vert f\right\Vert _{\mathcal{M}^{M^{\prime
}+\delta }\left( \mathbb{H}^{n}\right) },
\end{eqnarray*}%
since $Q$ is the homogeneous dimension of $\mathbb{H}^{n}$.

Now in term $II$ we have $\left\vert \left( z,u\right) \right\vert \leq
1\leq \frac{1}{2}\left\vert \left( z^{\prime },u^{\prime }\right)
\right\vert $ by the support condition on $\theta \left( z^{\prime
},u^{\prime }\right) $, and hence%
\begin{equation*}
\left\vert \left( z^{\prime },u^{\prime }\right) \right\vert \leq \left\vert
\left( z,u\right) \right\vert +\left\vert \left( z^{\prime },u^{\prime
}\right) \circ \left( z,u\right) ^{-1}\right\vert \leq \frac{1}{2}\left\vert
\left( z^{\prime },u^{\prime }\right) \right\vert +\left\vert \left(
z^{\prime },u^{\prime }\right) \circ \left( z,u\right) ^{-1}\right\vert
\end{equation*}%
implies that%
\begin{equation*}
\left\vert \left( z^{\prime },u^{\prime }\right) \right\vert \leq
2\left\vert \left( z^{\prime },u^{\prime }\right) \circ \left( z,u\right)
^{-1}\right\vert .
\end{equation*}%
As a consequence we obtain%
\begin{eqnarray*}
\left\vert II\right\vert &\lesssim &\left\Vert f\right\Vert _{\mathcal{M}%
^{M^{\prime }+\delta }\left( \mathbb{H}^{n}\right) }\int_{\left\{ \left\vert
\left( z^{\prime },u^{\prime }\right) \right\vert >2\right\} }\left\vert
\left( z^{\prime },u^{\prime }\right) \circ \left( z,u\right) \right\vert
^{-Q-M}\left\vert \left( z^{\prime },u^{\prime }\right) \right\vert
^{-Q-M-\delta }dz^{\prime }du^{\prime } \\
&\lesssim &C_{\delta }\left\Vert f\right\Vert _{\mathcal{M}^{M^{\prime
}+\delta }\left( \mathbb{H}^{n}\right) }.
\end{eqnarray*}

For term $III$ we will use the fact that%
\begin{equation*}
\left\Vert T_{\left( \alpha ^{\prime },\beta ^{\prime }\right) ;\left(
\alpha ,\beta \right) }\theta \left( \frac{\cdot -z}{t}\right) \right\Vert
_{\infty }\lesssim t^{\left\vert \alpha ^{\prime }\right\vert +\beta
^{\prime }-\left\vert \alpha \right\vert -\beta }
\end{equation*}%
where%
\begin{eqnarray*}
&&T_{\left( \alpha ^{\prime },\beta ^{\prime }\right) ;\left( \alpha ,\beta
\right) }f\equiv \left\langle K_{\left( \alpha ^{\prime },\beta ^{\prime
}\right) ;\left( \alpha ,\beta \right) },f\right\rangle \\
&\equiv &\iint_{\left\{ \left( z,u\right) \neq \left( z^{\prime },u^{\prime
}\right) \right\} }\left[ \left( z^{\prime },u^{\prime }\right) \circ \left(
z,u\right) \right] ^{\left( \alpha ^{\prime },\beta ^{\prime }\right)
}\left\{ \partial _{z}^{\alpha }\partial _{u}^{\beta }K\left( \left(
z,u\right) ,\left( z^{\prime },u^{\prime }\right) \right) \right\} f\left(
z^{\prime },u^{\prime }\right) dz^{\prime }du^{\prime }.
\end{eqnarray*}%
We obtain from $\left\vert T_{\left( \alpha ^{\prime },\beta ^{\prime
}\right) ;\left( \alpha ,\beta \right) }\theta \left( z,u\right) \right\vert
\leq C$ that 
\begin{eqnarray*}
\left\vert III\right\vert &\leq &\sum_{\left\vert \alpha ^{\prime
}\right\vert +2\beta ^{\prime }\leq M}c_{\alpha ^{\prime },\beta ^{\prime
}}\left\vert \left( \partial _{z}^{\alpha ^{\prime }}\partial _{u}^{\beta
^{\prime }}f\right) \left( z,u\right) \right\vert \left\vert T_{\left(
\alpha ^{\prime },\beta ^{\prime }\right) ;\left( \alpha ,\beta \right)
}\theta \left( z,u\right) \right\vert \\
&\lesssim &\sum_{\left\vert \alpha ^{\prime }\right\vert +2\beta ^{\prime
}\leq M}\left( 1+\left\vert z\right\vert ^{2}+\left\vert u\right\vert
\right) ^{-\frac{Q+M+\left\vert \alpha ^{\prime }\right\vert +2\beta
^{\prime }+\delta }{2}}\left\Vert f\right\Vert _{\mathcal{M}^{M^{\prime
}+\delta }\left( \mathbb{H}^{n}\right) } \\
&\lesssim &\left\Vert f\right\Vert _{\mathcal{M}^{M^{\prime }+\delta }\left( 
\mathbb{H}^{n}\right) }.
\end{eqnarray*}%
This shows that when $\left\vert \left( z,u\right) \right\vert \leq 1$, $%
\partial _{z}^{\alpha }\partial _{u}^{\beta }Tf\left( z,u\right) $ satisfies
the size conditions for the one-parameter molecular space $\mathcal{M}%
^{M^{\prime }+\delta }\left( \mathbb{H}^{n}\right) $ in Definition \ref{one
para mol}.

\bigskip

\textbf{Case 2}: Now suppose that $\left\vert \left( z,u\right) \right\vert
>1$, and suppose again that $\left\vert \alpha \right\vert +\beta =M$.
Following Meyer \cite{Mey}, we set%
\begin{eqnarray*}
\mathsf{1} &=&I\left( z^{\prime },u^{\prime }\right) +J\left( z^{\prime
},u^{\prime }\right) +L\left( z^{\prime },u^{\prime }\right) , \\
I\left( z^{\prime },u^{\prime }\right) &=&\theta \left( \frac{\delta \left(
z,u\right) \circ \left( z^{\prime },u^{\prime }\right) ^{-1}}{\left\vert
\left( z,u\right) \right\vert }\right) , \\
J\left( z^{\prime },u^{\prime }\right) &=&\theta \left( \frac{\delta \left(
z^{\prime },u^{\prime }\right) }{\left\vert \left( z,u\right) \right\vert }%
\right) , \\
f &=&f_{I}+f_{J}+f_{L}\equiv fI+fJ+fL.
\end{eqnarray*}%
We first estimate $\partial _{z}^{\alpha }\partial _{u}^{\beta }Tf_{I}\left(
z,u\right) $ as follows:%
\begin{eqnarray*}
&&\partial _{z}^{\alpha }\partial _{u}^{\beta }Tf_{I}\left( z,u\right) \\
&=&\int \partial _{z}^{\alpha }\partial _{u}^{\beta }K\left( \left(
z,u\right) ,\left( z^{\prime },u^{\prime }\right) \right) f_{I}\left(
z^{\prime },u^{\prime }\right) dz^{\prime }du^{\prime } \\
&=&\int \partial _{z}^{\alpha }\partial _{u}^{\beta }K\left( \left(
z,u\right) ,\left( z^{\prime },u^{\prime }\right) \right) \\
&&\times \left\{ f\left( z^{\prime },u^{\prime }\right) -\sum_{\left\vert
\alpha ^{\prime }\right\vert +2\beta ^{\prime }\leq M}c_{\alpha ^{\prime
},\beta ^{\prime }}\left( \partial _{z}^{\alpha ^{\prime }}\partial
_{u}^{\beta ^{\prime }}f\right) \left( z,u\right) \left[ \left( z^{\prime
},u^{\prime }\right) \circ \left( z,u\right) ^{-1}\right] ^{\left( \alpha
^{\prime },\beta ^{\prime }\right) }\right\} I\left( z^{\prime },u^{\prime
}\right) dz^{\prime }du^{\prime } \\
&&+\int \partial _{z}^{\alpha }\partial _{u}^{\beta }K\left( ...\right)
\sum_{\left\vert \alpha ^{\prime }\right\vert +2\beta ^{\prime }\leq
M}c_{\alpha ^{\prime },\beta ^{\prime }}\left( \partial _{z}^{\alpha
^{\prime }}\partial _{u}^{\beta ^{\prime }}f\right) \left( z,u\right) \left[
\left( z^{\prime },u^{\prime }\right) \circ \left( z,u\right) ^{-1}\right]
^{\left( \alpha ^{\prime },\beta ^{\prime }\right) }I\left( z^{\prime
},u^{\prime }\right) dz^{\prime }du^{\prime } \\
&=&A+B.
\end{eqnarray*}%
Using Taylor's theorem and the smoothness conditions on $f$ we obtain%
\begin{eqnarray*}
&&\left\vert A\right\vert \lesssim \left\Vert f\right\Vert _{\mathcal{M}%
^{M^{\prime }+\delta }\left( \mathbb{H}^{n}\right) }\times \\
&&\int_{\left\{ \frac{\left\vert \left( z,u\right) \circ \left( z^{\prime
},u^{\prime }\right) ^{-1}\right\vert }{\left\vert \left( z,u\right)
\right\vert }\leq \frac{1}{2}\right\} }\frac{1}{\left\vert \left( z,u\right)
\circ \left( z^{\prime },u^{\prime }\right) ^{-1}\right\vert ^{Q+M}}\frac{%
\left\vert \left( z,u\right) \circ \left( z^{\prime },u^{\prime }\right)
^{-1}\right\vert ^{\delta }}{\left( 1+\left\vert z\right\vert
^{2}+\left\vert u\right\vert \right) ^{\frac{Q+2M+2\delta }{2}}}dz^{\prime
}du^{\prime } \\
&\lesssim &\left\Vert f\right\Vert _{\mathcal{M}^{M^{\prime }+\delta }\left( 
\mathbb{H}^{n}\right) }\frac{1}{\left( 1+\left\vert z\right\vert
^{2}+\left\vert u\right\vert \right) ^{\frac{Q+2M+\delta }{2}}},
\end{eqnarray*}%
where we have also used the fact that the support of $I\left( z^{\prime
},u^{\prime }\right) $ is contained in the set%
\begin{equation*}
\left\{ \frac{\left\vert \left( z,u\right) \circ \left( z^{\prime
},u^{\prime }\right) ^{-1}\right\vert }{\left\vert \left( z,u\right)
\right\vert }\leq \frac{1}{2}\right\} .
\end{equation*}%
To estimate term $B$ we note that%
\begin{equation*}
\left\Vert T_{\left( \alpha ^{\prime },\beta ^{\prime }\right) ;\left(
\alpha ,\beta \right) }\theta \left( \frac{\delta \left\vert \left(
z,u\right) \circ \left( z^{\prime },u^{\prime }\right) ^{-1}\right\vert }{%
\left\vert \left( z,u\right) \right\vert }\right) \right\Vert _{\infty }\leq
C\left\vert \left( z,u\right) \right\vert ^{\left\vert \alpha ^{\prime
}\right\vert +2\beta ^{\prime }-M}.
\end{equation*}%
Then we get%
\begin{eqnarray*}
\left\vert B\right\vert &\lesssim &\sum_{\left\vert \alpha ^{\prime
}\right\vert +2\beta ^{\prime }\leq M}c_{\alpha ^{\prime },\beta ^{\prime
}}\left\Vert T_{\left( \alpha ^{\prime },\beta ^{\prime }\right) ;\left(
\alpha ,\beta \right) }I\left( \cdot ,\cdot \right) \right\Vert _{\infty
}\left\Vert \left( \partial _{z}^{\alpha ^{\prime }}\partial _{u}^{\beta
^{\prime }}f\right) \left( z,u\right) \right\Vert _{\infty } \\
&\lesssim &\sum_{\left\vert \alpha ^{\prime }\right\vert +2\beta ^{\prime
}\leq M}\left\vert \left( z,u\right) \right\vert ^{\left\vert \alpha
^{\prime }\right\vert +2\beta ^{\prime }-M}\frac{1}{\left( 1+\left\vert
z\right\vert ^{2}+\left\vert u\right\vert \right) ^{\frac{Q+M+\left\vert
\alpha ^{\prime }\right\vert +2\beta +\delta }{2}}}\left\Vert f\right\Vert _{%
\mathcal{M}^{M^{\prime }+\delta }\left( \mathbb{H}^{n}\right) } \\
&\lesssim &\left\Vert f\right\Vert _{\mathcal{M}^{M^{\prime }+\delta }\left( 
\mathbb{H}^{n}\right) }\frac{1}{\left( 1+\left\vert z\right\vert
^{2}+\left\vert u\right\vert \right) ^{\frac{Q+2M+\delta }{2}}}.
\end{eqnarray*}

For the term $\partial _{z}^{\alpha }\partial _{u}^{\beta }Tf_{J}\left(
z,u\right) $ we use the smoothness conditions on the kernel $T$ and write%
\begin{eqnarray*}
&&\partial _{z}^{\alpha }\partial _{u}^{\beta }Tf_{J}\left( z,u\right) \\
&=&\int \left\{ \partial _{z}^{\alpha }\partial _{u}^{\beta }K\left( \left(
z,u\right) ,\left( z^{\prime },u^{\prime }\right) \right) -\sum_{\left\vert
\alpha ^{\prime }\right\vert +2\beta ^{\prime }\leq M}c_{\alpha ^{\prime
},\beta ^{\prime }}\partial _{z}^{\alpha ^{\prime }}\partial _{u}^{\beta
^{\prime }}\partial _{z}^{\alpha }\partial _{u}^{\beta }K\left( \left(
z,u\right) ,\left( 0,0\right) \right) \left( z^{\prime },u^{\prime }\right)
^{\left( \alpha ^{\prime },\beta ^{\prime }\right) }\right\} \\
&&\ \ \ \ \ \ \ \ \ \ \ \ \ \ \ \ \ \ \ \ \times f_{J}\left( z^{\prime
},u^{\prime }\right) dz^{\prime }du^{\prime } \\
&&+\int \sum_{\left\vert \alpha ^{\prime }\right\vert +2\beta ^{\prime }\leq
M}c_{\alpha ^{\prime },\beta ^{\prime }}\partial _{z}^{\alpha ^{\prime
}}\partial _{u}^{\beta ^{\prime }}\partial _{z}^{\alpha }\partial
_{u}^{\beta }K\left( \left( z,u\right) ,\left( 0,0\right) \right) \left(
z^{\prime },u^{\prime }\right) ^{\left( \alpha ^{\prime },\beta ^{\prime
}\right) }f_{J}\left( z^{\prime },u^{\prime }\right) dz^{\prime }du^{\prime }
\\
&=&B_{1}+B_{2}.
\end{eqnarray*}%
Now%
\begin{eqnarray*}
\left\vert B_{1}\right\vert &\lesssim &\int_{\frac{\left\vert \left(
z^{\prime },u^{\prime }\right) \right\vert }{\left\vert \left( z,u\right)
\right\vert }\leq \frac{1}{2}}\frac{\left\vert \left( z^{\prime },u^{\prime
}\right) \right\vert ^{M+1}}{\left\vert \left( z,u\right) \circ \left(
z^{\prime },u^{\prime }\right) ^{-1}\right\vert ^{\frac{Q+M+M+1}{2}}}\frac{%
\left\Vert f\right\Vert _{\mathcal{M}^{M^{\prime }+\delta }\left( \mathbb{H}%
^{n}\right) }}{\left( 1+\left\vert z^{\prime }\right\vert ^{2}+\left\vert
u^{\prime }\right\vert \right) ^{\frac{Q+M+\delta }{2}}}dz^{\prime
}du^{\prime } \\
&\lesssim &\left\Vert f\right\Vert _{\mathcal{M}^{M^{\prime }+\delta }\left( 
\mathbb{H}^{n}\right) }\frac{1}{\left( 1+\left\vert z\right\vert
^{2}+\left\vert u\right\vert \right) ^{\frac{Q+2M+\delta }{2}}},
\end{eqnarray*}%
where we have used the fact that $\left\vert \left( z,u\right) \circ \left(
z^{\prime },u^{\prime }\right) ^{-1}\right\vert \geq \frac{1}{2}\left\vert
\left( z,u\right) \right\vert $ when $\frac{\left\vert \left( z^{\prime
},u^{\prime }\right) \right\vert }{\left\vert \left( z,u\right) \right\vert }%
\leq \frac{1}{2}$.

To estimate $B_{2}$, we note that by the vanishing moments conditions on $f$%
, 
\begin{eqnarray*}
&&\left\vert \int \left( z^{\prime },u^{\prime }\right) ^{\left( \alpha
^{\prime },\beta ^{\prime }\right) }f_{J}\left( z^{\prime },u^{\prime
}\right) dz^{\prime }du^{\prime }\right\vert \\
&=&\left\vert \int \left( z^{\prime },u^{\prime }\right) ^{\left( \alpha
^{\prime },\beta ^{\prime }\right) }f_{I}\left( z^{\prime },u^{\prime
}\right) dz^{\prime }du^{\prime }+\int \left( z^{\prime },u^{\prime }\right)
^{\left( \alpha ^{\prime },\beta ^{\prime }\right) }f_{L}\left( z^{\prime
},u^{\prime }\right) dz^{\prime }du^{\prime }\right\vert \\
&\lesssim &\left\Vert f\right\Vert _{\mathcal{M}^{M^{\prime }+\delta }\left( 
\mathbb{H}^{n}\right) }\left\{ \int_{\left\{ \left\vert \left( z,u\right)
\circ \left( z^{\prime },u^{\prime }\right) ^{-1}\right\vert \leq \frac{1}{2}%
\left\vert \left( z^{\prime },u^{\prime }\right) \right\vert \right\} }\frac{%
\left\vert \left( z^{\prime },u^{\prime }\right) \right\vert ^{\left\vert
\alpha ^{\prime }\right\vert +2\beta ^{\prime }}}{\left( 1+\left\vert
z^{\prime }\right\vert ^{2}+\left\vert u^{\prime }\right\vert \right) ^{%
\frac{Q+M^{\prime }+\delta }{2}}}dz^{\prime }du^{\prime }\right. \\
&&\ \ \ \ \ \ \ \ \ \ \ \ \ \ \ \ \ \ \ \ \ \ \ \ \ \left. +\int_{\left\{
\left\vert \left( z,u\right) \circ \left( z^{\prime },u^{\prime }\right)
^{-1}\right\vert \geq \frac{1}{4}\left\vert \left( z,u\right) \right\vert
\right\} }\frac{\left\vert \left( z^{\prime },u^{\prime }\right) \right\vert
^{\left\vert \alpha ^{\prime }\right\vert +2\beta ^{\prime }}}{\left(
1+\left\vert z^{\prime }\right\vert ^{2}+\left\vert u^{\prime }\right\vert
\right) ^{\frac{Q+M^{\prime }+\delta }{2}}}dz^{\prime }du^{\prime }\right\}
\\
&\lesssim &\left\Vert f\right\Vert _{\mathcal{M}^{M^{\prime }+\delta }\left( 
\mathbb{H}^{n}\right) }\frac{1}{\left( 1+\left\vert z\right\vert
^{2}+\left\vert u\right\vert \right) ^{\frac{M^{\prime }+\delta -\left\vert
\alpha ^{\prime }\right\vert -2\beta ^{\prime }}{2}}}.
\end{eqnarray*}%
Therefore%
\begin{eqnarray*}
\left\vert B_{2}\right\vert &\lesssim &\left\Vert f\right\Vert _{\mathcal{M}%
^{M^{\prime }+\delta }\left( \mathbb{H}^{n}\right) }\sum_{\left\vert \alpha
^{\prime }\right\vert +2\beta ^{\prime }\leq M}\frac{1}{\left( \left\vert
z\right\vert ^{2}+\left\vert u\right\vert \right) ^{\frac{Q+\left\vert
\alpha \right\vert +2\beta +\left\vert \alpha ^{\prime }\right\vert +2\beta
^{\prime }}{2}}}\frac{1}{\left( 1+\left\vert z\right\vert ^{2}+\left\vert
u\right\vert \right) ^{\frac{M^{\prime }+\delta -\left\vert \alpha ^{\prime
}\right\vert -2\beta ^{\prime }}{2}}} \\
&\lesssim &\left\Vert f\right\Vert _{\mathcal{M}^{M^{\prime }+\delta }\left( 
\mathbb{H}^{n}\right) }\frac{1}{\left( 1+\left\vert z\right\vert
^{2}+\left\vert u\right\vert \right) ^{\frac{Q+2M^{\prime }+\delta }{2}}},
\end{eqnarray*}%
since $\left\vert z\right\vert ^{2}+\left\vert u\right\vert \geq 1$.

For the last term $\partial _{z}^{\alpha }\partial _{u}^{\beta }Tf_{L}\left(
z,u\right) $, the estimate is similar to that of the first term, but easier.
Indeed, note that the support of $f_{L}$ is contained in 
\begin{equation*}
\left\{ \left( z^{\prime },u^{\prime }\right) :\left\vert \left( z^{\prime
},u^{\prime }\right) \right\vert \geq \frac{1}{4}\left\vert \left(
z,u\right) \right\vert \text{ and }\left\vert \left( z^{\prime },u^{\prime
}\right) \circ \left( z,u\right) ^{-1}\right\vert \geq \frac{1}{4}\left\vert
\left( z,u\right) \right\vert \right\} .
\end{equation*}%
Thus%
\begin{eqnarray*}
\left\vert \partial _{z}^{\alpha }\partial _{u}^{\beta }Tf_{L}\left(
z,u\right) \right\vert &=&\left\vert \int \partial _{z}^{\alpha }\partial
_{u}^{\beta }K\left( \left( z,u\right) ,\left( z^{\prime },u^{\prime
}\right) \right) f_{L}\left( z^{\prime },u^{\prime }\right) dz^{\prime
}du^{\prime }\right\vert \\
&\lesssim &\left\Vert f\right\Vert _{\mathcal{M}^{M^{\prime }+\delta }\left( 
\mathbb{H}^{n}\right) }\int_{\left\{ \frac{\left\vert z^{\prime }\right\vert
^{2}+\left\vert u^{\prime }\right\vert }{\left\vert z\right\vert
^{2}+\left\vert u\right\vert }\geq \frac{1}{4}\right\} }\frac{1}{\left(
\left\vert z\right\vert ^{2}+\left\vert u\right\vert \right) ^{\frac{%
Q+\left\vert \alpha \right\vert +2\beta }{2}}}\frac{1}{\left( \left\vert
z^{\prime }\right\vert ^{2}+\left\vert u^{\prime }\right\vert \right) ^{%
\frac{Q+M^{\prime }+\delta }{2}}}dz^{\prime }du^{\prime } \\
&\lesssim &\left\Vert f\right\Vert _{\mathcal{M}^{M^{\prime }+\delta }\left( 
\mathbb{H}^{n}\right) }\frac{1}{\left( \left\vert z\right\vert
^{2}+\left\vert u\right\vert \right) ^{\frac{Q+2M^{\prime }+\delta }{2}}}.
\end{eqnarray*}

\bigskip

We now prove the smoothness conditions for $\partial _{z}^{\alpha }\partial
_{u}^{\beta }Tf\left( z,u\right) $ in two separate cases.

\bigskip

\textbf{Case 3}: We consider first the case $\left\vert \left( z,u\right)
\right\vert \leq 1$. For this we need the following technical fact from
Lemma \ref{3}:%
\begin{equation*}
\partial _{z}^{\alpha ^{\prime }}\partial _{u}^{\beta ^{\prime }}Tf\left(
z,u\right) -\partial _{z}^{\alpha ^{\prime }}\partial _{u}^{\beta ^{\prime
}}Tf\left( z^{\prime \prime },u^{\prime \prime }\right)
\end{equation*}%
equals%
\begin{eqnarray*}
&&\int \partial _{z}^{\alpha ^{\prime }}\partial _{u}^{\beta ^{\prime
}}K\left( \left( z,u\right) ,\left( z^{\prime },u^{\prime }\right) \right) \\
&&\times \left\{ f\left( z^{\prime },u^{\prime }\right) -\sum_{\left\vert
\alpha ^{\prime \prime }\right\vert +2\beta ^{\prime \prime }\leq M^{\prime
}}c_{\alpha ^{\prime \prime },\beta ^{\prime \prime }}\partial _{z}^{\alpha
^{\prime \prime }}\partial _{u}^{\beta ^{\prime \prime }}f\left( z,u\right) 
\left[ \left( z^{\prime },u^{\prime }\right) \circ \left( z,u\right) ^{-1}%
\right] ^{\left( \alpha ^{\prime \prime },\beta ^{\prime \prime }\right)
}\right\} \widetilde{\theta }\left( z^{\prime },u^{\prime }\right)
dz^{\prime }du^{\prime }
\end{eqnarray*}%
\begin{eqnarray*}
&&+\int \partial _{z}^{\alpha ^{\prime }}\partial _{u}^{\beta ^{\prime
}}K\left( \left( z^{\prime \prime },u^{\prime \prime }\right) ,\left(
z^{\prime },u^{\prime }\right) \right) \\
&&\times \left\{ f\left( z^{\prime },u^{\prime }\right) -\sum_{\left\vert
\alpha ^{\prime \prime }\right\vert +2\beta ^{\prime \prime }\leq M^{\prime
}}c_{\alpha ^{\prime \prime },\beta ^{\prime \prime }}\partial _{z}^{\alpha
^{\prime \prime }}\partial _{u}^{\beta ^{\prime \prime }}f\left( z^{\prime
\prime },u^{\prime \prime }\right) \left[ \left( z^{\prime },u^{\prime
}\right) \circ \left( z^{\prime \prime },u^{\prime \prime }\right) ^{-1}%
\right] ^{\left( \alpha ^{\prime \prime },\beta ^{\prime \prime }\right)
}\right\} \widetilde{\theta }\left( z^{\prime },u^{\prime }\right)
dz^{\prime }du^{\prime }
\end{eqnarray*}%
\begin{eqnarray*}
&&+\int \left\{ \partial _{z}^{\alpha ^{\prime }}\partial _{u}^{\beta
^{\prime }}K\left( \left( z,u\right) ,\left( z^{\prime },u^{\prime }\right)
\right) -\partial _{z}^{\alpha ^{\prime }}\partial _{u}^{\beta ^{\prime
}}K\left( \left( z^{\prime \prime },u^{\prime \prime }\right) ,\left(
z^{\prime },u^{\prime }\right) \right) \right\} \\
&&\times \left\{ f\left( z^{\prime },u^{\prime }\right) -\sum_{\left\vert
\alpha ^{\prime \prime }\right\vert +2\beta ^{\prime \prime }\leq M^{\prime
}}c_{\alpha ^{\prime \prime },\beta ^{\prime \prime }}\partial _{z}^{\alpha
^{\prime \prime }}\partial _{u}^{\beta ^{\prime \prime }}f\left( z^{\prime
\prime },u^{\prime \prime }\right) \left[ \left( z^{\prime },u^{\prime
}\right) \circ \left( z^{\prime \prime },u^{\prime \prime }\right) ^{-1}%
\right] ^{\left( \alpha ^{\prime \prime },\beta ^{\prime \prime }\right)
}\right\} \left( 1-\widetilde{\theta }\left( z^{\prime },u^{\prime }\right)
\right) dz^{\prime }du^{\prime }
\end{eqnarray*}%
\begin{eqnarray*}
&&+\sum_{\left\vert \alpha ^{\prime \prime }\right\vert +2\beta ^{\prime
\prime }\leq M^{\prime }}c_{\alpha ^{\prime \prime },\beta ^{\prime \prime }}
\\
&&\times \left\{ \partial _{z}^{\alpha ^{\prime \prime }}\partial
_{u}^{\beta ^{\prime \prime }}f\left( z,u\right) -\sum_{\substack{ %
\left\vert \alpha ^{\prime \prime \prime }\right\vert +2\beta ^{\prime
\prime \prime }  \\ \leq \left\vert \alpha ^{\prime }\right\vert +2\beta
^{\prime }-\left\vert \alpha ^{\prime \prime }\right\vert -2\beta ^{\prime
\prime }}}c_{\alpha ^{\prime \prime \prime },\beta ^{\prime \prime \prime
}}\partial _{z}^{\alpha ^{\prime \prime \prime }+\alpha ^{\prime \prime
}}\partial _{u}^{2\beta ^{\prime \prime \prime }+2\beta ^{\prime \prime
}}f\left( z^{\prime \prime },u^{\prime \prime }\right) \left[ \left(
z,u\right) \circ \left( z^{\prime \prime },u^{\prime \prime }\right) ^{-1}%
\right] ^{\left( \alpha ^{\prime \prime \prime },\beta ^{\prime \prime
\prime }\right) }\right\} \\
&&\ \ \ \ \ \ \ \ \ \ \ \ \ \ \ \ \ \ \ \ \ \ \ \ \ \ \ \ \ \ \times
T_{\left( \alpha ^{\prime \prime },\beta ^{\prime \prime }\right) ,\left(
\alpha ^{\prime },\beta ^{\prime }\right) }\widetilde{\theta }\left(
z,u\right) ,
\end{eqnarray*}%
where%
\begin{equation*}
\widetilde{\theta }\left( z^{\prime },u^{\prime }\right) =\theta \left( 
\frac{8\left( z^{\prime },u^{\prime }\right) \circ \left( z,u\right) ^{-1}}{%
\left\vert \left( z,u\right) \circ \left( z^{\prime \prime },u^{\prime
\prime }\right) ^{-1}\right\vert }\right) ,
\end{equation*}%
and where all of the integrals above are absolutely convergent.

Let $I,II,III$ denote the three integrals and let $IV\ $denote the operator
term above. We first notice that 
\begin{eqnarray*}
&&\left\vert f\left( z^{\prime },u^{\prime }\right) -\sum_{\left\vert \alpha
^{\prime }\right\vert +2\beta ^{\prime }\leq M^{\prime }}c_{\alpha ^{\prime
},\beta ^{\prime }}\partial _{z}^{\alpha ^{\prime }}\partial _{u}^{\beta
^{\prime }}f\left( z,u\right) \left[ \left( z^{\prime },u^{\prime }\right)
\circ \left( z,u\right) ^{-1}\right] ^{\left( \alpha ^{\prime },\beta
^{\prime }\right) }\right\vert \\
&&\ \ \ \ \ \ \ \ \ \ \ \ \ \ \ \lesssim \left\vert \left( z^{\prime
},u^{\prime }\right) \circ \left( z,u\right) ^{-1}\right\vert ^{M^{\prime
}+1},
\end{eqnarray*}%
for all $\left( z^{\prime },u^{\prime }\right) \in \mathbb{H}^{n}$. Indeed,
for $\left\vert \left( z^{\prime },u^{\prime }\right) \circ \left(
z,u\right) ^{-1}\right\vert \leq 1$, this follows from the Taylor remainder
estimate, and for $\left\vert \left( z^{\prime },u^{\prime }\right) \circ
\left( z,u\right) ^{-1}\right\vert >1$, it follows from the mean value
theorem. Thus we have%
\begin{eqnarray*}
\left\vert I\right\vert &\lesssim &\left\Vert f\right\Vert _{\mathcal{M}%
^{M^{\prime }+\delta }\left( \mathbb{H}^{n}\right) }\int_{\left\{ \frac{%
\left\vert \left( z^{\prime },u^{\prime }\right) \circ \left( z^{\prime
\prime },u^{\prime \prime }\right) ^{-1}\right\vert }{\left\vert \left(
z,u\right) \circ \left( z^{\prime \prime },u^{\prime \prime }\right)
^{-1}\right\vert }\leq 4\right\} }\frac{\left\vert \left( z,u\right) \circ
\left( z^{\prime \prime },u^{\prime \prime }\right) ^{-1}\right\vert
^{\delta }}{\left\vert \left( z,u\right) \circ \left( z^{\prime \prime
},u^{\prime \prime }\right) ^{-1}\right\vert ^{Q+\delta }\left\vert \left(
z,u\right) \circ \left( z^{\prime },u^{\prime }\right) ^{-1}\right\vert
^{\left\vert \alpha ^{\prime }\right\vert +2\beta ^{\prime }}}dz^{\prime
}du^{\prime } \\
&\lesssim &\left\Vert f\right\Vert _{\mathcal{M}^{M^{\prime }+\delta }\left( 
\mathbb{H}^{n}\right) }\left\vert \left( z,u\right) \circ \left( z^{\prime
\prime },u^{\prime \prime }\right) ^{-1}\right\vert ^{\delta },
\end{eqnarray*}%
which gives the correct bound since $\left\vert \left( z,u\right)
\right\vert \leq 1$. Integral $II$ is estimated similarly. For integral $III$
we have%
\begin{eqnarray*}
\left\vert III\right\vert &\lesssim &\left\Vert f\right\Vert _{\mathcal{M}%
^{M^{\prime }+\delta }\left( \mathbb{H}^{n}\right) }\int_{\left\{ \frac{%
\left\vert \left( z^{\prime },u^{\prime }\right) \circ \left( z^{\prime
\prime },u^{\prime \prime }\right) ^{-1}\right\vert }{\left\vert \left(
z,u\right) \circ \left( z^{\prime \prime },u^{\prime \prime }\right)
^{-1}\right\vert }>2\right\} }\left\vert \left( z,u\right) \circ \left(
z^{\prime \prime },u^{\prime \prime }\right) ^{-1}\right\vert ^{\delta } \\
&&\times \left\vert \left( z^{\prime },u^{\prime }\right) \circ \left(
z^{\prime \prime },u^{\prime \prime }\right) ^{-1}\right\vert ^{-Q-\delta
}\left\vert \left( z^{\prime },u^{\prime }\right) \circ \left( z^{\prime
\prime },u^{\prime \prime }\right) ^{-1}\right\vert ^{-\left\vert \alpha
^{\prime }\right\vert -2\beta ^{\prime }}dz^{\prime }du^{\prime } \\
&\lesssim &\left\Vert f\right\Vert _{\mathcal{M}^{M^{\prime }+\delta }\left( 
\mathbb{H}^{n}\right) }\left\vert \left( z,u\right) \circ \left( z^{\prime
\prime },u^{\prime \prime }\right) ^{-1}\right\vert ^{\delta }.
\end{eqnarray*}%
Finally, 
\begin{eqnarray*}
\left\vert IV\right\vert &\lesssim &\sum_{\substack{ \left\vert \alpha
^{\prime }\right\vert +2\beta ^{\prime }  \\ \leq \left\vert \alpha
\right\vert +2\beta }}\left\vert \partial _{z}^{\alpha ^{\prime }}\partial
_{u}^{\beta ^{\prime }}f\left( z,u\right) -\sum_{\substack{ \left\vert
\alpha ^{\prime \prime }\right\vert +2\beta ^{\prime \prime }  \\ \leq
\left\vert \alpha \right\vert +2\beta -\left\vert \alpha ^{\prime
}\right\vert -2\beta ^{\prime }}}c_{\alpha ^{\prime \prime },\beta ^{\prime
\prime }}\partial _{z}^{\alpha ^{\prime \prime }+\alpha ^{\prime }}\partial
_{u}^{\beta ^{\prime \prime }+\beta ^{\prime }}f\left( z^{\prime \prime
},u^{\prime \prime }\right) \left[ \left( z,u\right) \circ \left( z^{\prime
\prime },u^{\prime \prime }\right) ^{-1}\right] ^{\left( \alpha ^{\prime
\prime },\beta ^{\prime \prime }\right) }\right\vert \\
&&\times \left\Vert T_{\left( \alpha ^{\prime },\beta ^{\prime }\right)
,\left( \alpha ,\beta \right) }\widetilde{\theta }\right\Vert _{\infty } \\
&\lesssim &\sum_{\substack{ \left\vert \alpha ^{\prime \prime }\right\vert
+2\beta ^{\prime \prime }  \\ \leq \left\vert \alpha \right\vert +2\beta
-\left\vert \alpha ^{\prime }\right\vert -2\beta ^{\prime }}}\left\vert
\partial _{z}^{\alpha ^{\prime \prime }+\alpha ^{\prime }}\partial
_{u}^{\beta ^{\prime \prime }+\beta ^{\prime }}f\left( \xi \right) -\partial
_{z}^{\alpha ^{\prime \prime }+\alpha ^{\prime }}\partial _{u}^{\beta
^{\prime \prime }+\beta ^{\prime }}f\left( z^{\prime \prime },u^{\prime
\prime }\right) \right\vert \\
&&\times \left\vert \left( z,u\right) \circ \left( z^{\prime \prime
},u^{\prime \prime }\right) ^{-1}\right\vert ^{\left\vert \alpha \right\vert
+2\beta -\left\vert \alpha ^{\prime }\right\vert -2\beta ^{\prime
}}\left\vert \left( z,u\right) \circ \left( z^{\prime \prime },u^{\prime
\prime }\right) ^{-1}\right\vert ^{\left\vert \alpha ^{\prime }\right\vert
+2\beta ^{\prime }-\left\vert \alpha \right\vert -2\beta } \\
&\lesssim &\left\Vert f\right\Vert _{\mathcal{M}^{M^{\prime }+\delta }\left( 
\mathbb{H}^{n}\right) }\sum_{\substack{ \left\vert \alpha ^{\prime
}\right\vert +2\beta ^{\prime }+\left\vert \alpha ^{\prime \prime
}\right\vert +2\beta ^{\prime \prime }  \\ \leq \left\vert \alpha
\right\vert +2\beta }}\left\vert \left( z,u\right) \circ \left( z^{\prime
\prime },u^{\prime \prime }\right) ^{-1}\right\vert ^{\delta } \\
&\lesssim &\left\Vert f\right\Vert _{\mathcal{M}^{M^{\prime }+\delta }\left( 
\mathbb{H}^{n}\right) }\left\vert \left( z,u\right) \circ \left( z^{\prime
\prime },u^{\prime \prime }\right) ^{-1}\right\vert ^{\delta }.
\end{eqnarray*}

\bigskip

\textbf{Case 4}: We prove the smoothness condition for $\partial
_{z}^{\alpha }\partial _{u}^{\beta }f\left( z,u\right) $ only when 
\begin{equation*}
\left\vert \left( z,u\right) \right\vert \geq 1\text{ and }\left\vert \left(
z,u\right) \circ \left( z^{\prime \prime },u^{\prime \prime }\right)
^{-1}\right\vert \leq \frac{1}{2}\left( 1+\left\vert \left( z,u\right)
\right\vert \right) ,
\end{equation*}%
since the remaining cases are similar, but easier. First we claim that for $%
\left( z^{\prime \prime \prime },u^{\prime \prime \prime }\right) ,\left(
z^{\prime \prime },u^{\prime \prime }\right) \in \mathbb{H}^{n}$, andwe have%
\begin{eqnarray*}
&&\left\vert \partial _{z}^{\alpha ^{\prime }}\partial _{u}^{\beta ^{\prime
}}f_{I}\left( z^{\prime \prime \prime },u^{\prime \prime \prime }\right)
-\partial _{z}^{\alpha ^{\prime }}\partial _{u}^{\beta ^{\prime
}}f_{I}\left( z^{\prime \prime },u^{\prime \prime }\right) \right\vert \\
&\lesssim &\left\Vert f\right\Vert _{\mathcal{M}_{flag}^{M^{\prime }+\delta
}\left( \mathbb{H}^{n}\right) }\left\vert \left( z^{\prime \prime \prime
},u^{\prime \prime \prime }\right) \circ \left( z^{\prime \prime },u^{\prime
\prime }\right) ^{-1}\right\vert ^{\delta }\left\vert \left( z,u\right)
\right\vert ^{-\left( Q+2\left\vert \alpha ^{\prime }\right\vert +\beta
^{\prime }\right) }.
\end{eqnarray*}%
To see this, first consider the case $\left\vert \left( z^{\prime \prime
\prime },u^{\prime \prime \prime }\right) \circ \left( z^{\prime \prime
},u^{\prime \prime }\right) ^{-1}\right\vert >\frac{1}{3}\left\vert \left(
z,u\right) \right\vert $. If $\left( z^{\prime \prime \prime },u^{\prime
\prime \prime }\right) $ lies in the support of $f_{I}\left( z,u\right) $,
then $\left\vert \left( z^{\prime \prime },u^{\prime \prime }\right)
\right\vert \approx \left\vert \left( z,u\right) \right\vert $, while
otherwise, $\partial _{z}^{\alpha ^{\prime }}\partial _{u}^{\beta ^{\prime
}}f_{I}\left( z^{\prime \prime \prime },u^{\prime \prime \prime }\right) =0$%
. Similarly for $\left( z^{\prime \prime \prime },u^{\prime \prime \prime
}\right) $. Using 
\begin{equation*}
\left\vert \partial _{z}^{\alpha ^{\prime }}\partial _{u}^{\beta ^{\prime
}}f_{I}\left( z^{\prime \prime \prime },u^{\prime \prime \prime }\right)
-\partial _{z}^{\alpha ^{\prime }}\partial _{u}^{\beta ^{\prime
}}f_{I}\left( z^{\prime \prime },u^{\prime \prime }\right) \right\vert \leq
\left\vert \partial _{z}^{\alpha ^{\prime }}\partial _{u}^{\beta ^{\prime
}}f_{I}\left( z^{\prime \prime \prime },u^{\prime \prime \prime }\right)
\right\vert +\left\vert \partial _{z}^{\alpha ^{\prime }}\partial
_{u}^{\beta ^{\prime }}f_{I}\left( z^{\prime \prime },u^{\prime \prime
}\right) \right\vert ,
\end{equation*}%
and the symmetry in $\left( z^{\prime \prime \prime },u^{\prime \prime
\prime }\right) $ and $\left( z^{\prime \prime },u^{\prime \prime }\right) $%
, it suffices to estimate just the first term on the right:%
\begin{eqnarray*}
&&\left\vert \partial _{z}^{\alpha ^{\prime }}\partial _{u}^{\beta ^{\prime
}}f_{I}\left( z^{\prime \prime \prime },u^{\prime \prime \prime }\right)
\right\vert \\
&\lesssim &\sum_{\left\vert \alpha ^{\prime \prime \prime }\right\vert
+2\beta ^{\prime \prime \prime }+\left\vert \alpha ^{\prime \prime
}\right\vert +2\beta ^{\prime \prime }=M^{\prime }}\left\vert \partial
_{z}^{\alpha ^{\prime \prime \prime }}\partial _{u}^{\beta ^{\prime \prime
\prime }}f_{I}\left( z^{\prime \prime \prime },u^{\prime \prime \prime
}\right) \left( \partial _{z}^{\alpha ^{\prime \prime }}\partial _{u}^{\beta
^{\prime \prime }}f_{I}\left( z^{\prime \prime \prime },u^{\prime \prime
\prime }\right) \right) \right\vert \\
&\lesssim &\left\Vert f\right\Vert _{\mathcal{M}^{M^{\prime }+\delta }\left( 
\mathbb{H}^{n}\right) }\sum_{\left\vert \alpha ^{\prime \prime \prime
}\right\vert +2\beta ^{\prime \prime \prime }+\left\vert \alpha ^{\prime
\prime }\right\vert +2\beta ^{\prime \prime }=M^{\prime }}\left(
1+\left\vert z^{\prime \prime \prime }\right\vert ^{2}+\left\vert u^{\prime
\prime \prime }\right\vert \right) ^{-\frac{Q+M^{\prime }+\left\vert \alpha
^{\prime \prime \prime }\right\vert +2\beta ^{\prime \prime \prime }}{2}%
}\left( \left\vert z\right\vert ^{2}+\left\vert u\right\vert \right) ^{-%
\frac{\left\vert \alpha ^{\prime \prime }\right\vert +2\beta ^{\prime \prime
}}{2}} \\
&\lesssim &\left\Vert f\right\Vert _{\mathcal{M}^{M^{\prime }+\delta }\left( 
\mathbb{H}^{n}\right) }\sum_{\left\vert \alpha ^{\prime \prime \prime
}\right\vert +2\beta ^{\prime \prime \prime }+\left\vert \alpha ^{\prime
\prime }\right\vert +2\beta ^{\prime \prime }=M^{\prime }}\left( \frac{%
\left\vert \left( z^{\prime \prime \prime },u^{\prime \prime \prime }\right)
\circ \left( z^{\prime \prime },u^{\prime \prime }\right) ^{-1}\right\vert }{%
\left\vert \left( z,u\right) \right\vert }\right) ^{\delta }\left\vert
\left( z,u\right) \right\vert ^{-\left( Q+M^{\prime }+\left\vert \alpha
^{\prime \prime \prime }\right\vert +2\beta ^{\prime \prime \prime
}+\left\vert \alpha ^{\prime \prime }\right\vert +2\beta ^{\prime \prime
}\right) } \\
&\lesssim &\left\Vert f\right\Vert _{\mathcal{M}^{M^{\prime }+\delta }\left( 
\mathbb{H}^{n}\right) }\left\vert \left( z^{\prime \prime \prime },u^{\prime
\prime \prime }\right) \circ \left( z^{\prime \prime },u^{\prime \prime
}\right) ^{-1}\right\vert ^{\delta }\left\vert \left( z,u\right) \right\vert
^{-\left( Q+M^{\prime }\right) }.
\end{eqnarray*}

On the other hand, suppose that $\left\vert \left( z^{\prime \prime \prime
},u^{\prime \prime \prime }\right) \circ \left( z^{\prime \prime },u^{\prime
\prime }\right) ^{-1}\right\vert \leq \frac{1}{3}\left\vert \left(
z,u\right) \right\vert $. Without loss of generality we may assume that at
least one of $\left( z^{\prime \prime \prime },u^{\prime \prime \prime
}\right) $ or $\left( z^{\prime \prime },u^{\prime \prime }\right) $ lies in
the support of $I$. Then%
\begin{equation*}
\left\vert \left( z^{\prime \prime \prime },u^{\prime \prime \prime }\right)
\circ \left( z,u\right) ^{-1}\right\vert ,\left\vert \left( z^{\prime \prime
},u^{\prime \prime }\right) \circ \left( z,u\right) ^{-1}\right\vert \leq 
\frac{5}{6}\left\vert \left( z,u\right) \right\vert \text{,}
\end{equation*}%
and it follows that $\left\vert \left( z^{\prime \prime \prime },u^{\prime
\prime \prime }\right) \right\vert $ and $\left\vert \left( z^{\prime \prime
},u^{\prime \prime }\right) \right\vert $ are comparable to $\left\vert
\left( z,u\right) \right\vert $. Thus we have%
\begin{eqnarray*}
&&\left\vert \partial _{z}^{\alpha ^{\prime }}\partial _{u}^{\beta ^{\prime
}}f_{I}\left( z^{\prime \prime \prime },u^{\prime \prime \prime }\right)
-\partial _{z}^{\alpha ^{\prime }}\partial _{u}^{\beta ^{\prime
}}f_{I}\left( z^{\prime \prime },u^{\prime \prime }\right) \right\vert \\
&\lesssim &\sum_{\left\vert \alpha ^{\prime \prime \prime }\right\vert
+2\beta ^{\prime \prime \prime }+\left\vert \alpha ^{\prime \prime
}\right\vert +2\beta ^{\prime \prime }=M^{\prime }}\left\vert \partial
_{z}^{\alpha ^{\prime \prime \prime }}\partial _{u}^{\beta ^{\prime \prime
\prime }}f\left( z^{\prime \prime \prime },u^{\prime \prime \prime }\right)
\partial _{z}^{\alpha ^{\prime \prime }}\partial _{u}^{\beta ^{\prime \prime
}}I\left( z^{\prime \prime \prime },u^{\prime \prime \prime }\right)
-\partial _{z}^{\alpha ^{\prime \prime \prime }}\partial _{u}^{\beta
^{\prime \prime \prime }}f\left( z^{\prime \prime },u^{\prime \prime
}\right) \partial _{z}^{\alpha ^{\prime \prime }}\partial _{u}^{\beta
^{\prime \prime }}I\left( z^{\prime \prime },u^{\prime \prime }\right)
\right\vert \\
&\lesssim &\sum_{\left\vert \alpha ^{\prime \prime \prime }\right\vert
+2\beta ^{\prime \prime \prime }+\left\vert \alpha ^{\prime \prime
}\right\vert +2\beta ^{\prime \prime }=M^{\prime }}\left\vert \partial
_{z}^{\alpha ^{\prime \prime \prime }}\partial _{u}^{\beta ^{\prime \prime
\prime }}f\left( z^{\prime \prime \prime },u^{\prime \prime \prime }\right)
\left\{ \partial _{z}^{\alpha ^{\prime \prime }}\partial _{u}^{\beta
^{\prime \prime }}I\left( z^{\prime \prime \prime },u^{\prime \prime \prime
}\right) -\partial _{z}^{\alpha ^{\prime \prime }}\partial _{u}^{\beta
^{\prime \prime }}I\left( z^{\prime \prime },u^{\prime \prime }\right)
\right\} \right\vert \\
&&+\sum_{\left\vert \alpha ^{\prime \prime \prime }\right\vert +2\beta
^{\prime \prime \prime }+\left\vert \alpha ^{\prime \prime }\right\vert
+2\beta ^{\prime \prime }=M^{\prime }}\left\vert \left\{ \partial
_{z}^{\alpha ^{\prime \prime \prime }}\partial _{u}^{\beta ^{\prime \prime
\prime }}f\left( z^{\prime \prime \prime },u^{\prime \prime \prime }\right)
-\partial _{z}^{\alpha ^{\prime \prime \prime }}\partial _{u}^{\beta
^{\prime \prime \prime }}f\left( z^{\prime \prime },u^{\prime \prime
}\right) \right\} \partial _{z}^{\alpha ^{\prime \prime }}\partial
_{u}^{\beta ^{\prime \prime }}I\left( z^{\prime \prime },u^{\prime \prime
}\right) \right\vert ,
\end{eqnarray*}%
which by the definition of decay for $f\in \mathcal{M}^{M^{\prime }+\delta
}\left( \mathbb{H}^{n}\right) $, and the estimate 
\begin{equation*}
\left\vert \partial _{z}^{\alpha ^{\prime \prime }}\partial _{u}^{\beta
^{\prime \prime }}I\left( w,v\right) \right\vert \leq C_{M^{\prime \prime
}}\left\vert \partial _{z}^{\alpha ^{\prime \prime }}\partial _{u}^{\beta
^{\prime \prime }}\widetilde{\theta }\left( \frac{8\left( z,u\right) \circ
\left( w,v\right) ^{-1}}{\left\vert \left( z,u\right) \right\vert }\right)
\right\vert \left\vert \left( z,u\right) \right\vert ^{-M^{\prime \prime }},
\end{equation*}%
is dominated by%
\begin{eqnarray*}
&&\left\Vert f\right\Vert _{\mathcal{M}^{M^{\prime }+\delta }\left( \mathbb{H%
}^{n}\right) }\sum_{\left\vert \alpha ^{\prime \prime \prime }\right\vert
+2\beta ^{\prime \prime \prime }+\left\vert \alpha ^{\prime \prime
}\right\vert +2\beta ^{\prime \prime }=M^{\prime }}\left\vert \left(
z,u\right) \right\vert ^{-\left( Q+M^{\prime }+\left\vert \alpha ^{\prime
\prime \prime }\right\vert +2\beta ^{\prime \prime \prime }\right) } \\
&&\times \left\vert \partial _{z}^{\alpha ^{\prime \prime }}\partial
_{u}^{\beta ^{\prime \prime }}\widetilde{\theta }\left( \frac{\left\vert
\left( z^{\prime \prime \prime },u^{\prime \prime \prime }\right) \circ
\left( z,u\right) ^{-1}\right\vert }{\left\vert \left( z,u\right)
\right\vert }\right) -\partial _{z}^{\alpha ^{\prime \prime }}\partial
_{u}^{\beta ^{\prime \prime }}\widetilde{\theta }\left( \frac{\left\vert
\left( z^{\prime \prime },u^{\prime \prime }\right) \circ \left( z,u\right)
^{-1}\right\vert }{\left\vert \left( z,u\right) \right\vert }\right)
\right\vert \left\vert \left( z,u\right) \right\vert ^{-\left( \left\vert
\alpha ^{\prime \prime }\right\vert +2\beta ^{\prime \prime }\right) } \\
&&+\left\Vert f\right\Vert _{\mathcal{M}^{M^{\prime }+\delta }\left( \mathbb{%
H}^{n}\right) }\sum_{\left\vert \alpha ^{\prime \prime \prime }\right\vert
+2\beta ^{\prime \prime \prime }+\left\vert \alpha ^{\prime \prime
}\right\vert +2\beta ^{\prime \prime }=M^{\prime }}\left\vert \left(
z^{\prime \prime \prime },u^{\prime \prime \prime }\right) \circ \left(
z^{\prime \prime },u^{\prime \prime }\right) ^{-1}\right\vert ^{\delta } \\
&&\ \ \ \ \ \ \ \ \ \ \ \ \ \ \ \ \ \ \ \ \ \ \ \ \ \ \ \ \ \ \ \ \ \ \ \ \
\ \ \ \times \left\vert \left( z,u\right) \right\vert ^{-\left( Q+M^{\prime
}+\left\vert \alpha ^{\prime \prime \prime }\right\vert +2\beta ^{\prime
\prime \prime }+\delta \right) }\left\vert \left( z,u\right) \right\vert
^{-\left( \left\vert \alpha ^{\prime \prime }\right\vert +2\beta ^{\prime
\prime }\right) } \\
&\lesssim &\left\Vert f\right\Vert _{\mathcal{M}^{M^{\prime }+\delta }\left( 
\mathbb{H}^{n}\right) }\left\vert \left( z^{\prime \prime \prime },u^{\prime
\prime \prime }\right) \circ \left( z^{\prime \prime },u^{\prime \prime
}\right) ^{-1}\right\vert ^{\delta }\left\vert \left( z,u\right) \right\vert
^{-\left( Q+2M^{\prime }+\delta \right) }.
\end{eqnarray*}

We now return to the estimate for the difference%
\begin{equation*}
\partial _{z}^{\alpha ^{\prime }}\partial _{u}^{\beta ^{\prime
}}Tf_{I}\left( z,u\right) -\partial _{z}^{\alpha ^{\prime }}\partial
_{u}^{\beta ^{\prime }}Tf_{I}\left( z^{\prime \prime },u^{\prime \prime
}\right) =I+II+III+IV,
\end{equation*}%
where we are using the decomposition introduced in Case 3, but applied to
the function $f_{I}$ instead of $f$. To estimate the first term we begin by
writing%
\begin{eqnarray*}
&&f\left( z^{\prime },u^{\prime }\right) -\sum_{\left\vert \alpha ^{\prime
\prime }\right\vert +2\beta ^{\prime \prime }\leq M^{\prime }}c_{\alpha
^{\prime \prime },\beta ^{\prime \prime }}\partial _{z}^{\alpha ^{\prime
\prime }}\partial _{u}^{\beta ^{\prime \prime }}f\left( z,u\right) \left[
\left( z^{\prime },u^{\prime }\right) \circ \left( z,u\right) ^{-1}\right]
^{\left( \alpha ^{\prime \prime },\beta ^{\prime \prime }\right) } \\
&=&\sum_{\left\vert \alpha ^{\prime \prime }\right\vert +2\beta ^{\prime
\prime }=M^{\prime }}c_{\alpha ^{\prime \prime },\beta ^{\prime \prime
}}\partial _{z}^{\alpha ^{\prime \prime }}\partial _{u}^{\beta ^{\prime
\prime }}f\left( \zeta ,\eta \right) \left[ \left( z^{\prime },u^{\prime
}\right) \circ \left( z,u\right) ^{-1}\right] ^{\left( \alpha ^{\prime
\prime },\beta ^{\prime \prime }\right) } \\
&&-\sum_{\left\vert \alpha ^{\prime \prime }\right\vert +2\beta ^{\prime
\prime }=M^{\prime }}c_{\alpha ^{\prime \prime },\beta ^{\prime \prime
}}\partial _{z}^{\alpha ^{\prime \prime }}\partial _{u}^{\beta ^{\prime
\prime }}f\left( z,u\right) \left[ \left( z^{\prime },u^{\prime }\right)
\circ \left( z,u\right) ^{-1}\right] ^{\left( \alpha ^{\prime \prime },\beta
^{\prime \prime }\right) } \\
&&\sum_{\left\vert \alpha ^{\prime \prime }\right\vert +2\beta ^{\prime
\prime }=M^{\prime }}c_{\alpha ^{\prime \prime },\beta ^{\prime \prime }} 
\left[ \partial _{z}^{\alpha ^{\prime \prime }}\partial _{u}^{\beta ^{\prime
\prime }}f\left( \zeta ,\eta \right) -\partial _{z}^{\alpha ^{\prime \prime
}}\partial _{u}^{\beta ^{\prime \prime }}f\left( z,u\right) \right] \left[
\left( z^{\prime },u^{\prime }\right) \circ \left( z,u\right) ^{-1}\right]
^{\left( \alpha ^{\prime \prime },\beta ^{\prime \prime }\right) },
\end{eqnarray*}%
where $\left( \zeta ,\eta \right) $ lies on line segment joining $\left(
z,u\right) $ and $\left( z^{\prime \prime },u^{\prime \prime }\right) $, in
order to obtain 
\begin{eqnarray*}
I &=&\int \partial _{z}^{\alpha ^{\prime }}\partial _{u}^{\beta ^{\prime
}}K\left( \left( z,u\right) ,\left( z^{\prime },u^{\prime }\right) \right) \\
&&\times \left\{ f_{I}\left( z^{\prime },u^{\prime }\right)
-\sum_{\left\vert \alpha ^{\prime \prime }\right\vert +2\beta ^{\prime
\prime }\leq M^{\prime }}c_{\alpha ^{\prime \prime },\beta ^{\prime \prime
}}\partial _{z}^{\alpha ^{\prime \prime }}\partial _{u}^{\beta ^{\prime
\prime }}f_{I}\left( z,u\right) \left[ \left( z^{\prime },u^{\prime }\right)
\circ \left( z,u\right) ^{-1}\right] ^{\left( \alpha ^{\prime \prime },\beta
^{\prime \prime }\right) }\right\} \widetilde{\theta }\left( z^{\prime
},u^{\prime }\right) dz^{\prime }du^{\prime } \\
&=&\int \partial _{z}^{\alpha ^{\prime }}\partial _{u}^{\beta ^{\prime
}}K\left( \left( z,u\right) ,\left( z^{\prime },u^{\prime }\right) \right) \\
&&\times \left\{ \sum_{\left\vert \alpha ^{\prime \prime }\right\vert
+2\beta ^{\prime \prime }=M^{\prime }}c_{\alpha ^{\prime \prime },\beta
^{\prime \prime }}\left[ \partial _{z}^{\alpha ^{\prime \prime }}\partial
_{u}^{\beta ^{\prime \prime }}f_{I}\left( \zeta ,\eta \right) -\partial
_{z}^{\alpha ^{\prime \prime }}\partial _{u}^{\beta ^{\prime \prime
}}f_{I}\left( z,u\right) \right] \left[ \left( z^{\prime },u^{\prime
}\right) \circ \left( z,u\right) ^{-1}\right] ^{\left( \alpha ^{\prime
\prime },\beta ^{\prime \prime }\right) }\right\} \widetilde{\theta }\left(
z^{\prime },u^{\prime }\right) dz^{\prime }du^{\prime }.
\end{eqnarray*}%
Hence, using the kernel estimate for $K$, we obtain%
\begin{eqnarray*}
&\lesssim &\int_{\left\{ \frac{\left\vert \left( z^{\prime },u^{\prime
}\right) \circ \left( z^{\prime \prime },u^{\prime \prime }\right)
^{-1}\right\vert }{\left\vert \left( z,u\right) \circ \left( z^{\prime
\prime },u^{\prime \prime }\right) ^{-1}\right\vert }\leq 4\right\}
}\left\vert \left( z,u\right) \circ \left( z^{\prime },u^{\prime }\right)
^{-1}\right\vert ^{-\left( Q+M^{\prime }\right) } \\
&&\times \sum_{\left\vert \alpha ^{\prime \prime }\right\vert +2\beta
^{\prime \prime }=M^{\prime }}\left\vert \partial _{z}^{\alpha ^{\prime
\prime }}\partial _{u}^{\beta ^{\prime \prime }}f_{I}\left( \zeta ,\eta
\right) -\partial _{z}^{\alpha ^{\prime \prime }}\partial _{u}^{\beta
^{\prime \prime }}f_{I}\left( z,u\right) \right\vert dz^{\prime }du^{\prime }
\\
&\lesssim &\left\Vert f\right\Vert _{\mathcal{M}^{M^{\prime }+\delta }\left( 
\mathbb{H}^{n}\right) }^{M^{\prime }+\delta }\int_{\left\{ \frac{\left\vert
\left( z^{\prime },u^{\prime }\right) \circ \left( z,u\right)
^{-1}\right\vert }{\left\vert \left( z,u\right) \circ \left( z^{\prime
\prime },u^{\prime \prime }\right) ^{-1}\right\vert }\leq 5\right\}
}\left\vert \left( z,u\right) \circ \left( z^{\prime },u^{\prime }\right)
^{-1}\right\vert ^{\delta }\left\vert \left( z,u\right) \right\vert
^{-\left( Q+2\left\vert \alpha ^{\prime }\right\vert +4\beta ^{\prime
}+\delta \right) }dz^{\prime }du^{\prime } \\
&\lesssim &\left\Vert f\right\Vert _{\mathcal{M}^{M^{\prime }+\delta }\left( 
\mathbb{H}^{n}\right) }^{M^{\prime }+\delta }\left\vert \left( z,u\right)
\circ \left( z^{\prime \prime },u^{\prime \prime }\right) ^{-1}\right\vert
^{\delta }\left\vert \left( z,u\right) \right\vert ^{-\left( 2M^{\prime
}+\delta \right) },
\end{eqnarray*}%
which is the correct estimate since $M^{\prime }=\left\vert \alpha ^{\prime
}\right\vert +2\beta ^{\prime }$. Term $II$ is estimated in similar fashion.
For term $III$ we have%
\begin{eqnarray*}
III &=&\int \left\{ \partial _{z}^{\alpha ^{\prime }}\partial _{u}^{\beta
^{\prime }}K\left( \left( z,u\right) ,\left( z^{\prime },u^{\prime }\right)
\right) -\partial _{z}^{\alpha ^{\prime }}\partial _{u}^{\beta ^{\prime
}}K\left( \left( z^{\prime \prime },u^{\prime \prime }\right) ,\left(
z^{\prime },u^{\prime }\right) \right) \right\} \\
&&\times \left\{ \sum_{\left\vert \alpha ^{\prime \prime }\right\vert
+2\beta ^{\prime \prime }=M^{\prime }}c_{\alpha ^{\prime \prime },\beta
^{\prime \prime }}\left[ \partial _{z}^{\alpha ^{\prime \prime }}\partial
_{u}^{\beta ^{\prime \prime }}f_{I}\left( \xi ,\eta \right) -\partial
_{z}^{\alpha ^{\prime \prime }}\partial _{u}^{\beta ^{\prime \prime
}}f_{I}\left( z^{\prime \prime },u^{\prime \prime }\right) \right] \left[
\left( z^{\prime },u^{\prime }\right) \circ \left( z^{\prime \prime
},u^{\prime \prime }\right) ^{-1}\right] ^{\left( \alpha ^{\prime \prime
},\beta ^{\prime \prime }\right) }\right\} \\
&&\times \left( 1-\widetilde{\theta }\left( z^{\prime },u^{\prime }\right)
\right) dz^{\prime }du^{\prime },
\end{eqnarray*}%
which gives%
\begin{eqnarray*}
\left\vert III\right\vert &\lesssim &\left\Vert f\right\Vert _{\mathcal{M}%
^{M^{\prime }+\delta }\left( \mathbb{H}^{n}\right) }\left\vert \left(
z,u\right) \circ \left( z^{\prime \prime },u^{\prime \prime }\right)
^{-1}\right\vert ^{1+\delta }\left\vert \left( z,u\right) \right\vert
^{-\left( Q+\left\vert \alpha ^{\prime }\right\vert +2\beta ^{\prime
}+\delta \right) } \\
&&\times \int_{\left\{ \frac{\left\vert \left( z,u\right) \circ \left(
z^{\prime \prime },u^{\prime \prime }\right) ^{-1}\right\vert }{\left\vert
\left( z^{\prime \prime },u^{\prime \prime }\right) \circ \left( z^{\prime
},u^{\prime }\right) ^{-1}\right\vert }\leq 2\right\} }\left\vert \left(
z^{\prime },u^{\prime }\right) \circ \left( z^{\prime \prime },u^{\prime
\prime }\right) ^{-1}\right\vert ^{-\left( Q+M^{\prime }+1\right)
}\left\vert \left( z^{\prime },u^{\prime }\right) \circ \left( z^{\prime
\prime },u^{\prime \prime }\right) ^{-1}\right\vert ^{M^{\prime }}dz^{\prime
}du^{\prime } \\
&\lesssim &\left\Vert f\right\Vert _{\mathcal{M}^{M^{\prime }+\delta }\left( 
\mathbb{H}^{n}\right) }\left\vert \left( z,u\right) \circ \left( z^{\prime
\prime },u^{\prime \prime }\right) ^{-1}\right\vert ^{\delta }\left\vert
\left( z,u\right) \right\vert ^{-\left( Q+2M^{\prime }+\delta \right) }.
\end{eqnarray*}%
Finally, using what we proved in Case 1, we have%
\begin{eqnarray*}
&&IV=\sum_{\left\vert \alpha ^{\prime \prime }\right\vert +2\beta ^{\prime
\prime }\leq M^{\prime }}c_{\alpha ^{\prime \prime },\beta ^{\prime \prime
}}\left\{ \partial _{z}^{\alpha ^{\prime \prime }}\partial _{u}^{\beta
^{\prime \prime }}f_{I}\left( z,u\right) -\sum_{\substack{ \left\vert \alpha
^{\prime \prime \prime }\right\vert +2\beta ^{\prime \prime \prime }  \\ %
\leq \left\vert \alpha ^{\prime }\right\vert +2\beta ^{\prime }-\left\vert
\alpha ^{\prime \prime }\right\vert -2\beta ^{\prime \prime }}}c_{\alpha
^{\prime \prime \prime },\beta ^{\prime \prime \prime }}\partial
_{z}^{\alpha ^{\prime \prime \prime }+\alpha ^{\prime \prime }}\partial
_{u}^{2\beta ^{\prime \prime \prime }+2\beta ^{\prime \prime }}f_{I}\left(
z^{\prime \prime },u^{\prime \prime }\right) \right. \\
&&\ \ \ \ \ \ \ \ \ \ \ \ \ \ \ \ \ \ \ \ \ \ \ \ \ \left. \times \left[
\left( z,u\right) \circ \left( z^{\prime \prime },u^{\prime \prime }\right)
^{-1}\right] ^{\left( \alpha ^{\prime \prime \prime },\beta ^{\prime \prime
\prime }\right) }\right\} T_{\left( \alpha ^{\prime \prime },\beta ^{\prime
\prime }\right) ,\left( \alpha ^{\prime },\beta ^{\prime }\right) }%
\widetilde{\theta }\left( z,u\right)
\end{eqnarray*}%
\begin{eqnarray*}
&=&\sum_{\left\vert \alpha ^{\prime \prime }\right\vert +2\beta ^{\prime
\prime }\leq M^{\prime }}c_{\alpha ^{\prime \prime },\beta ^{\prime \prime
}}\left\{ \sum_{\substack{ \left\vert \alpha ^{\prime \prime \prime
}\right\vert +2\beta ^{\prime \prime \prime }  \\ =\left\vert \alpha
^{\prime }\right\vert +2\beta ^{\prime }-\left\vert \alpha ^{\prime \prime
}\right\vert -2\beta ^{\prime \prime }}}c_{\alpha ^{\prime \prime \prime
},\beta ^{\prime \prime \prime }}\left[ \partial _{z}^{\alpha ^{\prime
\prime \prime }+\alpha ^{\prime \prime }}\partial _{u}^{2\beta ^{\prime
\prime \prime }+2\beta ^{\prime \prime }}f_{I}\left( \xi ,\eta \right)
-\partial _{z}^{\alpha ^{\prime \prime \prime }+\alpha ^{\prime \prime
}}\partial _{u}^{2\beta ^{\prime \prime \prime }+2\beta ^{\prime \prime
}}f_{I}\left( z^{\prime \prime },u^{\prime \prime }\right) \right] \right. \\
&&\ \ \ \ \ \ \ \ \ \ \ \ \ \ \ \ \ \ \ \ \left. \times \left[ \left(
z,u\right) \circ \left( z^{\prime \prime },u^{\prime \prime }\right) ^{-1}%
\right] ^{\left( \alpha ^{\prime \prime \prime },\beta ^{\prime \prime
\prime }\right) }\right\} T_{\left( \alpha ^{\prime \prime },\beta ^{\prime
\prime }\right) ,\left( \alpha ^{\prime },\beta ^{\prime }\right) }%
\widetilde{\theta }\left( z,u\right) ,
\end{eqnarray*}%
and so%
\begin{eqnarray*}
\left\vert IV\right\vert &\lesssim &\sum_{\substack{ \left\vert \alpha
^{\prime \prime \prime }\right\vert +2\beta ^{\prime \prime \prime
}+\left\vert \alpha ^{\prime \prime }\right\vert +2\beta ^{\prime \prime } 
\\ =\left\vert \alpha ^{\prime }\right\vert +2\beta ^{\prime }}}\left\vert
\partial _{z}^{\alpha ^{\prime \prime \prime }+\alpha ^{\prime \prime
}}\partial _{u}^{\beta ^{\prime \prime \prime }+\beta ^{\prime \prime
}}f_{I}\left( \xi ,\eta \right) -\partial _{z}^{\alpha ^{\prime \prime
\prime }+\alpha ^{\prime \prime }}\partial _{u}^{\beta ^{\prime \prime
\prime }+\beta ^{\prime \prime }}f_{I}\left( z^{\prime \prime },u^{\prime
\prime }\right) \right\vert \\
&&\times \left\vert \left( z,u\right) \circ \left( z^{\prime \prime
},u^{\prime \prime }\right) ^{-1}\right\vert ^{\left\vert \alpha ^{\prime
\prime \prime }\right\vert +2\beta ^{\prime \prime \prime }}\left\vert
\left( z,u\right) \circ \left( z^{\prime \prime },u^{\prime \prime }\right)
^{-1}\right\vert ^{\left\vert \alpha ^{\prime }\right\vert +2\beta ^{\prime
}-\left( \left\vert \alpha ^{\prime \prime \prime }\right\vert +2\beta
^{\prime \prime \prime }\right) } \\
&\lesssim &\left\Vert f\right\Vert _{\mathcal{M}^{M^{\prime }+\delta }\left( 
\mathbb{H}^{n}\right) }\left\vert \left( z,u\right) \circ \left( z^{\prime
\prime },u^{\prime \prime }\right) ^{-1}\right\vert ^{\delta }\left\vert
\left( z,u\right) \right\vert ^{-\left( Q+2M^{\prime }+\delta \right) }.
\end{eqnarray*}

Now we estimate the smoothness for the next term $\partial _{z}^{\alpha
^{\prime }}\partial _{u}^{\beta ^{\prime }}Tf_{J}\left( z,u\right) $. We
write%
\begin{eqnarray*}
&&\left\vert \partial _{z}^{\alpha ^{\prime }}\partial _{u}^{\beta ^{\prime
}}Tf_{J}\left( z,u\right) -\partial _{z}^{\alpha ^{\prime }}\partial
_{u}^{\beta ^{\prime }}Tf_{J}\left( z^{\prime \prime },u^{\prime \prime
}\right) \right\vert \\
&=&\left\vert \int \left\{ \partial _{z}^{\alpha ^{\prime }}\partial
_{u}^{\beta ^{\prime }}K\left( \left( z,u\right) ,\left( z^{\prime
},u^{\prime }\right) \right) -\partial _{z}^{\alpha ^{\prime }}\partial
_{u}^{\beta ^{\prime }}K\left( \left( z^{\prime \prime },u^{\prime \prime
}\right) ,\left( z^{\prime },u^{\prime }\right) \right) \right\} f_{J}\left(
z^{\prime },u^{\prime }\right) dz^{\prime }du^{\prime }\right\vert
\end{eqnarray*}%
and adding and subtracting a Taylor polynomial, we bound this by%
\begin{eqnarray*}
&&IV_{1}\equiv \left\vert \int \left[ \left\{ \partial _{z}^{\alpha ^{\prime
}}\partial _{u}^{\beta ^{\prime }}K\left( \left( z,u\right) ,\left(
z^{\prime },u^{\prime }\right) \right) -\partial _{z}^{\alpha ^{\prime
}}\partial _{u}^{\beta ^{\prime }}K\left( \left( z^{\prime \prime
},u^{\prime \prime }\right) ,\left( z^{\prime },u^{\prime }\right) \right)
\right\} \right. \right. \\
&&\left. -\sum_{\substack{ \left\vert \alpha ^{\prime \prime }\right\vert
+2\beta ^{\prime \prime }  \\ \leq \left\vert \alpha ^{\prime }\right\vert
+2\beta ^{\prime }}}c_{\alpha ^{\prime \prime },\beta ^{\prime \prime
}}\left\{ \partial _{z}^{\alpha ^{\prime \prime }+\alpha ^{\prime }}\partial
_{u}^{\beta ^{\prime \prime }+\beta ^{\prime }}K\left( \left( z,u\right)
,\left( 0,0\right) \right) -\partial _{z}^{\alpha ^{\prime \prime }+\alpha
^{\prime }}\partial _{u}^{\beta ^{\prime \prime }+\beta ^{\prime }}K\left(
\left( z^{\prime \prime },u^{\prime \prime }\right) ,\left( 0,0\right)
\right) \right\} \left( z^{\prime },u^{\prime }\right) ^{\left( \alpha
^{\prime \prime },\beta ^{\prime \prime }\right) }\right] \\
&&\ \ \ \ \ \ \ \ \ \ \left. \times f_{J}\left( z^{\prime },u^{\prime
}\right) dz^{\prime }du^{\prime }\right\vert
\end{eqnarray*}%
plus%
\begin{eqnarray*}
&&IV_{2}\equiv \\
&&\left\vert \int \left[ \sum_{\substack{ \left\vert \alpha ^{\prime \prime
}\right\vert +2\beta ^{\prime \prime }  \\ \leq \left\vert \alpha ^{\prime
}\right\vert +2\beta ^{\prime }}}c_{\alpha ^{\prime \prime },\beta ^{\prime
\prime }}\left\{ \partial _{z}^{\alpha ^{\prime \prime }+\alpha ^{\prime
}}\partial _{u}^{\beta ^{\prime \prime }+\beta ^{\prime }}K\left( \left(
z,u\right) ,\left( 0,0\right) \right) -\partial _{z}^{\alpha ^{\prime \prime
}+\alpha ^{\prime }}\partial _{u}^{\beta ^{\prime \prime }+\beta ^{\prime
}}K\left( \left( z^{\prime \prime },u^{\prime \prime }\right) ,\left(
0,0\right) \right) \right\} \left( z^{\prime },u^{\prime }\right) ^{\left(
\alpha ^{\prime \prime },\beta ^{\prime \prime }\right) }\right] \right. \\
&&\ \ \ \ \ \ \ \ \ \ \left. \times f_{J}\left( z^{\prime },u^{\prime
}\right) dz^{\prime }du^{\prime }\right\vert .
\end{eqnarray*}%
Now we use the double difference estimate for $\partial _{z}^{\alpha
^{\prime \prime }+\alpha ^{\prime }}\partial _{u}^{\beta ^{\prime \prime
}+\beta ^{\prime }}K\left( \left( z,u\right) ,\left( z^{\prime },u^{\prime
}\right) \right) $ to obtain%
\begin{eqnarray*}
\left\vert IV_{1}\right\vert &\lesssim &\int \sum_{\substack{ \left\vert
\alpha ^{\prime \prime }\right\vert +2\beta ^{\prime \prime }  \\ %
=\left\vert \alpha ^{\prime }\right\vert +2\beta ^{\prime }}}\left\vert
\left( \left\{ \partial _{z}^{\alpha ^{\prime \prime }+\alpha ^{\prime
}}\partial _{u}^{\beta ^{\prime \prime }+\beta ^{\prime }}K\left( \left(
z,u\right) ,\left( \xi ,\eta \right) \right) -\partial _{z}^{\alpha ^{\prime
\prime }+\alpha ^{\prime }}\partial _{u}^{\beta ^{\prime \prime }+\beta
^{\prime }}K\left( \left( z,u\right) ,\left( 0,0\right) \right) \right\}
\right. \right. \\
&&\left. \left. -\left\{ \partial _{z}^{\alpha ^{\prime \prime }+\alpha
^{\prime }}\partial _{u}^{\beta ^{\prime \prime }+\beta ^{\prime }}K\left(
\left( z^{\prime \prime },u^{\prime \prime }\right) ,\left( \xi ,\eta
\right) \right) -\partial _{z}^{\alpha ^{\prime \prime }+\alpha ^{\prime
}}\partial _{u}^{\beta ^{\prime \prime }+\beta ^{\prime }}K\left( \left(
z^{\prime \prime },u^{\prime \prime }\right) ,\left( 0,0\right) \right)
\right\} \right) \left( z^{\prime },u^{\prime }\right) ^{\left( \alpha
^{\prime \prime },\beta ^{\prime \prime }\right) }\right\vert \\
&&\ \ \ \ \ \ \ \ \ \ \times \left\vert f_{J}\left( z^{\prime },u^{\prime
}\right) \right\vert dz^{\prime }du^{\prime } \\
&\lesssim &\left\Vert f\right\Vert _{\mathcal{M}^{M^{\prime }+\delta }\left( 
\mathbb{H}^{n}\right) }\int_{\left\{ \frac{\left\vert \left( z^{\prime
},u^{\prime }\right) \right\vert }{\left\vert \left( z,u\right) \right\vert }%
\leq \frac{1}{2}\right\} }\left\vert \left( z,u\right) \circ \left(
z^{\prime \prime },u^{\prime \prime }\right) ^{-1}\right\vert \\
&&\ \ \ \ \ \ \ \ \ \ \ \ \ \ \ \ \ \ \ \ \times \left\vert \left(
z,u\right) \right\vert ^{-\left( Q+2\left\vert \alpha ^{\prime }\right\vert
+4\beta ^{\prime }+1\right) }\left( 1+\left\vert \left( z^{\prime
},u^{\prime }\right) \right\vert \right) ^{-\left( Q+M^{\prime }\right)
}dz^{\prime }du^{\prime } \\
&\lesssim &\left\Vert f\right\Vert _{\mathcal{M}^{M^{\prime }+\delta }\left( 
\mathbb{H}^{n}\right) }\left\vert \left( z,u\right) \circ \left( z^{\prime
\prime },u^{\prime \prime }\right) ^{-1}\right\vert \left\vert \left(
z,u\right) \right\vert ^{-\left( Q+2M^{\prime }+1\right) }.
\end{eqnarray*}%
For term $IV_{2}$ we use the estimate derived in Case 2 for the moments of $%
f_{J}$ to obtain%
\begin{eqnarray*}
\left\vert IV_{2}\right\vert &\lesssim &\sum_{\substack{ \left\vert \alpha
^{\prime \prime }\right\vert +2\beta ^{\prime \prime }  \\ \leq \left\vert
\alpha ^{\prime }\right\vert +2\beta ^{\prime }}}\left\vert \left(
z,u\right) \circ \left( z^{\prime \prime },u^{\prime \prime }\right)
^{-1}\right\vert \left\vert \left( z,u\right) \right\vert ^{-\left(
Q+2M^{\prime }+1\right) }\left\vert \int f_{J}\left( z^{\prime },u^{\prime
}\right) \left( z^{\prime },u^{\prime }\right) ^{\left( \alpha ^{\prime
\prime },\beta ^{\prime \prime }\right) }dz^{\prime }du^{\prime }\right\vert
\\
&\lesssim &\left\Vert f\right\Vert _{\mathcal{M}^{M^{\prime }+\delta }\left( 
\mathbb{H}^{n}\right) }\left\vert \left( z,u\right) \circ \left( z^{\prime
\prime },u^{\prime \prime }\right) ^{-1}\right\vert \left\vert \left(
z,u\right) \right\vert ^{-\left( Q+M^{\prime }+1\right) } \\
&&\ \ \ \ \ \ \ \ \ \ \ \ \ \ \ \ \ \ \ \ \times \sum_{\substack{ \left\vert
\alpha ^{\prime \prime }\right\vert +2\beta ^{\prime \prime }  \\ \leq
\left\vert \alpha ^{\prime }\right\vert +2\beta ^{\prime }}}\left\vert
\left( z,u\right) \right\vert ^{-\left( \left\vert \alpha ^{\prime \prime
}\right\vert +2\beta ^{\prime \prime }\right) }\left\vert \left( z,u\right)
\right\vert ^{\left\vert \alpha ^{\prime \prime }\right\vert +2\beta
^{\prime \prime }-\left( \left\vert \alpha ^{\prime }\right\vert +2\beta
^{\prime }\right) } \\
&\lesssim &\left\Vert f\right\Vert _{\mathcal{M}^{M^{\prime }+\delta }\left( 
\mathbb{H}^{n}\right) }\left\vert \left( z,u\right) \circ \left( z^{\prime
\prime },u^{\prime \prime }\right) ^{-1}\right\vert \left\vert \left(
z,u\right) \right\vert ^{-\left( Q+2M^{\prime \prime }+1\right) },
\end{eqnarray*}%
where we have used%
\begin{equation*}
\int f_{J}\left( z^{\prime },u^{\prime }\right) \left( z^{\prime },u^{\prime
}\right) ^{\left( \alpha ^{\prime \prime },\beta ^{\prime \prime }\right)
}dz^{\prime }du^{\prime }=-\int f_{I}\left( z^{\prime },u^{\prime }\right)
\left( z^{\prime },u^{\prime }\right) ^{\left( \alpha ^{\prime \prime
},\beta ^{\prime \prime }\right) }dz^{\prime }du^{\prime }-\int f_{L}\left(
z^{\prime },u^{\prime }\right) \left( z^{\prime },u^{\prime }\right)
^{\left( \alpha ^{\prime \prime },\beta ^{\prime \prime }\right) }dz^{\prime
}du^{\prime },
\end{equation*}%
together with the estimate%
\begin{equation*}
\left\vert \int f_{I}\left( z^{\prime },u^{\prime }\right) \left( z^{\prime
},u^{\prime }\right) ^{\left( \alpha ^{\prime \prime },\beta ^{\prime \prime
}\right) }dz^{\prime }du^{\prime }\right\vert \lesssim \left\vert \left(
z,u\right) \right\vert ^{\left\vert \alpha ^{\prime \prime }\right\vert
+2\beta ^{\prime \prime }-M^{\prime }}\leq C.
\end{equation*}

Finally, we estimate%
\begin{eqnarray*}
&&\left\vert \partial _{z}^{\alpha ^{\prime }}\partial _{u}^{\beta ^{\prime
}}Tf_{L}\left( z,u\right) -\partial _{z}^{\alpha ^{\prime }}\partial
_{u}^{\beta ^{\prime }}Tf_{L}\left( z^{\prime \prime },u^{\prime \prime
}\right) \right\vert \\
&\lesssim &\left\Vert f\right\Vert _{\mathcal{M}^{M^{\prime }+\delta }\left( 
\mathbb{H}^{n}\right) }\int_{\left\{ \frac{\left\vert \left( z^{\prime
},u^{\prime }\right) \circ \left( z,u\right) ^{-1}\right\vert }{\left\vert
\left( z,u\right) \right\vert }\geq \frac{1}{4}\text{ and }\frac{\left\vert
\left( z^{\prime },u^{\prime }\right) \right\vert }{\left\vert \left(
z,u\right) \right\vert }\geq \frac{1}{4}\right\} }\left\vert \left(
z,u\right) \circ \left( z^{\prime \prime },u^{\prime \prime }\right)
^{-1}\right\vert ^{\delta } \\
&&\times \left\vert \left( z^{\prime },u^{\prime }\right) \circ \left(
z,u\right) ^{-1}\right\vert ^{-Q-M^{\prime }-\delta }\left\vert \left(
z^{\prime },u^{\prime }\right) \right\vert ^{-\left( Q+\left\vert \alpha
^{\prime }\right\vert +2\beta ^{\prime }\right) }dz^{\prime }du^{\prime } \\
&\lesssim &\left\Vert f\right\Vert _{\mathcal{M}^{M^{\prime }+\delta }\left( 
\mathbb{H}^{n}\right) }\left\vert \left( z,u\right) \circ \left( z^{\prime
\prime },u^{\prime \prime }\right) ^{-1}\right\vert ^{\delta }\left\vert
\left( z,u\right) \right\vert ^{-Q-M^{\prime }-\delta } \\
&&\times \int_{\left\{ \frac{\left\vert \left( z^{\prime },u^{\prime
}\right) \circ \left( z,u\right) ^{-1}\right\vert }{\left\vert \left(
z,u\right) \right\vert }\geq \frac{1}{4}\text{ and }\frac{\left\vert \left(
z^{\prime },u^{\prime }\right) \right\vert }{\left\vert \left( z,u\right)
\right\vert }\geq \frac{1}{4}\right\} }\left\vert \left( z^{\prime
},u^{\prime }\right) \right\vert ^{-\left( Q+\left\vert \alpha ^{\prime
}\right\vert +2\beta ^{\prime }\right) }dz^{\prime }du^{\prime } \\
&\lesssim &\left\Vert f\right\Vert _{\mathcal{M}^{M^{\prime }+\delta }\left( 
\mathbb{H}^{n}\right) }\left\vert \left( z,u\right) \circ \left( z^{\prime
\prime },u^{\prime \prime }\right) ^{-1}\right\vert ^{\delta }\left\vert
\left( z,u\right) \right\vert ^{-\left( Q+2M^{\prime }+\delta \right) }.
\end{eqnarray*}

\bigskip

\textbf{Proof of Theorem \ref{product molecular proof}}: To prove the
product version we note that the above one-parameter proof extends virtually
verbatim to establish a \emph{vector-valued} version in a Banach space.
Indeed, all the main tools, such as integration, differentiation and
Taylor's formula, carry over to the Banach space setting. First we will
define the $X$-valued molecular space $\mathcal{M}^{M+\delta
,M_{1},M_{2}}\left( \mathbb{H}^{n};X\right) $, and then we will give the
extension of Theorem \ref{standard molecular proof} to this space.

\begin{definition}
\label{one para mol X}Let $X$ be a Banach space with norm $\left\vert
x\right\vert $ for $x\in X$. Let $M,M_{1},M_{2}\in \mathbb{N}$ be positive
integers, $0<\delta \leq 1$, and let $Q=2n+2$ denote the homogeneous
dimension of $\mathbb{H}^{n}$. The \emph{one-parameter} molecular space $%
\mathcal{M}^{M+\delta ,M_{1},M_{2}}\left( \mathbb{H}^{n};X\right) $ consists
of all $X$-valued functions $f:\mathbb{H}^{n}\rightarrow X$ satisfying the
moment conditions%
\begin{equation*}
\int_{\mathbb{H}^{n}}z^{\alpha }u^{\beta }f\left( z,u\right) dzdu=0\text{
for all }\left\vert \alpha \right\vert +2\left\vert \beta \right\vert \leq
M_{1},
\end{equation*}%
and such that there is a nonnegative constant $A$ satisfying the following
two differential inequalities:%
\begin{equation*}
\left\vert \partial _{z}^{\alpha }\partial _{u}^{\beta }f\left( z,u\right)
\right\vert _{X}\leq A\frac{1}{\left( 1+\left\vert z\right\vert
^{2}+\left\vert u\right\vert \right) ^{\frac{Q+M+\left\vert \alpha
\right\vert +2\left\vert \beta \right\vert +\delta }{2}}}\text{ for all }%
\left\vert \alpha \right\vert +2\left\vert \beta \right\vert \leq M_{2},
\end{equation*}%
\begin{eqnarray*}
&&\left\vert \partial _{z}^{\alpha }\partial _{u}^{\beta }f\left( z,u\right)
-\partial _{z}^{\alpha }\partial _{u}^{\beta }f\left( z^{\prime },u^{\prime
}\right) \right\vert _{X}\leq A\frac{\left\vert \left( z,u\right) \circ
\left( z^{\prime },u^{\prime }\right) ^{-1}\right\vert ^{\delta }}{\left(
1+\left\vert z\right\vert ^{2}+\left\vert u\right\vert \right) ^{\frac{%
Q+M+\delta +M_{2}+2\delta }{2}}} \\
&&\ \ \ \ \ \text{for all }\left\vert \alpha \right\vert +2\left\vert \beta
\right\vert =M_{2}\text{ and }\left\vert \left( z,u\right) \circ \left(
z^{\prime },u^{\prime }\right) ^{-1}\right\vert \leq \frac{1}{2}\left(
1+\left\vert z\right\vert ^{2}+\left\vert u\right\vert \right) ^{\frac{1}{2}%
}.
\end{eqnarray*}
\end{definition}

We have the following extension of Theorem \ref{standard molecular proof} to 
$X$-valued functions for an arbitrary Banach space $X$.

\begin{theorem}
\label{standard molecular proof X}Suppose that $T:L^{2}\left( \mathbb{H}%
^{n}\right) \rightarrow L^{2}\left( \mathbb{H}^{n}\right) $ is a bounded
linear operator with kernel $K\left( \left( z,u\right) ,\left( z^{\prime
},u^{\prime }\right) \right) $, i.e.%
\begin{equation*}
Tf\left( z,u\right) =\int_{\mathbb{H}^{n}}K\left( \left( z,u\right) ,\left(
z^{\prime },u^{\prime }\right) \right) f\left( z^{\prime },u^{\prime
}\right) dz^{\prime }du^{\prime },\ \ \ \ \ f\in L^{2}\left( \mathbb{H}%
^{n}\right) .
\end{equation*}%
Suppose furthermore that $K$ satisfies%
\begin{eqnarray*}
\int_{\mathbb{H}^{n}}z^{\alpha }u^{\beta }\ K\left( \left( z,u\right)
,\left( z^{\prime },u^{\prime }\right) \right) dzdu &=&0, \\
\int_{\mathbb{H}^{n}}\left( z^{\prime }\right) ^{\alpha }\left( u^{\prime
}\right) ^{\beta }\ K\left( \left( z,u\right) ,\left( z^{\prime },u^{\prime
}\right) \right) dz^{\prime }du^{\prime } &=&0,
\end{eqnarray*}%
for all $0\leq \left\vert \alpha \right\vert ,\beta $; and%
\begin{eqnarray*}
&&\left\vert \partial _{z}^{\alpha }\partial _{u}^{\beta }\partial
_{z^{\prime }}^{\alpha ^{\prime }}\partial _{u^{\prime }}^{\beta ^{\prime
}}K\left( \left( z,u\right) ,\left( z^{\prime },u^{\prime }\right) \right)
\right\vert \\
&\leq &A_{\alpha ,\beta ,\alpha ^{\prime },\beta ^{\prime }}\frac{1}{%
\left\vert \left( z,u\right) \circ \left( z^{\prime },u^{\prime }\right)
^{-1}\right\vert ^{Q+\left\vert \alpha \right\vert +2\beta +\left\vert
\alpha ^{\prime }\right\vert +2\beta ^{\prime }}},
\end{eqnarray*}%
for all $0\leq \left\vert \alpha \right\vert ,\beta ,\left\vert \alpha
^{\prime }\right\vert ,\beta ^{\prime }$. For $f:\mathbb{H}^{n}\rightarrow X$
we define $Tf$ by the Banach space valued integral 
\begin{equation*}
Tf\left( z,u\right) =\int_{\mathbb{H}^{n}}K\left( \left( z,u\right) ,\left(
z^{\prime },u^{\prime }\right) \right) f\left( z^{\prime },u^{\prime
}\right) dz^{\prime }du^{\prime }.
\end{equation*}%
Then 
\begin{equation*}
T:\mathcal{M}^{M^{\prime }+\delta }\left( \mathbb{H}^{n};X\right)
\rightarrow \mathcal{M}^{M^{\prime }+\delta }\left( \mathbb{H}^{n};X\right)
\end{equation*}%
is bounded for all $M^{\prime }$ and $0<\alpha <1$, and moreover, the
operator norm satisfies%
\begin{equation*}
\left\Vert T\right\Vert _{\mathcal{M}^{M^{\prime }+\delta }\left( \mathbb{H}%
^{n};X\right) }\leq C_{M^{\prime },\alpha },
\end{equation*}%
where the constant $C_{M^{\prime },\alpha }$ goes to zero as $A_{\alpha
,\beta ,\alpha ^{\prime },\beta ^{\prime }}\rightarrow 0$ for sufficiently
many of the indices $\alpha ,\beta ,\alpha ^{\prime },\beta ^{\prime }$.
\end{theorem}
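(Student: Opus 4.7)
The plan is to reduce Theorem \ref{standard molecular proof X} to the scalar Theorem \ref{standard molecular proof} by observing that every estimate in the scalar proof is, at its heart, a pointwise estimate that is either (a) an application of the Calder\'on--Zygmund kernel bounds for $K$, or (b) an application of a size/smoothness bound for $f\in\mathcal{M}^{M^{\prime }+\delta }$. Since the kernel $K$ is scalar-valued, the action $(Tf)(z,u)=\int K((z,u),(z^{\prime },u^{\prime }))f(z^{\prime },u^{\prime })\,dz^{\prime }du^{\prime }$ makes sense as a Bochner integral for $X$-valued $f$, and all the inequalities that in the scalar case were written with $\left\vert \cdot \right\vert $ carry over to $\left\vert \cdot \right\vert _{X}$ without change, because the only operations used on the values of $f$ are linear operations (addition, scalar multiplication by kernel values, integration) together with the triangle inequality in $X$.

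More concretely, I would mirror the four-case decomposition of the scalar argument. For the \textbf{size estimates} at $(z,u)$ with $\left\vert (z,u)\right\vert \leq 1$ and with $\left\vert (z,u)\right\vert >1$, I would reuse verbatim the splittings $Tf=I+II+III$ (Case 1) and $Tf_{I}+Tf_{J}+Tf_{L}$ via the Meyer cutoffs $I$, $J$, $L$ (Case 2), replacing the absolute values of $f$ and its derivatives by their $X$-norms. The estimates for integrals of $K$ against a smooth bump, and the Taylor remainder bounds, use only the $\mathcal{M}^{M^{\prime }+\delta }\left( \mathbb{H}^{n};X\right) $-norm controls on $|\partial_{z}^{\alpha^{\prime}}\partial_{u}^{\beta^{\prime}}f|_{X}$, and these propagate through the same computations. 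For the \textbf{smoothness estimates} (Cases 3 and 4), the structural input is the extension of Lemma \ref{3} (the Meyer--Torres identity) to the $X$-valued setting: because the right-hand side of that identity only applies $K$ and its derivatives against $f$ and Taylor-type remainders of $f$, and integrates against a cutoff $\widetilde{\theta }$, it lifts without modification to $X$-valued $f$ as a Bochner-integral identity. Then the four contributions $I,II,III,IV$ are estimated exactly as in the scalar case, using $\left\vert \cdot \right\vert _{X}$ throughout.

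Two places require a small verification that I would flag explicitly. First, the ``weak cancellation'' estimate used to bound term $B_{2}$ in Case~2, namely $\bigl\vert \int (z^{\prime },u^{\prime })^{(\alpha ^{\prime },\beta ^{\prime })}f_{J}(z^{\prime },u^{\prime })\,dz^{\prime }du^{\prime }\bigr\vert _{X}\lesssim \left\Vert f\right\Vert _{\mathcal{M}^{M^{\prime }+\delta }(\mathbb{H}^{n};X)}(1+|z|^{2}+|u|)^{(\left\vert \alpha ^{\prime }\right\vert +2\beta ^{\prime }-M^{\prime }-\delta )/2}$, is obtained by writing $\int (\cdot )^{(\alpha ^{\prime },\beta ^{\prime })}f_{J}=-\int (\cdot )^{(\alpha ^{\prime },\beta ^{\prime })}f_{I}-\int (\cdot )^{(\alpha ^{\prime },\beta ^{\prime })}f_{L}$ (using the vanishing moment condition $\int (\cdot )^{(\alpha ^{\prime },\beta ^{\prime })}f=0$ in $X$) and then estimating each of the two $X$-valued integrals by the triangle inequality. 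Second, the intermediate estimate in Case~4 on $\left\vert \partial _{z}^{\alpha ^{\prime }}\partial _{u}^{\beta ^{\prime }}f_{I}(z^{\prime \prime \prime },u^{\prime \prime \prime })-\partial _{z}^{\alpha ^{\prime }}\partial _{u}^{\beta ^{\prime }}f_{I}(z^{\prime \prime },u^{\prime \prime })\right\vert _{X}$ uses Leibniz's rule to split the difference of products $\partial (fI)$ into $(\partial f)\,\Delta I+(\Delta \partial f)\,I$ and then invokes the defining smoothness of $f\in \mathcal{M}^{M^{\prime }+\delta }(\mathbb{H}^{n};X)$; this works verbatim in $X$.

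The main obstacle, if there is one, is purely bookkeeping rather than conceptual: one must verify that every use of the triangle inequality, Taylor's theorem with integral remainder, and Fubini--Tonelli in the scalar proof survives the Bochner-integral interpretation. Since $f:\mathbb{H}^{n}\rightarrow X$ is continuous and satisfies polynomial decay in $X$-norm by the $\mathcal{M}^{M^{\prime }+\delta }(\mathbb{H}^{n};X)$ hypothesis, the Bochner integrability of all terms that appear is immediate, and scalar-valued Fubini for $\left\vert \cdot \right\vert _{X}$ against the scalar kernel bounds suffices. The operator-norm bound and the dependence of $C_{M^{\prime },\alpha }\rightarrow 0$ as enough of the constants $A_{\alpha ,\beta ,\alpha ^{\prime },\beta ^{\prime }}\rightarrow 0$ is inherited term-by-term from the scalar estimates, since in each case the implicit constant is a finite sum of the constants $A_{\alpha ,\beta ,\alpha ^{\prime },\beta ^{\prime }}$ arising from the kernel bounds used.
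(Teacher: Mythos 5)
Your proposal is correct and follows essentially the same route as the paper: the paper's proof is simply the one-sentence instruction to repeat the scalar proof of Theorem \ref{standard molecular proof} with $\left\vert \partial _{z}^{\alpha }\partial _{u}^{\beta }f\right\vert$ replaced by $\left\vert \partial _{z}^{\alpha }\partial _{u}^{\beta }f\right\vert _{X}$ and to use Banach-space-valued analogues of Taylor's theorem and the Torres identities (Lemma \ref{3}). Your detailed unpacking of why the four cases, the Meyer cutoffs, the vanishing-moment cancellation in $X$, and the Bochner-integral bookkeeping all transfer verbatim is a faithful expansion of exactly that argument.
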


\textbf{Proof}: We simply repeat the scalar proof of Theorem \ref{standard
molecular proof} but replace $\left\vert \partial _{z}^{\alpha }\partial
_{u}^{\beta }f\left( z,u\right) \right\vert $ by $\left\vert \partial
_{z}^{\alpha }\partial _{u}^{\beta }f\left( z,u\right) \right\vert _{X}$
throughout and use Banach space analogues of Taylor's theorem and the
identities of Torres \cite{Tor}.

\bigskip

Now we can quickly finish the proof of the product Theorem \ref{product
molecular proof}. We take $X=\mathcal{M}^{M^{\prime }+\delta }\left( \mathbb{%
R}\right) $ and note that the identification of product and iterated
molecular spaces, namely 
\begin{equation}
\mathcal{M}_{product}^{M^{\prime }+\delta }\left( \mathbb{H}^{n}\times 
\mathbb{R}\right) =\mathcal{M}^{M^{\prime }+\delta }\left( \mathbb{H}^{n};%
\mathcal{M}^{M^{\prime }+\delta }\left( \mathbb{R}\right) \right) =\mathcal{M%
}^{M^{\prime }+\delta }\left( \mathbb{H}^{n};X\right) ,  \label{equalities}
\end{equation}%
follows immediately from the definitions of the spaces involved; see
Definition \ref{one para mol}, Definition \ref{product molecular}\ and the
definition of $\mathcal{M}^{M+\delta ,M_{1},M_{2}}\left( \mathbb{R}\right) $
which we recall here.

\begin{definition}
Let $M\in \mathbb{N}$ be a positive integer and $0<\delta \leq 1$. The \emph{%
one-parameter} molecular space $\mathcal{M}^{M+\delta ,M_{1},M_{2}}\left( 
\mathbb{R}\right) $ consists of all functions $f\left( v\right) $ on $%
\mathbb{R}$ satisfying the moment conditions%
\begin{equation*}
\int_{\mathbb{R}}v^{\gamma }f\left( v\right) dv=0\text{ for all }2\gamma
\leq M_{1},
\end{equation*}%
and such that there is a nonnegative constant $A$ satisfying the following
two differential inequalities:%
\begin{equation*}
\left\vert \partial _{v}^{\gamma }f\left( v\right) \right\vert \leq A\frac{1%
}{\left( 1+\left\vert v\right\vert \right) ^{1+M+\gamma +\delta }}\ \ \ \ \ 
\text{ for all }2\gamma \leq M_{2},
\end{equation*}%
\begin{equation*}
\left\vert \partial _{v}^{M_{2}}f\left( v\right) -\partial
_{v}^{M_{2}}f\left( v^{\prime }\right) \right\vert \leq A\frac{\left\vert
v-v^{\prime }\right\vert ^{\delta }}{\left( 1+\left\vert v\right\vert
\right) ^{1+\frac{3}{2}M+\gamma +2\delta }}\ \ \ \ \ \text{for all }%
\left\vert v-v^{\prime }\right\vert \leq \frac{1}{2}\left( 1+\left\vert
v\right\vert \right) .
\end{equation*}
\end{definition}

For $f\in \mathcal{M}_{product}^{M^{\prime }+\delta }\left( \mathbb{H}%
^{n}\times \mathbb{R}\right) $, denote the realization of $f$ as an $X$%
-valued map by $\widetilde{f}:$ $\mathbb{H}^{n}\rightarrow \mathcal{M}%
_{product}^{M^{\prime }+\delta }\left( \mathbb{R}\right) $. Then from (\ref%
{equalities}) and Theorem \ref{standard molecular proof X} we have%
\begin{eqnarray*}
\left\Vert Tf\right\Vert _{\mathcal{M}_{product}^{M^{\prime }+\delta }\left( 
\mathbb{H}^{n}\times \mathbb{R}\right) } &=&\left\Vert T\widetilde{f}%
\right\Vert _{\mathcal{M}^{M^{\prime }+\delta }\left( \mathbb{H}^{n};%
\mathcal{M}^{M^{\prime }+\delta }\left( \mathbb{R}\right) \right) } \\
&\leq &C\left\Vert \widetilde{f}\right\Vert _{\mathcal{M}^{M^{\prime
}+\delta }\left( \mathbb{H}^{n};\mathcal{M}^{M^{\prime }+\delta }\left( 
\mathbb{R}\right) \right) }=C\left\Vert f\right\Vert _{\mathcal{M}%
_{product}^{M^{\prime }+\delta }\left( \mathbb{H}^{n}\times \mathbb{R}%
\right) }.
\end{eqnarray*}%
This completes the proof of Theorem \ref{product molecular proof}.

\subsection{Orthogonality estimates and the proof of the Plancherel-P\^{o}%
lya inequalities\label{5.3}}

We will need almost \emph{orthogonality estimates} in order to prove both
the Plancherel-P\^{o}lya inequalities and the boundedness of flag singular
integrals on $H_{flag}^{p}\left( \mathbb{H}^{n}\right) $. Recall from (\ref%
{defpsits}) the definition of the components $\psi _{t,s}$ of the continuous
decomposition of the identity adapted to the Heisenberg group:%
\begin{equation*}
\psi \left( z,u\right) =\psi ^{(1)}\ast _{2}\psi ^{(2)}\left( z,u\right)
=\int_{\mathbb{R}}\psi ^{(1)}(z,u-v)\psi ^{(2)}(v)dv,\ \ \ \ \ \left(
z,u\right) \in \mathbb{C}^{n}\times \mathbb{R},
\end{equation*}%
and%
\begin{eqnarray*}
\psi _{t,s}\left( z,u\right) &=&\psi _{t}^{\left( 1\right) }\ast _{2}\psi
_{s}^{\left( 2\right) }\left( z,u\right) =\int_{\mathbb{R}}\psi _{t}^{\left(
1\right) }\left( z,u-v\right) \psi _{s}^{\left( 2\right) }\left( v\right) dv
\\
&=&\int_{\mathbb{R}}t^{-2n-2}\psi ^{\left( 1\right) }\left( \frac{z}{t},%
\frac{u-v}{t^{2}}\right) s^{-1}\psi ^{\left( 2\right) }\left( \frac{v}{s}%
\right) dv.
\end{eqnarray*}%
Here $\psi ^{(1)}\in \mathcal{S}(\mathbb{H}^{n}\mathbb{)}$ is as in Theorem %
\ref{GM}, and $\psi ^{\left( 2\right) }\in \mathcal{S}\left( \mathbb{R}%
\right) $ satisfies 
\begin{equation*}
\int_{0}^{\infty }|\widehat{{\psi }^{(2)}}(t\eta )|^{2}\frac{dt}{t}=1
\end{equation*}%
for all $\eta \in \mathbb{R}\backslash \{0\}$, along with the moment
conditions 
\begin{eqnarray*}
\int\limits_{\mathbb{H}^{n}}z^{\alpha }u^{\beta }\psi ^{(1)}(z,u)dzdu &=&0,\
\ \ \ \ \left\vert \alpha \right\vert +2\beta \leq M, \\
\int\limits_{\mathbb{R}}v^{\gamma }\psi ^{(2)}(v)dv &=&0,\ \ \ \ \ \gamma
\geq 0,
\end{eqnarray*}%
where $M$ may be fixed arbitrarily large.

In particular the collection of component functions $\left\{ \psi
_{t,s}\right\} _{t,s>0}$ satisfies 
\begin{eqnarray}
\psi _{t,s} &=&\psi _{t}^{\left( 1\right) }\ast _{2}\psi _{s}^{\left(
2\right) },  \label{psi conditions} \\
\psi _{t}^{\left( 1\right) }\left( z,u\right) &=&t^{-2n-2}\psi ^{\left(
1\right) }\left( \frac{z}{t},\frac{u-v}{t^{2}}\right) ,  \notag \\
\psi _{s}^{\left( 2\right) }\left( v\right) &=&s^{-1}\psi ^{\left( 2\right)
}\left( \frac{v}{s}\right) ,  \notag \\
\psi ^{(1)}\left( z,u\right) \psi ^{(2)}\left( v\right) &\in &\mathcal{M}%
_{product}^{M+\delta }\left( \mathbb{H}^{n}\times \mathbb{R}\right) .  \notag
\end{eqnarray}%
Of course the conditions in (\ref{psi conditions}) imply that $\psi
_{t,s}\in \mathcal{M}_{flag}^{M}\left( \mathbb{H}^{n}\right) $ for all $%
t,s>0 $, but (\ref{psi conditions}) also contains the implicit dilation
information that cannot be expressed solely in terms of $\psi _{1,1}$.
Motivated by these considerations we make the following definition that
encompasses the identity (\ref{Psi}).

\begin{definition}
\label{10}To each function $\Psi \in \mathcal{M}_{product}^{M+\delta }\left( 
\mathbb{H}^{n}\times \mathbb{R}\right) $ we associate a collection of \emph{%
product dilations} $\left\{ \Psi _{t,s}\right\} _{t,s>0}$ defined by%
\begin{equation*}
\Psi _{t,s}\left( z,u,v\right) =t^{-2n-2}s^{-1}\Psi \left( \left( \frac{z}{t}%
,\frac{u}{t^{2}}\right) ,\frac{v}{s}\right) ,
\end{equation*}%
and a collection of \emph{component functions} $\left\{ \psi _{t,s}\right\}
_{t,s>0}$ defined by%
\begin{equation*}
\psi _{t,s}\left( z,u\right) =\pi \Psi _{t,s}\left( z,u\right) =\int_{%
\mathbb{R}}t^{-2n-2}s^{-1}\Psi \left( \left( \frac{z}{t},\frac{u-v}{t^{2}}%
\right) ,\frac{v}{s}\right) dv,\ \ \ \ \ t,s>0.
\end{equation*}
\end{definition}

Given two functions in $\mathcal{M}_{product}^{M+\delta }\left( \mathbb{H}%
^{n}\times \mathbb{R}\right) $ and their corresponding collections of
component functions we have the \emph{almost orthogonality} estimates given
below. We \ use $\ast _{\mathbb{H}^{n}}$ to denote convolution on the
Heisenberg group $\mathbb{H}^{n}$, and $\ast _{\mathbb{H}^{n}\times \mathbb{R%
}}$ to denote convolution on the product group $\mathbb{H}^{n}\times \mathbb{%
R}$. From Lemma \ref{intertwine} we obtain that $\pi $ intertwines these two
convolutions, which we record here.

\begin{lemma}
\label{project}For $\psi _{t,s},\Psi _{t,s},\phi _{t^{\prime },s^{\prime
}},\Phi _{t^{\prime },s^{\prime }}$ as above we have%
\begin{equation}
\psi _{t,s}\ast _{\mathbb{H}^{n}}\phi _{t^{\prime },s^{\prime }}=\pi \left\{
\Psi _{t,s}\ast _{\mathbb{H}^{n}\times \mathbb{R}}\Phi _{t^{\prime
},s^{\prime }}\right\} .  \label{projection commutes}
\end{equation}
\end{lemma}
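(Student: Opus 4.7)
The plan is to prove the identity by realizing $\pi$ as fiber integration along a group homomorphism, and then invoking the standard principle that group homomorphisms intertwine convolution. Specifically, I would define
\[
p:\mathbb{H}^{n}\times \mathbb{R}\longrightarrow \mathbb{H}^{n},\qquad p\bigl((z,u),v\bigr)=(z,u+v),
\]
and first verify that $p$ is a group homomorphism: a direct computation shows
\[
p\bigl((z_{1},u_{1}),v_{1}\bigr)\circ p\bigl((z_{2},u_{2}),v_{2}\bigr)=\bigl(z_{1}+z_{2},u_{1}+u_{2}+v_{1}+v_{2}+2\operatorname{Im}z_{1}\overline{z_{2}}\bigr),
\]
which matches $p$ applied to the product $\bigl((z_{1},u_{1})\circ (z_{2},u_{2}),v_{1}+v_{2}\bigr)$ in $\mathbb{H}^{n}\times \mathbb{R}$. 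Observing that the fiber $p^{-1}(z,u)$ is parametrized by $v\mapsto ((z,u-v),v)$, the definition $(\pi F)(z,u)=\int F((z,u-v),v)\,dv$ is exactly fiber integration $p_{\ast }F$.

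Once $\pi =p_{\ast }$ is recognized, the identity $\pi F\ast _{\mathbb{H}^{n}}\pi G=\pi (F\ast _{\mathbb{H}^{n}\times \mathbb{R}}G)$ is the general principle that fiber integration commutes with convolution under a group homomorphism. I would apply this to $F=\Psi _{t,s}$ and $G=\Phi _{t^{\prime },s^{\prime }}$, both of which are Schwartz enough (lying in $\mathcal{M}_{product}^{M+\delta }\left( \mathbb{H}^{n}\times \mathbb{R}\right) $) to justify all Fubini interchanges.

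As an alternative, entirely elementary proof (which I would present if invoking the homomorphism principle seems too slick), I would compute both sides directly. On the one hand,
\[
(\pi F\ast _{\mathbb{H}^{n}}\pi G)(z,u)=\iiint F\bigl((z,u)\circ (z^{\prime },u^{\prime })^{-1},w\bigr)\bigl\{\cdot \bigr\}\,G\bigl((z^{\prime },u^{\prime }-w^{\prime }),w^{\prime }\bigr)\,dw\,dw^{\prime }\,dz^{\prime }du^{\prime },
\]
and after the change of variable $u^{\prime }\mapsto u^{\prime }+w^{\prime }$ (shifting the second argument of $F$ by $-w^{\prime }$), this becomes
\[
\iiint F\bigl((z-z^{\prime },u-u^{\prime }-w-w^{\prime }+2\operatorname{Im}z\overline{z^{\prime }}),w\bigr)\,G\bigl((z^{\prime },u^{\prime }),w^{\prime }\bigr)\,dw\,dw^{\prime }\,dz^{\prime }du^{\prime }.
\]
On the other hand, writing out $\pi (F\ast _{\mathbb{H}^{n}\times \mathbb{R}}G)(z,u)=\int (F\ast G)((z,u-w),w)\,dw$ and substituting $w\mapsto w+w^{\prime }$ in the inner integral over the $\mathbb{R}$-factor yields exactly the same expression. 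The key computational point is that the Heisenberg twist $2\operatorname{Im}z\overline{z^{\prime }}$ is unaffected by the real-parameter shifts, which is precisely the homomorphism property above.

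The main (and essentially the only) obstacle is bookkeeping: correctly tracking the Heisenberg group operation, the product-group operation, and performing the change of variables cleanly. There is no analytic subtlety, since the component functions are Schwartz in all variables, so Fubini applies freely throughout.
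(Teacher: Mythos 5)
Your proof is correct, and it takes a genuinely different route from what the paper presents. The paper dismisses Lemma \ref{project} with the remark that it follows from Lemma \ref{intertwine}, but that lemma concerns the extension $\widetilde{T}=T\otimes \delta _{0}$ which convolves \emph{only} in the $\mathbb{H}^{n}$ factor; Lemma \ref{project} asserts intertwining with the \emph{full} product convolution $\ast _{\mathbb{H}^{n}\times \mathbb{R}}$. To close that gap via Lemma \ref{intertwine} one would still need to account for the $\mathbb{R}$--factor convolution, e.g.\ by exploiting the tensor--product form $\Psi =\psi ^{(1)}\otimes \psi ^{(2)}$ together with the commutativity of $\ast _{2}$ with $\ast _{\mathbb{H}^{n}}$ (the centrality of the $\mathbb{R}$--factor), none of which the paper spells out. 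Your argument bypasses this entirely: you identify $\pi $ as fiber integration $p_{\ast }$ along the surjective homomorphism $p\bigl((z,u),v\bigr)=(z,u+v)$ from $\mathbb{H}^{n}\times \mathbb{R}$ onto $\mathbb{H}^{n}$, verify the homomorphism property, and then either invoke the standard fact that $p_{\ast }(F\ast G)=p_{\ast }F\ast p_{\ast }G$ or carry out the Fubini/change--of--variables computation explicitly. That is cleaner and actually more self--contained than the paper's treatment, and it extends verbatim to general $F,G$, not just the dilated component functions. One small slip: in the first displayed formula of your direct computation, the expression $F\bigl((z,u)\circ (z^{\prime },u^{\prime })^{-1},w\bigr)\{\cdot \}$ is garbled --- the $u$--component of $\pi F$'s argument should carry the $-w$ shift inside $F$, so it ought to read $F\bigl((z-z^{\prime },\,u-u^{\prime }-w+2\operatorname{Im}z\overline{z^{\prime }}),w\bigr)$. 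Your second display has this correct, so the argument as a whole is sound; just clean up that intermediate line.
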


We now give the orthogonality estimates, first in the product case and then
in the flag case. The product case in Lemma \ref{almostorthoproduct} will
prove crucial in establishing Theorem \ref{key boundedness} for the flag
molecular space $\mathcal{M}_{flag}^{M^{\prime }+\delta }\left( \mathbb{H}%
^{n}\right) $.

For convenience we give the almost orthogonal estimates only for the case $%
\mathcal{M}_{product}^{4M+2,2M,2M}\left( \mathbb{H}^{n}\times \mathbb{R}%
\right) $.

\begin{lemma}
\label{almostorthoproduct}Suppose $\Psi ,\Phi \in \mathcal{M}%
_{product}^{4M+2,2M,2M}\left( \mathbb{H}^{n}\times \mathbb{R}\right) $. Then
there exists a constant $C=C(M)$ depending only on $M$ such that%
\begin{eqnarray}
&&\ \ \ \ \ \ \ \ \ \ \ \ \ \ \ \ \ \ \ \ \ \ \ \ \ \ \ \ \ \ \left\vert
\Psi _{t,s}\ast _{\mathbb{H}^{n}\times \mathbb{R}}\Phi _{t^{\prime
},s^{\prime }}\left( \left( z,u\right) ,v\right) \right\vert
\label{product est} \\
&\leq &C\left( {\frac{{t}}{{t^{\prime }}}}\wedge {\frac{{t^{\prime }}}{{t}}}%
\right) ^{2M+1}\left( {\frac{{s}}{{s^{\prime }}}}\wedge {\frac{{s^{\prime }}%
}{{s}}}\right) ^{M+1}\frac{\left( {{t}\vee {t^{\prime }}}\right) ^{2\frac{%
4M+2}{2}}}{\left( \left( {{t}\vee {t^{\prime }}}\right) ^{2}{+}\left\vert {z}%
\right\vert ^{2}+\left\vert u\right\vert \right) {^{\frac{Q+4M+2}{2}}}}\frac{%
\left( {{s}\vee {s^{\prime }}}\right) ^{4M+2}}{\left( {{s}\vee {s^{\prime }}+%
}\left\vert {v}\right\vert \right) {^{1+4M+2}}}.  \notag
\end{eqnarray}
\end{lemma}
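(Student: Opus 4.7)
The plan is to establish (\ref{product est}) by a standard almost orthogonality argument on the product group $\mathbb{H}^{n}\times\mathbb{R}$, exploiting the four separate moment/smoothness pairs supplied by Definition \ref{product molecular}. By the symmetry of the estimate under the involution $(\Psi,\Phi,t,s,t',s')\mapsto(\check{\Phi},\check{\Psi},t',s',t,s)$, which interchanges the two convolution factors while preserving the minima $\tfrac{t}{t'}\wedge\tfrac{t'}{t}$ and $\tfrac{s}{s'}\wedge\tfrac{s'}{s}$, it suffices to treat the case $t\le t'$ and $s\le s'$. In this case the target factor reduces to $(t/t')^{2M+1}(s/s')^{M+1}$.

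I would write
\[
I := \Psi_{t,s} \ast_{\mathbb{H}^n\times\mathbb{R}} \Phi_{t',s'}((z,u),v) = \iiint \Psi_{t,s}((z',u'),v')\, G(z',u',v')\, dz'\,du'\,dv',
\]
where $G(z',u',v'):=\Phi_{t',s'}((z,u)\circ(z',u')^{-1},\,v-v')$, and subtract from $G$ a partial Taylor polynomial in $(z',u',v')$ about the origin: a polynomial $P_{(z',u')}$ of homogeneous degree $\le 2M$ in $(z',u')$ (with $v'$ fixed), a polynomial $P_{v'}$ of degree $\le M$ in $v'$ (with $(z',u')$ fixed), and a mixed correction $P_{\mathrm{mix}}$ to avoid double subtraction. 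By the product moment conditions (\ref{Mo1}) on $\Psi$, each of these polynomials integrates to zero against $\Psi_{t,s}$ and can therefore be freely subtracted. The resulting remainder $\widetilde{G}$ satisfies
\[
|\widetilde{G}(z',u',v')|\lesssim \rho(z',u')^{2M+1}|v'|^{M+1}\sup\bigl|\partial_{z}^{\alpha^{\ast}}\partial_{u}^{\beta^{\ast}}\partial_{v}^{M+1}\Phi_{t',s'}\bigr|,
\]
with $|\alpha^{\ast}|+2\beta^{\ast}=2M+1$, the supremum taken over intermediate points in the Heisenberg/Euclidean segments between $(z,u)$ and $(z,u)\circ(z',u')^{-1}$ and between $v$ and $v-v'$. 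This is precisely the type of mixed double-difference estimate encoded by (\ref{DI4}), combined with the mean value theorem.

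Rescaling turns the supremum into $(t')^{-(2M+1)}(s')^{-(M+1)}$ multiplied by the decay factor
\[
\frac{(t')^{4M+2}}{((t')^2+|z|^2+|u|)^{(Q+4M+2)/2}}\cdot \frac{(s')^{4M+2}}{(s'+|v|)^{1+4M+2}};
\]
the decay transfers from the intermediate points to $(z,u)$ and $v$ because the hypotheses $t\le t'$, $s\le s'$ force the displacements $(z',u')$ and $v'$ on the effective support of $\Psi_{t,s}$ to stay below the intrinsic scales of $\Phi_{t',s'}$, rendering the decay expressions comparable before and after translation. Integrating $|\Psi_{t,s}((z',u'),v')|\rho(z',u')^{2M+1}|v'|^{M+1}$ against the size bound for $\Psi$ then contributes $\lesssim t^{2M+1}s^{M+1}$, and multiplying yields the advertised estimate. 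The principal obstacle is the Heisenberg chain rule: since $(z,u)\circ(z',u')^{-1}$ is a degree-two polynomial in $(z',u')$, an Euclidean $\partial_{z'}$ derivative of $G$ produces ordinary $(z,u)$-derivatives of $\Phi_{t',s'}$ weighted by polynomial factors in $(z,u)$, and one must check that these cross terms remain controlled by (\ref{DI1})--(\ref{DI4}) rather than generating uncontrolled growth. With this verified the argument is the standard product lift of one-parameter Heisenberg almost orthogonality, and no essentially new idea is needed.
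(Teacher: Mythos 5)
Your proposal has two genuine gaps.

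First, the symmetry reduction is incomplete. The involution $(\Psi,\Phi,t,s,t',s')\mapsto(\check{\Phi},\check{\Psi},t',s',t,s)$ swaps $t\leftrightarrow t'$ \emph{and} $s\leftrightarrow s'$ simultaneously, so it interchanges the case $t\geq t',\,s\geq s'$ with the case $t\leq t',\,s\leq s'$, and interchanges $t\geq t',\,s<s'$ with $t<t',\,s\geq s'$; it does \emph{not} reduce the mixed cases to the matched ones. In the mixed case (say $t\geq t'$, $s<s'$) the Taylor subtraction must go in different directions in the two factors: you expand in $(z',u')$ and use the moments of $\Phi_{t',s'}$ in the Heisenberg variables (since $\Psi_{t,s}$ has the larger $t$-scale, hence the smoothness), but you expand in $v'$ and use the moments of $\Psi_{t,s}$ in the $\mathbb{R}$ variable (since $\Phi_{t',s'}$ has the larger $s'$-scale). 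Your scheme expands in both variables around the origin against the moments of $\Psi$ alone, which is correct only for $t\leq t'$, $s\leq s'$. A structurally different expansion is needed for the mixed case, and the paper treats it separately as its Case (2).

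Second, your passage from the Taylor remainder bound to the displayed decay is not justified and requires a regional decomposition of the integration domain. The double-difference and H\"older-type inequalities (\ref{DI2})--(\ref{DI4}) defining $\mathcal{M}_{product}^{M+\delta,M_1,M_2}$ are \emph{local}: they are only stated for $|(z,u)\circ(z',u')^{-1}|\leq\frac12(1+|z|^2+|u|)^{1/2}$ and $|v-v'|\leq\frac12(1+|v|)$. Your claim that the displacements on the ``effective support of $\Psi_{t,s}$'' stay below the scales of $\Phi_{t',s'}$ is false: the molecular functions $\Psi,\Phi$ are \emph{not} compactly supported but have polynomial decay, so the integral receives contributions from arbitrarily large $(z',u')$ and $v'$, where (i) the Taylor/double-difference bound does not apply and (ii) the decay at the intermediate point is not comparable to the decay at $(z,u)$. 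The paper handles this by splitting $\mathbb{H}^n\times\mathbb{R}$ into four regions $W_1,\dots,W_4$ according to whether $|(z',u')|$ and $|v'|$ are small or large relative to $(t^2+|z|^2+|u|)^{1/2}$ and $s+|v|$, using the double-difference estimate only on the near region $W_1$ and falling back to partial smoothness or pure size estimates on $W_2,W_3,W_4$. Without this splitting the argument does not close. (Your flagged concern about the Heisenberg chain rule producing polynomial cross-terms is real but less serious; it is absorbed by the homogeneous weighting $|\alpha|+2\beta$ built into the definition of the molecular spaces.)
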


\textbf{Proof of Lemma \ref{almostorthoproduct}}: The proof uses a standard
orthogonality argument on the integral%
\begin{eqnarray}
&&\Psi _{t,s}\ast _{\mathbb{H}^{n}\times \mathbb{R}}\Phi _{t^{\prime
},s^{\prime }}\left( \left( z,u\right) ,v\right)  \label{integral'} \\
&=&\int_{\mathbb{H}^{n}\times \mathbb{R}}\Psi _{t,s}\left( \left( z,u\right)
\circ \left( z^{\prime },u^{\prime }\right) ^{-1},v-v^{\prime }\right) \Phi
_{t^{\prime },s^{\prime }}\left( \left( z^{\prime },u^{\prime }\right)
,v^{\prime }\right) dz^{\prime }du^{\prime }dv^{\prime }.  \notag
\end{eqnarray}%
Consider the following four cases:

\begin{enumerate}
\item $t\geq t^{\prime }$ and $s\geq s^{\prime }$,

\item $t\geq t^{\prime }$ and $s<s^{\prime }$,

\item $t<t^{\prime }$ and $s\geq s^{\prime }$,

\item $t<t^{\prime }$ and $s<s^{\prime }$.
\end{enumerate}

We prove Cases (1) and (2) since (4) is similar to (1) and (3) is similar to
(2).

\bigskip

\textbf{Case (1)}: In this case we can exploit the smoothness of $\Psi
_{t,s} $ and the vanishing moments of $\Phi _{t^{\prime },s^{\prime }}$ to
write%
\begin{eqnarray*}
&&\Psi _{t,s}\ast _{\mathbb{H}^{n}\times \mathbb{R}}\Phi _{t^{\prime
},s^{\prime }}\left( \left( z,u\right) ,v\right) \\
&=&\int_{\mathbb{H}^{n}\times \mathbb{R}}\left\{ \left[ \Psi _{t,s}\left(
\left( z,u\right) \circ \left( z^{\prime },u^{\prime }\right)
^{-1},v-v^{\prime }\right) -\sum_{\left\vert \alpha \right\vert +2\beta \leq
2M}c_{\alpha ,\beta }\partial _{z}^{\alpha }\partial _{u}^{\beta }\Psi
_{t,s}\left( \left( z,u\right) ,v-v^{\prime }\right) \left[ \left( z^{\prime
},u^{\prime }\right) ^{-1}\right] ^{\left( \alpha ,\beta \right) }\right]
\right. \\
&&\left. -\sum_{\gamma \leq M}c_{\gamma }\partial _{v}^{\gamma }\left[ \Psi
_{t,s}\left( \left( z,u\right) \circ \left( z^{\prime },u^{\prime }\right)
^{-1},v\right) -\sum_{\left\vert \alpha \right\vert +2\beta \leq
2M}c_{\alpha ,\beta }\partial _{z}^{\alpha }\partial _{u}^{\beta }\Psi
_{t,s}\left( \left( z,u\right) ,v\right) \left[ \left( z^{\prime },u^{\prime
}\right) ^{-1}\right] ^{\left( \alpha ,\beta \right) }\right] \left(
-v^{\prime }\right) ^{\gamma }\right\} \\
&&\ \ \ \ \ \ \ \ \ \ \ \ \ \ \ \ \ \ \ \ \times \Phi _{t^{\prime
},s^{\prime }}\left( \left( z^{\prime },u^{\prime }\right) ,v^{\prime
}\right) dz^{\prime }du^{\prime }dv^{\prime }.
\end{eqnarray*}%
Indeed, we only used the moment conditions%
\begin{equation*}
\int_{\mathbb{H}^{n}}\left[ \left( z^{\prime },u^{\prime }\right) ^{-1}%
\right] ^{\left( \alpha ,\beta \right) }\Phi _{t^{\prime },s^{\prime
}}\left( \left( z^{\prime },u^{\prime }\right) ,v^{\prime }\right)
dz^{\prime }du^{\prime }=0,\ \ \ \ \ \left\vert \alpha \right\vert +2\beta
\leq 2M,
\end{equation*}%
and%
\begin{equation*}
\int_{\mathbb{R}}\left( -v^{\prime }\right) ^{\gamma }\Phi _{t^{\prime
},s^{\prime }}\left( \left( z^{\prime },u^{\prime }\right) ,v^{\prime
}\right) dv^{\prime }=0,\ \ \ \ \ \gamma \leq M.
\end{equation*}

We now write%
\begin{equation*}
\Psi _{t,s}\ast _{\mathbb{H}^{n}\times \mathbb{R}}\Phi _{t^{\prime
},s^{\prime }}\left( \left( z,u\right) ,v\right) \equiv
I_{1}+I_{2}+I_{3}+I_{4}
\end{equation*}%
where $I_{j}$ denotes that part of the integral over $\mathbb{H}^{n}\times 
\mathbb{R}$ where integration is taken over the set $W_{j}$ given by%
\begin{eqnarray}
&&  \label{def W} \\
W_{1} &=&\left\{ \left( \left( z^{\prime },u^{\prime }\right) ,v^{\prime
}\right) \in \mathbb{H}^{n}\times \mathbb{R}:\left\vert \left( z^{\prime
},u^{\prime }\right) \right\vert \leq \frac{1}{2}\left( t^{2}+\left\vert
z\right\vert ^{2}+\left\vert u\right\vert \right) ^{\frac{1}{2}}\text{ and }%
\left\vert v^{\prime }\right\vert \leq \frac{1}{2}\left( s+\left\vert
v\right\vert \right) \right\} ,  \notag \\
W_{2} &=&\left\{ \left( \left( z^{\prime },u^{\prime }\right) ,v^{\prime
}\right) \in \mathbb{H}^{n}\times \mathbb{R}:\left\vert \left( z^{\prime
},u^{\prime }\right) \right\vert \leq \frac{1}{2}\left( t^{2}+\left\vert
z\right\vert ^{2}+\left\vert u\right\vert \right) ^{\frac{1}{2}}\text{ and }%
\left\vert v^{\prime }\right\vert >\frac{1}{2}\left( s+\left\vert
v\right\vert \right) \right\} ,  \notag \\
W_{3} &=&\left\{ \left( \left( z^{\prime },u^{\prime }\right) ,v^{\prime
}\right) \in \mathbb{H}^{n}\times \mathbb{R}:\left\vert \left( z^{\prime
},u^{\prime }\right) \right\vert >\frac{1}{2}\left( t^{2}+\left\vert
z\right\vert ^{2}+\left\vert u\right\vert \right) ^{\frac{1}{2}}\text{ and }%
\left\vert v^{\prime }\right\vert \leq \frac{1}{2}\left( s+\left\vert
v\right\vert \right) \right\} ,  \notag \\
W_{4} &=&\left\{ \left( \left( z^{\prime },u^{\prime }\right) ,v^{\prime
}\right) \in \mathbb{H}^{n}\times \mathbb{R}:\left\vert \left( z^{\prime
},u^{\prime }\right) \right\vert >\frac{1}{2}\left( t^{2}+\left\vert
z\right\vert ^{2}+\left\vert u\right\vert \right) ^{\frac{1}{2}}\text{ and }%
\left\vert v^{\prime }\right\vert >\frac{1}{2}\left( s+\left\vert
v\right\vert \right) \right\} .  \notag
\end{eqnarray}%
Note that $\mathbb{H}^{n}\times \mathbb{R}=W_{1}+W_{2}+W_{3}+W_{4}$.

\bigskip

To estimate term $I_{1}$ we first use Taylor's theorem in the factor $%
\mathbb{H}^{n}$ to write%
\begin{eqnarray*}
&&\Psi _{t,s}\left( \left( z,u\right) \circ \left( z^{\prime },u^{\prime
}\right) ^{-1},v-v^{\prime }\right) -\sum_{\left\vert \alpha \right\vert
+2\beta \leq 2M}c_{\alpha ,\beta }\partial _{z}^{\alpha }\partial
_{u}^{\beta }\Psi _{t,s}\left( \left( z,u\right) ,v-v^{\prime }\right) \left[
\left( z^{\prime },u^{\prime }\right) ^{-1}\right] ^{\left( \alpha ,\beta
\right) } \\
&=&\sum_{\left\vert \alpha \right\vert +2\beta =2M}c_{\alpha ,\beta }\left\{
\partial _{z}^{\alpha }\partial _{u}^{\beta }\Psi _{t,s}\left( \left(
z^{\prime \prime },u^{\prime \prime }\right) ,v-v^{\prime }\right) -\partial
_{z}^{\alpha }\partial _{u}^{\beta }\Psi _{t,s}\left( \left( z,u\right)
,v-v^{\prime }\right) \right\} \left[ \left( z^{\prime },u^{\prime }\right)
^{-1}\right] ^{\left( \alpha ,\beta \right) },
\end{eqnarray*}%
where $\left( z^{\prime \prime },u^{\prime \prime }\right) =\left(
z,u\right) \circ \delta _{\theta }\left( z^{\prime },u^{\prime }\right)
^{-1} $ for some $0<\theta <1$ (here $\delta _{\theta }$ is the usual
automorphic dilation of $\mathbb{H}^{n}$), and then use Taylor's theorem
again but in the factor $\mathbb{R}$ to continue with%
\begin{eqnarray*}
&&\Psi _{t,s}\left( \left( z,u\right) \circ \left( z^{\prime },u^{\prime
}\right) ^{-1},v-v^{\prime }\right) -\sum_{\left\vert \alpha \right\vert
+2\beta \leq 2M}c_{\alpha ,\beta }\partial _{z}^{\alpha }\partial
_{u}^{\beta }\Psi _{t,s}\left( \left( z,u\right) ,v-v^{\prime }\right) \left[
\left( z^{\prime },u^{\prime }\right) ^{-1}\right] ^{\left( \alpha ,\beta
\right) } \\
&&-\sum_{\gamma \leq M}c_{\gamma }\partial _{v}^{\gamma }\left[ \Psi
_{t,s}\left( \left( z,u\right) \circ \left( z^{\prime },u^{\prime }\right)
^{-1},v\right) -\sum_{\left\vert \alpha \right\vert +2\beta \leq
2M}c_{\alpha ,\beta }\partial _{z}^{\alpha }\partial _{u}^{\beta }\Psi
_{t,s}\left( \left( z,u\right) ,v\right) \left[ \left( z^{\prime },u^{\prime
}\right) ^{-1}\right] ^{\left( \alpha ,\beta \right) }\right] \left(
-v^{\prime }\right) ^{\gamma } \\
&=&\sum_{\left\vert \alpha \right\vert +2\beta =2M}c_{\alpha ,\beta }\left\{
\partial _{z}^{\alpha }\partial _{u}^{\beta }\Psi _{t,s}\left( \left(
z^{\prime \prime },u^{\prime \prime }\right) ,v-v^{\prime }\right) -\partial
_{z}^{\alpha }\partial _{u}^{\beta }\Psi _{t,s}\left( \left( z,u\right)
,v-v^{\prime }\right) \right\} \left[ \left( z^{\prime },u^{\prime }\right)
^{-1}\right] ^{\left( \alpha ,\beta \right) } \\
&&-\sum_{\gamma \leq M}c_{\gamma }\partial _{v}^{\gamma }\left\{
\sum_{\left\vert \alpha \right\vert +2\beta =2M}c_{\alpha ,\beta }\left\{
\partial _{z}^{\alpha }\partial _{u}^{\beta }\Psi _{t,s}\left( \left(
z^{\prime \prime },u^{\prime \prime }\right) ,v\right) -\partial
_{z}^{\alpha }\partial _{u}^{\beta }\Psi _{t,s}\left( \left( z,u\right)
,v-v^{\prime }\right) \right\} \left[ \left( z^{\prime },u^{\prime }\right)
^{-1}\right] ^{\left( \alpha ,\beta \right) }\right\} \left( -v^{\prime
}\right) ^{\gamma } \\
&=&\sum_{\substack{ \left\vert \alpha \right\vert +2\beta =2M  \\ \gamma =M}}%
c_{\gamma }c_{\alpha ,\beta }\left\{ \partial _{v}^{\gamma }\partial
_{z}^{\alpha }\partial _{u}^{\beta }\Psi _{t,s}\left( \left( z^{\prime
\prime },u^{\prime \prime }\right) ,v^{\prime \prime }\right) -\partial
_{v}^{\gamma }\partial _{z}^{\alpha }\partial _{u}^{\beta }\Psi _{t,s}\left(
\left( z^{\prime \prime },u^{\prime \prime }\right) ,v\right) \right. \\
&&\ \ \ \ \ \ \ \ \ \ \ \ \ \ \ \ \ \ \ \ \ \ \ \ \ \ \ \ \ \ \left.
-\partial _{z}^{\alpha }\partial _{u}^{\beta }\Psi _{t,s}\left( \left(
z,u\right) ,v^{\prime \prime }\right) +\partial _{v}^{\gamma }\partial
_{z}^{\alpha }\partial _{u}^{\beta }\Psi _{t,s}\left( \left( z,u\right)
,v\right) \right\} \left[ \left( z^{\prime },u^{\prime }\right) ^{-1}\right]
^{\left( \alpha ,\beta \right) }\left( -v^{\prime }\right) ^{\gamma }
\end{eqnarray*}%
where $v^{\prime \prime }=v-\theta v^{\prime }$ for some $0<\theta <1$ (not
necessarily the same $\theta $ as before). Now we apply the double
difference hypothesis in Definition \ref{product molecular} for elements of $%
\mathcal{M}_{product}^{4M+2,2M,2M}\left( \mathbb{H}^{n}\times \mathbb{R}%
\right) $, and note that we are using $2M+1$ in place of $M$, to obtain the
estimate that the modulus of the final sum above is dominated by%
\begin{equation*}
A\frac{\left\vert \left( z^{\prime },u^{\prime }\right) \right\vert ^{\frac{%
2M+1}{2}}}{\left( t^{2}+\left\vert z\right\vert ^{2}+\left\vert u\right\vert
\right) ^{\frac{2M+1}{2}}}\frac{\left( t^{2}\right) ^{\frac{4M+2}{2}}}{%
\left( t^{2}+\left\vert z\right\vert ^{2}+\left\vert u\right\vert \right) ^{%
\frac{Q+4M+2}{2}}}\frac{\left\vert v^{\prime }\right\vert ^{M+1}}{\left(
s+\left\vert v\right\vert \right) ^{M+1}}\frac{s^{4M+2}}{\left( s+\left\vert
v\right\vert \right) ^{1+4M+2}}
\end{equation*}%
since%
\begin{eqnarray*}
\left\vert \left( z,u\right) \circ \left( z^{\prime \prime },u^{\prime
\prime }\right) ^{-1}\right\vert &\lesssim &\left\vert \left( z^{\prime
},u^{\prime }\right) \right\vert \lesssim \frac{1}{2}\left( t^{2}+\left\vert
z\right\vert ^{2}+\left\vert u\right\vert \right) ^{\frac{1}{2}}, \\
\left\vert v-v^{\prime \prime }\right\vert &\lesssim &\left\vert v^{\prime
}\right\vert \lesssim \frac{1}{2}\left( s+\left\vert v\right\vert \right) .
\end{eqnarray*}%
This together with the size condition on $\Phi _{t^{\prime },s^{\prime }}$
yields 
\begin{eqnarray*}
\left\vert I_{1}\right\vert &\lesssim &A\int_{W_{1}}\frac{\left\vert \left(
z^{\prime },u^{\prime }\right) \right\vert ^{\frac{2M+1}{2}}}{\left(
t^{2}+\left\vert z\right\vert ^{2}+\left\vert u\right\vert \right) ^{\frac{%
2M+1}{2}}}\frac{\left( t^{2}\right) ^{\frac{4M+2}{2}}}{\left(
t^{2}+\left\vert z\right\vert ^{2}+\left\vert u\right\vert \right) ^{\frac{%
Q+4M+2}{2}}}\frac{\left\vert v^{\prime }\right\vert ^{M+1}}{\left(
s+\left\vert v\right\vert \right) ^{M+1}}\frac{s^{4M+2}}{\left( s+\left\vert
v\right\vert \right) ^{1+4M+2}} \\
&&\times \frac{\left( \left( {t}^{\prime }\right) ^{2}\right) ^{\frac{4M+2}{2%
}}}{\left( \left( {t}^{\prime }\right) ^{2}{+}\left\vert {z}^{\prime
}\right\vert ^{2}+\left\vert u^{\prime }\right\vert \right) {^{\frac{Q+4M+2}{%
2}}}}\frac{\left( s^{\prime }\right) ^{4M+2}}{\left( {s}^{\prime }{+}%
\left\vert {v}^{\prime }\right\vert \right) {^{1+4M+2}}}dz^{\prime
}du^{\prime }dv^{\prime } \\
&\lesssim &A\left( {\frac{{t^{\prime }}}{{t}}}\right) ^{2\frac{2M+1}{2}%
}\left( {\frac{{s^{\prime }}}{{s}}}\right) ^{M+1}\frac{\left( {t}^{2}\right)
^{2M+1}}{\left( {t}^{2}{+}\left\vert {z}\right\vert ^{2}+\left\vert
u\right\vert \right) {^{n+1+2M+1}}}\frac{{s}^{4M+2}}{\left( {{s}+}\left\vert 
{v}\right\vert \right) {^{1+4M+2}}},
\end{eqnarray*}%
which is dominated by the right side of (\ref{product est}) as required.
Recall that $Q=2n+2$.

\bigskip

In order to estimate $I_{2}$ we rewrite it as follows:%
\begin{eqnarray*}
I_{2} &=&\int_{W_{2}}\left\{ \sum_{\left\vert \alpha \right\vert +2\beta
=2M}c_{\alpha ,\beta }\left\{ \partial _{z}^{\alpha }\partial _{u}^{\beta
}\Psi _{t,s}\left( \left( z^{\prime \prime },u^{\prime \prime }\right)
,v-v^{\prime }\right) -\partial _{z}^{\alpha }\partial _{u}^{\beta }\Psi
_{t,s}\left( \left( z,u\right) ,v-v^{\prime }\right) \right\} \right. \\
&&-\left. \sum_{\left\vert \alpha \right\vert +2\beta =2M}\sum_{\gamma \leq
M}c_{\gamma }c_{\alpha ,\beta }\left[ \partial _{v}^{\gamma }\partial
_{z}^{\alpha }\partial _{u}^{\beta }\Psi _{t,s}\left( \left( z^{\prime
\prime },u^{\prime \prime }\right) ,v\right) -\partial _{v}^{\gamma
}\partial _{z}^{\alpha }\partial _{u}^{\beta }\Psi _{t,s}\left( \left(
z,u\right) ,v-v^{\prime }\right) \right] \left( -v^{\prime }\right) ^{\gamma
}\right\} \\
&&\ \ \ \ \ \ \ \ \ \ \ \ \ \ \ \ \ \ \ \ \ \ \ \ \ \ \ \ \ \ \times \left[
\left( z^{\prime },u^{\prime }\right) ^{-1}\right] ^{\left( \alpha ,\beta
\right) }\Phi _{t^{\prime },s^{\prime }}\left( \left( z^{\prime },u^{\prime
}\right) ,v^{\prime }\right) dz^{\prime }du^{\prime }dv^{\prime } \\
&=&I_{2}^{1}+I_{2}^{2}.
\end{eqnarray*}%
For $I_{2}^{1}$ we use%
\begin{equation*}
\left\vert \left( z,u\right) \circ \left( z^{\prime \prime },u^{\prime
\prime }\right) ^{-1}\right\vert \lesssim \left\vert \left( z^{\prime
},u^{\prime }\right) \right\vert \lesssim \frac{1}{2}\left( t^{2}+\left\vert
z\right\vert ^{2}+\left\vert u\right\vert \right) ^{\frac{1}{2}}
\end{equation*}%
to get%
\begin{eqnarray*}
\left\vert I_{2}^{1}\right\vert &\leq &A\int_{W_{2}}\frac{\left\vert \left(
z^{\prime },u^{\prime }\right) \right\vert ^{\frac{2M+1}{2}}}{\left(
t^{2}+\left\vert z\right\vert ^{2}+\left\vert u\right\vert \right) ^{\frac{%
2M+1}{2}}}\frac{\left( t^{2}\right) ^{\frac{4M+2}{2}}}{\left(
t^{2}+\left\vert z\right\vert ^{2}+\left\vert u\right\vert \right) ^{\frac{%
Q+4M+2}{2}}}\frac{s^{4M+2}}{\left( s+\left\vert v-v^{\prime }\right\vert
\right) ^{1+4M+2}} \\
&&\times \frac{\left( \left( {t}^{\prime }\right) ^{2}\right) ^{\frac{4M+2}{2%
}}}{\left( \left( {t}^{\prime }\right) ^{2}{+}\left\vert {z}^{\prime
}\right\vert ^{2}+\left\vert u^{\prime }\right\vert \right) {^{\frac{Q+4M+2}{%
2}}}}\frac{\left( {s}^{\prime }\right) ^{4M+2}}{\left( {s}^{\prime }{+}%
\left\vert {v}^{\prime }\right\vert \right) {^{1+4M+2}}}dz^{\prime
}du^{\prime }dv^{\prime }.
\end{eqnarray*}%
Now on $W_{2}$ we have $\left\vert {v}^{\prime }\right\vert \geq \frac{1}{2}%
\left( {{s}+}\left\vert {v}\right\vert \right) $, which gives%
\begin{equation*}
\frac{\left( {s}^{\prime }\right) ^{4M+2}}{\left( {s}^{\prime }{+}\left\vert 
{v}^{\prime }\right\vert \right) {^{1+4M+2}}}\leq C\frac{\left( {s}^{\prime
}\right) ^{4M+2}}{\left( {{s}+}\left\vert {v}\right\vert \right) {^{1+4M+2}}}%
=C\left( \frac{s^{\prime }}{s}\right) ^{4M+2}\frac{s^{4M+2}}{\left(
s+\left\vert v\right\vert \right) ^{1+4M+2}},
\end{equation*}%
and hence%
\begin{equation*}
\left\vert I_{2}^{1}\right\vert \leq A\left( {\frac{{t^{\prime }}}{{t}}}%
\right) ^{2\frac{2M+1}{2}}\left( {\frac{{s^{\prime }}}{{s}}}\right) ^{4M+2}%
\frac{\left( t^{2}\right) ^{\frac{4M+2}{2}}}{\left( t^{2}+\left\vert
z\right\vert ^{2}+\left\vert u\right\vert \right) ^{\frac{Q+4M+2}{2}}}\frac{%
s^{4M+2}}{\left( s+\left\vert v\right\vert \right) ^{1+4M+2}}.
\end{equation*}

Similarly, for $I_{2}^{2}$ we have%
\begin{eqnarray*}
\left\vert I_{2}^{2}\right\vert &\leq &A\int_{W_{2}}\sum_{\gamma \leq M}%
\frac{\left\vert \left( z^{\prime },u^{\prime }\right) \right\vert ^{\frac{%
2M+1}{2}}}{\left( t^{2}+\left\vert z\right\vert ^{2}+\left\vert u\right\vert
\right) ^{\frac{2M+1}{2}}}\frac{\left( t^{2}\right) ^{\frac{4M+2}{2}}}{%
\left( t^{2}+\left\vert z\right\vert ^{2}+\left\vert u\right\vert \right) ^{%
\frac{Q+4M+2}{2}}}\frac{s^{4M+2}\left\vert v^{\prime }\right\vert ^{\gamma }%
}{\left( s+\left\vert v\right\vert \right) ^{1+4M+2+\gamma }} \\
&&\times \frac{\left( \left( {t}^{\prime }\right) ^{2}\right) ^{\frac{4M+2}{2%
}}}{\left( \left( {t}^{\prime }\right) ^{2}{+}\left\vert {z}^{\prime
}\right\vert ^{2}+\left\vert u^{\prime }\right\vert \right) {^{\frac{Q+4M+2}{%
2}}}}\frac{\left( {s}^{\prime }\right) ^{4M+2}}{\left( {s}^{\prime }{+}%
\left\vert {v}^{\prime }\right\vert \right) {^{1+4M+2}}}dz^{\prime
}du^{\prime }dv^{\prime }.
\end{eqnarray*}%
Now we will use the inequality,%
\begin{equation*}
\int_{\left\{ \left\vert {v}^{\prime }\right\vert \geq \frac{1}{2}\left( {{s}%
+}\left\vert {v}\right\vert \right) \geq \frac{1}{2}s\right\} }\frac{1}{%
\left( {s}^{\prime }{+}\left\vert {v}^{\prime }\right\vert \right) {%
^{1+4M+2-\gamma }}}dv^{\prime }\leq C\frac{1}{s^{4M+2-\gamma }},
\end{equation*}%
and take integration first in $v^{\prime }$ and then in $\left( z^{\prime
},u^{\prime }\right) $ to obtain%
\begin{equation*}
\left\vert I_{2}^{2}\right\vert \leq A\left( {\frac{{t^{\prime }}}{{t}}}%
\right) ^{2\frac{2M+1}{2}}\left( {\frac{{s^{\prime }}}{{s}}}\right) ^{4M+2}%
\frac{\left( t^{2}\right) ^{\frac{4M+2}{2}}}{\left( t^{2}+\left\vert
z\right\vert ^{2}+\left\vert u\right\vert \right) ^{\frac{Q+4M+2}{2}}}\frac{%
s^{4M+2}}{\left( s+\left\vert v\right\vert \right) ^{1+4M+2}}.
\end{equation*}

\bigskip

In order to estimate $I_{3}$ we rewrite it as follows:%
\begin{eqnarray*}
I_{3} &=&\int_{W_{3}}\left\{ \left[ \Psi _{t,s}\left( \left( z,u\right)
\circ \left( z^{\prime },u^{\prime }\right) ^{-1},v-v^{\prime }\right)
-\sum_{\gamma \leq M}c_{\gamma }\partial _{v}^{\gamma }\Psi _{t,s}\left(
\left( z,u\right) \circ \left( z^{\prime },u^{\prime }\right) ^{-1},v\right)
\left( -v^{\prime }\right) ^{\gamma }\right] \right. \\
&&\ \ \ \ \ -\left[ \sum_{\left\vert \alpha \right\vert +2\beta \leq
2M}c_{\alpha ,\beta }\partial _{z}^{\alpha }\partial _{u}^{\beta }\Psi
_{t,s}\left( \left( z,u\right) ,v-v^{\prime }\right) \left[ \left( z^{\prime
},u^{\prime }\right) ^{-1}\right] ^{\left( \alpha ,\beta \right) }\right. \\
&&\ \ \ \ \ \ \ \ \ \ \ \ \ \ \ \ \ \ \ \ \left. -\left. \sum_{\gamma \leq
M}c_{\gamma }\partial _{v}^{\gamma }\sum_{\left\vert \alpha \right\vert
+2\beta \leq 2M}c_{\alpha ,\beta }\partial _{z}^{\alpha }\partial
_{u}^{\beta }\Psi _{t,s}\left( \left( z,u\right) ,v\right) \left[ \left(
z^{\prime },u^{\prime }\right) ^{-1}\right] ^{\left( \alpha ,\beta \right)
}\left( -v^{\prime }\right) ^{\gamma }\right] \right\} \\
&&\times \Phi _{t^{\prime },s^{\prime }}\left( \left( z^{\prime },u^{\prime
}\right) ,v^{\prime }\right) dz^{\prime }du^{\prime }dv^{\prime } \\
&=&I_{3}^{1}+I_{3}^{2}.
\end{eqnarray*}%
We have%
\begin{eqnarray*}
\left\vert I_{3}^{1}\right\vert &=&\left\vert \int_{W_{3}}\left[ \Psi
_{t,s}\left( \left( z,u\right) \circ \left( z^{\prime },u^{\prime }\right)
^{-1},v-v^{\prime }\right) -\sum_{\gamma \leq M}c_{\gamma }\partial
_{v}^{\gamma }\Psi _{t,s}\left( \left( z,u\right) \circ \left( z^{\prime
},u^{\prime }\right) ^{-1},v\right) \left( -v^{\prime }\right) ^{\gamma }%
\right] \right. \\
&&\ \ \ \ \ \ \ \ \ \ \ \ \ \ \ \left. \times \Phi _{t^{\prime },s^{\prime
}}\left( \left( z^{\prime },u^{\prime }\right) ,v^{\prime }\right)
dz^{\prime }du^{\prime }dv^{\prime }\right\vert \\
&=&\left\vert \int_{W_{3}}c_{M}\left[ \partial _{v}^{M}\Psi _{t,s}\left(
\left( z,u\right) \circ \left( z^{\prime },u^{\prime }\right)
^{-1},v^{\prime \prime }\right) -\partial _{v}^{M}\Psi _{t,s}\left( \left(
z,u\right) \circ \left( z^{\prime },u^{\prime }\right) ^{-1},v\right) \left(
-v^{\prime }\right) ^{M}\right] \right. \\
&&\ \ \ \ \ \ \ \ \ \ \ \ \ \ \ \left. \times \Phi _{t^{\prime },s^{\prime
}}\left( \left( z^{\prime },u^{\prime }\right) ,v^{\prime }\right)
dz^{\prime }du^{\prime }dv^{\prime }\right\vert \\
&\leq &A\int_{W_{3}}\frac{\left( {t}^{2}\right) ^{\frac{4M+2}{2}}}{\left( {t}%
^{2}{+}\left\vert \left( z,u\right) \circ \left( z^{\prime },u^{\prime
}\right) ^{-1}\right\vert ^{2}\right) {^{\frac{Q+4M+2}{2}}}}\frac{\left\vert
v^{\prime }\right\vert ^{M+1}}{\left( s+\left\vert v\right\vert \right)
^{M+1}}\frac{s^{4M+2}}{\left( s+\left\vert v\right\vert \right) ^{1+4M+2}} \\
&&\times \frac{\left( \left( {t}^{\prime }\right) ^{2}\right) ^{\frac{4M+2}{2%
}}}{\left( \left( {t}^{\prime }\right) ^{2}{+}\left\vert {z}^{\prime
}\right\vert ^{2}+\left\vert u^{\prime }\right\vert \right) {^{\frac{Q+4M+2}{%
2}}}}\frac{\left( {s}^{\prime }\right) ^{4M+2}}{\left( {s}^{\prime }{+}%
\left\vert {v}^{\prime }\right\vert \right) {^{1+4M+2}}}dz^{\prime
}du^{\prime }dv^{\prime }.
\end{eqnarray*}%
Now on $W_{3}$ we have $\left\vert {v}^{\prime }\right\vert \leq \frac{1}{2}%
\left( {{s}+}\left\vert {v}\right\vert \right) $ and $\left\vert \left(
z^{\prime },u^{\prime }\right) \right\vert \geq \frac{1}{2}\left( {t}^{2}{+}%
\left\vert {z}\right\vert ^{2}+\left\vert u\right\vert \right) ^{\frac{1}{2}%
} $ so that%
\begin{equation*}
\frac{\left( \left( {t}^{\prime }\right) ^{2}\right) ^{\frac{4M+2}{2}}}{%
\left( \left( {t}^{\prime }\right) ^{2}{+}\left\vert {z}^{\prime
}\right\vert ^{2}+\left\vert u^{\prime }\right\vert \right) {^{\frac{Q+4M+2}{%
2}}}}\leq C\left( {\frac{\left( {t^{\prime }}\right) ^{2}}{{t}^{2}}}\right)
^{\frac{4M+2}{2}}\frac{\left( {t}^{2}\right) ^{\frac{4M+2}{2}}}{\left( {t}%
^{2}{+}\left\vert {z}\right\vert ^{2}+\left\vert u\right\vert \right) {^{%
\frac{Q+4M+2}{2}}}}.
\end{equation*}%
Thus we have%
\begin{eqnarray*}
\left\vert I_{3}^{1}\right\vert &\leq &A\left( {\frac{\left( {t^{\prime }}%
\right) ^{2}}{{t}^{2}}}\right) ^{\frac{4M+2}{2}}\frac{\left( {t}^{2}\right)
^{\frac{4M+2}{2}}}{\left( {t}^{2}{+}\left\vert {z}\right\vert
^{2}+\left\vert u\right\vert \right) {^{\frac{Q+4M+2}{2}}}}\frac{s^{4M+2}}{%
\left( s+\left\vert v\right\vert \right) ^{1+4M+2}} \\
&&\times \int_{W_{3}}\frac{\left( {t}^{2}\right) ^{\frac{4M+2}{2}}}{\left( {t%
}^{2}{+}\left\vert \left( z,u\right) \circ \left( z^{\prime },u^{\prime
}\right) ^{-1}\right\vert ^{2}\right) {^{\frac{Q+4M+2}{2}}}}\frac{\left\vert
v^{\prime }\right\vert ^{M+1}}{\left( s+\left\vert v\right\vert \right)
^{M+1}}\frac{\left( {s}^{\prime }\right) ^{4M+2}}{\left( {s}^{\prime }{+}%
\left\vert {v}^{\prime }\right\vert \right) {^{1+4M+2}}}dz^{\prime
}du^{\prime }dv^{\prime } \\
&\leq &A\left( {\frac{\left( {t^{\prime }}\right) ^{2}}{{t}^{2}}}\right) ^{%
\frac{4M+2}{2}}\left( {\frac{{s^{\prime }}}{{s}}}\right) ^{M+1}\frac{\left( {%
t}^{2}\right) ^{\frac{4M+2}{2}}}{\left( {t}^{2}{+}\left\vert {z}\right\vert
^{2}+\left\vert u\right\vert \right) {^{\frac{Q+4M+2}{2}}}}\frac{s^{4M+2}}{%
\left( s+\left\vert v\right\vert \right) ^{1+4M+2}}.
\end{eqnarray*}

Similarly, since $\left\vert {v}^{\prime }\right\vert \leq \frac{1}{2}\left( 
{{s}+}\left\vert {v}\right\vert \right) $ on $W_{3}$, we have%
\begin{eqnarray*}
\left\vert I_{3}^{2}\right\vert &=&\left\vert \int_{W_{3}}\left[
\sum_{\left\vert \alpha \right\vert +2\beta \leq 2M}c_{\alpha ,\beta
}\partial _{z}^{\alpha }\partial _{u}^{\beta }\Psi _{t,s}\left( \left(
z,u\right) ,v-v^{\prime }\right) \left[ \left( z^{\prime },u^{\prime
}\right) ^{-1}\right] ^{\left( \alpha ,\beta \right) }\right. \right. \\
&&-\left. \sum_{\gamma \leq M}c_{\gamma }\partial _{v}^{\gamma
}\sum_{\left\vert \alpha \right\vert +2\beta \leq 2M}c_{\alpha ,\beta
}\partial _{z}^{\alpha }\partial _{u}^{\beta }\Psi _{t,s}\left( \left(
z,u\right) ,v\right) \left[ \left( z^{\prime },u^{\prime }\right) ^{-1}%
\right] ^{\left( \alpha ,\beta \right) }\left( -v^{\prime }\right) ^{\gamma }%
\right] \\
&&\ \ \ \ \ \ \ \ \ \ \ \ \ \ \ \ \ \ \ \ \left. \times \Phi _{t^{\prime
},s^{\prime }}\left( \left( z^{\prime },u^{\prime }\right) ,v^{\prime
}\right) dz^{\prime }du^{\prime }dv^{\prime }\right\vert \\
&=&\left\vert \int_{W_{3}}c_{M}\left[ \sum_{\left\vert \alpha \right\vert
+2\beta \leq 2M}c_{\alpha ,\beta }\partial _{v}^{M}\partial _{z}^{\alpha
}\partial _{u}^{\beta }\Psi _{t,s}\left( \left( z,u\right) ,v^{\prime \prime
}\right) \left[ \left( z^{\prime },u^{\prime }\right) ^{-1}\right] ^{\left(
\alpha ,\beta \right) }\right. \right. \\
&&-\left. \partial _{v}^{M}\partial _{z}^{\alpha }\partial _{u}^{\beta }\Psi
_{t,s}\left( \left( z,u\right) ,v\right) \left[ \left( z^{\prime },u^{\prime
}\right) ^{-1}\right] ^{\left( \alpha ,\beta \right) }\left( -v^{\prime
}\right) ^{M}\right] \\
&&\ \ \ \ \ \ \ \ \ \ \ \ \ \ \ \ \ \ \ \ \left. \times \Phi _{t^{\prime
},s^{\prime }}\left( \left( z^{\prime },u^{\prime }\right) ,v^{\prime
}\right) dz^{\prime }du^{\prime }dv^{\prime }\right\vert \\
&\leq &A\int_{W_{3}}\sum_{\left\vert \alpha \right\vert +2\beta \leq 2M}%
\frac{\left( {t}^{2}\right) ^{\frac{4M+2}{2}}\left\vert \left( z^{\prime
},u^{\prime }\right) \right\vert ^{\left\vert \alpha \right\vert +2\beta }}{%
\left( {t}^{2}{+}\left\vert {z}\right\vert ^{2}+\left\vert u\right\vert
\right) {^{\frac{Q+4M+2+\left\vert \alpha \right\vert +2\beta }{2}}}}\frac{%
\left\vert v^{\prime }\right\vert ^{M+1}}{\left( s+\left\vert v\right\vert
\right) ^{M+1}}\frac{s^{4M+2}}{\left( s+\left\vert v\right\vert \right)
^{1+4M+2}} \\
&&\times \frac{\left( \left( {t}^{\prime }\right) ^{2}\right) ^{\frac{4M+2}{2%
}}}{\left( \left( {t}^{\prime }\right) ^{2}{+}\left\vert {z}^{\prime
}\right\vert ^{2}+\left\vert u^{\prime }\right\vert \right) {^{\frac{Q+4M+2}{%
2}}}}\frac{\left( {s}^{\prime }\right) ^{4M+2}}{\left( {s}^{\prime }{+}%
\left\vert {v}^{\prime }\right\vert \right) {^{1+4M+2}}}dz^{\prime
}du^{\prime }dv^{\prime }.
\end{eqnarray*}%
Using the fact that%
\begin{equation*}
\left\vert \left( z^{\prime },u^{\prime }\right) \right\vert \geq \frac{1}{2}%
\left( {t}^{2}{+}\left\vert {z}\right\vert ^{2}+\left\vert u\right\vert
\right) ^{\frac{1}{2}}\geq \frac{1}{2}t
\end{equation*}%
on $W_{3}$ we have%
\begin{eqnarray*}
&&\int_{W_{3}}\frac{\left\vert \left( z^{\prime },u^{\prime }\right)
\right\vert ^{\left\vert \alpha \right\vert +2\beta }}{\left( \left( {t}%
^{\prime }\right) ^{2}{+}\left\vert {z}^{\prime }\right\vert ^{2}+\left\vert
u^{\prime }\right\vert \right) {^{\frac{Q+4M+2}{2}}}}dz^{\prime }du^{\prime }
\\
&\leq &\int_{W_{3}}\frac{1}{\left( \left( {t}^{\prime }\right) ^{2}{+}%
\left\vert {z}^{\prime }\right\vert ^{2}+\left\vert u^{\prime }\right\vert
\right) {^{\frac{Q+4M+2-\left\vert \alpha \right\vert -2\beta }{2}}}}%
dz^{\prime }du^{\prime } \\
&\leq &C\frac{1}{\left( {t}^{2}\right) {^{\frac{4M+2-\left\vert \alpha
\right\vert -2\beta }{2}}}},
\end{eqnarray*}%
and so%
\begin{eqnarray*}
\left\vert I_{3}^{2}\right\vert &\leq &A\left( {\frac{\left( {t^{\prime }}%
\right) ^{2}}{{t}^{2}}}\right) ^{\frac{4M+2}{2}}\left( t^{2}\right) ^{\frac{%
\left\vert \alpha \right\vert +2\beta }{2}}\frac{\left( {t}^{2}\right) ^{%
\frac{4M+2}{2}}}{\left( {t}^{2}{+}\left\vert {z}\right\vert ^{2}+\left\vert
u\right\vert \right) {^{\frac{Q+4M+2+\left\vert \alpha \right\vert +2\beta }{%
2}}}}\left( {\frac{{s^{\prime }}}{{s}}}\right) ^{M+1}\frac{s^{4M+2}}{\left(
s+\left\vert v\right\vert \right) ^{1+4M+2}} \\
&\leq &A\left( {\frac{\left( {t^{\prime }}\right) ^{2}}{{t}^{2}}}\right) ^{%
\frac{4M+2}{2}}\left( {\frac{{s^{\prime }}}{{s}}}\right) ^{M+1}\frac{\left( {%
t}^{2}\right) ^{\frac{4M+2}{2}}}{\left( {t}^{2}{+}\left\vert {z}\right\vert
^{2}+\left\vert u\right\vert \right) {^{\frac{Q+4M+2}{2}}}}\frac{s^{4M+2}}{%
\left( s+\left\vert v\right\vert \right) ^{1+4M+2}}.
\end{eqnarray*}

\bigskip

For the final term $I_{4}$, we use only the size conditions on $\Psi $ to
obtain%
\begin{eqnarray*}
\left\vert I_{4}\right\vert &\leq &A\int_{W_{4}}\left\{ \frac{\left( {t}%
^{2}\right) ^{\frac{4M+2}{2}}}{\left( {t}^{2}{+}\left\vert \left( z,u\right)
\circ \left( z^{\prime },u^{\prime }\right) ^{-1}\right\vert \right) {^{%
\frac{Q+4M+2}{2}}}}\frac{s^{4M+2}}{\left( s+\left\vert v-v^{\prime
}\right\vert \right) ^{1+4M+2}}\right. \\
&&+\sum_{\left\vert \alpha \right\vert +2\beta \leq 2M}c_{\alpha ,\beta }%
\frac{\left( {t}^{2}\right) ^{\frac{4M+2}{2}}\left\vert \left( z^{\prime
},u^{\prime }\right) \right\vert ^{\left\vert \alpha \right\vert +2\beta }}{%
\left( {t}^{2}{+}\left\vert {z}\right\vert ^{2}+\left\vert u\right\vert
\right) {^{\frac{Q+4M+2+\left\vert \alpha \right\vert +2\beta }{2}}}}\frac{%
s^{4M+2}}{\left( s+\left\vert v-v^{\prime }\right\vert \right) ^{1+4M+2}} \\
&&+\sum_{\gamma \leq M}c_{\gamma }\frac{\left( {t}^{2}\right) ^{\frac{4M+2}{2%
}}}{\left( {t}^{2}{+}\left\vert \left( z,u\right) \circ \left( z^{\prime
},u^{\prime }\right) ^{-1}\right\vert \right) {^{\frac{Q+4M+2}{2}}}}\frac{%
s^{4M+2}\left\vert v^{\prime }\right\vert ^{\gamma }}{\left( s+\left\vert
v\right\vert \right) ^{1+4M+2+\gamma }} \\
&&\left. +\sum_{\gamma \leq M}\sum_{\left\vert \alpha \right\vert +2\beta
\leq 2M}c_{\gamma }c_{\alpha ,\beta }\frac{\left( {t}^{2}\right) ^{\frac{4M+2%
}{2}}\left\vert \left( z^{\prime },u^{\prime }\right) \right\vert
^{\left\vert \alpha \right\vert +2\beta }}{\left( {t}^{2}{+}\left\vert {z}%
\right\vert ^{2}+\left\vert u\right\vert \right) {^{\frac{Q+4M+2+\left\vert
\alpha \right\vert +2\beta }{2}}}}\frac{s^{4M+2}\left\vert v^{\prime
}\right\vert ^{\gamma }}{\left( s+\left\vert v\right\vert \right)
^{1+4M+2+\gamma }}\right\} \\
&&\times \frac{\left( \left( {t}^{\prime }\right) ^{2}\right) ^{\frac{4M+2}{2%
}}}{\left( \left( {t}^{\prime }\right) ^{2}{+}\left\vert {z}^{\prime
}\right\vert ^{2}+\left\vert u^{\prime }\right\vert \right) {^{\frac{Q+4M+2}{%
2}}}}\frac{\left( {s}^{\prime }\right) ^{4M+2}}{\left( {s}^{\prime }{+}%
\left\vert {v}^{\prime }\right\vert \right) {^{1+4M+2}}}dz^{\prime
}du^{\prime }dv^{\prime }.
\end{eqnarray*}%
Now on $W_{4}$ we have 
\begin{equation*}
\left\vert \left( z^{\prime },u^{\prime }\right) \right\vert \geq \frac{1}{2}%
\left( {t}^{2}{+}\left\vert {z}\right\vert ^{2}+\left\vert u\right\vert
\right) ^{\frac{1}{2}}\text{ and }\left\vert {v}^{\prime }\right\vert \geq 
\frac{1}{2}\left( {{s}+}\left\vert {v}\right\vert \right) ,
\end{equation*}%
and so

\begin{enumerate}
\item 
\begin{eqnarray*}
&&\int_{W_{4}}\frac{\left( {t}^{2}\right) ^{\frac{4M+2}{2}}}{\left( {t}^{2}{+%
}\left\vert \left( z,u\right) \circ \left( z^{\prime },u^{\prime }\right)
^{-1}\right\vert \right) {^{\frac{Q+4M+2}{2}}}}\frac{s^{4M+2}}{\left(
s+\left\vert v-v^{\prime }\right\vert \right) ^{1+4M+2}} \\
&&\times \frac{\left( \left( {t}^{\prime }\right) ^{2}\right) ^{\frac{4M+2}{2%
}}}{\left( \left( {t}^{\prime }\right) ^{2}{+}\left\vert {z}^{\prime
}\right\vert ^{2}+\left\vert u^{\prime }\right\vert \right) {^{\frac{Q+4M+2}{%
2}}}}\frac{\left( {s}^{\prime }\right) ^{4M+2}}{\left( {s}^{\prime }{+}%
\left\vert {v}^{\prime }\right\vert \right) {^{1+4M+2}}}dz^{\prime
}du^{\prime }dv^{\prime } \\
&\leq &A\left( {\frac{\left( {t^{\prime }}\right) ^{2}}{{t}^{2}}}\right) ^{%
\frac{4M+2}{2}}\frac{\left( {t}^{2}\right) ^{\frac{4M+2}{2}}}{\left( {t}^{2}{%
+}\left\vert {z}\right\vert ^{2}+\left\vert u\right\vert \right) {^{\frac{%
Q+4M+2}{2}}}}\left( {\frac{{s^{\prime }}}{{s}}}\right) ^{4M+2}\frac{s^{4M+2}%
}{\left( s+\left\vert v\right\vert \right) ^{1+4M+2}};
\end{eqnarray*}

\item 
\begin{eqnarray*}
&&\int_{W_{4}}\sum_{\left\vert \alpha \right\vert +2\beta \leq 2M}c_{\alpha
,\beta }\frac{\left( {t}^{2}\right) ^{\frac{4M+2}{2}}\left\vert \left(
z^{\prime },u^{\prime }\right) \right\vert ^{\left\vert \alpha \right\vert
+2\beta }}{\left( {t}^{2}{+}\left\vert {z}\right\vert ^{2}+\left\vert
u\right\vert \right) {^{\frac{Q+4M+2+\left\vert \alpha \right\vert +2\beta }{%
2}}}}\frac{s^{4M+2}}{\left( s+\left\vert v-v^{\prime }\right\vert \right)
^{1+4M+2}} \\
&&\times \frac{\left( \left( {t}^{\prime }\right) ^{2}\right) ^{\frac{4M+2}{2%
}}}{\left( \left( {t}^{\prime }\right) ^{2}{+}\left\vert {z}^{\prime
}\right\vert ^{2}+\left\vert u^{\prime }\right\vert \right) {^{\frac{Q+4M+2}{%
2}}}}\frac{\left( {s}^{\prime }\right) ^{4M+2}}{\left( {s}^{\prime }{+}%
\left\vert {v}^{\prime }\right\vert \right) {^{1+4M+2}}}dz^{\prime
}du^{\prime }dv^{\prime } \\
&\leq &A{\frac{\left( \left( {t^{\prime }}\right) ^{2}\right) ^{\frac{4M+2}{2%
}}}{\left( {t}^{2}\right) ^{\frac{4M+2-\left\vert \alpha \right\vert -2\beta 
}{2}}}}\frac{\left( {t}^{2}\right) ^{\frac{4M+2}{2}}}{\left( {t}^{2}{+}%
\left\vert {z}\right\vert ^{2}+\left\vert u\right\vert \right) {^{\frac{%
Q+4M+2+\left\vert \alpha \right\vert +2\beta }{2}}}}\left( {\frac{{s^{\prime
}}}{{s}}}\right) ^{4M+2}\frac{s^{4M+2}}{\left( s+\left\vert v\right\vert
\right) ^{1+4M+2}} \\
&\leq &A\left( {\frac{\left( {t^{\prime }}\right) ^{2}}{{t}^{2}}}\right) ^{%
\frac{4M+2}{2}}\left( {\frac{{s^{\prime }}}{{s}}}\right) ^{4M+2}\frac{\left( 
{t}^{2}\right) ^{\frac{4M+2}{2}}}{\left( {t}^{2}{+}\left\vert {z}\right\vert
^{2}+\left\vert u\right\vert \right) {^{\frac{Q+4M+2}{2}}}}\frac{s^{4M+2}}{%
\left( s+\left\vert v\right\vert \right) ^{1+4M+2}};
\end{eqnarray*}

\item 
\begin{eqnarray*}
&&\int_{W_{4}}\sum_{\gamma \leq M}c_{\gamma }\frac{\left( {t}^{2}\right) ^{%
\frac{4M+2}{2}}}{\left( {t}^{2}{+}\left\vert \left( z,u\right) \circ \left(
z^{\prime },u^{\prime }\right) ^{-1}\right\vert \right) {^{\frac{Q+4M+2}{2}}}%
}\frac{s^{4M+2}\left\vert v^{\prime }\right\vert ^{\gamma }}{\left(
s+\left\vert v\right\vert \right) ^{1+4M+2+\gamma }} \\
&&\times \frac{\left( \left( {t}^{\prime }\right) ^{2}\right) ^{\frac{4M+2}{2%
}}}{\left( \left( {t}^{\prime }\right) ^{2}{+}\left\vert {z}^{\prime
}\right\vert ^{2}+\left\vert u^{\prime }\right\vert \right) {^{\frac{Q+4M+2}{%
2}}}}\frac{\left( {s}^{\prime }\right) ^{4M+2}}{\left( {s}^{\prime }{+}%
\left\vert {v}^{\prime }\right\vert \right) {^{1+4M+2}}}dz^{\prime
}du^{\prime }dv^{\prime } \\
&\leq &A{\frac{\left( \left( {t^{\prime }}\right) ^{2}\right) ^{\frac{4M+2}{2%
}}}{\left( {t}^{2}{+}\left\vert {z}\right\vert ^{2}+\left\vert u\right\vert
\right) {^{\frac{Q+4M+2}{2}}}}}\frac{\left( {s^{\prime }}\right) ^{4M+2}}{%
s^{4M+2-\gamma }}\frac{s^{4M+2}}{\left( s+\left\vert v\right\vert \right)
^{1+4M+2+\gamma }} \\
&\leq &A\left( {\frac{\left( {t^{\prime }}\right) ^{2}}{{t}^{2}}}\right) ^{%
\frac{4M+2}{2}}\left( {\frac{{s^{\prime }}}{{s}}}\right) ^{4M+2}\frac{\left( 
{t}^{2}\right) ^{\frac{4M+2}{2}}}{\left( {t}^{2}{+}\left\vert {z}\right\vert
^{2}+\left\vert u\right\vert \right) {^{\frac{Q+4M+2}{2}}}}\frac{s^{4M+2}}{%
\left( s+\left\vert v\right\vert \right) ^{1+4M+2}};
\end{eqnarray*}

\item 
\begin{eqnarray*}
&&\int_{W_{4}}\sum_{\gamma \leq M}\sum_{\left\vert \alpha \right\vert
+2\beta \leq 2M}c_{\gamma }c_{\alpha ,\beta }\frac{\left( {t}^{2}\right) ^{%
\frac{4M+2}{2}}\left\vert \left( z^{\prime },u^{\prime }\right) \right\vert
^{\left\vert \alpha \right\vert +2\beta }}{\left( {t}^{2}{+}\left\vert {z}%
\right\vert ^{2}+\left\vert u\right\vert \right) {^{\frac{Q+4M+2+\left\vert
\alpha \right\vert +2\beta }{2}}}}\frac{s^{4M+2}\left\vert v^{\prime
}\right\vert ^{\gamma }}{\left( s+\left\vert v\right\vert \right)
^{1+4M+2+\gamma }} \\
&&\times \frac{\left( \left( {t}^{\prime }\right) ^{2}\right) ^{\frac{4M+2}{2%
}}}{\left( \left( {t}^{\prime }\right) ^{2}{+}\left\vert {z}^{\prime
}\right\vert ^{2}+\left\vert u^{\prime }\right\vert \right) {^{\frac{Q+4M+2}{%
2}}}}\frac{\left( {s}^{\prime }\right) ^{4M+2}}{\left( {s}^{\prime }{+}%
\left\vert {v}^{\prime }\right\vert \right) {^{1+4M+2}}}dz^{\prime
}du^{\prime }dv^{\prime } \\
&\leq &A\frac{\left( {t}^{2}\right) ^{\frac{4M+2}{2}}}{\left( {t}^{2}{+}%
\left\vert {z}\right\vert ^{2}+\left\vert u\right\vert \right) {^{\frac{%
Q+4M+2+\left\vert \alpha \right\vert +2\beta }{2}}}}\frac{s^{4M+2}}{\left(
s+\left\vert v\right\vert \right) ^{1+4M+2+\gamma }}\frac{\left( \left( {t}%
^{\prime }\right) ^{2}\right) ^{\frac{4M+2}{2}}}{\left( {t}^{2}\right) {^{%
\frac{4M+2-\left\vert \alpha \right\vert -2\beta }{2}}}}\frac{\left( {s}%
^{\prime }\right) ^{4M+2}}{{{s}^{4M+2-\gamma }}} \\
&\leq &A\left( {\frac{\left( {t^{\prime }}\right) ^{2}}{{t}^{2}}}\right) ^{%
\frac{4M+2}{2}}\left( {\frac{{s^{\prime }}}{{s}}}\right) ^{4M+2}\frac{\left( 
{t}^{2}\right) ^{\frac{4M+2}{2}}}{\left( {t}^{2}{+}\left\vert {z}\right\vert
^{2}+\left\vert u\right\vert \right) {^{\frac{Q+4M+2}{2}}}}\frac{s^{4M+2}}{%
\left( s+\left\vert v\right\vert \right) ^{1+4M+2}}.
\end{eqnarray*}
\end{enumerate}

\bigskip

\textbf{Case (2)}: In this case we have $t\geq t^{\prime }$ and $s<s^{\prime
}$ and we use vanishing moments for both $\Psi $ and $\Phi $ to obtain%
\begin{eqnarray*}
&&\Psi _{t,s}\ast _{\mathbb{H}^{n}\times \mathbb{R}}\Phi _{t^{\prime
},s^{\prime }}\left( \left( z,u\right) ,v\right) \\
&=&\int_{\mathbb{H}^{n}\times \mathbb{R}}\left\{ \left[ \Psi _{t,s}\left(
\left( z,u\right) \circ \left( z^{\prime },u^{\prime }\right)
^{-1},v-v^{\prime }\right) -\sum_{\left\vert \alpha \right\vert +2\beta \leq
2M}c_{\alpha ,\beta }\partial _{z}^{\alpha }\partial _{u}^{\beta }\Psi
_{t,s}\left( \left( z,u\right) ,v-v^{\prime }\right) \left[ \left( z^{\prime
},u^{\prime }\right) ^{-1}\right] ^{\left( \alpha ,\beta \right) }\right]
\right. \\
&&\ \ \ \ \ \ \ \ \ \ \ \ \ \ \ \ \ \ \ \ \left. \times \left[ \Phi
_{t^{\prime },s^{\prime }}\left( \left( z^{\prime },u^{\prime }\right)
,v^{\prime }\right) -\sum_{\gamma \leq M}c_{\gamma }\partial _{v}^{\gamma
}\Phi _{t^{\prime },s^{\prime }}\left( \left( z^{\prime },u^{\prime }\right)
,v\right) \left( v^{\prime }-v\right) ^{\gamma }\right] \right\} dz^{\prime
}du^{\prime }dv^{\prime }.
\end{eqnarray*}%
As in Case (1) we write the integral above as $I_{1}+I_{2}+I_{3}+I_{4}$
where $I_{j}$ denotes integration taken over the set $W_{j}$ as in (\ref{def
W}). Recall that $\mathbb{H}^{n}\times \mathbb{R}=\dbigcup%
\limits_{j=1}^{4}W_{j}$.

Using the smoothness conditions on both $\Psi $ and $\Phi $ we obtain
similar to Case (1) that%
\begin{eqnarray*}
\left\vert I_{1}\right\vert &\lesssim &A\int_{W_{1}}\frac{\left\vert \left(
z^{\prime },u^{\prime }\right) \right\vert ^{\frac{2M+1}{2}}}{\left(
t^{2}+\left\vert z\right\vert ^{2}+\left\vert u\right\vert \right) ^{\frac{%
2M+1}{2}}}\frac{\left( t^{2}\right) ^{\frac{4M+2}{2}}}{\left(
t^{2}+\left\vert z\right\vert ^{2}+\left\vert u\right\vert \right) ^{\frac{%
Q+4M+2}{2}}}\frac{s^{4M+2}}{\left( s+\left\vert v-v^{\prime }\right\vert
\right) ^{1+4M+2}} \\
&&\times \frac{\left( \left( {t}^{\prime }\right) ^{2}\right) ^{\frac{4M+1}{2%
}}}{\left( \left( {t}^{\prime }\right) ^{2}{+}\left\vert {z}^{\prime
}\right\vert ^{2}+\left\vert u^{\prime }\right\vert \right) {^{\frac{Q+4M+2}{%
2}}}}\frac{\left\vert v^{\prime }-v\right\vert ^{M+1}}{\left( s+\left\vert
v\right\vert \right) ^{M+1}}\frac{\left( s^{\prime }\right) ^{4M+2}}{\left( {%
s}^{\prime }{+}\left\vert {v}\right\vert \right) {^{1+4M+2}}}dz^{\prime
}du^{\prime }dv^{\prime } \\
&\lesssim &A\left( {\frac{{t^{\prime }}}{{t}}}\right) ^{\frac{4M+1}{2}%
}\left( {\frac{{s^{\prime }}}{{s}}}\right) ^{M+1}\frac{\left( {t}^{2}\right)
^{2M+1}}{\left( {t}^{2}{+}\left\vert {z}\right\vert ^{2}+\left\vert
u\right\vert \right) {^{n+1+2M+1}}}\frac{\left( {s}^{\prime }\right) ^{4M+2}%
}{\left( {s}^{\prime }{+}\left\vert {v}\right\vert \right) {^{1+4M+2}}},
\end{eqnarray*}%
which is dominated by the right side of (\ref{product est}) as required.

For $I_{2}$ we have%
\begin{eqnarray*}
\left\vert I_{2}\right\vert &\leq &\int_{W_{2}}\left\vert \Psi _{t,s}\left(
\left( z,u\right) \circ \left( z^{\prime },u^{\prime }\right)
^{-1},v-v^{\prime }\right) -\sum_{\left\vert \alpha \right\vert +2\beta \leq
2M}c_{\alpha ,\beta }\partial _{z}^{\alpha }\partial _{u}^{\beta }\Psi
_{t,s}\left( \left( z,u\right) ,v-v^{\prime }\right) \left[ \left( z^{\prime
},u^{\prime }\right) ^{-1}\right] ^{\left( \alpha ,\beta \right) }\right\vert
\\
&&\times \left\{ \left\vert \Phi _{t^{\prime },s^{\prime }}\left( \left(
z^{\prime },u^{\prime }\right) ,v^{\prime }\right) \right\vert +\left\vert
\sum_{\gamma \leq M}c_{\gamma }\partial _{v}^{\gamma }\Phi _{t^{\prime
},s^{\prime }}\left( \left( z^{\prime },u^{\prime }\right) ,v\right) \left(
v^{\prime }-v\right) ^{\gamma }\right\vert \right\} dz^{\prime }du^{\prime
}dv^{\prime } \\
&\leq &A\int_{W_{2}}\frac{\left\vert \left( z^{\prime },u^{\prime }\right)
\right\vert ^{\frac{2M+1}{2}}}{\left( t^{2}+\left\vert z\right\vert
^{2}+\left\vert u\right\vert \right) ^{\frac{Q+4M+2}{2}}}\frac{\left(
t^{2}\right) ^{\frac{4M+2}{2}}}{\left( t^{2}+\left\vert z\right\vert
^{2}+\left\vert u\right\vert \right) ^{\frac{Q+4M+2}{2}}}\frac{s^{4M+2}}{%
\left( s+\left\vert v-v^{\prime }\right\vert \right) ^{1+4M+2}} \\
&&\times \frac{\left( \left( {t}^{\prime }\right) ^{2}\right) ^{\frac{4M+2}{2%
}}}{\left( \left( {t}^{\prime }\right) ^{2}{+}\left\vert {z}^{\prime
}\right\vert ^{2}+\left\vert u^{\prime }\right\vert \right) {^{\frac{Q+4M+2}{%
2}}}}\frac{\left( s^{\prime }\right) ^{4M+2}}{\left( {s}^{\prime }{+}%
\left\vert {v}\right\vert \right) {^{1+4M+2}}}dz^{\prime }du^{\prime
}dv^{\prime } \\
&&+A\int_{W_{2}}\frac{\left\vert \left( z^{\prime },u^{\prime }\right)
\right\vert ^{\frac{2M+1}{2}}}{\left( t^{2}+\left\vert z\right\vert
^{2}+\left\vert u\right\vert \right) ^{\frac{Q+4M+2}{2}}}\frac{\left(
t^{2}\right) ^{\frac{4M+2}{2}}}{\left( t^{2}+\left\vert z\right\vert
^{2}+\left\vert u\right\vert \right) ^{\frac{Q+4M+2}{2}}}\frac{s^{4M+2}}{%
\left( s+\left\vert v-v^{\prime }\right\vert \right) ^{1+4M+2}} \\
&&\times \sum_{\gamma \leq M}c_{\gamma }\frac{\left( \left( {t}^{\prime
}\right) ^{2}\right) ^{\frac{4M+2}{2}}}{\left( \left( {t}^{\prime }\right)
^{2}{+}\left\vert {z}^{\prime }\right\vert ^{2}+\left\vert u^{\prime
}\right\vert \right) {^{\frac{Q+4M+2}{2}}}}\frac{\left( s^{\prime }\right)
^{4M+2}\left\vert v-v^{\prime }\right\vert ^{\gamma }}{\left( {s}^{\prime }{+%
}\left\vert {v}\right\vert \right) {^{1+4M+2+\gamma }}}dz^{\prime
}du^{\prime }dv^{\prime },
\end{eqnarray*}%
and since $\left\vert v-v^{\prime }\right\vert \geq \frac{1}{2}\left( {s}%
^{\prime }{+}\left\vert {v}\right\vert \right) $, this is dominated by%
\begin{eqnarray*}
&&A\int_{W_{2}}\left( {\frac{{t^{\prime }}}{{t}}}\right) ^{4M+2}\frac{\left( 
{t}^{2}\right) ^{2M+1}}{\left( {t}^{2}{+}\left\vert {z}\right\vert
^{2}+\left\vert u\right\vert \right) {^{n+1+2M+1}}}\frac{{s}^{4M+2}}{\left( {%
s}^{\prime }{+}\left\vert {v}\right\vert \right) {^{1+4M+2}}} \\
&&+A\sum_{\gamma \leq M}c_{\gamma }\left( {\frac{{t^{\prime }}}{{t}}}\right)
^{4M+2}\frac{\left( {t}^{2}\right) ^{2M+1}}{\left( {t}^{2}{+}\left\vert {z}%
\right\vert ^{2}+\left\vert u\right\vert \right) {^{n+1+2M+1}}}\frac{{s}%
^{4M+2}}{\left( {s}^{\prime }{+}\left\vert {v}\right\vert \right) {^{1+4M+2}}%
} \\
&\leq &A\sum_{\gamma \leq M}c_{\gamma }\left( {\frac{{t^{\prime }}}{{t}}}%
\right) ^{4M+2}\left( {\frac{{s}}{{s^{\prime }}}}\right) ^{4M+2}\frac{\left( 
{t}^{2}\right) ^{2M+1}}{\left( {t}^{2}{+}\left\vert {z}\right\vert
^{2}+\left\vert u\right\vert \right) {^{n+1+2M+1}}}\frac{\left( {s}^{\prime
}\right) ^{4M+2}}{\left( {s}^{\prime }{+}\left\vert {v}\right\vert \right) {%
^{1+4M+2}}} \\
&\leq &A\left( {\frac{{t^{\prime }}}{{t}}}\right) ^{4M+2}\left( {\frac{{s}}{{%
s^{\prime }}}}\right) ^{3M+2}\frac{\left( {t}^{2}\right) ^{2M+1}}{\left( {t}%
^{2}{+}\left\vert {z}\right\vert ^{2}+\left\vert u\right\vert \right) {%
^{n+1+2M+1}}}\frac{\left( {s}^{\prime }\right) ^{3M+2}}{\left( {s}^{\prime }{%
+}\left\vert {v}\right\vert \right) {^{1+3M+2}}}.
\end{eqnarray*}

For $I_{3}$ we write%
\begin{eqnarray*}
\left\vert I_{3}\right\vert &\leq &\int_{W_{3}}\left\{ \left\vert \Psi
_{t,s}\left( \left( z,u\right) \circ \left( z^{\prime },u^{\prime }\right)
^{-1},v-v^{\prime }\right) \right\vert +\left\vert \sum_{\left\vert \alpha
\right\vert +2\beta \leq 2M}c_{\alpha ,\beta }\partial _{z}^{\alpha
}\partial _{u}^{\beta }\Psi _{t,s}\left( \left( z,u\right) ,v-v^{\prime
}\right) \left[ \left( z^{\prime },u^{\prime }\right) ^{-1}\right] ^{\left(
\alpha ,\beta \right) }\right\vert \right\} \\
&&\times \left\vert \Phi _{t^{\prime },s^{\prime }}\left( \left( z^{\prime
},u^{\prime }\right) ,v^{\prime }\right) -\sum_{\gamma \leq M}c_{\gamma
}\partial _{v}^{\gamma }\Phi _{t^{\prime },s^{\prime }}\left( \left(
z^{\prime },u^{\prime }\right) ,v\right) \left( v^{\prime }-v\right)
^{\gamma }\right\vert dz^{\prime }du^{\prime }dv^{\prime },
\end{eqnarray*}%
and using the size condition on $\Psi $ and the smoothness condition on $%
\Phi $ we get%
\begin{eqnarray*}
\left\vert I_{3}\right\vert &\leq &A\int_{W_{3}}\frac{\left( t^{2}\right) ^{%
\frac{4M+2}{2}}}{\left( t^{2}+\left\vert \left( z,u\right) \circ \left(
z^{\prime },u^{\prime }\right) ^{-1}\right\vert \right) ^{\frac{Q+4M+2}{2}}}%
\frac{s^{4M+2}}{\left( s+\left\vert v-v^{\prime }\right\vert \right)
^{1+4M+2}} \\
&&\times \frac{\left( \left( {t}^{\prime }\right) ^{2}\right) ^{\frac{4M+2}{2%
}}}{\left( \left( {t}^{\prime }\right) ^{2}{+}\left\vert {z}^{\prime
}\right\vert ^{2}+\left\vert u^{\prime }\right\vert \right) {^{\frac{Q+4M+2}{%
2}}}}\frac{\left\vert v-v^{\prime }\right\vert ^{M+1}}{\left( {s}^{\prime }{+%
}\left\vert {v}\right\vert \right) ^{M+1}}\frac{\left( s^{\prime }\right)
^{4M+2}}{\left( {s}^{\prime }{+}\left\vert {v}\right\vert \right) {^{1+4M+2}}%
}dz^{\prime }du^{\prime }dv^{\prime } \\
&&+A\int_{W_{3}}\sum_{\left\vert \alpha \right\vert +2\beta \leq
2M}c_{\alpha ,\beta }\frac{\left( t^{2}\right) ^{\frac{4M+2}{2}}\left\vert
\left( z^{\prime },u^{\prime }\right) \right\vert ^{\left\vert \alpha
\right\vert +2\beta }}{\left( {t}^{2}{+}\left\vert {z}\right\vert
^{2}+\left\vert u\right\vert \right) ^{\frac{Q+4M+2+\left\vert \alpha
\right\vert +2\beta }{2}}}\frac{s^{4M+2}}{\left( s+\left\vert v-v^{\prime
}\right\vert \right) ^{1+4M+2}} \\
&&\frac{\left( \left( {t}^{\prime }\right) ^{2}\right) ^{\frac{4M+2}{2}}}{%
\left( \left( {t}^{\prime }\right) ^{2}{+}\left\vert {z}^{\prime
}\right\vert ^{2}+\left\vert u^{\prime }\right\vert \right) {^{\frac{Q+4M+2}{%
2}}}}\frac{\left\vert v-v^{\prime }\right\vert ^{M+1}}{\left( {s}^{\prime }{+%
}\left\vert {v}\right\vert \right) ^{M+1}}\frac{\left( s^{\prime }\right)
^{4M+2}}{\left( {s}^{\prime }{+}\left\vert {v}\right\vert \right) {^{1+4M+2}}%
}dz^{\prime }du^{\prime }dv^{\prime }.
\end{eqnarray*}%
Using the fact that on\thinspace $W_{3}$ we have%
\begin{equation*}
\left\vert \left( z^{\prime },u^{\prime }\right) \right\vert \geq \frac{1}{2}%
\left( {t}^{2}{+}\left\vert {z}\right\vert ^{2}+\left\vert u\right\vert
\right) ^{\frac{1}{2}},
\end{equation*}%
we have%
\begin{eqnarray*}
\left\vert I_{3}\right\vert &\leq &A\frac{\left( \left( {t}^{\prime }\right)
^{2}\right) ^{\frac{4M+2}{2}}}{\left( \left( {t}^{\prime }\right) ^{2}{+}%
\left\vert {z}^{\prime }\right\vert ^{2}+\left\vert u^{\prime }\right\vert
\right) {^{\frac{Q+4M+2}{2}}}}\left( \frac{s}{s^{\prime }}\right) ^{M+1}%
\frac{\left( s^{\prime }\right) ^{4M+2}}{\left( {s}^{\prime }{+}\left\vert {v%
}\right\vert \right) {^{1+4M+2}}} \\
&&+A\frac{\left( t^{2}\right) ^{\frac{4M+2}{2}}}{\left( {t}^{2}{+}\left\vert 
{z}\right\vert ^{2}+\left\vert u\right\vert \right) ^{\frac{%
Q+4M+2+\left\vert \alpha \right\vert +2\beta }{2}}}\left( \frac{s}{s^{\prime
}}\right) ^{M+1}\frac{\left( s^{\prime }\right) ^{4M+2}}{\left( {s}^{\prime }%
{+}\left\vert {v}\right\vert \right) {^{1+4M+2}}}\frac{\left( \left( {t}%
^{\prime }\right) ^{2}\right) ^{\frac{4M+2}{2}}}{\left( \left( {t}^{\prime
}\right) ^{2}{+}\left\vert {z}^{\prime }\right\vert ^{2}+\left\vert
u^{\prime }\right\vert \right) {^{\frac{Q+4M+2-\left\vert \alpha \right\vert
-2\beta }{2}}}} \\
&\leq &A\left( {\frac{{t^{\prime }}}{{t}}}\right) ^{4M+2}\left( {\frac{{s}}{{%
s^{\prime }}}}\right) ^{M+1}\frac{\left( {t}^{2}\right) ^{2M+1}}{\left( {t}%
^{2}{+}\left\vert {z}\right\vert ^{2}+\left\vert u\right\vert \right) {%
^{n+1+2M+1}}}\frac{\left( {s}^{\prime }\right) ^{4M+2}}{\left( {s}^{\prime }{%
+}\left\vert {v}\right\vert \right) {^{1+4M+2}}}.
\end{eqnarray*}

Finally, for term $I_{4}$, we only use size conditions on both $\Psi $ and $%
\Phi $ to get%
\begin{eqnarray*}
\left\vert I_{4}\right\vert &\leq &\int_{W_{4}}\left\{ \left\vert \Psi
_{t,s}\left( \left( z,u\right) \circ \left( z^{\prime },u^{\prime }\right)
^{-1},v-v^{\prime }\right) \right\vert +\left\vert \sum_{\left\vert \alpha
\right\vert +2\beta \leq 2M}c_{\alpha ,\beta }\partial _{z}^{\alpha
}\partial _{u}^{\beta }\Psi _{t,s}\left( \left( z,u\right) ,v-v^{\prime
}\right) \left[ \left( z^{\prime },u^{\prime }\right) ^{-1}\right] ^{\left(
\alpha ,\beta \right) }\right\vert \right\} \\
&&\times \left\{ \left\vert \Phi _{t^{\prime },s^{\prime }}\left( \left(
z^{\prime },u^{\prime }\right) ,v^{\prime }\right) \right\vert +\left\vert
\sum_{\gamma \leq M}c_{\gamma }\partial _{v}^{\gamma }\Phi _{t^{\prime
},s^{\prime }}\left( \left( z^{\prime },u^{\prime }\right) ,v\right) \left(
v^{\prime }-v\right) ^{\gamma }\right\vert \right\} dz^{\prime }du^{\prime
}dv^{\prime }.
\end{eqnarray*}%
There are thus four terms after multiplying out, and each of these four
terms is handled in the same way that the corresponding four terms for $%
I_{4} $ in \emph{Case (1)} were handled.

\bigskip

There are corresponding orthogonality estimates that can now be obtained for
component functions on $\mathbb{H}^{n}$.

\begin{lemma}
\label{almostortho}Suppose $\Psi ,\Phi \in \mathcal{M}_{product}^{2M}\left( 
\mathbb{H}^{n}\times \mathbb{R}\right) $ and let $\left\{ \psi
_{t,s}\right\} _{t,s>0}$ and $\left\{ \phi _{t,s}\right\} _{t,s>0}$ be the
associated collections of component functions as defined in Definition \ref%
{10} and (\ref{Psi}) above. Then there exists a constant $C=C(M)$ depending
only on $M$ such that if $\left( {t}\vee {t^{\prime }}\right) ^{2}\leq {s}%
\vee {s^{\prime }},$ then 
\begin{eqnarray}
\left\vert \psi _{t,s}\ast _{\mathbb{H}^{n}}\phi _{t^{\prime },s^{\prime
}}(z,u)\right\vert &\leq &C\left( {\frac{{t}}{{t^{\prime }}}}\wedge {\frac{{%
t^{\prime }}}{{t}}}\right) ^{2M}\left( {\frac{{s}}{{s^{\prime }}}}\wedge {%
\frac{{s^{\prime }}}{{s}}}\right) ^{M}  \label{prod est} \\
&&\times {\frac{\left( {{t}\vee {t^{\prime }}}\right) ^{2M}}{\left( {{t}\vee 
{t^{\prime }}+}\left\vert {z}\right\vert \right) {^{2n+2M}}}}{\frac{\left( {{%
s}\vee {s^{\prime }}}\right) ^{M}}{\left( {{s}\vee {s^{\prime }}+}\left\vert 
{u}\right\vert \right) {^{1+M}}}},  \notag
\end{eqnarray}%
and if $\left( {t}\vee {t^{\prime }}\right) ^{2}\geq {s}\vee {s^{\prime }},$
then 
\begin{eqnarray}
\left\vert \psi _{t,s}\ast \phi _{t^{\prime },s^{\prime }}(z,u)\right\vert
&\leq &C\left( {\frac{{t}}{{t^{\prime }}}}\wedge {\frac{{t^{\prime }}}{{t}}}%
\right) ^{M}\left( {\frac{{s}}{{s^{\prime }}}}\wedge {\frac{{s^{\prime }}}{{s%
}}}\right) ^{M}  \label{one est} \\
&&\times {\frac{\left( {{t}\vee {t^{\prime }}}\right) ^{M}}{\left( {{t}\vee {%
t^{\prime }}+}\left\vert {z}\right\vert \right) {^{2n+M}}}}{\frac{\left( {{t}%
\vee {t^{\prime }}}\right) ^{M}}{\left( {{t}\vee {t^{\prime }}+}\sqrt{%
\left\vert {u}\right\vert }\right) {^{2+2M}}}}.  \notag
\end{eqnarray}
\end{lemma}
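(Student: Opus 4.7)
The plan is to reduce the flag estimate to the product almost-orthogonality of Lemma~\ref{almostorthoproduct} by integrating out the auxiliary $\mathbb{R}$-variable introduced by the lift. Writing $T = t \vee t'$, $\tau = t \wedge t'$, $S = s \vee s'$, $\sigma = s \wedge s'$, the intertwining identity of Lemma~\ref{project} gives
$$\psi_{t,s} \ast_{\mathbb{H}^n} \phi_{t',s'}(z,u) = \int_{\mathbb{R}} \bigl(\Psi_{t,s} \ast_{\mathbb{H}^n \times \mathbb{R}} \Phi_{t',s'}\bigr)\bigl((z, u-v), v\bigr)\, dv,$$
so Lemma~\ref{almostorthoproduct} reduces everything to bounding
$$\mathcal{I}(z,u) \equiv \int_{\mathbb{R}} \frac{T^{4M+2}}{(T^2 + |z|^2 + |u-v|)^{(Q+4M+2)/2}} \cdot \frac{S^{4M+2}}{(S+|v|)^{1+4M+2}}\, dv,$$
multiplied by the prefactor $(\tau/T)^{2M+1}(\sigma/S)^{M+1}$ which can freely be weakened to $(\tau/T)^{2M}(\sigma/S)^{M}$ (for \eqref{prod est}) or $(\tau/T)^{M}(\sigma/S)^{M}$ (for \eqref{one est}) with room to spare.

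To evaluate $\mathcal{I}$ I would split $\mathbb{R}$ at $|v| = \tfrac{1}{2}(S+|u|)$. On $\{|v| \leq \tfrac{1}{2}(S+|u|)\}$, one has $|u-v| \leq \tfrac{3}{2}(S+|u|)$, so the first factor can be bounded by replacing $|u-v|$ by $|u|+S$, while the $v$-integral of the second factor contributes at most $O(1)$. On the complementary region $|v| \gtrsim S+|u|$, the $v$-decay of the second factor is absolutely integrable and yields the same bound. Combining, one obtains
$$\mathcal{I}(z,u) \;\lesssim\; \frac{T^{4M+2}}{(T^2 + |z|^2 + |u| + S)^{(Q+4M+2)/2}},$$
possibly modulo trading a few powers of $T/(T+|z|)$ or $S/(S+|u|)$ using the plentiful decay available.

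From this unified bound the two cases split cleanly by comparing $T^2$ to $S$. In the vertical-rectangle regime $T^2 \leq S$, the denominator is dominated by $S + T^2 + |z|^2 \approx (T+|z|)^2 + S$, and since $S$ controls the $u$-scale one factors the bound as a product of a Heisenberg-$z$ piece $T^{2M}/(T+|z|)^{2n+2M}$ and an $\mathbb{R}$-$u$ piece $S^M/(S+|u|)^{1+M}$, giving \eqref{prod est}. In the Heisenberg-cube regime $T^2 \geq S$, the term $T^2$ dominates $S$, and the full automorphic Heisenberg decay at scale $T$ (with $|u|$ measured by $\sqrt{|u|}$) reappears, yielding \eqref{one est} after separating $|z|$ and $\sqrt{|u|}$ contributions.

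The main obstacle will be the bookkeeping of exponents: the product estimate supplies decay of order $4M+2$ in the Heisenberg direction and $1+4M+2$ in the $v$-direction, but projection costs at least one power of $(S+|u|)^{-1}$, and the split of the Heisenberg denominator into separate $z$- and $u$-factors costs additional orders. I must verify that after these losses the residual decay in $u$ is still of order $1+M$ in the regime $T^2 \leq S$ and of order $2+2M$ in $\sqrt{|u|}$ in the regime $T^2 \geq S$. In the latter case an additional subtlety arises because the integrand contains $|u-v|$ linearly while the target contains $\sqrt{|u|}$ quadratically; this is resolved by exploiting $S \leq T^2$ to replace the $v$-kernel by a Heisenberg kernel at scale $T^2$ before performing the $v$-integration, after which the usual convolution-type bound on the Heisenberg group delivers the claimed decay in $\sqrt{|u|}$.
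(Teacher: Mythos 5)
Your initial reduction is exactly the paper's: project via Lemma~\ref{project}, apply Lemma~\ref{almostorthoproduct}, and control the remaining $v$-integral $\mathcal{I}(z,u)$. The gap is the claimed ``unified bound''
\[
\mathcal{I}(z,u)\;\lesssim\;\frac{T^{4M+2}}{\left(T^{2}+|z|^{2}+|u|+S\right)^{(Q+4M+2)/2}},
\]
which is false. Already at $z=0,\ u=0$ and with $T^{2}\leq S$, the contribution from $|v|\lesssim T^{2}$ alone gives
\[
\mathcal{I}(0,0)\;\gtrsim\;\int_{|v|\leq T^{2}}\frac{T^{4M+2}}{T^{2(n+2M+2)}}\cdot\frac{1}{S}\,dv\;\approx\;\frac{1}{T^{2n}\,S},
\]
whereas your claimed bound gives $T^{4M+2}/S^{n+2M+2}$, which for $S\gg T^{2}$ is vastly smaller; so your ``unified bound'' is not an upper bound. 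The error in deriving it is a direction error: on $\{|v|\leq\tfrac12(S+|u|)\}$ you observe $|u-v|\leq\tfrac32(S+|u|)$, but since the Heisenberg factor is \emph{decreasing} in $|u-v|$, an upper bound on $|u-v|$ only makes that factor \emph{larger}, not smaller. In particular $v=u$ lies in your region whenever $|u|\leq S$, and there $|u-v|=0$, so the factor cannot be replaced by its value at $|u-v|=|u|+S$. The correct output in the product regime $T^{2}\leq S$ is inherently \emph{anisotropic} --- one must keep only $\mathbb{R}$-type decay $S^{M}/(S+|u|)^{1+M}\approx 1/S$ in $u$ together with Heisenberg decay at scale $T$ in $z$, which is precisely what \eqref{prod est} asserts; no isotropic bound of the form you wrote can interpolate between this and \eqref{one est}.

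What the paper does instead is a four-way case split on the \emph{output} variable: (i) $T^{2}\leq S$ and $|u|\geq S$, where the $v$-factor acts like an approximate identity at scale $S$ and one gets $T^{4M}/(T^{2}+|z|^{2}+|u|)^{n+1+2M}$, then factors it using $T^{2}\leq S\leq|u|$; (ii) $T^{2}\leq S$ and $|u|\leq S$, where the $v$-factor is bounded crudely by $1/S$ and the Heisenberg integral in $v$ is performed, giving the $1/(T^{2n}S)$-type bound; (iii)--(iv) $T^{2}\geq S$, split by $|u|\lessgtr T^{2}$, where the $v$-integration again reduces to the Heisenberg scale $T^{2}$ and one factors $(T^{2}+|z|^{2}+|u|)^{n+1+2M}$ into $(T^{2}+|z|^{2})^{n+M}(T^{2}+|u|)^{1+M}\approx(T+|z|)^{2n+2M}(T+\sqrt{|u|})^{2+2M}$. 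You should abandon the $v$-threshold split and the isotropic intermediate bound, and instead case-split on $|u|$ versus $S$ or $T^{2}$ after integrating out $v$, carrying the two product factors separately throughout.
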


Roughly speaking, $\psi _{t,s}\ast \phi _{t^{\prime },s^{\prime }}(z,u)$
satisfies the \emph{product multi-parameter} almost orthogonality when $%
\left( {t}\vee {t^{\prime }}\right) ^{2}\leq {s}\vee {s^{\prime }}$ and the 
\emph{one-parameter} almost orthogonality when $\left( {t}\vee {t^{\prime }}%
\right) ^{2}\geq {s}\vee {s^{\prime }}$.

\bigskip

\textbf{Proof of Lemma \ref{almostortho}}: We will use the projection Lemma %
\ref{project} to pass from the orthogonality estimates for the product
dilations $\left\{ \Psi _{t,s}\right\} _{t,s>0}$ and $\left\{ \Phi
_{t,s}\right\} _{t,s>0}$ in Lemma \ref{almostorthoproduct} to the estimates
for the component functions $\left\{ \psi _{t,s}\right\} _{t,s>0}$ and $%
\left\{ \phi _{t,s}\right\} _{t,s>0}$\ in Lemma \ref{almostortho}.

From\ (\ref{product est}) and (\ref{projection commutes}) we obtain%
\begin{eqnarray}
&&\left\vert \psi _{t,s}\ast \phi _{t^{\prime },s^{\prime }}(z,u)\right\vert
=\left\vert \int_{\mathbb{R}}\Psi _{t,s}\ast _{\mathbb{H}^{n}\times \mathbb{R%
}}\Phi _{t^{\prime },s^{\prime }}\left( \left( z,u-v\right) ,v\right)
dv\right\vert  \label{integral} \\
&\lesssim &C\left( {\frac{{t}}{{t^{\prime }}}}\wedge {\frac{{t^{\prime }}}{{t%
}}}\right) ^{2M}\left( {\frac{{s}}{{s^{\prime }}}}\wedge {\frac{{s^{\prime }}%
}{{s}}}\right) ^{M}  \notag \\
&&\times \int_{\mathbb{R}}\frac{\left( {{t}\vee {t^{\prime }}}\right) ^{4M}}{%
\left( \left( {{t}\vee {t^{\prime }}}\right) ^{2}{+}\left\vert {z}%
\right\vert ^{2}+\left\vert u-v\right\vert \right) {^{n+1+2M}}}\frac{\left( {%
{s}\vee {s^{\prime }}}\right) ^{2M}}{\left( {{s}\vee {s^{\prime }}+}%
\left\vert {v}\right\vert \right) {^{1+2M}}}dv.  \notag
\end{eqnarray}%
Now we consider four cases separately.

\bigskip

\textbf{Case 1}: $\left( {{t}\vee {t^{\prime }}}\right) ^{2}\leq {{s}\vee {%
s^{\prime }}}$ and $\left\vert u\right\vert \geq {{s}\vee {s^{\prime }}}$.
In this case we use that 
\begin{equation}
\frac{\left( {{s}\vee {s^{\prime }}}\right) ^{2M}}{\left( {{s}\vee {%
s^{\prime }}+}\left\vert {v}\right\vert \right) {^{1+2M}}}=\frac{1}{{{s}\vee 
{s^{\prime }}}}\frac{1}{\left( {{1}+}\left\vert \frac{{v}}{{{s}\vee {%
s^{\prime }}}}\right\vert \right) {^{1+2M}}}  \label{s integral}
\end{equation}%
has integral roughly $1$, with essential support $\left[ -{{s}\vee {%
s^{\prime },s}\vee {s^{\prime }}}\right] $, to obtain%
\begin{eqnarray*}
&&\int_{\mathbb{R}}\frac{\left( {{t}\vee {t^{\prime }}}\right) ^{4M}}{\left(
\left( {{t}\vee {t^{\prime }}}\right) ^{2}{+}\left\vert {z}\right\vert
^{2}+\left\vert u-v\right\vert \right) {^{n+1+2M}}}\frac{\left( {{s}\vee {%
s^{\prime }}}\right) ^{2M}}{\left( {{s}\vee {s^{\prime }}+}\left\vert {v}%
\right\vert \right) {^{1+2M}}}dv \\
&\approx &\frac{\left( {{t}\vee {t^{\prime }}}\right) ^{4M}}{\left( \left( {{%
t}\vee {t^{\prime }}}\right) ^{2}{+}\left\vert {z}\right\vert
^{2}+\left\vert u\right\vert \right) {^{n+1+2M}}}\leq \frac{\left( {{t}\vee {%
t^{\prime }}}\right) ^{2M}}{\left( \left( {{t}\vee {t^{\prime }}}\right) ^{2}%
{+}\left\vert {z}\right\vert ^{2}\right) {^{n+M}}}\frac{\left( {{t}\vee {%
t^{\prime }}}\right) ^{2M}}{\left( \left( {{t}\vee {t^{\prime }}}\right)
^{2}+\left\vert u\right\vert \right) {^{1+M}}} \\
&\leq &\frac{\left( {{t}\vee {t^{\prime }}}\right) ^{2M}}{\left( \left( {{t}%
\vee {t^{\prime }}}\right) {+}\left\vert {z}\right\vert \right) {^{2n+2M}}}%
\frac{\left( {{s}\vee s{^{\prime }}}\right) ^{M}}{\left( {{s}\vee s{^{\prime
}}}+\left\vert u\right\vert \right) {^{1+M}}}.
\end{eqnarray*}%
Plugging this estimate into the right side of (\ref{integral}) leads to the
correct product estimate (\ref{prod est}) for this case.

\bigskip

\textbf{Case 2}: $\left( {{t}\vee {t^{\prime }}}\right) ^{2}\leq {{s}\vee {%
s^{\prime }}}$ and $\left\vert u\right\vert \leq {{s}\vee {s^{\prime }}}$.
In this case we bound the left side of (\ref{s integral}) by $\frac{1}{{{s}%
\vee {s^{\prime }}}}$ to obtain%
\begin{eqnarray*}
&&\int_{\mathbb{R}}\frac{\left( {{t}\vee {t^{\prime }}}\right) ^{4M}}{\left(
\left( {{t}\vee {t^{\prime }}}\right) ^{2}{+}\left\vert {z}\right\vert
^{2}+\left\vert u-v\right\vert \right) {^{n+1+2M}}}\frac{\left( {{s}\vee {%
s^{\prime }}}\right) ^{2M}}{\left( {{s}\vee {s^{\prime }}+}\left\vert {v}%
\right\vert \right) {^{1+2M}}}dv \\
&\lesssim &\frac{1}{{{s}\vee {s^{\prime }}}}\int_{\mathbb{R}}\frac{\left( {{t%
}\vee {t^{\prime }}}\right) ^{4M}}{\left( \left( {{t}\vee {t^{\prime }}}%
\right) ^{2}{+}\left\vert {z}\right\vert ^{2}+\left\vert u-v\right\vert
\right) {^{n+1+2M}}}dv \\
&\lesssim &\frac{1}{{{s}\vee {s^{\prime }}}}\frac{\left( {{t}\vee {t^{\prime
}}}\right) ^{4M}}{\left( \left( {{t}\vee {t^{\prime }}}\right) ^{2}{+}%
\left\vert {z}\right\vert ^{2}\right) {^{n+2M}}}\leq \frac{\left( {{t}\vee {%
t^{\prime }}}\right) ^{2M}}{\left( \left( {{t}\vee {t^{\prime }}}\right) {+}%
\left\vert {z}\right\vert \right) {^{2n+2M}}}\frac{\left( {{s}\vee s{%
^{\prime }}}\right) ^{M}}{\left( {{s}\vee s{^{\prime }}}+\left\vert
u\right\vert \right) {^{1+M}}}.
\end{eqnarray*}%
Plugging this estimate into the right side of (\ref{integral}) again leads
to the correct product estimate (\ref{prod est}) for this case.

\bigskip

\textbf{Case 3}: $\left( {{t}\vee {t^{\prime }}}\right) ^{2}\geq {{s}\vee {%
s^{\prime }}}$ and $\left\vert u\right\vert \leq \left( {{t}\vee t{^{\prime }%
}}\right) ^{2}$. In this case we have%
\begin{eqnarray*}
&&\int_{\mathbb{R}}\frac{\left( {{t}\vee {t^{\prime }}}\right) ^{4M}}{\left(
\left( {{t}\vee {t^{\prime }}}\right) ^{2}{+}\left\vert {z}\right\vert
^{2}+\left\vert u-v\right\vert \right) {^{n+1+2M}}}\frac{\left( {{s}\vee {%
s^{\prime }}}\right) ^{2M}}{\left( {{s}\vee {s^{\prime }}+}\left\vert {v}%
\right\vert \right) {^{1+2M}}}dv \\
&\lesssim &\frac{\left( {{t}\vee {t^{\prime }}}\right) ^{4M}}{\left( \left( {%
{t}\vee {t^{\prime }}}\right) ^{2}{+}\left\vert {z}\right\vert ^{2}\right) {%
^{n+1+2M}}}\lesssim \frac{\left( {{t}\vee {t^{\prime }}}\right) ^{2M}}{%
\left( \left( {{t}\vee {t^{\prime }}}\right) ^{2}{+}\left\vert {z}%
\right\vert ^{2}\right) {^{n+M}}}\frac{\left( {{t}\vee {t^{\prime }}}\right)
^{2M}}{\left( \left( {{t}\vee {t^{\prime }}}\right) ^{2}{+}\left\vert {u}%
\right\vert \right) {^{1+M}}} \\
&\approx &\frac{\left( {{t}\vee {t^{\prime }}}\right) ^{2M}}{\left( {{t}\vee 
{t^{\prime }}+}\left\vert {z}\right\vert \right) {^{2n+2M}}}\frac{\left( {{t}%
\vee {t^{\prime }}}\right) ^{2M}}{\left( {{t}\vee {t^{\prime }}+}\sqrt{%
\left\vert {u}\right\vert }\right) {^{2+2M}}}.
\end{eqnarray*}%
Plugging this estimate into the right side of (\ref{integral}) leads to the
correct one-parameter estimate (\ref{one est}) for this case.

\bigskip

\textbf{Case 4}: $\left( {{t}\vee {t^{\prime }}}\right) ^{2}\geq {{s}\vee {%
s^{\prime }}}$ and $\left\vert u\right\vert \geq \left( {{t}\vee t{^{\prime }%
}}\right) ^{2}$. In this case we have%
\begin{eqnarray*}
&&\int_{\mathbb{R}}\frac{\left( {{t}\vee {t^{\prime }}}\right) ^{4M}}{\left(
\left( {{t}\vee {t^{\prime }}}\right) ^{2}{+}\left\vert {z}\right\vert
^{2}+\left\vert u-v\right\vert \right) {^{n+1+2M}}}\frac{\left( {{s}\vee {%
s^{\prime }}}\right) ^{2M}}{\left( {{s}\vee {s^{\prime }}+}\left\vert {v}%
\right\vert \right) {^{1+2M}}}dv \\
&\lesssim &\frac{\left( {{t}\vee {t^{\prime }}}\right) ^{4M}}{\left( \left( {%
{t}\vee {t^{\prime }}}\right) ^{2}{+}\left\vert {z}\right\vert
^{2}+\left\vert u\right\vert \right) {^{n+1+2M}}}\lesssim \frac{\left( {{t}%
\vee {t^{\prime }}}\right) ^{2M}}{\left( \left( {{t}\vee {t^{\prime }}}%
\right) ^{2}{+}\left\vert {z}\right\vert ^{2}\right) {^{n+M}}}\frac{\left( {{%
t}\vee {t^{\prime }}}\right) ^{2M}}{\left( \left( {{t}\vee {t^{\prime }}}%
\right) ^{2}{+}\left\vert {u}\right\vert \right) {^{1+M}}} \\
&\approx &\frac{\left( {{t}\vee {t^{\prime }}}\right) ^{2M}}{\left( {{t}\vee 
{t^{\prime }}+}\left\vert {z}\right\vert \right) {^{2n+2M}}}\frac{\left( {{t}%
\vee {t^{\prime }}}\right) ^{2M}}{\left( {{t}\vee {t^{\prime }}+}\sqrt{%
\left\vert {u}\right\vert }\right) {^{2+2M}}}.
\end{eqnarray*}

Plugging this estimate into the right side of (\ref{integral}) again leads
to the correct one-parameter estimate (\ref{one est}).

\subsubsection{Proof of the Plancherel-P\^{o}lya inequalities}

Before we prove the Plancherel-P\^{o}lya-type inequality in Theorem \ref{P-P}%
, we first prove the following lemma. We will often use the notation $\left(
x_{I},y_{J}\right) $ in place of $\left( z_{I},u_{J}\right) $ for the center
of the dyadic rectangle $I\times J$ in $\mathbb{H}^{n}$, i.e. we write $x$
in place of $z$, and $y$ in place of $u$.

\begin{lemma}
\label{7}Let $I\times J$ and $I^{\prime }\times J^{\prime }$ be dyadic
rectangles in $\mathbb{H}^{n}$ such that $\ell (I)=2^{-j-N}$, $\ell
(J)=2^{-j-N}+2^{-k-N}$, $\ell (I^{\prime })=2^{-j^{\prime }-N}$ and $\ell
(J^{\prime })=2^{-j^{\prime }-N}+2^{-k^{\prime }-N}$. Thus for any $\left(
z,u\right) $ and $\left( z^{\ast },u^{\ast }\right) $ in $\mathbb{H}^{n}$,
we have when $j\wedge j^{\prime }\geq k\wedge k^{\prime }$ 
\begin{eqnarray*}
&&\sum\limits_{I^{\prime },J^{\prime }}{\frac{{2^{-|j-{j^{\prime }}|L_{1}-|k-%
{k^{\prime }}|L_{2}}2^{-(j\wedge j^{\prime })K_{1}-(k\wedge k^{\prime
})K_{2}}|I^{\prime }||J^{\prime }|}}{{\left( 2^{-j\wedge j^{\prime
}}+|z-x_{I^{\prime }}|\right) ^{2n+K_{1}}\left( 2^{-k\wedge k^{\prime
}}+|u-y_{J^{\prime }}|\right) ^{1+K_{2}}}}}\cdot |\phi _{{j^{\prime }},{%
k^{\prime }}}\ast f\left( x_{I^{\prime }},y_{J^{\prime }}\right) | \\
&\leq &C_{1}(N,r,j,j^{\prime },k,k^{\prime })2^{-|j-{j^{\prime }}|L_{1}} \\
&&\times 2^{-|k-{k^{\prime }}|L_{2}}\left\{ M_{S}\left[ \left(
\sum\limits_{J^{\prime }}\sum\limits_{I^{\prime }}|\phi _{{j^{\prime }},{%
k^{\prime }}}\ast f(x_{I^{\prime }},y_{J^{\prime }})|\chi _{J^{\prime }}\chi
_{I^{\prime }}\right) ^{r}\right] \right\} ^{{\frac{{1}}{{r}}}}(z^{\ast
},u^{\ast })
\end{eqnarray*}%
and when $j\wedge j^{\prime }\leq k\wedge k^{\prime }$ 
\begin{eqnarray*}
&&\sum\limits_{I^{\prime },J^{\prime }}{\frac{{2^{-|j-{j^{\prime }}|L_{1}-|k-%
{k^{\prime }}|L_{2}}2^{-(j\wedge j^{\prime })K_{1}-(j\wedge j^{\prime
})K_{2}}|I^{\prime }||J^{\prime }|}}{{\left( 2^{-j\wedge j^{\prime
}}+|z-x_{I^{\prime }}|\right) ^{2n+K_{1}}\left( 2^{-j\wedge j^{\prime
}}+|u-y_{J^{\prime }}|\right) ^{1+K_{2}}}}}|\phi _{{j^{\prime }},{k^{\prime }%
}}\ast f(x_{I^{\prime }},y_{J^{\prime }})| \\
&\leq &C_{2}(N,r,j,j^{\prime },k,k^{\prime })2^{-|j-{j^{\prime }}%
|L_{1}}2^{-|k-{k^{\prime }}|L_{2}} \\
&&\times \left\{ M\left[ \left( \sum\limits_{J^{\prime
}}\sum\limits_{I^{\prime }}|\phi _{{j^{\prime }},{k^{\prime }}}\ast
f(x_{I^{\prime }},y_{J^{\prime }})|\chi _{J^{\prime }}\chi _{I^{\prime
}}\right) ^{r}\right] \right\} ^{{\frac{{1}}{{r}}}}(z^{\ast },u^{\ast })
\end{eqnarray*}%
where $M$ is the Hardy-Littlewood maximal function on $\mathbb{H}^{n}$, $%
M_{S}$ is the strong maximal function on $\mathbb{H}^{n}$ as defined in
(1.1), and $\max \left\{ \frac{2n}{2n+K_{1}},\frac{1}{1+K_{2}}\right\} <r$
and 
\begin{equation*}
C_{1}(N,r,j,j^{\prime },k,k^{\prime })=2^{(\frac{1}{r}-1)N(2n+1)}\cdot
2^{[2n(j\wedge j^{\prime }-j^{\prime })+(k\wedge k^{\prime }-k^{\prime })](1-%
\frac{1}{r})}
\end{equation*}%
\begin{equation*}
C_{2}(N,r,j,j^{\prime },k,k^{\prime })=2^{(\frac{1}{r}-1)N(2n+1)}\cdot
2^{[2n(j\wedge j^{\prime }-j^{\prime })+(j\wedge j^{\prime }-j^{\prime
}\wedge k^{\prime })](1-\frac{1}{r})}.
\end{equation*}
\end{lemma}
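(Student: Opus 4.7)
The plan is to decompose the sum over $(I',J')$ into annular pieces about a reference point in $I\times J$, estimate the kernel and the cardinality on each piece, and then extract the maximal function by means of an $r$-sub-additivity inequality. Set $L=j\wedge j'$ and $K=k\wedge k'$, and for integers $m,p\ge 0$ group those $(I',J')$ in
$$A_{m,p}=\{(I',J'):|z-x_{I'}|\sim 2^{-L+m},\ |u-y_{J'}|\sim 2^{-K+p}\},$$
with the natural modification when $m=0$ or $p=0$. On $A_{m,p}$ the kernel factor is essentially $2^{2nL+K}\cdot 2^{-m(2n+K_1)-p(1+K_2)}$, and the cardinality is at most $|B_{m,p}|/(|I'||J'|)$, where $B_{m,p}$ is a Heisenberg rectangle of size $2^{-L+m}\times 2^{-K+p}$ centered at $(z^\ast,u^\ast)$; this centering is legitimate because $(z,u)$ and $(z^\ast,u^\ast)$ both lie in $I\times J$, which is much finer than the scale $2^{-L}\times 2^{-K}$.

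Next, write
$$G(z',u')=\sum_{I'',J''}|\phi_{j',k'}\ast f(x_{I''},y_{J''})|\,\chi_{I''\times J''}(z',u'),$$
so that at fixed scale $(j',k')$ the rectangles tile $\mathbb H^n$ disjointly and $G\equiv |\phi_{j',k'}\ast f(x_{I'},y_{J'})|$ on each $I'\times J'$. Since the $I'\times J'$ in $A_{m,p}$ are disjoint and lie in $B_{m,p}$,
$$\sum_{(I',J')\in A_{m,p}}|\phi_{j',k'}\ast f(x_{I'},y_{J'})|^r\cdot|I'||J'|=\int_{\bigcup(I'\times J')}G^r\le |B_{m,p}|\,M_S(G^r)(z^\ast,u^\ast).$$
Invoking sub-additivity $(\sum|c_i|)^r\le\sum|c_i|^r$ for $r\le 1$ and taking $r$-th roots gives
$$\sum_{A_{m,p}}|\phi_{j',k'}\ast f(x_{I'},y_{J'})|\le\Bigl(\tfrac{|B_{m,p}|}{|I'||J'|}\Bigr)^{1/r}\bigl\{M_S(G^r)(z^\ast,u^\ast)\bigr\}^{1/r}.$$

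Now multiply by $|I'||J'|$ and the kernel bound, and sum over $m,p\ge 0$. The geometric series in $m$ and $p$ converge precisely when $r>2n/(2n+K_1)$ and $r>1/(1+K_2)$, which is exactly the stated hypothesis on $r$; these sums contribute a bounded constant. What remains reorganizes as $2^{-|j-j'|L_1-|k-k'|L_2}$ times a bookkeeping factor whose exponent is exactly $(1/r-1)\bigl(2n(j'-j\wedge j')+(j'\wedge k'-k\wedge k')\bigr)+(1/r-1)N(2n+1)$, matching the advertised $C_1$. The two cases in the lemma differ only in what maximal function is needed: when $L\ge K$, the rectangle $B_{m,p}$ has genuinely independent scales and one requires the strong maximal function $M_S$; when $L\le K$, the kernel has one effective scale $2^{-L}$ in both variables, $B_{m,p}$ degenerates into a Heisenberg ball, and the ordinary Hardy–Littlewood maximal function $M$ suffices, which also explains why the exponent in $C_2$ replaces $k\wedge k'-k'$ by $j\wedge j'-j'\wedge k'$.

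The main obstacle is the bookkeeping: translating between the measures $|I'|$, $|J'|$ (which depend on whether $j'\lessgtr k'$), the Heisenberg quasi-distances $|z-x_{I'}|$, $|u-y_{J'}|$, the scales $L,K$, and the boundary regimes where $m=0$ or $p=0$, so that the final exponent matches $C_1$ or $C_2$ precisely. The non-Euclidean geometry (Heisenberg rectangles do not split as products) shows up only through the size $|B_{m,p}|\sim 2^{2n(-L+m)}\cdot 2^{-K+p}$ (strong case) or $\sim 2^{(2n+1)(-L+m)}$ (one-parameter case), which is ultimately the correct $M_S$ vs $M$ dichotomy.
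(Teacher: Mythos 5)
Your proposal is correct and takes essentially the same route as the paper: decompose over annuli in each variable, bound the kernel on each annulus, pass from the $\ell^1$ sum to the $\ell^r$ sum via $r$-subadditivity, re-express the latter as an integral of the piecewise-constant function over the enveloping rectangle, and dominate by the (strong or ordinary) maximal function evaluated at $(z^\ast,u^\ast)$, summing the resulting geometric series under the stated constraint $r>\max\{\tfrac{2n}{2n+K_1},\tfrac{1}{1+K_2}\}$. Your observation that the evaluation at $(z^\ast,u^\ast)$ is legitimate precisely because $(z,u)$ and $(z^\ast,u^\ast)$ live in the same fine cube $I\times J$, which is smaller than the coarse scales $2^{-j\wedge j'}$ and $2^{-k\wedge k'}$, is a point the paper leaves implicit and is worth having made explicit.
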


\begin{proof}
We set 
\begin{equation*}
A_{0}=\{I^{\prime }:\ell (I^{\prime })=2^{-j^{\prime }-N},\,\,\frac{%
|z-x_{I^{\prime }}|}{2^{-j\wedge j^{\prime }}}\leq 1\}
\end{equation*}%
\begin{equation*}
B_{0}=\{J^{\prime }:\ell (J^{\prime })=2^{-j^{\prime }-N}+2^{-k^{\prime
}-N},\,\,\frac{|u-y_{J^{\prime }}|}{2^{-k\wedge k^{\prime }}}\leq 1\}
\end{equation*}%
where $x_{I^{\prime }}\in I^{\prime }$ and $y_{J^{\prime }}\in J^{\prime }$,
and where for $\ell \geq 1$, $i\geq 1$ 
\begin{equation*}
A_{\ell }=\{I^{\prime }:\ell (I^{\prime })=2^{-j^{\prime }-N},\,\,2^{\ell
-1}<\frac{|z-x_{I^{\prime }}|}{2^{-j\wedge j^{\prime }}}\leq 2^{\ell }\}.
\end{equation*}%
\begin{equation*}
B_{i}=\{J^{\prime }:\ell (J^{\prime })=2^{-j^{\prime }-N}+2^{-k^{\prime
}-N},\,\,2^{i-1}<\frac{|u-y_{J^{\prime }}|}{2^{-k\wedge k^{\prime }}}\leq
2^{i}\}.
\end{equation*}%
We first consider the case when $j\wedge j^{\prime }\geq k\wedge k^{\prime }$%
, and let 
\begin{equation*}
\tau =[2n(j\wedge j^{\prime }-j^{\prime })+(k\wedge k^{\prime }-k^{\prime
})](1-\frac{1}{r}).
\end{equation*}%
Then 
\begin{eqnarray*}
&&\sum\limits_{I^{\prime },J^{\prime }}{\frac{{2^{-(j\wedge j^{\prime
})K_{1}-(k\wedge k^{\prime })K_{2}}|I^{\prime }||J^{\prime }|}}{{\left(
2^{-j\wedge j^{\prime }}+|z-x_{I^{\prime }}|\right) ^{2n+K_{1}}\left(
2^{-k\wedge k^{\prime }}+|u-y_{J^{\prime }}|\right) ^{1+K_{2}}}}}\cdot |\phi
_{j^{\prime },k^{\prime }}\ast f(x_{I^{\prime }},y_{J^{\prime }})| \\
&\leq &\sum\limits_{\ell ,i\geq 0}2^{-\ell
(2n+K_{1})}2^{-i(1+K_{2})}2^{-N(2n+1)}2^{(j\wedge j^{\prime }-j^{\prime
})n+(k\wedge k^{\prime }-k^{\prime })m}\sum\limits_{I^{\prime }\in A_{\ell
},J^{\prime }\in B_{i}}|\phi _{{j^{\prime }},{k^{\prime }}}\ast
f(x_{I^{\prime }},y_{J^{\prime }})| \\
&\leq &\sum\limits_{\ell ,i\geq 0}2^{-\ell
(n+K_{1})}2^{-i(m+K_{2})}2^{-N(n+m)}2^{(j\wedge j^{\prime }-j^{\prime
})2n+(k\wedge k^{\prime }-k^{\prime })}\left( \sum\limits_{I^{\prime }\in
A_{\ell },J^{\prime }\in B_{i}}\left\vert \phi _{{j^{\prime }},{k^{\prime }}%
}\ast f(x_{I^{\prime }},y_{J^{\prime }})\right\vert ^{r}\right) ^{\frac{1}{r}%
} \\
&=&\sum\limits_{\ell ,i\geq 0}2^{-\ell
(2n+K_{1})-i(1+K_{2})-N(2n+1)}2^{(j\wedge j^{\prime }-j^{\prime
})2n+(k\wedge k^{\prime }-k^{\prime })} \\
&&\times \left( \int_{\mathbb{H}^{n}}|I^{\prime }|^{-1}|J^{\prime
}|^{-1}\sum\limits_{I^{\prime }\in A_{\ell },J^{\prime }\in B_{i}}|\phi _{{%
j^{\prime }},{k^{\prime }}}\ast f(x_{I^{\prime }},y_{J^{\prime }})|^{r}\chi
_{I^{\prime }}\chi _{J^{\prime }}\right) ^{\frac{1}{r}} \\
&\leq &\sum\limits_{\ell ,i\geq 0}2^{-\ell (2n+K_{1}-\frac{2n}{r})-i(1+K_{2}-%
\frac{1}{r})+(\frac{1}{r}-1)N(2n+1)} \\
&&\times 2^{\tau }\left( M_{S}\left( \sum\limits_{I^{\prime }\in A_{\ell
},J^{\prime }\in B_{i}}|\phi _{{j^{\prime }},{k^{\prime }}}\ast
f(x_{I^{\prime }},y_{J^{\prime }})|^{r}\chi _{I^{\prime }}\chi _{J^{\prime
}}\right) (z^{\ast },u^{\ast })\right) ^{\frac{1}{r}} \\
&\leq &C_{1}(N,r,j,k,j^{\prime },k^{\prime })\left( M_{S}\left(
\sum\limits_{I^{\prime },J^{\prime }}|\phi _{{j^{\prime }},{k^{\prime }}%
}\ast f(x_{I^{\prime }},y_{J^{\prime }})|^{r}\chi _{I^{\prime }}\chi
_{J^{\prime }}\right) (z^{\ast },u^{\ast })\right) ^{\frac{1}{r}}
\end{eqnarray*}%
The last inequality follows from the assumption that $r>\max \{\frac{2n}{%
2n+K_{1}},\frac{1}{1+K_{2}}\}$ which can be achieved by choosing $%
K_{1},K_{2} $ large enough. The second inequality can be proved similarly.
\end{proof}

We now are ready to give the proof of the Plancherel-P\^{o}lya inequality.

\begin{proof}
\textbf{(of Theorem \ref{P-P}):} By Theorem \ref{discretethm}, $f\in 
\mathcal{M}_{flag}^{M+\delta }\left( \mathbb{H}^{n}\right) ^{\prime }$ can
be represented by 
\begin{equation*}
f(z,u)=\sum\limits_{j^{\prime }}\sum\limits_{k^{\prime
}}\sum\limits_{J^{\prime }}\sum\limits_{I^{\prime }}|J^{\prime }||I^{\prime
}|{\widetilde{\phi }}_{{j^{\prime }},{k^{\prime }}}\left( \left( z,u\right)
\circ \left( x_{I^{\prime }},y_{J^{\prime }}\right) ^{-1}\right) \left( \phi
_{{j^{\prime }},{k^{\prime }}}\ast f\right) (x_{I^{\prime }},y_{J^{\prime
}}).
\end{equation*}%
We write 
\begin{eqnarray*}
&&\left( \psi _{j,k}\ast f\right) (u,v) \\
&=&\sum\limits_{j^{\prime }}\sum\limits_{k^{\prime }}\sum\limits_{J^{\prime
}}\sum\limits_{I^{\prime }}|I^{\prime }||J^{\prime }|\left( \psi _{j,k}\ast {%
\widetilde{\phi }}_{{j^{\prime }},{k^{\prime }}}\left( \left( \cdot ,\cdot
\right) \circ \left( x_{I^{\prime }},y_{J^{\prime }}\right) ^{-1}\right)
\right) (z,u)\left( \phi _{{j^{\prime }},{k^{\prime }}}\ast f\right)
(x_{I^{\prime }},y_{J^{\prime }}).
\end{eqnarray*}

By the almost orthogonality estimates in Lemma \ref{almostortho}, and by
choosing $t=2^{-j}$, $s=2^{-k}$, $t^{\prime }=2^{-j^{\prime }}$, $s^{\prime
}=2^{-k^{\prime }}$, and for any given positive integers $%
L_{1},L_{2},K_{1},K_{2}$, we have if $j\wedge j^{\prime }\geq k\wedge
k^{\prime },$ 
\begin{eqnarray*}
&&\left\vert \left( \psi _{j,k}\ast {\widetilde{\phi }}_{{j^{\prime }},{%
k^{\prime }}}\left( \left( \cdot ,\cdot \right) \circ \left( x_{I^{\prime
}},y_{J^{\prime }}\right) ^{-1}\right) \right) (z,u)\right\vert \\
&\leq &{\frac{{2^{-|j-{j^{\prime }}|L_{1}-|k-{k^{\prime }}%
|L_{2}}2^{-(j\wedge j^{\prime })K_{1}-(k\wedge k^{\prime })K_{2}}|I^{\prime
}||J^{\prime }|}}{{\left( 2^{-j\wedge j^{\prime }}+|z-x_{I^{\prime
}}|\right) ^{2n+K_{1}}\left( 2^{-k\wedge k^{\prime }}+|u-y_{J^{\prime
}}|\right) ^{1+K_{2}}}}}|\phi _{{j^{\prime }},{k^{\prime }}}\ast
f(x_{I^{\prime }},y_{J^{\prime }})|,
\end{eqnarray*}%
and when $j\wedge j^{\prime }\leq k\wedge k^{\prime }$, we have 
\begin{eqnarray*}
&&\left\vert \left( \psi _{j,k}\ast {\widetilde{\phi }}_{{j^{\prime }},{%
k^{\prime }}}(\left( \cdot ,\cdot \right) \circ \left( x_{I^{\prime
}},y_{J^{\prime }}\right) ^{-1})\right) (z,u)\right\vert \\
&\leq &{\frac{{2^{-|j-{j^{\prime }}|L_{1}-|k-{k^{\prime }}%
|L_{2}}2^{-(j\wedge j^{\prime })K_{1}-(j\wedge j^{\prime })K_{2}}|I^{\prime
}||J^{\prime }|}}{{\left( 2^{-j\wedge j^{\prime }}+|z-x_{I^{\prime
}}|\right) ^{2n+K_{1}}\left( 2^{-j\wedge j^{\prime }}+|u-y_{J^{\prime
}}|\right) ^{1+K_{2}}}}}|\phi _{{j^{\prime }},{k^{\prime }}}\ast
f(x_{I^{\prime }},y_{J^{\prime }})|.
\end{eqnarray*}%
Using Lemma \ref{7}, for any $z,z^{\ast }\in I$, $x_{I^{\prime }}\in
I^{\prime }$, $u,u^{\ast }\in J$ and $y_{J^{\prime }}\in J^{\prime },$ we
have 
\begin{eqnarray*}
|\psi _{j,k}\ast f(z,u)| &\leq &C_{1}\sum_{j^{\prime },k^{\prime }:j\wedge
j^{\prime }\geq k\wedge k^{\prime }}2^{-|j-{j^{\prime }}|L_{1}}\cdot 2^{-|k-{%
k^{\prime }}|L_{2}} \\
&&\times \left\{ M_{S}\left[ \left( \sum\limits_{J^{\prime
}}\sum\limits_{I^{\prime }}|\phi _{{j^{\prime }},{k^{\prime }}}\ast
f(x_{I^{\prime }},y_{J^{\prime }})|\chi _{J^{\prime }}\chi _{I^{\prime
}}\right) ^{r}\right] \right\} ^{{\frac{{1}}{{r}}}}(z^{\ast },u^{\ast }) \\
&&+C_{2}\sum_{j^{\prime },k^{\prime }:j\wedge j^{\prime }\leq k\wedge
k^{\prime }}2^{-|j-{j^{\prime }}|L_{1}}\cdot 2^{-|k-{k^{\prime }}|L_{2}} \\
&&\times \left\{ M\left[ \left( \sum\limits_{J^{\prime
}}\sum\limits_{I^{\prime }}|\phi _{{j^{\prime }},{k^{\prime }}}\ast
f(x_{I^{\prime }},y_{J^{\prime }})|\chi _{J^{\prime }}\chi _{I^{\prime
}}\right) ^{r}\right] \right\} ^{{\frac{{1}}{{r}}}}(z^{\ast },u^{\ast }) \\
&\leq &C\sum_{j^{\prime },k^{\prime }}2^{-|j-{j^{\prime }}|L_{1}}\cdot
2^{-|k-{k^{\prime }}|L_{2}} \\
&&\times \left\{ M_{S}\left[ \left( \sum\limits_{J^{\prime
}}\sum\limits_{I^{\prime }}|\phi _{{j^{\prime }},{k^{\prime }}}\ast
f(x_{I^{\prime }},y_{J^{\prime }})|\chi _{J^{\prime }}\chi _{I^{\prime
}}\right) ^{r}\right] \right\} ^{{\frac{{1}}{{r}}}}(z^{\ast },u^{\ast })
\end{eqnarray*}%
where $M$ is the Hardy-Littlewood maximal function on $\mathbb{H}^{n}$, $%
M_{S}$ is the strong maximal function on $\mathbb{H}^{n}$, and $\max \{\frac{%
2n}{2n+K_{1}},\frac{1}{1+K_{2}}\}<r<p.$

Applying H\"{o}lder's inequality and summing over $j,k,I,J$ yields 
\begin{eqnarray*}
&&\left\{ \sum\limits_{j,k}\sum\limits_{I,J}\sup_{u\in I,v\in J}|\psi
_{j,k}\ast f(z,u)|^{2}\chi _{I}\chi _{J}\right\} ^{{\frac{{1}}{{2}}}} \\
&\leq &C\left\{ \sum\limits_{j^{\prime },k^{\prime }}\left\{ M_{S}\left[
\left( \sum\limits_{I^{\prime },J^{\prime }}|\phi _{{j^{\prime }},{k^{\prime
}}}\ast f(x_{I^{\prime }},y_{J^{\prime }})|\chi _{I^{\prime }}\chi
_{J^{\prime }}\right) ^{r}\right] (z^{\ast },u^{\ast })\right\} ^{{\frac{{2}%
}{{r}}}}\right\} ^{{\frac{{1}}{{2}}}}
\end{eqnarray*}%
Since $x_{I^{\prime }}$ and $y_{J^{\prime }}$ are arbitrary points in $%
I^{\prime }$ and $J^{\prime },$ respectively, we have 
\begin{eqnarray*}
&&\left\{ \sum\limits_{j,k}\sum\limits_{I,J}\sup_{u\in I,v\in J}|\psi
_{j,k}\ast f(z,u)|^{2}\chi _{I}\chi _{J}\right\} ^{{\frac{{1}}{{2}}}} \\
&\leq &C\left\{ \sum\limits_{j^{\prime },k^{\prime }}\left\{
M_{S}(\sum\limits_{I^{\prime },J^{\prime }}\inf_{u\in I^{\prime },v\in
J^{\prime }}|\phi _{{j^{\prime }},{k^{\prime }}}\ast f(z,u)|\chi _{I^{\prime
}}\chi _{J^{\prime }})^{r}\right\} ^{{\frac{{2}}{{r}}}}\right\} ^{{\frac{{1}%
}{{2}}}},
\end{eqnarray*}%
and hence, by the Fefferman-Stein vector-valued maximal function inequality
[FS] with $r<p,$ we get 
\begin{eqnarray*}
&&\left\Vert \left\{
\sum\limits_{j}\sum\limits_{k}\sum\limits_{J}\sum\limits_{I}\sup_{u\in
I,v\in J}|\psi _{j,k}\ast f(z,u)|^{2}\chi _{I}\chi _{J}\right\} ^{{\frac{{1}%
}{{2}}}}\right\Vert _{p} \\
&\leq &C\left\Vert \left\{ \sum\limits_{j^{\prime }}\sum\limits_{k^{\prime
}}\sum\limits_{J^{\prime }}\sum\limits_{I^{\prime }}\inf_{u\in I^{\prime
},v\in J^{\prime }}|\phi _{{j^{\prime }},{k^{\prime }}}\ast f(z,u)|^{2}\chi
_{I^{\prime }}\chi _{J^{\prime }}\right\} ^{{\frac{{1}}{{2}}}}\right\Vert
_{p}.
\end{eqnarray*}%
This completes the proof of Theorem \ref{P-P}.
\end{proof}

\section{Boundedness of flag singular integrals\label{s boundedness}}

As a consequence of Theorem \ref{P-P}, it is easy to see that the Hardy
space $H_{flag}^{p}$ is independent of the choice of the functions $\psi .$
Moreover, we have the following characterization of $H_{flag}^{p}$ using the
wavelet norm.

\begin{proposition}
\label{1}Let $0<p\leq 1$. Then we have 
\begin{equation*}
\Vert f\Vert _{H_{flag}^{p}}\approx \left\Vert \left\{
\sum\limits_{j}\sum\limits_{k}\sum\limits_{J}\sum\limits_{I}|\psi _{j,k}\ast
f(x_{I},y_{J})|^{2}\chi _{I}(z)\chi _{J}(u)\right\} ^{{\frac{{1}}{{2}}}%
}\right\Vert _{p}
\end{equation*}%
where $j,k,\psi ,\chi _{I},\chi _{J},x_{I},y_{J}$ are as in Theorem \ref{P-P}%
.
\end{proposition}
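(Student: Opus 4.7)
The plan is to derive Proposition \ref{1} as an essentially immediate corollary of the Plancherel--P\^olya inequality in Theorem \ref{P-P}, combined with the trivial pointwise bound that sandwiches an arbitrary sampled value between the infimum and supremum of $|\psi_{j,k}\ast f|$ over its containing dyadic rectangle. First I would rewrite $\|f\|_{H_{flag}^p}=\|g_{flag}(f)\|_p$ in the discrete form given just before Definition \ref{defflagHardy}, namely
\[
g_{flag}(f)(z,u)^2=\sum_{\mathcal{R}\in\mathsf{R}_+}\bigl|\psi_{\mathcal{R}}\ast f(z_{\mathcal{R}},u_{\mathcal{R}})\bigr|^2\chi_{\mathcal{R}}(z,u),
\]
where $\mathsf{R}_+=\mathsf{Q}\cup\mathsf{R}_{vert}$ and $(z_{\mathcal{R}},u_{\mathcal{R}})$ is an arbitrary sample point in $\mathcal{R}$. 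In the labeling of Proposition \ref{1}, a dyadic Heisenberg rectangle $R=I\times J$ with $\ell(I)=2^{-j}$ and $\ell(J)=2^{-2k}$ corresponds, via the Caution at the end of Subsection \ref{component}, to a cube in $\mathsf{Q}$ when $k\ge j$ and to a vertical rectangle in $\mathsf{R}_{vert}$ when $k<j$; hence the sum $\sum_{j,k,I,J}$ on the right-hand side of Proposition \ref{1} indexes exactly the same collection $\mathsf{R}_+$, with $(x_I,y_J)$ playing the role of $(z_{\mathcal{R}},u_{\mathcal{R}})$.

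Next I would apply Theorem \ref{P-P} with $\phi=\psi$, which already provides the $L^p$ equivalence
\[
\Bigl\|\bigl\{\textstyle\sum_{\mathcal{R}}\sup_{\mathcal{R}}|\psi_{\mathcal{R}}\ast f|^2\chi_{\mathcal{R}}\bigr\}^{1/2}\Bigr\|_p
\;\approx\;
\Bigl\|\bigl\{\textstyle\sum_{\mathcal{R}}\inf_{\mathcal{R}}|\psi_{\mathcal{R}}\ast f|^2\chi_{\mathcal{R}}\bigr\}^{1/2}\Bigr\|_p.
\]
The elementary pointwise bound
\[
\inf_{(z',u')\in\mathcal{R}}|\psi_{\mathcal{R}}\ast f(z',u')|
\;\le\;|\psi_{\mathcal{R}}\ast f(x_I,y_J)|\;\le\;
\sup_{(z',u')\in\mathcal{R}}|\psi_{\mathcal{R}}\ast f(z',u')|
\]
then squeezes the sample-point square function between these two equivalent quantities, which, combined with the identification of index sets in the previous paragraph, immediately yields the claimed equivalence
\[
\|f\|_{H_{flag}^p}\;\approx\;\Bigl\|\bigl\{\textstyle\sum_{j,k,I,J}|\psi_{j,k}\ast f(x_I,y_J)|^2\chi_I\chi_J\bigr\}^{1/2}\Bigr\|_p.
\]

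There is essentially no obstacle once Theorem \ref{P-P} is available; the only points that require attention are the bookkeeping between the labelings $\mathsf{R}_+$ and $(j,k,I,J)$ described above, and the verification that $f\in(\mathcal{M}_{flag}^{M+\delta})'$---which is the underlying hypothesis of Theorem \ref{P-P}---is compatible with the class of distributions used to define $H_{flag}^p$. Since Definition \ref{defflagHardy} takes $H_{flag}^p$ to be precisely those $f\in(\mathcal{M}_{flag}^{M+\delta})'$ with $g_{flag}(f)\in L^p$, this compatibility is automatic. As a bonus, because the Plancherel--P\^olya equivalence applies to any admissible pair of component functions, the right-hand quantity in Proposition \ref{1} is independent of the particular choice of $\psi$, confirming that $H_{flag}^p$ is well defined.
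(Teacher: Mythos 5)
Your proposal is correct and takes essentially the same route as the paper, which simply introduces Proposition \ref{1} as an immediate consequence of Theorem \ref{P-P} without writing out the details. The sandwiching of the arbitrary sample value between the infimum and supremum over the containing rectangle, together with the $\sup\approx\inf$ equivalence of Theorem \ref{P-P} applied with $\phi=\psi$, is exactly the implicit argument.
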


Before we give the proof of the boundedness of flag singular integrals on $%
H_{flag}^{p},$ we show several properties of $H_{flag}^{p}.$

\begin{proposition}
\label{molecules dense}$\mathcal{M}_{flag}^{M+\delta }(\mathbb{H}^{n})$ is
dense in $H_{flag}^{p}(\mathbb{H}^{n})$ for $M$ large enough.
\end{proposition}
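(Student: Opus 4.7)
The strategy is to use the wavelet Calder\'on reproducing formula to produce, for each $f\in H_{flag}^{p}(\mathbb{H}^{n})$, an explicit sequence of \emph{finite} wavelet sums that approximates $f$ in the $H_{flag}^{p}$-quasinorm, and to verify that each such finite sum already lies in $\mathcal{M}_{flag}^{M+\delta}(\mathbb{H}^{n})$.

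First I would invoke Theorem \ref{discretethm} to expand
\begin{equation*}
f=\sum_{\mathcal{Q}\in\mathsf{Q}}f_{\mathcal{Q}}\,\widetilde{\psi}_{\mathcal{Q}}+\sum_{\mathcal{R}\in\mathsf{R}_{vert}}f_{\mathcal{R}}\,\widetilde{\psi}_{\mathcal{R}},
\end{equation*}
valid for any $f\in\mathcal{M}_{flag}^{M^{\prime}+\delta}(\mathbb{H}^{n})^{\prime}$ with $M^{\prime}$ large enough (and in particular for $f\in H_{flag}^{p}$, by Definition \ref{defflagHardy}). Enumerate the countable index set $\mathsf{R}_{+}=\mathsf{Q}\cup\mathsf{R}_{vert}$ as $\{\mathcal{R}_{j}\}_{j\geq 1}$ and define the partial sums
\begin{equation*}
f_{N}\equiv\sum_{j\leq N}f_{\mathcal{R}_{j}}\,\widetilde{\psi}_{\mathcal{R}_{j}}.
\end{equation*}
By Theorem \ref{discretethm} each $\widetilde{\psi}_{\mathcal{R}_{j}}$ lies in $\mathcal{M}_{flag}^{M+\delta}(\mathbb{H}^{n})$, and since this is a Banach (hence vector) space, $f_{N}\in\mathcal{M}_{flag}^{M+\delta}(\mathbb{H}^{n})$ for every $N$.

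The heart of the matter is then to show $\|f-f_{N}\|_{H_{flag}^{p}}\to 0$. The plan is to combine the wavelet representation of $f-f_{N}=\sum_{j>N}f_{\mathcal{R}_{j}}\widetilde{\psi}_{\mathcal{R}_{j}}$ with the almost orthogonality estimates of Lemma \ref{almostortho}, together with the summation Lemma \ref{7}, to produce the pointwise majorization
\begin{equation*}
\bigl|\psi_{\mathcal{R}^{\prime}}^{\prime}\ast(f-f_{N})(z,u)\bigr|\lesssim\Bigl\{M_{S}\Bigl[\Bigl(\sum_{j>N}|f_{\mathcal{R}_{j}}|\,\chi_{\mathcal{R}_{j}}\Bigr)^{r}\Bigr](z,u)\Bigr\}^{1/r},
\end{equation*}
with the implicit geometric-decay factors in $|\mathcal{R}^{\prime}|,|\mathcal{R}_{j}|$ absorbed by summation and $\max\{\tfrac{2n}{2n+K_{1}},\tfrac{1}{1+K_{2}}\}<r<p$. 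Squaring, summing over $\mathcal{R}^{\prime}\in\mathsf{R}_{+}$, and applying the Fefferman--Stein vector-valued maximal inequality (exactly as in the proof of Theorem \ref{P-P}), together with Proposition \ref{1}, yields
\begin{equation*}
\|f-f_{N}\|_{H_{flag}^{p}}^{p}\lesssim\Bigl\|\Bigl(\sum_{j>N}|f_{\mathcal{R}_{j}}|^{2}|\mathcal{R}_{j}|^{-1}\chi_{\mathcal{R}_{j}}\Bigr)^{1/2}\Bigr\|_{L^{p}(\mathbb{H}^{n})}^{p}.
\end{equation*}
The full series (with the sum taken over all $\mathcal{R}\in\mathsf{R}_{+}$) is, again by Theorem \ref{P-P}, comparable to $\|f\|_{H_{flag}^{p}}^{p}<\infty$. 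Hence dominated convergence in $L^{p}$ forces the right-hand side to $0$ as $N\to\infty$, which closes the argument.

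The main technical obstacle is the pointwise almost-orthogonality estimate for $\psi_{\mathcal{R}^{\prime}}^{\prime}\ast(f-f_{N})$: one must justify interchanging the convolution with the (infinite) wavelet sum in the dual sense, and then sum the resulting bilinear orthogonality factors uniformly in $N$, so that the Plancherel--P\^olya bound really controls the $H_{flag}^{p}$ tail by the $s^{p}$ tail of $\{f_{\mathcal{R}_{j}}\}$. Once this is carried out, the remaining steps---countability of $\mathsf{R}_{+}$, membership of each $f_{N}$ in $\mathcal{M}_{flag}^{M+\delta}$, and dominated convergence---are routine.
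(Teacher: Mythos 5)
Your proof is correct and follows essentially the same route as the paper: truncate the wavelet Calder\'on reproducing formula to a finite sum (the paper uses a structured cutoff $|j|\leq L$, $|k|\leq M$, $I\times J\subseteq B(0,r)$ rather than an arbitrary enumeration of $\mathsf{R}_+$, but this is cosmetic), observe that each finite sum lies in $\mathcal{M}_{flag}^{M+\delta}(\mathbb{H}^n)$, and bound the $H_{flag}^{p}$ norm of the tail by the $L^p$ norm of the tail square function via the Plancherel--P\^olya machinery (Theorem \ref{P-P}), concluding by dominated/monotone convergence since the full square function is in $L^p$.
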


\begin{proof}
Suppose $f\in H_{flag}^{p},$ and set $W=\{(j,k,I,J):|j|\leq L,|k|\leq
M,I\times J\subseteq B(0,r)\},$ where $I\times J$ is a dyadic rectangle in $%
\mathbb{H}^{n}$ with $\ell \left( I\right) =2^{-j-N}$ and $\ell \left(
J\right) =2^{-k-N}+2^{-j-N}$, and where $B(0,r)$ is the ball in $\mathbb{H}%
^{n}$ centered at the origin with radius $r$. It is easy to see that 
\begin{equation*}
\sum\limits_{(j,k,I,J)\in W}|I||J|{\widetilde{\psi }}_{j,k}(\left(
z,u\right) \circ \left( x_{I},y_{J}\right) ^{-1})\psi _{j,k}\ast
f(x_{I},y_{J})
\end{equation*}%
is a test function in $\mathcal{M}_{flag}^{M+\delta }(\mathbb{H}^{n})$ for
any fixed $L,M,r.$ To obtain the proposition, it suffices to prove 
\begin{equation*}
\sum\limits_{(j,k,I,J)\in W^{c}}|I||J|{\widetilde{\psi }}_{j,k}(\left(
z,u\right) \circ \left( x_{I},y_{J}\right) ^{-1})\psi _{j,k}\ast
f(x_{I},y_{J})
\end{equation*}%
tends to zero in the $H_{flag}^{p}$ norm as $L,M,r$ tend to infinity. This
follows from an argument similar to that in the proof of Theorem \ref{P-P}.
In fact, repeating the argument in Theorem \ref{P-P}, yields 
\begin{equation*}
\left\Vert \sum\limits_{(j,k,I,J)\in W^{c}}|I||J|{\widetilde{\psi }}%
_{j,k}(\left( z,u\right) \circ \left( x_{I},y_{J}\right) ^{-1})\psi
_{j,k}\ast f(x_{I},y_{J})\right\Vert _{H_{flag}^{p}}
\end{equation*}%
\begin{equation*}
\leq C\left\Vert \left\{ \sum\limits_{(j,k,I,J)\in W^{c}}|\psi _{j,k}\ast
f(x_{I},y_{J})|^{2}\chi _{I}\chi _{J}\right\} ^{{\frac{{1}}{{2}}}%
}\right\Vert _{p},
\end{equation*}%
where the last term tends to zero as $L,M,r$ tend to infinity whenever $f\in
H_{flag}^{p}$.
\end{proof}

As a consequence of Proposition \ref{molecules dense}, $L^{2}(\mathbb{H}%
^{n})\cap H_{flag}^{p}\left( \mathbb{H}^{n}\right) $ is dense in $%
H_{flag}^{p}(\mathbb{H}^{n}).$ Furthermore, we have this theorem.

\begin{theorem}
\label{L<H}If $f\in L^{2}(\mathbb{H}^{n})\cap H_{flag}^{p}\left( \mathbb{H}%
^{n}\right) ,0<p\leq 1,$ then $f\in L^{p}(\mathbb{H}^{n})$ and there is a
constant $C_{p}>0$ which is independent of the $L^{2}$ norm of $f$ such that 
\begin{equation*}
\Vert f\Vert _{p}\leq C\Vert f\Vert _{H_{flag}^{p}}.
\end{equation*}
\end{theorem}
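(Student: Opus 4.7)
The plan is to exploit the wavelet Calderón reproducing formula from Theorem \ref{discretethm} to decompose $f$ into Littlewood--Paley pieces adapted to level sets of the flag square function, and then to pass from an $L^2$ control on each piece to an $L^p$ control via Hölder's inequality, as in the classical Hardy space argument.

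First I would apply the reproducing formula to $f \in L^2 \cap H_{flag}^{p}$,
\begin{equation*}
f = \sum_{\mathcal{R} \in \mathsf{R}_+} f_{\mathcal{R}}\,\widetilde{\psi}_{\mathcal{R}},
\end{equation*}
with convergence in $L^2$. Fix $r$ with $0 < r < p$ and consider the level sets
\begin{equation*}
\Omega_k = \bigl\{(z,u) \in \mathbb{H}^n : M_S\bigl(g_{flag}(f)^r\bigr)(z,u)^{1/r} > 2^k\bigr\}, \qquad k \in \mathbb{Z}.
\end{equation*}
By the Fefferman--Stein strong maximal inequality (valid since $r < p$) they satisfy $\sum_k 2^{kp}|\Omega_k| \lesssim \|g_{flag}(f)\|_p^p$. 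Partition $\mathsf{R}_+$ into the disjoint families
\begin{equation*}
B_k = \bigl\{\mathcal{R} \in \mathsf{R}_+ : |\mathcal{R}\cap\Omega_k| > \tfrac{1}{2}|\mathcal{R}|,\ |\mathcal{R}\cap\Omega_{k+1}| \leq \tfrac{1}{2}|\mathcal{R}|\bigr\},
\end{equation*}
and set $b_k = \sum_{\mathcal{R}\in B_k} f_{\mathcal{R}}\,\widetilde{\psi}_{\mathcal{R}}$, so that $f = \sum_k b_k$.

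Next I would prove, via the almost-orthogonality estimates of Lemma \ref{almostortho} together with the Plancherel--Pôlya inequality in Theorem \ref{P-P}, an $L^2$ bound of the form
\begin{equation*}
\|b_k\|_2^2 \lesssim \sum_{\mathcal{R} \in B_k} |f_{\mathcal{R}}|^2\,|\mathcal{R}| \lesssim 2^{2k}|\Omega_k|,
\end{equation*}
where the second inequality exploits the fact that each $\mathcal{R}\in B_k$ has at least half its measure outside $\Omega_{k+1}$, so $g_{flag}(f)$ is controlled by $2^{k}$ on a definite fraction of $\mathcal{R}$ in the averaged sense. Choose an enlargement $\widetilde{\Omega}_k \supset \Omega_k$ with $|\widetilde{\Omega}_k| \lesssim |\Omega_k|$; once the part of $b_k$ living outside $\widetilde{\Omega}_k$ is shown to be a negligible tail, Hölder's inequality gives
\begin{equation*}
\|b_k\|_p^p \leq |\widetilde{\Omega}_k|^{1-p/2}\,\|b_k\|_2^p \leq C\,2^{kp}|\Omega_k|,
\end{equation*}
and combining this with subadditivity of $\|\cdot\|_p^p$ (valid for $0 < p \leq 1$) yields
\begin{equation*}
\|f\|_p^p \leq \sum_k \|b_k\|_p^p \lesssim \sum_k 2^{kp}|\Omega_k| \lesssim \|g_{flag}(f)\|_p^p.
\end{equation*}

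The main obstacle will be the concentration claim on $\widetilde{\Omega}_k$: because each $\widetilde{\psi}_{\mathcal{R}}$ lies in the flag molecular class $\mathcal{M}_{flag}^{M+\delta}(\mathbb{H}^n)$ rather than being compactly supported, $b_k$ has genuine tails outside any enlargement of $\Omega_k$. Controlling them requires splitting $b_k = b_k\chi_{\widetilde{\Omega}_k} + b_k\chi_{\widetilde{\Omega}_k^c}$ and using the quantitative decay of $\widetilde{\psi}_{\mathcal{R}}$ away from $\mathcal{R}$, together with Lemma \ref{7}, to sum the tail contribution in $L^p$; this is precisely where the condition $M \geq M_{n,p}$ in Definition \ref{defflagHardy} must be invoked. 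Because $\mathsf{R}_+$ contains both isotropic cubes $\mathcal{Q} \in \mathsf{Q}$ and vertical rectangles $\mathcal{R}\in \mathsf{R}_{vert}$ with different flag-type geometries, and since the two regimes in Lemma \ref{almostortho} must be treated separately, the tail estimate requires some care to patch together consistently across the two collections.
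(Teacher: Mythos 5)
Your overall strategy is correct in outline: decompose $f$ according to level sets, use the $\frac12$-density stopping condition to get an $L^2$ bound on each piece, and pass from $L^2$ to $L^p$ via Hölder. This matches the paper's proof in spirit.

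However, you yourself identify the genuine gap and do not close it. You build your decomposition from Theorem~\ref{discretethm}, whose building blocks $\widetilde{\psi}_{\mathcal{R}}\in\mathcal{M}_{flag}^{M+\delta}(\mathbb{H}^n)$ are only molecules with polynomial decay, not compactly supported, and so each piece $b_k$ genuinely spreads outside $\widetilde{\Omega}_k$. You then say the tail ``requires some care to patch together,'' which is an acknowledgment that the step is not done; in fact, for $p\le 1$ the exponent $1-p/2$ in Hölder makes this a delicate problem, since the $L^p$ norm of the tail must be summed over all levels $k$ and all rectangles in $B_k$ simultaneously, and the natural route through Lemma~\ref{7} does not obviously deliver the needed concentration of $\|b_k\|_p^p$ against $|\Omega_k|$.

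The paper avoids this difficulty altogether by working with a \emph{different} reproducing formula, namely Theorem~\ref{19}, where the building blocks $\widetilde{\phi}_{jk}$ are compactly supported (at the cost of only having finitely many vanishing moments and of the formula only being asserted on $L^2$). With compactly supported $\widetilde{\phi}_{jk}$, the piece
\begin{equation*}
b_i=\sum_{(j,k,I,J)\in\mathcal{B}_i}|I||J|\,\widetilde{\phi}_{j,k}\bigl((z,u)\circ(x_I,y_J)^{-1}\bigr)\,\phi_{j,k}\ast\bigl(T_N^{-1}f\bigr)(x_I,y_J)
\end{equation*}
is genuinely supported inside $\widetilde{\Omega}_i=\{M_S(\chi_{\Omega_i})>\tfrac{1}{100}\}$, so Hölder applies cleanly with no residual tail. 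This is precisely the reason the paper proves Theorem~\ref{19} and Lemma~\ref{small norm} at all: they are auxiliary devices introduced specifically to make the present argument work. If you want to salvage the route through Theorem~\ref{discretethm} you would need to substitute its molecular $\widetilde{\psi}_{\mathcal{R}}$ by a second compactly supported Calder\'on formula, which amounts to reproving Theorem~\ref{19}. Minor point: the paper sets $\Omega_i=\{\widetilde{g}(f)>2^i\}$ with $\widetilde{g}$ the discrete square function built from $\phi_{j,k}\ast(T_N^{-1}f)$, rather than your choice $\{M_S(g_{flag}(f)^r)^{1/r}>2^k\}$; this is cosmetic, but the paper's choice makes the $L^2$ estimate $\sum_{\mathcal{R}\in B_i}|I||J||\phi_{j,k}\ast(T_N^{-1}f)(x_I,y_J)|^2\lesssim 2^{2i}|\Omega_i|$ follow directly by integrating $\widetilde{g}(f)^2$ over $\widetilde{\Omega}_i\setminus\Omega_{i+1}$, without further recourse to Lemma~\ref{almostortho}.
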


To prove Theorem \ref{L<H}, we need a discrete Calder\'{o}n reproducing
formula on $L^{2}(\mathbb{H}^{n})$. To be more precise, take $\phi ^{(1)}\in
C_{0}^{\infty }(\mathbb{H}^{n})$ as in Theorem \ref{GM} with 
\begin{equation*}
\int_{\mathbb{H}^{n}}\phi ^{(1)}(z,u)z^{\alpha }u^{\beta }dzdu=0,\,\,\text{%
for all}\,\alpha ,\beta \,\,\text{satisfying}\,0\leq |\alpha |\leq
M_{0},\,0\leq |\beta |\leq M_{0},
\end{equation*}%
and take $\phi ^{(2)}\in C_{0}^{\infty }(\mathbb{R})$ with 
\begin{equation*}
\int_{\mathbb{R}}\phi ^{(2)}(v)z^{\gamma }dv=0\,\,\text{for all}\,\,0\leq
|\gamma |\leq M_{0},
\end{equation*}%
and $\sum_{k}|\widehat{\phi ^{(2)}}(2^{-k}\xi _{2})^{2}=1$ for all $\xi
_{2}\in \mathbb{R}\backslash \{0\}$.

Furthermore, we may assume that $\phi ^{(1)}$ and $\phi ^{(2)}$ are radial
functions and supported in the unit balls of $\mathbb{H}^{n}$ and $\mathbb{R}
$ respectively. Set 
\begin{equation*}
\phi _{jk}(z,u)=\int_{\mathbb{R}}\phi _{j}^{(1)}(z,u-v)\phi _{k}^{(2)}(v)dv.
\end{equation*}%
By Theorem \ref{GM} we have the following continuous version of the Calder%
\'{o}n reproducing formula on $L^{2}$: for $f\in L^{2}(\mathbb{H}^{n})$, 
\begin{equation*}
f(z,u)=\sum\limits_{j}\sum\limits_{k}\phi _{jk}\ast \phi _{jk}\ast f(z,u).
\end{equation*}%
For our purposes, we need a discrete version of the above reproducing
formula.

\begin{theorem}
\label{19}There exist functions $\widetilde{\phi }_{jk}$ and an operator $%
T_{N}^{-1}$ such that 
\begin{equation*}
f(x,y)=\sum\limits_{j}\sum\limits_{k}\sum\limits_{J}\sum\limits_{I}|I||J|%
\widetilde{\phi }_{j,k}(\left( z,u\right) \circ \left( x_{I},y_{J}\right)
^{-1})\phi _{j,k}\ast \left( T_{N}^{-1}(f)\right) (x_{I},y_{J})
\end{equation*}%
where functions $\widetilde{\phi }_{jk}(\left( z,u\right) \circ \left(
x_{I},y_{J}\right) ^{-1})$ satisfy the conditions in Theorem \ref%
{discretethm} with $\alpha _{1},\beta _{1},\gamma _{1},N,M$ depending on $%
M_{0}$. Moreover, $T_{N}^{-1}$ is bounded on both $L^{2}(\mathbb{H}^{n})$
and $H_{flag}^{p}\left( \mathbb{H}^{n}\right) ,$ and the series converges in 
$L^{2}(\mathbb{H}^{n})$.
\end{theorem}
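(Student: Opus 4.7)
The plan is to mimic the proof of Theorem \ref{discretethm} but to work in $L^2(\mathbb{H}^n)$ (and simultaneously in $H_{flag}^p$) rather than in the flag molecular space. Starting from the continuous reproducing formula $f=\sum_{j,k}\phi_{j,k}\ast\phi_{j,k}\ast f$, which converges in $L^2$ by Theorem \ref{GM} together with the Littlewood-Paley theory on $\mathbb R$, I would partition $\mathbb H^n$ into the dyadic rectangles $\mathcal R\in\mathsf R(j,k)$ of scale $2^{-j-N}\times 2^{-k-N}$ from Notation~\ref{notaterect} and approximate the inner factor $\phi_{j,k}\ast f$ by its value at a chosen point $(x_I,y_J)\in\mathcal R$. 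This yields a decomposition
\begin{equation*}
f(z,u)=T_N f(z,u)+R_N f(z,u),
\end{equation*}
where
\begin{equation*}
T_N f(z,u)=\sum_{j,k}\sum_{\mathcal R\in\mathsf R(j,k)}|I||J|\,\phi_{j,k}\!\left((z,u)\circ(x_I,y_J)^{-1}\right)\phi_{j,k}\ast f(x_I,y_J),
\end{equation*}
and $R_Nf$ is the sum of the corresponding ``Taylor-type'' error terms.

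The first key step is to show that $R_N$ has operator norm $\le 1/2$ on both $L^2(\mathbb H^n)$ and on $H_{flag}^p(\mathbb H^n)$ for $N$ large enough. For this I would lift $R_N$ to an operator $\widetilde R_N$ on $\mathbb H^n\times\mathbb R$ via the projection $\pi$ and the product decomposition $\Phi=\phi^{(1)}\otimes\phi^{(2)}$, exactly as in the treatment of $R_{\alpha,N}^{(1)}$ and $R_{\alpha,N}^{(2)}$ in Section~\ref{s developing}. Since the differences
\begin{equation*}
\phi_{j,k}\ast f(z',u')-\phi_{j,k}\ast f(x_I,y_J)
\end{equation*}
gain a factor $2^{-N}$ in size and smoothness on $\mathcal R$, the lifted kernel of $\widetilde R_N$ satisfies the product $T1$ hypotheses of Theorem~\ref{product molecular proof} with constant $O(2^{-N})$. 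Applying Theorem~\ref{product molecular proof} gives $\|\widetilde R_N F\|_{L^2}\lesssim 2^{-N}\|F\|_{L^2}$ and $\|\widetilde R_N F\|_{\mathcal M_{product}^{M'+\delta}}\lesssim 2^{-N}\|F\|_{\mathcal M_{product}^{M'+\delta}}$; projecting back through $\pi$ and using the $L^p$ lifting result from \cite{MRS} transfers these estimates to $R_N$ on $L^2(\mathbb H^n)$ and on $\mathcal M_{flag}^{M'+\delta}(\mathbb H^n)$. The bound on $H_{flag}^p$ then follows by combining the molecular bound on $\mathcal M_{flag}^{M'+\delta}$ with the Plancherel-P\^olya inequalities in Theorem~\ref{P-P}, precisely as in the passage from Theorem~\ref{key boundedness} to the convergence statements of Theorem~\ref{discretethm}.

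Once $\|R_N\|<\tfrac12$ is established on both spaces, the operator $T_N=I-R_N$ is invertible by the Neumann series $T_N^{-1}=\sum_{\ell\ge 0}R_N^{\ell}$, which is simultaneously bounded on $L^2(\mathbb H^n)$ and on $H_{flag}^p(\mathbb H^n)$. Applying $T_N^{-1}$ to $f$ and substituting into the definition of $T_N$ yields
\begin{equation*}
f(z,u)=\sum_{j,k}\sum_{\mathcal R\in\mathsf R(j,k)}|I||J|\,\widetilde\phi_{j,k}\!\left((z,u)\circ(x_I,y_J)^{-1}\right)\phi_{j,k}\ast(T_N^{-1}f)(x_I,y_J),
\end{equation*}
where $\widetilde\phi_{j,k}$ is obtained by applying the adjoint lifting construction (so that the inner factor at $(x_I,y_J)$ becomes $\phi_{j,k}\ast(T_N^{-1}f)$ rather than $T_N^{-1}(\phi_{j,k}\ast f)$). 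The molecular bounds $\|\widetilde\phi_{j,k}(\cdot\circ(x_I,y_J)^{-1})\|_{\mathcal M_{flag}^{M+\delta}}\lesssim\|\phi_{j,k}(\cdot\circ(x_I,y_J)^{-1})\|_{\mathcal M_{flag}^{M+\delta}}$ follow because $T_N^{-1}$ preserves $\mathcal M_{flag}^{M'+\delta}$ with $M'$ large relative to $M$; choosing $M_0$ sufficiently large (depending on $M$, $\delta$ and the parameters $\alpha_1,\beta_1,\gamma_1,N$ that enter Theorem~\ref{discretethm}) guarantees the required amount of smoothness, decay and cancellation for $\widetilde\phi_{j,k}$. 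Convergence of the series in $L^2(\mathbb H^n)$ is inherited from convergence of the continuous Calder\'on formula after one notes that $T_N^{-1}$ is $L^2$-bounded and that the partial sums of $T_N$ applied to any $L^2$ function converge in $L^2$ by the vector-valued Littlewood-Paley inequality used in Theorem~\ref{p>1}.

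The principal obstacle is the simultaneous boundedness of $R_N$ on the two very different scales---$L^2(\mathbb H^n)$ and $H_{flag}^p(\mathbb H^n)$ with $0<p\le 1$---using a single choice of the parameter $N$. On $L^2$ this is essentially the product $T1$ theorem, but on $H_{flag}^p$ it requires an additional passage through the molecular space $\mathcal M_{flag}^{M'+\delta}$ together with the Plancherel-P\^olya characterization, and one must keep careful track of how many vanishing moments and derivatives of $\phi^{(1)}$ and $\phi^{(2)}$ are consumed in that passage. This is where the quantitative dependence of $\alpha_1,\beta_1,\gamma_1,N,M$ on $M_0$ in the statement of the theorem is forced.
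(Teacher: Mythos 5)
Your overall strategy is the same as the paper's: write $f = T_Nf + \mathcal{R}f$ (with $T_N$ the discretized sum), show $\|\mathcal{R}\|\le 2^{-N}C$ simultaneously on $L^2$ and on $H_{flag}^p$ so that $T_N$ is invertible by Neumann series on both spaces, then substitute $f = T_N(T_N^{-1}f)$ to obtain the discrete formula. The paper's proof of the small-norm estimate (Lemma~\ref{small norm}) is closer to the Plancherel--P\^olya machinery — it applies the wavelet reproducing formula of Theorem~\ref{discretethm} to $f\in L^2$, uses Theorem~\ref{key boundedness} to check that $\mathcal{R}\bigl(\widetilde\psi_{j',k'}\bigr)$ are flag molecules, and then invokes Lemma~\ref{7} and Theorem~\ref{P-P} — but since Theorem~\ref{key boundedness} itself rests on the lifting argument and Theorem~\ref{product molecular proof}, the gap between your route and the paper's is small.

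The one genuine point of confusion in your write-up concerns the construction of $\widetilde\phi_{j,k}$, and it touches the main raison d'\^etre of Theorem~\ref{19}. In the paper, $\widetilde\phi_{j,k}\bigl((z,u)\circ(x_I,y_J)^{-1}\bigr)$ is \emph{simply the average}
\begin{equation*}
\frac{1}{|I||J|}\int_J\int_I\phi_{j,k}\bigl((z,u)\circ(w,v)^{-1}\bigr)\,dw\,dv,
\end{equation*}
which is a slight smoothing of $\phi_{j,k}$ and in particular retains the \emph{compact support} of $\phi_{j,k}$. The operator $T_N^{-1}$ is never applied to $\phi_{j,k}$: the formula $f = T_N(T_N^{-1}f)$ puts $T_N^{-1}$ on the scalar coefficient $\phi_{j,k}\ast(T_N^{-1}f)(x_I,y_J)$, leaving the outer bump unchanged. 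Your description via an \textquotedblleft adjoint lifting construction,\textquotedblright\ and your assertion that the molecular bounds on $\widetilde\phi_{j,k}$ \textquotedblleft follow because $T_N^{-1}$ preserves $\mathcal{M}_{flag}^{M'+\delta}$,\textquotedblright\ both suggest you are applying an operator to $\phi_{j,k}$ and thereby getting a non-compactly-supported molecule as in Theorem~\ref{discretethm}. That would recover Theorem~\ref{discretethm}, not Theorem~\ref{19}. The paper's remark immediately following Theorem~\ref{19} makes the distinction explicit: the point of Theorem~\ref{19} is that $\widetilde\phi_{j,k}$ has compact support (at the cost of only finitely many vanishing moments and the formula holding only in $L^2$). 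This compact support is then used critically in the proof of Theorem~\ref{L<H}, where one needs $\phi_{j,k}\bigl(\cdot\circ(x_I,y_J)^{-1}\bigr)$ to be supported in $\widetilde{\Omega_i}$. So while your inversion scheme is right, you should make the outer bump $\widetilde\phi_{j,k}$ the rectangle-average of $\phi_{j,k}$ — and note that this average trivially satisfies the same size, smoothness, moment and support conditions as $\phi_{j,k}$, with no appeal to $T_N^{-1}$-invariance of the molecular space.
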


\begin{remark}
The difference between Theorem \ref{19} and Theorem \ref{discretethm} is
that the ${\widetilde{\phi }}_{jk}$ in Theorem \ref{19} have compact
support. The price we pay here is that ${\widetilde{\phi }}_{jk}$ only
satisfies moment conditions of finite order, unlike in Theorem \ref%
{discretethm} where moment conditions of infinite order are satisfied.
Moreover, the formula in Theorem \ref{19} only holds on $L^{2}(\mathbb{H}%
^{n})$ while the formula in Theorem \ref{discretethm} holds in both the test
function space $\mathcal{M}_{flag}^{M+\delta }$ and its dual space $(%
\mathcal{M}_{flag}^{M+\delta })^{\prime }$.
\end{remark}

\textbf{Proof of Theorem }\ref{19}\textbf{:} Following the proof of Theorem %
\ref{discretethm}, we have 
\begin{equation*}
f(z,u)=\sum\limits_{j}\sum\limits_{k}\sum\limits_{J}\sum\limits_{I}\left[
\int\limits_{J}\int\limits_{I}\phi _{j,k}(\left( z,u\right) \circ \left(
u,v\right) ^{-1})dudv\right] \left( \phi _{j,k}\ast f\right) (x_{I},y_{J})+%
\mathcal{R}f(z,u).
\end{equation*}%
where $I,J,j,k$ and $\mathcal{R}$ are as in Theorem \ref{discretethm}.

We need the following lemma to handle the remainder term $\mathcal{R}$.

\begin{lemma}
\label{small norm}Let $0<p\leq 1$. Then the operator $\mathcal{R}$ is
bounded on $L^{2}(\mathbb{H}^{n})$ and $H_{flag}^{p}(\mathbb{H}^{n})$
whenever $M_{0}$ is chosen to be a large positive integer. Moreover, there
exists a constant $C>0$ such that 
\begin{equation*}
||\mathcal{R}f||_{2}\leq C2^{-N}||f||_{2}
\end{equation*}%
and 
\begin{equation*}
||\mathcal{R}f||_{H_{flag}^{p}\left( \mathbb{H}^{n}\right) }\leq
C2^{-N}||f||_{H_{flag}^{p}\left( \mathbb{H}^{n}\right) }.
\end{equation*}
\end{lemma}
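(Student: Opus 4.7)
\smallskip

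\textbf{Proof proposal for Lemma \ref{small norm}.} The plan is to mirror the strategy used for the operators $R_\alpha$, $R_{\alpha,N}^{(1)}$, $R_{\alpha,N}^{(2)}$ in the proof of Theorem \ref{key boundedness}: lift $\mathcal{R}$ to a product operator $\widetilde{\mathcal{R}}$ on $\mathbb{H}^n\times\mathbb{R}$, verify product Calder\'on--Zygmund estimates for its kernel with a gain of $2^{-N}$, then apply the product $T1$-type Theorem \ref{product molecular proof} and project back. Concretely, using $\phi_{j,k}=\phi_j^{(1)}*_2\phi_k^{(2)}$ and the intertwining Lemma \ref{intertwine}, I would first split
\[
\mathcal{R}f=\mathcal{R}^{(1)}f+\mathcal{R}^{(2)}f,
\]
according to $k\ge j$ (where the component is essentially a one-parameter Heisenberg object and dyadic cubes $\mathcal{Q}\in\mathsf{Q}(j)$ are used) and $k<j$ (where vertical rectangles $\mathcal{R}\in\mathsf{R}(j,k)$ are used), just as in the treatment of $R_{\alpha,N}^{(1)}$ and $R_{\alpha,N}^{(2)}$.

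Writing $f=\pi F$ with $F\in\mathcal{M}_{product}^{M_0+\delta}(\mathbb{H}^n\times\mathbb{R})$, the same chain of changes of variables used earlier produces $\mathcal{R}^{(i)}f=\pi(\widetilde{\mathcal{R}}^{(i)}F)$, where the kernel of $\widetilde{\mathcal{R}}^{(i)}$ has the schematic form
\[
\widetilde{\mathcal{R}}^{(i)}\bigl[((z,u),w),((z'',u''),w'')\bigr]\sim \sum_{j,k}\Bigl\{\phi_j^{(1)}\bigl((z,u)\circ(z'',u'')^{-1}\bigr)-\text{evaluation at }(x_I,y_J)\text{ corners}\Bigr\}\,\phi_k^{(2)}(w-w'').
\]
The telescoping identity $\phi_j^{(1)}(\cdot)-\phi_j^{(1)}(\cdot\text{ at corner})\sim 2^{-N}\phi_j^{(1)}$, exactly as in the $R_{\alpha,N}^{(i)}$ analysis (see display (\ref{sim})), supplies the crucial factor of $2^{-N}$. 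Combined with the fact that truncations of approximate identities $\sum_j\phi_j^{(1)}$, $\sum_k\phi_k^{(2)}$ satisfy product Calder\'on--Zygmund kernel conditions uniformly, this shows the kernel of $\widetilde{\mathcal{R}}^{(i)}$ fulfills the hypotheses of Theorem \ref{product molecular proof} with overall constant $A\lesssim 2^{-N}$, provided $M_0$ is taken large enough that the required number of moment and regularity conditions of $\phi^{(1)},\phi^{(2)}$ is available.

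From Theorem \ref{product molecular proof} I then extract both $L^2$ and molecular bounds of order $2^{-N}$ for $\widetilde{\mathcal{R}}^{(i)}$, and transfer them to $\mathbb{H}^n$ via $\pi$: for the $L^2$ estimate I use the result from \cite{MRS} that every $f\in L^2(\mathbb{H}^n)$ admits a lift $F$ with $\|F\|_{L^2(\mathbb{H}^n\times\mathbb{R})}\le C\|f\|_{L^2(\mathbb{H}^n)}$, whence
\[
\|\mathcal{R}f\|_{L^2(\mathbb{H}^n)}=\|\pi\widetilde{\mathcal{R}}F\|_{L^2(\mathbb{H}^n)}\le\|\widetilde{\mathcal{R}}F\|_{L^2(\mathbb{H}^n\times\mathbb{R})}\lesssim 2^{-N}\|f\|_{L^2(\mathbb{H}^n)}.
\]
For the $H_{flag}^p$ bound with $0<p\le 1$, I would express $\|\mathcal{R}f\|_{H_{flag}^p}$ via the wavelet norm in Proposition \ref{1}, expand $\mathcal{R}f$ through the wavelet reproducing formula of Theorem \ref{discretethm}, and apply the almost orthogonality estimates of Lemma \ref{almostortho} to the compositions $\psi_{j,k}\ast\mathcal{R}\widetilde{\psi}_{j',k'}$; the $2^{-N}$ gain on the kernel of $\mathcal{R}$ propagates to a $2^{-N}$ gain in each almost orthogonality bound, and the Plancherel--P\^olya inequality Theorem \ref{P-P} together with the Fefferman--Stein vector-valued maximal inequality (as in the final step of the proof of Theorem \ref{P-P}) converts the resulting sum into $2^{-N}\|f\|_{H_{flag}^p}$.

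The main obstacle is the first verification: showing that after all the changes of variables, the telescoping differences appearing in the kernel of $\widetilde{\mathcal{R}}$ genuinely produce a quantitative $2^{-N}$ saving uniformly in $(j,k)$ while preserving the product Calder\'on--Zygmund estimates in each variable separately. This requires a careful accounting of how many moments/derivatives of $\phi^{(1)}$ and $\phi^{(2)}$ are consumed by the derivatives required in Theorem \ref{product molecular proof}, which fixes how large $M_0$ must be chosen in terms of the target pair $(p,\delta)$; the remaining ingredients (intertwining, lifting of $L^2$ and $H_{flag}^p$ norms, Plancherel--P\^olya, Fefferman--Stein) are already in place from the preceding sections.
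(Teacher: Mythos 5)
Your proposal is essentially the approach taken in the paper: the paper also expands $\mathcal{R}f$ through the wavelet reproducing formula of Theorem~\ref{discretethm}, reduces to an almost orthogonality estimate on $\psi_{j,k}\ast\mathcal{R}\widetilde{\psi}_{j',k'}$ (the paper's ``Claim''), obtains the $2^{-N}$ factor from the fact that Theorem~\ref{key boundedness} already shows $\mathcal{R}\widetilde{\psi}_{j',k'}$ are flag molecules with small norm, and then finishes with Lemma~\ref{7}, the Plancherel--P\^olya inequality and the Fefferman--Stein maximal inequality; the $L^2$ bound is obtained in the paper by the remark that the same chain of inequalities works with $p=2$. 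Your version spells out the lifting/product-$T1$ kernel verification that the paper delegates to Theorem~\ref{key boundedness}, and treats the $L^2$ bound by a direct lift rather than by specializing the $H^p_{flag}$ argument, but the decomposition, the key ingredients, and the source of the $2^{-N}$ gain are the same.
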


\begin{proof}
Following the proofs of Theorems \ref{discretethm} and \ref{P-P} and using
the wavelet Calder\'{o}n reproducing formula for $f\in L^{2}(\mathbb{H}%
^{n}), $ we have 
\begin{eqnarray*}
&&||g_{flag}(\mathcal{R}f)||_{p} \\
&\leq &\left\Vert \left\{
\sum\limits_{j}\sum\limits_{k}\sum\limits_{J}\sum\limits_{I}|\left( \psi
_{j,k}\ast \mathcal{R}f\right) |^{2}\chi _{I}\chi _{J}\right\} ^{{\frac{{1}}{%
{2}}}}\right\Vert _{p} \\
&=&\left\Vert \left\{ \sum\limits_{j,k,J,I}\sum\limits_{{j^{\prime }}%
,k^{\prime },J^{\prime },I^{\prime }}|J^{\prime }||I^{\prime }||\left( \psi
_{j,k}\ast \mathcal{R}\widetilde{\psi _{j^{\prime },k^{\prime }}}\left(
\left( \cdot ,\cdot \right) \circ \left( x_{I^{\prime }},y_{J^{\prime
}}\right) ^{-1}\right) \cdot \psi _{j^{\prime }k^{\prime }}\ast
f(x_{I^{\prime }},y_{J^{\prime }})\right) |^{2}\chi _{I}\chi _{J}\right\} ^{{%
\frac{{1}}{{2}}}}\right\Vert _{p}
\end{eqnarray*}%
where $j,k,\psi ,\chi _{I},\chi _{J},x_{I},y_{J}$ are as in Theorem \ref{P-P}%
.

\medskip

\textbf{Claim}: We have%
\begin{eqnarray*}
&&\left\vert \left( \psi _{j,k}\ast \mathcal{R}\left( {\widetilde{\psi }}_{{%
j^{\prime }},{k^{\prime }}}\left( \left( \cdot ,\cdot \right) \circ \left(
x_{I^{\prime }},y_{J^{\prime }}\right) ^{-1}\right) \right) \right)
(z,u)\right\vert \\
&\leq &C2^{-N}2^{-|j-{j^{\prime }}|K}2^{-|k-{k^{\prime }}|K} \\
&&\times \int\limits_{\mathbb{R}}{\frac{{2^{-(j\wedge {j^{\prime }})K}}}{{%
(2^{-(j\wedge {j^{\prime }})}+|z-x_{I^{\prime }}|+|v-u-y_{J^{\prime
}})^{2n+1+K}}}}{\frac{{2^{-(k\wedge {k^{\prime }})K}}}{{(2^{-(k\wedge {%
k^{\prime }})}+|v|)^{1+K}}}}dv,
\end{eqnarray*}%
where we have chosen for simplicity 
\begin{equation*}
L_{1}=L_{2}=K_{1}=K_{2}=K<M_{0},max({\frac{2{n}}{2{n+K}}},{\frac{{1}}{{1+K}}}%
)<p,
\end{equation*}%
and $M_{0}$ is chosen to be a larger integer later.

Assuming the claim for the moment, we can repeat an argument used in Lemma %
\ref{7}, and then use Theorem \ref{P-P}, to obtain 
\begin{eqnarray*}
&&\left\Vert g_{flag}\left( \mathcal{R}f\right) \right\Vert _{p} \\
&\leq &C2^{-N}\left\Vert \left\{ \sum\limits_{j^{\prime
}}\sum\limits_{k^{\prime }}\left[ M_{S}(\sum\limits_{J^{\prime
}}\sum\limits_{I^{\prime }}|\psi _{{j^{\prime }},{k^{\prime }}}\ast
f(x_{I^{\prime }},y_{J^{\prime }})|\chi _{J^{\prime }}\chi _{I^{\prime
}})^{r}\right] ^{{\frac{{2}}{{r}}}}\right\} ^{{\frac{{1}}{{2}}}}\right\Vert
_{p} \\
&\leq &C2^{-N}\left\Vert \left\{ \sum\limits_{j^{\prime
}}\sum\limits_{k^{\prime }}\sum\limits_{J^{\prime }}\sum\limits_{I^{\prime
}}|\psi _{{j^{\prime }},{k^{\prime }}}\ast f(x_{I^{\prime }},y_{J^{\prime
}})|^{2}\chi _{I^{\prime }}\chi _{J^{\prime }}\right\} ^{{\frac{{1}}{{2}}}%
}\right\Vert _{p}\leq C2^{-N}\Vert f\Vert _{H_{flag}^{p}\left( \mathbb{H}%
^{n}\right) }.
\end{eqnarray*}%
It is clear that the above estimates continue to hold when $p$ is replaced
by $2$. This completes the proof of Lemma \ref{small norm} modula the claim.

In order to prove the Claim made above, we note that Theorem \ref{key
boundedness} shows that the functions 
\begin{equation*}
\mathcal{R}\left( {\widetilde{\psi }}_{{j^{\prime }},{k^{\prime }}}(\left(
\cdot ,\cdot \right) \circ \left( x_{I^{\prime }},y_{J^{\prime }}\right)
^{-1})\right) (z,u)
\end{equation*}%
are flag molecules. Then the claim follows from Lemma \ref{7}, and this
completes the proof of Lemma \ref{small norm}.
\end{proof}

We now return to the proof of Theorem \ref{19}. Let $(T_{N})^{-1}=%
\sum_{i=1}^{\infty }\mathcal{R}^{i},$ where 
\begin{equation*}
T_{N}f=\sum\limits_{j}\sum\limits_{k}\sum\limits_{J}\sum\limits_{I}\left( 
\frac{1}{|I||J|}\int\limits_{J}\int\limits_{I}\phi _{j,k}\left( \left(
z,u\right) \circ \left( w,v\right) ^{-1}\right) dwdv\right) I||J|\left( \phi
_{j,k}\ast f\right) (x_{I},y_{J}).
\end{equation*}%
Lemma \ref{small norm} shows that if $N$ is large enough, then both of $%
T_{N} $ and $(T_{N})^{-1}$ are bounded on $L^{2}(\mathbb{H}^{n})\cap
H_{flag}^{p}\left( \mathbb{H}^{n}\right) $. Hence, we can get the following
reproducing formula 
\begin{equation*}
f(x,y)=\sum\limits_{j}\sum\limits_{k}\sum\limits_{J}\sum\limits_{I}|I||J|{%
\widetilde{\phi }}_{j,k}(\left( z,u\right) \circ \left( x_{I},y_{J}\right)
^{-1})\phi _{j,k}\ast \left( T_{N}^{-1}f\right) (x_{I},y_{J})
\end{equation*}%
where the functions ${\widetilde{\phi }}_{jk}(\left( z,u\right) \circ \left(
x_{I},y_{J}\right) ^{-1})$ are flag molecules, and the series converges in $%
L^{2}(\mathbb{H}^{n})$. This completes the proof of Theorem \ref{19}.

\bigskip

As a consequence of Theorem \ref{19}, we obtain the following corollary.

\begin{corollary}
\label{L2 dense}If $f\in L^{2}(\mathbb{H}^{n})\cap H_{flag}^{p}\left( 
\mathbb{H}^{n}\right) $ and $0<p\leq 1$, then 
\begin{equation*}
\Vert f\Vert _{H_{flag}^{p}}\approx \left\Vert \left\{
\sum\limits_{j}\sum\limits_{k}\sum\limits_{J}\sum\limits_{I}\left\vert \phi
_{jk}\ast \left( T_{N}^{-1}f\right) (x_{I},y_{J})\right\vert ^{2}\chi
_{I}(z)\chi _{J}(u)\right\} ^{\frac{1}{2}}\right\Vert _{p}
\end{equation*}%
where the constants are independent of the $L^{2}$ norm of $f.$
\end{corollary}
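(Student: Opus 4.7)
The plan is to combine two already-established facts: (i) the $L^{2}$ and $H_{flag}^{p}$ boundedness of $T_{N}$ and $T_{N}^{-1}$ from Lemma \ref{small norm}, with constants independent of the $L^{2}$ norm of $f$, and (ii) the discrete Plancherel-P\^{o}lya equivalence in Theorem \ref{P-P}, which characterizes the $H_{flag}^{p}$ quasi-norm via any admissible pair of component functions. The role of $T_{N}^{-1}$ in this corollary is purely to pass from the (non-compactly-supported, infinite-moment) wavelets $\widetilde{\psi }_{j,k}$ of Theorem \ref{discretethm} to the compactly supported $\widetilde{\phi }_{j,k}$ of Theorem \ref{19}, while the $p$-norm equivalence is then delivered by a standard change-of-component argument.

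First I would verify that the components $\phi ^{(1)}$ and $\phi ^{(2)}$ introduced before Theorem \ref{19} satisfy the hypotheses of Subsection \ref{component}. Since $\phi ^{(1)}$ is compactly supported the integer $M_{0}$ may be taken arbitrarily large, so we choose $M_{0}\geq M_{n,p}$ with $M_{n,p}$ as in Definition \ref{defflagHardy}. Then $\phi =\phi ^{(1)}\ast _{2}\phi ^{(2)}$ is a legitimate flag component function for computing $g_{flag}$. Applying Theorem \ref{P-P} to $g:=T_{N}^{-1}f\in L^{2}(\mathbb{H}^{n})\cap H_{flag}^{p}(\mathbb{H}^{n})\subset \mathcal{M}_{flag}^{M+\delta }(\mathbb{H}^{n})^{\prime }$, with the pair $(\phi ,\phi )$ in place of $(\psi ,\phi )$, yields
\begin{equation*}
\Vert g\Vert _{H_{flag}^{p}}\approx \left\Vert \Big\{\sum_{j,k,I,J}\big|\phi _{j,k}\ast g(x_{I},y_{J})\big|^{2}\chi _{I}\chi _{J}\Big\}^{\frac{1}{2}}\right\Vert _{p},
\end{equation*}
with constants depending only on $n$, $p$, and the fixed component functions.

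Finally I would invoke Lemma \ref{small norm} to close the loop. For $N$ large enough, $\Vert \mathcal{R}\Vert _{H_{flag}^{p}\rightarrow H_{flag}^{p}}\leq C2^{-N}<1/2$, so the Neumann series $T_{N}^{-1}=\sum_{i\geq 0}\mathcal{R}^{i}$ converges in operator norm on $H_{flag}^{p}$ (and also on $L^{2}$), giving $\Vert T_{N}^{-1}f\Vert _{H_{flag}^{p}}\approx \Vert f\Vert _{H_{flag}^{p}}$ with constants depending only on $N$ and $p$. Inserting $g=T_{N}^{-1}f$ into the previous display completes the claimed equivalence. The main obstacle — really the only delicate point — is to keep the comparability constants genuinely independent of $\Vert f\Vert _{L^{2}}$, which matters because Theorem \ref{19} is formulated only on $L^{2}$; this is exactly what Lemma \ref{small norm} guarantees, since the operator bounds for $\mathcal{R}$ and hence $T_{N}^{-1}$ come from the almost-orthogonality estimates of Subsection \ref{5.3} and not from any a priori $L^{2}$ size of $f$.
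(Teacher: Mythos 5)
Your proof is correct and achieves the stated equivalence; it is a cleaner, more modular repackaging of what the paper only sketches. The paper's one-line proof says "apply the Calder\'on reproducing formula in Theorem \ref{19} and then repeat the proof of Theorem \ref{P-P}", meaning: substitute the compactly supported reproducing formula of Theorem \ref{19} in place of the wavelet formula from Theorem \ref{discretethm} and re-run the almost-orthogonality/Fefferman--Stein machinery. You instead factor the statement into two already-proved ingredients: first, Theorem \ref{P-P} (equivalently Proposition \ref{1}) applied to the distribution $g=T_{N}^{-1}f$ with $\phi $ as the component function gives $\Vert T_{N}^{-1}f\Vert _{H_{flag}^{p}}\approx \Vert \{\sum |\phi _{j,k}\ast (T_{N}^{-1}f)(x_{I},y_{J})|^{2}\chi _{I}\chi _{J}\}^{1/2}\Vert _{p}$; second, Lemma \ref{small norm} makes $\mathcal{R}$ a strict contraction on $H_{flag}^{p}$ (and on $L^{2}$), so $T_{N}=I-\mathcal{R}$ and $T_{N}^{-1}=\sum_{i\geq 0}\mathcal{R}^{i}$ are both bounded and $\Vert T_{N}^{-1}f\Vert _{H_{flag}^{p}}\approx \Vert f\Vert _{H_{flag}^{p}}$ with constants depending only on $n,p,N$. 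Combining gives the corollary. Your emphasis on the fact that all comparability constants come from Lemma \ref{small norm} rather than from any a priori $L^{2}$ size of $f$ is exactly the point of the corollary.

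Two small caveats worth flagging. First, your phrase ``with the pair $(\phi ,\phi )$ in place of $(\psi ,\phi )$'' is misleading: what you really need from Theorem \ref{P-P} is the pair $(\psi ,\phi )$, where $\psi $ is the component function used in Definition \ref{defflagHardy}, so that the $\phi $-based square function is comparable to $\Vert \cdot \Vert _{H_{flag}^{p}}$. Second, the $\phi ^{(2)}$ of Theorem \ref{19} obeys the discrete normalization $\sum _{k}|\widehat{\phi ^{(2)}}(2^{-k}\xi )|^{2}=1$ and has only $M_{0}$ vanishing moments, whereas the hypotheses of Subsection \ref{component} are stated with the continuous normalization and infinitely many vanishing moments; invoking Theorem \ref{P-P} as a black box therefore requires noting that its proof only uses finitely many moments (through the orthogonality estimates of Lemma \ref{almostortho}) and that the discrete normalization is equally admissible. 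Since $M_{0}$ is free to be taken as large as needed, this is a harmless adjustment, and the paper's own terse proof implicitly makes the same identification.
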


\begin{proof}
\textbf{(of Corollary \ref{L2 dense})}: Note that if $f\in L^{2}(\mathbb{H}%
^{n})$, we can apply the Caldero\'{n} reproducing formula in Theorem \ref{19}
and then repeat the proof of Theorem \ref{P-P}. We leave the details to the
reader, and this completes our proof of Corollary \ref{L2 dense}.
\end{proof}

\bigskip

We now start the proof of Theorem\textbf{\ }\ref{L<H}\textbf{. }We define a
square function by 
\begin{equation*}
\widetilde{g}(f)(z,u)=\left\{
\sum\limits_{j}\sum\limits_{k}\sum\limits_{J}\sum\limits_{I}|\phi _{j,k}\ast
\left( T_{N}^{-1}(f)\right) (x_{I},y_{J})|^{2}\chi _{I}(z)\chi
_{J}(u)\right\} ^{{\frac{{1}}{{2}}}},
\end{equation*}%
where $\phi _{jk}$ are as in Theorem \ref{19}. By Corollary \ref{L2 dense},
for $f\in L^{2}(\mathbb{H}^{n})\cap H_{flag}^{p}\left( \mathbb{H}^{n}\right) 
$ we have, 
\begin{equation*}
||\widetilde{g}(f)||_{L^{p}(\mathbb{H}^{n})}\leq C||f||_{H_{flag}^{p}\left( 
\mathbb{H}^{n}\right) }.
\end{equation*}%
To complete the proof of Theorem \ref{L<H}, let $f\in L^{2}(\mathbb{H}%
^{n})\cap H_{flag}^{p}\left( \mathbb{H}^{n}\right) $. Set 
\begin{equation*}
\Omega _{i}=\{(z,u)\in \mathbb{H}^{n}:\widetilde{g}(f)(z,u)>2^{i}\}.
\end{equation*}%
Let 
\begin{equation*}
\mathcal{B}_{i}=\{(j,k,I,J):|(I\times J)\cap \Omega _{i}|>{\frac{{1}}{{2}}}%
|I\times J|,|(I\times J)\cap \Omega _{i+1}|\leq {\frac{{1}}{{2}}}|I\times
J|\},
\end{equation*}%
where $I\times J$ are rectangles in $\mathbb{H}^{n}$ with side lengths $\ell
\left( I\right) =2^{-j-N}$ and $\ell \left( J\right) =2^{-k-N}+2^{-j-N}$.
Since $f\in L^{2}(\mathbb{H}^{n})$, the discrete Calder\'{o}n reproducing
formula in Theorem \ref{19} gives, 
\begin{eqnarray*}
f(z,u) &=&\sum\limits_{j}\sum\limits_{k}\sum\limits_{J}\sum\limits_{I}{%
\widetilde{\phi }}_{j,k}(\left( z,u\right) \circ \left( x_{I},y_{J}\right)
^{-1})|I||J|\phi _{j,k}\ast \left( T_{N}^{-1}(f)\right) (x_{I},y_{J}) \\
&=&\sum\limits_{i}\sum\limits_{(j,k,I,J)\in \mathcal{B}_{i}}|I||J|{%
\widetilde{\phi }}_{j,k}(\left( z,u\right) \circ \left( x_{I},y_{J}\right)
^{-1})\phi _{j,k}\ast \left( T_{N}^{-1}(f)\right) (x_{I},y_{J}),
\end{eqnarray*}%
where the series converges rapidly in $L^{2}$ norm, and hence almost
everywhere.

\bigskip

\textbf{Claim:} We have%
\begin{equation*}
\left\Vert \sum\limits_{(j,k,I,J)\in \mathcal{B}_{i}}|I||J|{\widetilde{\phi }%
}_{j,k}(\left( z,u\right) \circ \left( x_{I},y_{J}\right) ^{-1})\phi
_{j,k}\ast \left( T_{N}^{-1}(f)\right) (x_{I},y_{J})\right\Vert _{p}^{p}\leq
C2^{ip}|\Omega _{i}|,
\end{equation*}%
which together with the fact $0<p\leq 1$ yields 
\begin{eqnarray*}
||f||_{p}^{p} &\leq &\sum_{i}\left\Vert \sum\limits_{(j,k,I,J)\in \mathcal{B}%
_{i}}|I||J|{\widetilde{\phi }}_{j,k}(\left( z,u\right) \circ \left(
x_{I},y_{J}\right) ^{-1})\phi _{j,k}\ast \left( T_{N}^{-1}(f)\right)
(x_{I},y_{J})\right\Vert _{p}^{p} \\
&\leq &C\sum_{i}2^{ip}|\Omega _{i}|\leq C\left\Vert \widetilde{g}%
(f)\right\Vert _{p}^{p}\leq C||f||_{H_{flag}^{p}}^{p}.
\end{eqnarray*}

To obtain the claim, note that $\phi ^{(1)}$ and $\psi ^{(2)}$ are radial
functions supported in unit balls in $\mathbb{H}^{n}$ and $\mathbb{R}$
respectively. Hence, if $(j,k,I,J)\in \mathcal{B}_{i}$, then $\phi
_{j,k}(\left( z,u\right) \circ \left( x_{I},y_{J}\right) ^{-1})$ is
supported in 
\begin{equation*}
{\widetilde{\Omega _{i}}}=\{(z,u):M_{S}(\chi _{\Omega _{i}})(z,u)>{\frac{{1}%
}{{100}}}\}.
\end{equation*}%
Thus, by H\"{o}lder's inequality, 
\begin{eqnarray*}
&&\left\Vert \sum\limits_{(j,k,I,J)\in \mathcal{B}_{i}}|J||I|{\widetilde{%
\phi }}_{j,k}(\left( z,u\right) \circ \left( x_{I},y_{J}\right) ^{-1})\phi
_{j,k}\ast \left( T_{N}^{-1}(f)\right) (x_{I},y_{J})\right\Vert _{p}^{p} \\
&\leq &|{\widetilde{\Omega _{i}}}|^{1-{\frac{{p}}{{2}}}}\left\Vert
\sum\limits_{(j,k,I,J)\in \mathcal{B}_{i}}|J||I|{\widetilde{\phi }}%
_{j,k}(\left( z,u\right) \circ \left( x_{I},y_{J}\right) ^{-1})\phi
_{j,k}\ast \left( T_{N}^{-1}(f)\right) (x_{I},y_{J})\right\Vert _{2}^{p}.
\end{eqnarray*}%
By duality, for all $g\in L^{2}$ with $\Vert g\Vert _{2}\leq 1,$ 
\begin{eqnarray*}
&&\left\vert \left\langle \sum\limits_{(j,k,I,J)\in \mathcal{B}_{i}}|J||I|{%
\widetilde{\phi }}_{j,k}(\left( z,u\right) \circ \left( x_{I},y_{J}\right)
^{-1})\phi _{j,k}\ast \left( T_{N}^{-1}(f)\right)
(x_{I},y_{J}),g\right\rangle \right\vert \\
&=&\left\vert \sum\limits_{(j,k,I,J)\in \mathcal{B}_{i}}|J||I|{\widetilde{%
\phi }}_{j,k}\ast g(x_{I},y_{J})\phi _{j,k}\ast \left( T_{N}^{-1}(f)\right)
(x_{I},y_{J})\right\vert \\
&\leq &C\left( \sum\limits_{(j,k,I,J)\in \mathcal{B}_{i}}|I||J||\phi
_{j,k}\ast \left( T_{N}^{-1}(f)\right) (x_{I},y_{J})|^{2}\right) ^{\frac{1}{2%
}}\cdot \left( \sum\limits_{(j,k,I,J)\in \mathcal{B}_{i}}|I||J||{\widetilde{%
\phi }}_{j,k}\ast g(x_{I},y_{J})|^{2}\right) ^{\frac{1}{2}}.
\end{eqnarray*}%
Since 
\begin{eqnarray*}
&&\left( \sum\limits_{(j,k,I,J)\in \mathcal{B}_{i}}|I||J||{\widetilde{\phi }}%
_{j,k}\ast g(x_{I},y_{J})|^{2}\right) ^{\frac{1}{2}} \\
&\leq &\left( \sum\limits_{(j,k,I,J)\in \mathcal{B}_{i}}|I||J|\left(
M_{S}\left( {\widetilde{\phi }}_{j,k}\ast g\right) (z,u)\chi _{I}(z)\chi
_{J}(u)\right) ^{2}\right) ^{\frac{1}{2}} \\
&\leq &C\left( \sum_{j,k}\int_{\mathbb{C}^{n}}\int_{\mathbb{R}}\left(
M_{S}\left( {\widetilde{\phi }}_{j,k}\ast g\right) ^{2}(z,u)dzdu\right)
\right) ^{\frac{1}{2}}\leq C||g||_{2},
\end{eqnarray*}%
the claim now follows from the fact that $|{\widetilde{\Omega _{i}}}|\leq C|{%
\Omega _{i}}|$ and the following estimate: 
\begin{eqnarray*}
C2^{2i}|\Omega _{i}| &\geq &\int\limits_{{\widetilde{\Omega _{i}}}\backslash 
{\Omega _{i+1}}}\widetilde{g}^{2}(f)(z,u)dzdu \\
&\geq &\sum\limits_{(j,k,I,J)\in \mathcal{B}_{i}}|\phi _{j,k}\ast \left(
T_{N}^{-1}(f)\right) (x_{I},y_{J})|^{2}|(I\times J)\cap {{\widetilde{\Omega
_{i}}}\backslash {\Omega _{i+1}}}| \\
&\geq &{\frac{{1}}{{2}}}\sum\limits_{(j,k,I,J)\in \mathcal{B}%
_{i}}|I||J||\phi _{j,k}\ast \left( T_{N}^{-1}(f)\right) (x_{I},y_{J})|^{2},
\end{eqnarray*}%
where the fact that $|(I\times J)\cap {{\widetilde{\Omega _{i}}}\backslash {%
\Omega _{i+1}}}|>{\frac{{1}}{{2}}}|I\times J|$ when $(j,k,I,J)\in \mathcal{B}%
{_{i}}$ is used in the last inequality. This finishes the proof of Theorem %
\ref{L<H}.

As a consequence of Theorem \ref{L<H}, we have the following corollary.

\begin{corollary}
\label{4}$H_{flag}^{1}(\mathbb{H}^{n})$ is a subspace of $L^{1}(\mathbb{H}%
^{n})$.
\end{corollary}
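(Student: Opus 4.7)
The plan is to deduce the embedding directly from Theorem \ref{L<H} by a standard density-and-completion argument, with the only subtle point being the identification of the $L^{1}$ limit with the original distribution in $\mathcal{M}_{flag}^{M+\delta}(\mathbb{H}^{n})^{\prime}$.

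First, I would take $f\in H_{flag}^{1}(\mathbb{H}^{n})$, which by Definition \ref{defflagHardy} is an element of $\mathcal{M}_{flag}^{M+\delta}(\mathbb{H}^{n})^{\prime}$ with $g_{flag}(f)\in L^{1}(\mathbb{H}^{n})$. By Proposition \ref{molecules dense}, the space $\mathcal{M}_{flag}^{M+\delta}(\mathbb{H}^{n})$ is dense in $H_{flag}^{1}(\mathbb{H}^{n})$, so one can choose a sequence $\{f_{n}\}\subset \mathcal{M}_{flag}^{M+\delta}(\mathbb{H}^{n})$ with $f_{n}\to f$ in $H_{flag}^{1}$. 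Since every element of $\mathcal{M}_{flag}^{M+\delta}(\mathbb{H}^{n})$ is a bounded, integrable, smooth function with rapid decay (this is immediate from the projected-molecular definition), in particular $f_{n}\in L^{2}(\mathbb{H}^{n})\cap H_{flag}^{1}(\mathbb{H}^{n})$, and likewise $f_{n}-f_{m}\in L^{2}\cap H_{flag}^{1}$ for all $n,m$.

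Next, I would apply Theorem \ref{L<H} with $p=1$ to the differences to obtain
\begin{equation*}
\|f_{n}-f_{m}\|_{L^{1}(\mathbb{H}^{n})}\leq C\|f_{n}-f_{m}\|_{H_{flag}^{1}(\mathbb{H}^{n})},
\end{equation*}
where the constant $C$ is independent of the $L^{2}$ norms. Since $\{f_{n}\}$ is Cauchy in $H_{flag}^{1}$, it is therefore Cauchy in $L^{1}(\mathbb{H}^{n})$, and hence converges to some $F\in L^{1}(\mathbb{H}^{n})$ with $\|F\|_{L^{1}}\leq C\|f\|_{H_{flag}^{1}}$.

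The main remaining point is to verify that $F=f$ in the sense of the ambient distribution space $\mathcal{M}_{flag}^{M+\delta}(\mathbb{H}^{n})^{\prime}$, so that the embedding $H_{flag}^{1}\hookrightarrow L^{1}$ is well defined. For any test function $\varphi\in \mathcal{M}_{flag}^{M+\delta}(\mathbb{H}^{n})$, one has $\langle f_{n},\varphi\rangle\to\langle f,\varphi\rangle$ because $f_{n}\to f$ in $H_{flag}^{1}$ (which continuously embeds into $\mathcal{M}_{flag}^{M+\delta}(\mathbb{H}^{n})^{\prime}$, a fact inherent in Definition \ref{defflagHardy} together with the Plancherel-P\^olya Theorem \ref{P-P}), and simultaneously $\langle f_{n},\varphi\rangle\to\langle F,\varphi\rangle$ because $f_{n}\to F$ in $L^{1}$ and $\varphi\in L^{\infty}$. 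Hence $f=F$ as distributions, so $f\in L^{1}(\mathbb{H}^{n})$ and $\|f\|_{L^{1}}\leq C\|f\|_{H_{flag}^{1}}$.

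The only step I expect to require care is the continuous embedding $H_{flag}^{1}\hookrightarrow\mathcal{M}_{flag}^{M+\delta}(\mathbb{H}^{n})^{\prime}$ used to identify the limit; this is implicit in the definition of $H_{flag}^{1}$ but should be stated explicitly, and then the rest is routine completion.
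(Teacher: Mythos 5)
Your proof is correct and takes essentially the same route as the paper: approximate $f$ by a sequence in $L^{2}\cap H_{flag}^{1}$ via Proposition \ref{molecules dense}, apply Theorem \ref{L<H} (whose constant is independent of the $L^{2}$ norm) to see the sequence is Cauchy in $L^{1}$, and then identify the $L^{1}$ limit with $f$ in $\mathcal{M}_{flag}^{M+\delta}(\mathbb{H}^{n})^{\prime}$. The paper's own proof is terser and simply asserts the final identification; your version spells out the two-sided weak-$\ast$ argument (using $H_{flag}^{1}\hookrightarrow\mathcal{M}_{flag}^{M+\delta}(\mathbb{H}^{n})^{\prime}$ on one side and $L^{1}$ convergence paired against $\varphi\in L^{\infty}$ on the other), which is a welcome added precision but not a different approach.
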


\begin{proof}
Given $f\in H_{flag}^{1}(\mathbb{H}^{n}),$ by Proposition \ref{molecules
dense}, there is a sequence $\{f_{n}\}$ such that $f_{n}\in {L^{2}(\mathbb{H}%
^{n})}\cap {H_{flag}^{1}(\mathbb{H}^{n})}$ and $f_{n}$ converges to $f$ in
the norm of $H_{flag}^{1}(\mathbb{H}^{n}).$ By Theorem \ref{L<H}, $f_{n}$
converges to $g$ in $L^{1}(\mathbb{H}^{n})$ for some $g\in L^{1}(\mathbb{H}%
^{n}).$ Therefore, $f=g$ in $(\mathcal{M}_{flag}^{M+\delta })^{\prime }$.
\end{proof}

We now turn to the proof of Theorem \ref{flagintbounded}.

\begin{proof}
\textbf{(of Theorem \ref{flagintbounded}):} We assume that $K$ is the kernel
of $T$. Applying the discrete Calder\'{o}n reproducing formula in Theorem %
\ref{19} implies that for $f\in L^{2}(\mathbb{H}^{n})\cap H_{flag}^{p}\left( 
\mathbb{H}^{n}\right) $, 
\begin{eqnarray*}
&&\left\Vert \left\{ \sum\limits_{j,k}\sum\limits_{I,J}|\phi _{j,k}\ast
K\ast f(z,u)|^{2}\chi _{I}(z)\chi _{J}(u)\right\} ^{{\frac{{1}}{{2}}}%
}\right\Vert _{p} \\
&=&\left\Vert \left\{
\sum\limits_{j,k}\sum\limits_{I,J}|\sum\limits_{j^{\prime },k^{\prime
}}\sum\limits_{I^{\prime },J^{\prime }}|J^{\prime }||I^{\prime }|\phi
_{j,k}\ast K\ast {\widetilde{\phi }}_{{j^{\prime }},{k^{\prime }}}(\left(
\cdot ,\cdot \right) \circ \left( x_{I},y_{J}\right) ^{-1})(z,u)\right.
\right. \\
&&\left. \left. \times \phi _{{j^{\prime }},{k^{\prime }}}\ast \left(
T_{N}^{-1}(f)\right) (x_{I^{\prime }},y_{J^{\prime }})|^{2}\chi _{I}(z)\chi
_{J}(u)\right\} ^{\frac{1}{2}}\right\Vert _{p},
\end{eqnarray*}%
where the discrete Calder\'{o}n reproducing formula in $L^{2}(\mathbb{H}%
^{n}) $ is used.

Note that $\phi _{jk}$ are dilations of bump functions, and by estimates
similar to the those in (\ref{product est}), one can easily check that 
\begin{equation*}
|\phi _{j,k}\ast K\ast {\widetilde{\phi }}_{j^{\prime },k^{\prime }}(\left(
\cdot ,\cdot \right) \circ \left( x_{I},y_{J}\right) ^{-1})(z,u)|\leq
C2^{-|j-{j^{\prime }}|K}2^{-|k-{k^{\prime }}|K}
\end{equation*}%
\begin{equation*}
\int\limits_{R^{m}}{\frac{{2^{-(j\wedge {j^{\prime }})K}}}{{(2^{-(j\wedge {%
j^{\prime }})}+|z-x_{I^{\prime }}|+|v-u-y_{J^{\prime }}|)^{2n+1+K}}}}\cdot {%
\frac{{2^{-(k\wedge {k^{\prime }})K}}}{{(2^{-(k\wedge {k^{\prime }}%
)}+|v|)^{1+K}}}}dv,
\end{equation*}%
where $K$ depends on $M_{0}$ given in Theorem \ref{flagintbounded}, and $%
M_{0}$ is chosen large enough.

Repeating an argument similar to that in the proof of Theorem \ref{P-P},
together with Corollary \ref{L2 dense}, we obtain 
\begin{eqnarray*}
&&\Vert Tf\Vert _{H_{flag}^{p}} \\
&\leq &C\left\Vert \left\{ \sum\limits_{j^{\prime }}\sum\limits_{k^{\prime
}}\left( M_{S}\left[ \left( \sum\limits_{J^{\prime }}\sum\limits_{I^{\prime
}}|\phi _{{j^{\prime }},{k^{\prime }}}\ast \left( T_{N}^{-1}(f)\right)
(x_{I^{\prime }},y_{J^{\prime }})|\chi _{J^{\prime }}\chi _{I^{\prime
}}\right) ^{r}\right] \right) ^{{\frac{{2}}{{r}}}}(z,u)\right\} ^{{\frac{{1}%
}{{2}}}}\right\Vert _{p}
\end{eqnarray*}%
\begin{equation*}
\leq C\left\Vert \left\{ \sum\limits_{j^{\prime }}\sum\limits_{k^{\prime
}}\sum\limits_{J^{\prime }}\sum\limits_{I^{\prime }}|\phi _{{j^{\prime }},{%
k^{\prime }}}\ast \left( T_{N}^{-1}(f)\right) (x_{I^{\prime }},y_{J^{\prime
}})|^{2}\chi _{J^{\prime }}(u)\chi _{I^{\prime }}(z)\right\} ^{{\frac{{1}}{{2%
}}}}\right\Vert _{p}\leq C\Vert f\Vert _{H_{flag}^{p}},
\end{equation*}%
where the last inequality follows from Corollary \ref{L2 dense}.

Since $L^{2}(\mathbb{H}^{n})\cap H_{flag}^{p}\left( \mathbb{H}^{n}\right) $
is dense in $H_{flag}^{p}\left( \mathbb{H}^{n}\right) ,$ $T$ can be extended
to a bounded operator on $H_{flag}^{p}\left( \mathbb{H}^{n}\right) $, and
this ends the proof of Theorem \ref{flagintbounded}.
\end{proof}

We now immediately obtain the proof of Theorem \ref{flagintHtoL}.

\begin{proof}
\textbf{(Theorem \ref{flagintHtoL}):} We note that $H_{flag}^{p}\cap L^{2}$
is dense in $H_{flag}^{p}$, so we only have to obtain the required
inequality for $f\in H_{flag}^{p}\cap L^{2}$. Thus Theorem \ref{flagintHtoL}
follows immediately from Theorems \ref{flagintbounded} and \ref{L<H}.
\end{proof}

\section{Duality of Hardy spaces $H_{flag}^{p}$\label{s duality}}

A. Chang and R. Fefferman established that the dual space of $H^{1}(\mathbb{R%
}_{+}^{2}\times \mathbb{R}_{+}^{2})$ is $BMO(\mathbb{R}_{+}^{2}\times 
\mathbb{R}_{+}^{2})$ (\cite{CF1}) by using the bi-Hilbert transform, and
consequently, their method is not directly applicable to the implicit
two-parameter structure associated to flag singular integrals. In order to
deal with the duality theory of $H_{flag}^{p}\left( \mathbb{H}^{n}\right) $
for all $0<p\leq 1$, we proceed differently, and first prove Theorem \ref%
{P-PCar}, the Plancherel-P\^{o}lya inequalities for the Carleson space $%
CMO_{flag}^{p}$. This theorem implies that the function space $%
CMO_{flag}^{p} $ is well defined.

\textbf{Proof of Theorem \ref{P-PCar}:} The idea of the proof of this
theorem is, as in the proof of Theorem \ref{P-P}, to use the wavelet Calder%
\'{o}n reproducing formula and almost orthogonality estimates.

For convenience, we prove Theorem \ref{P-PCar} for the smallest Heisenberg
group $\mathbb{H}^{1}=\mathbb{C}\times \mathbb{R}$. However, it will be
clear from the proof that its extension to general $\mathbb{H}^{n}$ is
straightforward. Moreover, to simplify notation, we denote $f_{j,k}=f_{R}$
when $R=I\times J$ is a dyadic rectangle contained in $\mathbb{H}^{1}$ and $%
\ell \left( I\right) =2^{-j-N}$, $\ell \left( J\right) =2^{-k-N}+2^{-j-N}$
are dyadic cubes and intervals respectively. Thus $\left\vert I\right\vert
=2^{-2\left( j+N\right) }$ and $\left\vert J\right\vert =2^{-k-N}+2^{-j-N}$
in this case. Here $N$ is the same as in Theorem \ref{discretethm}. We also
denote by $dist(I,I^{\prime })$ the distance between intervals $I$ and $%
I^{\prime }$, 
\begin{equation*}
S_{R}=\sup\limits_{u\in I,v\in J}|\psi _{R}\ast
f(z,u)|^{2},\,\,\,T_{R}=\inf\limits_{u\in I,v\in J}|\phi _{R}\ast
f(z,u)|^{2}.
\end{equation*}

With this notation, we can rewrite the wavelet Calder\'{o}n reproducing
formula in Theorem \ref{discretethm} as 
\begin{equation*}
f(z,u)=\sum\limits_{R=I\times J}|I||J|{\widetilde{\phi }}_{R}(z,u)\phi
_{R}\ast f(x_{I},y_{J}),
\end{equation*}%
where the sum runs over all rectangles $R=I\times J$. Let 
\begin{equation*}
R^{\prime }=I^{\prime }\times J^{\prime },\ \ \ \left\vert I^{\prime
}\right\vert =2^{-2\left( j^{\prime }+N\right) },\ \ \ \left\vert J^{\prime
}\right\vert =2^{-j^{\prime }-N}+2^{-k^{\prime }-N},\ \ \ j^{\prime
}>k^{\prime }.
\end{equation*}%
Applying the above wavelet Calder\'{o}n reproducing formula, and the
orthogonality estimates in Subsection \ref{5.3}, yields for all $(z,u)\in R,$
\begin{equation*}
|\psi _{R}\ast f(z,u)|^{2}\leq C\sum\limits_{R^{\prime }=I^{\prime }\times
J^{\prime },j^{\prime }>k^{\prime }}({\frac{{|I|}}{{|I^{\prime }|}}}\wedge {%
\frac{{|I^{\prime }|}}{{|I|}}})^{L}({\frac{{|J|}}{{|J^{\prime }|}}}\wedge {%
\frac{{|J^{\prime }|}}{{|J|}}})^{L}\times
\end{equation*}%
\begin{equation*}
{\frac{{|I^{\prime }|}^{K}}{{({|I^{\prime }|}+|z-x_{I^{\prime }}|)^{(2+K)}}}}%
{\frac{{|J^{\prime }|}^{K}}{{({\ |J^{\prime }|}+|u-y_{J^{\prime }}|)^{(1+K)}}%
}}|I^{\prime }||J^{\prime }||\phi _{R^{\prime }}\ast f(x_{I^{\prime
}},y_{J^{\prime }})|^{2}
\end{equation*}%
\begin{equation*}
+C\sum\limits_{R^{\prime }=I^{\prime }\times J^{\prime },j^{\prime }\leq
k^{\prime }}({\frac{{|I|}}{{|I^{\prime }|}}}\wedge {\frac{{|I^{\prime }|}}{{%
|I|}}})^{L}({\frac{{|J|}}{{|J^{\prime }|}}}\wedge {\frac{{|J^{\prime }|}}{{%
|J|}}})^{L}\times
\end{equation*}%
\begin{equation*}
{\frac{{|I^{\prime }|}^{K}}{{({|I^{\prime }|}+|z-x_{I^{\prime }}|)^{(2+K)}}}}%
{\frac{{|I^{\prime }|}^{K}}{{({|I^{\prime }|}+|u-y_{J^{\prime }}|)^{(1+K)}}}}%
|I^{\prime }||J^{\prime }||\phi _{R^{\prime }}\ast f(x_{I^{\prime
}},y_{J^{\prime }})|^{2},
\end{equation*}%
where $K,L$ are any positive integers which can be chosen by $L,K>4\left( {%
\frac{{2}}{{p}}}-1\right) $ (for general $\mathbb{H}^{n}$, $K$ can be chosen
greater than $(2n+2)({\frac{{2}}{{p}}}-1)$), the constant $C$ depends only
on $K,L$ and functions $\psi $ and $\phi $, here $x_{I^{\prime }}$ and $%
y_{J^{\prime }}$ are any fixed points in $I^{\prime },J^{\prime },$
respectively.

Adding up over $R\subseteq \Omega ,$ we obtain 
\begin{equation}
\sum\limits_{R\subseteq \Omega }|I||J|S_{R}\leq C\sum\limits_{R\subseteq
\Omega }\sum\limits_{R^{\prime }}|I^{\prime }||J^{\prime }|r(R,R^{\prime
})P(R,R^{\prime })T_{R^{\prime }},  \label{7.1}
\end{equation}%
where 
\begin{equation*}
r(R,R^{\prime })=({\frac{{|I|}}{{|I^{\prime }|}}}\wedge {\frac{{|I^{\prime }|%
}}{{|I|}}})^{L-1}({\frac{{|J|}}{{|J^{\prime }|}}}\wedge {\frac{{|J^{\prime }|%
}}{{|J|}}})^{L-1}
\end{equation*}%
and 
\begin{equation*}
P(R,R^{\prime })={\frac{{1}}{{(1+{\frac{{dist(I,I^{\prime })}}{{|I^{\prime }|%
}}})^{2+K}(1+{\frac{{dist(J,J^{\prime })}}{{|J^{\prime }|}}})^{1+K}}}}
\end{equation*}%
if $j^{\prime }>k^{\prime }$, and 
\begin{equation*}
P(R,R^{\prime })={\frac{{1}}{{(1+{\frac{{dist(I,I^{\prime })}}{{|I^{\prime }|%
}}})^{2+K}(1+{\frac{{dist(J,J^{\prime })}}{{|I^{\prime }|}}})^{1+K}}}}
\end{equation*}%
if $j^{\prime }\leq k^{\prime }$.

We estimate the right-hand side in the above inequality, where we first
consider 
\begin{equation*}
R^{\prime }=I^{\prime }\times J^{\prime },\ \ \ \left\vert I^{\prime
}\right\vert =2^{-\left( j^{\prime }+N\right) },\ \ \ \left\vert J^{\prime
}\right\vert =2^{-j^{\prime }-N}+2^{-k^{\prime }-N},\ \ \ j^{\prime
}>k^{\prime }.
\end{equation*}%
Define 
\begin{equation*}
\Omega ^{i,\ell }=\bigcup_{I\times J\subset \Omega }3(2^{i}I\times 2^{\ell
}J)\,\,\text{for}\,\,i,\ell \geq 0.
\end{equation*}%
Let $B_{i,\ell }$ be a collection of dyadic rectangles $R^{\prime }$ so that
for $i,\ell \geq 1$ 
\begin{equation*}
B_{i,\ell }=\{R^{\prime }=I^{\prime }\times J^{\prime i}I^{\prime \ell
}J^{\prime })\bigcap \Omega ^{i,\ell }\neq \emptyset \,\,\text{and}%
\,\,3(2^{i-1}I^{\prime }\times 2^{\ell -1}J^{\prime i,\ell }=\emptyset \},
\end{equation*}%
and 
\begin{equation*}
B_{0,\ell }=\{R^{\prime }=I^{\prime }\times J^{\prime },3(I^{\prime }\times
2^{\ell }J^{\prime 0,\ell }\neq \emptyset \,\,\text{and}\,\,3(I^{\prime
}\times 2^{\ell -1}J^{\prime 0,\ell }=\emptyset \}\,\,\text{for}\,\,\ell
\geq 1,
\end{equation*}%
and 
\begin{equation*}
B_{i,0}=\{R^{\prime }=I^{\prime }\times J^{\prime i}I^{\prime }\times
J^{\prime i,0}\neq \emptyset \,\,\text{and}\,\,3(2^{i-1}I^{\prime }\times
J^{\prime i,0}=\emptyset \}\,\,\text{for}\,\,i\geq 1,
\end{equation*}%
and 
\begin{equation*}
B_{0,0}=\{R^{\prime }:R^{\prime }=I^{\prime }\times J^{\prime },3(I^{\prime
}\times J^{\prime })\bigcap \Omega ^{0,0}\neq \emptyset \}.
\end{equation*}%
We write 
\begin{equation*}
\sum\limits_{R\subseteq \Omega }\sum\limits_{R^{\prime }}|I^{\prime
}||J^{\prime }|r(R,R^{\prime })P(R,R^{\prime })T_{R^{\prime
}}=\sum\limits_{i\geq 0,\ell \geq 0}\sum\limits_{R^{\prime }\in {B_{i,\ell }}%
}\sum\limits_{R\subseteq \Omega }|I^{\prime }||J^{\prime }|r(R,R^{\prime
})P(R,R^{\prime })T_{R^{\prime }}.
\end{equation*}

To estimate the right-hand side of the above equality, we first consider the
case when $i=\ell =0.$ Note that when $R^{\prime }\in B_{0,0}$, $3R^{\prime
0,0}\neq \emptyset .$ For each integer $h\geq 1,$ let 
\begin{equation*}
\mathcal{F}_{h}=\{R^{\prime }=I^{\prime }\times J^{\prime }\in
B_{0,0},|(3I^{\prime }\times 3J^{\prime 0,0}|\geq ({\frac{{1}}{{2^{h}}}}%
)|3I^{\prime }\times 3J^{\prime }|\}.
\end{equation*}%
Let $\mathcal{D}_{h}=\mathcal{F}_{h}\backslash \mathcal{F}_{h-1},$ and $%
\Omega _{h}=\bigcup_{R^{\prime }\in \mathcal{D}_{h}}R^{\prime }.$ Finally,
assume that for any open set $\Omega \subset R^{2},$ 
\begin{equation*}
\sum\limits_{R=I\times J\subseteq \Omega }|I||J|T_{R}\leq C{|\Omega |}^{{%
\frac{{2}}{{p}}}-1}.
\end{equation*}%
Since $B_{0,0}=\bigcup_{h\geq 1\mathcal{D}h}$ and for each $R^{\prime }\in
B_{0,0},P(R,R^{\prime })\leq 1,$ thus, 
\begin{equation*}
\sum\limits_{R^{\prime }\in {B_{0,0}}}\sum\limits_{R\subseteq \Omega
}|I^{\prime }||J^{\prime }|r(R,R^{\prime })P(R,R^{\prime })T_{R^{\prime }}
\end{equation*}%
\begin{equation*}
\leq \sum\limits_{h\geq 1}\sum\limits_{R^{\prime }\subseteq \Omega
_{h}}\sum\limits_{R\subseteq \Omega }|I^{\prime }||J^{\prime }|r(R,R^{\prime
})T_{R^{\prime }}
\end{equation*}

For each $h\geq 1$ and $R^{\prime }\subseteq \Omega _{h}$, we decompose $%
\{R:R\subseteq \Omega \}$ into 
\begin{equation*}
A_{0,0}(R^{\prime })=R\subseteq \Omega :\,\,\text{dist}(I,I^{\prime })\leq
|I|\vee |I^{\prime }|,\,\text{dist}(J,J^{\prime })\leq |J|\vee |J^{\prime }|;
\end{equation*}%
\begin{equation*}
A_{i^{\prime },0}(R^{\prime })=R\subseteq \Omega :\,2^{i^{\prime
}-1}(|I|\vee |I^{\prime }|)<\,\text{dist}(I,I^{\prime })\leq 2^{i^{\prime
}}(|I|\vee |I^{\prime }|),\,\text{dist}(J,J^{\prime })\leq |J|\vee
|J^{\prime }|;
\end{equation*}%
\begin{equation*}
A_{0,\ell ^{\prime }}(R^{\prime })=R\subseteq \Omega :\,\text{dist}%
(I,I^{\prime })\leq |I|\vee |I^{\prime }|,\ 2^{\ell ^{\prime }-1}(|J|\vee
|J^{\prime }|)<\text{dist}(J,J^{\prime })\leq 2^{\ell ^{\prime }}(|J|\vee
|J^{\prime }|);
\end{equation*}%
\begin{equation*}
A_{i^{\prime },\ell ^{\prime }}(R^{\prime })=R\subseteq \Omega :\
2^{i^{\prime }-1}(|I|\vee |I^{\prime }|)<\text{dist}(I,I^{\prime })\leq
2^{i^{\prime }}(|I|\vee |I^{\prime }|),
\end{equation*}%
\begin{equation*}
2^{\ell ^{\prime }-1}(|J|\vee |J^{\prime }|)<\text{dist}(J,J^{\prime \ell
^{\prime }}(|J|\vee |J^{\prime }|),
\end{equation*}%
where $i^{\prime },\ell ^{\prime }\geq 1$.

Now we split $\sum\limits_{h\geq 1}\sum\limits_{R^{\prime }\subseteq \Omega
_{h}}\sum\limits_{R\subseteq \Omega }|I^{\prime }||J^{\prime }|r(R,R^{\prime
})P(R,R^{\prime })T_{R^{\prime }}$ into 
\begin{equation*}
\sum\limits_{h\geq 1}\sum\limits_{R^{\prime }\in \Omega
_{h}}\sum\limits_{R\in A_{0,0}(R^{\prime })}+\sum\limits_{i^{\prime }\geq
1}\sum\limits_{R\in A_{i^{\prime },0}(R^{\prime })}+\sum\limits_{\ell
^{\prime }\geq 1}\sum\limits_{R\in A_{0,\ell ^{\prime }}(R^{\prime
})}+\sum\limits_{i^{\prime },\ell ^{\prime }\geq 1}\sum\limits_{R\in
A_{i^{\prime },\ell ^{\prime }}(R^{\prime })}|I^{\prime }||J^{\prime }|
\end{equation*}%
\begin{equation*}
\times r(R,R^{\prime })P(R,R^{\prime })T_{R^{\prime
}}=:I_{1}+I_{2}+I_{3}+I_{4}.
\end{equation*}

To estimate term $I_{1}$, we only need to estimate $\sum\limits_{R\in
A_{0,0}(R^{\prime })}r(R,R^{\prime })$ since $P(R,R^{\prime })\leq 1$ in
this case.

Note that $R\in A_{0,0}(R^{\prime })$ implies $3R\bigcap3R^{\prime }\ne
\emptyset$. For such $R$, there are four cases:

\textbf{Case 1}: $|I^{\prime }|\geq |I|$, $|J^{\prime }|\leq |J|$;

\textbf{Case 2}: $|I^{\prime }|\leq |I|$, $|J^{\prime }|\geq |J|$;

\textbf{Case 3}: $|I^{\prime }|\geq |I|$, $|J^{\prime }|\geq |J|$;

\textbf{Case 4}: $|I^{\prime }|\leq |I|$, $|J^{\prime }|\leq |J|$.

In each case, we can estimate $\sum_{R\in A_{0,0}}r\left( R,R^{\prime
-hL}\right) $ by using a simple geometric argument similar to that of
Chang-R. Fefferman \cite{CF3}. This, together with (\ref{7.1}), implies that 
$I_{1}$ is bounded by 
\begin{equation*}
\sum\limits_{h\geq 1}2^{-hL}|\Omega _{h}|^{{\frac{{2}}{{p}}}-1}\leq
C\sum\limits_{h\geq 1}h^{{\frac{{2}}{{p}}}-1}2^{-h(L-{\frac{{2}}{{p}}}%
+1)}|\Omega ^{0,0}|^{{\frac{{2}}{{p}}}-1}\leq C|\Omega |^{{\frac{{2}}{{p}}}%
-1},
\end{equation*}%
since $|\Omega _{h}|\leq Ch2^{h}|\Omega ^{0,0}|$ and $|\Omega ^{0,0}|\leq
C|\Omega |.$

Thus it remains to estimate term $I_{4}$, since estimates of $I_{2}$ and $%
I_{3}$ can be derived using the same techniques as in $I_{1}$ and $I_{4}$.
The estimate for this term is more complicated than that for term $I_{1}$.

As in estimating term $I_{1}$, we only need to estimate the sum $\sum_{R\in
A_{i^{\prime },\ell ^{\prime }}}r(R,R^{\prime })$. Note that $R\in
A_{i^{\prime },\ell ^{\prime }}(R^{\prime })$ implies $3(2^{i^{\prime
}}I\times 2^{\ell ^{\prime }}J)\cap 3(2^{i^{\prime }}I^{\prime \ell ^{\prime
}}J^{\prime })\neq \emptyset $. We also split our estimate into four cases.

\textbf{Case 1}: $|2^{i^{\prime }}I^{\prime }|\geq |2^{i^{\prime }}I|$, $%
|2^{\ell ^{\prime }}J^{\prime }|\geq |2^{\ell ^{\prime }}J|$. Then 
\begin{equation*}
{\frac{{|2^{i^{\prime }}I|}}{{|3\cdot 2^{i^{\prime }}I^{\prime }|}}}%
\left\vert 3\left( 2^{i^{\prime }}I^{\prime }\times 2^{\ell ^{\prime
}}J^{\prime }\right) \right\vert \leq \left\vert 3\left( 2^{i^{\prime
}}I^{\prime }\times 2^{\ell ^{\prime }}J^{\prime }\right) \cap 3\left(
2^{i^{\prime }}I\times 2^{\ell ^{\prime }}J\right) \right\vert
\end{equation*}%
\begin{equation*}
\leq C2^{i^{\prime }}2^{\ell ^{\prime }}|3R^{\prime }\cap \Omega ^{0,0}|\leq
C2^{i^{\prime }}2^{\ell ^{\prime }}{\frac{{1}}{{2^{h-1}}}}|3R^{\prime }|\leq
C{\frac{{1}}{{2^{h-1}}}}|3(2^{i^{\prime }}I^{\prime }\times 2^{\ell ^{\prime
}}J^{\prime })|.
\end{equation*}%
Thus $\left\vert 2^{i^{\prime }}I^{\prime }\right\vert ={2^{h-1+n}}%
\left\vert 2^{i^{\prime }}I\right\vert $ for some $n\geq 0$. For each fixed $%
n$, the number of such $2^{i^{\prime }}I$ must be $\leq 2^{n}\cdot 5$.
Similarly $|2^{\ell ^{\prime }}J|=2^{m}|2^{\ell ^{\prime }}J^{\prime }|$ for
some $m\geq 0$, and for each fixed $m$, $3\cdot 2^{\ell ^{\prime }}J\cap
3\cdot 2^{\ell ^{\prime }}J^{\prime }\neq \emptyset $ implies that the
number of such $2^{\ell ^{\prime }}J^{\prime }$ is less than $5$. Thus 
\begin{equation*}
\sum_{R\in case1}r(R,R^{\prime })\leq \sum_{m,n\geq 0}r\left( R,R^{\prime
}\right) \left( {\frac{1}{{2^{n+m+h-1}}}}\right) ^{L}2^{n}\cdot 5^{2}\leq
C2^{-hL}.
\end{equation*}

Similarly, we can handle the other three cases. Combining the four cases, we
have 
\begin{equation*}
\sum_{R\in A_{i^{\prime },\ell ^{\prime }}(R^{\prime })}r\left( R,R^{\prime
}\right) \leq C2^{-hL},
\end{equation*}%
which, together with the estimate for $P(R,R^{\prime }),$ implies that 
\begin{equation*}
I_{4}\leq C\sum\limits_{h\geq 1}\sum\limits_{i^{\prime },\ell ^{\prime }\geq
1}\sum\limits_{R^{\prime }\subseteq \Omega _{h}}2^{-hL}2^{-i^{\prime
}(2+K)}2^{-\ell ^{\prime }(1+K)}|I^{\prime }||J^{\prime }|T_{R^{\prime }}.
\end{equation*}%
Hence $I_{4}$ is bounded by 
\begin{equation*}
\sum\limits_{h\geq 1}2^{-hL}|\Omega _{h}|^{{\frac{{2}}{{p}}}-1}\leq
C\sum\limits_{h\geq 1}h^{{\frac{{2}}{{p}}}-1}2^{-h(L-{\frac{{2}}{{p}}}%
+1)}|\Omega ^{0,0}|^{{\frac{{2}}{{p}}}-1}\leq C|\Omega |^{{\frac{{2}}{{p}}}%
-1},
\end{equation*}%
since $|\Omega _{h}|\leq Ch2^{h}|\Omega ^{0,0}|$ and $|\Omega ^{0,0}|\leq
C|\Omega |$. Combining $I_{1}$, $I_{2}$, $I_{3}$ and $I_{4}$, we have 
\begin{equation*}
{\frac{1}{{|\Omega |^{{\frac{{2}}{{p}}}-1}}}}\sum_{R^{\prime }\in {B_{0,0}}%
}\sum\limits_{R\subseteq \Omega }|I^{\prime }||J^{\prime }|r(R,R^{\prime
})P(R,R^{\prime })T_{R^{\prime }}\leq C\sup_{\bar{\Omega}}{\frac{1}{{|\bar{%
\Omega}|^{{\frac{{2}}{{p}}}-1}}}}\sum_{R^{\prime }\subseteq \bar{\Omega}%
}|I^{\prime }||J^{\prime }|T_{R^{\prime }}.
\end{equation*}

Now we consider 
\begin{equation*}
\sum\limits_{i,\ell \geq 1}\sum\limits_{R^{\prime }\in {B_{i,\ell }}%
}\sum\limits_{R\subseteq \Omega }|I^{\prime }||J^{\prime }|r(R,R^{\prime
})P(R,R^{\prime })T_{R^{\prime }}.
\end{equation*}%
Note that for $R^{\prime }\in {B_{i,\ell }}$, $3\left( 2^{i}I^{\prime
}\times 2^{\ell }J^{\prime }\right) \cap \Omega ^{i,\ell }\neq \emptyset $.
Let 
\begin{equation*}
\mathcal{F}_{h}^{i,\ell }=\left\{ R^{\prime }\in B_{i,\ell }:\left\vert
3\left( 2^{i}I^{\prime }\times 2^{\ell }J^{\prime }\right) \cap \Omega
^{i,\ell }\right\vert \geq {\frac{1}{{2^{h}}}}\left\vert 3\left(
2^{i}I^{\prime }\times 2^{\ell }J^{\prime }\right) \right\vert \right\} ,
\end{equation*}%
\begin{equation*}
\mathcal{D}_{h}^{i,\ell }=\mathcal{F}_{h}^{i,\ell }\setminus \mathcal{F}%
_{h-1}^{i,\ell }
\end{equation*}%
and 
\begin{equation*}
\Omega _{h}^{i,\ell }=\bigcup_{R^{\prime }\in \mathcal{D}_{h}^{i,\ell
}}R^{\prime }.
\end{equation*}%
Since $B_{i,\ell }=\bigcup_{h\geq 1}\mathcal{D}_{h}^{i,\ell }$, we first
estimate 
\begin{equation*}
\sum_{R^{\prime }\in \mathcal{D}_{h}^{i,\ell }}\sum_{R\subseteq \Omega
}|I^{\prime }||J^{\prime }|r(R,R^{\prime })P(R,R^{\prime })T_{R^{\prime }}
\end{equation*}%
for some $i,\ell ,h\geq 1$.

Note that for each $R^{\prime }\in \mathcal{D}_{h}^{i,\ell }$, $3\left(
2^{i}I^{\prime }\times 2^{\ell }J^{\prime }\right) \cap \Omega ^{i-1,\ell
-1}=\emptyset $. So for any $R\subseteq \Omega $, we have $2^{i}(|I|\vee
|I^{\prime }|)\leq \,\text{dist}(I,I^{\prime })$ and $2^{\ell }(|J|\vee
|J^{\prime }|)\leq \,\,\text{dist}(J,J^{\prime })$. We decompose $\{R:\
R\subseteq \Omega \}$ by 
\begin{equation*}
A_{i^{\prime },\ell ^{\prime }}(R^{\prime i^{\prime }-1}\cdot 2^{i}(|I|\vee
|I^{\prime }|)\leq \,\text{dist}(I,I^{\prime i^{\prime }}\cdot 2^{i}(|I|\vee
|I^{\prime }|),
\end{equation*}%
\begin{equation*}
2^{\ell ^{\prime }-1}\cdot 2^{\ell }(|J|\vee |J^{\prime }|)\leq \,\text{dist}%
(J,J^{\prime \ell ^{\prime }}\cdot 2^{\ell }(|J|\vee |J^{\prime }|)\},
\end{equation*}%
where $i^{\prime },\ell ^{\prime }\geq 1$. Then we write 
\begin{equation*}
\sum_{R^{\prime }\in \mathcal{D}_{h}^{i,\ell }}\sum_{R\subseteq \Omega
}|I^{\prime }||J^{\prime }|r(R,R^{\prime })P(R,R^{\prime })T_{R^{\prime
}}=\sum_{i^{\prime },\ell ^{\prime }\geq 1}\sum_{R^{\prime }\in \mathcal{D}%
_{h}^{i,\ell }}\sum_{R\in A_{i^{\prime },\ell ^{\prime }}(R^{\prime
})}|I^{\prime }||J^{\prime }|r(R,R^{\prime })P(R,R^{\prime })T_{R^{\prime }}
\end{equation*}

Since $P\left( R,R^{\prime }\right) \leq 2^{-i\left( 2+K\right) }2^{-\ell
(1+K)}2^{-i^{\prime }(2+K)}$ for $R^{\prime }\in B_{i,\ell }$ and $R\in
A_{i^{\prime },\ell ^{\prime }}(R^{\prime })$, we can repeat the same proof
with $B_{0,0}$ replaced by $B_{i,\ell }$, with with the necessary
modifications, to obtain 
\begin{equation*}
\sum_{R^{\prime }\in \mathcal{D}_{h}^{i,\ell }}\sum_{R\in A_{i^{\prime
},\ell ^{\prime }}(R^{\prime })}|I^{\prime }||J^{\prime }|r(R,R^{\prime
})P(R,R^{\prime })T_{R^{\prime }}\leq C2^{-i(2+K)}2^{-\ell
(1+K)}2^{-i^{\prime }(2+K)}2^{-\ell ^{\prime }(1+K)}\times
\end{equation*}%
\begin{equation*}
i^{{\frac{2}{p}}-1}2^{i\left( {\frac{2}{p}}-1\right) }{\ell }^{{\frac{2}{p}}%
-1}2^{\ell \left( {\frac{2}{p}}-1\right) }h^{{\frac{2}{p}}-1}2^{-h\left( L-{%
\frac{2}{p}}+1\right) }\sup_{\bar{\Omega}}{\frac{1}{{|\bar{\Omega}|^{{\frac{{%
2}}{{p}}}-1}}}}\sum_{R^{\prime }\subseteq \bar{\Omega}}|I^{\prime
}||J^{\prime }|T_{R^{\prime }}.
\end{equation*}%
Adding over all $i,\ell ,i^{\prime },\ell ^{\prime },h\geq 1,$ we have 
\begin{equation*}
{\frac{1}{{|\Omega |^{{\frac{{2}}{{p}}}-1}}}}\sum\limits_{i,\ell \geq
1}\sum\limits_{R^{\prime }\in {B_{i,\ell }}}\sum\limits_{R\subseteq \Omega
}|I^{\prime }||J^{\prime }|r(R,R^{\prime })P(R,R^{\prime })T_{R^{\prime
}}\leq C\sup_{\bar{\Omega}}{\frac{1}{{|\bar{\Omega}|^{{\frac{{2}}{{p}}}-1}}}}%
\sum_{R^{\prime }\subseteq \bar{\Omega}}|I^{\prime }||J^{\prime
}|T_{R^{\prime }}.
\end{equation*}

Similar estimates, which we leave to the reader, hold for 
\begin{equation*}
\sum\limits_{i\geq 1}\sum\limits_{R^{\prime }\in {B_{i,0}}%
}\sum\limits_{R\subseteq \Omega }|I^{\prime }||J^{\prime }|r(R,R^{\prime
})P(R,R^{\prime })T_{R^{\prime }}
\end{equation*}%
and 
\begin{equation*}
\sum\limits_{\ell \geq 1}\sum\limits_{R^{\prime }\in {B_{0,\ell }}%
}\sum\limits_{R\subseteq \Omega }|I^{\prime }||J^{\prime }|r(R,R^{\prime
})P(R,R^{\prime })T_{R^{\prime }},
\end{equation*}%
which, after adding over all $i,\ell \geq 0$, complete the proof of Theorem %
\ref{P-PCar}.

\bigskip

As a consequence of Theorem \ref{P-PCar}, it is easy to see that the space $%
CMO_{F}^{p}$ is well defined. In particular, we have

\begin{corollary}
We have 
\begin{equation*}
\Vert f\Vert _{CMO_{F}^{p}}\approx \sup_{\Omega }\left\{ {\frac{{1}}{{%
|\Omega |}^{{\frac{{2}}{{p}}}-1}}}\sum\limits_{j}\sum\limits_{k}\sum%
\limits_{I\times J\subseteq \Omega }|\psi _{j,k}\ast
f(x_{I},y_{J})|^{2}|I||J|\right\} ^{\frac{{1}}{{2}}},
\end{equation*}%
where $I\times J$ is a dyadic rectangle in $\mathbb{H}^{n}$ with $\ell
\left( I\right) =2^{-j-N}$ and $\ell \left( J\right) =2^{-j-N}+2^{-k-N}$,
and where $x_{I},y_{J}$ are any fixed points in $I,J,$ respectively.
\end{corollary}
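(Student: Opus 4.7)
The corollary is an immediate consequence of the Plancherel--P\^{o}lya-type inequality in Theorem \ref{P-PCar}. The plan is to sandwich both the integral form that defines $\Vert f\Vert_{CMO_{flag}^{p}}$ and the ``sampled'' form on the right-hand side of the corollary between the sup- and inf-versions shown to be equivalent by that theorem.

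First, I would observe that for any open set $\Omega$ of finite measure and any dyadic rectangle $\mathcal{R}\in\mathsf{R}_{+}$ with $\mathcal{R}\subseteq\Omega$, one has
\[
\int_{\Omega}|\psi_{\mathcal{R}}\ast f(z,u)|^{2}\chi_{\mathcal{R}}(z,u)\,dz\,du=\int_{\mathcal{R}}|\psi_{\mathcal{R}}\ast f(z,u)|^{2}\,dz\,du,
\]
which trivially satisfies
\[
|\mathcal{R}|\inf_{(z,u)\in\mathcal{R}}|\psi_{\mathcal{R}}\ast f(z,u)|^{2}\;\leq\;\int_{\mathcal{R}}|\psi_{\mathcal{R}}\ast f|^{2}\;\leq\;|\mathcal{R}|\sup_{(z,u)\in\mathcal{R}}|\psi_{\mathcal{R}}\ast f(z,u)|^{2}.
\]
Summing over $\mathcal{R}\subseteq\Omega$, dividing by $|\Omega|^{2/p-1}$, and taking the supremum over $\Omega$, the quantity $\Vert f\Vert_{CMO_{flag}^{p}}$ is trapped between the inf- and sup-versions appearing in Theorem \ref{P-PCar} (applied with $\phi=\psi$). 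By that theorem, these two versions are comparable, so both are equivalent to $\Vert f\Vert_{CMO_{flag}^{p}}$.

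For the expression on the right-hand side of the corollary, the pointwise value $|\psi_{j,k}\ast f(x_{I},y_{J})|^{2}$ at any fixed sample point $(x_{I},y_{J})\in I\times J$ again lies between the inf and sup over $I\times J$, so its weighted sum is likewise trapped between the inf- and sup-versions, and hence equivalent to $\Vert f\Vert_{CMO_{flag}^{p}}$. This gives the claimed equivalence. As a bonus, applying Theorem \ref{P-PCar} with two distinct choices of component functions $\psi$ and $\phi$ satisfying the conditions of Subsection \ref{component} shows that the resulting norms are equivalent, so the space $CMO_{flag}^{p}$ is well defined independently of the chosen component functions. There is no essential obstacle here, as all of the real work has been carried out in the proof of Theorem \ref{P-PCar}; the only content is the bookkeeping of the three sandwich inequalities.
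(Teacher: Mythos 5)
Your proposal is correct and follows exactly the route the paper intends: the paper prefaces the Corollary with the remark that it is a consequence of Theorem \ref{P-PCar}, and your sandwich argument (taking $\phi=\psi$ in that theorem and trapping both the integral $\int_{\mathcal{R}}|\psi_{\mathcal{R}}\ast f|^{2}$ and the sample $|\psi_{j,k}\ast f(x_I,y_J)|^{2}|I||J|$ between $|\mathcal{R}|\inf_{\mathcal{R}}$ and $|\mathcal{R}|\sup_{\mathcal{R}}$) is precisely the bookkeeping the paper leaves implicit.
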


We are now ready to give the proof of Theorem \ref{sequenceduality}.

\begin{proof}
\textbf{(of Theorem \ref{sequenceduality}):} We first prove $c^{p}\subseteq
(s^{p})^{\ast }.$ Applying the proof in Theorem \ref{L<H}, set 
\begin{equation*}
s(z,u)=\{\sum\limits_{I\times J}|s_{I\times J}|^{2}|I|^{-1}|J|^{-1}\chi
_{I}(z)\chi _{J}(u)\}^{\frac{{1}}{{2}}}
\end{equation*}%
and 
\begin{equation*}
\Omega _{i}=\{(z,u)\in \mathbb{H}^{n}:s(z,u)>2^{i}\}.
\end{equation*}%
Let 
\begin{equation*}
\mathcal{B}_{i}=\{(I\times J):|(I\times J)\cap \Omega _{i}|>{\frac{{1}}{{2}}}%
|I\times J|,|(I\times J)\cap \Omega _{i+1}|\leq {\frac{{1}}{{2}}}|I\times
J|\},
\end{equation*}%
where $I\times J$ is a dyadic rectangle in $\mathbb{H}^{n}$ with $\ell
\left( I\right) =2^{-j-N}$ and $\ell \left( J\right) =2^{-j-N}+2^{-k-N}$.
Suppose $t=\{t_{I\times J}\}\in c^{p}$ and write 
\begin{eqnarray}
\left\vert \sum\limits_{I\times J}s_{I\times J}\overline{t}_{I\times
J}\right\vert &=&\left\vert \sum\limits_{i}\sum\limits_{(I\times J)\in 
\mathcal{B}_{i}}s_{I\times J}\overline{t}_{I\times J}\right\vert  \label{7.2}
\\
&\leq &\left\{ \sum\limits_{i}\left[ \sum\limits_{(I\times J)\in \mathcal{B}%
_{i}}|s_{I\times J}|^{2}\right] ^{\frac{{p}}{{2}}}\left[ \sum\limits_{(I%
\times J)\in \mathcal{B}_{i}}|t_{I\times J}|^{2}\right] ^{\frac{{p}}{{2}}%
}\right\} ^{\frac{{1}}{{p}}}  \notag \\
&\leq &C\Vert t\Vert _{c^{p}}\left\{ \sum\limits_{i}|\Omega _{i}|^{1-{\frac{{%
p}}{{2}}}}\left[ \sum\limits_{(I\times J)\in \mathcal{B}_{i}}|s_{I\times
J}|^{2}\right] ^{\frac{{p}}{{2}}}\right\} ^{\frac{{1}}{{p}}}  \notag
\end{eqnarray}%
since if $I\times J\in \mathcal{B}{_{i}}$, then 
\begin{eqnarray*}
I\times J &\subseteq &{\widetilde{\Omega _{i}}}=\left\{ (z,u):M_{S}(\chi
_{\Omega _{i}})(z,u)>{\frac{{1}}{{2}}}\right\} , \\
|{\widetilde{\Omega _{i}}}| &\leq &C|{\Omega _{i}}|,
\end{eqnarray*}%
and $\{t_{I\times J}\}\in c^{p}$ yield 
\begin{equation*}
\{\sum\limits_{(I\times J)\in \mathcal{B}_{i}}|t_{I\times J}|^{2}\}^{\frac{{1%
}}{{2}}}\leq C\Vert t\Vert _{c^{p}}|{\Omega _{i}}|^{{\frac{{1}}{{p}}}-{\frac{%
{1}}{{2}}}}.
\end{equation*}

The same proof as in the claim of Theorem 4.4 implies 
\begin{equation*}
\sum\limits_{(I\times J)\in \mathcal{B}_{i}}|s_{I\times J}|^{2}\leq C2^{2i}|{%
\Omega _{i}}|.
\end{equation*}%
Substituting the above term back into the last term in (\ref{7.2}) gives $%
c^{p}\subseteq (s^{p})^{\ast }$.

The proof of the converse is simple and is similar to the one given in \cite%
{FJ} for $p=1$ in the one-parameter setting on $R^{n}$. If $\ell \in
(s^{p})^{\ast },$ then it is clear that $\ell (s)=\sum\limits_{I\times
J}s_{I\times J}{\overline{t}}_{I\times J}$ for some $t=\{t_{I\times J}\}$.
Now fix an open set $\Omega \subset \mathbb{H}^{n}$ and let $S$ be the
sequence space of all $s=\{s_{I\times J}\}$ such that $I\times J\subseteq
\Omega .$ Finally, let $\mu $ be a measure on $S$ so that the $\mu -$measure
of the \textquotedblleft point\textquotedblright\ $I\times J$ is ${\frac{{1}%
}{{{|\Omega |}^{{\frac{{2}}{{p}}}-1}}}}.$ Then, 
\begin{equation*}
\{{\frac{{1}}{{|\Omega |^{{\frac{{2}}{{p}}}-1}}}}\sum_{I\times J\subseteq
\Omega }|t_{I\times J}|^{2}\}^{\frac{{1}}{{2}}}=\Vert t_{I\times J}\Vert
_{\ell ^{2}(S,d\mu )}
\end{equation*}%
\begin{equation*}
=\sup_{\Vert s\Vert _{\ell ^{2}(S,d\mu )}\leq 1}|{\frac{{1}}{{|\Omega |^{{%
\frac{{2}}{{p}}}-1}}}}\sum\limits_{I\times J\subseteq \Omega }s_{I\times J}{%
\overline{t}}_{I\times J}|
\end{equation*}%
\begin{equation*}
\leq \Vert t\Vert _{(s^{p})^{\ast }}\sup_{\Vert s\Vert _{\ell ^{2}(S,d\mu
)}\leq 1}\Vert s_{I\times J}{\frac{{1}}{{|\Omega |^{{\frac{{2}}{{p}}}-1}}}}%
\Vert _{s^{p}}.
\end{equation*}%
By H\"{o}lder's inequality, 
\begin{eqnarray*}
\Vert s_{I\times J}{\frac{{1}}{{|\Omega |^{{\frac{{2}}{{p}}}-1}}}}\Vert
_{s^{p}} &=&{\frac{{1}}{{|\Omega |^{{\frac{{2}}{{p}}}-1}}}}%
\{\int\limits_{\Omega }(\sum\limits_{I\times J\subseteq \Omega }|s_{I\times
J}|^{2}|I\times J|^{-1}\chi _{I}(x)\chi _{J}(y))^{\frac{{p}}{{2}}}dzdu\}^{%
\frac{{1}}{{p}}} \\
&\leq &\{{\frac{{1}}{{|\Omega |^{{\frac{{2}}{{p}}}-1}}}}\int\limits_{\Omega
}\sum\limits_{I\times J\subseteq \Omega }|s_{I\times J}|^{2}{|I\times J|}%
^{-1}\chi _{I}(x)\chi _{J}(y)dzdu\}^{\frac{{1}}{{2}}}=\Vert s\Vert _{\ell
^{2}(S,d\mu )}\leq 1,
\end{eqnarray*}%
which shows $\Vert t\Vert _{c^{p}}\leq \Vert t\Vert _{(s^{p})^{\ast }}$%
\textbf{.}
\end{proof}

In order to use Theorem \ref{sequenceduality} to obtain Theorem \ref{flag
duality}, we introduce a map $S$ which takes $f\in (\mathcal{M}%
_{flag}^{M+\delta })^{\prime }$ to the sequence of coefficients 
\begin{equation*}
Sf\equiv \{s_{I\times J}\}=\left\{ |I|^{\frac{{1}}{{2}}}|J|^{\frac{{1}}{{2}}%
}\psi _{j,k}\ast f(x_{I},y_{J})\right\} ,
\end{equation*}%
where $I\times J$ is a dyadic rectangle in $\mathbb{H}^{n}$ with $\ell
\left( I\right) =2^{-j-N}$ and $\ell \left( J\right) =2^{-j-N}+2^{-k-N}$,
and where $x_{I},y_{J}$ are any fixed points in $I,J,$ respectively. For any
sequence $s=\{s_{I\times J}\},$ we define a map $T$ which takes $s$ to 
\begin{equation*}
T(s)=\sum\limits_{j}\sum\limits_{k}\sum\limits_{J}\sum\limits_{I}|I|^{\frac{{%
1}}{{2}}}|J|^{\frac{{1}}{{2}}}{\widetilde{\psi }}_{j,k}(z,u)s_{I\times J},
\end{equation*}%
where ${\widetilde{\psi }}_{j,k}$ are as in (\ref{defdiscreteCRF}).

The following result together with Theorem \ref{sequenceduality} will give
Theorem \ref{flag duality}.

\begin{theorem}
The maps $S:H_{flag}^{p}\rightarrow s^{p}$ and $S:CMO_{flag}^{p}\rightarrow
c^{p}$, as well as the maps $T:s^{p}\rightarrow H_{flag}^{p}$ and $%
T:c^{p}\rightarrow CMO_{flag}^{p}$ are bounded. Moreover, $T\circ S$ is the
identity on both $H_{flag}^{p}$ and $CMO_{flag}^{p}$.
\end{theorem}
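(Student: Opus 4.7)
The plan is to establish the four boundedness statements by reducing each to a Plancherel--Pólya-type equivalence already in hand, and then to read off $T\circ S=I$ directly from the wavelet Calderón reproducing formula of Theorem~\ref{discretethm}. For the boundedness of $S$, observe that by definition
\begin{equation*}
\|Sf\|_{s^{p}}=\Bigl\|\Bigl\{\sum_{\mathcal{R}=I\times J}\bigl|\psi_{j,k}\ast f(x_{I},y_{J})\bigr|^{2}\chi_{\mathcal{R}}\Bigr\}^{1/2}\Bigr\|_{L^{p}},
\end{equation*}
since the factors $|I|^{1/2}|J|^{1/2}$ are absorbed by the $|\mathcal{R}|^{-1}$ weight in the definition of $s^{p}$. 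The Plancherel--Pólya inequality of Theorem~\ref{P-P}, comparing infima and suprema of $\psi_{\mathcal{R}}\ast f$ over $\mathcal{R}$, identifies this quantity with $\|g_{flag}(f)\|_{L^{p}}\approx\|f\|_{H_{flag}^{p}}$; this gives $S:H_{flag}^{p}\to s^{p}$. An entirely parallel computation, this time invoking the Carleson-measure Plancherel--Pólya inequality of Theorem~\ref{P-PCar}, yields $S:CMO_{flag}^{p}\to c^{p}$.

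For the boundedness of $T$, I will reinterpret the image $Ts$ as an element of $\mathcal{M}_{flag}^{M+\delta}(\mathbb{H}^{n})^{\prime}$ and compute its wavelet coefficients. Given $s=\{s_{\mathcal{R}}\}$, for each dyadic rectangle $\mathcal{R}^{\prime}\in\mathsf{R}_{+}$ one has
\begin{equation*}
\psi_{\mathcal{R}^{\prime}}\ast(Ts)(z,u)=\sum_{\mathcal{R}\in\mathsf{R}_{+}}|\mathcal{R}|^{1/2}\,s_{\mathcal{R}}\,\psi_{\mathcal{R}^{\prime}}\ast\widetilde{\psi}_{\mathcal{R}}(z,u).
\end{equation*}
The almost-orthogonality estimates of Lemma~\ref{almostortho} (or more precisely the product-orthogonality bounds at the level of lifted functions) give rapid decay of $\psi_{\mathcal{R}^{\prime}}\ast\widetilde{\psi}_{\mathcal{R}}$ in both the ratio of scales and in the spatial separation of $\mathcal{R}$ and $\mathcal{R}^{\prime}$. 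Plugging these estimates into the definition of $g_{flag}(Ts)$ and invoking the strong-maximal/Fefferman--Stein vector-valued maximal bound exactly as in the proof of Theorem~\ref{P-P}, one obtains
\begin{equation*}
\|g_{flag}(Ts)\|_{L^{p}}\lesssim\Bigl\|\Bigl\{\sum_{\mathcal{R}}|s_{\mathcal{R}}|^{2}|\mathcal{R}|^{-1}\chi_{\mathcal{R}}\Bigr\}^{1/2}\Bigr\|_{L^{p}}=\|s\|_{s^{p}},
\end{equation*}
establishing $T:s^{p}\to H_{flag}^{p}$. The corresponding bound $T:c^{p}\to CMO_{flag}^{p}$ is obtained by inserting the same pointwise almost-orthogonality bound into the Carleson-measure definition of $CMO_{flag}^{p}$ and running the geometric stopping-time/rectangle-splitting argument used to prove Theorem~\ref{P-PCar}, which is precisely designed to accommodate the rapid decay of $\psi_{\mathcal{R}^{\prime}}\ast\widetilde{\psi}_{\mathcal{R}}$ and the implicit two-parameter geometry.

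Finally, to check $T\circ S=I$, observe that $TSf=\sum_{\mathcal{R}}|\mathcal{R}|\,\widetilde{\psi}_{\mathcal{R}}(z,u)\,\psi_{\mathcal{R}}\ast f(x_{\mathcal{R}},y_{\mathcal{R}})$, which is exactly the wavelet Calderón reproducing formula \eqref{defdiscreteCRF} of Theorem~\ref{discretethm}; convergence holds in $\mathcal{M}_{flag}^{M^{\prime}+\delta}(\mathbb{H}^{n})^{\prime}$ for $M^{\prime}$ large, so the identity holds for all $f\in H_{flag}^{p}$ and for all $f\in CMO_{flag}^{p}\subset\mathcal{M}_{flag}^{M+\delta}(\mathbb{H}^{n})^{\prime}$. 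The main technical obstacle is the bound $T:c^{p}\to CMO_{flag}^{p}$: the Carleson supremum over all open sets $\Omega$ does not submit directly to the maximal-function arguments used for $L^{p}$, and instead requires the careful stopping-time decomposition over the enlargements $\Omega^{i,\ell}$ of $\Omega$ together with the geometric summation over rectangle pairs developed in the proof of Theorem~\ref{P-PCar}. Once that mechanism is installed, the almost-orthogonality estimate furnishes the requisite geometric decay, and the estimate closes in the same manner as Theorem~\ref{P-PCar} itself.
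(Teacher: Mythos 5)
Your proposal is correct and follows essentially the same route as the paper: both establish the boundedness of $S$ directly from the Plancherel--P\^{o}lya inequalities (Theorems \ref{P-P} and \ref{P-PCar}), establish the boundedness of $T$ by computing $\psi_{\mathcal{R}'}\ast Ts$ via the almost-orthogonality estimates and then running the same strong-maximal (for $s^p\to H^p_{flag}$) and stopping-time/Carleson (for $c^p\to CMO^p_{flag}$) machinery as in the proofs of those theorems, and read off $T\circ S=I$ from the wavelet Calder\'{o}n reproducing formula of Theorem \ref{discretethm}. You are in fact slightly more explicit than the paper about the $T:c^p\to CMO^p_{flag}$ step (the paper merely says "a similar adaptation applies"), but the underlying argument is the same.
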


\begin{proof}
The boundedness of $S$ on $H_{flag}^{p}$ and $CMO_{flag}^{p}$ follows
directly from the Plancherel-P\^{o}lya inequalities, Theorem \ref{P-P} and
Theorem \ref{P-PCar}. The boundedness of $T$ also follows from the arguments
in Theorem \ref{P-P} and \ref{P-PCar}. Indeed, to see that $T$ is bounded
from $s^{p}$ to $H_{flag}^{p}$, let $s=\{s_{I\times J}\}.$ Then, by
Proposition \ref{1}, 
\begin{equation*}
\Vert T(s)\Vert _{H_{flag}^{p}}\leq C\left\Vert \left\{
\sum\limits_{j}\sum\limits_{k}\sum\limits_{J}\sum\limits_{I}|\psi _{j,k}\ast
T(s)(z,u)|^{2}\chi _{I}(z)\chi _{J}(u)\right\} ^{{\frac{{1}}{{2}}}%
}\right\Vert _{p}.
\end{equation*}%
By adapting an argument similar to that in the proof of Theorem \ref{P-P},
we have for some $0<r<p$ 
\begin{eqnarray*}
&&|\psi _{j,k}\ast T(s)(z,u)\chi _{I}(z)\chi _{J}(u)|^{2} \\
&=&|\sum\limits_{j^{\prime },k^{\prime }}\sum\limits_{I^{\prime },J^{\prime
}}|I^{\prime }||I^{\prime }|\psi _{j,k}\ast {\widetilde{\psi }}_{{j^{\prime }%
},{k^{\prime }}}(\cdot ,\cdot )(z,u)s_{I^{\prime }\times J^{\prime
}}|I^{\prime }|^{-{\frac{{1}}{{}2}}}|J^{\prime }|^{-{\frac{{1}}{{}2}}}\chi
_{I}(z)\chi _{J}(u)|^{2} \\
&\leq &C\sum\limits_{k\wedge k^{\prime }\leq j\wedge j^{\prime }}2^{-|j-{%
j^{\prime }}|K}2^{-|k-{k^{\prime }}|K}\{M_{S}(\sum\limits_{I^{\prime
},J^{\prime }}|s_{I^{\prime }\times J^{\prime }}||I^{\prime
}|^{-1}|J^{\prime }|^{-1}\chi _{J^{\prime {\chi _{I^{\prime }}}}})^{r}\}^{{%
\frac{{2}}{{r}}}}(z,u)\chi _{I}(z)\chi _{J}(u) \\
&&+\sum\limits_{k\wedge k^{\prime }>j\wedge j^{\prime }}2^{-|j-{j^{\prime }}%
|K}2^{-|k-{k^{\prime }}|K}\{M(\sum\limits_{I^{\prime },J^{\prime
}}|s_{I^{\prime }\times J^{\prime }}||I^{\prime }|^{-1}|J^{\prime
}|^{-1}\chi _{J^{\prime {\chi _{I^{\prime }}}}})^{r}\}^{{\frac{{2}}{{r}}}%
}(z,u)\chi _{I}(z)\chi _{J}(u).
\end{eqnarray*}%
Repeating the argument in Theorem \ref{P-P} gives the boundedness of $T$
from $s^{p}$ to $H_{flag}^{p}$. A similar adaptation of the argument in the
proof of Theorem \ref{P-PCar} applies to yield the boundedness of $T$ from $%
c^{p}$ to $CMO_{flag}^{p}$. We leave the details to the reader. The discrete
Calder\'{o}n reproducing formula, Theorem \ref{discretethm}, and Theorem \ref%
{P-PCar} show that $T\circ S$ is the identity on both $H_{flag}^{p}$ and $%
CMO_{flag}^{p}$.
\end{proof}

We are now ready to give the proofs of Theorems \ref{flag duality} and \ref%
{BMO bound}.

\begin{proof}
\textbf{(of Theorem \ref{flag duality}):} If $f\in \mathcal{M}%
_{flag}^{M+\delta }$ and $g\in CMO_{flag}^{p}$, let $\ell _{g}=\left\langle
f,g\right\rangle $. Then then the discrete Calder\'{o}n reproducing formula,
Theorem \ref{P-PCar} and Theorem \ref{sequenceduality} imply 
\begin{eqnarray*}
\left\vert \ell _{g}\right\vert &=&\left\vert \left\langle f,g\right\rangle
\right\vert =\left\vert \sum\limits_{R=I\times J}|I||J|\psi _{R}\ast
f(x_{I},y_{J}){\widetilde{\psi }}_{R}(g)(x_{I},y_{J})\right\vert \\
&\leq &C\Vert f\Vert _{H_{flag}^{p}}\Vert g\Vert _{CMO_{F}^{p}}.
\end{eqnarray*}

Because $\mathcal{M}_{flag}^{M+\delta }$ is dense in $H_{flag}^{p},$ this
shows that the map $\ell _{g}=\left\langle f,g\right\rangle ,$ defined
initially for $f\in \mathcal{M}_{flag}^{M+\delta }$ can be extended to a
continuous linear functional on $H_{flag}^{p}$ with $\Vert \ell _{g}\Vert
\leq C\Vert g\Vert _{CMO_{flag}^{p}}$.

Conversely, let $\ell \in (H_{flag}^{p})^{\ast }$ and set $\ell _{1}=\ell
\circ T,$ where $T$ is defined as in Theorem \ref{sequenceduality}. Then, by
theorem \ref{sequenceduality}, $\ell _{1}\in (s^{p})^{\ast }$, so by Theorem %
\ref{P-PCar}, there exists $t=\{t_{I\times J}\}$ such that $\ell
_{1}(s)=\sum\limits_{I\times J}s_{I\times J}{\overline{t}}_{I\times J}$ for
all $s=\{s_{I\times J}\}$, and where 
\begin{equation*}
\Vert t\Vert _{c^{p}}\approx \Vert \ell _{1}\Vert \leq C\Vert \ell \Vert ,
\end{equation*}%
because $T$ is bounded. Again, by Theorem \ref{sequenceduality}, $\ell =\ell
\circ T\circ S=\ell _{1}\circ S$. Hence, with 
\begin{equation*}
f\in \mathcal{M}_{flag}^{M+\delta }\text{ and\ }g=\sum\limits_{I\times
J}t_{I\times J}\psi _{R}(\left( z,u\right) \circ \left( x_{I},y_{J}\right)
^{-1}),
\end{equation*}
and where, without loss the generality we may assume that $\psi $ is a
radial function, we have 
\begin{equation*}
\ell (f)=\ell _{1}(S(f))=\left\langle S(f),t\right\rangle =\left\langle
f,g\right\rangle .
\end{equation*}%
This proves $\ell =\ell _{g}$, and by Theorem \ref{sequenceduality} we have%
\begin{equation*}
\Vert g\Vert _{CMO_{flag}^{p}}\leq C\Vert t\Vert _{c^{p}}\leq C\Vert \ell
_{g}\Vert \mathbf{.}
\end{equation*}
\end{proof}

\begin{proof}
\textbf{(of Theorem \ref{BMO bound}):} By Corollary \ref{4}, $H_{flag}^{1}$
is a subspace of $L^{1}$. By the duality of $H_{flag}^{1}$ and $BMO_{flag}$,
we now conclude that $L^{\infty }$ is a subspace of $BMO_{flag}$, and from
the boundedness of flag singular integrals on $H_{flag}^{1}$, we obtain that
flag singular integrals are bounded on $BMO_{flag}$ and hence also from $%
L^{\infty }$ to $BMO_{flag}$. This completes the proof of Theorem \ref{BMO
bound}.
\end{proof}

\section{Calder\'{o}n-Zygmund decomposition and interpolation\label{s CZ
decomposition}}

In this section we derive a Calder\'{o}n-Zygmund decomposition using
functions in flag Hardy spaces. As an application, we prove an interpolation
theorem for the spaces $H_{flag}^{p}\left( \mathbb{H}^{n}\right) $.

We first recall that A. Chang and R. Fefferman established the following
Calder\'{o}n-Zygmund decomposition on the pure product domain $\mathbb{R}%
_{+}^{2}\times \mathbb{R}_{+}^{2}$ (\cite{CF2}).

\begin{lemma}[\textbf{Calder\'{o}n-Zygmund Lemma}]
Let $\alpha >0$ be given and $f\in L^{p}(\mathbb{R}^{2})$, $1<p<2$. Then we
may write $f=g+b$ where $g\in L^{2}(\mathbb{R}^{2})$ and $b\in H^{1}(\mathbb{%
R}_{+}^{2}\times \mathbb{R}_{+}^{2})$ with $||g||_{2}^{2}\leq \alpha
^{2-p}||f||_{p}^{p}$ and $||b||_{H^{1}(\mathbb{R}_{+}^{2}\times \mathbb{R}%
_{+}^{2})}\leq C\alpha ^{1-p}||f||_{p}^{p}$, where $c$ is an absolute
constant.
\end{lemma}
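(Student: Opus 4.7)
The plan is to mimic the classical Calderón–Zygmund decomposition but carry it out at the level of Littlewood–Paley coefficients rather than on the physical side, since the pure product setting has no well-behaved Whitney decomposition of a general open set into pairwise disjoint rectangles. First I would fix a product Littlewood–Paley decomposition $\{D_{j}D_{k}^{\prime }\}_{j,k\in \mathbb{Z}}$ adapted to $\mathbb{R}_{+}^{2}\times \mathbb{R}_{+}^{2}$ and the associated product wavelet reproducing formula $f=\sum_{R=I\times J}f_{R}\widetilde{\psi }_{R}$, where the sum runs over dyadic rectangles, $f_{R}=|R|^{1/2}(D_{j}D_{k}^{\prime}f)(x_{I},y_{J})$ are wavelet coefficients, and $\{\widetilde{\psi }_{R}\}$ are the product molecules produced by the formula. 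The A. Chang–R. Fefferman product square-function characterization gives $\|f\|_{p}\approx \|S(f)\|_{p}$ where $S(f)^{2}=\sum_{R}|f_{R}|^{2}|R|^{-1}\chi _{R}$.

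Next, with $\alpha >0$ given, I would introduce the product stopping-time set
\begin{equation*}
\Omega =\left\{ (x,y)\in \mathbb{R}^{2}:M_{S}(S(f))(x,y)>\alpha \right\} ,
\end{equation*}
where $M_{S}$ is the strong maximal function. By the $L^{p}$-boundedness of $M_{S}$ and the square-function characterization, $|\Omega |\le C\alpha ^{-p}\|f\|_{p}^{p}$. I then split the wavelet coefficients according to the geometry of $\Omega $: let
\begin{equation*}
g=\sum_{R\not\subset \Omega }f_{R}\widetilde{\psi }_{R},\qquad b=\sum_{R\subset \Omega }f_{R}\widetilde{\psi }_{R},
\end{equation*}
so that $f=g+b$. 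The good part $g$ collects those coefficients whose rectangle is \emph{not} contained in the level set; roughly speaking, these are the pieces of $f$ which look bounded by $\alpha $ at scale $R$. The bad part $b$ collects the pieces inside $\Omega $.

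For the estimate on $g$ I would use Plancherel for the product wavelet expansion together with the fact that on $R\not\subset \Omega $ the square function does not exceed $\alpha $ in $M_{S}$-average, which after a standard $L^{\infty }$/$L^{2}$ interpolation yields $\|g\|_{2}^{2}\lesssim \int S(g)^{2}\lesssim \alpha ^{2-p}\|f\|_{p}^{p}$. For the estimate on $b$, a Plancherel–Pólya argument of the type used in Theorems \ref{P-P} and \ref{P-PCar} gives
\begin{equation*}
\|b\|_{H_{product}^{1}}\lesssim \Bigl\|\Bigl(\sum_{R\subset \Omega }|f_{R}|^{2}|R|^{-1}\chi _{R}\Bigr)^{1/2}\Bigr\|_{1}=\int_{\Omega }S(f)(x,y)\,dx\,dy,
\end{equation*}
and this last integral is controlled, via the John–Nirenberg/Carleson type estimate together with $|\Omega |\le C\alpha ^{-p}\|f\|_{p}^{p}$ and $\|S(f)\chi _{\Omega }\|_{1}\le \|S(f)\|_{p}|\Omega |^{1/p^{\prime }}$, by $C\alpha ^{1-p}\|f\|_{p}^{p}$, which is the desired bound.

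The genuinely hard step is the $H_{product}^{1}$-estimate for $b$: unlike the one-parameter case, the mere containment $R\subset \Omega $ does not immediately organize the pieces $f_{R}\widetilde{\psi }_{R}$ into $H^{1}$-atoms, because the open set $\Omega $ cannot in general be tiled by rectangles with controlled overlap. The way around this, in the spirit of Chang–Fefferman, is to avoid atomic decompositions entirely and pass through the square function characterization of $H_{product}^{1}$, which is exactly what the Plancherel–Pólya step above does; the Journé covering lemma, or Pipher's refinement, then intervenes only implicitly through the $L^{1}$-bound for the strong maximal function applied to $\chi _{\Omega }$. Once both estimates are in hand, we recover the stated bounds $\|g\|_{2}^{2}\leq \alpha ^{2-p}\|f\|_{p}^{p}$ and $\|b\|_{H^{1}(\mathbb{R}_{+}^{2}\times \mathbb{R}_{+}^{2})}\leq C\alpha ^{1-p}\|f\|_{p}^{p}$.
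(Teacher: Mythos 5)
The paper does not actually prove this lemma; it is stated as a recalled result of Chang and R. Fefferman, cited to \cite{CF2}. That said, your argument is essentially the one the paper carries out for its own flag analogue (Theorem \ref{CZ decomposition}): pass to a discrete/wavelet Calder\'on reproducing formula, sort the dyadic rectangles by whether or not they sit inside a level set of the square function, and control the resulting pieces through a Plancherel--P\^olya type equivalence. The one genuine variation is how the level set is measured against a rectangle. You take $\Omega =\{M_{S}(Sf)>\alpha \}$ and split on the containment $R\subset \Omega $ versus $R\not\subset \Omega $, while both \cite{CF2} and the paper's flag proof use $\Omega_0 =\{Sf>\alpha \}$ and split on the density condition $|R\cap \Omega_0 |\geq \tfrac{1}{2}|R|$ versus $|R\cap \Omega_0 |<\tfrac{1}{2}|R|$; these are equivalent in spirit (by Chebyshev, $R\not\subset \{M_{S}(Sf)>\alpha \}$ forces $|R\cap \{Sf>2\alpha \}|<\tfrac{1}{2}|R|$), so either version gives the estimates. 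The one place your write-up is too loose is the $g$-estimate: the phrase ``$L^{\infty }/L^{2}$ interpolation'' is doing a lot of work where what is really needed is the standard good-rectangle count
\begin{equation*}
\sum_{R\not\subset \Omega }|f_{R}|^{2}\leq 2\int_{\{Sf\leq 2\alpha \}}\sum_{R\not\subset \Omega }\frac{|f_{R}|^{2}}{|R|}\chi _{R}\leq 2\int_{\{Sf\leq 2\alpha \}}(Sf)^{2}\leq C\alpha ^{2-p}\Vert Sf\Vert _{p}^{p},
\end{equation*}
which uses that at least half of each such $R$ lies in $\{Sf\leq 2\alpha \}$. You should also note that the display
\begin{equation*}
\Bigl\Vert \Bigl(\sum_{R\subset \Omega }|f_{R}|^{2}|R|^{-1}\chi _{R}\Bigr)^{1/2}\Bigr\Vert _{1}=\int_{\Omega }S(f)
\end{equation*}
is an inequality $\leq $, not an equality, which is all you need; and the appeal to ``John--Nirenberg/Carleson'' is superfluous since H\"older plus $|\Omega |\leq C\alpha ^{-p}\Vert f\Vert _{p}^{p}$ already closes the $b$-estimate. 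None of this affects the correctness of the approach.
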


We now prove the Calder\'{o}n-Zygmund decomposition in the setting of flag
Hardy spaces on the Heisenberg group.

\begin{proof}
\textbf{(of Theorem \ref{CZ decomposition}):} We first assume $f\in L^{2}(%
\mathbb{H}^{n})\cap H_{flag}^{p}\left( \mathbb{H}^{n}\right) .$ Let $\alpha
>0$ and 
\begin{equation*}
\Omega _{\ell }=\{(z,u)\in \mathbb{H}^{n}:S(f)(z,u)>\alpha 2^{\ell }\},
\end{equation*}%
where, as in Corollary \ref{L2 dense}, 
\begin{equation*}
S(f)(z,u)=\left\{ \sum\limits_{j,k}\sum\limits_{I,J}|\phi _{jk}\ast \left(
T_{N}^{-1}(f)\right) (x_{I},y_{J})|^{2}\chi _{I}(z)\chi _{J}(u)\right\} ^{%
\frac{1}{2}}.
\end{equation*}%
It was shown in Corollary \ref{L2 dense} that for $f\in L^{2}(\mathbb{H}%
^{n})\cap H_{flag}^{p}\left( \mathbb{H}^{n}\right) $, we have $%
||f||_{H_{flag}^{p}}\approx ||S(f)||_{p}$.

In the following we denote dyadic rectangles in $\mathbb{H}^{n}$ by $%
R=I\times J$ with $\ell \left( I\right) =2^{-j-N}$ and $\ell \left( J\right)
=2^{-j-N}+2^{-k-N}$, where $j,k$ are integers and $N$ is sufficiently large.
Let 
\begin{equation*}
\mathcal{R}_{0}=\left\{ R=I\times J,\,\,\text{such that}\,\,|R\cap \Omega
_{0}|<\frac{1}{2}|R|\right\}
\end{equation*}%
and for $\ell \geq 1$ 
\begin{equation*}
\mathcal{R}_{\ell }=\left\{ R=I\times J,\,\,\text{such that}\,\,|R\cap
\Omega _{\ell -1}|\geq \frac{1}{2}|R|\,\,\text{but}\,\,|R\cap \Omega _{\ell
}|<\frac{1}{2}|R|\right\} .
\end{equation*}%
By the discrete Calder\'{o}n reproducing formula in Theorem \ref{19}, 
\begin{eqnarray*}
f(z,u) &=&\sum\limits_{j,k}\sum\limits_{I,J}|I||J|{\widetilde{\phi }}%
_{jk}(\left( z,u\right) \circ \left( x_{I},y_{J}\right) ^{-1})\phi _{jk}\ast
\left( T_{N}^{-1}(f)\right) (x_{I},y_{J}) \\
&=&\sum\limits_{\ell \geq 1}\sum\limits_{I\times J\in \mathcal{R}_{\ell
}}|I||J|{\widetilde{\phi }}_{jk}(\left( z,u\right) \circ \left(
x_{I},y_{J}\right) ^{-1})\phi _{jk}\ast \left( T_{N}^{-1}(f)\right)
(x_{I},y_{J}) \\
&&+\sum\limits_{I\times J\in \mathcal{R}_{0}}|I||J|{\widetilde{\phi }}%
_{jk}(\left( z,u\right) \circ \left( x_{I},y_{J}\right) ^{-1})\phi _{jk}\ast
\left( T_{N}^{-1}(f)\right) (x_{I},y_{J}) \\
&=&b(z,u)+g(z,u)
\end{eqnarray*}%
When $p_{1}>1$, using a duality argument, it is easy to show 
\begin{equation*}
||g||_{p_{1}}\leq C\left\Vert \left\{ \sum\limits_{R=I\times J\in \mathcal{R}%
_{0}}|\phi _{jk}\ast \left( T_{N}^{-1}(f)\right) (x_{I},y_{J})|^{2}\chi
_{I}\chi _{J}\right\} ^{\frac{1}{2}}\right\Vert _{p_{1}}.
\end{equation*}%
Next, we estimate $||g||_{H_{flag}^{p_{1}}}$ when $0<p_{1}\leq 1$. Clearly,
the duality argument will not work here. Nevertheless, we can estimate the $%
H_{flag}^{p_{1}}$ norm directly by using the discrete Calder\'{o}n
reproducing formula in Theorem \ref{19}. To this end, we note that 
\begin{equation*}
||g||_{H_{flag}^{p_{1}}}\leq \left\Vert \left\{ \sum\limits_{j^{\prime
},k^{\prime }}\sum\limits_{I^{\prime },J^{\prime }}|\left( \psi _{j^{\prime
}k^{\prime }}\ast g\right) (x_{I^{\prime }},y_{J^{\prime }})|^{2}\chi
_{I^{\prime }}(z)\chi _{J^{\prime }}(u)\right\} ^{\frac{1}{2}}\right\Vert
_{L^{p_{1}}}.
\end{equation*}%
Since 
\begin{eqnarray*}
&&\left( \psi _{j^{\prime },k^{\prime }}\ast g\right) (x_{I^{\prime
}},y_{J^{\prime }}) \\
&=&\sum\limits_{I\times J\in \mathcal{R}_{0}}|I||J|\left( \psi _{j^{\prime
}k^{\prime }}\ast {\widetilde{\phi }}_{jk}\right) (\left( x_{I^{\prime
}},y_{J^{\prime }}\right) \circ \left( x_{I},y_{J}\right) ^{-1})\phi
_{jk}\ast \left( T_{N}^{-1}(f)\right) (x_{I},y_{J}),
\end{eqnarray*}%
we can repeat the argument in the proof of Theorem \ref{L<H} to obtain 
\begin{eqnarray*}
&&|\left\Vert \left\{ \sum\limits_{j^{\prime },k^{\prime
}}\sum\limits_{I^{\prime },J^{\prime }}|\left( \psi _{j^{\prime }k^{\prime
}}\ast g\right) (x_{I^{\prime }},y_{J^{\prime }})|^{2}\chi _{I^{\prime
}}(z)\chi _{J^{\prime }}(u)\right\} ^{\frac{1}{2}}\right\Vert |_{L^{p_{1}}}
\\
&\leq &C\left\Vert \left\{ \sum\limits_{R=I\times J\in \mathcal{R}_{0}}|\phi
_{jk}\ast \left( T_{N}^{-1}(f)\right) (x_{I},y_{J})|^{2}\chi _{I}\chi
_{J}\right\} ^{\frac{1}{2}}\right\Vert _{p_{1}}.
\end{eqnarray*}%
This shows that for all $0<p_{1}<\infty $ 
\begin{equation*}
||g||_{H_{flag}^{p_{1}}}\leq C\left\Vert \left\{ \sum\limits_{R=I\times J\in 
\mathcal{R}_{0}}|\phi _{jk}\ast \left( T_{N}^{-1}(f)\right)
(x_{I},y_{J})|^{2}\chi _{I}\chi _{J}\right\} ^{\frac{1}{2}}\right\Vert
_{p_{1}}.
\end{equation*}

\medskip

\textbf{Claim 1:} We have 
\begin{equation*}
\int_{S(f)(z,u)\leq \alpha }S^{p_{1}}(f)(z,u)dzdu\geq C\left\Vert \left\{
\sum\limits_{R=I\times J\in \mathcal{R}_{0}}|\phi _{jk}\ast \left(
T_{N}^{-1}(f)\right) (x_{I},y_{J})|^{2}\chi _{I}\chi _{J}\right\} ^{\frac{1}{%
2}}\right\Vert _{p_{1}}.
\end{equation*}%
This claim implies 
\begin{eqnarray*}
||g||_{p_{1}} &\leq &C\int_{S(f)(z,u)\leq \alpha }S^{p_{1}}(f)(z,u)dzdu \\
&\leq &C\alpha ^{p_{1}-p}\int_{S(f)(z,u)\leq \alpha }S^{p}(f)(z,u)dzdu \\
&\leq &C\alpha ^{p_{1}-p}||f||_{H_{flag}^{p}\left( \mathbb{H}^{n}\right)
}^{p}.
\end{eqnarray*}%
To prove Claim 1, we let $R=I\times J\in \mathcal{R}_{0}$. Choose $0<q<p_{1}$
and note that 
\begin{eqnarray*}
&&\int_{S(f)(z,u)\leq \alpha }S^{p_{1}}(f)(z,u)dzdu \\
&=&\int_{S(f)(z,u)\leq \alpha }\left\{
\sum\limits_{j,k}\sum\limits_{I,J}|\phi _{jk}\ast \left(
T_{N}^{-1}(f)\right) (x_{I},y_{J})|^{2}\chi _{I}(z)\chi _{J}(u)\right\} ^{%
\frac{p_{1}}{2}}dzdu \\
&\geq &C\int_{\Omega _{0}^{c}}\left\{ \sum\limits_{R\in \mathcal{R}%
_{0}}|\phi _{jk}\ast \left( T_{N}^{-1}(f)\right) (x_{I},y_{J})|^{2}\chi
_{I}\chi _{J}\right\} ^{\frac{p_{1}}{2}}dzdu \\
&=&C\int_{\mathbb{H}^{n}}\left\{ \sum\limits_{R\in \mathcal{R}_{0}}|\phi
_{jk}\ast \left( T_{N}^{-1}(f)\right) (x_{I},y_{J})|^{2}\chi _{R\cap \Omega
_{0}^{c}}(z,u)\right\} ^{\frac{p_{1}}{2}}dzdu \\
&\geq &C\int_{\mathbb{H}^{n}}\left\{ \left\{ \sum\limits_{R\in \mathcal{R}%
_{0}}\left( M_{S}\left( |\phi _{jk}\ast \left( T_{N}^{-1}(f)\right)
(x_{I},y_{J})|^{q}\chi _{R\cap \Omega _{0}^{c}}\right) (z,u)\right) ^{\frac{2%
}{q}}\right\} ^{\frac{q}{2}}\right\} ^{\frac{p_{1}}{q}}dzdu \\
&\geq &C\int_{\mathbb{H}^{n}}\left\{ \sum\limits_{R\in \mathcal{R}_{0}}|\phi
_{jk}\ast \left( T_{N}^{-1}(f)\right) (x_{I},y_{J})|^{2}\chi
_{R}(z,u)\right\} ^{\frac{p_{1}}{2}}dzdu
\end{eqnarray*}%
In the last inequality above we have used the fact that $|\Omega
_{0}^{c}\cap (I\times J)|\geq \frac{1}{2}|I\times J|$ for $I\times J\in 
\mathcal{R}_{0}$, and thus 
\begin{equation*}
\chi _{R}(z,u)\leq 2^{\frac{1}{q}}M_{S}(\chi _{R\cap \Omega _{0}^{c}})^{%
\frac{1}{q}}(z,u).
\end{equation*}%
In the second to last inequality above we have used the vector-valued
Fefferman-Stein inequality for the strong maximal function 
\begin{equation*}
\left\Vert \left( \sum\limits_{k=1}^{\infty }(M_{S}f_{k})^{r}\right) ^{\frac{%
1}{r}}\right\Vert _{p}\leq C\left\Vert \left( \sum\limits_{k=1}^{\infty
}|f_{k}|^{r}\right) ^{\frac{1}{r}}\right\Vert _{p},
\end{equation*}%
with the exponents $r=2/q>1$ and $p=p_{1}/q>1$. Thus Claim 1 follows.

We now recall $\widetilde{\Omega _{\ell }}=\{(z,u)\in \mathbb{H}%
^{n}:M_{S}(\chi _{\Omega _{\ell }})>\frac{1}{2}\}$.

\medskip

\textbf{Claim 2:} For $p_{2}\leq 1$, 
\begin{equation*}
\left\Vert \sum\limits_{I\times J\in \mathcal{R}_{\ell }}|I||J|{\widetilde{%
\phi }}_{jk}(\left( z,u\right) \circ \left( x_{I},y_{J}\right) ^{-1})\phi
_{jk}\ast \left( T_{N}^{-1}f\right) (x_{I},y_{J})\right\Vert
_{H_{flag}^{p_{2}}}^{p_{2}}\leq C(2^{\ell }\alpha )^{p_{2}}|\widetilde{%
\Omega _{\ell -1}}|.
\end{equation*}%
Claim 2 implies 
\begin{eqnarray*}
||b||_{H_{flag}^{p_{2}}}^{p_{2}} &\leq &\sum\limits_{\ell \geq 1}(2^{\ell
}\alpha )^{p_{2}}|\widetilde{\Omega _{\ell -1}}|\leq C\sum\limits_{\ell \geq
1}(2^{\ell }\alpha )^{p_{2}}|\Omega _{\ell -1}| \\
&\leq &C\int_{S(f)(z,u)>\alpha }S^{p_{2}}f(z,u)dzdu \\
&\leq &C\alpha ^{p_{2}-p}\int_{S(f)(z,u)>\alpha }S^{p}f(z,u)dzdu\leq C\alpha
^{p_{2}-p}||f||_{H_{flag}^{p}}^{p}.
\end{eqnarray*}%
To prove Claim 2, we again have 
\begin{eqnarray*}
&&\left\Vert \sum\limits_{I\times J\in \mathcal{R}_{\ell }}|I||J|{\widetilde{%
\phi }}_{jk}(\left( z,u\right) \circ \left( x_{I},y_{J}\right) ^{-1})\phi
_{jk}\ast \left( T_{N}^{-1}f\right) (x_{I},y_{J})\right\Vert
_{H_{flag}^{p_{2}}}^{p_{2}} \\
&\leq &C|\left\Vert \left\{ \sum\limits_{j^{\prime }k^{\prime
}}\sum\limits_{I^{\prime },J^{\prime }}\left\vert \sum\limits_{I\times J\in 
\mathcal{R}_{\ell }}|I||J|\left( \psi _{j^{\prime }k^{\prime }}\ast {%
\widetilde{\phi }}_{jk}\right) (\left( x_{I^{\prime }},y_{J^{\prime
}}\right) \circ \left( x_{I},y_{J}\right) ^{-1})\phi _{jk}\ast \left(
T_{N}^{-1}f\right) (x_{I},y_{J})\right\vert ^{2}\right\} ^{\frac{1}{2}%
}\right\Vert _{L^{p_{2}}} \\
&\leq &C\left\Vert \left\{ \sum\limits_{R=I\times J\in \mathcal{R}_{\ell
}}|\phi _{jk}\ast \left( T_{N}^{-1}f\right) (x_{I},y_{J})|^{2}\chi _{I}\chi
_{J}\right\} ^{\frac{1}{2}}\right\Vert _{p_{2}}
\end{eqnarray*}%
where we can use an argument similar to that in the proof of Theorem \ref%
{P-P} to prove the last inequality.

However, 
\begin{eqnarray*}
&&\sum\limits_{\ell =1}^{\infty }(2^{\ell }\alpha )^{p_{2}}|\widetilde{%
\Omega }_{\ell -1}| \\
&\geq &\int_{\widetilde{\Omega }_{\ell -1}\backslash \Omega _{\ell
}}S(f)^{p_{2}}(z,u)dzdu \\
&=&\int_{\widetilde{\Omega }_{\ell -1}\backslash \Omega _{\ell }}\left\{
\sum\limits_{j,k}\sum\limits_{I,J}|\phi _{jk}\ast \left(
T_{N}^{-1}(f)\right) (x_{I},y_{J})|^{2}\chi _{I}(z)\chi _{J}(u)\right\} ^{%
\frac{p_{2}}{2}}dzdu \\
&=&\int_{\mathbb{H}^{n}}\left\{ \sum\limits_{j,k}\sum\limits_{I,J}|\phi
_{jk}\ast \left( T_{N}^{-1}(f)\right) (x_{I},y_{J})|^{2}\chi _{(I\times
J)\cap \widetilde{\Omega }_{\ell -1}\backslash \Omega _{\ell
})}(z,u)\right\} ^{\frac{p_{2}}{2}}dzdu \\
&\geq &\int_{\mathbb{H}^{n}}\left\{ \sum\limits_{I\times J\in \mathcal{R}%
_{\ell }}|\phi _{jk}\ast \left( T_{N}^{-1}(f)\right) (x_{I},y_{J})|^{2}\chi
_{(I\times J)\cap \widetilde{\Omega }_{\ell -1}\backslash \Omega _{\ell
})}(z,u)\right\} ^{\frac{p_{2}}{2}}dzdu \\
&\geq &\int_{\mathbb{H}^{n}}\left\{ \sum\limits_{I\times J\in \mathcal{R}%
_{\ell }}|\phi _{jk}\ast \left( T_{N}^{-1}(f)\right) (x_{I},y_{J})|^{2}\chi
_{I}(z)\chi _{J}(u)\right\} ^{\frac{p_{2}}{2}}dzdu
\end{eqnarray*}

In the above string of inequalities, we have used the fact that for $R\in 
\mathcal{R}_{\ell }$ we have 
\begin{equation*}
|R\cap \Omega _{\ell -1}|>\frac{1}{2}|R|\,\,\,\text{and}\,\,|R\cap \Omega
_{\ell }|\leq \frac{1}{2}|R|
\end{equation*}%
and consequently $R\subset \widetilde{\Omega }_{\ell -1}$. Therefore $|R\cap
(\widetilde{\Omega }_{\ell -1}\backslash \Omega _{\ell })|>\frac{1}{2}|R|$.
Thus the same argument applies here to conclude the last inequality above.
Finally, since $L^{2}(\mathbb{H}^{n})$ is dense in $H_{flag}^{p}\left( 
\mathbb{H}^{n}\right) $, Theorem \ref{CZ decomposition} is proved.
\end{proof}

We are now ready to prove the interpolation theorem on Hardy spaces $%
H_{flag}^{p}$ for all $0<p<\infty $.

\begin{proof}
\textbf{(of Theorem \ref{interpolation}):} Suppose that $T$ is bounded from $%
H_{flag}^{p_{2}}$ to $L^{p_{2}}$ and from $H_{flag}^{p_{1}}$ to $L^{p_{1}}$.
For any given $\lambda >0$ and $f\in H_{flag}^{p}$, by the Calder\'{o}%
n-Zygmund decomposition, 
\begin{equation*}
f(z,u)=g(z,u)+b(z,u)
\end{equation*}%
with 
\begin{equation*}
||g||_{H_{flag}^{p_{1}}}^{p_{1}}\leq C\lambda
^{p_{1}-p}||f||_{H_{flag}^{p}}^{p}\,\,\,\text{and}\,%
\,||b||_{H_{flag}^{p_{2}}}^{p_{2}}\leq C\lambda
^{p_{2}-p}||f||_{H_{flag}^{p}}^{p}.
\end{equation*}

Moreover, we have proved the estimates 
\begin{equation*}
||g||_{H_{flag}^{p_{1}}}^{p_{1}}\leq C\int_{S(f)(z,u)\leq \alpha
}S(f)^{p_{1}}(z,u)dzdu
\end{equation*}%
and 
\begin{equation*}
||b||_{H_{flag}^{p_{2}}}^{p_{2}}\leq C\int_{S(f)(z,u)>\alpha
}S(f)^{p_{2}}(z,u)dzdu
\end{equation*}%
which implies that 
\begin{eqnarray*}
||Tf||_{p}^{p} &=&p\int_{0}^{\infty }\alpha ^{p-1}|\left\{
(z,u):|Tf(z,u)|>\lambda \right\} |d\alpha \\
&\leq &p\int_{0}^{\infty }\alpha ^{p-1}|\left\{ (z,u):|Tg(z,u)|>\frac{%
\lambda }{2}\right\} |d\alpha +p\int_{0}^{\infty }\alpha ^{p-1}|\left\{
(z,u):|Tb(z,u)|>\frac{\lambda }{2}\right\} |d\alpha \\
&\leq &p\int_{0}^{\infty }\alpha ^{p-1}\int_{S(f)(z,u)\leq \alpha
}S(f)^{p_{1}}(z,u)dzdud\alpha +p\int_{0}^{\infty }\alpha
^{p-1}\int_{S(f)(z,u)>\alpha }S(f)^{p_{2}}(z,u)dzdud\alpha \\
&\leq &C||f||_{H_{flag}^{p}}^{p}.
\end{eqnarray*}%
Thus, 
\begin{equation*}
||Tf||_{p}\leq C||f||_{H_{flag}^{p}}
\end{equation*}%
for any $p_{2}<p<p_{1}$. Hence, $T$ is bounded from $H_{flag}^{p}$ to $L^{p}$%
.

Now we prove the second assertion that $T$ is bounded on $H_{flag}^{p}$ for $%
p_{2}<p<p_{1}$. For any given $\lambda >0$ and $f\in H_{flag}^{p}$, we have
by the Calder\'{o}n-Zygmund decomposition again, 
\begin{eqnarray*}
&&|\left\{ (z,u):|g(Tf)(z,u)|>\alpha \right\} | \\
&\leq &\left\vert \left\{ (z,u):|g(Tg)(z,u)|>\frac{\alpha }{2}\right\}
\right\vert +\left\vert \left\{ (z,u):|g(Tb)(z,u)|>\frac{\alpha }{2}\right\}
\right\vert \\
&\leq &C\alpha ^{-p_{1}}||Tg||_{H_{flag}^{p_{1}}}^{p_{1}}+C\alpha
^{-p_{2}}||Tb||_{H_{flag}^{p_{2}}}^{p_{2}} \\
&\leq &C\alpha ^{-p_{1}}||g||_{H_{flag}^{p_{1}}}^{p_{1}}+C\alpha
^{-p_{2}}||b||_{H_{flag}^{p_{2}}}^{p_{2}} \\
&\leq &C\alpha ^{-p_{1}}\int_{S(f)(z,u)\leq \alpha
}(Sf)^{p_{1}}(z,u)dzdu+C\alpha ^{-p_{2}}\int_{S(f)(z,u)>\alpha
}(Sf)^{p_{2}}(z,u)dzdu,
\end{eqnarray*}%
which, as above, shows that $||Tf||_{H_{flag}^{p}}\leq C||g(TF)||_{p}\leq
C||f||_{H_{flag}^{p}}$ for any $p_{2}<p<p_{1}$.
\end{proof}

\section{Embeddings and quotients of flag and moment molecular spaces\label%
{s embeddings}}

Our purpose in this final section of Part 2 is to give the proof of Lemma %
\ref{containments}, and then prove the inclusion $Q_{flag}^{p}\left( \mathbb{%
H}^{n}\right) \hookrightarrow Q^{p}\left( \mathbb{H}^{n}\right) $ of the
quotient spaces 
\begin{eqnarray*}
Q_{flag}^{p}\left( \mathbb{H}^{n}\right) &\equiv &H_{flag}^{p}\left( \mathbb{%
H}^{n}\right) /\mathsf{M}_{F}^{M^{\prime }+\delta ,M_{1}^{\prime
},M_{2}^{\prime }}\left( \mathbb{H}^{n}\right) ^{\bot }, \\
Q^{p}\left( \mathbb{H}^{n}\right) &\equiv &H^{p}\left( \mathbb{H}^{n}\right)
/\mathsf{M}_{F}^{M^{\prime }+\delta ,M_{1}^{\prime },M_{2}^{\prime }}\left( 
\mathbb{H}^{n}\right) ^{\bot }.
\end{eqnarray*}

\subsection{Proof of Lemma \protect\ref{containments}}

We begin with the second containment of Lemma \ref{containments}. Suppose
that $f\in \mathcal{M}_{flag}^{M,M_{1},M_{2}}\left( \mathbb{H}^{n}\right) $
and that $F\in \mathcal{M}_{product}^{M,M_{1},M_{2}}\left( \mathbb{H}%
^{n}\times \mathbb{R}\right) $ satisfies%
\begin{equation}
f=\pi F\text{ and }\left\Vert F\right\Vert _{\mathcal{M}%
_{product}^{M,M_{1},M_{2}}\left( \mathbb{H}^{n}\times \mathbb{R}\right)
}\leq 2\left\Vert f\right\Vert _{\mathcal{M}_{flag}^{M,M_{1},M_{2}}\left( 
\mathbb{H}^{n}\right) }.  \label{near optimal}
\end{equation}%
We first verify the moment conditions required for membership in $\mathsf{M}%
_{F}^{M,M_{1},M_{2}}\left( \mathbb{H}^{n}\right) $\textbf{. }For $\left\vert
\alpha \right\vert \leq M_{1}$ and $\left\vert \alpha \right\vert +2\beta
\leq 2M_{1}+2$, we have that 
\begin{eqnarray*}
\int_{\mathbb{H}^{n}}z^{\alpha }u^{\beta }f\left( z,u\right) dzdu &=&\int_{%
\mathbb{H}^{n}}z^{\alpha }u^{\beta }\int_{\mathbb{R}}F\left( \left(
z,u-v\right) ,v\right) dvdzdu \\
&=&\int_{\mathbb{H}^{n}}z^{\alpha }\left( u+v\right) ^{\beta }\int_{\mathbb{R%
}}F\left( \left( z,u\right) ,v\right) dvdzdu \\
&=&\sum_{\beta =\gamma +\delta }c_{\gamma ,\delta }\int_{\mathbb{H}%
^{n}}z^{\alpha }u^{\gamma }v^{\delta }\int_{\mathbb{R}}F\left( \left(
z,u\right) ,v\right) dvdzdu \\
&=&\left\{ \sum_{\beta =\gamma +\delta :\left\vert \alpha \right\vert
+2\gamma \leq M}+\sum_{\beta =\gamma +\delta :2\delta \leq M}\right\}
c_{\gamma ,\delta }\int_{\mathbb{H}^{n}}z^{\alpha }u^{\gamma }\int_{\mathbb{R%
}}v^{\delta }F\left( \left( z,u\right) ,v\right) dvdzdu \\
&=&\sum_{\beta =\gamma +\delta :\left\vert \alpha \right\vert +2\gamma \leq
M}c_{\gamma ,\delta }\int_{\mathbb{R}}v^{\delta }\left\{ \int_{\mathbb{H}%
^{n}}z^{\alpha }u^{\gamma }F\left( \left( z,u\right) ,v\right) dzdu\right\}
dv \\
&&+\sum_{\beta =\gamma +\delta :2\delta \leq M}c_{\gamma ,\delta }\int_{%
\mathbb{H}^{n}}z^{\alpha }u^{\gamma }\left\{ \int_{\mathbb{R}}v^{\delta
}F\left( \left( z,u\right) ,v\right) dv\right\} dzdu
\end{eqnarray*}%
vanishes, since%
\begin{equation*}
\int_{\mathbb{H}^{n}}z^{\alpha }u^{\gamma }F\left( \left( z,u\right)
,v\right) dzdu=0\text{ if }\left\vert \alpha \right\vert +2\gamma \leq M_{1},
\end{equation*}%
and%
\begin{equation*}
\int_{\mathbb{R}}v^{\delta }F\left( \left( z,u\right) ,v\right) dv=0\text{
if }2\delta \leq M_{1}.
\end{equation*}%
Note that since $\left\vert \alpha \right\vert \leq M_{1}$ and $\left\vert
\alpha \right\vert +2\beta \leq 2M_{1}+2$, then%
\begin{equation*}
\left\vert \alpha \right\vert +2\gamma +2\delta =\left\vert \alpha
\right\vert +2\beta \leq 2M_{1}+2
\end{equation*}%
implies that at least one of the inequalities $\left\vert \alpha \right\vert
+2\gamma \leq M_{1}$ and $2\delta \leq M_{1}$ must hold (if they both fail,
then $\left\vert \alpha \right\vert +2\gamma \geq M_{1}+1$ and $2\delta \geq
M_{1}+2$).

Next, for $2\gamma \leq M_{1}$ we have that 
\begin{eqnarray*}
\int_{\mathbb{R}}u^{\gamma }f\left( z,u\right) du &=&\int_{\mathbb{R}%
}u^{\gamma }\int_{\mathbb{R}}F\left( \left( z,u-v\right) ,v\right) dvdu \\
&=&\int_{\mathbb{R}}\left( u+v\right) ^{\gamma }\int_{\mathbb{R}}F\left(
\left( z,u\right) ,v\right) dvdu \\
&=&\sum_{\gamma =\delta +\eta }c_{\delta ,\eta }\int_{\mathbb{R}}u^{\delta
}\left\{ \int_{\mathbb{R}}v^{\eta }F\left( \left( z,u\right) ,v\right)
dv\right\} du
\end{eqnarray*}%
vanishes since%
\begin{equation*}
\int_{\mathbb{R}}v^{\eta }F\left( \left( z,u\right) ,v\right) dv=0\text{ if }%
2\eta \leq 2\gamma \leq M_{1}.
\end{equation*}

We now turn to proving the norm inequality%
\begin{equation*}
\left\Vert f\right\Vert _{\mathsf{M}_{F}^{M,M_{1},M_{2}}\left( \mathbb{H}%
^{n}\right) }\lesssim \left\Vert F\right\Vert _{\mathcal{M}%
_{product}^{M,M_{1},M_{2}}\left( \mathbb{H}^{n}\times \mathbb{R}\right) }.
\end{equation*}%
For $\left\vert \alpha \right\vert +2\beta \leq M_{2}$ we have 
\begin{eqnarray*}
\left\vert \partial _{z}^{\alpha }\partial _{u}^{\beta }f\left( z,u\right)
\right\vert &=&\left\vert \partial _{z}^{\alpha }\partial _{u}^{\beta }\pi
F\left( z,u\right) \right\vert =\left\vert \int_{\mathbb{R}}\partial
_{z}^{\alpha }\partial _{u}^{\beta }F\left( \left( z,u-v\right) ,v\right)
dv\right\vert \\
&\lesssim &\int_{\mathbb{R}}\frac{1}{\left( 1+\left\vert z\right\vert
^{2}+\left\vert u-v\right\vert \right) ^{\frac{Q+M+\left\vert \alpha
\right\vert +2\beta }{2}}}\frac{1}{\left( 1+\left\vert v\right\vert \right)
^{1+M}}dv \\
&\lesssim &\frac{1}{\left( 1+\left\vert z\right\vert ^{2}+\left\vert
u\right\vert \right) ^{\frac{Q+M+\left\vert \alpha \right\vert +2\beta }{2}}}%
.
\end{eqnarray*}%
The first difference inequality is proved in the same way. Together with (%
\ref{near optimal}), these inequalities complete the proof that $\mathcal{M}%
_{flag}^{M,M_{1},M_{2}}\left( \mathbb{H}^{n}\right) $ is continuously
embedded in $\mathsf{M}_{F}^{M,M_{1},M_{2}}\left( \mathbb{H}^{n}\right) $.

\begin{remark}
We also have the following differential inequalities for $f\in \mathcal{M}%
_{flag}^{M,M_{1},M_{2}}\left( \mathbb{H}^{n}\right) $ with \emph{more}
derivatives but \emph{less} decay:%
\begin{eqnarray*}
&&\left\vert \partial _{z}^{\alpha }\partial _{u}^{\beta }f\left( z,u\right)
\right\vert \leq A\frac{1}{\left( 1+\left\vert z\right\vert ^{2}+\left\vert
u\right\vert \right) ^{\frac{Q+\widetilde{M}+\left\vert \alpha \right\vert
+2\beta }{2}}}, \\
&&\ \ \ \ \ \text{for all }\left\vert \alpha \right\vert \leq
M_{2},\left\vert \alpha \right\vert +2\beta \leq 2M_{2}\text{,} \\
&&\ \ \ \ \ \text{and where }\widetilde{M}=M-M_{2}.
\end{eqnarray*}%
Indeed, it is enough to prove the case $M_{2}\leq \left\vert \alpha
\right\vert +2\beta \leq 2M_{2}$, and so we write $\beta =\gamma +\delta $
where $\left\vert \alpha \right\vert +2\gamma =M_{2}$ and $2\delta \leq
M_{2} $. Now for any suitable function $G\left( z,u,v\right) $ we have%
\begin{eqnarray*}
\frac{\partial }{\partial v}G\left( z,u-v,v\right) &=&-G_{2}\left(
z,u-v,v\right) +G_{3}\left( z,u-v,v\right) , \\
\int_{\mathbb{R}}G_{2}\left( z,u-v,v\right) dv &=&\int_{\mathbb{R}%
}G_{3}\left( z,u-v,v\right) dv,
\end{eqnarray*}%
and iterating with $G=F\left( \left( z,u\right) ,v\right) $ we obtain%
\begin{eqnarray*}
\partial _{z}^{\alpha }\partial _{u}^{\beta }f\left( z,u\right) &=&\int_{%
\mathbb{R}}\partial _{z}^{\alpha }\partial _{2}^{\beta }F\left( \left(
z,u-v\right) ,v\right) dv \\
&=&\left( -1\right) ^{\delta }\int_{\mathbb{R}}\partial _{z}^{\alpha
}\partial _{2}^{\gamma }\partial _{3}^{\delta }F\left( \left( z,u-v\right)
,v\right) dv.
\end{eqnarray*}%
Thus we have%
\begin{eqnarray*}
\left\vert \partial _{z}^{\alpha }\partial _{u}^{\beta }f\left( z,u\right)
\right\vert &=&\left\vert \int_{\mathbb{R}}\partial _{z}^{\alpha }\partial
_{2}^{\gamma }\partial _{3}^{\delta }F\left( \left( z,u-v\right) ,v\right)
dv\right\vert \\
&\lesssim &\int_{\mathbb{R}}\frac{1}{\left( 1+\left\vert z\right\vert
^{2}+\left\vert u-v\right\vert \right) ^{\frac{Q+M+\left\vert \alpha
\right\vert +2\gamma }{2}}}\frac{1}{\left( 1+\left\vert v\right\vert \right)
^{1+M+\delta }}dv \\
&\lesssim &\frac{1}{\left( 1+\left\vert z\right\vert ^{2}+\left\vert
u-v\right\vert \right) ^{\frac{Q+M+M_{2}}{2}}} \\
&=&\frac{1}{\left( 1+\left\vert z\right\vert ^{2}+\left\vert u-v\right\vert
\right) ^{\frac{Q+\widetilde{M}+2M_{2}}{2}}} \\
&\leq &\frac{1}{\left( 1+\left\vert z\right\vert ^{2}+\left\vert
u-v\right\vert \right) ^{\frac{Q+\widetilde{M}+\left\vert \alpha \right\vert
+2\beta }{2}}}.
\end{eqnarray*}%
Note that the extra decay in the factor $\frac{1}{\left( 1+\left\vert
v\right\vert \right) ^{1+M+\delta }}$\ is `lost' here when we project.
\end{remark}

Now we turn to proving the first, and more difficult, containment in Lemma %
\ref{containments}. So suppose that $f\in \mathsf{M}%
_{F}^{3M+M_{2},M_{1},2M_{2}+4}\left( \mathbb{H}^{n}\right) $. We first
decompose $f$ according to annuli in $\mathbb{H}^{n}$. Let $1=\phi
_{0}\left( t\right) +\sum_{m=0}^{\infty }\phi _{m}\left( t\right) $ for $%
t\in \left[ 0,\infty \right) $ where $\phi _{0}$ is supported in $\left[ 0,1%
\right] $, $\phi _{m}$ is supported in $\left[ 2^{m-1},2^{m+1}\right] $ for $%
m\geq 1$, and 
\begin{equation*}
\left\vert \left( \frac{d}{dt}\right) ^{j}\phi _{m}\right\vert \leq
C_{j}2^{-jm},\ \ \ \ \ \text{for all }j,m\geq 0.
\end{equation*}%
Now set%
\begin{eqnarray*}
\varphi _{m}\left( z,u\right) &=&\phi _{m}\left( \sqrt{1+\left\vert
z\right\vert ^{4}+u^{2}}\right) , \\
f_{m}\left( z,u\right) &=&\varphi _{m}\left( z,u\right) f\left( z,u\right)
=\varphi _{m}f\left( z,u\right) ,
\end{eqnarray*}%
so that%
\begin{equation*}
f\left( z,u\right) =\sum_{m=0}^{\infty }\varphi _{m}f\left( z,u\right)
=\sum_{m=0}^{\infty }f_{m}\left( z,u\right) .
\end{equation*}

We now decompose each $f_{m}\left( z,u\right) $ in the $u$ variable using
the Calder\'{o}n reproducing formula in $\mathbb{R}$. Let $\chi $ and $\psi $
be smooth functions on the real line $\mathbb{R}$ supported in $\left[ -1,1%
\right] $, satisfying $\int \chi \left( x\right) dx=1$ and $\int x^{\beta
}\psi \left( x\right) dx=0$ for $0\leq \beta \leq M_{1}$, and for each $m$
the reproducing formula%
\begin{equation*}
\delta _{0}=\chi _{-m}\ast \chi _{-m}+\sum_{k=-m+1}^{\infty }\psi _{k}\ast
\psi _{k},
\end{equation*}%
where $\psi _{k}\left( v\right) =2^{k}\psi \left( 2^{k}v\right) $ and $\chi
_{-m}\left( v\right) =2^{-m}\chi \left( 2^{-m}v\right) $. Let $f_{m,z}\left(
u\right) =f_{m}\left( z,u\right) $ and define 
\begin{eqnarray*}
F_{m}\left( \left( z,u\right) ,v\right) &=&\left( f_{m,z}\ast \chi \right)
\left( u\right) \chi \left( v\right) +\sum_{k=-m+1}^{\infty }\left(
f_{m,z}\ast \psi _{k}\right) \left( u\right) \psi _{k}\left( v\right) \\
&=&\left( f_{m}\ast _{2}\chi \right) \left( z,u\right) \chi \left( v\right)
+\sum_{k=-m+1}^{\infty }\left( f_{m}\ast _{2}\psi _{k}\right) \left(
z,u\right) \psi _{k}\left( v\right) ,
\end{eqnarray*}%
for $\left( \left( z,u\right) ,v\right) \in \mathbb{H}^{n}\times \mathbb{R}$%
. Note that 
\begin{eqnarray*}
\pi F_{m}\left( z,u\right) &=&\int_{\mathbb{R}}F_{m}\left( \left(
z,u-v\right) ,v\right) dv \\
&=&f_{m,z}\ast \chi \ast \chi \left( u\right) +f_{m,z}\ast \left(
\sum_{k=-m+1}^{\infty }\psi _{k}\ast \psi _{k}\right) \left( u\right) \\
&=&f_{m,z}\ast \delta _{0}\left( u\right) =f_{m}\left( z,u\right) .
\end{eqnarray*}

Now $\psi _{k}$ has vanishing moments but $\chi _{-m}$ does not. We remedy
this lack of vanishing moments for $\chi _{-m}$ by noting that for $0\leq
2\ell \leq M_{1}+2$ we have by integration by parts in $v$, 
\begin{eqnarray*}
\pi F_{m}\left( z,u\right) &=&\int_{\mathbb{R}}I^{\ell }\left( f_{m}\ast
_{2}\chi _{-m}\right) \left( z,u-v\right) D^{\ell }\chi _{-m}\left( v\right)
dv \\
&&+\sum_{k=-m+1}^{\infty }\int_{\mathbb{R}}\left( f_{m}\ast _{2}\psi
_{k}\right) \left( z,u-v\right) \psi _{k}\left( v\right) dv,
\end{eqnarray*}%
where $D^{\ell }\chi _{-m}\left( v\right) =\left( \frac{d}{dv}\right) ^{\ell
}\chi _{-m}\left( v\right) $ and $I^{\ell }g\left( z,t\right) =\ell
\int_{-\infty }^{t}g\left( z,s\right) \left( t-s\right) ^{\ell -1}ds$.
Observe that $\lim_{t\rightarrow \infty }I^{\ell }g\left( z,t\right) =0$
provided $g\left( z,s\right) $ satisfies $\int_{\mathbb{R}}g\left(
z,s\right) s^{j}ds=0$ for $0\leq j\leq \ell -1$. We now fix%
\begin{equation*}
\ell =\frac{M_{1}}{2}+1,
\end{equation*}%
but will continue to write $\ell $ for clarity. Then with%
\begin{equation}
F_{m}^{\ell }\left( \left( z,u\right) ,v\right) \equiv I^{\ell }\left(
f_{m}\ast _{2}\chi _{-m}\right) \left( z,u\right) D^{\ell }\chi _{-m}\left(
v\right) +\sum_{k=-m+1}^{\infty }\left( f_{m}\ast _{2}\psi _{k}\right)
\left( z,u\right) \psi _{k}\left( v\right) ,  \label{Flr}
\end{equation}%
we have%
\begin{equation*}
\pi F_{m}^{\ell }\left( z,u\right) =\pi F_{m}\left( z,u\right) .
\end{equation*}

Now we check that $F_{m}^{\ell }$ satisfies the moment conditions required
for membership in $\mathcal{M}_{product}^{M,M_{1},M_{2}}\left( \mathbb{H}%
^{n}\times \mathbb{R}\right) $. We write $F_{m}^{\ell }=G_{m}^{\ell }+H_{m}$
where%
\begin{equation*}
G_{m}^{\ell }=I^{\ell }\left( f_{m}\ast _{2}\chi _{-m}\right) \left(
z,u\right) D^{\ell }\chi _{-m}\left( v\right) \text{ and }%
H_{m}=\sum_{k=-m+1}^{\infty }\left( f_{m}\ast _{2}\psi _{k}\right) \left(
z,u\right) \psi _{k}\left( v\right) .
\end{equation*}%
It is clear that $H_{m}$ satisfies (more than) the required number of
vanishing moments since the functions $\psi _{k}$ each have $M_{1}$
vanishing moments, so we turn our attention to $G_{m}^{\ell }$. For $%
\left\vert \alpha \right\vert +2\beta \leq M_{1}$ we have that%
\begin{eqnarray*}
\int_{\mathbb{H}^{n}}z^{\alpha }u^{\beta }G_{m}^{\ell }\left( \left(
z,u\right) ,v\right) dzdu &=&\int_{\mathbb{H}^{n}}z^{\alpha }u^{\beta
}I^{\ell }\left( f_{m}\ast _{2}\chi _{-m}\right) \left( z,u\right) D^{\ell
}\chi _{-m}\left( v\right) dzdu \\
&=&D^{\ell }\chi _{-m}\left( v\right) \int_{\mathbb{H}^{n}}z^{\alpha
}\left\{ I^{\ell }u^{\beta }\right\} \left( f_{m}\ast _{2}\chi _{-m}\right)
\left( z,u\right) dzdu \\
&=&c_{\ell ,\beta }D^{\ell }\chi _{-m}\left( v\right) \int_{\mathbb{H}%
^{n}}z^{\alpha }u^{\beta +\ell }\left\{ \int_{\mathbb{R}}f_{m}\left(
z,u-s\right) \chi _{-m}\left( s\right) ds\right\} dzdu \\
&=&c_{\ell ,\beta }D^{\ell }\chi _{-m}\left( v\right) \int_{\mathbb{R}%
}\left\{ \int_{\mathbb{H}^{n}}z^{\alpha }u^{\beta +\ell }f\left(
z,u-s\right) dzdu\right\} \chi _{-m}\left( s\right) ds
\end{eqnarray*}%
vanishes since%
\begin{equation*}
\int_{\mathbb{H}^{n}}z^{\alpha }u^{\beta +\ell }f\left( z,u-s\right)
dzdu=\int_{\mathbb{H}^{n}}z^{\alpha }u^{\beta +\ell }f\left( z,u\right)
dzdu=0
\end{equation*}%
when%
\begin{equation*}
\left\vert \alpha \right\vert +2\left( \beta +\ell \right) =\left\vert
\alpha \right\vert +2\beta +M_{1}+2\leq 2M_{1}+2.
\end{equation*}%
Also, for $2\gamma \leq M_{1}$ we have

\begin{eqnarray*}
\int_{\mathbb{R}}v^{\gamma }G_{m}^{\ell }\left( \left( z,u\right) ,v\right)
dv &=&\int_{\mathbb{R}}v^{\gamma }I^{\ell }f_{m}\left( z,u\right) D^{\ell
}\chi _{-m}\left( v\right) dv \\
&=&I^{\ell }f_{m}\left( z,u\right) \left( -1\right) ^{\ell }\int_{\mathbb{R}%
}\left( D^{\ell }v^{\gamma }\right) \chi _{-m}\left( v\right) dv
\end{eqnarray*}%
vanishes since $\gamma \leq \frac{M_{1}}{2}<\frac{M_{1}}{2}+1=\ell $.

\bigskip

It remains to verify the norm inequality%
\begin{equation}
\left\Vert F^{\ell }\right\Vert _{\mathcal{M}_{product}^{M,M_{1},M_{2}}%
\left( \mathbb{H}^{n}\times \mathbb{R}\right) }\lesssim \left\Vert
f\right\Vert _{\mathsf{M}_{F}^{3M+M_{2},M_{1},2M_{2}+4}\left( \mathbb{H}%
^{n}\right) }.  \label{flag norm estimate}
\end{equation}%
Our first task is to prove the differential inequalities%
\begin{eqnarray*}
&&\left\vert \partial _{z}^{\alpha }\partial _{u}^{\beta }\partial
_{v}^{\gamma }F^{\ell }\left( \left( z,u\right) ,v\right) \right\vert \leq A%
\frac{1}{\left( 1+\left\vert z\right\vert ^{2}+\left\vert u\right\vert
\right) ^{\frac{Q+M+\left\vert \alpha \right\vert +2\beta }{2}}}\frac{1}{%
\left( 1+\left\vert v\right\vert \right) ^{1+M+\gamma }}, \\
&&\ \ \ \ \ \ \ \ \ \ \text{for all }\left\vert \alpha \right\vert +2\beta
\leq M_{2}\text{ and }2\gamma \leq M_{2}.
\end{eqnarray*}%
We have%
\begin{eqnarray*}
\partial _{z}^{\alpha }\partial _{u}^{\beta }I^{\ell }\left( f_{m}\ast
_{2}\chi _{-m}\right) \left( z,u\right) &=&\partial _{z}^{\alpha }I^{\ell
-\beta }\left( f_{m}\ast _{2}\chi _{-m}\right) \left( z,u\right) \\
&=&\left[ \left( \partial _{z}^{\alpha }\partial _{u}^{\beta }I^{\ell
}f_{m}\right) \ast _{2}\chi _{-m}\right] \left( z,u\right) ,
\end{eqnarray*}%
and%
\begin{equation*}
\left\vert \partial _{z}^{\alpha }\partial _{u}^{\beta }I^{\ell }f_{m}\left(
z,u\right) \right\vert =\left\vert \partial _{z}^{\alpha }\partial
_{u}^{\beta -\ell }f_{m}\left( z,u\right) \right\vert \lesssim \frac{1}{%
\left( 1+\left\vert z\right\vert ^{2}+\left\vert u\right\vert \right) ^{%
\frac{Q+3M+M_{2}+\left\vert \alpha \right\vert +2\beta -2\ell }{2}}},
\end{equation*}%
for $\left\vert \alpha \right\vert +2\left( \beta -\ell \right) \leq
M_{2}\leq 2M_{2}+4$ since $f\in \mathsf{M}_{F}^{3M+M_{2},M_{1},2M_{2}+4}%
\left( \mathbb{H}^{n}\right) $. This inequality follows immediately from the
definition if $\beta \geq \ell $, and the case $\beta <\ell $ is an easy
exercise using that $2\left( \ell -1\right) =M_{1}$.

This leads to%
\begin{eqnarray*}
\left\vert \partial _{z}^{\alpha }\partial _{u}^{\beta }\partial
_{v}^{\gamma }G_{m}^{\ell }\left( \left( z,u\right) ,v\right) \right\vert
&=&\left\vert \partial _{z}^{\alpha }\partial _{u}^{\beta }I^{\ell }\left(
f_{m}\ast _{2}\chi _{-m}\right) \left( z,u\right) \right\vert \left\vert
\partial _{\nu }^{\gamma }D^{\ell }\chi _{-m}\left( v\right) \right\vert \\
&\lesssim &A\chi _{\left\{ 1+\left\vert z\right\vert ^{2}+\left\vert
u\right\vert \approx 2^{m}\right\} }\frac{1}{\left( 1+\left\vert
z\right\vert ^{2}+\left\vert u\right\vert \right) ^{\frac{%
Q+3M+M_{2}+\left\vert \alpha \right\vert +2\beta }{2}-\ell }} \\
&&\times \frac{\left( 2^{-m}\right) ^{\ell +\gamma +1}}{\left(
1+2^{-m}\left\vert v\right\vert \right) ^{1+M+\gamma }},
\end{eqnarray*}%
since $\left\vert \partial _{v}^{\gamma }D^{\ell }\chi _{-m}\left( v\right)
\right\vert \lesssim \chi _{\left\{ \left\vert v\right\vert \leq
2^{m}\right\} }\left( 2^{-m}\right) ^{\ell +\gamma +1}$ follows from the
fact that $2^{-m}\left\vert v\right\vert \leq 1$ if $\chi _{-m}\left(
v\right) \neq 0$. Using that $1+\left\vert z\right\vert ^{2}+\left\vert
u\right\vert \approx 2^{m}$ we thus obtain the estimate%
\begin{eqnarray*}
&&\left\vert \partial _{z}^{\alpha }\partial _{u}^{\beta }\partial
_{v}^{\gamma }G_{m}^{\ell }\left( \left( z,u\right) ,v\right) \right\vert \\
&\lesssim &A\chi _{\left\{ 1+\left\vert z\right\vert ^{2}+\left\vert
u\right\vert \approx 2^{m}\right\} }\frac{1}{\left( 1+\left\vert
z\right\vert ^{2}+\left\vert u\right\vert \right) ^{\frac{Q+M+\left\vert
\alpha \right\vert +2\beta }{2}}\left( 2^{m}\right) ^{M+\frac{M_{2}}{2}-\ell
}}\chi _{\left\{ \left\vert v\right\vert \leq 2^{m}\right\} }\left(
2^{-m}\right) ^{\ell +\gamma +1} \\
&=&A\chi _{\left\{ 1+\left\vert z\right\vert ^{2}+\left\vert u\right\vert
\approx 2^{m}\right\} }\frac{1}{\left( 1+\left\vert z\right\vert
^{2}+\left\vert u\right\vert \right) ^{\frac{Q+M+\left\vert \alpha
\right\vert +2\beta }{2}}}\chi _{\left\{ \left\vert v\right\vert \leq
2^{m}\right\} }\left( 2^{-m}\right) ^{1+M+\gamma +\frac{M_{2}}{2}} \\
&\lesssim &A\chi _{\left\{ 1+\left\vert z\right\vert ^{2}+\left\vert
u\right\vert \approx 2^{m}\right\} }\frac{1}{\left( 1+\left\vert
z\right\vert ^{2}+\left\vert u\right\vert \right) ^{\frac{Q+M+\left\vert
\alpha \right\vert +2\beta }{2}}}\frac{1}{\left( 1+\left\vert v\right\vert
\right) ^{1+M+\gamma +\frac{M_{2}}{2}}},
\end{eqnarray*}%
in which the power of $1+\left\vert v\right\vert $ is \emph{better} by $%
\frac{M_{2}}{2}$.

\begin{remark}
The above inequality uses $3M$\ decay for $f$ in the space $%
M_{F}^{3M+M_{2},M_{1},2M_{2}+4}\left( \mathbb{H}^{n}\right) $. The extra
decay of $M_{2}$ is not needed here.
\end{remark}

To estimate $\left\vert \partial _{z}^{\alpha }\partial _{u}^{\beta
}\partial _{v}^{\gamma }H_{m}\left( \left( z,u\right) ,v\right) \right\vert $
we first note that by Taylor's formula there is $0<\theta <1$ such that%
\begin{eqnarray*}
&&\left\vert \partial _{z}^{\alpha }\partial _{u}^{\beta }\left( f_{m}\ast
_{2}\psi _{k}\right) \left( z,u\right) \right\vert =\left\vert \int_{\mathbb{%
R}}\partial _{z}^{\alpha }\partial _{u}^{\beta }f_{m,z}\left( u-w\right)
\psi _{k}\left( w\right) dw\right\vert \\
&&\ \ \ \ \ =\left\vert \int_{\mathbb{R}}\left\{ \sum_{j=0}^{\gamma
+1}\left( \partial _{z}^{\alpha }\partial _{u}^{\beta }f_{m,z}\right)
^{\left( j\right) }\left( u\right) \frac{\left( -w\right) ^{j}}{j!}+\left(
\partial _{z}^{\alpha }\partial _{u}^{\beta }f_{m,z}\right) ^{\left( \gamma
+2\right) }\left( u-\theta w\right) \frac{\left( -w\right) ^{\gamma +2}}{%
\left( \gamma +2\right) !}\right\} \psi _{k}\left( w\right) dw\right\vert \\
&&\ \ \ \ \ =\left\vert \int_{\mathbb{R}}\left( \partial _{z}^{\alpha
}\partial _{u}^{\beta }f_{m,z}\right) ^{\left( \gamma +2\right) }\left(
u-\theta w\right) \frac{\left( -w\right) ^{\gamma +2}}{\left( \gamma
+2\right) !}\psi _{k}\left( w\right) dw\right\vert \\
&&\ \ \ \ \ \lesssim \frac{2^{-k\left( \gamma +2\right) }}{\left(
1+\left\vert z\right\vert ^{2}+\left\vert u\right\vert \right) ^{\frac{%
Q+3M+\left\vert \alpha \right\vert +2\beta +2\gamma }{2}+2}},
\end{eqnarray*}%
since $\int w^{m}\psi _{k}\left( w\right) dw=0$ for $m\leq \gamma +1$, $%
\left\vert w\right\vert ^{\gamma +2}\leq 2^{-k\left( \gamma +2\right) }$ if $%
\psi _{k}\left( w\right) \neq 0$, and $\int_{\mathbb{R}}\left\vert \psi
_{k}\left( w\right) \right\vert dw\leq C$. Here we have used the
differential inequality for the function 
\begin{equation*}
\left( \partial _{z}^{\alpha }\partial _{u}^{\beta }f_{z}\right) ^{\left(
\gamma +2\right) }\left( v\right) =\partial _{z}^{\alpha }\partial
_{u}^{\beta +\gamma +2}f\left( z,u\right) ,
\end{equation*}%
which holds since $f\in \mathsf{M}_{F}^{3M+M_{2},M_{1},2M_{2}+4}\left( 
\mathbb{H}^{n}\right) $ and%
\begin{equation*}
\left\vert \alpha \right\vert +2\left( \beta +\gamma +2\right) =\left(
\left\vert \alpha \right\vert +2\beta \right) +2\gamma +4\leq
M_{2}+M_{2}+4=2M_{2}+4.
\end{equation*}

\begin{remark}
The above inequality uses the full $2M_{2}+4$\ derivatives for $f$ in the
space $M_{F}^{3M+M_{2},M_{1},2M_{2}+4}\left( \mathbb{H}^{n}\right) $.
\end{remark}

Using this and $\left\vert \partial _{v}^{\gamma }\psi _{k}\left( v\right)
\right\vert \leq \chi _{\left\{ \left\vert v\right\vert \leq 2^{-k}\right\}
}2^{k\left( \gamma +1\right) }$ we thus have%
\begin{eqnarray*}
&&\left\vert \partial _{z}^{\alpha }\partial _{u}^{\beta }\partial
_{v}^{\gamma }H_{m}\left( \left( z,u\right) ,v\right) \right\vert
=\left\vert \sum_{k=-m+1}^{\infty }\left( \partial _{z}^{\alpha }\partial
_{u}^{\beta }f_{m}\ast _{2}\psi _{k}\right) \left( z,u\right) \partial
_{v}^{\gamma }\psi _{k}\left( v\right) \right\vert \\
&\leq &\sum_{k=-m+1}^{\infty }\left\vert \left( \partial _{z}^{\alpha
}\partial _{u}^{\beta }f_{m}\ast _{2}\psi _{k}\right) \left( z,u\right)
\right\vert \left\vert \partial _{v}^{\gamma }\psi _{k}\left( v\right)
\right\vert \\
&\lesssim &\sum_{k=-m+1}^{\infty }\chi _{\left\{ 1+\left\vert z\right\vert
^{2}+\left\vert u\right\vert \approx 2^{m}\right\} }\frac{2^{-k\left( \gamma
+2\right) }}{\left( 1+\left\vert z\right\vert ^{2}+\left\vert u\right\vert
\right) ^{\frac{Q+3M+M_{2}+\left\vert \alpha \right\vert +2\beta }{2}+2}}%
\chi _{\left\{ \left\vert v\right\vert \leq 2^{-k}\right\} }2^{k\left(
\gamma +1\right) } \\
&\lesssim &\sum_{k=-m+1}^{\infty }2^{-k}\chi _{\left\{ 1+\left\vert
z\right\vert ^{2}+\left\vert u\right\vert \approx 2^{m}\right\} }\frac{1}{%
\left( 1+\left\vert z\right\vert ^{2}+\left\vert u\right\vert \right) ^{%
\frac{Q+M+\left\vert \alpha \right\vert +2\beta }{2}}}\frac{1}{\left(
2^{m}\right) ^{M+\frac{M_{2}}{2}+2}}\chi _{\left\{ \left\vert v\right\vert
\leq 2^{-k}\right\} } \\
&\lesssim &\chi _{\left\{ 1+\left\vert z\right\vert ^{2}+\left\vert
u\right\vert \approx 2^{m}\right\} }\frac{1}{\left( 1+\left\vert
z\right\vert ^{2}+\left\vert u\right\vert \right) ^{\frac{Q+M+\left\vert
\alpha \right\vert +2\beta }{2}}}\frac{1}{\left( 1+\left\vert v\right\vert
\right) ^{M+1+\gamma }},
\end{eqnarray*}%
since $2\gamma \leq M_{2}$ and $\sum_{k=-m+1}^{\infty }2^{-k}\approx 2^{m}$.

\begin{remark}
The above display uses\ the full $3M+M_{2}$ decay for $f$ in the space $%
M_{F}^{3M+M_{2},M_{1},2M_{2}+4}\left( \mathbb{H}^{n}\right) $.
\end{remark}

Now we use the fact that the sets $\left\{ 1+\left\vert z\right\vert
^{2}+\left\vert u\right\vert \approx 2^{m}\right\} $ have bounded overlap to
obtain the desired estimate:%
\begin{eqnarray*}
\left\vert \partial _{z}^{\alpha }\partial _{u}^{\beta }\partial
_{v}^{\gamma }F^{\ell }\left( \left( z,u\right) ,v\right) \right\vert
&=&\left\vert \sum_{m=0}^{\infty }\partial _{z}^{\alpha }\partial
_{u}^{\beta }\partial _{v}^{\gamma }F_{m}^{\ell }\left( \left( z,u\right)
,v\right) \right\vert \lesssim \sum_{m=0}^{\infty }\left\vert \partial
_{z}^{\alpha }\partial _{u}^{\beta }\partial _{v}^{\gamma }F_{m}^{\ell
}\left( \left( z,u\right) ,v\right) \right\vert \\
&\leq &\sum_{m=0}^{\infty }\left\{ \left\vert \partial _{z}^{\alpha
}\partial _{u}^{\beta }\partial _{v}^{\gamma }G_{m}^{\ell }\left( \left(
z,u\right) ,v\right) \right\vert +\left\vert \partial _{z}^{\alpha }\partial
_{u}^{\beta }\partial _{v}^{\gamma }H_{m}\left( \left( z,u\right) ,v\right)
\right\vert \right\} \\
&\lesssim &\sum_{m=0}^{\infty }\chi _{\left\{ 1+\left\vert z\right\vert
^{2}+\left\vert u\right\vert \approx 2^{m}\right\} }\frac{1}{\left(
1+\left\vert z\right\vert ^{2}+\left\vert u\right\vert \right) ^{\frac{%
Q+M+\left\vert \alpha \right\vert +2\beta }{2}}}\frac{1}{\left(
2^{m}+\left\vert v\right\vert \right) ^{1+M+\gamma }} \\
&\lesssim &\frac{1}{\left( 1+\left\vert z\right\vert ^{2}+\left\vert
u\right\vert \right) ^{\frac{Q+M+\left\vert \alpha \right\vert +2\beta }{2}}}%
\frac{1}{\left( 1+\left\vert v\right\vert \right) ^{1+M+\gamma }}.
\end{eqnarray*}

\bigskip

It remains to estimate the first order differences and second order
differences in Definition \ref{product molecular}. The first order
difference estimates%
\begin{eqnarray*}
&&\left\vert \partial _{z}^{\alpha }\partial _{u}^{\beta }F\left( \left(
z,u\right) ,v\right) -\partial _{z}^{\alpha }\partial _{u}^{\beta }F\left(
\left( z^{\prime },u^{\prime }\right) ,v\right) \right\vert \\
&&\ \ \ \ \ \ \ \ \ \ \ \ \ \ \ \leq A\frac{\left\vert \left( z,u\right)
\circ \left( z^{\prime },u^{\prime }\right) ^{-1}\right\vert }{\left(
1+\left\vert z\right\vert ^{2}+\left\vert u\right\vert \right) ^{\frac{%
Q+M+M_{2}+1}{2}}}\frac{1}{\left( 1+\left\vert v\right\vert \right) ^{1+M}} \\
&&\text{for all }\left\vert \alpha \right\vert +2\beta =M_{2}\text{ and }%
\left\vert \left( z,u\right) \circ \left( z^{\prime },u^{\prime }\right)
^{-1}\right\vert \leq \frac{1}{2}\left( 1+\left\vert z\right\vert
^{2}+\left\vert u\right\vert \right) ^{\frac{1}{2}}.
\end{eqnarray*}%
\begin{eqnarray*}
&&\left\vert \partial _{z}^{\alpha }\partial _{u}^{\beta }\partial
_{v}^{\gamma }F\left( \left( z,u\right) ,v\right) -\partial _{z}^{\alpha
}\partial _{u}^{\beta }\partial _{v}^{\gamma }F\left( \left( z,u\right)
,v^{\prime }\right) \right\vert \\
&&\ \ \ \ \ \ \ \ \ \ \leq A\frac{1}{\left( 1+\left\vert z\right\vert
^{2}+\left\vert u\right\vert \right) ^{\frac{Q+M+\left\vert \alpha
\right\vert +2\beta }{2}}}\frac{\left\vert v-v^{\prime }\right\vert }{\left(
1+\left\vert v\right\vert \right) ^{2+M}}, \\
&&\ \ \ \ \ \ \ \ \ \ \text{for all }\left\vert \alpha \right\vert +2\beta
\leq M_{2}\text{ and }2\gamma =M_{2} \\
&&\ \ \ \ \ \ \ \ \ \ \ \ \ \ \ \text{and }\left\vert v-v^{\prime
}\right\vert \leq \frac{1}{2}\left( 1+\left\vert v\right\vert \right) ,
\end{eqnarray*}%
with $F$ replaced by first $G_{m}^{\ell }$ and then $H_{m}$, are easily
proved using the above methods and the first order estimates for $f_{m}$.
Then we can sum in $m$ using the bounded overlap of the supports of $%
G_{m}^{\ell }$ and $H_{m}$ to obtain the required inequality.

The second order difference estimate%
\begin{eqnarray*}
&&\left\vert \left[ \partial _{z}^{\alpha }\partial _{u}^{\beta }\partial
_{v}^{\gamma }F\left( \left( z,u\right) ,v\right) -\partial _{z}^{\alpha
}\partial _{u}^{\beta }\partial _{v}^{\gamma }F\left( \left( z^{\prime
},u^{\prime }\right) ,v\right) \right] \right. \\
&&\ \ \ \ \ \left. -\left[ \partial _{z}^{\alpha }\partial _{u}^{\beta
}\partial _{v}^{\gamma }F\left( \left( z,u\right) ,v^{\prime }\right)
-\partial _{z}^{\alpha }\partial _{u}^{\beta }\partial _{v}^{\gamma }F\left(
\left( z^{\prime },u^{\prime }\right) ,v^{\prime }\right) \right] \right\vert
\\
&&\ \ \ \ \ \ \ \ \ \ \leq A\frac{\left\vert \left( z,u\right) \circ \left(
z^{\prime },u^{\prime }\right) ^{-1}\right\vert }{\left( 1+\left\vert
z\right\vert ^{2}+\left\vert u\right\vert \right) ^{\frac{Q+2M}{2}}}\frac{%
\left\vert v-v^{\prime }\right\vert }{\left( 1+\left\vert v\right\vert
\right) ^{1+M}} \\
&&\ \ \ \ \ \ \ \ \ \ \text{for all }\left\vert \alpha \right\vert +2\beta
=M_{2}\text{ and }2\gamma =M_{2} \\
&&\ \ \ \ \ \ \ \ \ \ \ \ \ \ \ \text{and }\left\vert \left( z,u\right)
\circ \left( z^{\prime },u^{\prime }\right) ^{-1}\right\vert \leq \frac{1}{2}%
\left( 1+\left\vert z\right\vert ^{2}+\left\vert u\right\vert \right) ^{%
\frac{1}{2}} \\
&&\ \ \ \ \ \ \ \ \ \ \ \ \ \ \ \text{and }\left\vert v-v^{\prime
}\right\vert \leq \frac{1}{2}\left( 1+\left\vert v\right\vert \right) ,
\end{eqnarray*}%
follows from the first order difference estimates using the product form (%
\ref{Flr}) of the functions $G_{m}^{\ell }$ and $H_{m}$. For example,%
\begin{eqnarray*}
&&\left\vert \left[ \partial _{z}^{\alpha }\partial _{u}^{\beta }\partial
_{v}^{\gamma }G_{m}^{\ell }\left( \left( z,u\right) ,v\right) -\partial
_{z}^{\alpha }\partial _{u}^{\beta }\partial _{v}^{\gamma }G_{m}^{\ell
}\left( \left( z^{\prime },u^{\prime }\right) ,v\right) \right] \right. \\
&&\ \ \ \ \ \left. -\left[ \partial _{z}^{\alpha }\partial _{u}^{\beta
}\partial _{v}^{\gamma }G_{m}^{\ell }\left( \left( z,u\right) ,v^{\prime
}\right) -\partial _{z}^{\alpha }\partial _{u}^{\beta }\partial _{v}^{\gamma
}G_{m}^{\ell }\left( \left( z^{\prime },u^{\prime }\right) ,v^{\prime
}\right) \right] \right\vert \\
&=&\left\vert \left[ \partial _{z}^{\alpha }\partial _{u}^{\beta }I^{\ell
}\left( f_{m}\ast _{2}\chi _{-m}\right) \left( z,u\right) \partial
_{v}^{\gamma }D^{\ell }\chi _{-m}\left( v\right) -\partial _{z}^{\alpha
}\partial _{u}^{\beta }I^{\ell }\left( f_{m}\ast _{2}\chi _{-m}\right)
\left( z^{\prime },u^{\prime }\right) \partial _{v}^{\gamma }D^{\ell }\chi
_{-m}\left( v\right) \right] \right. \\
&&\left. -\left[ \partial _{z}^{\alpha }\partial _{u}^{\beta }I^{\ell
}\left( f_{m}\ast _{2}\chi _{-m}\right) \left( z,u\right) \partial
_{v}^{\gamma }D^{\ell }\chi _{-m}\left( v^{\prime }\right) -\partial
_{z}^{\alpha }\partial _{u}^{\beta }I^{\ell }\left( f_{m}\ast _{2}\chi
_{-m}\right) \left( z^{\prime },u^{\prime }\right) \partial _{v}^{\gamma
}D^{\ell }\chi _{-m}\left( v^{\prime }\right) \right] \right\vert \\
&=&\left\vert \partial _{z}^{\alpha }\partial _{u}^{\beta }I^{\ell }\left(
f_{m}\ast _{2}\chi _{-m}\right) \left( z,u\right) -\partial _{z}^{\alpha
}\partial _{u}^{\beta }I^{\ell }\left( f_{m}\ast _{2}\chi _{-m}\right)
\left( z^{\prime },u^{\prime }\right) \right\vert \\
&&\ \ \ \ \ \ \ \ \ \ \ \ \ \ \ \ \ \ \ \ \times \left\vert \partial
_{v}^{\gamma }D^{\ell }\chi _{-m}\left( v\right) -\partial _{v}^{\gamma
}D^{\ell }\chi _{-m}\left( v^{\prime }\right) \right\vert ,
\end{eqnarray*}%
and we can now apply first order difference estimates for the functions $%
\partial _{z}^{\alpha }\partial _{u}^{\beta }I^{\ell }\left( f_{m}\ast
_{2}\chi _{-m}\right) $ and $\partial _{v}^{\gamma }D^{\ell }\chi _{-m}$.
The same argument works for $H_{m}$ in place of $G_{m}^{\ell }$. Finally, we
sum the resulting estimates in $m$ using the finite overlap of the supports
of $G_{m}^{\ell }$ and $H_{m}$ to obtain the required inequality.

\subsection{A comparison of quotient spaces}

Here we put the space of `projected' flag test functions $\mathcal{M}%
_{flag}^{M+\delta ,M_{1},M_{2}}\left( \mathbb{H}^{n}\right) $ into
perspective. For the purpose of comparison, we first recall the usual
molecular space $\mathcal{M}^{M+\delta ,M_{1},M_{2}}\left( \mathbb{H}%
^{n}\right) $ associated with the one-parameter autormorphic group of
dilations on $\mathbb{H}^{n}$.

\begin{definition}
\label{one para mol}Let $M,M_{1},M_{2}\in \mathbb{N}$ be positive integers, $%
0<\delta \leq 1$, and let $Q=2n+2$ denote the homogeneous dimension of $%
\mathbb{H}^{n}$. The \emph{one-parameter} molecular space $\mathcal{M}%
^{M+\delta ,M_{1},M_{2}}\left( \mathbb{H}^{n}\right) $ consists of all
functions $f\left( z,u\right) $ on $\mathbb{H}^{n}$ satisfying the moment
conditions%
\begin{equation*}
\int_{\mathbb{H}^{n}}z^{\alpha }u^{\beta }f\left( z,u\right) dzdu=0\text{
for all }\left\vert \alpha \right\vert +2\left\vert \beta \right\vert \leq
M_{1},
\end{equation*}%
and such that there is a nonnegative constant $A$ satisfying the following
two differential inequalities:%
\begin{equation*}
\left\vert \partial _{z}^{\alpha }\partial _{u}^{\beta }f\left( z,u\right)
\right\vert \leq A\frac{1}{\left( 1+\left\vert z\right\vert ^{2}+\left\vert
u\right\vert \right) ^{\frac{Q+M+\left\vert \alpha \right\vert +2\left\vert
\beta \right\vert +\delta }{2}}}\text{ for all }\left\vert \alpha
\right\vert +2\left\vert \beta \right\vert \leq M_{2},
\end{equation*}%
\begin{eqnarray*}
&&\left\vert \partial _{z}^{\alpha }\partial _{u}^{\beta }f\left( z,u\right)
-\partial _{z}^{\alpha }\partial _{u}^{\beta }f\left( z^{\prime },u^{\prime
}\right) \right\vert \leq A\frac{\left\vert \left( z,u\right) \circ \left(
z^{\prime },u^{\prime }\right) ^{-1}\right\vert ^{\delta }}{\left(
1+\left\vert z\right\vert ^{2}+\left\vert u\right\vert \right) ^{\frac{%
Q+M+\delta +M_{2}+2\delta }{2}}} \\
&&\ \ \ \ \ \text{for all }\left\vert \alpha \right\vert +2\left\vert \beta
\right\vert =M_{2}\text{ and }\left\vert \left( z,u\right) \circ \left(
z^{\prime },u^{\prime }\right) ^{-1}\right\vert \leq \frac{1}{2}\left(
1+\left\vert z\right\vert ^{2}+\left\vert u\right\vert \right) ^{\frac{1}{2}%
}.
\end{eqnarray*}
\end{definition}

\begin{remark}
\label{standard on flag}The intertwining formula in Lemma \ref{intertwine}
shows that if $T$ is a convolution operator on $\mathbb{H}^{n}$ that is
bounded on the one-parameter molecular space $\mathcal{M}^{M+\delta
,M_{1},M_{2}}\left( \mathbb{H}^{n}\right) $, then it is also bounded on the
flag molecular space $\mathcal{M}_{flag}^{M+\delta ,M_{1},M_{2}}\left( 
\mathbb{H}^{n}\right) $. Indeed, one simply notes that the extension $%
\widetilde{T}$ of $T$ to the group $\mathbb{H}^{n}\times \mathbb{R}$ is
bounded on the product molecular space $\mathcal{M}_{product}^{M+\delta
,M_{1},M_{2}}\left( \mathbb{H}^{n}\times \mathbb{R}\right) $ by fixing $v$
and $v^{\prime }$ in conditions (\ref{Mo1}) - (\ref{DI4}) in Definition \ref%
{product molecular} and using the boundedness of $T$ in the remaining
variables in the one-parameter space $\mathcal{M}^{M+\delta
,M_{1},M_{2}}\left( \mathbb{H}^{n}\right) $.
\end{remark}

The following lemma provides the fundamental inclusions between these spaces
of test functions.

\begin{notation}
Given positive integers $M,M_{1},M_{2}\in \mathbb{N}$, set%
\begin{eqnarray*}
M^{\prime } &=&3M+M_{2}, \\
M_{1}^{\prime } &=&M_{1}, \\
M_{2}^{\prime } &=&2M_{2}+4.
\end{eqnarray*}
\end{notation}

\begin{lemma}
\label{inclusion}Let $M,M_{1},M_{2}\in \mathbb{N}$ be positive integers,and $%
0<\delta \leq 1$. Then the inclusions%
\begin{equation*}
\mathsf{M}_{F}^{M^{\prime }+\delta ,M_{1}^{\prime },M_{2}^{\prime }}\left( 
\mathbb{H}^{n}\right) \subset \mathcal{M}_{flag}^{M+\delta
,M_{1},M_{2}}\left( \mathbb{H}^{n}\right) \subset \mathsf{M}_{F}^{M+\delta
,M_{1},M_{2}}\left( \mathbb{H}^{n}\right) \subset \mathcal{M}^{M+\delta
,M_{1},M_{2}}\left( \mathbb{H}^{n}\right)
\end{equation*}%
are continuous with closed range.
\end{lemma}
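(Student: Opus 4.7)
The first two inclusions, together with their norm estimates, are precisely the content of Lemma \ref{containments}, proved in detail earlier in this section. Therefore the only new inclusion to establish is $\mathsf{M}_{F}^{M+\delta ,M_{1},M_{2}}(\mathbb{H}^{n}) \subset \mathcal{M}^{M+\delta ,M_{1},M_{2}}(\mathbb{H}^{n})$; after that one must verify the closed-range assertion.

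For the third inclusion, my plan is a direct structural comparison. The size and Hölder-type difference inequalities in the definitions of $\mathsf{M}_{F}^{M+\delta ,M_{1},M_{2}}$ and $\mathcal{M}^{M+\delta ,M_{1},M_{2}}$ are literally identical, so the least admissible constant $A$ is the same in either space, yielding $\Vert f\Vert _{\mathcal{M}^{M+\delta ,M_{1},M_{2}}} = \Vert f\Vert _{\mathsf{M}_{F}^{M+\delta ,M_{1},M_{2}}}$ whenever the latter is finite. It remains only to verify that the moment conditions defining $\mathsf{M}_{F}$ force those defining $\mathcal{M}$: if $(\alpha ,\beta )$ satisfies $|\alpha | + 2\beta \leq M_{1}$, then automatically $|\alpha | \leq M_{1}$ and $|\alpha | + 2\beta \leq M_{1} \leq 2M_{1} + 2$, so $\int_{\mathbb{H}^{n}} z^{\alpha }u^{\beta }f\,dz\,du = 0$ is forced by the first moment clause in the definition of $\mathsf{M}_{F}$. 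This inclusion is thus an isometric embedding, whose range is automatically closed.

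For the closed-range assertion on the first two inclusions, I would argue by dominated convergence on moment integrals: the size inequality in the target space, with its strictly positive decay exponent $M$, supplies a uniform $L^{1}$ majorant $(1 + |z|^{2} + |u|)^{-(Q+M+\delta )/2}$ (and in the product case a matching $v$-factor) for every polynomial factor $z^{\alpha }u^{\beta }v^{\gamma }$ of total homogeneous order at most $2M_{1} + 2$, so every vanishing-moment relation in the source space persists under the relevant Cauchy convergence. The genuine obstacle throughout — the quantitative reconstruction of a product preimage $F \in \mathcal{M}_{product}$ for a given $f \in \mathsf{M}_{F}$ with sufficient decay and derivatives — has already been surmounted in the proof of Lemma \ref{containments} via an annular decomposition followed by a Calderón reproducing formula in the $v$-variable, with an integration-by-parts trick to compensate for the lack of cancellation in the mean-value piece $\chi _{-m}$; it is this construction that dictates the arithmetic $M^{\prime } = 3M + M_{2}$, $M_{2}^{\prime } = 2M_{2} + 4$.
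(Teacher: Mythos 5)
Your reduction of the first two inclusions to Lemma \ref{containments} is exactly what the paper does (its entire proof is the one-line assertion that the lemma ``follows immediately from the continuous containments in Lemma \ref{containments}''), and your verification that the $\mathsf{M}_F$ moment conditions imply the $\mathcal{M}$ moment conditions is correct. But two points need attention.

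First, your claim that the size and H\"older inequalities defining $\mathsf{M}_{F}^{M+\delta,M_1,M_2}$ and $\mathcal{M}^{M+\delta,M_1,M_2}$ are ``literally identical'' does not match the paper's printed definitions: in $\mathcal{M}^{M+\delta,M_1,M_2}$ the size exponent is $\frac{Q+M+|\alpha|+2\beta+\delta}{2}$ (and the H\"older estimate carries $+2\delta$), whereas in the definition of $\mathsf{M}_F^{M+\delta,M_1,M_2}$ the $\delta$ is absent from the size exponent. Taken literally, the $\mathcal{M}$ decay requirement is \emph{stronger}, so the third inclusion fails rather than being isometric. This is almost certainly a typographical inconsistency in the paper (the superscript $M+\delta$ is meant to encode a $+\delta$ in the exponent, as it visibly does in the one-parameter space), but your proof silently relies on the definitions matching, so you should say so explicitly rather than assert identity.

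Second, the closed-range argument is genuinely incomplete. Dominated convergence shows that vanishing-moment relations pass to limits, but says nothing about the differential inequalities. For the first two inclusions the source space has strictly more decay and strictly more derivatives than the target; convergence of $f_n$ in the \emph{target} norm gives no uniform control on $\left\Vert f_n\right\Vert_{\mathsf{M}_F^{M'+\delta,M_1',M_2'}}$ or $\left\Vert f_n\right\Vert_{\mathcal{M}_{flag}^{M+\delta,M_1,M_2}}$, so there is no reason the limit $f$ satisfies the stronger source-space estimates at all. In other words, closedness of the range for these embeddings is equivalent (by the open mapping theorem) to a lower bound $\left\Vert f\right\Vert_{\text{target}} \gtrsim \left\Vert f\right\Vert_{\text{source}}$ on the smaller space, which is the \emph{reverse} of the inequalities supplied by Lemma \ref{containments} and is not established anywhere. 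Only for the last (isometric, modulo the $\delta$ typo) inclusion does closed range follow for free, since there the two decay/smoothness conditions coincide and the extra $\mathsf{M}_F$ constraints are kernels of continuous linear functionals once $M$ is large enough. The paper itself neither proves nor uses the closed-range assertion for the first two inclusions — the proof of Lemma \ref{dual inclusion} invokes only continuity — so either restrict your claim to what your argument actually establishes, or flag the first two closed-range assertions as unproved.
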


The proof of Lemma \ref{inclusion} follows immediately from the continuous
containments in Lemma \ref{containments}. As a consequence of Lemma \ref%
{inclusion} we have the following relationship between the corresponding
dual spaces.

\begin{lemma}
\label{dual inclusion}For every $\Lambda \in \mathcal{M}_{flag}^{M+\delta
,M_{1},M_{2}}\left( \mathbb{H}^{n}\right) ^{\prime }$ there is $\Phi \in 
\mathcal{M}^{M+\delta ,M_{1},M_{2}}\left( \mathbb{H}^{n}\right) ^{\prime }$
such that 
\begin{equation*}
\Phi \mid _{\mathsf{M}_{F}^{M^{\prime }+\delta ,M_{1}^{\prime
},M_{2}^{\prime }}\left( \mathbb{H}^{n}\right) }=\Lambda _{\mathsf{M}%
_{F}^{M^{\prime }+\delta ,M_{1}^{\prime },M_{2}^{\prime }}\left( \mathbb{H}%
^{n}\right) }.
\end{equation*}
\end{lemma}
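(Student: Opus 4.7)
The plan is to obtain $\Phi$ by a straightforward Hahn--Banach extension argument, using the chain of continuous embeddings already established in Lemma \ref{inclusion}. Specifically, Lemma \ref{inclusion} gives us continuous inclusions
\begin{equation*}
\mathsf{M}_{F}^{M^{\prime }+\delta ,M_{1}^{\prime },M_{2}^{\prime }}\left( \mathbb{H}^{n}\right) \;\hookrightarrow\; \mathcal{M}_{flag}^{M+\delta ,M_{1},M_{2}}\left( \mathbb{H}^{n}\right) \;\hookrightarrow\; \mathsf{M}_{F}^{M+\delta ,M_{1},M_{2}}\left( \mathbb{H}^{n}\right) \;\hookrightarrow\; \mathcal{M}^{M+\delta ,M_{1},M_{2}}\left( \mathbb{H}^{n}\right),
\end{equation*}
each of the intermediate Banach spaces sitting continuously inside the next, with closed range.

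First, given $\Lambda \in \mathcal{M}_{flag}^{M+\delta ,M_{1},M_{2}}\left( \mathbb{H}^{n}\right) ^{\prime }$, I would restrict $\Lambda$ to the subspace $\mathsf{M}_{F}^{M^{\prime }+\delta ,M_{1}^{\prime },M_{2}^{\prime }}\left( \mathbb{H}^{n}\right)$. By the first continuous inclusion, this restriction $\Lambda_{0} \equiv \Lambda \mid_{\mathsf{M}_{F}^{M^{\prime }+\delta ,M_{1}^{\prime },M_{2}^{\prime }}}$ is a bounded linear functional on $\mathsf{M}_{F}^{M^{\prime }+\delta ,M_{1}^{\prime },M_{2}^{\prime }}\left( \mathbb{H}^{n}\right)$ with norm controlled by $\|\Lambda\|$ times the embedding constant.

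Next, I would view $\mathsf{M}_{F}^{M^{\prime }+\delta ,M_{1}^{\prime },M_{2}^{\prime }}\left( \mathbb{H}^{n}\right)$ as a (continuously embedded, closed-range) subspace of the Banach space $\mathcal{M}^{M+\delta ,M_{1},M_{2}}\left( \mathbb{H}^{n}\right)$ via the composition of the three inclusions in Lemma \ref{inclusion}. Because the composed inclusion is continuous, the functional $\Lambda_{0}$ is bounded with respect to the norm induced from $\mathcal{M}^{M+\delta ,M_{1},M_{2}}\left( \mathbb{H}^{n}\right)$. Now the classical Hahn--Banach theorem produces an extension $\Phi \in \mathcal{M}^{M+\delta ,M_{1},M_{2}}\left( \mathbb{H}^{n}\right)^{\prime }$ with $\Phi \mid_{\mathsf{M}_{F}^{M^{\prime }+\delta ,M_{1}^{\prime },M_{2}^{\prime }}} = \Lambda_{0} = \Lambda \mid_{\mathsf{M}_{F}^{M^{\prime }+\delta ,M_{1}^{\prime },M_{2}^{\prime }}}$, which is exactly the required identity.

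There is no real obstacle here beyond verifying that the composition of the three embeddings in Lemma \ref{inclusion} is indeed a continuous injection from $\mathsf{M}_{F}^{M^{\prime }+\delta ,M_{1}^{\prime },M_{2}^{\prime }}\left( \mathbb{H}^{n}\right)$ into $\mathcal{M}^{M+\delta ,M_{1},M_{2}}\left( \mathbb{H}^{n}\right)$---this is automatic since each individual embedding is continuous with closed range and the composition of continuous linear maps is continuous. The only mild subtlety is that we do \emph{not} claim $\Phi$ restricts to $\Lambda$ on all of $\mathcal{M}_{flag}^{M+\delta ,M_{1},M_{2}}\left( \mathbb{H}^{n}\right)$---only on the smaller `moment' molecular space $\mathsf{M}_{F}^{M^{\prime }+\delta ,M_{1}^{\prime },M_{2}^{\prime }}\left( \mathbb{H}^{n}\right)$---which is precisely what the statement asserts and what corresponds to the `roughly the same' identification advertised in the Remark preceding the definition of $Q_{flag}^{p}\left( \mathbb{H}^{n}\right)$.
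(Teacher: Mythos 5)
Your overall plan — restrict $\Lambda$ to $\mathsf{M}_{F}^{M'+\delta,M_{1}',M_{2}'}(\mathbb{H}^{n})$ and then apply Hahn--Banach to produce $\Phi\in\mathcal{M}^{M+\delta,M_{1},M_{2}}(\mathbb{H}^{n})'$ — is the same as the paper's; the paper just factors its extension through $\mathcal{M}^{M'+\delta,M_{1}',M_{2}'}$ while you extend in one step, which is a harmless variant.

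There is, however, a genuine gap in the justification for the decisive step. You write that because the composed inclusion
$\mathsf{M}_{F}^{M'+\delta,M_{1}',M_{2}'}\hookrightarrow \mathcal{M}^{M+\delta,M_{1},M_{2}}$
is continuous, $\Lambda_{0}$ is automatically bounded with respect to the $\mathcal{M}^{M+\delta,M_{1},M_{2}}$ norm. This is false as stated: continuity of the inclusion gives only
\begin{equation*}
\left\Vert f\right\Vert_{\mathcal{M}^{M+\delta,M_{1},M_{2}}}\lesssim \left\Vert f\right\Vert_{\mathsf{M}_{F}^{M'+\delta,M_{1}',M_{2}'}},
\end{equation*}
which is the \emph{wrong} direction. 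To apply Hahn--Banach you need
$\left\vert \Lambda_{0}(f)\right\vert \lesssim \left\Vert f\right\Vert_{\mathcal{M}^{M+\delta,M_{1},M_{2}}}$ on the subspace, and since you only know $\left\vert \Lambda_{0}(f)\right\vert \lesssim \left\Vert f\right\Vert_{\mathsf{M}_{F}^{M'+\delta,M_{1}',M_{2}'}}$, you would need the \emph{reverse} inequality $\left\Vert f\right\Vert_{\mathsf{M}_{F}^{M'+\delta,M_{1}',M_{2}'}}\lesssim \left\Vert f\right\Vert_{\mathcal{M}^{M+\delta,M_{1},M_{2}}}$ on the subspace. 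Continuity of an inclusion never yields that. What does yield it is exactly the closed-range property asserted in Lemma \ref{inclusion}: a continuous injection between Banach spaces with closed range is, by the open mapping theorem, a topological isomorphism onto its image, so the two norms are \emph{equivalent} on $\mathsf{M}_{F}^{M'+\delta,M_{1}',M_{2}'}$. You mention closed range in passing but then only invoke continuity when drawing the conclusion; the closed-range argument via the open mapping theorem needs to be made explicit, since it is the only thing that makes the Hahn--Banach step legitimate.
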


\begin{proof}
Suppose that $\Lambda \in \mathcal{M}_{flag}^{M+\delta ,M_{1},M_{2}}\left( 
\mathbb{H}^{n}\right) ^{\prime }$. Since the inclusion%
\begin{equation*}
\mathsf{M}_{F}^{M^{\prime }+\delta ,M_{1}^{\prime },M_{2}^{\prime }}\left( 
\mathbb{H}^{n}\right) \subset \mathcal{M}_{flag}^{M+\delta
,M_{1},M_{2}}\left( \mathbb{H}^{n}\right)
\end{equation*}%
is continuous, we see that 
\begin{equation*}
\Lambda _{1}\equiv \Lambda \mid _{\mathsf{M}_{F}^{M^{\prime }+\delta
,M_{1}^{\prime },M_{2}^{\prime }}\left( \mathbb{H}^{n}\right) }
\end{equation*}%
is a continuous linear functional on $\mathsf{M}_{F}^{M^{\prime }+\delta
,M_{1}^{\prime },M_{2}^{\prime }}\left( \mathbb{H}^{n}\right) $. The
Hahn-Banach theorem yields a continuous extension $\Phi $ of $\Lambda _{1}$
first to the space $\mathcal{M}^{M^{\prime }+\delta ,M_{1}^{\prime
},M_{2}^{\prime }}\left( \mathbb{H}^{n}\right) $ and then to the larger
space $\mathcal{M}^{M+\delta ,M_{1},M_{2}}\left( \mathbb{H}^{n}\right) $.
Thus $\Phi \in \mathcal{M}^{M+\delta ,M_{1},M_{2}}\left( \mathbb{H}%
^{n}\right) ^{\prime }$ and of course%
\begin{equation*}
\Phi \mid _{\mathsf{M}_{F}^{M^{\prime }+\delta ,M_{1}^{\prime
},M_{2}^{\prime }}\left( \mathbb{H}^{n}\right) }=\Lambda _{1}=\Lambda \mid _{%
\mathsf{M}_{F}^{M^{\prime }+\delta ,M_{1}^{\prime },M_{2}^{\prime }}\left( 
\mathbb{H}^{n}\right) }.
\end{equation*}
\end{proof}

While we cannot say that $H_{flag}^{p}\left( \mathbb{H}^{n}\right) $ is a 
\emph{subspace} of the one-parameter Hardy space $H^{p}\left( \mathbb{H}%
^{n}\right) $, Lemma \ref{dual inclusion} shows that a certain quotient
space $Q_{flag}^{p}\left( \mathbb{H}^{n}\right) $ of $H_{flag}^{p}\left( 
\mathbb{H}^{n}\right) $ can be identified with a closed subspace of the
corresponding quotient space $Q^{p}\left( \mathbb{H}^{n}\right) $ of $%
H^{p}\left( \mathbb{H}^{n}\right) $; the quotient spaces in question are%
\begin{eqnarray*}
Q_{flag}^{p}\left( \mathbb{H}^{n}\right) &\equiv &H_{flag}^{p}\left( \mathbb{%
H}^{n}\right) /\mathsf{M}_{F}^{M^{\prime }+\delta ,M_{1}^{\prime
},M_{2}^{\prime }}\left( \mathbb{H}^{n}\right) ^{\bot }, \\
Q^{p}\left( \mathbb{H}^{n}\right) &\equiv &H^{p}\left( \mathbb{H}^{n}\right)
/\mathsf{M}_{F}^{M^{\prime }+\delta ,M_{1}^{\prime },M_{2}^{\prime }}\left( 
\mathbb{H}^{n}\right) ^{\bot }.
\end{eqnarray*}%
Indeed, if $f\in H_{flag}^{p}\left( \mathbb{H}^{n}\right) $, then by Lemma %
\ref{dual inclusion} there is 
\begin{equation*}
F\in \mathcal{M}^{M+\delta ,M_{1},M_{2}}\left( \mathbb{H}^{n}\right)
^{\prime }
\end{equation*}%
such that%
\begin{equation}
F\mid _{\mathsf{M}_{F}^{M^{\prime }+\delta ,M_{1}^{\prime },M_{2}^{\prime
}}\left( \mathbb{H}^{n}\right) }=f_{\mathsf{M}_{F}^{M^{\prime }+\delta
,M_{1}^{\prime },M_{2}^{\prime }}\left( \mathbb{H}^{n}\right) }.
\label{coincide}
\end{equation}%
But then since the component functions in $E_{k}D_{j}$ can be chosen to
belong to $\mathsf{M}_{F}^{M^{\prime }+\delta ,M_{1}^{\prime },M_{2}^{\prime
}}\left( \mathbb{H}^{n}\right) $, we have $E_{k}D_{j}F=E_{k}D_{j}f$ and%
\begin{eqnarray}
g\left( F\right) &=&\left\{ \sum\limits_{k=-\infty }^{\infty }\left\vert
E_{k}F\right\vert ^{2}\right\} ^{{\frac{{1}}{{2}}}}\lesssim \left\{
\sum\limits_{j,k=-\infty }^{\infty }\left\vert E_{k}D_{j}F\right\vert
^{2}\right\} ^{{\frac{{1}}{{2}}}}  \label{comp g} \\
&=&\left\{ \sum\limits_{j,k=-\infty }^{\infty }\left\vert
E_{k}D_{j}f\right\vert ^{2}\right\} ^{{\frac{{1}}{{2}}}}=g_{flag}\left(
f\right) .  \notag
\end{eqnarray}%
If $\left[ F\right] $ (respectively $\left[ f\right] $) denotes the
equivalence class in $Q^{p}\left( \mathbb{H}^{n}\right) $ (respectively $%
Q_{flag}^{p}\left( \mathbb{H}^{n}\right) $) that contains the distribution $%
F $ (respectively $f$), then (\ref{coincide}) shows that there is a
well-defined one-to-one linear map%
\begin{equation*}
T:Q_{flag}^{p}\left( \mathbb{H}^{n}\right) \rightarrow Q^{p}\left( \mathbb{H}%
^{n}\right) ,
\end{equation*}%
given by $Tq=\left[ F\right] $ if $q=\left[ f\right] $ and (\ref{coincide})
holds. Then (\ref{comp g}) implies that 
\begin{equation*}
\left\Vert Tq\right\Vert _{Q^{p}\left( \mathbb{H}^{n}\right) }\lesssim
\left\Vert q\right\Vert _{Q_{flag}^{p}\left( \mathbb{H}^{n}\right) },\ \ \ \
\ q\in Q_{flag}^{p}\left( \mathbb{H}^{n}\right) .
\end{equation*}%
The map $T$ thus identifies $Q_{flag}^{p}\left( \mathbb{H}^{n}\right) $ as a
closed subspace of $Q^{p}\left( \mathbb{H}^{n}\right) $.

Note that if we could identify $\mathcal{M}^{M+\delta ,M_{1},M_{2}}\left( 
\mathbb{H}^{n}\right) $ with $\mathsf{M}_{F}^{M^{\prime }+\delta
,M_{1}^{\prime },M_{2}^{\prime }}\left( \mathbb{H}^{n}\right) $ for some
choice of parameters $M^{\prime }+\delta ,M_{1}^{\prime },M_{2}^{\prime }$,
then we could conclude that $H_{flag}^{p}\left( \mathbb{H}^{n}\right)
=Q_{flag}^{p}\left( \mathbb{H}^{n}\right) $ is itself a closed subspace of
the quotient space $Q^{p}\left( \mathbb{H}^{n}\right) $.

\section{A counterexample for the one-parameter Hardy space\label{s
counterexample}}

Recall that $\mathbb{H}^{n}=\mathbb{C}^{n}\times \mathbb{R}$ is the
Heisenberg group with group multiplication%
\begin{equation*}
\left( \zeta ,t\right) \cdot \left( \eta ,s\right) =\left( \zeta +\eta ,t+s+2%
\func{Im}\left( \zeta \cdot \overline{\eta }\right) \right) ,\ \ \ \ \
\left( \zeta ,t\right) ,\left( \eta ,s\right) \in \mathbb{C}^{n}\times 
\mathbb{R},
\end{equation*}%
and that $\left( \eta ,s\right) ^{-1}=\left( -\eta ,-s\right) $. Consider
the mixed kernel $K\left( z,t\right) =K_{1}\left( z\right) K_{2}\left(
z,t\right) $ for $\left( z,t\right) \in \mathbb{H}^{n}=\mathbb{C}^{n}\times 
\mathbb{R}$ given by%
\begin{equation*}
K_{1}\left( z\right) =\frac{\Omega \left( z\right) }{\left\vert z\right\vert
^{2n}}\text{ and }K_{2}\left( z,t\right) =\frac{1}{\left\vert z\right\vert
^{2}+it},
\end{equation*}%
where $\Omega $ is smooth with mean zero on the unit sphere in $\mathbb{C}%
^{n}$. We show in the subsection below that $K$ satisfies the smoothness and
cancellation conditions required of a flag kernel. It then follows from
Muller-Ricci-Stein \cite{MRS} that there is an operator $T$ having kernel $K$
such that for each $1<p<\infty $,%
\begin{equation*}
\left\Vert Tf\right\Vert _{L^{p}\left( \mathbb{H}^{n}\right) }\leq
C_{p,n}\left\Vert f\right\Vert _{L^{p}\left( \mathbb{H}^{n}\right) },\ \ \ \
\ f\in L^{p}\left( \mathbb{H}^{n}\right) .
\end{equation*}%
The action of the corresponding singular integral operator $Tf=K\ast f$ is
given by 
\begin{eqnarray*}
Tf\left( \zeta ,t\right) &=&K\ast _{\mathbb{H}^{n}}f\left( \zeta ,t\right)
=\int_{\mathbb{H}^{n}}K\left( \left( \zeta ,t\right) \circ \left( \eta
,s\right) ^{-1}\right) \ f\left( \eta ,s\right) d\eta ds \\
&=&\int_{\mathbb{H}^{n}}f\left( \eta ,s\right) \ K\left( \zeta -\eta ,t-s-2%
\func{Im}\left( \zeta \cdot \overline{\eta }\right) \right) \ d\eta ds \\
&=&\int_{\mathbb{H}^{n}}f\left( \eta ,s\right) \ \frac{\Omega \left( \zeta
-\eta \right) }{\left\vert \zeta -\eta \right\vert ^{2n}}\frac{1}{\left\vert
\zeta -\eta \right\vert ^{2}+i\left( t-s-2\func{Im}\left( \zeta \cdot 
\overline{\eta }\right) \right) }\ d\eta ds.
\end{eqnarray*}

\begin{theorem}
\label{endpoint fails}There is a smooth function $\Omega $ with mean zero on
the unit sphere in $\mathbb{C}^{n}$ such that there is \textbf{no} operator $%
T$ having kernel $K$ that is bounded from $H^{1}\left( \mathbb{H}^{n}\right) 
$ to $L^{1}\left( \mathbb{H}^{n}\right) $.
\end{theorem}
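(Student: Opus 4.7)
The strategy is to exhibit an $H^{1}(\mathbb{H}^{n})$ atom $a$ whose image $Ta$ fails to be integrable, which contradicts any purported $H^{1}\to L^{1}$ bound. I would choose $\Omega$ to be any real nontrivial smooth function on the unit sphere in $\mathbb{C}^{n}$ with $\int_{S^{2n-1}}\Omega\,d\sigma=0$ (for example $\Omega(\zeta)=\operatorname{Re}(\zeta_{1}^{2})/|\zeta|^{2}$), so that $K_{1}(\zeta)=\Omega(\zeta)/|\zeta|^{2n}$ is a classical Calder\'{o}n--Zygmund convolution kernel on $\mathbb{C}^{n}$. Select a real-valued $\phi\in C_{c}^{\infty}(\mathbb{C}^{n})$ supported in the unit ball with $\int\phi=0$ and with the classical Euclidean convolution $K_{1}\ast\phi$ not identically zero (easily arranged since the CZ operator $\phi\mapsto K_{1}\ast\phi$ is a nontrivial bounded operator on $L^{2}$). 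Select $\psi\in C_{c}^{\infty}(\mathbb{R})$ with $\psi\geq 0$, support in $[-1,1]$, and $\int\psi\neq 0$. Then $a(\eta,s)=\phi(\eta)\psi(s)$ is a smooth compactly-supported function on $\mathbb{H}^{n}$ with $\int a=(\int\phi)(\int\psi)=0$, supported in a unit Heisenberg ball about the origin, so $a/C$ is a classical $H^{1}(\mathbb{H}^{n})$ atom for a suitable absolute constant $C$.

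The key step is an asymptotic expansion of $Ta(\zeta,t)$ along the flag singularity subspace $\{(0,t)\}$. Fix a bounded open set $U\subset\mathbb{C}^{n}$ and choose $T_{0}\geq C_{0}(1+\sup_{\zeta\in U}|\zeta|^{2})$ for a large constant $C_{0}$. Then for $(\eta,s)\in\operatorname{supp}(a)$ and $(\zeta,t)\in U\times\{|t|\geq T_{0}\}$, both the Heisenberg twist $2\operatorname{Im}(\zeta\bar{\eta})$ and $|\zeta-\eta|^{2}$ are bounded by a constant much smaller than $|t|$, and a geometric-series expansion yields
\begin{equation*}
\frac{1}{|\zeta-\eta|^{2}+i\bigl(t-s-2\operatorname{Im}(\zeta\bar{\eta})\bigr)}=\frac{1}{it}+O\!\left(\tfrac{1}{t^{2}}\right),
\end{equation*}
uniformly on $U\times\{|t|\geq T_{0}\}$. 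Substituting into the convolution formula for $Ta$ and factoring the product structure of $a$ gives
\begin{equation*}
Ta(\zeta,t)=\frac{1}{it}\Bigl(\int\psi\Bigr)(K_{1}\ast\phi)(\zeta)+O\!\left(\tfrac{1}{t^{2}}\right),
\end{equation*}
with error uniform on $U\times\{|t|\geq T_{0}\}$.

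Since $K_{1}\ast\phi$ is a continuous real function that is not identically zero, I can arrange $U$ bounded with $|(K_{1}\ast\phi)(\zeta)|\geq\alpha>0$ on $U$, and then enlarge $T_{0}$ so that the error contributes at most half the main term there. This produces the divergent lower bound
\begin{equation*}
\|Ta\|_{L^{1}(\mathbb{H}^{n})}\geq\int_{U}\int_{|t|\geq T_{0}}|Ta(\zeta,t)|\,dt\,d\zeta\geq\frac{\alpha\bigl|\int\psi\bigr|}{4}\,|U|\int_{|t|\geq T_{0}}\frac{dt}{|t|}=+\infty,
\end{equation*}
contradicting any putative $H^{1}\to L^{1}$ bound for $T$. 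The only delicate point is the uniformity of the $O(1/t^{2})$ error on $U$: the twist $2\operatorname{Im}(\zeta\bar{\eta})$ grows linearly in $|\zeta|$ and could in principle compete with $t$, but the choice $T_{0}\gg\sup_{\zeta\in U}|\zeta|^{2}$ ensures $|t-s-2\operatorname{Im}(\zeta\bar{\eta})|\asymp|t|$ throughout the relevant region. Conceptually, this argument is a direct manifestation of the flag kernel's slow vertical decay $1/(|\zeta|^{2}+it)\sim 1/(it)$ at unit horizontal scale: the single classical cancellation $\int a=0$ cannot kill the resulting $1/t$ tail when $K_{1}\ast\phi$ is not identically zero, whereas atoms in the flag Hardy space carry the additional horizontal cancellation needed to tame it.
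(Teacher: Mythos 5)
Your proof is correct and follows essentially the same route as the paper: a product test function with zero mean in the $\mathbb{C}^{n}$ factor and nonzero mean in the central variable, the $1/|t|$ tail of the flag kernel $K_{2}$ at bounded horizontal scale, and nonvanishing of $K_{1}\ast \phi$ on a set of positive measure to force divergence of the $L^{1}$ norm. Where you diverge is in how you ensure the nonvanishing --- you appeal to the nontriviality of the Calder\'{o}n--Zygmund operator $\phi \mapsto K_{1}\ast \phi$ and continuity of its output, rather than the paper's explicit choice of $\Omega$, $\psi$, and evaluation region (done there for $n=1$) --- which is cleaner and immediately general in $n$.
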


To prove the theorem, we fix $f\left( z,u\right) =\psi \left( z\right)
\varphi \left( u\right) $ where

\begin{enumerate}
\item $\psi $ is smooth with support in the unit ball of $\mathbb{C}^{n}$,

\item $\varphi $ is smooth with support in $\left( -1,1\right) $,

\item $\int_{\mathbb{C}^{n}}\psi \left( z\right) dz=0$ and $\int_{\mathbb{R}%
}\varphi \left( u\right) du=1$.
\end{enumerate}

Such a function $f$ is clearly in $H^{1}\left( \mathbb{H}^{n}\right) $ since 
$f$ is smooth, compactly supported and has mean zero: 
\begin{equation*}
\int_{\mathbb{H}^{n}}f\left( z,u\right) dzdu=\int_{\mathbb{R}}\left\{ \int_{%
\mathbb{C}^{n}}\psi \left( z\right) dz\right\} \varphi \left( u\right)
du=\int_{\mathbb{R}}\left\{ 0\right\} \varphi \left( u\right) du=0.
\end{equation*}%
We next show that $T$ fails to be bounded from $H^{1}\left( \mathbb{H}%
^{n}\right) $ to $L^{1}\left( \mathbb{H}^{n}\right) $, and then that $T$ is
a flag singular integral.

\subsection{Failure of boundedness of $T$}

For 
\begin{eqnarray*}
\zeta &\in &B\left( \left( 100,\mathbf{0}\right) ,0\right) =\left\{ \left(
\zeta _{1},\mathbf{\zeta }^{\prime }\right) \in \mathbb{R}\times \mathbb{C}%
^{n-1}:\left( \zeta _{1}-100\right) ^{2}+\left\vert \mathbf{\zeta }^{\prime
}\right\vert ^{2}<1\right\} , \\
\left\vert t\right\vert &>&10^{6},
\end{eqnarray*}%
we have%
\begin{equation*}
\left\vert Tf\left( \zeta ,t\right) \right\vert \approx \int \psi \left(
\eta \right) \varphi \left( s\right) \frac{\Omega \left( \zeta -\eta \right) 
}{\left\vert \zeta \right\vert ^{2n}}\frac{1}{\left\vert \zeta \right\vert
^{2}+i\left( t-2\left\vert \zeta \right\vert ^{2}\right) }d\eta ds\approx 
\frac{1}{\left\vert \zeta \right\vert ^{2n}\left\vert t\right\vert },
\end{equation*}%
since for $\zeta \in B\left( \left( 100,\mathbf{0}\right) ,0\right) $ we have%
\begin{equation*}
\left\vert \int \psi \left( \eta \right) \Omega \left( \zeta -\eta \right)
d\eta \right\vert \geq c>0,
\end{equation*}%
for an appropriately chosen $\Omega $ with mean zero on the sphere. The
point is that both functions $\psi $ and $\Omega $ have mean zero on their
respective domains, but the product can destroy enough of the cancellation.
For example, when $n=1$ we can take%
\begin{eqnarray*}
\Omega \left( x,y\right) &=&\frac{y}{\sqrt{x^{2}+y^{2}}}, \\
\psi \left( x,y\right) &=&y\psi _{1}\left( x\right) \psi _{2}\left( y\right)
,
\end{eqnarray*}%
where $\psi _{i}$ is an even function identically one on $\left( -\frac{1}{2}%
,\frac{1}{2}\right) $ and supported in $\left( -\frac{1}{\sqrt{2}},\frac{1}{%
\sqrt{2}}\right) $. Then for 
\begin{equation*}
\zeta =\left( 100+\nu ,\omega \right) ,\ \ \ \ \ \left\vert \nu \right\vert
^{2}+\left\vert \omega \right\vert ^{2}\leq 1,
\end{equation*}%
we have%
\begin{eqnarray*}
\int \psi \left( \eta \right) \Omega \left( \zeta -\eta \right) d\eta
&=&\int y\psi _{1}\left( x\right) \psi _{2}\left( y\right) \Omega \left(
100+\nu -x,\omega -y\right) \\
&=&\int y\psi _{1}\left( x\right) \psi _{2}\left( y\right) \frac{\omega -y}{%
\sqrt{\left( 100+\nu -x\right) ^{2}+\left( \omega -y\right) ^{2}}} \\
&=&\omega \int \frac{y\psi _{1}\left( x\right) \psi _{2}\left( y\right) }{%
\sqrt{\left( 100+\nu -x\right) ^{2}+\left( \omega -y\right) ^{2}}} \\
&&-\int \frac{y^{2}\psi _{1}\left( x\right) \psi _{2}\left( y\right) }{\sqrt{%
\left( 100+\nu -x\right) ^{2}+\left( \omega -y\right) ^{2}}} \\
&\approx &-\frac{1}{100}.
\end{eqnarray*}

We conclude from the above that 
\begin{equation*}
\int_{\mathbb{H}^{n}}\left\vert Tf\left( \zeta ,t\right) \right\vert d\zeta
dt\gtrsim \int_{\left\{ \zeta \in B\left( \left( 100,\mathbf{0}\right)
,0\right) \text{ and }\left\vert t\right\vert >10^{6}\right\} }\frac{1}{%
\left\vert \zeta \right\vert ^{2n}\left\vert t\right\vert }d\zeta dt=\infty .
\end{equation*}

\subsection{$T$ is a flag singular integral}

Let $K$ be the kernel%
\begin{equation*}
K\left( z,t\right) =\frac{\Omega \left( z\right) }{\left\vert z\right\vert
^{2n}}\frac{1}{\left\vert z\right\vert ^{2}+it},\ \ \ \ \ \left( z,t\right)
\in \mathbb{H}^{n}.
\end{equation*}%
In order to show that $K$ is a flag kernel we must establish the following
smoothness and cancellation conditions.

\begin{enumerate}
\item (\emph{Differential Inequalities}) For any multi-indices $\alpha
=(\alpha _{1},\cdots ,\alpha _{n})$, $\beta =(\beta _{1},\cdots ,\beta _{m})$
\begin{equation*}
\left\vert \partial _{z}^{\alpha }\partial _{u}^{\beta }K(z,u)\right\vert
\leq C_{\alpha ,\beta }\left\vert z\right\vert ^{-2n-|\alpha |}\cdot \left(
\left\vert z\right\vert ^{2}+\left\vert u\right\vert \right) ^{-1-|\beta |}
\end{equation*}%
for all $(z,u)\in \mathbb{H}^{n}$ with $z\neq 0$.

\item (\emph{Cancellation Condition}) 
\begin{equation*}
\left\vert \int_{\mathbb{R}}\partial _{z}^{\alpha }K(z,u)\phi _{1}(\delta
u)du\right\vert \leq C_{\alpha }|z|^{-2n-|\alpha |}
\end{equation*}%
for every multi-index $\alpha $ and every normalized bump function $\phi
_{1} $ on $\mathbb{R}$ and every $\delta >0$; 
\begin{equation*}
\left\vert \int_{\mathbb{C}^{n}}\partial _{u}^{\beta }K(z,u)\phi _{2}(\delta
z)dz\right\vert \leq C_{\gamma }|u|^{-1-|\beta |}
\end{equation*}%
for every multi-index $\beta $ and every normalized bump function $\phi _{2}$
on $\mathbb{C}^{n}$ and every $\delta >0$; and 
\begin{equation*}
\left\vert \int_{\mathbb{H}^{n}}K(z,u)\phi _{3}(\delta _{1}z,\delta
_{2}u)dzdu\right\vert \leq C
\end{equation*}%
for every normalized bump function $\phi _{3}$ on $\mathbb{H}^{n}$ and every 
$\delta _{1}>0$ and $\delta _{2}>0$.
\end{enumerate}

\bigskip

The differential inequalities in (1) follow immediately from the definition
of $K$.

The first cancellation condition in (2) exploits the fact that $t$ is an odd
function. For convenience we assume $\alpha =0$. We then have%
\begin{eqnarray*}
\left\vert \int_{\mathbb{R}}K(z,t)\phi _{1}(\delta t)dt\right\vert
&=&\left\vert \int_{\mathbb{R}}\frac{\Omega \left( z\right) }{\left\vert
z\right\vert ^{2n}}\left\{ \frac{\left\vert z\right\vert ^{2}}{\left\vert
z\right\vert ^{4}+t^{2}}-\frac{it}{\left\vert z\right\vert ^{4}+t^{2}}%
\right\} \phi _{1}(\delta t)dt\right\vert \\
&\leq &\int_{\mathbb{R}}\frac{1}{\left\vert z\right\vert ^{2n}}\frac{%
\left\vert z\right\vert ^{2}}{\left\vert z\right\vert ^{4}+t^{2}}\left\vert
\phi _{1}\left( \delta t\right) \right\vert dt \\
&&+\left\vert \int_{\mathbb{R}}\frac{\Omega \left( z\right) }{\left\vert
z\right\vert ^{2n}}\frac{it}{\left\vert z\right\vert ^{4}+t^{2}}\left\{ \phi
_{1}\left( \delta t\right) -\phi _{1}\left( 0\right) \right\} dt\right\vert
\\
&\lesssim &\frac{1}{\left\vert z\right\vert ^{2n-2}}\int_{0}^{\infty }\frac{1%
}{\left\vert z\right\vert ^{4}+t^{2}}dt+\frac{1}{\left\vert z\right\vert
^{2n}}\int_{0}^{\frac{1}{\delta }}\frac{\delta t^{2}}{\left\vert
z\right\vert ^{4}+t^{2}}dt.
\end{eqnarray*}%
Now%
\begin{equation*}
\frac{1}{\left\vert z\right\vert ^{2n-2}}\int_{0}^{\infty }\frac{1}{%
\left\vert z\right\vert ^{4}+t^{2}}dt\lesssim \frac{1}{\left\vert
z\right\vert ^{2n-2}}\left( \int_{0}^{\left\vert z\right\vert ^{2}}\frac{1}{%
\left\vert z\right\vert ^{4}}dt+\int_{\left\vert z\right\vert ^{2}}^{\infty }%
\frac{1}{t^{2}}dt\right) \lesssim \frac{1}{\left\vert z\right\vert ^{2n}},
\end{equation*}%
and for $\left\vert z\right\vert ^{2}\leq \frac{1}{\delta }$, we have 
\begin{equation*}
\int_{0}^{\frac{1}{\delta }}\frac{\delta t^{2}}{\left\vert z\right\vert
^{4}+t^{2}}dt\lesssim \int_{0}^{\left\vert z\right\vert ^{2}}\frac{\delta
t^{2}}{\left\vert z\right\vert ^{4}}dt+\int_{\left\vert z\right\vert ^{2}}^{%
\frac{1}{\delta }}\frac{\delta t^{2}}{t^{2}}dt\lesssim \delta \frac{%
\left\vert z\right\vert ^{6}}{\left\vert z\right\vert ^{4}}+1\lesssim 1,
\end{equation*}%
while for $\left\vert z\right\vert ^{2}>\frac{1}{\delta }$, we have%
\begin{equation*}
\int_{0}^{\frac{1}{\delta }}\frac{\delta t^{2}}{\left\vert z\right\vert
^{4}+t^{2}}dt\lesssim \int_{0}^{\frac{1}{\delta }}\frac{\delta t^{2}}{%
\left\vert z\right\vert ^{4}}dt\lesssim \delta \frac{\left( \frac{1}{\delta }%
\right) ^{3}}{\left\vert z\right\vert ^{4}}\lesssim 1.
\end{equation*}%
Altogether we have $\left\vert \int_{\mathbb{R}}K(z,t)\phi _{1}(\delta
t)dt\right\vert \lesssim \left\vert z\right\vert ^{-2n}$ as required.

The second cancellation condition in (2) uses the assumption that $\Omega $
has mean zero on the sphere. For convenience we take $\beta =0$. Then we
have 
\begin{eqnarray*}
\left\vert \int_{\mathbb{C}^{n}}K(z,t)\phi _{2}(\delta z)dz\right\vert
&=&\left\vert \int_{\mathbb{C}^{n}}\frac{\Omega \left( z\right) }{\left\vert
z\right\vert ^{2n}}\frac{1}{\left\vert z\right\vert ^{2}+it}\left\{ \phi
_{2}\left( \delta z\right) -\phi _{2}\left( 0\right) \right\} dz\right\vert
\\
&\lesssim &\delta \int_{\left\{ \left\vert z\right\vert \leq \frac{1}{\delta 
}\right\} }\frac{1}{\left\vert z\right\vert ^{2n}}\frac{1}{\left\vert
z\right\vert ^{2}+\left\vert t\right\vert }\left\vert z\right\vert dz \\
&\lesssim &\frac{\delta }{\left\vert t\right\vert }\int_{0}^{\frac{1}{\delta 
}}\frac{1}{r^{2n}}r\left( r^{2n-1}dr\right) \approx \left\vert t\right\vert
^{-1},
\end{eqnarray*}%
as required.

The third cancellation condition in (2) is handled similarly. We have%
\begin{eqnarray*}
&&\int_{\mathbb{H}^{n}}K(z,t)\phi _{3}(\delta _{1}z,\delta _{2}t)dzdt \\
&=&\int_{\mathbb{H}^{n}}\frac{\Omega \left( z\right) }{\left\vert
z\right\vert ^{2n}}\left\{ \frac{\left\vert z\right\vert ^{2}}{\left\vert
z\right\vert ^{4}+t^{2}}-\frac{it}{\left\vert z\right\vert ^{4}+t^{2}}%
\right\} \left\{ \phi _{3}\left( \delta _{1}z,\delta _{2}t\right) -\phi
_{3}\left( 0,\delta _{2}t\right) \right\} dzdt \\
&=&\int_{\mathbb{H}^{n}}\frac{\Omega \left( z\right) }{\left\vert
z\right\vert ^{2n}}\frac{\left\vert z\right\vert ^{2}}{\left\vert
z\right\vert ^{4}+t^{2}}\left\{ \phi _{3}\left( \delta _{1}z,\delta
_{2}t\right) -\phi _{3}\left( 0,\delta _{2}t\right) \right\} dzdt \\
&&-\int_{\mathbb{H}^{n}}\frac{\Omega \left( z\right) }{\left\vert
z\right\vert ^{2n}}\frac{it}{\left\vert z\right\vert ^{4}+t^{2}}\left\{ \phi
_{3}\left( \delta _{1}z,\delta _{2}t\right) -\phi _{3}\left( 0,\delta
_{2}t\right) -\phi _{3}\left( \delta _{1}z,0\right) +\phi _{3}\left(
0,0\right) \right\} dzdt,
\end{eqnarray*}%
and so%
\begin{eqnarray*}
&&\left\vert \int_{\mathbb{H}^{n}}K(z,t)\phi _{3}(\delta _{1}z,\delta
_{2}t)dzdt\right\vert \\
&\lesssim &\int_{\left\vert t\right\vert \leq \frac{1}{\delta _{2}}%
}\int_{\left\vert z\right\vert \leq \frac{1}{\delta _{1}}}\frac{1}{%
\left\vert z\right\vert ^{2n}}\frac{\left\vert z\right\vert ^{2}}{\left\vert
z\right\vert ^{4}+t^{2}}\delta _{1}\left\vert z\right\vert
dzdt+\int_{\left\vert t\right\vert \leq \frac{1}{\delta _{2}}%
}\int_{\left\vert z\right\vert \leq \frac{1}{\delta _{1}}}\frac{1}{%
\left\vert z\right\vert ^{2n}}\frac{\left\vert t\right\vert }{\left\vert
z\right\vert ^{4}+t^{2}}\delta _{1}\left\vert z\right\vert \delta
_{2}\left\vert t\right\vert dzdt \\
&=&I+II.
\end{eqnarray*}%
Now if $\frac{1}{\delta _{2}}\leq \left\vert z\right\vert ^{2}$, then 
\begin{equation*}
I\lesssim \delta _{1}\int_{\left\vert z\right\vert \leq \frac{1}{\delta _{1}}%
}\frac{1}{\left\vert z\right\vert ^{2n-3}}\left\{ \int_{0}^{\frac{1}{\delta
_{2}}}\frac{1}{\left\vert z\right\vert ^{4}}dt\right\} dz\lesssim \delta
_{1}\int_{\left\vert z\right\vert \leq \frac{1}{\delta _{1}}}\frac{1}{%
\left\vert z\right\vert ^{2n-1}}dz\approx \delta _{1}\int_{0}^{\frac{1}{%
\delta _{1}}}dr=1,
\end{equation*}%
while if $\frac{1}{\delta _{2}}>\left\vert z\right\vert ^{2}$, then%
\begin{equation*}
I\lesssim \delta _{1}\int_{\left\vert z\right\vert \leq \frac{1}{\delta _{1}}%
}\frac{1}{\left\vert z\right\vert ^{2n-3}}\left\{ \int_{0}^{\left\vert
z\right\vert ^{2}}\frac{1}{\left\vert z\right\vert ^{4}}dt+\int_{\left\vert
z\right\vert ^{2}}^{\frac{1}{\delta _{2}}}\frac{1}{t^{2}}dt\right\}
dz\lesssim \delta _{1}\int_{\left\vert z\right\vert \leq \frac{1}{\delta _{1}%
}}\frac{1}{\left\vert z\right\vert ^{2n-1}}dz\approx 1.
\end{equation*}%
Finally, we have%
\begin{equation*}
II\lesssim \delta _{1}\int_{\left\vert z\right\vert \leq \frac{1}{\delta _{1}%
}}\frac{1}{\left\vert z\right\vert ^{2n-1}}\left\{ \delta
_{2}\int_{\left\vert t\right\vert \leq \frac{1}{\delta _{2}}}\frac{t^{2}}{%
\left\vert z\right\vert ^{4}+t^{2}}dt\right\} dz\lesssim \delta
_{1}\int_{\left\vert z\right\vert \leq \frac{1}{\delta _{1}}}\frac{1}{%
\left\vert z\right\vert ^{2n-1}}dz\approx 1.
\end{equation*}

\part{Spaces of homogenous type}

Here in Part 3, we turn to a modest beginnning of an extension of the
implicit two-parameter theory to the general context of spaces of
homogeneous type. Recall that the general theory of Hardy spaces in spaces
of homogeneous type is limited to a single vanishing moment condition, and
hence to $p_{0}<p\leq 1$ where $p_{0}$ is the exponent determined by having
just one vanishing moment condition. The tools required for an extension of
the theory to two implict parameters will include an appropriate \emph{flag}
dyadic decomposition of the space, and an analogue of the covering lemma of
Journ\'{e} and Pipher.

We begin by constructing a \emph{flag} dyadic decompositon of the Heisenberg
group using two different proofs. The first uses the tiling theorem of
Strichartz, and the second uses a `hands-on' construction, which has the
advantage that it generalizes to certain products of spaces of homogeneous
type. We end by indicating how to extend Journ\'{e}'s covering lemma to the
Heisenberg group, and more generally to the aforementioned product spaces.

\section{The Heisenberg grid\label{s Heisenberg grid}}

Let $\mathbb{H}^{n}=\mathbb{C}^{n}\times \mathbb{R}$ be the Heisenberg group
with group multiplication%
\begin{equation*}
\left( \zeta ,t\right) \cdot \left( \eta ,s\right) =\left( \zeta +\eta ,t+s+2%
\func{Im}\left( \zeta \cdot \overline{\eta }\right) \right) ,\ \ \ \ \
\left( \zeta ,t\right) ,\left( \eta ,s\right) \in \mathbb{C}^{n}\times 
\mathbb{R}.
\end{equation*}%
Note that $\left( \eta ,s\right) ^{-1}=\left( -\eta ,-s\right) $. Relative
to this multiplication we define the dilation%
\begin{equation*}
\delta _{\lambda }\left( \zeta ,t\right) =\left( \lambda \zeta ,\lambda
^{2}t\right) ,
\end{equation*}%
and its corresponding "norm" on $\mathbb{H}^{n}$ by%
\begin{equation*}
\rho \left( \zeta ,t\right) =\sqrt[4]{\left\vert \zeta \right\vert ^{4}+t^{2}%
}.
\end{equation*}%
Then we define a symmetric quasimetric $d$ on $\mathbb{H}^{n}$ by%
\begin{equation*}
d\left( \left( \zeta ,t\right) ,\left( \eta ,s\right) \right) =\rho \left(
\left( \zeta ,t\right) \cdot \left( \eta ,s\right) ^{-1}\right) ,
\end{equation*}%
and note that%
\begin{equation*}
d\left( \delta _{\lambda }\left( \zeta ,t\right) ,\delta _{\lambda }\left(
\eta ,s\right) \right) =\lambda d\left( \left( \zeta ,t\right) ,\left( \eta
,s\right) \right) .
\end{equation*}

The center of the group $\mathbb{H}^{n}$ is 
\begin{equation*}
\mathcal{Z}^{n}=\left\{ \left( \zeta ,t\right) \in \mathbb{H}^{n}:\zeta
=0\right\} ,
\end{equation*}%
which is isomorphic to the abelian group $\mathbb{R}$. The quotient group $%
\mathbb{Q}^{n}=\mathbb{H}^{n}/\mathcal{Z}^{n}$ consists of equivalence
classes $\left[ \left( \zeta ,t\right) \right] $ such that $\left[ \left(
\zeta ,t\right) \right] =\left[ \left( \eta ,s\right) \right] $ if and only
if%
\begin{equation*}
\left( \zeta ,t\right) \cdot \left( \eta ,s\right) ^{-1}\in \mathcal{Z}%
^{n},\ \ \ \ \ i.e.\ \zeta =\eta .
\end{equation*}%
Thus we may identify $\mathbb{Q}^{n}$ with $\mathbb{C}^{n}$ as abelian
groups. Thus we see that $\mathbb{H}^{n}=\mathbb{C}^{n}\otimes _{twist}%
\mathbb{R}$ is a twisted group product of the abelian groups $\mathbb{C}^{n}$
and $\mathbb{R}$.

Now we apply the usual dyadic decomposition to the quotient metric space $%
\mathbb{Q}^{n}=\mathbb{C}^{n}$ to obtain a grid of "almost balls" (which are
actually cubes here)%
\begin{equation*}
\left\{ I\right\} _{I\text{ dyadic}}=\left\{ I_{\alpha }^{j}\right\} _{j\in 
\mathbb{Z}\text{ and }\alpha \in 2^{j}\mathbb{Z}^{2n}}
\end{equation*}%
where $I_{0}^{j}=\left[ 0,2^{j}\right) ^{2n}$ and $I_{\alpha
}^{j}=I_{0}^{j}+\alpha $ for $j\in \mathbb{Z}$ and $\alpha \in 2^{j}\mathbb{Z%
}^{2n}$, so that $\ell \left( I_{\alpha }^{j}\right) =2^{j}$. By a grid of
almost balls we mean that the sets $I_{\alpha }^{j}$ decompose $\mathbb{C}%
^{n}$ at each scale $2^{j}$, are almost balls, and are nested at differing
scales, i.e. there are positive constants $C_{1},C_{2}$ and points $%
c_{I_{\alpha }^{j}}\in I_{\alpha }^{j}$ such that%
\begin{eqnarray}
\mathbb{C}^{n} &=&\overset{\cdot }{\cup }_{\alpha }I_{\alpha }^{j},\ \ \ \ \
j\in \mathbb{Z},  \label{grid1} \\
B\left( c_{I_{\alpha }^{j}},C_{1}2^{j}\right) &\subset &I_{\alpha
}^{j}\subset B\left( c_{I_{\alpha }^{j}},C_{2}2^{j}\right) \ \ \ \ \ j\in 
\mathbb{Z},\alpha \in 2^{j}\mathbb{Z}^{2n},  \notag \\
\text{either }I_{\alpha ^{\prime }}^{j^{\prime }} &\subset &I_{\alpha }^{j}%
\text{, }I_{\alpha }^{j}\subset I_{\alpha ^{\prime }}^{j^{\prime }}\text{ or 
}I_{\alpha ^{\prime }}^{j^{\prime }}=I_{\alpha }^{j}.  \notag
\end{eqnarray}%
Here we can take $c_{I}$ to be the center of the cube $I$, and $C_{1}=\frac{1%
}{2}$, $C_{2}=\frac{\sqrt{2n}}{2}=\sqrt{\frac{n}{2}}$. We also have the
usual dyadic grid $\left\{ J_{\tau }^{k}\right\} _{k\in \mathbb{Z}\text{ and 
}\tau \in 2^{k}\mathbb{Z}}$ for $\mathbb{R}$ where $J_{0}^{k}=\left[
0,2^{k}\right) $ and $I_{\tau }^{k}=I_{0}^{k}+\tau $ for $k\in \mathbb{Z}$
and $\tau \in 2^{k}\mathbb{Z}$.

In order to use these grids to construct a "product-like" grid for $\mathbb{H%
}^{n}$ we must take into account the twisted structure of the product $%
\mathbb{H}^{n}=\mathbb{C}^{n}\otimes _{twist}\mathbb{R}$. Here is our
theorem on the existence of a twisted grid for $\mathbb{H}^{n}$.

\begin{theorem}
\label{Heisenberg grid}There is a positive integer $m$ and positive
constants $C_{1},C_{2}$, such that for each $j\in m\mathbb{Z}$ and%
\begin{equation*}
\left( \alpha ,\tau \right) \in K_{j}\equiv 2^{j}\mathbb{Z}^{2n}\times 2^{2j}%
\mathbb{Z},
\end{equation*}%
there are subsets $\mathcal{S}_{j,\alpha ,\tau }$ of $\mathbb{H}^{n}$\
satisfying%
\begin{eqnarray}
\mathbb{H}^{n} &=&\overset{\cdot }{\cup }_{\left( \alpha ,\tau \right) \in
K_{j}}\mathcal{S}_{j,\alpha ,\tau },\ \ \ \ \ \text{for each }j\in m\mathbb{Z%
},  \label{productgrid} \\
P_{\mathbb{C}^{n}}\mathcal{S}_{j,\alpha ,\tau } &=&I_{\alpha }^{j},\ \ \ \ \
j\in m\mathbb{Z},\left( \alpha ,\tau \right) \in K_{j},  \notag \\
B_{d}\left( c_{j,\alpha ,\tau },C_{1}2^{j}\right) &\subset &\mathcal{S}%
_{j,\alpha ,\tau }\subset B_{d}\left( c_{j,\alpha ,\tau },C_{2}2^{j}\right)
\ \ \ \ \ j\in m\mathbb{Z},\left( \alpha ,\tau \right) \in K_{j},  \notag \\
\text{either }\mathcal{S}_{j,\alpha ,\tau } &\subset &\mathcal{S}_{j^{\prime
},\alpha ^{\prime },\tau ^{\prime }}\text{, }\mathcal{S}_{j^{\prime },\alpha
^{\prime },\tau ^{\prime }}\subset \mathcal{S}_{j,\alpha ,\tau }\text{ or }%
\mathcal{S}_{j,\alpha ,\tau }\cap \mathcal{S}_{j^{\prime },\alpha ^{\prime
},\tau ^{\prime }}=\phi ,  \notag \\
c_{j,\alpha ,\tau } &=&\left( P_{j,\alpha },\tau +\frac{1}{2}2^{2j}\right) ,
\notag
\end{eqnarray}%
where $P_{j,\alpha }=c_{I_{\alpha }^{j}}$ and $P_{\mathbb{C}^{n}}$ denotes
orthogonal projection of $\mathbb{H}^{n}$ onto $\mathbb{C}^{n}$.
\end{theorem}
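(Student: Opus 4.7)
My plan is to construct each tile $\mathcal{S}_{j,\alpha,\tau}$ as a Heisenberg left-translate of a fixed dilated base tile, then verify the four required properties. For the base tile I take the centered cube $\mathcal{S}_0^* = [-\frac{1}{2},\frac{1}{2})^{2n} \times [-\frac{1}{2},\frac{1}{2})$ viewed as a neighborhood of the identity $e \in \mathbb{H}^n$; a direct calculation using $\rho(\zeta,t) = \sqrt[4]{|\zeta|^4 + t^2}$ shows that $\mathcal{S}_0^*$ is sandwiched between two $d$-balls centered at $e$ with constants depending only on $n$. I then define
\[
\mathcal{S}_{j,\alpha,\tau} := c_{j,\alpha,\tau} \cdot \delta_{2^j}(\mathcal{S}_0^*), \qquad c_{j,\alpha,\tau} = \bigl(P_{j,\alpha},\, \tau + \tfrac{1}{2}\cdot 2^{2j}\bigr),
\]
for $j \in m\mathbb{Z}$ and $(\alpha,\tau) \in K_j$, where $m$ is an integer to be chosen.

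The projection, ball, and tiling properties are then relatively easy. The projection $P_{\mathbb{C}^n}(\mathcal{S}_{j,\alpha,\tau}) = I_\alpha^j$ follows by direct computation from the group law $(P,T)\cdot(\zeta_0,t_0) = (P+\zeta_0,\, T+t_0+2\,\mathrm{Im}(P\bar\zeta_0))$ together with the fact that $P_{j,\alpha}$ is the center of $I_\alpha^j$. The ball property is a consequence of the left-invariance of $d$ combined with the scaling identity $d(\delta_{2^j}p,\delta_{2^j}q) = 2^j\, d(p,q)$. The tiling at a fixed scale $j$ reduces to showing that $\delta_{2^j}(\mathcal{S}_0^*)$ is a fundamental domain for the lattice $\delta_{2^j}(\Gamma_0) = 2^j\mathbb{Z}^{2n} \times 2^{2j}\mathbb{Z}$ in $\mathbb{H}^n$, which I would verify by a unique-representation argument for each $p \in \mathbb{H}^n$, solving first for the $\mathbb{C}^n$-translate and then for the $t$-translate (the order matters because of the twist).

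The main obstacle is the nesting property. For $j < j'$ in $m\mathbb{Z}$ with $I_\alpha^j \subset I_{\alpha'}^{j'}$, a direct computation gives $h := c_{j',\alpha',\tau'}^{-1} \cdot c_{j,\alpha,\tau}$ with $\mathbb{C}^n$-component $P_{j,\alpha} - P_{j',\alpha'}$ and $t$-component $\tau - \tau' + 2^{2j-1} - 2^{2j'-1} - 2\,\mathrm{Im}(P_{j',\alpha'}\bar P_{j,\alpha})$; the inclusion $\mathcal{S}_{j,\alpha,\tau} \subset \mathcal{S}_{j',\alpha',\tau'}$ is then equivalent to $h \cdot \delta_{2^j}(\mathcal{S}_0^*) \subset \delta_{2^{j'}}(\mathcal{S}_0^*)$. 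Acting on $(\zeta_0,t_0) \in \delta_{2^j}(\mathcal{S}_0^*)$ produces an additional twist $2\,\mathrm{Im}(h_1\bar\zeta_0)$ of size $O(2^{j+j'})$ in the $t$-coordinate. The key arithmetic observation, which I would verify by expanding $P_{j,\alpha} = \alpha + 2^{j-1}\mathbf{1}$ componentwise, is that $2\,\mathrm{Im}(P_{j',\alpha'}\bar{P}_{j,\alpha}) \in 2^{j+j'}\mathbb{Z} \subset 2^{2j}\mathbb{Z}$ whenever $j' \geq j$, so that the constant part of the $t$-shift of $h$ is compatible with the $2^{2j}$-spacing of admissible $\tau$-values. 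Choosing $m$ large enough (depending on $n$) forces the twist variation $O(2^{j+j'}) = O(2^{2j'-m})$ to be smaller than the slack $2^{2j'} - 2^{2j}$ inside a scale-$j'$ fiber, so no scale-$j$ tile can straddle the boundary of a scale-$j'$ tile.

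The hardest step will be this last verification that arithmetic compatibility plus small twist variation together rule out every straddling configuration across scales; this is the ``hands-on'' content that substitutes for an appeal to Strichartz's self-similar tile theorem. If the naive cube $\mathcal{S}_0^*$ turns out to leave a residual boundary mismatch, the fallback is to replace it by a slightly deformed base tile of the form $\{(\zeta,t) : \zeta \in [-\frac{1}{2},\frac{1}{2})^{2n},\ t \in a(\zeta) + [-\frac{1}{2},\frac{1}{2})\}$ for a carefully chosen shift function $a$ that absorbs the residual twist while preserving the projection and ball properties.
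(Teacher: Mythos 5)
Your construction is genuinely different from the paper's two proofs (Strichartz's self-similar tiles, or the paper's hand-built slanted slabs followed by a recursive reassignment and a limiting argument), and the first three properties --- the projection identity, the ball sandwiching, and the single-scale tiling --- do come out cleanly from your definition $\mathcal{S}_{j,\alpha,\tau}=c_{j,\alpha,\tau}\cdot\delta_{2^{j}}(\mathcal{S}_{0}^{*})$ exactly as you say. The gap is in the nesting step, and it is real, not merely a technicality that large $m$ will fix.

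The problem is that the $t$-boundaries of your tiles are not level sets of a single function but of a \emph{scale-dependent} family of linear functionals. Writing things in the absolute $\mathbb{C}^{n}$-coordinate $z$, the floor of $\mathcal{S}_{j,\alpha,\tau}$ over the fiber through $z\in I_{\alpha}^{j}$ sits at $t=\tau+2\operatorname{Im}(P_{j,\alpha}\bar z)$, a hyperplane of slope $P_{j,\alpha}$; the floors of the scale $j'$ tiles with $I_{\alpha}^{j}\subset I_{\alpha'}^{j'}$ sit on parallel hyperplanes of slope $P_{j',\alpha'}$. These two families of hyperplanes are transverse whenever $P_{j,\alpha}\neq P_{j',\alpha'}$ (that is, for all but one of the $2^{2n(j'-j)}$ sub-cubes of $I_{\alpha'}^{j'}$), so the parent boundary necessarily crosses some child floor, and the child tile sitting on that floor straddles two parent tiles. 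Concretely, with $n=1$, $j=0$, $j'=m$, $\alpha=(0,0)$, $\alpha'=(0,0)$ and $\tau=0$, one finds that the child slice over $z=(0.9,0)$ lies in the parent with $\tau'=-2^{2m}$ while the slice over $z=(0,0.9)$ lies in the parent with $\tau'=0$, so $\mathcal{S}_{0,(0,0),0}$ straddles the parent boundary no matter how large $m$ is. Your estimate ``twist variation $O(2^{j+j'})\ll 2^{2j'}$'' is true but irrelevant: the quantity $2^{2j'}$ is the total height of the parent slot, not a buffer zone, and since the child slots are spaced only $2^{2j}$ apart, one of them will always land exactly on the parent boundary.

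The paper itself records this obstruction in the sentence after Lemma \ref{semigrid} (``the collection of slabs ... fails to satisfy the corresponding nesting property since the slabs ... have different normal vectors''), and its whole second proof is devoted to repairing it: the truncated grids $\left\{\widetilde{\mathcal{S}}_{j,\alpha,\tau}\right\}$ are built recursively from the bottom scale $-M$ upward by an explicit assignment rule that absorbs straddling slabs into one parent, and the full grid is then obtained by a $\liminf$ as $M\to\infty$ using the stability estimate (\ref{dist}). Your fallback of replacing the base cube by $\{(\zeta,t):\zeta\in[-\tfrac12,\tfrac12)^{2n},\ t\in a(\zeta)+[-\tfrac12,\tfrac12)\}$ is pointing in the right direction --- one can indeed describe the final tiles in this graph form --- but the shift function $a$ cannot be ``slightly deformed'' or smooth: eliminating the straddling exactly forces $a$ to satisfy a self-affine functional equation across all pairs of scales, and its solution has fractal boundary (this is precisely what Strichartz's tile $T_{o}$, invoked in the paper's first proof, produces). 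Constructing such an $a$ is the entire content of the theorem, and your proposal does not supply it; as written the argument does not close.
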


Thus at each dyadic scale $2^{j}$ with $j\in m\mathbb{Z}$ we have a pairwise
disjoint decomposition of $\mathbb{H}^{n}$ into sets $\mathcal{S}_{j,\alpha
,\tau }$ that are almost Heisenberg balls of radius $2^{j}$. These
decompositions are nested, and moreover are \emph{product-like} in the sense
that the sets $\mathcal{S}_{j,\alpha ,\tau }$ project onto the usual dyadic
grid in the factor $\mathbb{C}^{n}$, and have centers $c_{j,\alpha ,\tau
}=\left( P_{j,\alpha },\tau +\frac{1}{2}2^{2j}\right) $ that for each $j$
form a product set indexed by $K_{j}\equiv 2^{j}\mathbb{Z}^{2n}\times 2^{2j}%
\mathbb{Z}$ and satisfy%
\begin{equation*}
\left\vert c_{j,\alpha ,\tau }-c_{j,\alpha ^{\prime },\tau }\right\vert
=2^{j}\text{ and }\left\vert c_{j,\alpha ,\tau }-c_{j,\alpha ,\tau ^{\prime
}}\right\vert =2^{2j},
\end{equation*}%
if $\alpha $ and $\alpha ^{\prime }$ are neighbours in $2^{j}\mathbb{Z}^{2n}$%
, and if $\tau $ and $\tau ^{\prime }$ are neighbours in $2^{2j}\mathbb{Z}$.

\subsection{Self-similar tilings of the Heisenberg group}

Theorem \ref{Heisenberg grid} follows easily from the theory of self-similar
tilings (neatly stacked over dyadic cubes) in Strichartz \cite{Str}. An
excellent source for this material is pages 39 to 42 of Tyson \cite{Tys},
which we now briefly recall.

Let $b=2n+1$ be a dyadic division factor, and let $Q=2n+2$ be the
homogeneous dimension of the Heisenberg group $\mathbb{H}^{n}$. In order to
fix the geometry, we suppose that $n=1$, $b=3$ and $Q=4$. Let $\mathbf{k}%
=\left( k_{1},k_{2}\right) \in \left\{ 1,2,3\right\} ^{2}$ and $\ell \in
\left\{ 1,2,...,9\right\} $. As on page 39 of \cite{Tys} consider the
following collection of $81$ contractive similarities from $\mathbb{H}^{n}$
to itself:%
\begin{equation*}
F_{\mathbf{k},\ell }\left( z,t\right) =\left( z_{\mathbf{k}},t_{\ell
}\right) \delta _{\frac{1}{3}}\left( z,t\right) ,\ \ \ \ \ \left( z,t\right)
\in \mathbb{H}^{n},
\end{equation*}%
where 
\begin{equation*}
z_{\mathbf{k}}=\frac{k_{1}-2}{3}+i\frac{k_{2}-2}{3},\ \ \ \ \ 1\leq
k_{1},k_{2}\leq 3,
\end{equation*}%
and%
\begin{equation*}
t_{\ell }=\frac{\ell -5}{9},\ \ \ \ \ 1\leq \ell \leq 9.
\end{equation*}%
Each similarity has contraction ratio $\frac{1}{3}$ and they differ only in
the $81$ Heisenberg group translations. The corresponding iterated function
system (IFS) has a unique nonempty compact invariant set $T_{o}\subset 
\mathbb{H}^{n}$ characterized by the identity%
\begin{equation*}
T_{o}=\bigcup_{\mathbf{k},\ell }F_{\mathbf{k},\ell }\left( T_{o}\right) .
\end{equation*}

Now following page 41 of \cite{Tys} let $\mathbb{H}_{\mathbb{Z}}^{n}=\left( 
\mathbb{Z}+i\mathbb{Z}\right) \times \mathbb{Z}$ denote the integral
Heisenberg group and note that%
\begin{equation*}
\delta _{\frac{1}{3}}T_{o}=\bigcup_{p\in D}pT_{o},
\end{equation*}%
where $D$ is the set of points $p=\left( z,t\right) \in \mathbb{H}_{\mathbb{Z%
}}^{n}$ such that $\left\vert z\right\vert \leq 1$ and $\left\vert
t\right\vert \leq 4$ (for the relevance of these constants see Lemma 3.3 on
page 40 of \cite{Tys}). Iterating and passing to the limit we obtain the
following decompostion at scale one of $\mathbb{H}^{n}$:%
\begin{equation*}
\mathbb{H}^{n}=\bigcup_{p\in \mathbb{H}_{\mathbb{Z}}^{n}}pT_{o}.
\end{equation*}%
For $m\in \mathbb{Z}$ and $p\in \delta _{\frac{1}{3^{m}}}\mathbb{H}_{\mathbb{%
Z}}^{n}$ we obtain a decomposition at scale $3^{m}$:%
\begin{equation*}
\mathbb{H}^{n}=\bigcup_{p\in \delta _{\frac{1}{3^{m}}}\mathbb{H}_{\mathbb{Z}%
}^{n}}p\delta _{\frac{1}{3^{m}}}\left( T_{o}\right) .
\end{equation*}%
These decompositions are nested and Lemma 3.3 on page 40 of \cite{Tys} shows
that the sets $p\delta _{\frac{1}{3^{m}}}\left( T_{o}\right) $ are "almost
Heisenberg balls". Together with Lemma 3.4 on page 42 of \cite{Tys}, this
can be used to prove Theorem \ref{Heisenberg grid} with the sets $p\delta _{%
\frac{1}{3^{m}}}\left( T_{o}\right) $ playing the role of the sets $\mathcal{%
S}_{j,\alpha ,\tau }$ (with appropriate translation of notation).

\begin{remark}
The self-similarity approach also works more generally for nilpotent Lie
groups.
\end{remark}

\section{A grid in semiproducts of quasimetric spaces}

Theorem \ref{Heisenberg grid} can be generalized to the following setting of 
\emph{semiproducts} of quasimetric spaces where there is no group structure,
hence no self-similarity. Suppose that $\left( X,d_{X}\right) $ and $\left(
Y,d_{Y}\right) $ are quasimetric spaces. Suppose moreover that $d_{Z}$ is a
quasimetric on the product set $Z=X\times Y$ that satisfies the following
"semiproduct axiom":

\begin{axiom}
\label{semiquasi}There are positive constants $c$ and $C$ for which the
following holds: for each ball $B_{d_{X}}\left( x,r\right) $ in $\left(
X,d_{X}\right) $ there is $R>0$ and a collection of points $\left\{
y_{j}\right\} \subset Y$ satisfying both%
\begin{eqnarray*}
B_{d_{Y}}\left( y_{i},cR\right) \cap B_{d_{Y}}\left( y_{j},cR\right)
&=&\emptyset ,\ \ \ \ \ i\neq j, \\
\bigcup_{j}B_{d_{Y}}\left( y_{j},CR\right) &\supset &Y,
\end{eqnarray*}%
and%
\begin{equation*}
\overset{\cdot }{\bigcup_{j}}B_{d_{Z}}\left( \left( x,y_{j}\right)
,cr\right) \subset B_{X}\left( x,r\right) \times Y\subset
\bigcup_{j}B_{d_{Z}}\left( \left( x,y_{j}\right) ,Cr\right) .
\end{equation*}
\end{axiom}

The notation $\overset{\cdot }{\bigcup_{j}}$ means that the union over $j$
is pairwise disjoint. Thus the above axiom postulates that every "vertical
tube" $B_{X}\left( x,r\right) \times Y$ in the product space $X\times Y$ can
be covered by balls $B_{d_{Z}}\left( \left( x,y_{j}\right) ,Cr\right) $ with
the property that the smaller balls $B_{d_{Z}}\left( \left( x,y_{j}\right)
,cr\right) $ are pairwise disjoint, and moreover that the corresponding
balls $B_{d_{Y}}\left( y_{j},R\right) $ in $Y$ form an "almost
decomposition" of $Y$.

\begin{theorem}
\label{quasi grid}Suppose that $\left( X,d_{X}\right) $, $\left(
Y,d_{Y}\right) $ and $\left( Z,d_{Z}\right) $ are quasimetric spaces
satisfying Axiom \ref{semiquasi} with $Z=X\times Y$. Then there is a grid
for the quasimetric space $\left( Z,d_{Z}\right) $ that satisfies the
analogue of (\ref{semigrid}).
\end{theorem}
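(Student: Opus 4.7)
The plan is to mimic the construction used for the Heisenberg group, replacing the explicit self-similar tiles of Strichartz with dyadic grids produced by the Christ-type construction on a space of homogeneous type. First I would invoke the standard dyadic cube construction on the quasimetric spaces $\left(X,d_{X}\right)$ and $\left(Y,d_{Y}\right)$ to obtain, for each integer $j$, pairwise disjoint decompositions $X=\overset{\cdot }{\cup }_{\alpha }I_{\alpha }^{j}$ and $Y=\overset{\cdot }{\cup }_{\tau }J_{\tau }^{k}$ whose members behave like balls of radii $a^{j}$ and $a^{k}$ respectively for some fixed dilation constant $a>1$. Each $I_{\alpha }^{j}$ contains a distinguished center $x_{\alpha }^{j}$, and likewise each $J_{\tau }^{k}$ has a center $y_{\tau }^{k}$.

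Second, at each scale $j$ (taken in $m\mathbb{Z}$ for an integer $m$ to be fixed later) I would apply Axiom \ref{semiquasi} to the ball associated with $I_{\alpha }^{j}$ in $X$ to produce a collection of points $\{y_{j,\alpha ,\tau }\}_{\tau }\subset Y$ such that the balls $B_{d_{Z}}\left( \left( x_{\alpha }^{j},y_{j,\alpha ,\tau }\right) ,ca^{j}\right) $ are pairwise disjoint inside the vertical tube $I_{\alpha }^{j}\times Y$, while the larger balls $B_{d_{Z}}\left( \left( x_{\alpha }^{j},y_{j,\alpha ,\tau }\right) ,Ca^{j}\right) $ cover this tube. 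I would then carve out tiles $\mathcal{S}_{j,\alpha ,\tau }$ inside the tube $I_{\alpha }^{j}\times Y$ by assigning each point of the tube to its nearest center (breaking ties in any measurable way), following the Christ construction on the fibre variable. This automatically gives $P_{X}\mathcal{S}_{j,\alpha ,\tau }=I_{\alpha }^{j}$ and the two-sided inclusion into $d_{Z}$-balls centered at $c_{j,\alpha ,\tau }=\left( x_{\alpha }^{j},y_{j,\alpha ,\tau }\right) $ required by the analogue of (\ref{productgrid}).

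Third, and this is where the main difficulty lies, I need the nestedness property: for $j^{\prime }<j$ (both in $m\mathbb{Z}$) each $\mathcal{S}_{j^{\prime },\alpha ^{\prime },\tau ^{\prime }}$ must be contained in exactly one $\mathcal{S}_{j,\alpha ,\tau }$. On the $X$-factor the usual dyadic nesting of $\{I_{\alpha }^{j}\}$ handles the first coordinate, reducing the problem to the fibre direction. Here the obstacle is that the points $\{y_{j,\alpha ,\tau }\}$ at different scales are produced by different invocations of Axiom \ref{semiquasi} applied to different cubes, and therefore are not a priori compatible. To surmount this I would fix $m$ large enough (depending on the constants $c$, $C$ and the quasimetric constants) and construct the fibre points recursively from the coarsest scale downward: having chosen $\{y_{j,\alpha ,\tau }\}$ at scale $j$, pick $\{y_{j^{\prime },\alpha ^{\prime },\tau ^{\prime }}\}$ at scale $j^{\prime }=j-m$ to be a subset of (or sufficiently close to, then adjusted via the Christ reassignment trick used in spaces of homogeneous type) a maximal $a^{j^{\prime }}$-separated net in $I_{\alpha ^{\prime }}^{j^{\prime }}\times Y$, with the further requirement that the enlarged fibre ball at the finer scale stays inside the fibre ball at the coarser scale. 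This is the same thickening/contracting argument that yields nested Christ cubes, transported to the vertical direction via Axiom \ref{semiquasi}.

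Finally, once nestedness is in hand, the bounded overlap afforded by the axiom together with the covering property yields $Z=\overset{\cdot }{\cup }_{\left( \alpha ,\tau \right) \in K_{j}}\mathcal{S}_{j,\alpha ,\tau }$ at each admissible $j$, and the two-sided inclusion of $\mathcal{S}_{j,\alpha ,\tau }$ between $d_{Z}$-balls centered at $c_{j,\alpha ,\tau }$ of radii comparable to $a^{j}$ follows directly from Axiom \ref{semiquasi}. The hardest step by far will be the compatibility of the fibre nets across scales, and the choice of the ratio $a^{m}$ must be made large enough so that both the dyadic nesting on $X$ and the net-refinement on $Y$ can be synchronized simultaneously.
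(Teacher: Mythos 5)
Your plan matches the paper's in its opening moves — build Christ-type dyadic cubes on $X$, use Axiom \ref{semiquasi} to slice each vertical tube $I_{\alpha}^{j}\times Y$ into tiles that project onto $I_{\alpha}^{j}$ — and you correctly isolate the only genuinely hard step, namely forcing nestedness across scales. But your proposed mechanism for that step does not close the gap. You suggest a top-down recursion, ``from the coarsest scale downward,'' but the grid has to exist for \emph{every} $j\in m\mathbb{Z}$, so there is no coarsest scale at which the recursion could start. Further, the phrase ``the same thickening/contracting argument that yields nested Christ cubes, transported to the vertical direction via Axiom \ref{semiquasi}'' begs the question: the obstruction is not just a mismatch of fibre nets in $Y$, but the fact that the $d_{Z}$-geometry of the fibre tiles over $I_{\alpha^{\prime}}^{j-m}$ differs from that over the parent $I_{\alpha}^{j}$ because the covering balls supplied by Axiom \ref{semiquasi} depend jointly on the base cube and the scale (in the Heisenberg model this is precisely the tilt of the hyperplanes in $\mathbb{R}^{2n+1}$, which changes with the base point). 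A Christ reassignment purely in the $Y$-coordinate does not realign these tilted slabs.

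What the paper actually does — for the Heisenberg case, with the assertion that it generalizes — is the opposite direction: build a (non-nested) semigrid of slabs $\mathcal{S}_{j,\alpha,\tau}$ at all scales; for each truncation depth $M$ start from the finest retained scale $-M$ and assemble coarser slabs \emph{bottom-up} as unions of finer ones (assigning each fine slab to exactly one coarse slab, which is possible once $m$ is large and $C_{1}$ is small via a quasitriangle-inequality pigeonhole); then let $M\rightarrow \infty$ and extract a limiting grid using a scale-$2^{-m\ell}$ stability estimate on the assignments (this step needs $d_{Z}$ comparable to a metric, or an equivalent Macías--Segovia replacement, which your write-up also does not flag); finally attach the leftover boundary points inductively to restore the exact disjoint cover. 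Your proposal would need to be reorganized along these lines — finest-to-coarsest assembly from a truncation, a quantitative stability bound, and a limit argument — before the nestedness claim could be substantiated. As written it names the difficulty but does not resolve it.
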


This theorem generalizes the construction of Christ in \cite{Chr}, and the
variant of Sawyer and Wheeden in \cite{SaWh}. Note also that the Heisenberg
group $\mathbb{H}^{n}$ is an example of a semiproduct of $\mathbb{C}^{n}$
and $\mathbb{R}$ in that Axiom \ref{semiquasi} holds with $X=\mathbb{C}^{n}$%
, $Y=\mathbb{R}$ and $Z=\mathbb{H}^{n}=\mathbb{C}^{n}\times \mathbb{R}$
together with the corresponding metrics.

\begin{remark}
There is a generalization of the M\"{u}ller-Ricci-Stein theory to this more
general setting.
\end{remark}

We will now give a proof of Theorem \ref{Heisenberg grid} that can be
generalized to prove Theorem \ref{quasi grid}.

\subsection{A twisted semigrid}

In order to construct the twisted product grid in (\ref{productgrid}), we
must first compute the shape of the Heisenberg balls. For small $r>0$ the
Heisenberg ball $B_{\rho }\left( \left( \zeta ,t\right) ,r\right) $ centered
at $\left( \zeta ,t\right) $ with radius $r$ is given by%
\begin{eqnarray*}
B_{\rho }\left( \left( \zeta ,t\right) ,r\right) &=&\left\{ \left( \eta
,s\right) \in \mathbb{H}^{n}:\rho \left[ \left( \eta ,s\right) \cdot \left(
\zeta ,t\right) ^{-1}\right] <r\right\} \\
&=&\left\{ \left( \eta ,s\right) \in \mathbb{H}^{n}:\rho \left[ \left( \eta
-\zeta ,s-t-2\func{Im}\left( \eta \cdot \overline{\zeta }\right) \right) %
\right] <r\right\} \\
&=&\left\{ \left( \eta ,s\right) \in \mathbb{H}^{n}:\left\vert \eta -\zeta
\right\vert ^{4}+\left( s-t-2\func{Im}\left( \eta \cdot \overline{\zeta }%
\right) \right) ^{2}<r^{4}\right\} .
\end{eqnarray*}%
Now take $n=1$ for the moment, let $\left( \zeta ,t\right) =P_{R}=\left(
R,0\right) \in \mathbb{C}\times \mathbb{R}$ and $\eta =x+iy$ and consider
the equation for the surface $\mathbf{\sigma }=\partial B_{\rho }\left(
P_{R},r\right) $:%
\begin{equation}
\left[ \left( x-R\right) ^{2}+y^{2}\right] ^{2}+\left( s-2Ry\right)
^{2}=r^{4}.  \label{equation}
\end{equation}%
For $r$ small (large), $\mathbf{\sigma }$ is a pancake (cigar) shaped
surface straddling the elliptical disk $D\left( P_{R},r\right) $ having
boundary given by the ellipse 
\begin{equation*}
\left\vert \left( x-R,y\right) \right\vert =r,s=2Ry,
\end{equation*}%
in three dimensions. The unit normal vector to the elliptical disk $D\left(
P_{R},r\right) $ is 
\begin{equation*}
\mathbf{n}_{\left( R,0\right) }=\left( 0,\frac{-2R}{\sqrt{1+4R^{2}}},\frac{1%
}{\sqrt{1+4R^{2}}}\right) .
\end{equation*}%
The Heisenberg balls $B_{\rho }\left( \left( \zeta ,t\right) ,r\right) $ are
rotation invariant in $\zeta $ and translation invariant in $t$, and so we
obtain that if $\left( \zeta ,t\right) =\left( R\cos \theta +iR\sin \theta
,t\right) =Rot_{\theta }\left( R,t\right) $, then $D\left( \left( \zeta
,t\right) ,r\right) =Rot_{\theta }D\left( P_{R},r\right) $ and the normal to 
$D\left( \left( \zeta ,t\right) ,r\right) $ is 
\begin{equation}
\mathbf{n}_{\left( \zeta ,t\right) }=Rot_{\theta }\mathbf{n}_{\left(
R,0\right) }=\left( \frac{2R}{\sqrt{1+4R^{2}}}\sin \theta ,\frac{-2R}{\sqrt{%
1+4R^{2}}}\cos \theta ,\frac{1}{\sqrt{1+4R^{2}}}\right) .  \label{normal}
\end{equation}%
We will refer to $D\left( \left( \zeta ,t\right) ,r\right) $ as the \emph{%
straddling disk} for the ball $B_{d}\left( \left( \zeta ,t\right) ,r\right) $%
. The situation is similar in $\mathbb{C}^{n}$ for $n>1$.

Now fix a dyadic cube $I_{\alpha }^{j}$ of side length $2^{j}$ with centre $%
P_{j,\alpha }$ in $\mathbb{C}^{n}$ and consider the infinite rectangular box$%
\ \mathcal{T}_{j,\alpha }\equiv I_{\alpha }^{j}\times \mathbb{R}$. For each
point $\tau \in 2^{2j}\mathbb{Z}$, let $H_{j,\alpha ,\tau }$ be the
hyperplane through the point $P_{j,\alpha }+\left( 0,\tau \right) \in 
\mathbb{H}^{n}$ with normal vector $\mathbf{n}_{P_{j,\alpha }}$. Let $%
\mathcal{H}_{j,\alpha ,\tau }$ be the region between the hyperplanes $%
H_{j,\alpha ,\tau }$ and $H_{j,\alpha ,\tau +2^{2j}}$ including $H_{j,\alpha
,\tau }$ but not $H_{j,\alpha ,\tau +2^{2j}}$. Then for each $j,\alpha $ we
have $\mathbb{H}^{n}=\overset{\cdot }{\cup }_{\tau \in 2^{2j}\mathbb{Z}}%
\mathcal{H}_{j,\alpha ,\tau }$. We decompose the rectangular box $\mathcal{T}%
_{j,\alpha }$ into preliminary pairwise disjoint slabs%
\begin{equation*}
\mathcal{S}_{j,\alpha ,\tau }\equiv \mathcal{T}_{j,\alpha }\cap \mathcal{H}%
_{j,\alpha ,\tau }
\end{equation*}%
so that%
\begin{equation}
\mathcal{T}_{j,\alpha }=\overset{\cdot }{\cup }_{\tau \in 2^{2j}\mathbb{Z}%
}\left\{ \mathcal{T}_{j,\alpha }\cap \mathcal{H}_{j,\alpha ,\tau }\right\} =%
\overset{\cdot }{\cup }_{\tau \in 2^{2j}\mathbb{Z}}\mathcal{S}_{j,\alpha
,\tau }.  \label{tube}
\end{equation}

At this point we note that the collection of slabs $\left\{ \mathcal{S}%
_{j,\alpha ,\tau }\right\} _{j\in \mathbb{Z},\alpha \in 2^{j}\mathbb{Z}%
^{2n},\tau \in 2^{2j}\mathbb{Z}}$ is a \emph{semigrid} of almost balls in $%
\mathbb{H}^{n}$ in the following sense.

\begin{lemma}
\label{semigrid}The collection $\left\{ \mathcal{S}_{j,\alpha ,\tau
}\right\} _{j\in \mathbb{Z},\alpha \in 2^{j}\mathbb{Z}^{2n},\tau \in 2^{2j}%
\mathbb{Z}}$ satisfies 
\begin{eqnarray}
\mathbb{H}^{n} &=&\overset{\cdot }{\cup }_{\alpha \in 2^{j}\mathbb{Z}%
^{2n},\tau \in 2^{2j}\mathbb{Z}}\mathcal{S}_{j,\alpha ,\tau },\ \ \ \ \ 
\text{for each }j\in \mathbb{Z},  \label{grid2} \\
B_{\rho }\left( c_{j,\alpha ,\tau },C_{1}2^{j}\right) &\subset &\mathcal{S}%
_{j,\alpha ,\tau }\subset B_{\rho }\left( c_{j,\alpha ,\tau
},C_{2}2^{j}\right) \ \ \ \ \ \text{for each }j\in \mathbb{Z},\alpha \in
2^{j}\mathbb{Z}^{2n},\tau \in 2^{2j}\mathbb{Z},  \notag
\end{eqnarray}%
for some positive constants $C_{1},C_{2}$ and where $c_{j,\alpha ,\tau }$ is
the center of the slab $\mathcal{S}_{j,\alpha ,\tau }$. We may take $C_{1}=%
\frac{1}{\sqrt{2}}$ and $C_{2}=\frac{\sqrt[4]{n^{2}+2}}{\sqrt{2}}$.
\end{lemma}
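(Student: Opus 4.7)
The plan is to establish the two assertions of (\ref{grid2}) by first using the tube decomposition already in hand, and then identifying the slab geometry algebraically with the Heisenberg ball formula. For the disjoint union, I would combine the pairwise disjoint decomposition $\mathbb{C}^{n}=\dot{\bigcup}_{\alpha}I_{\alpha}^{j}$ of the base with the pairwise disjoint slab decomposition (\ref{tube}) of each tube $\mathcal{T}_{j,\alpha}=I_{\alpha}^{j}\times\mathbb{R}$. Since the tubes $\mathcal{T}_{j,\alpha}$ themselves partition $\mathbb{H}^{n}$ at each scale $2^{j}$, the slabs $\mathcal{S}_{j,\alpha,\tau}=\mathcal{T}_{j,\alpha}\cap\mathcal{H}_{j,\alpha,\tau}$ inherit this property verbatim.

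The analytic core of the two containments is a single algebraic identity matching the hyperplane equation to the Heisenberg ball. Writing $\zeta_{0}=P_{j,\alpha}$ and $t_{0}=\tau+2^{2j-1}$ for the $t$-coordinate of the slab center, I would first use the explicit formula (\ref{normal}) for $\mathbf{n}_{\zeta_{0}}$ to verify by direct computation that the Euclidean hyperplane $H_{j,\alpha,\tau}$ is the level set $s-2\operatorname{Im}(\bar{\zeta_{0}}\eta)=\tau$. Consequently, membership in $\mathcal{H}_{j,\alpha,\tau}$ is equivalent to
\[
\bigl|s-t_{0}-2\operatorname{Im}(\eta\bar{\zeta_{0}})\bigr|\leq 2^{2j-1},
\]
and this is precisely the second coordinate of $(\eta,s)\cdot c_{j,\alpha,\tau}^{-1}$, so that
\[
\rho\bigl((\eta,s)\cdot c_{j,\alpha,\tau}^{-1}\bigr)^{4}=|\eta-\zeta_{0}|^{4}+\bigl(s-t_{0}-2\operatorname{Im}(\eta\bar{\zeta_{0}})\bigr)^{2}.
\]
With this identification, the inner containment $B_{\rho}(c_{j,\alpha,\tau},C_{1}2^{j})\subset\mathcal{S}_{j,\alpha,\tau}$ reduces to two elementary requirements: the projection condition $C_{1}2^{j}\leq 2^{j-1}$, so that $|\eta-\zeta_{0}|<C_{1}2^{j}$ forces $\eta\in I_{\alpha}^{j}$; and the hyperplane condition $(C_{1}2^{j})^{2}\leq 2^{2j-1}$, i.e.\ $C_{1}\leq 1/\sqrt{2}$. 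For the outer containment $\mathcal{S}_{j,\alpha,\tau}\subset B_{\rho}(c_{j,\alpha,\tau},C_{2}2^{j})$, one bounds each term on the slab: $|\eta-\zeta_{0}|\leq\sqrt{n/2}\cdot 2^{j}$ using the half-diagonal of the cube $I_{\alpha}^{j}$, and $|s-t_{0}-2\operatorname{Im}(\eta\bar{\zeta_{0}})|\leq 2^{2j-1}$ from the slab definition, which combine to $\rho^{4}\leq\bigl((n^{2}+1)/4\bigr)2^{4j}$ and hence a multiple of $2^{j}$.

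The main obstacle is really a bookkeeping matter: pinning down the numerical constants so they match those stated. In particular, the projection bound $C_{1}\leq 1/2$ arising from the inradius of $I_{\alpha}^{j}$ is strictly tighter than the hyperplane bound $C_{1}\leq 1/\sqrt{2}$, and my outer estimate produces $(n^{2}+1)^{1/4}/\sqrt{2}$ rather than $(n^{2}+2)^{1/4}/\sqrt{2}$. These small discrepancies are not conceptually serious and can be reconciled by a modest slack in the constants; the essential geometric mechanism is the identification of the hyperplane level set with the vertical component of Heisenberg multiplication, after which both inclusions become routine Euclidean estimates. The argument is independent of $n$ in structure, the only $n$-dependence entering through the half-diagonal factor $\sqrt{n/2}$ in the outer bound.
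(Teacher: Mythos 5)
Your proof is correct and takes essentially the same route as the paper: algebraically identify the hyperplane level set with the vertical component of $(\eta,s)\circ c_{j,\alpha,\tau}^{-1}$ via $\rho\bigl((\eta,s)\circ c_{j,\alpha,\tau}^{-1}\bigr)^{4}=|\eta-\zeta_{0}|^{4}+\bigl(s-t_{0}-2\operatorname{Im}(\eta\bar\zeta_{0})\bigr)^{2}$, then compare vertical sections of the Heisenberg ball with those of the slab, using the inradius and half-diagonal of $I_{\alpha}^{j}$. The two constant discrepancies you flag are in fact minor slips in the paper rather than gaps in your argument: the inner containment does require the projection condition $C_{1}\le 1/2$ (the paper omits it, so its stated $C_{1}=1/\sqrt 2$ is too large), and the outer condition should read $C_{2}^{4}-n^{2}/4\ge 1/4$ rather than $\ge 1/2$ (since $(2^{2j-1})^{2}=2^{4j}/4$), so your $C_{2}=(n^{2}+1)^{1/4}/\sqrt 2$ is the sharp value — none of which affects the lemma, which needs only the existence of suitable constants.
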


\textbf{Proof}: We have already observed just after (\ref{grid1}) that\ in $%
\mathbb{C}^{n}$ we have%
\begin{equation*}
B\left( c_{I_{\alpha }^{j}},\frac{1}{2}2^{j}\right) \subset I_{\alpha
}^{j}\subset B\left( c_{I_{\alpha }^{j}},\sqrt{\frac{n}{2}}2^{j}\right) \ \
\ \ \ j\in \mathbb{Z},\alpha \in 2^{j}\mathbb{Z}^{2n}.
\end{equation*}%
Thus in order to prove the second containment in the second line of (\ref%
{grid2}) it suffices to show that for each point $\left( x,y\right) \in
B\left( c_{I_{\alpha }^{j}},\sqrt{\frac{n}{2}}2^{j}\right) $, the
intersection of the vertical line $\mathsf{L}_{\left( x,y\right) }$ through $%
\left( x,y\right) $ with the slab $\mathcal{S}_{j,\alpha ,\tau }$ is
contained in $\mathsf{L}_{\left( x,y\right) }\cap B_{\rho }\left(
c_{j,\alpha ,\tau },C_{2}2^{j}\right) $ provided $C_{2}$ is chosen large
enough. For convenience we suppose that $c_{I_{\alpha }^{j}}=R$ and $%
c_{j,\alpha ,\tau }=\left( R,0\right) $. But then from the definition of $%
\mathcal{S}_{j,\alpha ,\tau }$ we have 
\begin{equation*}
\mathsf{L}_{\left( x,y\right) }\cap \mathcal{S}_{j,\alpha ,\tau }=\left\{
\left( x,y,s\right) :2Ry-\frac{1}{2}2^{2j}\leq s<2Ry+\frac{1}{2}%
2^{2j}\right\} ,
\end{equation*}%
and from the equation (\ref{equation}) we have%
\begin{eqnarray*}
\left( s-2Ry\right) ^{2} &=&\left( C_{2}2^{j}\right) ^{4}-\left[ \left(
x-R\right) ^{2}+y^{2}\right] ^{2} \\
&\geq &\left( C_{2}2^{j}\right) ^{4}-\left( \sqrt{\frac{n}{2}}2^{j}\right)
^{4}=\left( C_{2}^{4}-\frac{n^{2}}{4}\right) 2^{4j},
\end{eqnarray*}%
and so%
\begin{eqnarray*}
&&\mathsf{L}_{\left( x,y\right) }\cap B_{\rho }\left( \left( R,0\right)
,C_{2}2^{j}\right) \\
&\supset &\left\{ \left( x,y,s\right) :2Ry-\sqrt{C_{2}^{4}-\frac{n^{2}}{4}}%
2^{2j}<s<2Ry+\sqrt{C_{2}^{4}-\frac{n^{2}}{4}}2^{2j}\right\} .
\end{eqnarray*}%
Altogether then we obtain%
\begin{equation*}
\mathsf{L}_{\left( x,y\right) }\cap \mathcal{S}_{j,\alpha ,\tau }\subset 
\mathsf{L}_{\left( x,y\right) }\cap B_{\rho }\left( \left( R,0\right)
,C_{2}2^{j}\right)
\end{equation*}%
provided $C_{2}^{4}-\frac{n^{2}}{4}\geq \frac{1}{2}$ or $C_{2}\geq \frac{%
\sqrt[4]{n^{2}+2}}{\sqrt{2}}$.

Turning to the first containment in (\ref{grid2}), we note that for each
point $\left( x,y\right) \in B\left( R,C_{1}2^{j}\right) $, (\ref{equation})
yields%
\begin{equation*}
\left( s-2Ry\right) ^{2}=\left( C_{1}2^{j}\right) ^{4}-\left[ \left(
x-R\right) ^{2}+y^{2}\right] ^{2}\leq C_{1}^{4}2^{4j}
\end{equation*}%
and so%
\begin{eqnarray*}
&&\mathsf{L}_{\left( x,y\right) }\cap B_{\rho }\left( \left( R,0\right)
,C_{1}2^{j}\right) \\
&\subset &\left\{ \left( x,y,s\right)
:2Ry-C_{1}^{2}2^{2j}<s<2Ry+C_{1}^{2}2^{2j}\right\} .
\end{eqnarray*}%
Thus we have%
\begin{equation*}
\mathsf{L}_{\left( x,y\right) }\cap B_{\rho }\left( \left( R,0\right)
,C_{1}2^{j}\right) \subset \mathsf{L}_{\left( x,y\right) }\cap \mathcal{S}%
_{j,\alpha ,\tau }
\end{equation*}%
provided $C_{1}^{2}\leq \frac{1}{2}$ or $C_{1}\leq \frac{1}{\sqrt{2}}$.

Thus we may take $C_{1}=\frac{1}{\sqrt{2}}$ and $C_{2}=\frac{\sqrt[4]{n^{2}+2%
}}{\sqrt{2}}$ in (\ref{grid2}), and this completes the proof of Lemma \ref%
{semigrid}.

\bigskip

However, the collection of slabs in Lemma \ref{semigrid} fails to satisfy
the corresponding nesting property since the slabs $\mathcal{S}_{j-1,\alpha
^{\prime },\tau ^{\prime }}$ corresponding to dyadic subcubes $I_{\alpha
^{\prime }}^{j-1}$ of a given dyadic cube $I_{\alpha }^{j}$ have different
normal vectors. Nevertheless, (\ref{normal}) shows that these normal vectors
are very close and indeed, the fact that $d$ is a quasimetric can be used to
modify the slabs by adding and subtracting portions near the boundary in
such a way as to preserve the semigrid properties while achieving the
nesting property. We now turn to the details.

\subsection{A truncated twisted grid}

Recall\ the index set $K_{j}=2^{j}\mathbb{Z}^{2n}\times 2^{2j}\mathbb{Z}$.
Fix a large integer $M$ (the integer of truncation) and consider the
decomposition of $\mathbb{H}^{n}$ given by the first line in (\ref{grid2})
with $j=-M$, i.e. $\mathbb{H}^{n}=\overset{\cdot }{\cup }\left\{ \mathcal{S}%
_{-M,\alpha ,\tau }:\left( \alpha ,\tau \right) \in K_{-M}\right\} $. Let $m$
be a positive integer that will be chosen sufficiently large below. We
construct new slabs $\widetilde{\mathcal{S}}_{m-M,\alpha ,\tau }$ for $%
\left( \alpha ,\tau \right) \in K_{m-M}$ so that every slab $\mathcal{S}%
_{-M,\alpha ^{\prime },\tau ^{\prime }}$ with $\left( \alpha ^{\prime },\tau
^{\prime }\right) \in K_{-M}$ is contained in a new slab $\widetilde{%
\mathcal{S}}_{m-M,\alpha ,\tau }$ for some $\left( \alpha ,\tau \right) \in
K_{m-M}$. We perform the construction of $\widetilde{\mathcal{S}}%
_{m-M,\alpha ,\tau }$ within the rectangular box $\mathcal{T}_{m-M,\alpha }$%
. So fix $m-M$ and $\alpha $.

First we note that no slab $\mathcal{S}_{-M,\alpha ,\tau }$ at level $-M$
can simultaneously intersect two \emph{different} balls $B_{\rho }\left(
c_{m-M,\alpha ^{\prime },\tau ^{\prime }},C_{1}2^{m-M}\right) $ and $B_{\rho
}\left( c_{m-M,\alpha ^{\prime },\tau ^{\prime \prime }},C_{1}2^{m-M}\right) 
$ at level $n-M$ if $C_{1}$ is small enough and $m$ is large enough. Indeed,
let $K$ be the quasimetric constant for $d$, and suppose that $\mathcal{S}%
_{-M,\alpha ,\tau }$ has nonempty intersection with the ball $B_{\rho
}\left( c_{m-M,\alpha ^{\prime },\tau ^{\prime }},C_{1}2^{m-M}\right) $. By
the second containment in (\ref{grid2}) we have%
\begin{equation*}
\mathcal{S}_{-M,\alpha ,\tau }\subset B_{\rho }\left( c_{-M,\alpha ,\tau
},C_{2}2^{-M}\right) ,\ \ \ \ \ C_{2}=\frac{\sqrt[4]{n^{2}+2}}{\sqrt{2}},
\end{equation*}%
and so the triangle inequality shows that every point $\left( \zeta
,t\right) \in \mathcal{S}_{-M,\alpha ,\tau }$ satisfies%
\begin{equation*}
d\left( \left( \zeta ,t\right) ,c_{m-M,\alpha ^{\prime },\tau ^{\prime
}}\right) \leq K\left[ C_{2}2^{-M}+C_{1}2^{m-M}\right] .
\end{equation*}%
Now assume that the point $\left( \zeta ,t\right) $ also lies in the other
ball $B_{\rho }\left( c_{m-M,\alpha ^{\prime },\tau ^{\prime \prime
}},C_{1}2^{m-M}\right) $. Then we would obtain by the triangle inequality
that%
\begin{eqnarray*}
d\left( c_{m-M,\alpha ^{\prime },\tau ^{\prime \prime }},c_{m-M,\alpha
^{\prime },\tau ^{\prime }}\right) &\leq &K\left[ d\left( c_{m-M,\alpha
^{\prime },\tau ^{\prime \prime }},\left( \zeta ,t\right) \right) +d\left(
\left( \zeta ,t\right) ,c_{m-M,\alpha ^{\prime },\tau ^{\prime }}\right) %
\right] \\
&\leq &K\left[ C_{1}2^{m-M}+K\left[ C_{2}2^{-M}+C_{1}2^{m-M}\right] \right] .
\end{eqnarray*}%
However, since the balls are \emph{different} there is a positive constant
depending only on $n$ such that%
\begin{equation*}
d\left( c_{m-M,\alpha ^{\prime },\tau ^{\prime \prime }},c_{m-M,\alpha
^{\prime },\tau ^{\prime }}\right) \geq c2^{m-M}.
\end{equation*}%
Combining the latter two inequalities we obtain%
\begin{equation}
c2^{m-M}\leq \left( K+K^{2}\right) C_{1}2^{m-M}+K^{2}C_{2}2^{-M},\ \ \ \ \
C_{2}=\frac{\sqrt[4]{n^{2}+2}}{\sqrt{2}}.  \label{contrainequ}
\end{equation}%
Clearly (\ref{contrainequ}) cannot hold if we take $C_{1}\leq \frac{1}{\sqrt{%
2}}$ small enough and $m$ large enough, e.g. if%
\begin{equation*}
\left( K+K^{2}\right) C_{1}\leq \frac{c}{3}\text{ and }K^{2}\frac{\sqrt[4]{%
n^{2}+2}}{\sqrt{2}}\leq \frac{c}{3}2^{m}.
\end{equation*}

Moreover, every slab $\mathcal{S}_{-M,\alpha ,\tau }$ is contained in some
ball $B_{\rho }\left( c_{m-M,\alpha ^{\prime },\tau ^{\prime
}},C_{2}2^{m-M}\right) $ if $C_{2}$ is large enough. Thus we can assign each
slab $\mathcal{S}_{-M,\alpha ,\tau }$ with $\left( \alpha ,\tau \right) \in
K_{-M}$ to one of the slabs $\mathcal{S}_{m-M,\alpha ^{\prime },\tau
^{\prime }}$ with $\left( \alpha ^{\prime },\tau ^{\prime }\right) \in
K_{m-M}$ in such a way that if $\widetilde{\mathcal{S}}_{m-M,\alpha ^{\prime
},\tau ^{\prime }}$ is the union of all the slabs $\mathcal{S}_{-M,\alpha
,\tau }$ that have been assigned to $\mathcal{S}_{m-M,\alpha ^{\prime },\tau
^{\prime }}$, then 
\begin{equation*}
B_{\rho }\left( c_{m-M,\alpha ,\tau },C_{1}2^{m-M}\right) \subset \widetilde{%
\mathcal{S}}_{m-M,\alpha ,\tau }\subset B_{\rho }\left( c_{m-M,\alpha ,\tau
},C_{2}2^{m-M}\right) ,\ \ \ \ \ \left( \alpha ,\tau \right) \in K_{m-M}.
\end{equation*}%
In fact we will use the following assignment scheme: if $\mathcal{S}%
_{-M,\alpha ,\tau }$ is contained in $\mathcal{S}_{m-M,\alpha ^{\prime
},\tau ^{\prime }}$ then we assign $\mathcal{S}_{-M,\alpha ,\tau }$ to $%
\mathcal{S}_{m-M,\alpha ^{\prime },\tau ^{\prime }}$. If $\mathcal{S}%
_{-M,\alpha ,\tau }$ intersects both $\mathcal{S}_{m-M,\alpha ^{\prime
},\tau ^{\prime }}$ and $\mathcal{S}_{m-M,\alpha ^{\prime },\tau ^{\prime
\prime }}$ where $\mathcal{S}_{m-M,\alpha ^{\prime },\tau ^{\prime }}$ lies
underneath $\mathcal{S}_{m-M,\alpha ^{\prime },\tau ^{\prime \prime }}$,
then we assign $\mathcal{S}_{-M,\alpha ,\tau }$ to $\mathcal{S}_{m-M,\alpha
^{\prime },\tau ^{\prime }}$.

By convention we set $\widetilde{\mathcal{S}}_{-M,\alpha ,\tau }=\mathcal{S}%
_{-M,\alpha ,\tau }$ for $\left( \alpha ,\tau \right) \in K_{-M}$. We now
inductively define in similar fashion new (rough) slabs $\widetilde{\mathcal{%
S}}_{j,\alpha ,\tau }$ for $\left( \alpha ,\tau \right) \in K_{j}$ and $%
j=2m-M,3m-M,...$ to be appropriate unions of the new slabs $\widetilde{%
\mathcal{S}}_{j-m,\alpha ^{\prime },\tau ^{\prime }}$ constructed in the
previous step. Provided $m$, $C_{1}$ and $C_{2}$ are chosen appropriately,
we obtain in this way a \emph{truncated} grid $\left\{ \widetilde{\mathcal{S}%
}_{j,\alpha ,\tau }\right\} _{j\in m\mathbb{Z}_{+}-M,\left( \alpha ,\tau
\right) \in K_{j}}$.

\begin{lemma}
\label{trungrid}The collection $\left\{ \widetilde{\mathcal{S}}_{j,\alpha
,\tau }\right\} _{j\in m\mathbb{Z}_{+}-M,\left( \alpha ,\tau \right) \in
K_{j}}$ satisfies 
\begin{eqnarray}
\mathbb{H}^{n} &=&\overset{\cdot }{\cup }_{\left( \alpha ,\tau \right) \in
K_{j}}\widetilde{\mathcal{S}}_{j,\alpha ,\tau },\ \ \ \ \ j\in m\mathbb{Z}%
_{+}-M,  \label{grid3} \\
B_{\rho }\left( c_{j,\alpha ,\tau },C_{1}2^{j}\right) &\subset &\widetilde{%
\mathcal{S}}_{j,\alpha ,\tau }\subset B_{\rho }\left( c_{j,\alpha ,\tau
},C_{2}2^{j}\right) \ \ \ \ \ j\in m\mathbb{Z}_{+}-M,\left( \alpha ,\tau
\right) \in K_{j},  \notag \\
\text{either }\widetilde{\mathcal{S}}_{j,\alpha ,\tau } &\subset &\widetilde{%
\mathcal{S}}_{j^{\prime },\alpha ^{\prime },\tau ^{\prime }}\text{, }%
\widetilde{\mathcal{S}}_{j^{\prime },\alpha ^{\prime },\tau ^{\prime
}}\subset \widetilde{\mathcal{S}}_{j,\alpha ,\tau }\text{ or }\widetilde{%
\mathcal{S}}_{j,\alpha ,\tau }\cap \widetilde{\mathcal{S}}_{j^{\prime
},\alpha ^{\prime },\tau ^{\prime }}=\phi .  \notag
\end{eqnarray}
\end{lemma}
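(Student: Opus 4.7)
The plan is to prove Lemma \ref{trungrid} by induction on $j$ along the arithmetic progression $-M, m-M, 2m-M, \ldots$, exploiting the fact that both the quasimetric $\rho$ and the preliminary semigrid of Lemma \ref{semigrid} are dilation-invariant, so the constants $m$, $C_{1}$, $C_{2}$ chosen once at the passage from level $-M$ to level $m-M$ will work at every subsequent step. The base case $j=-M$ is immediate: by convention $\widetilde{\mathcal{S}}_{-M,\alpha ,\tau }=\mathcal{S}_{-M,\alpha ,\tau }$, so the disjoint covering (\ref{grid3})$_{1}$ and the ball containments (\ref{grid3})$_{2}$ follow directly from Lemma \ref{semigrid}, while the nesting condition (\ref{grid3})$_{3}$ is vacuous at a single level.

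For the inductive step, assume (\ref{grid3}) has been established at level $j-m$. The passage to level $j$ repeats the assignment scheme described in the excerpt between levels $-M$ and $m-M$, but now applied to the slabs $\widetilde{\mathcal{S}}_{j-m,\alpha ^{\prime },\tau ^{\prime }}$ in place of $\mathcal{S}_{-M,\alpha ^{\prime },\tau ^{\prime }}$. Two facts, each an immediate rescaling of its counterpart in the excerpt, drive the construction: (i) no $\widetilde{\mathcal{S}}_{j-m,\alpha ^{\prime },\tau ^{\prime }}$ can intersect two distinct small balls $B_{\rho }\left( c_{j,\alpha ,\tau },C_{1}2^{j}\right) $ simultaneously, provided $C_{1}$ is small enough and $m$ is large enough to defeat the inequality
\begin{equation*}
c\,2^{j}\leq \left( K+K^{2}\right) C_{1}2^{j}+K^{2}C_{2}2^{j-m};
\end{equation*}
and (ii) every $\widetilde{\mathcal{S}}_{j-m,\alpha ^{\prime },\tau ^{\prime }}\subset B_{\rho }\left( c_{j-m,\alpha ^{\prime },\tau ^{\prime }},C_{2}2^{j-m}\right) $ is contained in at least one ball $B_{\rho }\left( c_{j,\alpha ,\tau },C_{2}2^{j}\right) $ once $C_{2}$ is large enough. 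Using (i), assign each $\widetilde{\mathcal{S}}_{j-m,\alpha ^{\prime },\tau ^{\prime }}$ that meets some $B_{\rho }\left( c_{j,\alpha ,\tau },C_{1}2^{j}\right) $ to the unique such $(\alpha ,\tau )$; using (ii), assign any remaining $\widetilde{\mathcal{S}}_{j-m,\alpha ^{\prime },\tau ^{\prime }}$ to any $(\alpha ,\tau )$ whose large ball contains it, with ties broken by the natural "underneath" rule recalled in the excerpt. Define $\widetilde{\mathcal{S}}_{j,\alpha ,\tau }$ as the union of those $\widetilde{\mathcal{S}}_{j-m,\alpha ^{\prime },\tau ^{\prime }}$ assigned to $(\alpha ,\tau )$.

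The three conclusions of the lemma are then verified in turn. The disjoint union (\ref{grid3})$_{1}$ at level $j$ is inherited from level $j-m$ because the assignment is a function: the inductively disjoint cover by the $\widetilde{\mathcal{S}}_{j-m,\alpha ^{\prime },\tau ^{\prime }}$ is partitioned according to its destination. For (\ref{grid3})$_{2}$, the lower containment $B_{\rho }\left( c_{j,\alpha ,\tau },C_{1}2^{j}\right) \subset \widetilde{\mathcal{S}}_{j,\alpha ,\tau }$ follows because every point of this small ball lies in some $\widetilde{\mathcal{S}}_{j-m,\alpha ^{\prime },\tau ^{\prime }}$ which, by (i), must be assigned to $(\alpha ,\tau )$; the upper containment follows because by (ii) and a triangle-inequality estimate of the form
\begin{equation*}
\widetilde{\mathcal{S}}_{j-m,\alpha ^{\prime },\tau ^{\prime }}\subset B_{\rho }\left( c_{j,\alpha ,\tau },K\left( C_{1}2^{j}+K\,C_{2}2^{j-m}\right) \right) \subset B_{\rho }\left( c_{j,\alpha ,\tau },C_{2}2^{j}\right) ,
\end{equation*}
each assigned piece sits inside the large ball centered at $c_{j,\alpha ,\tau }$. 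The nesting property (\ref{grid3})$_{3}$ is built into the construction: each $\widetilde{\mathcal{S}}_{j-m,\alpha ^{\prime },\tau ^{\prime }}$ is contained in a single $\widetilde{\mathcal{S}}_{j,\alpha ,\tau }$, so by iterating, for any $j^{\prime }<j$ in the progression, each $\widetilde{\mathcal{S}}_{j^{\prime },\alpha ^{\prime },\tau ^{\prime }}$ lies in exactly one $\widetilde{\mathcal{S}}_{j,\alpha ,\tau }$; together with the disjointness at each level, this yields the trichotomy.

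The main obstacle is the simultaneous calibration of the constants: we must choose $m$, $C_{1}$, and $C_{2}$ once at the outset so that the non-straddling inequality of fact (i), the containment of fact (ii), and the triangle-inequality upper bound in the verification of (\ref{grid3})$_{2}$ all hold with the \emph{same} constants at every level. This is possible precisely because the relevant inequalities are homogeneous under the Heisenberg dilations $\delta _{2^{m}}$, so explicit thresholds such as $\left( K+K^{2}\right) C_{1}\leq c/3$ and $K^{2}\sqrt[4]{n^{2}+2}/\sqrt{2}\leq (c/3)\,2^{m}$ computed at the first step transfer unchanged to all later steps.
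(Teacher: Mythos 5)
The paper itself dismisses the proof of Lemma \ref{trungrid} as ``a straightforward exercise,'' so your job was to supply the details, and your inductive argument along the progression $-M, m-M, 2m-M, \ldots$ is essentially the filling-in the paper had in mind. The base case, the two driving facts (i) and (ii), and the verification of the three conclusions are all organized correctly. One small structural difference: your assignment rule is phrased in terms of intersection with the small balls $B_{\rho}(c_{j,\alpha,\tau},C_{1}2^{j})$ (fallback to containment in large balls), whereas the paper's sketched assignment is in terms of containment in, or straddling of, the preliminary slabs $\mathcal{S}_{j,\alpha,\tau}$. Both schemes work; yours avoids having to argue that a thin slab can straddle at most two preliminary slabs at the coarser level, which is a small simplification.

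Three points deserve tightening. First, the non-straddling fact (i), as derived from (\ref{contrainequ}) in the paper, compares only centers $c_{j,\alpha',\tau'}$ and $c_{j,\alpha',\tau''}$ sharing the same $\alpha'$; to conclude uniqueness of the meeting small ball over \emph{all} indices $(\alpha,\tau)$, you should add the observation that $P_{\mathbb{C}^{n}}\widetilde{\mathcal{S}}_{j-m,\alpha',\tau'}=I_{\alpha'}^{j-m}$ lies inside a unique dyadic cube $I_{\alpha}^{j}$ (and the small balls at level $j$ project into their own $I_{\alpha}^{j}$ once $C_{1}$ is small), so ambiguity in the $\mathbb{C}^{n}$-component cannot occur and only the vertical straddling estimated by (\ref{contrainequ}) matters. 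Second, the triangle-inequality bound you quote should be $K\left(C_{1}2^{j}+2KC_{2}2^{j-m}\right)$ rather than $K\left(C_{1}2^{j}+KC_{2}2^{j-m}\right)$ (two applications of the quasi-triangle inequality are needed to travel from a general point of the small slab, through its center, through a witnessing point in the small ball, to $c_{j,\alpha,\tau}$); this is harmless. Third, the justification for once-and-for-all constants should not appeal to Heisenberg-dilation homogeneity of the construction (the preliminary slabs, being cut by hyperplanes whose tilt depends on absolute position, are not self-similar under $\delta_{\lambda}$), nor should the threshold involve $\sqrt[4]{n^{2}+2}/\sqrt{2}=C_{2}^{(0)}$ rather than the propagating $C_{2}$. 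The correct reason the constants can be fixed at the outset is that the three constraints can be satisfied in the order: choose $C_{1}\leq\min\bigl(C_{1}^{(0)},\tfrac{c}{2(K+K^{2})}\bigr)$, then $C_{2}\geq\max\bigl(C_{2}^{(0)},2KC_{1},2KC_{2}^{(0)}\bigr)$, then $m$ large enough that $2^{m}\geq\max\bigl(2K^{2}C_{2}/c,4K^{2},2K\bigr)$; with this calibration the inductive step carries the pair $(C_{1},C_{2})$ from level $j-m$ to level $j$ unchanged, which is what makes a uniform statement across all $j\in m\mathbb{Z}_{+}-M$ possible.
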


The proof of Lemma \ref{trungrid} is a straightforward exercise.

\begin{remark}
\label{originalblock}We emphasize that each of the new slabs $\widetilde{%
\mathcal{S}}_{j,\alpha ,\tau }$ is a union of a subset of the original
building blocks $\mathcal{S}_{-M,\alpha ^{\prime },\tau ^{\prime }}$, $%
\left( \alpha ^{\prime },\tau ^{\prime }\right) \in K_{-M}$.
\end{remark}

\subsection{The full twisted grid}

It remains to extend this truncated grid to a full grid defined for all $%
j\in m\mathbb{Z}$ and $\left( \alpha ,\tau \right) \in K_{j}$. So pick $M=mk$
for a large positive integer $k$, and denote slabs $\widetilde{\mathcal{S}}%
_{j,\alpha ,\tau }$ in the truncated grid constructed in (\ref{grid3}) above
by $\mathcal{S}_{j,\alpha ,\tau }^{k}$. We now start the construction at the
next level down, namely $j=-m-M=-m\left( k+1\right) $, and obtain \emph{%
different} slabs $\mathcal{S}_{-mk,\alpha ,\tau }^{k+1}$ than the original
slabs $\mathcal{S}_{-mk,\alpha ,\tau }^{k}$ at the level $j=-M=-mk$.

Now comes the crucial point. We continue the inductive construction of the
new slabs $\mathcal{S}_{j,\alpha ,\tau }^{k+1}$ using \emph{exactly the same
assignments} as were used in the construction in (\ref{grid3}), but with the
original building blocks $\mathcal{S}_{-mk,\alpha ,\tau }^{k}=\widetilde{%
\mathcal{S}}_{-M,\alpha ,\tau }$ in Remark \ref{originalblock}\ replaced by
the new building blocks $\mathcal{S}_{-mk,\alpha ,\tau }^{k+1}$.

In this way we construct a \emph{new} truncated grid $\left\{ \mathcal{S}%
_{j,\alpha ,\tau }^{k+1}\right\} _{j\in m\mathbb{Z}_{+}-M,\left( \alpha
,\tau \right) \in K_{j}}$ that satisfies the properties in (\ref{grid3})
with the same constant $m$, and possibly new constants $C_{1}$ and $C_{2}$.
Moreover we have the crucial property that%
\begin{equation*}
dist_{\rho }\left( \mathcal{S}_{j,\alpha ,\tau }^{k+1},\mathcal{S}_{j,\alpha
,\tau }^{k}\right) \leq C2^{-M}=C2^{-mk},\ \ \ \ \ j\in m\mathbb{Z}_{+}-mk.
\end{equation*}%
Here the distance between sets $\mathcal{E}$ and $\mathcal{F}$ is defined as%
\begin{equation*}
dist_{\rho }\left( \mathcal{E},\mathcal{F}\right) =\sup_{x\in \mathcal{E}%
}\inf_{y\in \mathcal{F}}\rho \left( x,y\right) +\sup_{y\in \mathcal{F}%
}\inf_{x\in \mathcal{E}}\rho \left( x,y\right) .
\end{equation*}%
Continuing in this fashion we construct for each $\ell \in \mathbb{Z}_{+}$ a
truncated grid%
\begin{equation*}
\left\{ \mathcal{S}_{j,\alpha ,\tau }^{k+\ell }\right\} _{j\in m\mathbb{Z}%
_{+}-m\left( k+\ell \right) ,\left( \alpha ,\tau \right) \in K_{j}}
\end{equation*}%
satisfying the properties in (\ref{grid3}) uniformly in $\ell $, as well as
the inequality, 
\begin{equation}
dist_{\rho }\left( \mathcal{S}_{j,\alpha ,\tau }^{k+\ell +1},\mathcal{S}%
_{j,\alpha ,\tau }^{k+\ell }\right) \leq C2^{-m\left( k+\ell \right) },\ \ \
\ \ j\in m\mathbb{Z}_{+}-m\left( k+\ell \right) .  \label{dist}
\end{equation}

For convenience we take $k=0$ in (\ref{dist}). In the special case that $%
\rho $ is comparable to a metric, we conclude from (\ref{dist}) and the
triangle inequality that for $0\leq \ell \leq q$, we have 
\begin{equation}
dist_{\rho }\left( \mathcal{S}_{j,\alpha ,\tau }^{\ell },\mathcal{S}%
_{j,\alpha ,\tau }^{q}\right) \leq C2^{-m\ell },\ \ \ \ \ j\in m\mathbb{Z}%
_{+}-m\ell .  \label{stab}
\end{equation}%
This shows in particular that%
\begin{equation*}
B_{\rho }\left( c_{j,\alpha ,\tau },C_{1}2^{j}\right) \subset \mathcal{S}%
_{j,\alpha ,\tau }^{\ell }\subset B_{\rho }\left( c_{j,\alpha ,\tau
},C_{2}2^{j}\right) ,\ \ \ \ \ \ell \geq -j,
\end{equation*}%
perhaps with new constants $C_{1}$ and $C_{2}$. We can now let $\ell
\rightarrow \infty $ and define%
\begin{equation*}
\mathcal{S}_{j,\alpha ,\tau }^{\ast }=\lim \inf_{\ell \rightarrow \infty }%
\mathcal{S}_{j,\alpha ,\tau }^{\ell }\equiv \cup _{L=0}^{\infty }\cap _{\ell
=L}^{\infty }\mathcal{S}_{j,\alpha ,\tau }^{\ell }.
\end{equation*}

We have for each $j\in \mathbb{Z}$,%
\begin{eqnarray}
\mathcal{S}_{j,\alpha ,\tau }^{\ast }\cap \mathcal{S}_{j,\alpha ^{\prime
},\tau ^{\prime }}^{\ast } &=&\phi ,\ \ \ \ \ \left( \alpha ,\tau \right)
\neq \left( \alpha ^{\prime },\tau ^{\prime }\right) ,  \label{clearly} \\
\mathbb{H}^{n} &=&\cup _{\left( \alpha ,\tau \right) \in K_{j}}\overline{%
\mathcal{S}_{j,\alpha ,\tau }^{\ast }}.  \notag
\end{eqnarray}%
Indeed, the first line in (\ref{clearly}) is obvious. To see the second line
in (\ref{clearly}), suppose in order to derive a contradiction that there is 
$x\in \mathbb{H}^{n}\setminus \cup _{\left( \alpha ,\tau \right) \in K_{j}}%
\overline{\mathcal{S}_{j,\alpha ,\tau }^{\ast }}$. Since the sets $\overline{%
\mathcal{S}_{j,\alpha ,\tau }^{\ast }}$ have finite overlap, $\cup _{\left(
\alpha ,\tau \right) \in K_{j}}\overline{\mathcal{S}_{j,\alpha ,\tau }^{\ast
}}$ is closed and there is an open Euclidean ball $B\left( x,r\right) $
contained in $\mathbb{H}^{n}\setminus \cup _{\left( \alpha ,\tau \right) \in
K_{j}}\overline{\mathcal{S}_{j,\alpha ,\tau }^{\ast }}$. There are at most a
finite number of indices $\left( \alpha ,\tau \right) \in K_{j}$ such that
the slab $\mathcal{S}_{j,\alpha ,\tau }^{\ell }$ could have nonempty
intersection with $B\left( x,r\right) $. Since by (\ref{stab}) the slabs
stabilize as $\ell \rightarrow \infty $, there is an open subset $O$ of $%
B\left( x,r\right) $ and an index $\left( \alpha ,\tau \right) \in K_{j}$
such $O\subset \mathcal{S}_{j,\alpha ,\tau }^{\ell }$ for all sufficiently
large $\ell $. Thus $O\subset \mathcal{S}_{j,\alpha ,\tau }^{\ast }$ by
definition and this contradicts our assumption that $B\left( x,r\right) $
has empty intersection with all $\mathcal{S}_{j,\alpha ,\tau }^{\ast }$.

Moreover, we also have the nesting property for the $\mathcal{S}_{j,\alpha
,\tau }^{\ast }$ as well as the following monotonicity:%
\begin{equation*}
\cup _{\left( \alpha ,\tau \right) \in K_{j}}\mathcal{S}_{j,\alpha ,\tau
}^{\ast }\subset \cup _{\left( \alpha ,\tau \right) \in K_{j^{\prime }}}%
\mathcal{S}_{j,\alpha ,\tau }^{\ast }
\end{equation*}%
for $j<j^{\prime }$. Indeed, if $x\in \mathcal{S}_{j,\alpha ,\tau }^{\ast }$%
, then $x\in \mathcal{S}_{j,\alpha ,\tau }^{\ell }$ for all sufficiently
large $\ell $, and since $\left\{ \mathcal{S}_{j,\alpha ,\tau }^{k+\ell
}\right\} $ is a truncated grid, there is $\left( \alpha ^{\prime },\tau
^{\prime }\right) \in K_{j^{\prime }}$ such that $x\in \mathcal{S}%
_{j^{\prime },\alpha ^{\prime },\tau ^{\prime }}^{\ell }$ for all
sufficiently large $\ell $, i.e. $x\in \mathcal{S}_{j^{\prime },\alpha
^{\prime },\tau ^{\prime }}^{\ast }$.

Fix $j\in \mathbb{Z}$. Any point $x$ in%
\begin{equation*}
E_{j}\equiv \mathbb{H}^{n}\setminus \cup _{\left( \alpha ,\tau \right) \in
K_{j}}\mathcal{S}_{j,\alpha ,\tau }^{\ast }
\end{equation*}%
must be in the boundary of some $\mathcal{S}_{j,\alpha ,\tau }^{\ast }$ by (%
\ref{clearly}). It is now easy to inductively attach these points to slabs $%
\mathcal{S}_{j,\alpha ,\tau }^{\ast }$ for which they are already a limit
point, and in such a way that the new sets $\mathcal{S}_{j,\alpha ,\tau
}^{\maltese }$ with the attached points form a full grid $\left\{ \mathcal{S}%
_{j,\alpha ,\tau }^{\maltese }\right\} _{j\in m\mathbb{Z},\left( \alpha
,\tau \right) \in K_{j}}$ for $\mathbb{H}^{n}$:%
\begin{eqnarray}
\mathbb{H}^{n} &=&\overset{\cdot }{\cup }_{\left( \alpha ,\tau \right) \in
K_{j}}\mathcal{S}_{j,\alpha ,\tau }^{\maltese },\ \ \ \ \ j\in m\mathbb{Z},
\label{grid4} \\
B_{\rho }\left( c_{j,\alpha ,\tau },C_{1}2^{j}\right) &\subset &\mathcal{S}%
_{j,\alpha ,\tau }^{\maltese }\subset B_{\rho }\left( c_{j,\alpha ,\tau
},C_{2}2^{j}\right) \ \ \ \ \ j\in m\mathbb{Z},\left( \alpha ,\tau \right)
\in K_{j},  \notag \\
\text{either }\mathcal{S}_{j,\alpha ,\tau }^{\maltese } &\subset &\mathcal{S}%
_{j^{\prime },\alpha ^{\prime },\tau ^{\prime }}^{\maltese }\text{, }%
\mathcal{S}_{j^{\prime },\alpha ^{\prime },\tau ^{\prime }}^{\maltese
}\subset \mathcal{S}_{j,\alpha ,\tau }^{\maltese }\text{ or }\mathcal{S}%
_{j,\alpha ,\tau }^{\maltese }\cap \mathcal{S}_{j^{\prime },\alpha ^{\prime
},\tau ^{\prime }}^{\maltese }=\phi .  \notag
\end{eqnarray}

Indeed, the second line in (\ref{grid4}) will pose no problem for the points
in $E_{j}$. Let $j=0$ and write the indices in $K_{0}$ as a sequence $%
\left\{ \left( \alpha _{i},\tau _{i}\right) \right\} _{i=1}^{\infty }$. Now
attach all points in $E$ that are permissible boundary points of $\mathcal{S}%
_{0,\alpha _{1},\tau _{1}}^{\ast }$ to the slab $\mathcal{S}_{0,\alpha
_{1},\tau _{1}}^{\ast }$ to obtain $\mathcal{S}_{0,\alpha _{1},\tau
_{1}}^{\maltese }$. Here we say that a boundary point $x$ is \emph{%
permissible} for $\mathcal{S}_{0,\alpha _{1},\tau _{1}}^{\ast }$ if whenever 
$x$ lies in some larger slab $\mathcal{S}_{j^{\prime },\alpha ^{\prime
},\tau ^{\prime }}^{\ast }$, then $\mathcal{S}_{0,\alpha _{1},\tau
_{1}}^{\ast }$ also lies in the larger slab $\mathcal{S}_{j^{\prime },\alpha
^{\prime },\tau ^{\prime }}^{\ast }$. Next attach all points remaining in $E$
that are permissible boundary points of $\mathcal{S}_{0,\alpha _{2},\tau
_{2}}^{\ast }$ to the slab $\mathcal{S}_{0,\alpha _{2},\tau _{2}}^{\ast }$
to obtain $\mathcal{S}_{0,\alpha _{2},\tau _{2}}^{\maltese }$. Continue in
this way to define all the slabs $\mathcal{S}_{0,\alpha ,\tau }^{\maltese }$
with $\left( \alpha ,\tau \right) \in K_{0}$. This process exhausts the set $%
E_{0}$ of extra points at scale $j=0$. For $j\geq 1$, each slab $\mathcal{S}%
_{j,\alpha ,\tau }^{\ast }$ is a union of certain of the slabs $\mathcal{S}%
_{0,\alpha ^{\prime },\tau ^{\prime }}^{\ast }$, and we now define $\mathcal{%
S}_{j,\alpha ,\tau }^{\maltese }$ to be the union of the corresponding new
slabs $\mathcal{S}_{0,\alpha ^{\prime },\tau ^{\prime }}^{\maltese }$.

Fix $j=-1$. We must attach the points in $E_{-1}$ to the slabs $\mathcal{S}%
_{-1,\alpha ,\tau }^{\ast }$ in such a way that the nesting property holds.
Write the indices in $K_{-1}$ as a sequence $\left\{ \left( \alpha _{i},\tau
_{i}\right) \right\} _{i=1}^{\infty }$. The slab $\mathcal{S}_{-1,\alpha
_{1},\tau _{1}}^{\ast }$ is contained in some slab $\mathcal{S}_{0,\alpha
,\tau }^{\maltese }$. We attach all points in $E_{-1}$ that lie in $\mathcal{%
S}_{0,\alpha ,\tau }^{\maltese }$ and are boundary points of $\mathcal{S}%
_{-1,\alpha _{1},\tau _{1}}^{\ast }$ to the slab $\mathcal{S}_{-1,\alpha
_{1},\tau _{1}}^{\ast }$ to obtain $\mathcal{S}_{-1,\alpha _{1},\tau
_{1}}^{\maltese }$. The slab $\mathcal{S}_{-1,\alpha _{2},\tau _{2}}^{\ast }$
is contained in some slab $\mathcal{S}_{0,\alpha ^{\prime },\tau ^{\prime
}}^{\maltese }$. We attach all points remaining in $E_{-1}$ that lie in $%
\mathcal{S}_{0,\alpha ^{\prime },\tau ^{\prime }}^{\maltese }$\ and are
boundary points of $\mathcal{S}_{-1,\alpha _{2},\tau _{2}}^{\ast }$ to the
slab $\mathcal{S}_{-1,\alpha _{2},\tau _{2}}^{\ast }$ to obtain $\mathcal{S}%
_{-1,\alpha _{2},\tau _{2}}^{\maltese }$. We continue in this way to define
all the slabs $\mathcal{S}_{-1,\alpha ,\tau }^{\maltese }$ with $\left(
\alpha ,\tau \right) \in K_{-1}$. Now repeat the process with $j=-2,-3,...$
to complete the construction of a grid $\left\{ \mathcal{S}_{j,\alpha ,\tau
}^{\maltese }\right\} _{j\in m\mathbb{Z},\left( \alpha ,\tau \right) \in
K_{j}}$ for $\mathbb{H}^{n}$ satisfying (\ref{grid4}). We finally observe
that our construction has preserved the property that $P_{\mathbb{C}^{n}}%
\mathcal{S}_{j,\alpha ,\tau }^{\maltese }=I_{\alpha }^{j}$ for all $j,\alpha
,\tau $. We can thus use the sets $\left\{ \mathcal{S}_{j,\alpha ,\tau
}^{\maltese }\right\} _{j\in m\mathbb{Z},\left( \alpha ,\tau \right) \in
K_{j}}$ in the conclusion (\ref{productgrid}) of the theorem.

\section{Rectangles in the Heisenberg group}

Recall from Theorem \ref{Heisenberg grid} that at each dyadic scale $2^{j}$
with $j\in m\mathbb{Z}$ there is a pairwise disjoint decomposition of $%
\mathbb{H}^{n}$ into sets $\mathcal{S}_{j,\alpha ,\tau }$ that are "almost
Heisenberg ball" of radius $2^{j}$. We will refer to these sets as dyadic
cubes at scale $2^{j}$. These decompositions are nested, and moreover are 
\emph{product-like} in the sense that the cubes $\mathcal{S}_{j,\alpha ,\tau
}$ project onto $I_{\alpha }^{j}$ in the usual dyadic grid in the factor $%
\mathbb{C}^{n}$, and have centers $c_{j,\alpha ,\tau }=\left( P_{j,\alpha
},\tau +\frac{1}{2}2^{2j}\right) $ that for each $j$ form a product set
indexed by $K_{j}\equiv 2^{j}\mathbb{Z}^{2n}\times 2^{2j}\mathbb{Z}$ and
satisfy%
\begin{equation*}
\left\vert c_{j,\alpha ,\tau }-c_{j,\alpha ^{\prime },\tau }\right\vert
=2^{j}\text{ and }\left\vert c_{j,\alpha ,\tau }-c_{j,\alpha ,\tau ^{\prime
}}\right\vert =2^{2j},
\end{equation*}%
if $\alpha $ and $\alpha ^{\prime }$ are neighbours in $2^{j}\mathbb{Z}^{2n}$%
, and if $\tau $ and $\tau ^{\prime }$ are neighbours in $2^{2j}\mathbb{Z}$.

We now define vertical and horizontal dyadic rectangles relative to this
decomposition into dyadic cubes. The analogy with dyadic rectangles in the
plane $\mathbb{R}^{2}$ that we are pursuing here is that a dyadic rectangle $%
I=I_{1}\times I_{2}$ in the plane is vertical if $\left\vert
I_{2}\right\vert \geq \left\vert I_{1}\right\vert $, and is horizontal if $%
\left\vert I_{1}\right\vert \geq \left\vert I_{2}\right\vert $ (and both if
and only if $I$ is a dyadic square). If we consider the\ grid of dyadic
cubes $\left\{ \mathcal{S}_{j,\alpha ,\tau }\right\} $ in $\mathbb{H}^{n}$
in place of the grid of dyadic squares in $\mathbb{R}^{2}$, we are led to
the following definition.

\begin{definition}
\label{Hrect}Let $j,k\in m\mathbb{Z}$ with $j\leq k$ and let $\mathcal{S}%
_{j,\alpha ,\tau }$ and $\mathcal{S}_{k,\beta ,\upsilon }$ be dyadic cubes
in $\mathbb{H}^{n}$ with $\mathcal{S}_{j,\alpha ,\tau }\subset \mathcal{S}%
_{k,\beta ,\upsilon }$. The set%
\begin{equation*}
\mathcal{R}\left( ver\right) =\mathcal{R}_{\mathcal{S}_{j,\alpha ,\tau }}^{%
\mathcal{S}_{k,\beta ,\upsilon }}\left( ver\right) =\dbigcup \left\{ 
\mathcal{S}_{j,\alpha ,\tau ^{\prime }}:\mathcal{S}_{j,\alpha ,\tau ^{\prime
}}\subset \mathcal{S}_{k,\beta ,\upsilon }\right\}
\end{equation*}%
will be referred to as a \emph{vertical dyadic rectangle} or more precisely
the vertical dyadic rectangle in $\mathcal{S}_{k,\beta ,\upsilon }$\
containing $\mathcal{S}_{j,\alpha ,\tau }$. We define the \emph{base} of the
rectangle $\mathcal{R}\left( ver\right) $ to be the dyadic cube $I_{\alpha
}^{j}$ in $\mathbb{C}^{n}$ and we define the \emph{cobase} of the rectangle $%
\mathcal{R}\left( ver\right) $ to be the dyadic interval $J_{\upsilon }^{2k}$
in $\mathbb{R}$. We say the rectangle $\mathcal{R}\left( ver\right) $ has 
\emph{width} $2^{j}$ and \emph{height} $2^{2k}$. Similarly, the set%
\begin{equation*}
\mathcal{R}\left( hor\right) =\mathcal{R}_{\mathcal{S}_{j,\alpha ,\tau }}^{%
\mathcal{S}_{k,\beta ,\upsilon }}\left( hor\right) =\dbigcup \left\{ 
\mathcal{S}_{j,\alpha ^{\prime },\tau }:\mathcal{S}_{j,\alpha ^{\prime
},\tau }\subset \mathcal{S}_{k,\beta ,\upsilon }\right\}
\end{equation*}%
will be referred to as a \emph{horizontal dyadic rectangle} or more
precisely the horizontal dyadic rectangle in $\mathcal{S}_{k,\beta ,\upsilon
}$\ containing $\mathcal{S}_{j,\alpha ,\tau }$. We define the \emph{base} of
the rectangle $\mathcal{R}\left( hor\right) $ to be the dyadic cube $%
I_{\beta }^{k}$ in $\mathbb{C}^{n}$ and we define the \emph{cobase} of the
rectangle $\mathcal{R}\left( ver\right) $ to be the dyadic interval $J_{\tau
}^{2j}$ in $\mathbb{R}$. We say the rectangle $\mathcal{R}\left( hor\right) $
has \emph{width} $2^{k}$ and \emph{height} $2^{2j}$.
\end{definition}

We will usually write just $\mathcal{R}$ to denote a dyadic rectangle that
is either vertical or horizontal. Note that a dyadic rectangle $\mathcal{R}$
is both vertical and horizontal if and only if $\mathcal{R}$ is a dyadic
cube $\mathcal{S}_{j,\alpha ,\tau }$. Finally note that $\mathcal{R}_{%
\mathcal{S}_{j,\alpha ,\tau }}^{\mathcal{S}_{k,\beta ,\upsilon }}\left(
ver\right) $ can be thought of as a Heisenberg substitute for the Euclidean
rectangle $I_{\alpha }^{j}\times J_{\upsilon }^{2k}$ in $\mathbb{H}^{n}$
with width $2^{j}$ and height $2^{2k}$, and that $\mathcal{R}_{\mathcal{S}%
_{j,\alpha ,\tau }}^{\mathcal{S}_{k,\beta ,\upsilon }}\left( hor\right) $
can be thought of as a Heisenberg substitute for the Euclidean rectangle $%
I_{\beta }^{k}\times J_{\tau }^{2j}$ in $\mathbb{H}^{n}$ with width $2^{k}$
and height $2^{2j}$. The vertical Heisenberg rectangles are constructed by
stacking Heisenberg cubes neatly on top of each other, while the horizontal
Heisenberg rectangles are constructed by placing Heisenberg cubes next to
each other, although the placement is far from neat.

\begin{remark}
In applications to operators with flag kernels, or more generally a
semiproduct structure, it is appropriate to restrict attention to the set of 
\emph{vertical} dyadic rectangles.
\end{remark}

\subsection{The dyadic strong maximal function}

We define the strong \emph{dyadic} maximal function $M$ relative to these
dyadic rectangles in the usual way:%
\begin{equation*}
Mf\left( \zeta \right) =\sup_{\zeta \in \mathcal{R}}\frac{1}{\left\vert 
\mathcal{R}\right\vert }\int_{\mathcal{R}}\left\vert f\right\vert ,\ \ \ \ \
f\in L^{1}\left( \mathbb{H}^{n}\right) ,
\end{equation*}%
where the supremum is taken over all dyadic rectangles $\mathcal{R}$
containing $\zeta $. We then have the following strong maximal theorem.

\begin{theorem}
\label{maximal theorem}For $1<p<\infty $, we have%
\begin{equation}
\left\Vert Mf\right\Vert _{L^{p}\left( \mathbb{H}^{n}\right) }\leq
C_{n,p}\left\Vert f\right\Vert _{L^{p}\left( \mathbb{H}^{n}\right) },\ \ \ \
\ f\in L^{p}\left( \mathbb{H}^{n}\right) .  \label{max ineq}
\end{equation}
\end{theorem}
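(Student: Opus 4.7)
The plan is to dominate $M$ by a composition of one-parameter maximal operators, each of which is bounded on $L^p$ by Fubini and classical theory. Let $M_H^d$ denote the dyadic Hardy--Littlewood maximal function over the Heisenberg cubes $\{\mathcal S_{j,\alpha,\tau}\}$ of Theorem~\ref{Heisenberg grid}. Since these form a Christ-type grid on the doubling quasi-metric space $(\mathbb H^n,\rho,d\mu)$, the standard theory yields $\|M_H^d f\|_{L^p} \leq C_{n,p}\|f\|_{L^p}$ for $1 < p < \infty$. Let $\mathfrak M_u$, $\mathfrak M_z$ denote the one-parameter dyadic Hardy--Littlewood maximal operators acting in $u \in \mathbb R$ and $z \in \mathbb C^n$ respectively; by Fubini each is bounded on $L^p(\mathbb H^n)$.

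I would split $Mf \leq M_{ver}f + M_{hor}f$ and aim to establish the pointwise bounds
\begin{equation*}
M_{ver} f \leq C\,\mathfrak M_z\bigl[\mathfrak M_u(M_H^d|f|)\bigr],\qquad M_{hor} f \leq C\,\mathfrak M_u\bigl[\mathfrak M_z(M_H^d|f|)\bigr],
\end{equation*}
from which the theorem follows immediately by composing $L^p$ bounds. For the vertical case, one uses that $\mathcal R = \mathcal R_{\mathcal S_{j,\alpha,\tau}}^{\mathcal S_{k,\beta,\upsilon}}\left( ver\right)$ is a disjoint union of $N = 2^{2(k-j)}$ cubes $\mathcal S_{j,\alpha,\tau'}$, all sharing the common base $I_\alpha^j$, to write
\begin{equation*}
\frac{1}{|\mathcal R|}\int_{\mathcal R}|f| \;=\; \frac{1}{N}\sum_{\tau'}\frac{1}{|\mathcal S_{j,\alpha,\tau'}|}\int_{\mathcal S_{j,\alpha,\tau'}}|f|.
\end{equation*}
Defining an auxiliary function $G_j$ that equals the cube-average $g_j(\alpha,\tau) := \bar{|f|}_{\mathcal S_{j,\alpha,\tau}}$ on the \emph{Euclidean} cube $I_\alpha^j \times J_\tau^{2j}$ (which has the same measure $2^{(2n+2)j}$ as the corresponding Heisenberg cube), the right side above becomes the Euclidean average of $G_j$ over $I_\alpha^j \times J_\upsilon^{2k}$, bounded in turn by $\mathfrak M_z \mathfrak M_u G_j$ evaluated at any point of that Euclidean rectangle.

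The crucial geometric step is to pass from this pointwise bound at some point of the Euclidean rectangle $I_\alpha^j \times J_\upsilon^{2k}$ to a bound at the given point $\zeta = (z_0,u_0) \in \mathcal R$: the $u$-fiber of $\mathcal R$ above $z_0$ is shifted from $J_\upsilon^{2k}$ by the tilt $2\,\mathrm{Im}(\overline{P_{j,\alpha}}\,z_0)$, but the fact that all cubes of the vertical stack share $\alpha$ means this shift is common to all $N$ cubes, so it can be absorbed into $\mathfrak M_u$ by dilating the enveloping dyadic interval by a bounded factor controlled by $C_1, C_2$ in~(\ref{productgrid}). For horizontal rectangles this common-tilt cancellation fails; instead one left-translates by $c_{k,\beta,\upsilon}^{-1}$, which is a measure-preserving group automorphism bringing the ambient cube near the origin where the ball equation~(\ref{equation}) shows the Heisenberg geometry is essentially Euclidean and the rectangle becomes an axis-aligned slab of width $\sim 2^k$ in $z$ and thickness $\sim 2^{2j}$ in $u$, after which the vertical argument is run in the translated frame and the translation inverted at no cost in $L^p$.

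The main obstacle is this geometric comparison lemma---making rigorous the claim that a Heisenberg vertical (respectively horizontal) rectangle, after the common-tilt shear (respectively centre translation), is comparable to a Euclidean box with bounded volume and aspect distortion uniform in all grid parameters. A secondary technical point is justifying the passage from the per-scale pointwise bound involving $G_j$ to a uniform-in-$j$ bound involving $M_H^d|f|$, which can be accomplished by replacing $G_j$ with the conditional expectation $E_j|f|$ on the Heisenberg grid and applying Doob's inequality, together with the $L^p$ boundedness of $\mathfrak M_z\mathfrak M_u$ via Fubini.
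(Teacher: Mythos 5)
Your overall strategy---dominating $M$ by a composition of one-parameter maximal operators, each bounded on $L^p$ by Fubini---is the same as the paper's, which dominates a truncated maximal operator $M_k$ (over rectangles of height $\geq 2^{2k}$) by $\widetilde{M_k}\circ\widehat{M}$, where $\widehat M$ is the one-parameter dyadic maximal in the $u$-variable and $\widetilde{M_k}$ is the maximal over Heisenberg rectangles of \emph{fixed} height $2^{2k}$; fixed-height Heisenberg rectangles nest, so $\widetilde{M_k}$ is trivially weak $(1,1)$ and hence $L^p$-bounded uniformly in $k$, and one then lets $k\to-\infty$. Your detour through Euclidean boxes, the auxiliary $M_H^d$, and Doob's inequality is not only unnecessary machinery but is the source of a genuine gap.

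The gap is in the ``crucial geometric step'' of both halves. For the vertical case, you assert that the tilt shift ``can be absorbed into $\mathfrak M_u$ by dilating the enveloping dyadic interval by a bounded factor controlled by $C_1,C_2$.'' This fails. From the normal-vector formula (\ref{normal}), the slabs $\mathcal S_{j,\alpha,\tau'}$ are tilted with slope $\sim|P_{j,\alpha}|$; over the base $I_\alpha^j$ of side $2^j$ the tilt-induced $u$-shift of the fiber therefore varies by $\sim|P_{j,\alpha}|\cdot 2^j$, which is unbounded as the column moves away from the origin and in particular is not $O(2^{2k})$. Thus $\zeta=(z_0,u_0)$ can lie arbitrarily many lengths $2^{2k}$ away from the Euclidean box $I_\alpha^j\times J_\upsilon^{2k}$ in the $u$-direction, and a dyadic $\mathfrak M_u$ evaluated at $u_0$ cannot recover the average of $G_j(z_0,\cdot)$ over $J_\upsilon^{2k}$ without losing an unbounded factor. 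The horizontal case is worse: after left-translating by $c_{k,\beta,\upsilon}^{-1}$, the sub-cubes $\mathcal S_{j,\alpha',\tau}$ still carry tilts of slope $\sim|P_{j,\alpha'}-P_{k,\beta}|\lesssim 2^k$, so the centers of their $u$-fibers sweep through an interval of length $\sim 2^{2k}\gg 2^{2j}$ across the base $I_\beta^k$; the translated rectangle is emphatically \emph{not} ``an axis-aligned slab of width $\sim2^k$ in $z$ and thickness $\sim2^{2j}$ in $u$'' but a thin wavy slab whose axis-aligned convex hull has measure $\sim2^{2nk}\cdot2^{2k}\gg|\mathcal R|$.

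The paper sidesteps the tilt entirely by never leaving the Heisenberg grid. Having written the average over $\mathcal R$ as an average in $w\in I_{\alpha'}^{j'}$ of the fiber averages $\frac{1}{|F_w|}\int_{F_w}|f(w,\cdot)|$, one chooses the evaluation point $r_w$ at the $\inf$ of $\widehat M f(w,\cdot)$ over the fiber of the nested small rectangle $\widetilde{\mathcal R}\subset\mathcal R$ (same base, height $2^{2k}$) containing $\zeta$; the outer average is then dominated by the average over $\widetilde{\mathcal R}$, hence by $\widetilde{M_k}(\widehat M f)(\zeta)$. Because $\widetilde{\mathcal R}$ and $\mathcal R$ share the same tilted fibers, the tilt cancels automatically. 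To rescue your Euclidean-box picture one would have to apply an $\alpha$-dependent shear $(z,u)\mapsto(z,u-\mathrm{shift}_\alpha(z))$ in each tube $\mathcal T_{j,\alpha}$, but these shears do not patch across column boundaries; it is simply cleaner to do the iteration with Heisenberg rectangles as the paper does.
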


\begin{proof}
While $M$ in the form given here is not obviously a \emph{product} maximal
operator, it turns out that it can be dominated by an iteration of three
one-dimensional maximal operators in distinct variables (\cite{RiSo}), from
which (\ref{max ineq}) follows immediately. Alternatively, one can use the
more general strong maximal theorem in Christ (\cite{Ch}), whose proof is of
consequently more complicated. Here however, we can easily approximate an
iteration by simpler maximal operators as follows. For each $k\in m\mathbb{Z}
$ (we will eventually let $k\rightarrow -\infty $), we consider the \emph{%
truncated} maximal operator%
\begin{equation*}
M_{k}f\left( \zeta \right) =\sup_{\zeta \in \mathcal{R}:\ height\left( 
\mathcal{R}\right) \geq 2^{2k}}\frac{1}{\left\vert \mathcal{R}\right\vert }%
\int_{\mathcal{R}}\left\vert f\right\vert ,\ \ \ \ \ f\in L^{1}\left( 
\mathbb{H}^{n}\right) ,
\end{equation*}%
where the rectangles have height at least $2^{2k}$, as well as the \emph{%
level} maximal operator%
\begin{equation*}
\widetilde{M_{k}}f\left( \zeta \right) =\sup_{\zeta \in \mathcal{R}:\
height\left( \mathcal{R}\right) =2^{2k}}\frac{1}{\left\vert \mathcal{R}%
\right\vert }\int_{\mathcal{R}}\left\vert f\right\vert ,\ \ \ \ \ f\in
L^{1}\left( \mathbb{H}^{n}\right) ,
\end{equation*}%
where the rectangles have height exactly $2^{2k}$. We claim that if $%
\widehat{M}$ denotes the\ one-dimensional maximal operator in the variable $%
t $ where $\zeta =\left( z,t\right) \in \mathbb{C}^{n}\times \mathbb{R}$, we
have the pointwise estimate%
\begin{equation*}
M_{k}f\left( z,t\right) \leq C_{n}\widetilde{M_{k}}\left( \widehat{M}%
f\right) \left( z,t\right) .
\end{equation*}%
Here the level maximal operator $\widetilde{M_{k}}$ is playing the role of
an approximate maximal operator in the variable $z\in \mathbb{C}^{n}$.
Indeed, if $\mathcal{R}=\mathcal{R}_{\mathcal{S}_{j^{\prime },\alpha
^{\prime },\tau ^{\prime }}}^{\mathcal{S}_{j,\alpha ,\tau }}\left(
ver\right) $ contains $\left( z,t\right) $ and if $\widetilde{\mathcal{R}}$
is the unique dyadic rectangle with base $I_{\alpha ^{\prime }}^{j^{\prime
}} $ and height $2^{2k}$ that contains $\left( z,t\right) $, then 
\begin{eqnarray*}
\frac{1}{\left\vert \mathcal{R}\right\vert }\int_{\mathcal{R}}\left\vert
f\right\vert &=&\frac{1}{\left\vert I_{\alpha ^{\prime }}^{j^{\prime
}}\right\vert }\frac{1}{\left\vert J_{\tau }^{j}\right\vert }\int_{\mathcal{R%
}}\left\vert f\right\vert \leq C_{n}\frac{1}{\left\vert I_{\alpha ^{\prime
}}^{j^{\prime }}\right\vert }\frac{1}{\left\vert J_{\tau }^{j}\right\vert }%
\int_{I_{\alpha ^{\prime }}^{j^{\prime }}}\int_{\left\{ s:\left( w,s\right)
\in \mathcal{R}\right\} }\left\vert f\left( w,s\right) \right\vert dsdw \\
&\leq &C_{n}\frac{1}{\left\vert I_{\alpha ^{\prime }}^{j^{\prime
}}\right\vert }\int_{I_{\alpha ^{\prime }}^{j^{\prime }}}\left\{ \frac{1}{%
\left\vert \left\{ s:\left( w,s\right) \in \mathcal{R}\right\} \right\vert }%
\int_{\left\{ s:\left( w,s\right) \in \mathcal{R}\right\} }\left\vert
f\left( w,s\right) \right\vert ds\right\} dw \\
&\leq &C_{n}\frac{1}{\left\vert I_{\alpha ^{\prime }}^{j^{\prime
}}\right\vert }\int_{I_{\alpha ^{\prime }}^{j^{\prime }}}\left\{
\inf_{r:\left( w,r\right) \in \widetilde{\mathcal{R}}}\widehat{M}f\left(
w,r\right) \right\} dw\leq C_{n}\widetilde{M_{k}}\left( \widehat{M}f\right)
\left( z,t\right) .
\end{eqnarray*}

Now the level maximal operator $\widetilde{M_{k}}$ is trivially of weak type 
$\left( 1,1\right) $ since any collection of dyadic rectangles of \emph{fixed%
} height form a grid with the nesting property. By interpolation we obtain
that%
\begin{equation*}
\left\Vert \widetilde{M_{k}}g\right\Vert _{L^{p}\left( \mathbb{H}^{n}\right)
}\leq C_{n,p}\left\Vert g\right\Vert _{L^{p}\left( \mathbb{H}^{n}\right) },\
\ \ \ \ g\in L^{p}\left( \mathbb{H}^{n}\right) ,
\end{equation*}%
with a constant $C_{n,p}$ independent of $k$. Since the maximal operator $%
\widehat{M}$ is bounded on $L^{p}\left( \mathbb{R}\right) $, we conclude that%
\begin{eqnarray*}
\left\Vert M_{k}f\right\Vert _{L^{p}\left( \mathbb{H}^{n}\right) }^{p} &\leq
&\left\Vert C_{n}\widetilde{M_{k}}\left( \widehat{M}f\right) \right\Vert
_{L^{p}\left( \mathbb{H}^{n}\right) }^{p}\leq C_{n}^{p}C_{n,p}^{p}\left\Vert 
\widehat{M}f\right\Vert _{L^{p}\left( \mathbb{H}^{n}\right) }^{p} \\
&=&C_{n,p}\int_{\mathbb{C}^{n}}\left\{ \int_{\mathbb{R}}\left\vert \widehat{M%
}f\left( z,t\right) \right\vert ^{p}dt\right\} dz \\
&\leq &C_{n,p}\int_{\mathbb{C}^{n}}\int_{\mathbb{R}}\left\vert f\left(
z,t\right) \right\vert ^{p}dtdz=C_{n,p}\left\Vert f\right\Vert _{L^{p}\left( 
\mathbb{H}^{n}\right) }^{p},
\end{eqnarray*}%
with a constant $C_{n,p}$ independent of $k$. Now let $k\rightarrow -\infty $
and use the dominated convergence theorem to obtain (\ref{max ineq}).
\end{proof}

\subsection{Journ\'{e}'s covering lemma}

Using the Maximal Theorem \ref{maximal theorem}, we can obtain an analogue
of Journ\'{e}'s covering lemma for the Heisenberg group. Let $\Omega $ be an
open set in $\mathbb{H}^{n}$ and define%
\begin{equation*}
\Omega ^{\left( 1\right) }=\left\{ M\chi _{\Omega }>\frac{1}{2}\right\} 
\text{ and }\Omega ^{\left( 2\right) }=\left\{ M\chi _{\Omega ^{\left(
1\right) }}>\frac{1}{2}\right\} ,
\end{equation*}%
where $M$ is the strong dyadic maximal function on $\mathbb{H}^{n}$. Given a
rectangle $\mathcal{R}$ in $\Omega \,\ $we define the blowup rectangle $%
\widehat{\mathcal{R}}$ relative to $\Omega $ to be the following rectangle
contained in $\Omega ^{\left( 2\right) }$. Suppose that $\mathcal{R}$ has
cobase $J\subset \mathbb{R}$. First we define the rectangle $\widetilde{%
\mathcal{R}}$ to be the largest rectangle in $\Omega ^{\left( 1\right) }$
containing $\mathcal{R}$ and having cobase $J$. Let $\widetilde{\mathcal{R}}$
have base $I\subset \mathbb{C}^{n}$. Then we define the rectangle $\widehat{%
\mathcal{R}}$ to be the largest rectangle in $\Omega ^{\left( 2\right) }$
containing $\widetilde{\mathcal{R}}$ and having base $I$.

\begin{lemma}
Let $\Omega $ be an open set in $\mathbb{H}^{n}$, and for each rectangle $%
\mathcal{R}\subset \Omega $ let $\widehat{\mathcal{R}}\subset \Omega
^{\left( 2\right) }$ be defined as above. Then%
\begin{equation*}
\left\vert \dbigcup\limits_{\mathcal{R}}\widehat{\mathcal{R}}\right\vert
\leq C_{n}\left\vert \Omega \right\vert ,
\end{equation*}%
and for every $0<\varepsilon <1$, there is a positive constant $%
C_{n,\varepsilon }$ such that 
\begin{equation*}
\sum_{\mathcal{R}\subset \Omega }\left\vert \mathcal{R}\right\vert \left( 
\frac{\left\vert \mathcal{R}\right\vert }{\left\vert \widehat{\mathcal{R}}%
\right\vert }\right) ^{\varepsilon }\leq C_{n,\varepsilon }\left\vert \Omega
\right\vert .
\end{equation*}
\end{lemma}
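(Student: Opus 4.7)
The proof splits into the two stated inequalities.

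For the volume bound $\left\vert \bigcup_{\mathcal{R}} \widehat{\mathcal{R}}\right\vert \leq C_n\vert\Omega\vert$, I would apply the strong dyadic maximal Theorem \ref{maximal theorem} twice. Since $M$ is bounded on $L^2(\mathbb{H}^n)$, Chebyshev gives $\vert\Omega^{(1)}\vert = \vert\{M\chi_\Omega > 1/2\}\vert \leq 4 C_{n,2}^{2} \vert\Omega\vert$, and similarly $\vert\Omega^{(2)}\vert \leq 4 C_{n,2}^{2} \vert\Omega^{(1)}\vert \leq C_n \vert\Omega\vert$. By construction every $\widehat{\mathcal{R}}$ lies in $\Omega^{(2)}$, so this part follows immediately.

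For the weighted geometric sum I would adopt the standard Journ\'e--Pipher strategy and factor the density as
\[
\left(\frac{\vert\mathcal{R}\vert}{\vert\widehat{\mathcal{R}}\vert}\right)^{\varepsilon} = \left(\frac{\vert I_{\mathcal{R}}\vert}{\vert I_{\widetilde{\mathcal{R}}}\vert}\right)^{\varepsilon} \left(\frac{\vert J_{\mathcal{R}}\vert}{\vert J_{\widehat{\mathcal{R}}}\vert}\right)^{\varepsilon},
\]
exploiting the fact that $\widetilde{\mathcal{R}}$ has the same cobase as $\mathcal{R}$ while $\widehat{\mathcal{R}}$ has the same base as $\widetilde{\mathcal{R}}$. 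I would then organize the double sum by first fixing $\widetilde{\mathcal{R}}$ (equivalently, the cobase $J_{\mathcal{R}}$ and the enlarged base $I_{\widetilde{\mathcal{R}}}$) and summing over the interior rectangles $\mathcal{R}$ whose base is a dyadic subcube of $I_{\widetilde{\mathcal{R}}}$; the maximality of $\widetilde{\mathcal{R}}$ in $\Omega^{(1)}$ guarantees that the dyadic parent of $\widetilde{\mathcal{R}}$ in the base direction meets $(\Omega^{(1)})^{c}$ on a set of measure at least $\tfrac{1}{2}\vert\widetilde{\mathcal{R}}\vert$. Running a telescoping argument generation by generation, with the factor $\varepsilon<1$ producing a convergent geometric series in the ratio of base sizes, I expect
\[
\sum_{\mathcal{R}:\ \widetilde{\mathcal{R}}\text{ fixed}} \vert \mathcal{R}\vert \left(\frac{\vert I_{\mathcal{R}}\vert}{\vert I_{\widetilde{\mathcal{R}}}\vert}\right)^{\varepsilon} \leq C_{\varepsilon} \vert\widetilde{\mathcal{R}}\vert .
\]
The outer sum over $\widetilde{\mathcal{R}}$ then carries the second factor $(\vert J_{\widetilde{\mathcal{R}}}\vert/\vert J_{\widehat{\mathcal{R}}}\vert)^{\varepsilon}$, and an analogous one-parameter telescoping in the cobase direction, using maximality of $\widehat{\mathcal{R}}$ in $\Omega^{(2)}$, yields $\sum_{\widetilde{\mathcal{R}}:\ \widehat{\mathcal{R}}\text{ fixed}} \vert\widetilde{\mathcal{R}}\vert (\vert J_{\widetilde{\mathcal{R}}}\vert/\vert J_{\widehat{\mathcal{R}}}\vert)^{\varepsilon}\leq C_{\varepsilon}\vert\widehat{\mathcal{R}}\vert$, which summed over the disjoint maximal $\widehat{\mathcal{R}}\subset \Omega^{(2)}$ gives at most $C_{\varepsilon}\vert\Omega^{(2)}\vert\leq C_{n,\varepsilon}\vert\Omega\vert$ by the first part.

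The principal obstacle I anticipate is executing these one-parameter telescoping steps faithfully inside the implicit Heisenberg geometry, where horizontal dyadic rectangles arise by twisted concatenation of stacked cubes rather than as Cartesian products. The projection identity $P_{\mathbb{C}^n}\mathcal{S}_{j,\alpha,\tau} = I_{\alpha}^{j}$ of Theorem \ref{Heisenberg grid} is exactly what aligns the base-direction sum with the usual Euclidean dyadic structure on $\mathbb{C}^n$, while the cobase-direction sum exploits that vertical rectangles in a fixed dyadic tube $\mathcal{T}_{j,\alpha}$ genuinely stack neatly. The cross term, controlling how an enlargement in the base variable interacts with the twisted fibres in the cobase variable, is delicate; here the key is that Theorem \ref{maximal theorem} already controls $M$ on $L^p$ with constants independent of which stacking is used, so the maximal set estimates $\vert\Omega^{(1)}\vert,\vert\Omega^{(2)}\vert\lesssim \vert\Omega\vert$ survive the twisting intact, and the requirement $\varepsilon < 1$ ensures the double geometric series converges despite the countably many nested rectangles in the sum.
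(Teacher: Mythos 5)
Your proposal matches the paper exactly in spirit: the paper's ``proof'' of this lemma is a single sentence deferring to Journ\'{e}'s original argument once Theorem \ref{maximal theorem} is in hand, and you have correctly identified both ingredients---the $L^2$ bound on the strong dyadic maximal operator to control $\vert\Omega^{(1)}\vert$ and $\vert\Omega^{(2)}\vert$ via Chebyshev, and the Journ\'{e}--Pipher factor-and-telescope scheme for the weighted sum, organized through the intermediate rectangle $\widetilde{\mathcal{R}}$ so that the base ratio and cobase ratio are handled by separate one-parameter geometric sums. One small caution for the final step: the rectangles $\widehat{\mathcal{R}}$ are not pairwise disjoint, so the closing bound ``$\sum_{\widehat{\mathcal{R}}}\vert\widehat{\mathcal{R}}\vert\lesssim\vert\Omega^{(2)}\vert$'' needs the usual Journ\'{e} accounting (summing a disjoint splitting of $\widehat{\mathcal{R}}\cap\Omega^{(1)}$ indexed by the $\widetilde{\mathcal{R}}$'s, rather than the $\widehat{\mathcal{R}}$'s themselves); this is a standard technicality in the two-parameter covering lemma and does not affect the overall structure of your argument.
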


\begin{proof}
Given that we have the Maximal Theorem \ref{maximal theorem} at our
disposal, the proof is an easy generalization of that in Journ\'{e} \cite{J2}%
.
\end{proof}


\begin{thebibliography}{99}
\bibitem{Car} \textsc{L. Carleson,} A counterexample for measures bounded on 
$H^{p}$ for the bidisc, Mittag-Leffler Report No. 7, 1974.

\bibitem{CW} \textsc{R. R. Coifman and G. Weiss,} Transference methods in
analysis, The Conference Board of the Mathematical Sciences by the AMS, 1977.

\bibitem{CF1} \textsc{S-Y. A. Chang and R. Fefferman,} Some recent
developments in Fourier analysis and $H^{p}$ theory on product domains,
Bull. A. M. S. \textbf{12} (1985), 1-43.

\bibitem{CF2} \textsc{S-Y. A. Chang and R. Fefferman,} The Calder\'{o}%
n-Zygmund decomposition on product domains, Amer. J. Math. \textbf{104}
(1982), 455-468.

\bibitem{CF3} \textsc{S-Y. A. Chang and R. Fefferman,} A continuous version
of duality of $H^{1}$ with $BMO$ on the bidisc, Annals of math. \textbf{112}
(1980), 179-201.

\bibitem{Ch} \textsc{M. Christ,} The strong maximal function on a nilpotent
group, Trans. AMS., \textbf{331} (1992), 1-13.

\bibitem{Chr} \textsc{M. Christ,} A $T\left( b\right) $ theorem with remarks
on analytic capacity and the Cauchy integral, \textit{Colloq. Math.} \textbf{%
61} (1990), 601-628.

\bibitem{F1} \textsc{R. Fefferman,} Multi-parameter Fourier analysis, Study 
\textbf{112}, Beijing Lectures in Harmonic Analysis, Edited by E. M. Stein,
47-130. Annals of Mathematics Studies Princeton University Press.

\bibitem{F2} \textsc{R. Fefferman,} Harmonic Analysis on product spaces,
Ann. of Math. \textbf{126} (1987), 109-130.

\bibitem{F3} \textsc{R. Fefferman,} Multiparameter Calder\'{o}n-Zygmund
theory, Harmonic analysis and partial differential equations (Chicago, IL,
1996), 207-221, Chicago Lectures in Math., Univ. Chicago Press, Chicago, IL,
1999.

\bibitem{FS} \textsc{R. Fefferman and E. M. Stein,} Singular integrals on
product spaces, Adv. Math. \textbf{45} (1982), 117-143.

\bibitem{FeLa} \textsc{S. Ferguson and M. Lacey, }A characterization of
product BMO by commutators, Acta Math. \textbf{189} (2002), 143--160.

\bibitem{FJ} \textsc{M. Frazier B. Jawerth,} A discrete transform and
decompositions of distribution spaces, J. Functional Analysis \textbf{93}
(1990), 34-170.

\bibitem{GM} \textsc{D. Geller and A. Mayeli,} Continuous wavelets and
frames on stratified Lie groups, I. J. Fourier Anal. Appl., \textbf{12}
(2006), 543-579.

\bibitem{GS} \textsc{R. Gundy and E. M. Stein,} $H^{p}$ theory for the
polydisk, Proc. Nat. Acad. Sci., \textbf{76} (1979),

\bibitem{GHHLWW} \textsc{J. Gilbert, Y. Han, J. Hogan, J. Lakey, D. Weiland,
and G. Weiss,} Smooth molecular decompositions of functions and singular
integral operators, Memoirs of the AMS, \textbf{156} (2002), no. 742.

\bibitem{H1} \textsc{Y. Han,} Plancherel-P\^{o}lya type inequality on spaces
of homogeneous type and its applications, Proc. Amer. Math. Soc. \textbf{126}
(1998), 3315-3327.

\bibitem{H2} \textsc{Y. Han,} Calder\'{o}n-type reproducing formula and the $%
Tb$ theorem, Rev. Mat. Iberoamericana \textbf{10} (1994), 51-91.

\bibitem{HL1} \textsc{Y. Han and G. Lu,} Discrete Littlewood-Paley-Stein
analysis and multiparameter Hardy spaces associated with flag singular
integrals, \textit{arXiv}:0801.1701.

\bibitem{JMZ} \textsc{B. Jessen, J. Marcinkiewicz, and A. Zygmund,} Note on
the differentiability of multiple integrals, Funda. Math. \textbf{25}
(1935), 217-234.

\bibitem{J1} \textsc{J. L. Journ\'{e},} Calder\'{o}n-Zygmund operators on
product spaces, Rev. Mat. Iberoamericana \textbf{1} (1985), 55-92.

\bibitem{J2} \textsc{J. L. Journ\'{e},} A covering lemma for product spaces,
Proc. AMS. \textbf{96} (1986), 593-598.

\bibitem{LaTe} \textsc{M. Lacey and E. Terwilliger,} Little Hankel Operators
and Product BMO (2004).

\bibitem{Mey} \textsc{I. Meyer,} Les Nouveaux Op\'{e}rateurs de Calder\'{o}%
n-Zygmund, Ast\'{e}risque \textbf{131} (1985), 237-254.

\bibitem{MRS} \textsc{D. M\"{u}ller, F. Ricci, and E. M. Stein,}
Marcinkiewicz multipliers and multi-parameter structure on Heisenberg(-type)
groups, I, Invent. math. \textbf{119} (1995), 119-233.

\bibitem{MRS2} \textsc{D. M\"{u}ller, F. Ricci, and E. M. Stein,}
Marcinkiewicz multipliers and multi-parameter structure on Heisenberg(-type)
groups, II, Math. Z. \textbf{221} (1996), 267-291.

\bibitem{NRS} \textsc{A. Nagel, F. Ricci, and E. M. Stein,} Singular
integrals with flag kernels and analysis on quadratic CR manifolds, J. Func.
Anal. \textbf{181} (2001), 29-118.

\bibitem{NRSW} \textsc{A. Nagel, F. Ricci, E. M. Stein, and S. Wainger,}
Singular integrals with flag kernels on homogeneous groups: I, \textit{arxiv}%
:1108.0177v1 [math.FA] 31 Jul 2011.

\bibitem{P} \textsc{J. Pipher,} Journ\'{e}'s covering lemma and its
extension to higher dimensions, Duke Mathematical Journal, \textbf{53}
(1986), 683-690.

\bibitem{PlPo} \textsc{M. Plancherel and G. P\^{o}lya,} Fonctions entieres
et integrales de Fourier multiples, Comment. Math. Helv. \textbf{9} (1937),
224-248.

\bibitem{PlPo2} \textsc{M. Plancherel and G. P\^{o}lya,} Fonctions entieres
et integrales de Fourier multiples, Comment. Math. Helv. \textbf{10} (1938),
110-163.

\bibitem{RiSo} \textsc{F. Ricci and P. Sj\"{o}gren,} Two-parameter maximal
functions in the Heisenberg group, Math. Z. \textbf{199} (1988), 565-575.

\bibitem{SaWh} \textsc{E. Sawyer and R. L. Wheeden,} Weighted inequalities
for fractional integrals on Euclidean and homogeneous spaces, \textit{Amer.
J. Math. }\textbf{114} (1992), 813-874.

\bibitem{Str} \textsc{R. S. Strichartz,} \textit{Self-similarity on
nilpotent Lie groups}, Contemporary Math. \textbf{140} (1992), 123-157.

\bibitem{Tor} \textsc{R.Torres,} Boundedness results for operators with
singular kernels on distribution spaces, Memoirs of the AMS \textbf{442}
(1991).

\bibitem{Tys} \textsc{J. T. Tyson,} \textit{Global conformal Assouad
dimension in the Heisenberg group}, Conf. Geom. and Dynamics \textbf{12}
(2008), 32-57.
\end{thebibliography}
\end{document}